\theoremstyle{plain}
\newtheorem{theorem}{Theorem}[section]
\newtheorem{corollary}[theorem]{Corollary}
\newtheorem{conj}[theorem]{Conjecture} 
\newtheorem{prop}[theorem]{Proposition}
\newtheorem{lemma}[theorem]{Lemma}
\newtheorem{remark}[theorem]{Remark}
\theoremstyle{definition}
\newtheorem{definition}[theorem]{Definition}
\theoremstyle{plain}
\newcommand{\cL}{\mathcal{L}}
\newcommand{\cN}{\mathcal{N}}
\newcommand{\N}{\mathbb{N}}
\newcommand{\prob}{\mathbb{P}}
\newcommand{\p}{\mathbb{P}}
\newcommand{\E}{\mathbb{E}}
\newcommand{\floor}[1]{{\left\lfloor #1 \right\rfloor}}
\newcommand{\II}[1]{\left \llbracket #1 \right \rrbracket}
\newcommand{\sset}{\subset}
\newcommand{\al}{\alpha}
\newcommand{\Om}{\Omega}
\newcommand{\mathforall}{\text{ for all }}
\newcommand{\mathand}{\;\text{and}\;}
\newcommand{\mathfor}{\;\text{for}\;}
\newcommand{\ga}{\gamma}
\newcommand{\Ga}{\Gamma}
\newcommand{\ep}{\epsilon}
\newcommand{\om}{\omega}
\newcommand{\de}{\delta}
\newcommand{\ze}{\zeta}
\newcommand{\sig}{\sigma}
\newcommand{\Pass}{\mathsf{Pass}}
\newcommand{\Fav}{\mathsf{Fav}}
\newcommand{\Favl}{\mathsf{Fav!}}
\newcommand{\scrA}{\mathcal{A}}
\newcommand{\scrB}{\mathcal{B}}
\newcommand{\scrE}{\mathcal{E}}
\newcommand{\scrG}{\mathcal{G}}
\newcommand{\scrP}{\mathcal{P}}
\newcommand{\scrK}{\mathcal{K}}
\newcommand{\scrM}{\mathcal{M}}
\newcommand{\scrC}{\mathcal{C}}
\newcommand{\scrJ}{\mathcal{J}}
\newcommand{\scrR}{\mathcal{R}}
\newcommand{\scrL}{\mathcal{L}}
\newcommand{\scrH}{\mathcal{H}}
\newcommand{\scrS}{\mathcal{S}}
\newcommand{\scrF}{\mathcal{F}}
\newcommand{\fX}{\mathfrak{X}}
\newcommand{\Z}{\mathbb{Z}}
\newcommand{\R}{\mathbb{R}}
\newcommand{\Q}{\mathbb{Q}}
\DeclareMathOperator*{\argmax}{arg\,max}
\newcommand{\eqd}{\stackrel{d}{=}}
\newcommand{\cvgd}{\stackrel{d}{\to}}
\newcommand{\X}{\times}
\newcommand{\cvgdown}{\downarrow}
\newcommand{\as}{\text{almost surely}}
\newenvironment{claim}[1]{\par\noindent\underline{Claim:}\space#1}{}
\newcommand{\smin}{\setminus}
\newcommand{\lf}{\left}
\newcommand{\rg}{\right}
\newcommand{\bx}{\mathbf{x}}
\newcommand{\bu}{\mathbf{u}}
\newcommand{\bv}{\mathbf{v}}
\newcommand{\bp}{\mathbf{p}}
\newcommand{\bq}{\mathbf{q}}
\newcommand{\ba}{\mathbf{a}}
\newcommand{\bb}{\mathbf{b}}
\newcommand{\by}{\mathbf{y}}
\newcommand{\bz}{\mathbf{z}}
\newcommand{\oo}{\mathfrak{o}}
\newcommand{\fg}{\mathfrak{g}}
\newcommand{\tbx}{\tilde{\bx}}
\newcommand{\tby}{\tilde{\by}}
\newcommand{\tbz}{\tilde{\bz}}
\newcommand{\tz}{\tilde{z}}
\newcommand{\tw}{\tilde{w}}
\newcommand{\tB}{\tilde{B}}
\newcommand{\Rd}{\mathbb{R}^4_\uparrow}
\definecolor{wwhhii}{rgb}{1.,1.,1.}
\definecolor{rreedd}{rgb}{1.,0.,0.}
\definecolor{uuuuuu}{rgb}{0.26666666666666666,0.26666666666666666,0.26666666666666666}
\title{Disjoint optimizers and the directed landscape}
\author{Duncan Dauvergne 
\thanks{Department of Mathematics, University of Toronto, e-mail: duncan.dauvergne@utoronto.ca}
\and Lingfu Zhang
\thanks{Department of Mathematics, Princeton University, e-mail: lingfuz@math.princeton.edu}
}
\begin{document}

\maketitle

\begin{abstract}
We study maximal length collections of disjoint paths, or `disjoint optimizers', in the directed landscape. We show that disjoint optimizers always exist, and that their lengths can be used to construct an extended directed landscape. The extended directed landscape can be built from an independent collection of extended Airy sheets, which we define from the parabolic Airy line ensemble. We show that the extended directed landscape and disjoint optimizers are scaling limits of the corresponding objects in Brownian last passage percolation (LPP). As two consequences of this work, we show that one direction of the Robinson-Schensted-Knuth bijection passes to the KPZ limit, and we find a criterion for geodesic disjointness in the directed landscape that uses only a single parabolic Airy line ensemble.

The proofs rely on a new notion of multi-point LPP across the parabolic Airy line ensemble, combinatorial properties of multi-point LPP, and probabilistic resampling ideas.
\end{abstract}

\tableofcontents

\FloatBarrier

\begin{figure}
    \centering
    \includegraphics{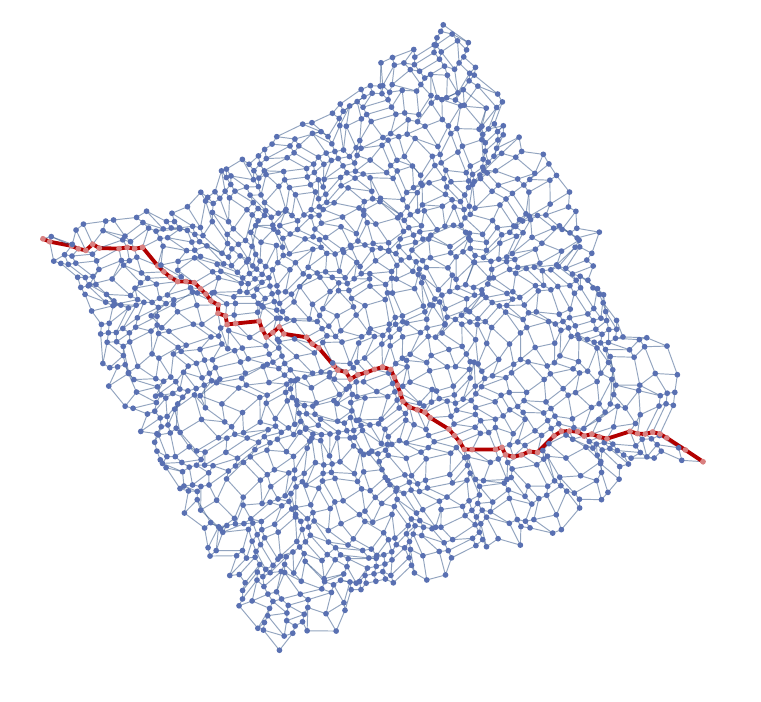}
    \caption{First passage percolation on $\Z^2$ with i.i.d.\ edge weights equal to $1$ or $2$ with equal probability. This model is expected to lie in the KPZ universality class. The red path is a geodesic and here the weighted graph is drawn using a spring electrical embedding (Mathematica). Figure generated by B\'alint Vir\'ag.}
    \label{fig:FPP}
\end{figure}

\section{Introduction}
\subsection{The KPZ universality class}

We start by considering a simple model of a random metric on the plane, see Figure \ref{fig:FPP} for an example. Take the lattice $\Z^2$, and assign i.i.d.\ positive edge weights to all edges. For vertices $p, q \in \Z^2$, let $d(p, q)$ denote graph distance in this randomly weighted graph. This model is an example of first passage percolation on $\Z^2$.

What does this metric look like as the points $p$ and $q$ become further and further away from each other? As long as the weight distribution is sufficiently nice, we expect the following picture. 
For concreteness, we take $p = (0,0)$ and $q = (0, n)$. At leading order, $d(p, q)$ grows proportionally to $n$ and the fluctuations of $d(p, q)$ are of order $O(n^{1/3})$. Moreover, geodesics from $p$ to $q$ are expected to only use edges in an $O(n^{2/3})$-window around the vertical axis. In particular, $d(p, q)$ will only have nontrivial correlations with another distance $d((x_1, y_1), (x_2, y_2))$ when $x_1, x_2 = O(n^{2/3})$.

This $n:n^{1/3}:n^{2/3}$ scaling (usually referred to as a $1:2:3$ scaling) is also observed in other models with an underlying random planar geometry including random interface growth (e.g. the $1+1$-dimensional Kardar-Parisi-Zhang (KPZ) equation, TASEP), random polymers, and last passage percolation. Collectively, these models are said to lie in the \textit{KPZ universality class}. See Section \ref{ss:relatedwork} for background on this area.

\subsection{The directed landscape}

Any model satisfying the $1:2:3$ scaling described above is expected to converge to a scaling limit. Indeed, for first passage percolation $d(\cdot, \cdot)$ on $\Z^2$ defined using a sufficiently nice edge weight distribution we expect that there is a four-parameter continuum object $\cL$ such that
\begin{equation}
\label{E:landscape-heuristic}
d((x n^{2/3}, sn), (yn^{2/3}, tn)) = c_1(t-s) n - n^{1/3} \cL(x, s; y, t) + o(n^{1/3})
\end{equation}
for some constant $c_1$.\footnote{The negative sign in front of the limit $\cL$ here is a convention in the field.} The continuum limit $\cL$ should be universal: up to possibly a linear rescaling of parameters, we should see the same limit regardless of our initial choice of weight distribution. 
Moreover, it should be the limit of all models in the KPZ universality class.

While the possibility of verifying \eqref{E:landscape-heuristic} for general first passage percolation is pure conjecture at this point, over the past twenty-five years a handful of integrable models of \textit{last passage percolation} have been discovered where establishing the existence of this scaling limit is possible. Indeed, recently Dauvergne, Ortmann, and Vir\'ag \cite{DOV} constructed the limit object $\cL$ and verified the analogue of \eqref{E:landscape-heuristic} for one particular integrable model: Brownian last passage percolation. They called the identified limit the \textbf{directed landscape}. This result was extended to other integrable models in \cite{dauvergne2021scaling}.
The papers \cite{DOV, dauvergne2021scaling} build on many previous results in the field. We refer the reader to Section \ref{ss:relatedwork} for more about of this history, and continue describing the directed landscape $\cL$.

The directed landscape $\mathcal L$ is a random continuous function from the parameter space 
$$
\R^4_\uparrow = \{u = (p; q) = (x, s; y, t) \in \R^4 : s < t\}
$$
to $\R$. 

As with first passage percolation, the value $\scrL(p; q) = \scrL(x, s; y, t)$ is best thought of as a distance between two points $p$ and $q$. Here we think of $x, y$ as spatial coordinates and $s, t$ as time coordinates. The two types of coordinates play very different roles, as is evident from the scaling in \eqref{E:landscape-heuristic}. The domain of $\scrL$ is not equal to all of $\R^4$ since the tilt of $c_1 (t-s) n$ in \eqref{E:landscape-heuristic} forces rescaled distances to tend to $-\infty$ if their time coordinates $s, t$ are in the wrong order. In other words, in the limit we cannot move backwards or instantaneously in time. 

Unlike an ordinary metric, $\scrL$ is not symmetric and may take negative values. It also satisfies the triangle inequality backwards (i.e. because of the sign convention in \eqref{E:landscape-heuristic}):
\begin{equation}
\label{E:triangle}
\scrL(p; r) \ge \scrL(p; q) + \scrL(q; r) \qquad \mbox{ for all }(p; r), (p; q), (q; r) \in \Rd.
\end{equation}
Just as in true metric spaces, we can define path lengths in $\scrL$, see \cite[Section 12]{DOV}. In the limiting setup, a \textbf{path} from $(x, s)$ to $(y, t)$ is a continuous function $\pi:[s, t] \to \R$ with $\pi(s) = x$ and $\pi(t) = y$. We can define the \textbf{length} of a path by
\begin{equation}
\label{E:length-L}
\|\pi\|_\scrL=\inf_{k\in \N}\inf_{s=t_0<t_1<\ldots<t_k=t}\sum_{i=1}^k\cL(\pi(t_{i-1}),t_{i-1};\pi(t_i),t_i)\,.
\end{equation}
This is analogous to defining the length of a curve in Euclidean space by piecewise linear approximation.
A path $\pi$ is a \textbf{directed geodesic}, or geodesic for brevity, if $\|\pi\|_\scrL$ is maximal among all paths with the same start and endpoints. Geodesics maximize, rather than minimize, path length because the triangle inequality \eqref{E:triangle} is backwards. Equivalently, a geodesic is any path $\pi$ with $\|\pi\|_\scrL = \scrL(\pi(s),s;\pi(t),t)$. Almost surely, directed geodesics exist between every pair of points $(x, s), (y, t)$ with $s < t$. Moreover, there is almost surely a unique geodesic between any fixed pair $(x, s), (y, t)$. In models which converge to the directed landscape, directed geodesics are limits of geodesics in the prelimiting metrics.

Before moving on to the main goal of this paper -- understanding disjoint optimizers in the directed landscape -- we mention a few relationships between the directed landscape and other well-known limit objects in order to better orient the reader. The first fluctuation limit theorem in the KPZ universality class is the Baik-Deift-Johansson theorem on the length of the longest increasing subsequence in a uniform permutation \cite{baik1999distribution}, see also \cite{johansson2000shape} for the same result in a different model. From our present perspective, these theorems show that for any fixed point $u = (x, s; y, t) \in \Rd$, we have
$$
\scrL(u) \eqd (t-s)^{1/3} T - \frac{(x-y)^2}{t-s},
$$
where $T$ is a Tracy-Widom GUE random variable.
In other words, these results identify the one-point distributions of $\scrL$. 

Shortly afterwards, Pr\"ahofer and Spohn \cite{prahofer2002scale} found a richer one-parameter scaling limit while studying the polynuclear growth model. Again, from our perspective this theorem amounts to identifying the process
$$
y \mapsto \scrL(0,0; y, t) + y^2
$$
as a stationary Airy$_2$ process. The work of Pr\"ahofer and Spohn also introduces a system of functions $\scrA =\{\scrA_i: \R \to \R, i \in \N\}$ with $\scrA_1$ being the Airy$_2$ process. This system was shown to consist of locally Brownian, ordered curves $\scrA_1 > \scrA_2 > \dots$ by Corwin and Hammond \cite{CH}. The collection $\scrA$ is called the \textbf{Airy line ensemble}, and it is the crucial integrable input needed in the construction of the directed landscape. In particular, the marginal $\scrL(\cdot, s; \cdot; t)$ is expressed in terms of an Airy line ensemble, see Section \ref{S:Airysheet} for more details.

Finally, there are exact formulas for marginals of the form $\scrL(0,0; y_i, t_i), i = 1, \dots, k$ from  Johansson and Rahman \citep{johansson2019multi} and Liu \citep{liu2019multi}, as well as formulas for the Markov process 
$$
\mathfrak h_t(y) = \max_{x \in \R} f(x) + \scrL(x, 0; y, t)
$$
for any upper semicontinuous function $f:\R \to \R \cup \{-\infty\}$ satisfying a certain growth condition. This Markov process is the KPZ fixed point, constructed by Matetski, Quastel, and Remenik \citep{matetski2016kpz}.

\subsection{Two perspectives on last passage percolation}

The goal of this paper is to study and understand the structure of maximal length collections of disjoint paths in the directed landscape. This may initially seem like a rather peripheral object of study in a random metric. However, it turns out that optimal collections of disjoint paths plays a central role in random planar geometry.
To properly motivate the study of these objects, we must first return to the prelimit and describe \textbf{Brownian last passage percolation} (henceforth Brownian LPP).

Let $f=\{f_i : i \in \Z\}$ be a sequence of continuous functions. For a nonincreasing cadlag function $\pi$ from $[x, y]$ to the integer interval $\II{m, n}$ with $\pi(y) = m$, henceforth a \textbf{path} from $(x, n)$ to $(y, m)$, define the length of $\pi$ with respect to $f$ by
$$
\|\pi\|_f = \sum_{i = m}^n f_i(z_i) - f_i(z_{i+1}).
$$ 
Here $z_m = y$ and for $i > m$, $z_i$ is the first time when $\pi$ is less than $i$. 
We remark that we use the indexing convention where the path $\pi$ is nonincreasing (rather than nondecreasing) to be consistent with \cite{DOV}; this indexing convention is also convenient to work with in the limit transition to the Airy line ensemble, see Figure \ref{fig:parabolic-path}.

For $x \le y$ and integers $m \le n$, define the \textbf{last passage value}
\begin{equation}
\label{E:BLPP-multi-intro}
f[(x, n) \to (y, m)] = \sup_{\pi} \|\pi\|_f,
\end{equation} 
where the supremum is over all paths from $(x, n)$ to $(y, m)$.
A function $\pi$ that achieves this supremum is called a \textbf{geodesic}. When the function $f$ is a collection of independent two-sided standard Brownian motions $B = \{B_i : i \in \Z\}$, this model is Brownian last passage percolation LPP
$$
(x, n; y, m) \mapsto B[(x, n) \to (y, m)].
$$
Going back to the work of Logan and Shepp \cite{logan1977variational} and Vershik and Kerov \cite{vershik1977asymptotics} on longest increasing subsequences, much of the progress on understanding integrable LPP models has come by understanding the Robinson-Schensted-Knuth (RSK) bijection. One direction of the classical RSK bijection maps an array of numbers to a pair of semistandard Young tableaux of the same shape. This pair of Young tableaux is built out of differences of certain \textit{multi-point} last passage values.
 In the context of last passage percolation across a sequence of functions $f = (f_1, \dots, f_n)$ with domain $[0, t]$, these multi-point last passage values are precisely the data
\begin{equation}
\label{E:f0k-to-yk}
f[(0^k, n) \to (y^k, m)] := \sup_\pi \sum_{i=1}^k \|\pi_i\|_f, \qquad  (y, m) \in [0, t] \X\{1\} \cup \{t\} \X \II{1, n}, k \in \II{1, n - m +1}.
\end{equation}
The supremum is over all $k$-tuples of disjoint paths $\pi = (\pi_1, \dots, \pi_k)$ from $(0, n)$ to $(y, m)$.
Here and throughout the paper we write $x^k = (x, \dots, x) \in \R^k$ for $x\in\R$.
In other words, one direction of this bijection records all multi-point last passage values from $(0,n)$, the bottom corner of the box $[0, t] \X \II{1, n}$, to points on the two far sides. It turns out that the whole function $f$ can be reconstructed from this data. Given the importance of the RSK bijection, it is natural to ask what becomes of it in the directed landscape limit, and how it relates to the finite RSK bijection.
	
On the nonintegrable side, going back at least to the work of Licea and Newman \cite{licea1996geodesics} on first passage percolation, the joint structure of geodesics in random metric models has been an object of fruitful study. Questions about geodesic coalescence and disjointness are closely linked with questions about limit shapes, fluctuation exponents, and the structure of shocks in related growth models. More recently, geodesic coalescence and disjointness have been studied in the more tractable context of integrable last passage percolation by using probabilistic and geometric techniques, e.g. see Hammond \citep{hammond2017exponents}; Pimentel \cite{pimentel2016duality}; Basu, Sarkar, Sly and Zhang \cite{basu2019coal, zhang2020optimal}; Bal{\'a}zs, Busani, Georgiou, Rassoul-Agha, Sepp\"al\"ainen, Shen~\citep{georgiou2017stationary, seppalainen2020coalescence, balazs2020local}. Questions of geodesic coalescence and disjointness still make sense in the directed landscape, and studying these reveals interesting probabilistic structures, e.g. see Bates, Ganguly, and Hammond \cite{bates2019hausdorff}.

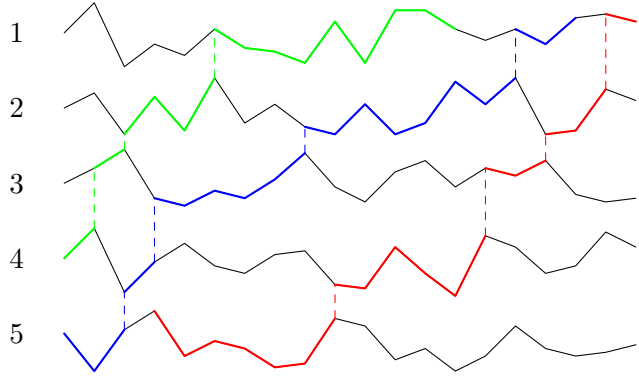
\begin{figure}
	\centering
	\begin{tikzpicture}[line cap=round,line join=round,>=triangle 45,x=4cm,y=5cm]
	\clip(-0.15,-0.15) rectangle (2.15,1.15);
	
	\draw (0.,0.1) node[anchor=east]{$5$};
	\draw (0.,0.3) node[anchor=east]{$4$};
	\draw (0.,0.5) node[anchor=east]{$3$};
	\draw (0.,0.7) node[anchor=east]{$2$};
	\draw (0.,0.9) node[anchor=east]{$1$};

	\draw plot coordinates {(0.1,0.1) (0.2,0.) (0.3,0.11) (0.4,0.16) (0.5,0.04) (0.6,0.08) (0.7,0.06) (0.8,0.01) (0.9,0.02) (1.,0.14) (1.1,0.12) (1.2,0.03) (1.3,0.06) (1.4,0.) (1.5,0.04) (1.6,0.12) (1.7,0.06) (1.8,0.04) (1.9,0.05) (2.,0.07) };
	\draw plot coordinates {(0.1,0.3) (0.2,0.38) (0.3,0.21) (0.4,0.29) (0.5,0.34) (0.6,0.28) (0.7,0.26) (0.8,0.31) (0.9,0.32) (1.,0.23) (1.1,0.22) (1.2,0.33) (1.3,0.26) (1.4,0.2) (1.5,0.36) (1.6,0.33) (1.7,0.26) (1.8,0.28) (1.9,0.37) (2.,0.33) };
	\draw plot coordinates {(0.1,0.5) (0.2,0.54) (0.3,0.59) (0.4,0.46) (0.5,0.44) (0.6,0.48) (0.7,0.46) (0.8,0.51) (0.9,0.58) (1.,0.49) (1.1,0.45) (1.2,0.53) (1.3,0.56) (1.4,0.49) (1.5,0.54) (1.6,0.52) (1.7,0.56) (1.8,0.47) (1.9,0.45) (2.,0.46) };
	\draw plot coordinates {(0.1,0.7) (0.2,0.74) (0.3,0.63) (0.4,0.73) (0.5,0.64) (0.6,0.78) (0.7,0.66) (0.8,0.71) (0.9,0.65) (1.,0.63) (1.1,0.71) (1.2,0.63) (1.3,0.66) (1.4,0.77) (1.5,0.71) (1.6,0.78) (1.7,0.63) (1.8,0.64) (1.9,0.75) (2.,0.72) };
	\draw plot coordinates {(0.1,0.9) (0.2,0.98) (0.3,0.81) (0.4,0.87) (0.5,0.84) (0.6,0.91) (0.7,0.86) (0.8,0.85) (0.9,0.82) (1.,0.93) (1.1,0.82) (1.2,0.96) (1.3,0.96) (1.4,0.91) (1.5,0.88) (1.6,0.91) (1.7,0.87) (1.8,0.94) (1.9,0.95) (2.,0.93) };

	\draw [thick] [red] plot coordinates {(0.4,0.16) (0.5,0.04) (0.6,0.08) (0.7,0.06) (0.8,0.01) (0.9,0.02) (1.,0.14)};
	\draw [dashed] [red] plot coordinates {(1.,0.14) (1.,0.23)};
	\draw [thick] [red] plot coordinates {(1.,0.23) (1.1,0.22) (1.2,0.33) (1.3,0.26) (1.4,0.2) (1.5,0.36)};
	\draw [dashed] [red] plot coordinates {(1.5,0.36) (1.5,0.54)};
	\draw [thick] [red] plot coordinates {(1.5,0.54) (1.6,0.52) (1.7,0.56)};
	\draw [dashed] [red] plot coordinates {(1.7,0.56) (1.7,0.63)};
	\draw [thick] [red] plot coordinates {(1.7,0.63) (1.8,0.64) (1.9,0.75)};
	\draw [dashed] [red] plot coordinates {(1.9,0.75) (1.9,0.95)};
	\draw [thick] [red] plot coordinates {(1.9,0.95) (2.,0.93)};
	
	\draw [thick] [blue] plot coordinates {(0.1,0.1) (0.2,0.) (0.3,0.11)};
	\draw [dashed] [blue] plot coordinates {(0.3,0.11) (0.3,0.21)};
	\draw [thick] [blue] plot coordinates {(0.3,0.21) (0.4,0.29)};
	\draw [dashed] [blue] plot coordinates {(0.4,0.29) (0.4,0.46)};
	\draw [thick] [blue] plot coordinates {(0.4,0.46) (0.5,0.44) (0.6,0.48) (0.7,0.46) (0.8,0.51) (0.9,0.58)};
	\draw [dashed] [blue] plot coordinates {(0.9,0.58) (0.9,0.65)};
	\draw [thick] [blue] plot coordinates {(0.9,0.65) (1.,0.63) (1.1,0.71) (1.2,0.63) (1.3,0.66) (1.4,0.77) (1.5,0.71) (1.6,0.78)};
	\draw [dashed] [blue] plot coordinates {(1.6,0.78) (1.6,0.91)};
	\draw [thick] [blue] plot coordinates {(1.6,0.91) (1.7,0.87) (1.8,0.94)};
	
	\draw [thick] [green] plot coordinates {(0.1,0.3) (0.2,0.38)};
	\draw [dashed] [green] plot coordinates {(0.2,0.38) (0.2,0.54)};
	\draw [thick] [green] plot coordinates {(0.2,0.54) (0.3,0.59)};
	\draw [dashed] [green] plot coordinates {(0.3,0.59) (0.3,0.63)};
	\draw [thick] [green] plot coordinates {(0.3,0.63) (0.4,0.73) (0.5,0.64) (0.6,0.78)};
	\draw [dashed] [green] plot coordinates {(0.6,0.78) (0.6,0.91)};
	\draw [thick] [green] plot coordinates {(0.6,0.91) (0.7,0.86) (0.8,0.85) (0.9,0.82) (1.,0.93) (1.1,0.82) (1.2,0.96) (1.3,0.96) (1.4,0.91)};
	
	\end{tikzpicture}
	\caption{A disjoint optimizer for $k=3$ from $((0,0,0.2),5)$ to $((0.7,0.9,1),1)$.}   \label{fig:dis-opt}
\end{figure}

One way to think about problems of geodesic disjointness and coalescence is in terms of certain \textbf{multi-point last passage values} that generalize \eqref{E:f0k-to-yk}. For collections of points $\bx = (x_1 \le x_2 \le \dots \le x_k)$ and $\by = (y_1 \le \dots \le y_k)$, define
\begin{equation}
\label{E:f0k-to-yk-2}
f[(\bx, n) \to (\by, m)] := \sup_\pi \sum_{i=1}^k \|\pi_i\|_f,
\end{equation}
where the supremum is over all $k$-tuples of disjoint paths $\pi = (\pi_1, \dots, \pi_k)$, where each $\pi_i$ goes from $(x_i, n)$ to $(y_i, m)$. We call a $k$-tuple $\pi$ that achieves this supremum a \textbf{disjoint optimizer}, abbreviated as optimizer. See Figure \ref{fig:dis-opt} for an example of these definitions and Section \ref{S:lppm} for a more precise setup. If there are disjoint geodesics $\pi_i$ from $x_i$ to $y_i$ for $i = 1, \dots, k$, then $f[(\bx, n) \to (\by, m)] = \sum_{i=1}^k f[(x_i, n) \to (y_i, m)]$. On the other hand, if for any collection of $k$ geodesics from $x_i$ to $y_i$, at least $2$ must coalesce on some interval, then $f[(\bx, n) \to (\by, m)] < \sum_{i=1}^k f[(x_i, n) \to (y_i, m)]$.

The following definition gives the analogue of multi-point last passage percolation in the directed landscape. This paper is devoted to studying this analogue, in order to shed light on both the limit of RSK and the structure of geodesic disjointness and coalescence in $\scrL$.

\begin{definition}
	\label{D:ext-land}
	Let $\fX_\uparrow$ be the space of all points $(\bx, s; \by, t)$, where $s < t$ and $\bx, \by$ lie in the same space $\R^k_\le = \{ \bx \in \R^k: x_1 \le \dots \le x_k \}$ for some $k \in \N$. For $(\bx, s; \by, t) \in \fX_\uparrow$, define
	\begin{equation}
	\label{E:Lextend}
	\scrL(\bx, s; \by, t) = \sup_{\pi_1, \dots, \pi_k} \sum_{i=1}^k \|\pi_i\|_\scrL.
	\end{equation}
	Here and throughout we use the convention that $k$ is such that $\bx, \by \in \R^k_\le$. The supremum is over all $k$-tuples of paths $\pi = (\pi_1, \dots, \pi_k)$ where each $\pi_i$ is a path from $(x_i, s)$ to $(y_i, t)$, and the paths satisfy the disjointness condition $\pi_i(r) \ne \pi_j(r)$ for all $i \ne j$ and $r \in (s, t)$. We call such a collection $\pi$ a \textbf{disjoint $k$-tuple} from $(\bx, s)$ to $(\by, t)$. We call the extension of $\scrL$ from $\fX_\uparrow \to \R \cup \{-\infty\}$ the \textbf{extended directed landscape}, abbreviated as extended landscape.
\end{definition}

See Figure \ref{fig:dis-tuple} for an illustration of Definition \ref{D:ext-land}. Note that $\Rd \sset \fX_\uparrow$, and since geodesics in $\scrL$ always exist, definition \eqref{E:Lextend} on $\Rd$ coincides with the usual definition of $\scrL$. In the course of this paper, we will show that:
\begin{enumerate}
	\item Just as the directed landscape is the limit of single-point Brownian LPP, the extended landscape is the scaling limit of multi-point Brownian LPP.
	\item For any $s < t$, the function $(\bx, \by) \mapsto \scrL(\bx, s; \by, t)$ can be expressed in terms of a more tractable object: the parabolic Airy line ensemble. This makes $\scrL(\cdot, s; \cdot, t)$ more amenable to probabilistic analysis.
	\item The supremum in \eqref{E:f0k-to-yk-2} is always attained, and so 
	$
	\scrL(\bx, s; \by, t) = \sum_{i=1}^k \scrL(x_i, s; y_i, t)
	$ 
	if and only if there are geodesics $\pi_i$ from $(x_i, s)$ to $(y_i, t), i = 1, \dots, k$ that are disjoint on $(s, t)$. When combined with point $2$, this gives a formula for understanding geodesic disjointness and coalescence that uses only a single parabolic Airy line ensemble.
	\item One direction of the RSK bijection passes to the limit.
\end{enumerate}

\begin{figure}
	\centering
	\begin{tikzpicture}[line cap=round,line join=round,>=triangle 45,x=10cm,y=5cm]
	\clip(-0.15,-0.15) rectangle (1.15,1.15);

	\draw [line width=.6pt] (-1.,0.) -- (2.,0.);
	\draw [line width=.6pt] (-1.,1.) -- (2.,1.);
	
	\draw (0.2,0) node[anchor=north]{$x_1=x_2$};
	\draw (0.35,0) node[anchor=north]{$x_3$};
	\draw (0.6,0) node[anchor=north]{$x_4=x_5=x_6$};
	\draw (0.8,0) node[anchor=north]{$x_7$};
	\draw (.2,1.) node[anchor=south]{$y_1$};
	\draw (.4,1.) node[anchor=south]{$y_2=y_3$};
	\draw (.6,1.) node[anchor=south]{$y_4=y_5$};
	\draw (.75,1.) node[anchor=south]{$y_6$};
	\draw (.95,1.) node[anchor=south]{$y_7$};
	
	\draw [red] plot coordinates {(0.2,0.) (0.09,0.1) (0.14,0.2) (0.24,0.3) (0.16,0.4) (0.2,0.5) (0.31,0.6) (0.13,0.7) (0.12,0.8) (0.21,0.9) (0.2,1.) };
	\draw [red] plot coordinates {(0.2,0.) (0.28,0.1) (0.3,0.2) (0.28,0.3) (0.26,0.4) (0.31,0.5) (0.39,0.6) (0.37,0.7) (0.41,0.8) (0.33,0.9) (0.4,1.) };
	\draw [red] plot coordinates {(0.35,0.) (0.43,0.1) (0.36,0.2) (0.38,0.3) (0.42,0.4) (0.4,0.5) (0.49,0.6) (0.42,0.7) (0.45,0.8) (0.37,0.9) (0.4,1.) };
	\draw [red] plot coordinates {(0.6,0.) (0.55,0.1) (0.46,0.2) (0.51,0.3) (0.48,0.4) (0.52,0.5) (0.59,0.6) (0.62,0.7) (0.54,0.8) (0.55,0.9) (0.6,1.) };
	\draw [red] plot coordinates {(0.6,0.) (0.67,0.1) (0.66,0.2) (0.61,0.3) (0.68,0.4) (0.58,0.5) (0.66,0.6) (0.72,0.7) (0.69,0.8) (0.67,0.9) (0.6,1.) };
	\draw [red] plot coordinates {(0.6,0.) (0.73,0.1) (0.76,0.2) (0.68,0.3) (0.71,0.4) (0.78,0.5) (0.86,0.6) (0.83,0.7) (0.72,0.8) (0.81,0.9) (0.75,1.) };
	\draw [red] plot coordinates {(0.8,0.) (0.79,0.1) (0.86,0.2) (0.71,0.3) (0.86,0.4) (0.98,0.5) (0.99,0.6) (0.85,0.7) (0.84,0.8) (0.91,0.9) (0.95,1.) };
	\draw [fill=uuuuuu] (0.2,0.) circle (1.0pt);
	\draw [fill=uuuuuu] (0.35,0.) circle (1.0pt);
	\draw [fill=uuuuuu] (0.6,0.) circle (1.0pt);
	\draw [fill=uuuuuu] (0.8,0.) circle (1.0pt);
	\draw [fill=uuuuuu] (0.2,1.) circle (1.0pt);
	\draw [fill=uuuuuu] (0.4,1.) circle (1.0pt);
	\draw [fill=uuuuuu] (0.6,1.) circle (1.0pt);
	\draw [fill=uuuuuu] (0.75,1.) circle (1.0pt);
	\draw [fill=uuuuuu] (0.95,1.) circle (1.0pt);
	\end{tikzpicture}
	\caption{A disjoint $k$-tuple.}   \label{fig:dis-tuple}
\end{figure}
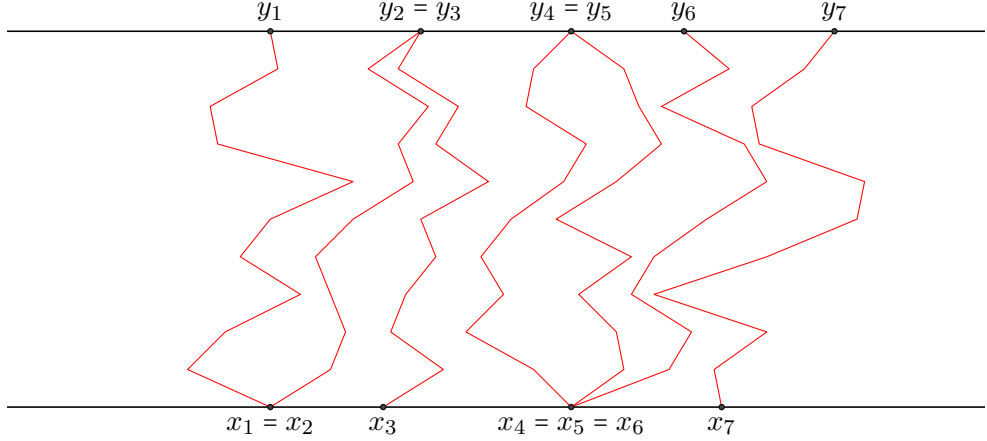

\subsection{Brownian LPP and the extended Airy sheet}
To understand the extended landscape, we need to go back to understand multi-point LPP in the prelimit.
We first focus on understanding the scaling limit of multi-point Brownian LPP from line $n$ to line $1$ as $n \to \infty$. 

\begin{theorem}
	\label{T:extended-sheet}
	Let $B = \{B_i : i \in \Z\}$ be a collection of independent two-sided standard Brownian motions. Let $\fX = \bigcup_{k =1}^\infty \R^k_\le \X \R^k_\le$. For $(\bx, \by) \in \R^k_\le \X \R^k_\le$, define
	$$
	\scrS^n(\bx, \by) = n^{1/6} \lf(B[(2n^{-1/3}\bx, n) \to (1 + 2n^{-1/3}\by, 1)] - 2k\sqrt{n} - n^{1/6} \sum_{i=1}^k 2(y_i - x_i) \rg),
	$$
	Then $\scrS^n \cvgd \scrS$ for some random continuous function $\scrS:\fX \to \R$. The underlying topology here is uniform convergence on compact subsets of $\fX$. The limit $\scrS$ is called the \textbf{extended Airy sheet}.
\end{theorem}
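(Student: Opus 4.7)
The plan adapts the strategy used by Dauvergne--Ortmann--Vir\'ag in \cite{DOV} to construct the single-point Airy sheet, extending it to multi-point LPP. The three main steps are: (i) use the Brownian RSK correspondence to rewrite $B[(2n^{-1/3}\bx, n) \to (1 + 2n^{-1/3}\by, 1)]$ as a multi-point LPP across the top $k$ lines of the Brownian melon $W^n B$; (ii) invoke the convergence of the appropriately rescaled melon to the parabolic Airy line ensemble $\scrA$; (iii) pass the multi-point LPP functional to the limit, identifying $\scrS$ as multi-point LPP across $\scrA$.

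For step (i), Brownian RSK gives an identity of the form
\[
B[(\bx, n) \to (\by, 1)] \;=\; (W^n B)[(\bx, n) \to (\by, 1)] \quad \text{for all } \bx, \by \in \R^k_\le,\ k \le n,
\]
where the right-hand side is multi-point LPP across the $n$ non-intersecting curves of $W^n B$. Since these curves are ordered $(W^n B)_1 > (W^n B)_2 > \dots$, a rearrangement/sorting argument shows that any disjoint optimizer for a $k$-tuple of endpoints lives on the top $k$ melon lines, reducing the prelimiting problem to a multi-point LPP computation on $((W^n B)_1, \dots, (W^n B)_k)$. For step (ii), the normalization in the theorem matches the standard KPZ rescaling of the melon: the rescaled ensemble
\[
y \mapsto n^{1/6}\bigl((W^n B)_i(1 + 2n^{-1/3}y) - 2\sqrt{n} - 2n^{1/6} y\bigr), \quad i = 1, \dots, k,
\]
converges in distribution, jointly and uniformly on compacts in $\R$, to $(\scrA_1, \dots, \scrA_k)$. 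This is a now-standard Brownian LPP input.

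Step (iii) combines these via continuous mapping plus tightness. The candidate limit $\scrS(\bx, \by)$ is the multi-point LPP functional applied to $(\scrA_1, \dots, \scrA_k)$, with the parabolic shift absorbed in the normalization; finite-dimensional convergence of $\scrS^n$ to $\scrS$ then follows from step (ii) together with continuity of this functional at typical realizations of $\scrA$. Since every compact subset of $\fX$ lies in some $\R^k_\le \X \R^k_\le$, uniform-on-compacts convergence reduces to a fixed-$k$ statement. The main obstacles are (a) showing that the multi-point LPP functional is finite and continuous on $\scrA$ despite the degeneracy at coinciding endpoints (where the disjointness constraint is singular and small perturbations of the ensemble can, a priori, cause the supremum to jump), and (b) proving a uniform modulus-of-continuity estimate on $\scrS^n$ that gives tightness in the uniform-on-compacts topology on each $\R^k_\le \X \R^k_\le$. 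Both rely on uniform H\"older-$(1/2-\ep)$ regularity of the top $k$ melon lines on compacts (via the Brownian Gibbs property), together with quantitative stability of disjoint optimizers under small endpoint perturbations, which can be established by comparing an optimizer at perturbed endpoints to a slight modification of an optimizer at nearby non-degenerate endpoints. Together these yield tightness of $\{\scrS^n\}$ and identify every subsequential limit with $\scrS$, completing the proof.
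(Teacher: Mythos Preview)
Your proposal has a genuine gap in Step (i). The claim that ``a rearrangement/sorting argument shows that any disjoint optimizer for a $k$-tuple of endpoints lives on the top $k$ melon lines'' is false in general. The fact that optimizers stay on the top $k$ lines (Lemma~\ref{L:high-paths}) is a special property valid only when all starting points coincide with the point at which the melon is opened, i.e.\ when $\bx = 0^k$. For general $\bx$ with $x_i > 0$, the prelimiting paths start at $(2n^{-1/3}x_i, n)$, that is, on line $n$, and must traverse \emph{all} $n$ lines of the melon. After the Airy rescaling, the starting point becomes $(\bx - n^{1/3}/2, n)$ (see \eqref{E:Sn-compact-form}), which tends to $(-\infty, \infty)$ in the parabolic Airy line ensemble. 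In particular, the optimizer touches line $m$ near position $-\sqrt{m/(2x_i)}$ (Lemma~\ref{L:zk-unif-bound}), so it explores arbitrarily deep into the ensemble; there is no reduction to a finite number of lines.

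Consequently, your candidate limit ``multi-point LPP applied to $(\scrA_1,\dots,\scrA_k)$'' is wrong for $\bx \ne 0^k$: the extended Airy sheet is instead given by multi-point LPP along \emph{parabolic paths} across the full parabolic Airy line ensemble (Theorem~\ref{T:extended-sheet-char}, equation~\eqref{E:Bxy}). This is precisely why the paper must develop the theory in Section~\ref{S:lpp-across-ALE}: defining length for infinite parabolic paths, proving existence and coalescence of geodesics across $\scrB$ (Lemma~\ref{L:A-geod-exist}), and showing that melon optimizers converge to optimizers across $\scrB$ (Corollary~\ref{C:path-tightness}, Proposition~\ref{P:BSjoint}). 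The tightness argument you outline is closer to what the paper does (Section~\ref{S:tightness}), but the identification of the limit requires the full machinery of parabolic paths, not a continuous-mapping argument on the top $k$ lines. Your approach would, however, correctly recover $\scrS(0^k,\by)$; compare Proposition~\ref{P:high-paths-B}.
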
 

Certain marginals of the extended Airy sheet are familiar. Indeed, let $\scrA = \{\scrA_i:\R \to \R, i \in \N\}$ be the Airy line ensemble and let $\scrB_i(x) = \scrA_i(x) - x^2$
be the \textbf{parabolic Airy line ensemble}. Then the system $\scrB = \{\scrB_i:\R \to \R, i \in \N\}$ can be coupled with $\scrS$ so that
\begin{equation}
\label{E:line-sheet-relation}
\sum_{i=1}^k \scrB_i(y) = \scrS(0^k, y^k)
\end{equation}
for all $k \in \N, y \in \R$.

The usual Airy sheet, constructed in \cite{DOV}, is given by $\scrS|_{\R^2}$. It is the scaling limit of single-point last passage values from line $n$ to line $1$. The construction of the Airy sheet in \cite{DOV} relies on showing that the \textbf{half-Airy sheet} $\scrS|_{[0, \infty) \X \R}$ is equal to $h(\scrB)$ for an explicit function $h$. The function $h$ is defined in terms of a last passage problem involving the parabolic Airy line ensemble, see Section \ref{S:Airysheet} and Section \ref{SS:outline} for some discussion of how this description arises from an identity in the prelimit. Our Theorem \ref{T:extended-sheet} also relies on characterizing $\scrS$ in terms of last passage percolation across $\scrB$. Doing so requires formalizing a notion of last passage percolation along infinite paths across $\scrB$.

\begin{figure}
    \centering
    \includegraphics{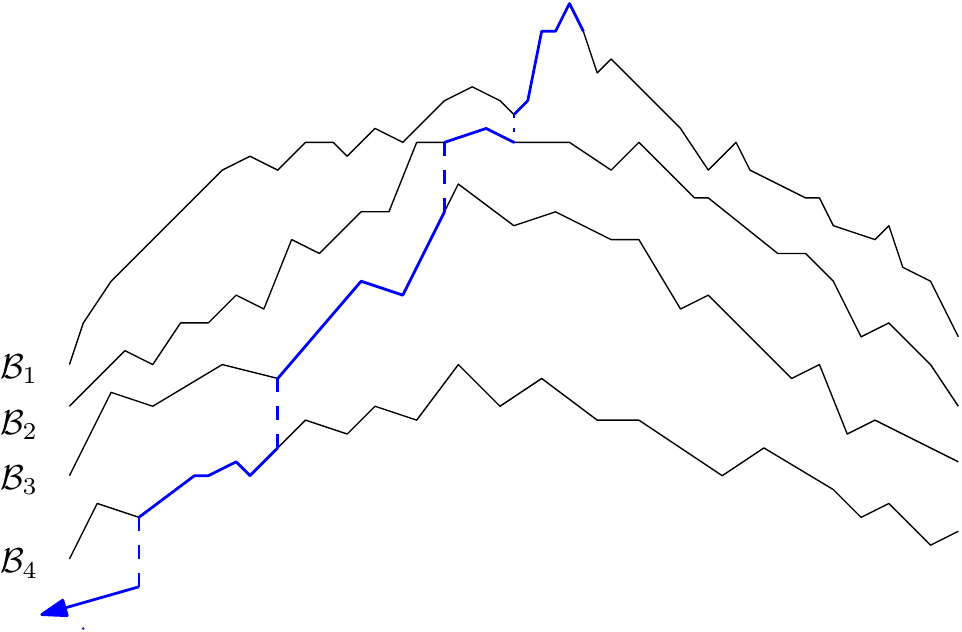}
    \caption{A parabolic path across the parabolic Airy line ensemble.}
    \label{fig:parabolic-path}
\end{figure}
For $x \in [0, \infty), z\in \R$, we say that a nonincreasing cadlag function $\pi:(-\infty, z] \to \N$ is a \textbf{parabolic path} from $x$ to $z$ if
$$
\lim_{y \to -\infty} \frac{\pi(y)}{2y^2} =x.
$$
See Figure \ref{fig:parabolic-path} for an example. For a parabolic Airy line ensemble $\scrB$ with corresponding half-Airy sheet $h(\scrB):[0, \infty) \X \R \to \R$, define the path length
$$
\|\pi\|_\scrB = h(\scrB)(x, z) + \lim_{y \to -\infty} \lf( \|\pi|_{[y,z]}\|_\scrB - \scrB[(y, \pi(y)) \to (z, 1)]\rg). 
$$
See Section \ref{S:lpp-across-ALE} for more context regarding this definition.
For $(\bx, \by) \in \fX$ with $x_1 \ge 0$, we can then define the (multi-point) \textbf{last passage value}
\begin{equation}
\label{E:Bxy}
\scrB[\bx \to \by] = \sup_{\pi_1, \dots \pi_k} \sum_{i=1}^k \|\pi_i\|_\scrB,
\end{equation}
where the supremum is over $k$-tuples of parabolic paths from $x_i$ to $y_i$ that are disjoint away from the right endpoints $y_i$. 

\begin{theorem}
	\label{T:extended-sheet-char}
The extended Airy sheet $\scrS$ satisfies the following properties:
\begin{itemize}
	\item $\scrS$ is shift invariant. More precisely, for $(\bx,\by) \in \fX$ and $c \in \R$, let $T_c(\bx,\by) = (x_1 + c, \dots, x_k + c, y_1+c, \dots, y_k+c)$. Then 
	$
	\scrS \eqd \scrS \circ T_c
	$
	for all $c \in \R$.
	\item $\scrS$ can be coupled with a parabolic Airy line ensemble $\scrB$ so that
	\begin{equation}
	\label{E:Sbxy-int}
	\scrS(\bx, \by) = \scrB[\bx \to \by]
	\end{equation}
	for all $(\bx, \by) \in \fX$ with $x_1 \ge 0$. 
\end{itemize}
Moreover, the law of $\scrS$ is the unique distribution on continuous functions on $\fX$ satisfying these properties.
\end{theorem}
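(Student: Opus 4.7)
The plan is to verify both asserted properties for the extended Airy sheet $\scrS$ constructed in Theorem \ref{T:extended-sheet}, and then deduce uniqueness directly from them. Shift invariance is inherited from the prelimit: the shift $T_c$ preserves the centering $\sum_{i=1}^k 2(y_i - x_i)$ of $\scrS^n$ while translating both endpoints of the underlying Brownian last passage problem by $2n^{-1/3}c$, so translation invariance of the driving Brownian motions $\{B_i\}$ gives $\scrS^n \eqd \scrS^n \circ T_c$ for every $n$. Passing to the limit via Theorem \ref{T:extended-sheet} and continuity of the shift $T_c$ yields $\scrS \eqd \scrS \circ T_c$.

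The identification $\scrS(\bx,\by) = \scrB[\bx \to \by]$ on $\{x_1 \geq 0\}$ is to be obtained from a prelimit identity. The plan is to rewrite multi-point Brownian LPP as last passage percolation across the Brownian ``melon'' of $(B_1,\ldots,B_n)$, obtained by a Pitman-type (RSK-like) rearrangement that produces non-crossing lines. Under the scaling of Theorem \ref{T:extended-sheet}, the top rows of the rescaled melon converge to the parabolic Airy line ensemble $\scrB$, and a Brownian LPP path starting at $(2n^{-1/3} x_i, n)$ corresponds, after the melon transformation and in the limit, to a path across $\scrB$ with the parabolic asymptote $\pi(y)/(2y^2) \to x_i$; the constraint $x_1 \geq 0$ encodes the admissible range $[0,\infty)$ of parabolic starting positions. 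To turn this into a proof, couple $\scrS^n$ and the rescaled melon on one probability space, express $\scrS^n(\bx, \by)$ as a multi-point last passage value across the melon along paths with controlled left tails, and show that both sides converge jointly to $\scrS(\bx, \by)$ and $\scrB[\bx \to \by]$. This simultaneously yields the identity \eqref{E:Sbxy-int} and an explicit coupling of $\scrS$ with $\scrB$.

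For uniqueness, let $\scrS'$ be any random continuous function on $\fX$ satisfying both listed properties, with associated parabolic Airy line ensemble $\scrB'$ from the coupling. For any finite collection $(\bx_j,\by_j) \in \fX$, $j=1,\dots,m$, choose $c\in\R$ large enough that the first coordinate of $\bx_j+c$ is nonnegative for every $j$. Shift invariance gives
$$
\big(\scrS'(\bx_j,\by_j)\big)_{j=1}^m \eqd \big(\scrS'(T_c(\bx_j,\by_j))\big)_{j=1}^m,
$$
and the coupling assumption forces the right-hand side to equal $\big(\scrB'[\bx_j+c \to \by_j+c]\big)_{j=1}^m$ almost surely. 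Since the law of a parabolic Airy line ensemble is uniquely determined, so is the joint law on the right, and hence so is that of $(\scrS'(\bx_j,\by_j))_{j=1}^m$. This pins down all finite-dimensional marginals of $\scrS'$, and thus its law.

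The main obstacle is the limit transition producing \eqref{E:Sbxy-int}. One must justify that the parabolic asymptote condition $\pi(y)/(2y^2) \to x$ survives the scaling limit, and that both the regularized length $\|\pi\|_\scrB$ (itself defined as a limit as $y \to -\infty$) and the multi-point supremum $\scrB[\bx \to \by]$ arise as limits of the corresponding quantities across the rescaled melon. This requires tail estimates for the parabolic Airy line ensemble to control the long-time behavior of optimizing paths, matching quantitative melon estimates at the prelimit level, and careful handling of paths that stretch arbitrarily far into the past.
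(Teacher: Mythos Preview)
Your proposal is correct and follows essentially the same approach as the paper. The paper implements the coupling step in two stages—first establishing $\scrS(\bx,\by)=\scrB[\bx\to\by]$ on a dense set of endpoints with distinct positive rational starting coordinates (by showing melon optimizers converge to locally optimal parabolic $k$-tuples, which are then the unique optimizers in $\scrB$), and then extending to all of $\{x_1\ge 0\}$ via a separate, nontrivial proof that $(\bx,\by)\mapsto \scrB[\bx\to\by]$ is continuous on $\hat\fX$—but your outline and identification of the main obstacle accurately capture this strategy.
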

The parabolic Airy line ensemble $\scrB$ in the coupling in Theorem \ref{T:extended-sheet-char} can be recovered from $\scrS$ via \eqref{E:line-sheet-relation}. While the definition of path length and last passage percolation across $\scrB$ are fairly involved, they are still workable. In Sections \ref{S:lpp-across-ALE} and \ref{S:limits-of-optimizers} we prove basic properties of these structures that help make \eqref{E:Sbxy-int} a useful respresentation of the extended Airy sheet. As part of this work, we show that $\scrS(0^k, \by)$ has a particularly accessible structure depending only on the top $k$ lines of $\scrB$ in the compact set $[y_1, y_k]$ (see Proposition \ref{P:high-paths-B}).
We also prove certain symmetries of $\scrS$ (Lemma \ref{L:basic-sym}), a two-point tail bound (Lemma \ref{l:change-spatial}) that shows $\scrS$ is H\"older-$(1/2)^-$, and a metric composition law (Proposition \ref{P:metric-composition-ex-sheet}). 

\subsection{The full scaling limit of multi-point Brownian LPP} In \cite{DOV}, the directed landscape is built out of independent Airy sheets via a metric composition law inherited from Brownian LPP. The authors then show that this describes the full scaling limit of single-point Brownian LPP. A similar procedure allows us to quickly construct the full scaling limit of multi-point Brownian LPP. For this next theorem, we say $\scrS_s$ is an extended Airy sheet of scale $s$ if
$$
\scrS_s(\bx, \by) \eqd s \scrS(s^{-2}\bx, s^{-2} \by)
$$
jointly in all $\bx, \by$.
\begin{theorem}
	\label{T:unique-L*}
There is a unique (in law) random continuous function $\scrL^*:\fX_\uparrow \to \R$ such that 
\begin{itemize}
	\item For any $(\bx, s; \by, t) \in \fX_\uparrow$ and $r \in (s, t)$, almost surely, 
	$$
	\scrL^*(\bx, s; \by, t) = \max_{\bz} \scrL^*(\bx, s; \bz, r) + \scrL^*(\bz, r; \by, t).
	$$
	Here the maximum is over all $\bz \in \R^k_\le$, where $k$ is the cardinality of $\bx$ and $\by$.
	\item For any finite collection of disjoint time intervals $(t_i, t_i + s_i^3)$, the functions $\scrL^*(\cdot, t_i; \cdot; t_i + s_i^3)$ are independent extended Airy sheets of scale $s_i$.
\end{itemize}

\end{theorem}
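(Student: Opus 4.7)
The plan is to follow the strategy used for the directed landscape in \cite{DOV}. For existence, I would first construct $\scrL^*$ on dyadic times. For each $n \ge 0$ and each integer $k$, let $\scrS^{(n)}_k$ be an extended Airy sheet of scale $2^{-n/3}$ associated to the interval $[k 2^{-n}, (k+1) 2^{-n}]$, and couple the different levels so that $\scrS^{(n)}_k$ equals the metric composition of $\scrS^{(n+1)}_{2k}$ and $\scrS^{(n+1)}_{2k+1}$. The metric composition law for extended Airy sheets (Proposition \ref{P:metric-composition-ex-sheet}) guarantees the marginal law of the composition is correct, so a Kolmogorov extension argument (or an inductive coupling) produces such a consistent family. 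Using it, define $\scrL^*(\bx, s; \by, t)$ for $s < t$ with $s, t \in 2^{-n}\Z$ by iterated max-plus composition over the intervening level-$n$ intervals; consistency between levels is then immediate. Both properties hold on dyadic times by construction: property 2 because disjoint unions of dyadic intervals give rise to independent sheets, and property 1 because the composition is built into the definition.

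Next, extend $\scrL^*$ to all of $\fX_\uparrow$ by continuity. Fixing $k$, the spatial H\"older-$(1/2)^-$ regularity of each time slice follows from the two-point tail bound on extended Airy sheets (Lemma \ref{l:change-spatial}), applied uniformly at each dyadic time. For joint regularity in time, use the composition property: for small $\epsilon > 0$,
\[ \scrL^*(\bx, s; \by, t + \epsilon) = \max_{\bz \in \R^k_\le} \bigl[\scrL^*(\bx, s; \bz, t) + \scrS_\epsilon(\bz, \by)\bigr], \]
where $\scrS_\epsilon$ is an independent extended Airy sheet of scale $\epsilon^{1/3}$ whose fluctuations are of order $\epsilon^{1/3}$ by scaling. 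The main technical obstacle lies here: one must localize the $\bz$ over which the maximum is achieved to a compact set with high probability (using coarse bounds on $\scrL^*$), control the sheet $\scrS_\epsilon$ uniformly on that set, and combine these into a Kolmogorov-type criterion that yields joint H\"older-$(1/3)^-$ regularity in $s, t$ and H\"older-$(1/2)^-$ regularity in $\bx, \by$. Separate estimates for each $k$-component of $\fX_\uparrow$ suffice since the components are disjoint.

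For uniqueness, suppose $\scrL^*_1$ and $\scrL^*_2$ both satisfy the two properties. Property 2 determines the joint law of the slices $\scrL^*_j(\cdot, t_i; \cdot, t_i + s_i^3)$ over any finite collection of disjoint time intervals as independent extended Airy sheets of the specified scales. Property 1, applied iteratively over a dyadic refinement, expresses the value of $\scrL^*_j(\bx, s; \by, t)$ at any finite set of times as a deterministic max-plus function of these independent slices. Hence the finite-dimensional distributions of $\scrL^*_1$ and $\scrL^*_2$ agree at all dyadic times; continuity of both processes then upgrades this to equality in law on all of $\fX_\uparrow$.
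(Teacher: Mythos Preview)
Your proposal is correct, but the existence argument takes a genuinely different route from the paper. The paper does not construct $\scrL^*$ directly; instead it obtains existence as a byproduct of the convergence $\scrL_n \cvgd \scrL^*$ (Theorem~\ref{T:Lnconverges}). Tightness of the prelimiting extended landscapes $\scrL_n$ is established in Proposition~\ref{p:ln-tight} (using the two-point bounds of Lemmas~\ref{l:change-spatial-prelim} and \ref{l:change-time-prelim}), and then any subsequential limit is shown to satisfy the two defining properties: independent extended Airy sheet marginals via Theorem~\ref{T:extended-Airy-sheet}, and metric composition via Lemma~\ref{L:split-path} plus tightness of the argmax locations (Lemma~\ref{l:maxi-loc}). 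The paper explicitly acknowledges your direct-construction approach as a valid alternative (see the sentence preceding Theorem~\ref{T:Lnconverges}), but chooses the limiting route because it simultaneously delivers Theorem~\ref{T:BLPP-convergence}. Your approach is more self-contained and mirrors \cite[Section~10]{DOV}, but the ``main technical obstacle'' you identify --- joint space-time H\"older continuity from the dyadic skeleton --- requires essentially the same estimates (analogues of Lemmas~\ref{l:change-spatial} and \ref{l:change-time}) that the paper develops anyway for $\scrL_n$, so neither route is substantially shorter.

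For uniqueness, your argument is essentially the same as the paper's, though the paper's version is slightly cleaner: rather than going through dyadic times and then invoking continuity, the paper directly observes that for any finite collection of points $\bu_1, \dots, \bu_k \in \fX_\uparrow$, one can take the order statistics $r_1 < \dots < r_\ell$ of all their time coordinates, and then repeated applications of property~II (metric composition) express each $\scrL^*(\bu_j)$ as a measurable function of the independent slices $\scrL^*(\cdot, r_i; \cdot, r_{i+1})$, whose joint law is fixed by property~I. No dyadic refinement or continuity argument is needed.
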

\begin{theorem}
\label{T:BLPP-convergence}
Let $(\bx, s)_n = (s+2\bx n^{-1/3}, - \floor{sn})$, and define
\begin{equation}
\scrL_n(\bx,t;\by,s) = n^{1/6} \left(
B[(\bx, s)_n\to (\by, t)_n] - 2k(t-s)\sqrt{n} - n^{1/6}\sum_{i=1}^k 2(y_i-x_i)
\right).
\end{equation}
 Then $\scrL_n \cvgd \scrL^*$, with $\scrL^*$ as in Theorem \ref{T:unique-L*}. Here the underlying topology is uniform convergence on compact subsets of $\fX_\uparrow$.
\end{theorem}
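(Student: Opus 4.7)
The plan is to adapt the single-point convergence strategy of \cite{DOV} to the multi-point setting and then identify the limit via the uniqueness in Theorem \ref{T:unique-L*}. Two prelimit structural facts are crucial. First, $\scrL_n$ satisfies an exact metric composition identity
$$
\scrL_n(\bx, s; \by, t) = \max_{\bz \in \R^k_\le} \lf[ \scrL_n(\bx, s; \bz, r) + \scrL_n(\bz, r; \by, t) \rg]
$$
for any $r \in (s, t)$ and $n$ large enough: disjoint $k$-tuples in BLPP factor through any intermediate integer line, and the normalizations $2k(t-s)\sqrt{n}$ and $2n^{1/6}\sum_i (y_i - x_i)$ telescope. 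Second, for pairwise disjoint time intervals $(s_i, t_i)$, the increments $\scrL_n(\cdot, s_i; \cdot, t_i)$ are measurable with respect to the Brownian motions $B_j$ restricted to the disjoint spatial windows $[s_i + O(n^{-1/3}), t_i + O(n^{-1/3})]$ induced by the time shift in $(\bx, s)_n = (s + 2\bx n^{-1/3}, -\floor{sn})$, and hence are jointly independent for large $n$.

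For finite-dimensional convergence, I apply Theorem \ref{T:extended-sheet} on each interval $(s_i, t_i)$ after a KPZ $1{:}2{:}3$ rescaling to unit time length, obtaining $\scrL_n(\cdot, s_i; \cdot, t_i) \cvgd \scrS_i$ for an extended Airy sheet $\scrS_i$ of scale $(t_i - s_i)^{1/3}$. Combined with the independence above, the joint limit is a product of independent extended Airy sheets of the requisite scales, which is exactly the independent-Airy-sheet property in Theorem \ref{T:unique-L*}. For tightness of $(\scrL_n)$ in the uniform-on-compacts topology on $\fX_\uparrow$, fix a compact $K \subset \fX_\uparrow$ and a fine time grid $r_0 < \dots < r_N$ containing all time coordinates appearing in $K$; iterating the metric composition identity expresses $\scrL_n(\bx, s; \by, t)$ on $K$ as the maximum over intermediate $\bz^1, \dots, \bz^{N-1}$ of a sum of thin-slab increments $\scrL_n(\bz^i, r_i; \bz^{i+1}, r_{i+1})$. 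Each such increment has a uniform spatial H\"older modulus from Theorem \ref{T:extended-sheet} together with the two-point tail of Lemma \ref{l:change-spatial}, and temporal regularity is obtained by inserting thin intermediate slabs. The maximization over $\bz^i$ is then restricted to a compact set with high probability using the bound $\scrL_n(\bx, s; \by, t) \le \sum_{i=1}^k \scrL_n(x_i, s; y_i, t)$ (immediate from telescoping, since each right-hand term is an unconstrained one-point maximum) together with the single-point parabolic tails from \cite{DOV}.

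With both finite-dimensional convergence and tightness in hand, any subsequential limit $\scrL^\#$ is continuous on $\fX_\uparrow$, inherits the independent-Airy-sheet property directly, and inherits the metric composition property by passing the prelimit identity to the limit; here the max over $\bz$ commutes with the uniform limit since it is effectively a max over a compact window with high probability. Theorem \ref{T:unique-L*} then forces $\scrL^\# \eqd \scrL^*$, so the full sequence satisfies $\scrL_n \cvgd \scrL^*$. The main obstacle is the tightness step, specifically the uniform-in-$n$ control on the support of the optimizing intermediate $\bz^i$'s: the reduction from multi-point to single-point parabolic tails via the unconstrained-sum bound is what makes this feasible, with Lemma \ref{l:change-spatial} converting these tail bounds into a genuine uniform modulus of continuity on $K$ rather than merely pointwise estimates.
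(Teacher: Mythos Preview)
Your high-level strategy---tightness plus identification of subsequential limits via independent extended Airy sheet increments and metric composition, then uniqueness from Theorem~\ref{T:unique-L*}---is exactly what the paper does (see the proof of Theorem~\ref{T:Lnconverges}). The increment identification is fine: independence for disjoint time intervals and Theorem~\ref{T:extended-sheet} give property~I, and metric composition passes to the limit once maximizer locations are tight (the paper records this as Lemma~\ref{l:maxi-loc}, which is essentially your single-point-parabolic-tails argument).

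The gap is in your tightness argument. A compact $K\subset\fX_\uparrow$ has a continuum of time coordinates, so ``a fine time grid containing all time coordinates appearing in $K$'' is not available; your metric-composition decomposition only handles pairs $(s,t)$ on the grid, and passing from grid to continuum is precisely the temporal modulus you are trying to establish. The phrase ``temporal regularity is obtained by inserting thin intermediate slabs'' hides the real work: you must bound $\scrL_n(\bz,t;\by,t')$ for $t'-t$ small, uniformly over the relevant $\bz$, and this is not a consequence of spatial H\"older continuity or maximizer tightness alone---it needs a one-point bound on thin slabs. The paper instead proves tightness directly (Proposition~\ref{p:ln-tight}) via two explicit two-point estimates: the spatial Lemma~\ref{l:change-spatial-prelim} you cite, and a temporal bound Lemma~\ref{l:change-time-prelim} of the form $\p(|\scrK_n(\bx,0;\by',t)-\scrK_n(\bx,0;\by,1)|>a(1-t)^{1/3}|\log(1-t)|)<ce^{-da^{9/8}}$, after which Kolmogorov--Chentsov applies. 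The proof of that temporal lemma (Appendix~\ref{app:b-m-est}) does use metric composition, maximizer localization, and spatial continuity in the way you sketch, together with the thin-slab bound Lemma~\ref{l:bd-brownain-passtime}; so your ingredients are the right ones, but they must be assembled into such a two-point temporal estimate rather than invoked to bypass it.
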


We can think of Theorems \ref{T:unique-L*} and \ref{T:BLPP-convergence} as an alternate way of constructing an extended directed landscape by first going back to the prelimit. The advantage of having done this is that the definition of the extended Airy sheet that underlies $\scrL^*$ is much more tractable than Definition \ref{D:ext-land} for $\scrL$. However, it is not clear from their constructions that $\scrL$ and $\scrL^*$ represent the same object. Much of the second half of the paper is devoted to showing this. 

\begin{theorem}\label{T:extended-landscape}
$\scrL^* = \scrL$.
\end{theorem}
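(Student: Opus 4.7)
The plan is to realize both $\scrL$ and $\scrL^*$ on a single probability space as scaling limits of Brownian LPP, and compare them pointwise. By Skorokhod representation applied to Theorem~\ref{T:BLPP-convergence}, work on a space where $\scrL_n \to \scrL^*$ uniformly on compact subsets of $\fX_\uparrow$ a.s.; since $\scrL_n|_{\Rd}$ also converges in distribution to the directed landscape, we may take this coupling so that $\scrL^*|_{\Rd}$ is the directed landscape $\scrL$ that underlies Definition~\ref{D:ext-land}. It then suffices to verify $\scrL^*(\bx, s; \by, t) = \scrL(\bx, s; \by, t)$ for each fixed $(\bx, s; \by, t) \in \fX_\uparrow$ a.s., via two matching inequalities.

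For $\scrL(\bx, s; \by, t) \le \scrL^*(\bx, s; \by, t)$, start from an arbitrary disjoint $k$-tuple $\pi = (\pi_1, \dots, \pi_k)$ in $\scrL$. Since the $\pi_i$ are continuous and strictly separated on compact subintervals of $(s,t)$, I would construct LPP-disjoint approximants $\pi^n_i$ in $B^n$ whose rescaled endpoints approximate those of $\pi_i$ along a fine time mesh $s = t_0 < \dots < t_m = t$. Using the uniform convergence $\scrL_n \to \scrL$ on compacts of $\Rd$ applied to consecutive mesh points of each $\pi_i$, the rescaled sums $\sum_i \|\pi^n_i\|_{B^n}$ approach $\sum_i \sum_j \scrL(\pi_i(t_{j-1}), t_{j-1}; \pi_i(t_j), t_j)$, which in turn tends to $\sum_i \|\pi_i\|_\scrL$ as the mesh is refined. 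Since these sums are bounded above by $\scrL_n(\bx, s; \by, t) \to \scrL^*(\bx, s; \by, t)$, supremum over disjoint $k$-tuples yields the inequality.

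For the reverse inequality, for each $n$ pick an LPP disjoint optimizer $\pi^n = (\pi^n_1, \dots, \pi^n_k)$ (which exists as the supremum in \eqref{E:f0k-to-yk-2} is attained). Prove tightness of $\{\pi^n\}$ in a suitable topology on $k$-tuples of cadlag paths using modulus-of-continuity estimates for LPP geodesics inherited from \cite{DOV}, then extract a subsequential limit $\pi = (\pi_1, \dots, \pi_k)$ of continuous paths from $(\bx, s)$ to $(\by, t)$. Verify $\|\pi^n_i\|_{B^n} \to \|\pi_i\|_\scrL$ for each $i$ by the same partition-based argument. Provided $\pi$ is a valid disjoint $k$-tuple in the sense of Definition~\ref{D:ext-land}, this yields $\scrL^*(\bx, s; \by, t) = \lim_n \scrL_n(\bx, s; \by, t) = \sum_i \|\pi_i\|_\scrL \le \scrL(\bx, s; \by, t)$.

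The hard part is the non-collapse of limiting optimizers: nothing immediate prevents two LPP optimizers $\pi^n_i, \pi^n_j$ from being so close that their limits coincide on some subinterval of $(s,t)$, invalidating the disjointness clause of Definition~\ref{D:ext-land}. Under such collapse the sum $\sum_i \|\pi_i\|_\scrL$ could strictly exceed $\scrL(\bx, s; \by, t)$, so ruling it out is essential. To handle this I would exploit the explicit representation $\scrS(\bx, \by) = \scrB[\bx \to \by]$ from Theorem~\ref{T:extended-sheet-char} together with the local rigidity of top-$k$ parabolic Airy ensembles (Proposition~\ref{P:high-paths-B}): a collapsed limit should produce a strict defect in the extended Airy sheet value at a suitably chosen intermediate time slice, contradicting $\scrL^* = \lim \scrL_n$. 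An alternative strategy is to first establish equality on a dense subset of parameters where uniqueness and non-coalescence of landscape geodesics hold almost surely, then propagate equality to all of $\fX_\uparrow$ via continuity of $\scrL^*$ and lower semicontinuity of $\scrL$ (as a supremum of continuous functionals).
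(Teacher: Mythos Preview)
Your overall architecture matches the paper's: couple so that $\scrL^*|_{\Rd}$ is the directed landscape, then prove two inequalities. The easy direction $\scrL \le \scrL^*$ goes through; the paper does it more cleanly via a purely limit-level statement (Proposition~\ref{P:sum-of-disjoint}: for any multi-path with strictly separated components on $(s,t)$, $\|\pi\|_{\scrL^*}=\sum_i\|\pi_i\|_\scrL$), but your prelimit-approximation version would also work. You have also correctly isolated the crux: non-collapse of the limiting optimizer paths.

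The gap is in your resolution of that crux. Your first idea (``a collapsed limit should produce a strict defect in the extended Airy sheet value'') is only a heuristic; nothing in Theorem~\ref{T:extended-sheet-char} or Proposition~\ref{P:high-paths-B} directly supplies such a defect, and making this precise is exactly the substantial analytic work. Your second idea fails on two counts: first, even at a dense set of endpoints, the claim that prelimiting multi-point optimizers do not collapse is \emph{not} implied by almost-sure uniqueness or non-coalescence of single $\scrL$-geodesics, because the constituent paths $\pi^n_i$ of a multi-point LPP optimizer are not themselves geodesics; second, lower semicontinuity of $\scrL$ gives only $\scrL\le\scrL^*$ after passing to limits, not the inequality you need. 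The paper avoids prelimiting optimizers entirely. It develops multi-path length and optimizers \emph{in $\scrL^*$} via metric composition (Section~\ref{S:paths-extended-landscape}), then proves Proposition~\ref{P:disjointness}: every endpoint pair admits an $\scrL^*$-optimizer with strictly separated components. The $k=2$ case with identical endpoint pairs is the heart: a quantitative separation estimate $\p(|\pi_1(r)-\pi_2(r)|<\ep)\lesssim\ep^{2-\eta}$ at fixed intermediate time $r$ (Lemma~\ref{L:disjoint-fixedtime}) is obtained by reducing via Proposition~\ref{P:high-paths-B} to an argmax problem on the top two Airy lines and invoking the Brownian Radon--Nikodym comparison of Theorem~\ref{T:radon-n-deri}; general two-path endpoints then follow by a resampling-and-overlap-convergence argument (Lemmas~\ref{L:positive-probability}, \ref{L:pointwise-to-overlap}), and general $k$ by induction using interlacing with $(k{-}1)$-path optimizers. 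With Propositions~\ref{P:sum-of-disjoint} and~\ref{P:disjointness} in hand, Theorem~\ref{T:extended-landscape} becomes a one-paragraph consequence.
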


The key difficulty in proving Theorem \ref{T:extended-landscape} is in showing that disjoint optimizers in \eqref{E:f0k-to-yk-2} remain disjoint after passing to the limit. As an upshot of the proof of this fact, we show that the supremum \eqref{E:Lextend} is always attained.

\begin{theorem}
	\label{T:disjoint-optimizers-in-L}
	Almost surely, the supremum in \eqref{E:Lextend} is attained for every $\bu = (\bx, s; \by, t) \in \fX_\uparrow$ by some disjoint $k$-tuple $\pi$. We call $\pi$ a \textbf{disjoint optimizer} for $\bu$ in $\scrL$. Moreover, for any fixed $\bu \in \fX_\uparrow$, almost surely there is a unique disjoint optimizer $\pi_\bu$ for $\bu$ in $\scrL$.
\end{theorem}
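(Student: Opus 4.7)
The plan is to prove Theorem \ref{T:disjoint-optimizers-in-L} jointly with Theorem \ref{T:extended-landscape}, by transferring disjoint optimizers from the Brownian LPP prelimit. The inputs are the convergence $\scrL_n \cvgd \scrL^*$ from Theorem \ref{T:BLPP-convergence}, the representation of $\scrL^*(\cdot, s; \cdot, t)$ as an extended Airy sheet (Theorem \ref{T:extended-sheet-char}), and the elementary fact that a disjoint optimizer in $\scrL_n$ exists for each $n$: the disjointness condition together with the boundary data leaves only finitely many combinatorial types of $k$-tuples, and within each type the total length is continuous in finitely many crossing times, so the supremum is attained on a compact set.

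For existence, fix $\bu = (\bx, s; \by, t) \in \fX_\uparrow$ and let $\pi^{(n)}$ be a disjoint optimizer in $\scrL_n$ for endpoints approximating $\bu$. Transversal fluctuation estimates for Brownian LPP geodesics give tightness of the $\pi^{(n)}_i$ in the uniform topology on $[s,t]$, so Arzela-Ascoli and diagonalization produce a subsequential limit $\pi = (\pi_1, \dots, \pi_k)$ of continuous paths. The convergence $\scrL_n \to \scrL^* = \scrL$, together with upper semicontinuity of $\|\cdot\|_\scrL$ under uniform limits (using local H\"older regularity of $\scrL$), yields $\sum_i \|\pi_i\|_\scrL = \scrL(\bu)$. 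Granted that $\pi$ is a disjoint $k$-tuple in the sense of Definition \ref{D:ext-land}, this produces a disjoint optimizer at $\bu$. To upgrade to almost sure existence for every $\bu \in \fX_\uparrow$ simultaneously, I would run the construction on a countable dense set and use continuity of $\scrL$ and the local compactness argument above to propagate to all $\bu$.

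The main obstacle, as flagged in the introduction, is to show that the subsequential limit $\pi$ remains disjoint on $(s, t)$. The prelimit paths $\pi^{(n)}_i$ are strictly disjoint, but they can approach each other arbitrarily closely, and naively the limit could coalesce at a point or on an interval. The plan to rule this out is a local surgery argument: by the ordering $\pi_1 \le \dots \le \pi_k$ inherited from the prelimit and continuity, any coincidence of adjacent lines $\pi_i, \pi_{i+1}$ at a single time propagates to an entire interval $[a, b] \sset (s, t)$ on which $\pi_i \equiv \pi_{i+1}$. Applying the metric composition identity (first bullet of Theorem \ref{T:unique-L*}) on the slice $[a,b]$ and the representation \eqref{E:Sbxy-int} via an independent extended Airy sheet there, I would show that replacing this doubly-traced segment by a small disjoint two-path rearrangement of its endpoints strictly increases the two-path sum with positive probability, using the local Brownian behavior of the parabolic Airy line ensemble to exhibit the perturbation explicitly. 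This contradicts $\sum_i \|\pi_i\|_\scrL = \scrL(\bu)$ and forces disjointness. I expect this to be the hardest step, since it requires quantitative geometric control beyond the structural characterization of Theorem \ref{T:extended-sheet-char}.

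For uniqueness at a fixed $\bu$, I would use a time-decomposition argument. Any disjoint optimizer $\pi$ for $\bu$ must satisfy $(\pi_1(r), \dots, \pi_k(r)) \in \argmax_{\bz \in \R^k_\le} \scrL(\bx, s; \bz, r) + \scrL(\bz, r; \by, t)$ for each $r \in (s, t)$. For fixed $\bu$ and fixed rational $r$, the two extended Airy sheets $\scrL(\cdot, s; \cdot, r)$ and $\scrL(\cdot, r; \cdot, t)$ are independent by Theorem \ref{T:unique-L*}, and a tie between two distinct candidate maximizers $\bz$ forces an algebraic relation between independent Airy sheet values, which has probability zero by absolute continuity of the relevant finite-dimensional marginals. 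Iterating over a countable dense set of $r \in (s, t)$ pins $\pi$ down uniquely and completes the proof.
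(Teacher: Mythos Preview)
Your overall scaffolding (existence via subsequential limits of prelimiting optimizers, disjointness via a local surgery, uniqueness via time-slice $\argmax$) matches the paper's high-level flow, but the two hard steps have genuine gaps that the paper fills with substantial machinery.

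\textbf{Disjointness.} The claim that a single coincidence $\pi_i(r_0) = \pi_{i+1}(r_0)$ propagates to an interval on which $\pi_i \equiv \pi_{i+1}$ is unjustified: limits of disjoint paths can touch at isolated points, and at this stage nothing forces the individual $\pi_i$ to be geodesics with any coalescence structure. The paper does not use this claim. Instead it first treats the two-path common-endpoint case $((x,x),s;(y,y),t)$ via a quantitative estimate (Lemma~\ref{L:disjoint-fixedtime}): the probability that the midpoint gap $|\pi_1(r)-\pi_2(r)|$ is below $\ep$ is $O(\ep^{2-\eta})$. This comes from reducing to an explicit $\argmax$ across the top two lines of two independent parabolic Airy line ensembles, comparing to independent Brownian motions via the Radon--Nikodym bound of Theorem~\ref{T:radon-n-deri}, and a direct Brownian computation (Lemma~\ref{L:LPP-2BM}). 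Combined with H\"older-$(2/3)^-$ regularity of optimizers, this rules out touching on $(s,t)$. General two-path endpoints are then reduced to common endpoints by a resampling argument (Lemma~\ref{L:positive-probability}) together with overlap convergence (Lemma~\ref{L:pointwise-to-overlap}). Only after the $k=2$ case is established does the induction for $k\ge 3$ proceed by the surgery you describe: at a touching point of $\pi_i,\pi_{i+1}$ one replaces the pair on a short interval by a disjoint two-path optimizer, contradicting uniqueness. Your surgery intuition is correct for the inductive step, but the base case requires the $\ep^{2-\eta}$ bound, which is not obtainable from the vague ``local Brownian behavior'' sketch.

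\textbf{Uniqueness.} The assertion that a tie between two distinct maximizers $\bz \ne \bz'$ has probability zero ``by absolute continuity of the relevant finite-dimensional marginals'' is incomplete. The $\argmax$ is over the uncountable set $\R^k_\le$, and the values $\scrL(\bx,s;\bz,r)$ and $\scrL(\bx,s;\bz',r)$ come from the \emph{same} extended Airy sheet, so independence of the two time-slices does not by itself rule out ties. The paper's Lemma~\ref{L:two-sheet-sum-unique-max} handles this by passing to the parabolic Airy line ensemble description, isolating the jump times of optimizing parabolic paths (shown to be distinct in Lemma~\ref{L:jump-points-distinct}), and then using the Brownian Gibbs property to show that two restricted optimizations---one forcing a jump on or off $\scrB_1$ in a rational interval $I$, one forbidding any such jump---almost surely have different values. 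Two distinct maximizers would force equality of these restricted problems for some rational $I$, a contradiction.
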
 

Given that $\scrL^* = \scrL$, we can show that optimizers in the prelimit converge to optimizers in the limit. This theorem is the analogue of \cite[Theorem 1.8]{DOV}.
\begin{theorem}
	\label{T:limit-theorem}
	With $\scrL_n$ and $\scrL^* = \scrL$ as in Theorem \ref{T:BLPP-convergence}, consider a coupling where $\scrL_n \to \scrL$ almost surely uniformly on compact subsets of $\fX_\uparrow$. For $\bu = (\bx, s; \by, t) \in \fX_\uparrow$, let $C_\bu$ be the set of probability $1$ where there is an unique disjoint optimizer $\pi = (\pi_1, \dots, \pi_k)$ for $\bu$ in $\scrL$.
	
	In this coupling, there exists a set $\Om$ of probability $1$, such that the following holds. Consider any sequence of points $\bu_n = (\ba_n, m_n; \bb_n, \ell_n)$ which rescale to $\bu$ in the setup of Theorem \ref{T:BLPP-convergence}. That is, 
	$$
	\lf(\frac{n^{-2/3} m_n + n^{1/3} \ba_n}2, -\frac{m_n}n ; \frac{n^{-2/3} \ell_n + n^{1/3} \bb_n}2, -\frac{\ell_n}n \rg) \to \bu.
	$$
	Also consider any sequence of disjoint optimizers $\pi^{(n)} = (\pi^{(n)}_1, \dots, \pi^{(n)}_k)$ for $\bu_n$ across the Brownian motions that give rise to $\scrL_n$. Let $h_{n, i}$ be the order-preserving, linear function mapping $[s, t]$ onto $[a_{n, i}, b_{n, i}]$. Then on $\Om \cap C_\bu$, for all $1 \le i \le k$, we have
	$$
\frac{\pi^{(n)}_i \circ h_{n, i} + n h_{n, i}}{2 n^{2/3}} \to \pi_i 
	$$ 
	uniformly as functions from $[s, t]$ to $\R$.
\end{theorem}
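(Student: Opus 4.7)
The strategy is to extract subsequential limits of the rescaled tuples $\tilde\pi^{(n)} := (\pi^{(n)}_i \circ h_{n,i} + n h_{n,i})/(2n^{2/3})$ and to identify every such limit with the unique disjoint optimizer $\pi$ supplied by Theorem \ref{T:disjoint-optimizers-in-L}. Tightness in $C([s,t],\R^k)$ comes from uniform H\"older-$(2/3)^-$ control on each $\tilde\pi^{(n)}_i$: each middle path is spatially sandwiched between the outermost paths $\tilde\pi^{(n)}_1$ and $\tilde\pi^{(n)}_k$, and the outermost paths enjoy the standard transversal-fluctuation estimates for Brownian LPP, so Arzel\`a-Ascoli applies. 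Along any subsequence, extract a further subsequence so that $\tilde\pi^{(n)} \to \sigma = (\sigma_1,\dots,\sigma_k)$ almost surely uniformly; the limit $\sigma$ is continuous, satisfies $\sigma_i(s) = x_i$ and $\sigma_i(t) = y_i$, and inherits the weak ordering $\sigma_i \le \sigma_{i+1}$ from the prelimit integer-line disjointness.

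Next I would establish the length lower bound $\sum_i \|\sigma_i\|_\scrL \ge \scrL(\bu)$. For any partition $s = t_0 < \cdots < t_r = t$, subadditivity of prelimit single-path length along each $\pi^{(n)}_i$ combined with the optimizer identity $\scrL_n(\bu_n) = \sum_i \|\tilde\pi^{(n)}_i\|_{B_n}$ (in rescaled form) gives
\[
\scrL_n(\bu_n) \;\le\; \sum_{i,j} \scrL_n\bigl(\tilde\pi^{(n)}_i(t_{j-1}), t_{j-1};\, \tilde\pi^{(n)}_i(t_j), t_j\bigr).
\]
Sending $n \to \infty$ using the almost-sure uniform convergence $\scrL_n \to \scrL$ on compacts, $\scrL_n(\bu_n) \to \scrL(\bu)$, and the continuity of the limiting paths, then taking the infimum over partitions, yields $\scrL(\bu) \le \sum_i \|\sigma_i\|_\scrL$ via the definition \eqref{E:length-L}. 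If $\sigma$ is additionally a strict disjoint $k$-tuple, Definition \eqref{E:Lextend} supplies the matching upper bound $\sum_i \|\sigma_i\|_\scrL \le \scrL(\bu)$, and so $\sigma$ is a disjoint optimizer for $\bu$ in $\scrL$. On $C_\bu$ the uniqueness statement then forces $\sigma = \pi$, and the full convergence $\tilde\pi^{(n)} \to \pi$ follows because every subsequence has a sub-subsequence with this property.

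The delicate step I expect to be the main obstacle is upgrading the weak ordering $\sigma_i \le \sigma_{i+1}$ to strict disjointness on $(s,t)$. If $\sigma_i = \sigma_{i+1}$ on some interval $[r_1,r_2] \subset (s,t)$, the supremum in \eqref{E:Lextend} cannot be directly evaluated on $\sigma$. The plan is to show that the supremum is unchanged upon enlarging the admissible class to weakly ordered tuples, so that $\sigma$ is automatically a relaxed optimizer; I would then use the almost-sure uniqueness on $C_\bu$ to conclude $\sigma = \pi$. To implement this equivalence, I would apply the metric composition law of Theorem \ref{T:unique-L*} to reduce to a single extended Airy sheet $\scrS$, and then exploit the parabolic-path representation \eqref{E:Sbxy-int} of Theorem \ref{T:extended-sheet-char}: since the disjoint-optimizer $k$-tuples across the parabolic Airy line ensemble $\scrB$ are automatically strict (parabolic paths sit on distinct integer lines and cannot share a line on an interval), any weakly ordered tuple can be approximated by strictly disjoint tuples with matching total length, at least after infinitesimally perturbing the intermediate composition points $\bz$. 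This quantitative control on length is required because $\scrL$-path length is only upper semicontinuous, so the construction must preserve length by genuine geometric input rather than by soft continuity arguments.
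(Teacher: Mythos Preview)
Your proposal has the right overall shape---tightness, subsequential limits, identification---but the identification step contains a real gap, and the fix you sketch is essentially circular. You correctly flag the issue: after extracting $\sigma$ you only know $\sigma_i \le \sigma_{i+1}$ weakly, and while your inequality $\scrL(\bu) \le \sum_i \|\sigma_i\|_\scrL$ is fine, the matching upper bound $\sum_i \|\sigma_i\|_\scrL \le \scrL(\bu)$ genuinely requires strict disjointness. If $\sigma_i$ and $\sigma_{i+1}$ coincide on an interval, the shared geodesic segment contributes twice to $\sum_i \|\sigma_i\|_\scrL$ but is forbidden in the disjoint supremum defining $\scrL(\bu)$, so the inequality can be strict and you cannot invoke uniqueness on $C_\bu$. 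Your proposed fix---showing the supremum is unchanged for weakly ordered tuples via the parabolic-path description of $\scrS$---amounts to reproving Proposition~\ref{P:disjointness} together with the equality case of Proposition~\ref{P:sum-of-disjoint}, which is precisely the hardest analytic work of Sections~\ref{S:paths-extended-landscape}--\ref{S:disjoitness-optimizers}.

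The paper sidesteps disjointness entirely by never decomposing into single-path lengths. It uses the \emph{multi-point} metric composition law (Lemma~\ref{L:split-path}) in the prelimit: for each intermediate line index $j$ there is a vector $\bz$ on the optimizer with
\[
B^n[(\ba_n,m_n)\to(\bb_n,\ell_n)] \;=\; B^n[(\ba_n,m_n)\to(\bz,j)] + B^n[(\bz,j-1)\to(\bb_n,\ell_n)],
\]
and the rescaled points $A_n(\bz,j)$ Hausdorff-converge to the graph of the limit $\gamma$. Passing to the limit with $\scrL_n\to\scrL$ gives
\[
\scrL(\bx,s;\by,t) = \scrL(\bx,s;\gamma(r),r) + \scrL(\gamma(r),r;\by,t)\quad\text{for every }r\in(s,t),
\]
with the extended (multi-point) $\scrL$-values on both sides. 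This says exactly that $\gamma$ is an $\scrL^*$-optimizer; since $\scrL^*=\scrL$ by Theorem~\ref{T:extended-landscape}, uniqueness on $C_\bu$ forces $\gamma=\pi$. Whether $\gamma$ happens to be disjoint is never needed as an input.

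A smaller point: your tightness argument also needs work. Sandwiching middle paths between the outermost ones gives pointwise bounds but not equicontinuity, so Arzel\`a--Ascoli does not apply directly to the middle paths. The paper instead proves Hausdorff precompactness of the rescaled zigzag graphs (Lemma~\ref{L:tight}) using only that each $\pi^{(n)}_i$ has $\liminf$ rescaled length $>-\infty$ (deduced from \eqref{E:ppinn}--\eqref{E:pini}) together with the parabolic curvature of $\scrL$: a limit graph meeting a horizontal line twice would force the prelimit length to $-\infty$.
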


In our exploration of the extended landscape, we also find continuity properties analogously to known properties for the directed landscape. The extended landscape is H\"older-$(1/3)^-$ in time (a consequence of Lemma \ref{l:change-time}), H\"older-$(1/2)^-$ in space (a consequence of Lemma \ref{l:change-spatial}) and its optimizers are H\"older-$(2/3)^-$ (a consequence of Lemma \ref{L:transfluc}).

\subsection{Consequences}
\label{S:consequence}

The structure of the extended landscape established in the previous theorems allows us to use the object to understand the limiting analogue of the RSK bijection, and the structure of geodesic disjointness and coalescence. 

We start with the RSK bijection. If we apply the RSK bijection to a random array or a sequence of continuous functions, then the KPZ scaling limit of the resulting pair of Young tableaux is a single parabolic Airy line ensemble $\scrB$. On the other hand, the KPZ scaling limit of the array itself is the directed landscape, with times restricted to the interval $[0, 1]$. 

As a consequence of our work, we show that the limiting parabolic Airy line ensemble can be reconstructed from the directed landscape restricted to times in $[0, 1]$ via the natural limiting analogue of RSK. This shows that one direction of the RSK bijection survives into the limit.

\begin{corollary}
	\label{C:rsk}
	Let $\scrL$ be the directed landscape restricted to the set $\{(x,s ; y, t) : x, y \in \R, 0 \le s < t \le 1\} \sset \Rd$. Then there is a function $f$ such that
	$
	f(\scrL) = \scrB,
	$
	where $\scrB$ is a parabolic Airy line ensemble. More precisely,
	$$
	\sum_{i=1}^k \scrB_i(y) = \scrL(0^k, 0; y^k, 1),
	$$
	where the right-hand side is an extended landscape value defined from $\scrL$ as in Definition \ref{D:ext-land}.
\end{corollary}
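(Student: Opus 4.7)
The plan is to obtain the corollary almost immediately from the three main theorems of the paper: Theorems \ref{T:extended-landscape}, \ref{T:unique-L*}, and \ref{T:extended-sheet-char}. Define the candidate function $f$ deterministically from $\scrL$ by the telescoping formula
$$
f(\scrL)_k(y) := \scrL(0^k, 0; y^k, 1) - \scrL(0^{k-1}, 0; y^{k-1}, 1), \qquad k \ge 1, \ y \in \R,
$$
with the convention $\scrL(\emptyset, 0; \emptyset, 1) = 0$. The content of the corollary is then to show that the collection $f(\scrL) = (f(\scrL)_k)_{k \ge 1}$ has the law of a parabolic Airy line ensemble, and that it satisfies the displayed summation identity.

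The summation identity is immediate by summing the telescoping definition: for every $k$,
$$
\sum_{i=1}^k f(\scrL)_i(y) = \scrL(0^k, 0; y^k, 1).
$$
So it only remains to identify the law. Here I would invoke Theorem \ref{T:extended-landscape} to replace $\scrL$ with $\scrL^*$, and then use the second property in Theorem \ref{T:unique-L*} with the single interval $(0, 1)$ (so $t_1 = 0$ and $s_1 = 1$): this says the marginal $\scrS := \scrL(\cdot, 0; \cdot, 1) = \scrL^*(\cdot, 0; \cdot, 1)$ is an extended Airy sheet of scale $1$. Applying Theorem \ref{T:extended-sheet-char} to this $\scrS$ produces a coupling with a parabolic Airy line ensemble $\scrB$ for which $\sum_{i=1}^k \scrB_i(y) = \scrS(0^k, y^k)$ holds for all $k, y$. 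Rearranging gives $\scrB_k(y) = \scrS(0^k, y^k) - \scrS(0^{k-1}, y^{k-1}) = f(\scrL)_k(y)$, so that $f(\scrL)$ equals this $\scrB$ pointwise and hence has the law of a parabolic Airy line ensemble.

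There is really no main obstacle — the hard work has already been done in establishing Theorem \ref{T:extended-landscape} (so that the extended-landscape Definition \ref{D:ext-land} produces sheet marginals) and in Theorem \ref{T:extended-sheet-char} (where the deterministic recovery of $\scrB$ from an extended Airy sheet is encoded by \eqref{E:line-sheet-relation}). The only mild subtlety is to check measurability and consistency of the definition of $f$: measurability follows because each $f(\scrL)_k(y)$ is a finite linear combination of point evaluations of $\scrL$, and continuity of the resulting line ensemble follows from the continuity of $\scrL$ on $\fX_\uparrow$ combined with the uniform continuity granted by Theorem \ref{T:extended-sheet-char}.
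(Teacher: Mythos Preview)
Your proposal is correct and follows essentially the same route as the paper. The paper's proof is the one-line remark that Corollary \ref{C:rsk} ``follows from \eqref{E:line-sheet-relation} and Theorem \ref{T:extended-landscape}''; your argument unpacks exactly this, additionally citing Theorem \ref{T:unique-L*} to identify the time-$[0,1]$ marginal as an extended Airy sheet (which is implicit in the paper's invocation of \eqref{E:line-sheet-relation}).
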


It is natural to ask whether the RSK map in Corollary \ref{C:rsk} is still invertible in the limit. We believe that almost surely, this is the case.

\begin{conj}
	\label{Q:rsk} There is an analogue of the RSK bijection in the KPZ limit.
	More precisely, let $f$ be as in Corollary \ref{C:rsk}, let $\scrB$ be a parabolic Airy line ensemble and let $\scrL$ be a directed landscape restricted to times in the interval $[0, 1]$. Then there exists a function $g$ such that almost surely,
	$f \circ g(\scrB) = \scrB$ and  $g \circ f(\scrL) = \scrL$.
\end{conj}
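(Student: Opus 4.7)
The plan is to establish the conjecture by transferring the classical inverse RSK map to the KPZ scaling limit, mirroring the way Corollary \ref{C:rsk} identifies the forward direction. First, in the prelimit Brownian LPP with $n$ independent Brownian motions $B^{(n)}$, the O'Connell-Yor RSK correspondence gives a measurable bijection between $B^{(n)}$ and its $\scrQ$-tableau $\scrQ^{(n)}$; composing its inverse with the LPP map yields a measurable map $G_n$ with $\scrL_n = G_n(\scrQ^{(n)})$, and the prelimit of Corollary \ref{C:rsk} asserts that $f_n \circ G_n = G_n \circ f_n = \mathrm{id}$ already at finite $n$.

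Next, I would use Theorem \ref{T:BLPP-convergence} together with the joint convergence $\scrQ^{(n)} \to \scrB$ under KPZ scaling (whose forward direction is established earlier in the paper) to build a coupling in which $\scrL_n \to \scrL$ and $\scrQ^{(n)} \to \scrB$ almost surely, uniformly on compacts. Within this coupling one has $\scrL_n = G_n(\scrQ^{(n)})$ for every $n$, and the task reduces to showing that $G_n$ converges to a measurable map $g$ so that $g(\scrB) = \scrL$ almost surely. Both identities $f \circ g = \mathrm{id}$ and $g \circ f = \mathrm{id}$ then pass from the prelimit to the limit using the continuity of $f$ (which is the content of Corollary \ref{C:rsk}) together with the convergence of $G_n$.

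The main obstacle is this last convergence, which is essentially an almost-sure reconstruction of $\scrL$ from $\scrB$. Note that \eqref{E:Sbxy-int} extracts $\scrL(\bx, 0; \by, 1)$ directly from $\scrB$ only when $x_1 \ge 0$; even recovering the other half of the time-$1$ slice, let alone intermediate slices $\scrL(\cdot, s; \cdot, t)$ with $0 < s < t < 1$, is not visibly encoded in $\scrB$ alone. A natural route is to prove a quantitative stability estimate for $G_n$: if $\scrQ^{(n)}$ and $\widetilde{\scrQ}^{(n)}$ are $\varepsilon$-close on a compact set, then $G_n(\scrQ^{(n)})$ and $G_n(\widetilde{\scrQ}^{(n)})$ are $\omega(\varepsilon)$-close on a related compact in $\fX_\uparrow$, with $\omega(\varepsilon) \to 0$ as $\varepsilon \to 0$ uniformly in $n$. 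Establishing such stability likely requires combining the Brownian Gibbs property of the parabolic Airy line ensemble (Corwin-Hammond) with the resampling machinery developed in Sections \ref{S:lpp-across-ALE}-\ref{S:limits-of-optimizers} and the uniqueness characterization in Theorem \ref{T:unique-L*}; the key probabilistic content would be to show that any subsequential limit of $G_n$ produces, from $\scrB$, a metric-composition-compatible family of independent extended Airy sheets whose joint law matches that of $\scrL$, whence it must agree with the extended landscape by Theorem \ref{T:extended-landscape}. Substantially new input beyond the present paper appears necessary, since the argument in effect asks for a Brownian-Gibbs-type intrinsic description of the full extended landscape, not just of a single time slice.
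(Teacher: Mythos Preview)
The statement you are attempting to prove is Conjecture~\ref{Q:rsk}; the paper does not prove it and presents it as an open problem. So there is no paper proof to compare against, and your proposal should be read as a plan of attack on an open question rather than as a reconstruction of an existing argument.

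That said, the specific strategy you outline---passing the prelimiting inverse RSK map $G_n$ to the KPZ limit---is one the authors explicitly warn against. Immediately after stating the conjecture they write that they ``do not expect [the inverse] to resemble the inverse of the usual RSK bijection; this inverse no longer makes sense in the limit,'' and suggest instead that $g$ should arise from ``almost sure probabilistic properties of $\scrL$ (e.g.\ laws of large numbers, $0$--$1$ laws).'' The concrete obstruction is this: at level $n$, the inverse RSK map reconstructs the $n$ Brownian motions from the \emph{entire} melon $W^n = (W^n_1,\dots,W^n_n)$, not just its top few lines. Under KPZ scaling only the edge of the melon survives as $\scrB$; the bulk lines are lost. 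Your map $G_n$ therefore takes as input data that has no limit, and any stability estimate for $G_n$ would have to show that $G_n$ depends on the bottom lines of $W^n$ only through quantities that themselves stabilize at the edge---which is precisely the hard content of the conjecture, not a lemma one can hope to extract from existing tools. Your own closing paragraph essentially concedes this.

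A second issue is that your proposed route to identifying the subsequential limit of $G_n$ via Theorem~\ref{T:unique-L*} would require showing that $g(\scrB)$ has independent extended Airy sheet increments over disjoint time intervals in $[0,1]$. But $\scrB$ is a single line ensemble encoding only the time-$0$-to-$1$ slice, and there is no known mechanism by which the independence structure of $\scrL$ over subintervals of $[0,1]$ is visible in $\scrB$. This is a genuinely missing idea, not a technicality.
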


While we expect that such a function $g$ exists, we do not expect it to resemble the inverse of the usual RSK bijection; this inverse no longer makes sense in the limit. Rather, we believe that such a $g$ should exist because of certain almost sure probabilistic properties of $\scrL$ (e.g. laws of large numbers, $0-1$ laws).

Our work on the extended landscape gives the following criterion for geodesic disjointness and coalescence.

\begin{corollary}
\label{C:disjointness}
Almost surely the following holds.
For every $(\bx, s; \by, t) \in \fX_\uparrow$,
\begin{equation}
\label{E:Lxsys}
\cL(\bx, s; \by, t) = \sum_{i=1}^k \cL(x_i, s; y_i, t)
\end{equation}
if and only if there exist $\cL$-geodesics $\pi_1, \dots, \pi_k$ where $\pi_i$ goes from $(x_i, s)$ to $(y_i, t)$, satisfying $\pi_i(r) < \pi_{i+1}(r)$ for all $i \in \II{1, k-1}$ and $r \in (s, t)$.
\end{corollary}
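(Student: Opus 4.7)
The plan is to derive both directions from Theorem \ref{T:disjoint-optimizers-in-L} together with the almost-sure existence of $\scrL$-geodesics between every pair of spacetime points (cited in the excerpt from \cite[Section 12]{DOV}). Work on the intersection $\Om$ of these two probability-one events, which itself has probability one.

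The preliminary observation is that on $\Om$ the inequality $\scrL(\bx,s;\by,t) \le \sum_{i=1}^k \scrL(x_i,s;y_i,t)$ always holds: for any disjoint $k$-tuple $\sigma=(\sigma_1,\dots,\sigma_k)$ from $(\bx,s)$ to $(\by,t)$, each $\sigma_i$ is a path from $(x_i,s)$ to $(y_i,t)$, so $\|\sigma_i\|_\scrL \le \scrL(x_i,s;y_i,t)$ by the definition of the single-point landscape value; summing and taking the supremum in \eqref{E:Lextend} gives the bound.

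The $\Leftarrow$ direction is then essentially immediate. Given $\scrL$-geodesics $\pi_1,\dots,\pi_k$ with $\pi_i(r)<\pi_{i+1}(r)$ on $(s,t)$, the tuple $\pi$ is a disjoint $k$-tuple with $\sum_i\|\pi_i\|_\scrL = \sum_i \scrL(x_i,s;y_i,t)$, which yields the reverse inequality and hence \eqref{E:Lxsys}. For the $\Rightarrow$ direction, assume \eqref{E:Lxsys}. By Theorem \ref{T:disjoint-optimizers-in-L} a disjoint optimizer $\pi=(\pi_1,\dots,\pi_k)$ for $(\bx,s;\by,t)$ exists on $\Om$. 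Since $\sum_i \|\pi_i\|_\scrL = \scrL(\bx,s;\by,t) = \sum_i \scrL(x_i,s;y_i,t)$ and each summand is bounded by the corresponding $\scrL(x_i,s;y_i,t)$, equality must hold componentwise, so each $\pi_i$ is an $\scrL$-geodesic from $(x_i,s)$ to $(y_i,t)$.

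The only remaining (and main) step is to upgrade the disjointness of $\pi$ to the strict ordering $\pi_i<\pi_{i+1}$ on $(s,t)$. Disjointness gives $\pi_i(r)\neq\pi_{i+1}(r)$ on $(s,t)$, and the endpoints are weakly ordered: $\pi_i(s)=x_i\le x_{i+1}=\pi_{i+1}(s)$ and similarly at $t$. If $x_i<x_{i+1}$ or $y_i<y_{i+1}$, then continuity of $\pi_{i+1}-\pi_i$ together with the intermediate value theorem rules out any crossing on $(s,t)$, so $\pi_i<\pi_{i+1}$ on $(s,t)$. In the degenerate case $x_i=x_{i+1}$ and $y_i=y_{i+1}$, the same IVT argument applied to a right neighborhood of $s$ (where $\pi_{i+1}-\pi_i$ has constant nonzero sign by continuity and disjointness) shows that the sign of $\pi_{i+1}-\pi_i$ is constant on $(s,t)$; one then relabels the pair if needed to restore the desired ordering. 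Performing this relabeling within each maximal block of indices sharing both endpoints produces a $k$-tuple of $\scrL$-geodesics with $\pi_i<\pi_{i+1}$ on $(s,t)$, completing the argument.
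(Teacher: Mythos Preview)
Your proposal is correct and follows essentially the same approach as the paper: the $\Leftarrow$ direction is immediate from the definition of $\scrL$ on $\fX_\uparrow$, and the $\Rightarrow$ direction extracts a disjoint optimizer from Theorem~\ref{T:disjoint-optimizers-in-L} and observes that componentwise equality forces each $\pi_i$ to be a geodesic.

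The only place you do more than the paper is in the ordering step. The paper's one-line proof implicitly takes the disjoint optimizer to be the strictly ordered one supplied by Proposition~\ref{P:disjointness} (which underlies Theorem~\ref{T:disjoint-optimizers-in-L}), so the condition $\pi_i<\pi_{i+1}$ on $(s,t)$ is already built in. You instead argue directly from the bare disjointness condition $\pi_i(r)\neq\pi_j(r)$ of Definition~\ref{D:ext-land}, using continuity and the intermediate value theorem, and then relabel within blocks of equal endpoints. That argument is fine; a slightly cleaner way to phrase it is to observe that pairwise disjointness on $(s,t)$ induces a total order on the $k$ paths there, and taking limits at $s$ and $t$ shows the associated permutation sends a weakly increasing sequence to another weakly increasing rearrangement of the same multiset, hence is a permutation within equal-endpoint blocks, so the relabeled tuple still has the right endpoints.
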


For a fixed $s, t$, equation \eqref{E:Lxsys} is an equation about a single extended Airy sheet. In particular, by Theorem \ref{T:extended-sheet-char} it can be tackled by understanding a last passage problem across the parabolic Airy line ensemble $\scrB$. Because of the semi-discrete and locally Brownian nature of $\scrB$, understanding this problem is easier than understanding geodesic disjointness and coalescence in $\scrL$ directly.

\subsection{More related work}  \label{ss:relatedwork}
We do not attempt to give a full history of work on the KPZ universality class, and will instead focus on the circle of ideas most closely related to the present work.
For a gentle introduction to the KPZ universality class suitable for a newcomer to the area, see Romik \citep{romik2015surprising}. Review articles and books focusing on more recent developments include Corwin \citep{corwin2016kardar}; Ganguly \citep{ganguly2021random}; Ferrari and Spohn \citep{ferrari2010random}; Quastel \citep{quastel2011introduction}; Weiss, Ferrari, and Spohn \citep{weiss2017reflected}; and Zygouras \citep{zygouras2018some}.

Many of the initial breakthroughs in the area of KPZ relied on understanding integrable models via the RSK bijection. These include the previously discussed papers of \citep{baik1999distribution, johansson2000shape, prahofer2002scale} that establish Tracy-Widom convergence and Airy process convergence. A connection between the RSK correspondence and systems of nonintersecting random walks or Brownian motions was later discovered and understood in a series of papers by O'Connell and coauthors \cite{o2001brownian, o2002random, biane2005littelmann, konig2002non}. A description of the RSK correspondence in \cite{biane2005littelmann} leads to an identity for last passage percolation, see \eqref{E:fWf} below, which is the prelimiting version of the description of the Airy sheet in terms of the Airy line ensemble. Going beyond RSK, newer integrable ideas have yielded a richer set of formulas for limit objects, e.g. see  \citep{matetski2016kpz, johansson2019multi, liu2019multi, quastel2022kp}.

The works discussed above provide a strong integrable framework for understanding the directed landscape. More recently, probabilistic and geometric methods have been used in conjunction with a few key integrable inputs to prove regularity results, convergence statements, and exponent estimates in such models.

Corwin and Hammond \citep{CH} showed that the parabolic Airy line ensemble $\scrB$ satisfies a certain Brownian Gibbs property, making it amenable to probabilistic analysis. Hammond \citep{hammond2016brownian, hammond2019modulus, hammond2019patchwork}; Dauvergne and Vir\'ag \citep{DV}; and Calvert, Hammond, and Hegde \citep{CHH20} used Brownian Gibbs analysis to quantitatively understand the Brownian nature of the parabolic Airy line ensemble. The parabolic Airy line ensemble plays a central role in our paper, and we will require several consequences of this research program. Having a strong understanding of the Brownian nature of $\scrB$ is what makes results like Theorem \ref{T:extended-sheet} and Corollary \ref{C:disjointness} useful in practice.

There are many other papers that use Brownian Gibbs analysis and related ideas to study the structure of geodesics, near geodesics, and disjoint optimizers in the directed landscape and other last passage models. Some prominent recent examples include Hammond \citep{hammond2017exponents}; Ganguly and Hammond \citep{ganguly2020geometry, ganguly2020stability}; Basu, Ganguly, and Zhang \cite{basu2019temporal}; Sarkar, Dauvergne, and Vir\'ag \cite{DSV}; and Bates, Ganguly, and Hammond \cite{bates2019hausdorff}. 

Beyond \cite{DOV}, perhaps the two papers most closely linked with our own are \cite{SV20} and \cite{basu2020interlacing}. In \cite{SV20}, Sarkar and Vir\'ag show Brownian absolute continuity of the KPZ fixed point. One key idea in their work is to construct infinite last passage geodesics across the parabolic Airy line ensemble. Their setup for doing this is different than the setup we require for Theorem \ref{T:extended-sheet}, but still based around the Airy sheet construction in \cite{DOV}. In \cite{basu2020interlacing}, Basu, Ganguly, Hammond, and Hegde study the geometry of disjoint optimizers between $k$ identical start and endpoints for lattice last passage models, or ``geodesic watermelons''. They find scaling exponents in $k$ for the total length and transversal fluctuations of these optimizers.

Results, techniques, and frameworks developed in this paper
have already been used to analyze the Airy sheet, and geodesics across the parabolic Airy line ensemble and in the directed landscape. For example, the work \cite{dauvergne2021last} which came out after the first version of this paper appeared online, uses the framework of this paper to relate marginals of the Airy sheet to marginals in Brownian last passage percolation.
The work \cite{GZfract} (which also came out after the first version of this paper) about fractal geometry in the directed landscape requires an understanding of coalescence and disjointness between various pairs of landscape geodesics, which are equivalent to relations between infinite last passage geodesics across the parabolic Airy line ensemble. 
The analysis in \cite{GZfract} uses the framework of last passage percolation across the parabolic Airy line ensemble in Section \ref{S:lpp-across-ALE}, and the existence of disjoint optimizers across the parabolic Airy line ensemble proven in Section \ref{S:limits-of-optimizers}. Corollary \ref{C:disjointness} and related ideas will be used to analyze disjointness of $\scrL$-geodesics in the forthcoming work \cite{DVgeodesics}.

\subsection{Outline of the paper and a primer about the proofs}
\label{SS:outline}

While the structure of the paper is similar to \cite{DOV}, the proofs are mostly distinct. Indeed, the main difficulties that were resolved in \cite{DOV} yield lemmas that can be applied immediately here without need for generalization. 
As a consequence, the main difficulties in our work are unique to the multi-point setting and require different types of ideas. In this outline, we emphasize the differences between the two papers and some of the additional difficulties in multi-point setting. Generally, Sections \ref{S:construct-extended-landscape} and \ref{S:convergence-optimizers} follow a similar flow to corresponding sections in \cite{DOV}, and Sections \ref{S:lpp-across-ALE}, \ref{S:limits-of-optimizers}, \ref{S:paths-extended-landscape}, and \ref{S:disjoitness-optimizers} contain the most novel ideas. Section \ref{S:lpp} is a blend of background and new deterministic results for multi-point LPP, and Section \ref{S:tightness} applies these multi-point LPP results to prove tightness for key objects.

The first half of the paper (Sections \ref{S:lpp} to \ref{S:limits-of-optimizers}) is devoted to constructing the extended Airy sheet. This is the part of the paper that leans most heavily on technical machinery from \cite{DOV}, and so to appreciate these sections we recommend that the reader have some familiarity with the construction of the Airy sheet from \cite{DOV}. See Section \ref{S:Airysheet} for details on the exact inputs we use.

The starting point for the construction of the extended Airy sheet is a combinatorial identity about the RSK bijection. In essence, this identity shows that given a collection of functions $f = (f_1, \dots, f_n)$, we can construct a collection of ordered functions $Wf = (Wf_1 \ge \dots \ge Wf_n)$ with $Wf(0) = (0, \dots, 0)$ such that
\begin{equation}
\label{E:fWf}
f[(\bx, n) \to (\by, 1)] = Wf[(\bx, n) \to (\by, 1)],
\end{equation}
for all $\bx, \by$ with $x_1 \ge 0$. We refer to $Wf$ as the melon of $f$, as ordered paths in $Wf$ emanating from $0$ resemble stripes on a watermelon. Versions of this identity go back to \cite{noumi2002tropical} and \cite{biane2005littelmann}. When $f$ is given by a collection of independent Brownian motions, then $Wf$ is given by a collection of nonintersecting Brownian motions. In the scaling window we care about, the top lines of $Wf$ converge to the parabolic Airy line ensemble $\scrB$. What Theorems \ref{T:extended-sheet} and \ref{T:extended-sheet-char} say (in particular, equation \eqref{E:Sbxy-int}) is that in this scaling, the identity \eqref{E:fWf} also passes to the limit. The left-hand side becomes the extended Airy sheet and the right-hand side becomes a last passage problem along parabolic paths in $\scrB$. 

At the level of single points $x, y$, this limiting picture was developed in \cite{DOV} to construct the usual Airy sheet. 
However, the construction of the Airy sheet does not require a well-developed notion of last passage percolation along infinite paths across the parabolic Airy line ensemble. We develop this theory in Sections \ref{S:lpp-across-ALE} and \ref{S:limits-of-optimizers}, expanding on the discussion prior to Theorem \ref{T:extended-sheet-char} above. Note that the theory of LPP along infinite paths has subtleties that are not present in the finite case. For example, it is not straightforward to show that the function $\scrB[\bx \to \by]$ is almost surely finite or continuous in $\bx$ and $\by$, see Proposition \ref{P:B-continuity}.

To take advantage of this theory and prove Theorem \ref{T:extended-sheet}, we need to prove tightness of both the extended sheets $\scrS^n$ and optimizers across the Brownian melon. To avoid obtaining new analytic estimates here, we take advantage of a variety of useful \textit{quadrangle inequalities and monotonicity properties} for multi-point LPP that generalize corresponding properties for single-point LPP, see Section \ref{S:basic-properties} and Lemma \ref{L:b-quadrangle}. These inequalities allow us to quickly deduce tightness and a modulus of continuity for the extended Airy sheet from bounds on the prelimiting Airy line ensembles and tightness of melon optimizers from tightness and coalescence properties of melon geodesics, see Section \ref{S:tightness}. These deterministic properties continue to appear as crucial tools throughout the paper. The construction of the extended Airy sheet (Theorems \ref{T:extended-sheet} and \ref{T:extended-sheet-char}) is the culmination of Sections \ref{S:lpp}-\ref{S:limits-of-optimizers}.

The remainder of the paper proves Theorems \ref{T:BLPP-convergence}, \ref{T:extended-landscape}, \ref{T:disjoint-optimizers-in-L}, \ref{T:limit-theorem} and Corollaries \ref{C:rsk} and \ref{C:disjointness}. This part of the paper does not use technical machinery from \cite{DOV}, though as mentioned previously, Sections \ref{S:construct-extended-landscape} and \ref{S:convergence-optimizers} follow a similar flow of ideas to \cite{DOV}. 

The limit $\scrL^*$ of Brownian LPP can be patched together from extended Airy sheets, just as the directed landscape can be built from Airy sheets. The procedure just requires a few technical estimates. We prove these along with Theorems \ref{T:unique-L*} and \ref{T:BLPP-convergence}, in Section \ref{S:construct-extended-landscape}.

Just as path length can be defined in the directed landscape by \eqref{E:length-L}, we can define the length of a continuous multi-path $\pi:[s, t] \to \R^k_\le$ in $\scrL^*$ by setting
$$
\|\pi\|_{\scrL^*} = \inf_{m\in\N} \inf_{s=t_0<t_1<\cdots<t_m=t}
\sum_{i=1}^m \scrL^*(\pi(t_{i-1}), t_{i-1}; \pi(t_i), t_i).
$$
We say that $\pi$ is an optimizer from $(\pi(s), s)$ to $(\pi(t), t)$ if $\|\pi\|_{\scrL^*} = \scrL^*(\pi(s), s; \pi(t), t).$ Preliminary results about paths and length in $\scrL^*$ are developed in Section \ref{S:paths-extended-landscape}. Again, there are some subtleties that arise in the study of these objects that do not exist either in the prelimit or in the setting of single paths. For example, unlike for geodesics it is not straightforward that for any $(\bp; \bq) \in \fX_\uparrow$ there is almost surely a unique $\scrL^*$-optimizer from $\bp$ to $\bq$. This requires a resampling argument in the parabolic Airy line ensemble, see Section \ref{S:optimizers-transversal}.

To show that the limit $\scrL^*$ can alternately be described by Definition \ref{D:ext-land}, the key step is Proposition \ref{P:disjointness}, which shows that almost surely, for every point in $\fX_\uparrow$ there exists an optimizer in $\scrL^*$ consisting of \textit{disjoint} paths. This is a three-step process, carried out in Section \ref{S:disjoitness-optimizers}.  

We first prove Proposition \ref{P:disjointness} for endpoints of the form $((x, x), s), ((y, y), t)$. This is an easier problem since the midpoint of such an optimizer can be characterized using only the top two lines of two independent parabolic Airy line ensembles $\scrB, \scrB'$. The key technical point that makes this observation useful is that for any compact set $K \sset \R$ and any $k \in \N$, on $K$ the top $k$ lines of $\scrB, \scrB'$ are absolutely continuous with respect to $2k$ independent Brownian motions with a well-controlled Radon-Nikodym derivative, see Theorem \ref{T:radon-n-deri}. At the level of any single Airy line, such a Radon-Nikodym derivative estimate was proven in \cite{CHH20}. The extension to multiple lines can be extracted by combining various intermediate lemmas in \cite{CHH20}, see Appendix \ref{app:radon-n-deri}.

Next, we move to endpoints of the more general form $((x_1, x_2), s), ((y_1, y_2), t)$. We do this with a resampling argument which shows that for any $[s', t'] \sset (s, t)$, there is optimizer from $((x_1, x_2), s)$ to $((y_1, y_2), t)$ that coincides on $[s', t']$ with the optimizer from $((0,0), s-1)$ to $((0,0), t + 1).$ Finally, we treat the case of $k \ge 3$ endpoints by induction. The $k=2$ case is both the base case and the key input for the inductive step.

Given Proposition \ref{P:disjointness}, Theorems \ref{T:extended-landscape} and \ref{T:disjoint-optimizers-in-L} and Corollaries \ref{C:rsk} and \ref{C:disjointness} follow easily. In a final short section (Section \ref{S:convergence-optimizers}) we give a deterministic argument to prove Theorem \ref{T:limit-theorem} from Theorems \ref{T:BLPP-convergence} and \ref{T:extended-landscape}. This section is quite similar to Section 13 of \cite{DOV}, though the arguments have been simplified a bit.

\subsection{Acknowledgements}
The first author would like to thank B\'alint Vir\'ag for many useful discussions about this project.
The authors would also like to thank anonymous referees for carefully reading this paper and providing many valuable comments that help improve the expository.

\section{Last passage percolation across lines}
\label{S:lpp}

In this section, we recall and prove combinatorial properties of last passage percolation across lines, and gather necessary limiting results for Brownian LPP. Our presentation aligns with that of \cite{DOV}, where notation and coordinate orientation are set up so that last passage geodesics will rise from the bottom of the page to the top of the page in the Airy line ensemble limit, see Figure \ref{fig:parabolic-path}.

Recall from the introduction that a path from $(x, n)$ to $(y, m)$ is a cadlag, nonincreasing function $\pi:[x, y] \to  \llbracket m, n\rrbracket$ with $\pi(y) = m$. We denote the left limit of $\pi$ at a point $t$ by $\pi(t^-)$. This is defined for all $t \in (x, y]$. We will also extend this to the point $x$ by setting $\pi(x^-) = n$. For any path $\pi$, we can define a sequence of \textbf{jump times} $x = t_{n+1} \le t_n \dots  \le t_{m+1} \in [x, y]$, where
$$
t_i = \inf \{ t \in [x, y] : \pi(t) < i \}.
$$
Typically, this is the jump when $\pi$ jumps from line $i$ to $i-1$. We also set $t_m = y$.
The \textbf{zigzag graph} of $\pi$ is
$$
\Ga(\pi) = \{(t, k) \in [a, b] \X \llbracket m, n\rrbracket : \pi(t^-) \ge k \ge \pi(t)\}.
$$
In other words, the zigzag graph of $\pi$ connects up the graph of $\pi$ by vertical lines at its jumps. We can make the set of paths into a topological space -- path space -- by specifying that $\pi_n \to \pi$ if $\Ga(\pi_n) \to \Ga(\pi)$ in the Hausdorff topology. Equivalently, $\pi_n \to \pi$ if the endpoints and jump times of $\pi_n$ converge to the endpoints and jump times of $\pi$. With this definition, the space of all paths from $p$ to $q$ is compact.

We will also introduce a partial order on paths. Let $(p, q) = (x, n; y, m), (p', q') = (x', n'; y', m')$ be such that $x \le x', y \le y'$. Then for paths $\pi, \pi'$ from $p$ to $q$ and $p'$ to $q'$ respectively, we say that $\pi \le \pi'$ if for every $t \in [x, y] \cap [x', y']$, we have $\pi(t) \le \pi'(t)$.

Now consider a sequence of continuous functions $f = (f_i : i \in I)$, where $I \sset \Z$ and each $f_i:\R\to \R$. We call the space of such functions $\scrC^I$. We will alternately think of $f$ as a function from $\R \X I$ to $\R$, or as a function from $\R \X \Z$ to $\R$, where $f$ is set equal to $0$ outside of its natural domain. When $\llbracket m, n\rrbracket \sset I$, recall from the introduction that the $f$-length of a path $\pi$ from $(x, n)$ to $(y, m)$ with jump times $t_i$ is
$$
\|\pi\|_f = \sum_{i=m}^n f_i(t_i) - f_i(t_{i+1}).
$$
Observe that $f$-length is a continuous function in path space by the continuity of $f$.
Now, for $(p, q) = (x, n; y, m)$ with $x \le y, n \ge m$ we define the \textbf{last passage value}
$$
f[p \to q] = \sup_{\pi} \|\pi\|_f,
$$
where the supremum is over all paths $\pi$ from $p$ to $q$. Continuity of path length and compactness of the set of paths from $p$ to $q$ ensures that this supremum is always attained. 

We call a path that attains the supremum a \textbf{geodesic} from $p=(x,n)$ to $q=(y,m)$. We say that $\pi$ is a \textbf{rightmost geodesic} from $p$ to $q$ if $\pi(t) \ge \tau(t)$ for all $t \in [x, y]$ for any other geodesic $\tau$ from $p$ to $q$. We similarly define the leftmost geodesic $\tau$ from $p$ to $q$ with the opposite inequality. Note our notion of rightmost and leftmost paths is with respect to the picture in Figure \ref{fig:dis-opt}, where the line order is increasing as we go from top to bottom.  Rightmost and leftmost geodesics between two points always exist by a basic compactness and continuity argument in path space, see \cite[Lemma 3.5]{DOV}. Moreover, these paths exhibit a particular tree structure and monotonicity, which can be straightforwardly deduced from their definitions.

\begin{prop}[\protect{\cite[Proposition 3.7]{DOV}}]
	\label{P:tree-structure}
	Take any $x_1 \le x_2$ and $y_1 \le y_2$, and let $\pi^+[x_i, y_i]$ denote the rightmost geodesic from $(x_i, n)$ to $(y_i, 1)$ across a function $f$. Then $\pi^+[x_1, y_1] \le \pi^+[x_2, y_2]$ and $\Ga(\pi^+[x_1, y_1]) \cap \Ga(\pi^+[x_2, y_2])$ is the zigzag graph of some path whenever this set is nonempty.
	
	In particular, if $x_1 = x_2$, then the rightmost geodesics to $y_1$ and $y_2$ are equal on some interval $[x_1, z)$, and $\pi^+[x_1, y_1](z') < \pi^+[x_2, y_2](z')$ whenever $z' \ge z$ is in the domain of both paths. We can think of the two paths as forming two branches in a tree. The same structure holds with rightmost paths replaced by leftmost paths.
\end{prop}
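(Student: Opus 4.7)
The plan is to prove all three claims via a \emph{double-swap} path-surgery argument combined with the pointwise maximality of rightmost geodesics. The underlying tool is: if $\pi$ and $\tau$ are two geodesics sharing zigzag-graph points $(t_*, k_*)$ and $(t_{**}, k_{**})$ with $t_* < t_{**}$, then both restrictions $\pi|_{[t_*, t_{**}]}$ and $\tau|_{[t_*, t_{**}]}$ are geodesics from $(t_*, k_*)$ to $(t_{**}, k_{**})$ of equal $f$-length; exchanging them inside either full path yields another valid path between the same original endpoints with the same total length, hence another geodesic.

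To prove monotonicity, set $\pi_i := \pi^+[x_i, y_i]$ and suppose for contradiction that $\pi_1(s) > \pi_2(s)$ at some $s$ in the common domain. The boundary inequalities $\pi_1 \le n = \pi_2$ at $\max(x_1, x_2)$ and $\pi_1 \le \pi_2$ at $\min(y_1, y_2)$, together with the cadlag integer-valued nonincreasing structure of both paths, force the maximal ``bad'' interval $(t_*, t_{**}) \ni s$ on which $\pi_1 > \pi_2$ to have both endpoints $t_*, t_{**}$ realized at shared zigzag points: at each boundary one of the two paths must jump through the other's level, producing a common vertical segment at that time. Double-swapping on $[t_*, t_{**}]$ -- substituting the higher portion $\pi_1|_{[t_*, t_{**}]}$ for the lower portion $\pi_2|_{[t_*, t_{**}]}$ inside $\pi_2$ -- produces a path $\sigma$ from $(x_2, n)$ to $(y_2, 1)$ of the same length as $\pi_2$ (hence a geodesic) but with $\sigma > \pi_2$ on $(t_*, t_{**})$, contradicting the rightmost maximality of $\pi_2$.

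For the tree-structure claim, assume $S := \Gamma(\pi_1) \cap \Gamma(\pi_2)$ is nonempty and pick any $(t, k), (t', k') \in S$ with $t < t'$. By monotonicity $\pi_1 \le \pi_2$ on $[t, t']$, with equality at the endpoints. If the restrictions disagreed at some $r \in (t, t')$, the same double-swap applied now inside $\pi_1$ (replacing $\pi_1|_{[t, t']}$ by $\pi_2|_{[t, t']}$) would produce another geodesic from $(x_1, n)$ to $(y_1, 1)$ pointwise at least $\pi_1$ and strictly greater at $r$, contradicting rightmost maximality of $\pi_1$. Hence $\pi_1 \equiv \pi_2$ on $[t, t']$; letting $t, t'$ range over the extreme time coordinates of the compact set $S$ identifies $S$ as the zigzag graph of this common restriction.

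The special case $x_1 = x_2$ follows immediately: $S$ contains $(x_1, n)$, so the tree-structure claim exhibits $S$ as the zigzag graph of a common initial subpath on some maximal interval $[x_1, z]$; beyond $z$, any further point of agreement would extend $S$, forcing the strict inequality $\pi_1(z') < \pi_2(z')$ for all $z' \ge z$ in the common domain. The leftmost-geodesic analogue is obtained by the same arguments with the roles of ``rightmost/pointwise-largest'' swapped for ``leftmost/pointwise-smallest''. I expect the principal technical care to lie in the first step: establishing that a genuine crossing of two cadlag integer-valued nonincreasing paths forces shared zigzag points at \emph{both} endpoints of the bad interval, and verifying that the double-swapped path is a valid cadlag nonincreasing integer-valued path with endpoint conditions matching $\pi_2$ (respectively $\pi_1$), so that the contradiction lands cleanly on rightmost maximality.
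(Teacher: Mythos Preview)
The paper does not supply its own proof of this proposition; it is simply quoted from \cite{DOV}. So there is no in-paper argument to compare against directly. That said, the paper does prove the multi-path generalization of the monotonicity assertion (Lemma~\ref{L:mono-tree-multi-path}), and the method there differs from yours in a way worth noting.

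Your swap argument is correct: crossing of two cadlag, integer-valued, nonincreasing paths does force shared zigzag-graph points at both ends of a maximal bad interval, and the surgery produces a valid geodesic violating rightmost maximality. The subtleties you flag (endpoint matching, cadlag validity of the spliced path) all resolve favorably once one splits each path at the shared zigzag point $(t_*,k_*)$ rather than at the value $\pi_i(t_*)$.

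However, the paper's approach in Lemma~\ref{L:mono-tree-multi-path} (and, one expects, in \cite{DOV}) is cleaner: rather than locating crossing points, one simply sets $\zeta = \min(\pi_1,\pi_2)$ and $\zeta' = \max(\pi_1,\pi_2)$ on the common domain, extending by $\pi_1$ or $\pi_2$ as appropriate outside it. Then $\zeta$ is a path from $(x_1,n)$ to $(y_1,1)$, $\zeta'$ is a path from $(x_2,n)$ to $(y_2,1)$, and $\|\zeta\|_f + \|\zeta'\|_f = \|\pi_1\|_f + \|\pi_2\|_f$ immediately. Both are therefore geodesics; since $\pi_2 \le \zeta'$ and $\pi_2$ is rightmost, $\zeta' = \pi_2$, giving $\pi_1 \le \pi_2$. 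This sidesteps all the cadlag endpoint bookkeeping that your approach requires. Your argument and the min/max argument are morally equivalent (your swapped paths are exactly $\zeta$ and $\zeta'$ restricted to the bad interval), but the global min/max formulation handles all crossings at once and needs no case analysis at jump times. For the tree-structure claim your argument is fine as stated.
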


Often, there will be a unique geodesic between two points across the functions that we consider. In this case, the tree structure in Proposition \ref{P:tree-structure} will automatically hold; a unique geodesic is both a rightmost and leftmost geodesic. 

\subsection{Last passage with multiple paths}
\label{S:lppm}

We can extend the definition of last passage percolation to multiple disjoint paths. We say that $\pi$ and $\tau$ with domains $[a, b]$ and $[a', b']$ are \textbf{essentially disjoint} if
\begin{itemize}[nosep]
	\item $\pi(t) \ne \tau(t)$ for all $t \in(a, b) \cap (a', b')$
	\item Either $\pi \le \tau$ or $\tau \le \pi$.
\end{itemize} 
Note that since all paths are cadlag, the first condition above is equivalent to the property that the intersection of the closed graphs $\Ga(\pi) \cap \Ga(\tau)$ is finite. This characterization will often be more useful for proofs. Essential disjointness is a closed condition: if $\pi_n, \tau_n$ are sequences of essentially disjoint paths converging to paths $\pi, \tau$, then $\pi$ and $\tau$ are essentially disjoint.

Now, consider vectors $\mathbf{p} = (p_1, \dots, p_k) = ((x_1, n_1), \dots, (x_k, n_k))$ and $\mathbf{q} = (q_1, \dots, q_k) = ((y_1, m_1), \dots, (y_k, m_k))$ in $(\R \X \Z)^k$.
We say that $(\bp, \bq)$ is an \textbf{endpoint pair} of size $k$, if $n_i \ge m_i$ and $x_i \le y_i, x_i \le x_{i+1}, y_i \le y_{i+1}$ for all $i$, and there is at least one \textbf{disjoint $k$-tuple (of paths)} from $\mathbf{p}$ to $\mathbf{q}$.
Here a \textbf{disjoint $k$-tuple (of paths)} from $\mathbf{p}$ to $\mathbf{q}$ is a vector $\pi = (\pi_1,\ldots ,\pi_k)$, where 
\begin{itemize}[nosep]
	\item $\pi_i$ is a path from $(x_i, n_i)$ to $(y_i, m_i)$,
	\item $\pi_i$ and $\pi_j$ are essentially disjoint for all $i \ne j$,
	\item $\pi_i \le \pi_j$ for $i < j$.
\end{itemize}

We put the product topology on the space of all $k$-tuples of paths: $\pi \to \tau$ if $\pi_i \to \tau_i$ for all $i$. The space of disjoint $k$-tuples is a closed subset of this space, since essential disjointness and all ordering requirements are closed conditions. As in the single path case, the set of all disjoint $k$-tuples from $\bp$ to $\bq$ is compact for any endpoint pair $(\bp, \bq)$.

Now, for a disjoint $k$-tuple $\pi$ and $f \in \scrC^I$, let
$
\|\pi\|_f = \sum_{i=1}^k \|\pi_i\|_f.
$
For any endpoint pair $(\bp, \bq)$ and $f\in\scrC^I$ with $\II{m,n}\subset I$, define the last passage value
$$
f[\bp \to \bq] = \sup_{\pi} \|\pi\|_f,
$$
where the supremum is over disjoint $k$-tuples $\pi$ from $\bp$ to $\bq$. This supremum is always attained since length is a continuous function in path space and the set of all disjoint $k$-tuples from $\bp$ to $\bq$ is compact.
A disjoint $k$-tuple that attains this supremum is a \textbf{disjoint optimizer}, abbreviated to optimizer.

For most parts of the paper, we will only be concerned with endpoint pairs where all the $n_i$ are equal to some $n$, and all the $m_i$ are equal to some $m$. As a slight abuse of notation we write $\bp = (\bx, n)$ and $\bq = (\by, m)$ in this case.

\subsection{Basic properties of disjoint optimizers and last passage values}
\label{S:basic-properties}
Disjoint optimizers share certain features with geodesics. In particular, leftmost and rightmost optimizers still exist, and we have monotonicity and a useful quadrangle inequality.
Throughout this subsection we take $f\in\scrC^I$, for some suitable $I\subset\Z$.

For two disjoint $k$-tuples of paths $\pi, \tau$, we say that $\pi \le \tau$ if $\pi_i \le \tau_i$ for all $i$.
\begin{lemma}
	\label{L:rightmost-multi-path}For any endpoint pair $(\bp, \bq)$, there exists an optimizer $\pi = (\pi_1, \dots, \pi_k)$ from $\bp$ to $\bq$ such that for any other optimizer $\tau$ from $\bp$ to $\bq$, $\tau \le \pi$. We call $\pi$ the \textbf{rightmost optimizer} from $\bp$ to $\bq$. Similarly, there always exists a \textbf{leftmost optimizer} from $\bp$ to $\bq$.
\end{lemma}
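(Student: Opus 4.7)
The plan is to lift the single-path argument for existence of rightmost geodesics (cf.\ \cite[Lemma 3.5]{DOV}, sketched just before Proposition \ref{P:tree-structure}) to disjoint $k$-tuples. Path length $\|\cdot\|_f$ is continuous in path space, and the space of disjoint $k$-tuples from $\bp$ to $\bq$ is compact (as already noted in the text), so the set $\scrO$ of optimizers is a nonempty compact subset. The rightmost optimizer will be extracted from $\scrO$ by a finite-intersection compactness argument once we establish a ``join'' operation inside $\scrO$.

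The key step is a join lemma: for any $\pi, \tau \in \scrO$, produce $\sigma, \rho \in \scrO$ with $\pi, \tau \le \sigma$ and $\rho \le \pi, \tau$ componentwise. I would construct them by pointwise sorting: at each time $t$, arrange the multiset $\{\pi_i(t), \tau_i(t) : 1 \le i \le k\}$ in increasing order as $a_1(t) \le \cdots \le a_{2k}(t)$, and set $\sigma_i(t) = a_{k+i}(t)$, $\rho_i(t) = a_i(t)$. A direct rank count gives $\rho_i \le \pi_i, \tau_i \le \sigma_i$ for each $i$: there are at most $i + k$ values $\le \pi_i(t)$ (the $i$ smallest $\pi_j$'s plus at most all $k$ of the $\tau_j$'s) and at least $i$ of them, so the rank of $\pi_i(t)$ lies in $\{i, \ldots, i+k\}$. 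The crucial length-preservation identity
\[
\|\sigma\|_f + \|\rho\|_f = \|\pi\|_f + \|\tau\|_f
\]
follows by decomposing $(\pi, \tau) \mapsto (\sigma, \rho)$ into a sequence of local exchanges between pairs of paths from $\pi \cup \tau$ at their crossings, each preserving total length by the standard single-path swap identity. Since $\|\rho\|_f \le \|\pi\|_f$ by optimality of $\pi$, this forces $\|\sigma\|_f \ge \|\pi\|_f$, and optimality of $\pi$ gives $\sigma \in \scrO$.

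With the join lemma, for each $\pi \in \scrO$ the set $U_\pi := \{\tau \in \scrO : \pi \le \tau\}$ is closed in $\scrO$ (since $\le$ is a closed condition in the path-space topology). Iterating the join lemma gives the finite intersection property for the family $\{U_\pi\}_{\pi \in \scrO}$, so compactness of $\scrO$ yields $\bigcap_{\pi \in \scrO} U_\pi \ne \emptyset$; any element is a rightmost optimizer. The leftmost optimizer is obtained by the symmetric argument using $\rho$ instead of $\sigma$.

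The main obstacle is verifying that $\sigma$ and $\rho$ are genuine disjoint $k$-tuples, i.e., that the essential disjointness condition $\sigma_i(t) \ne \sigma_{i+1}(t)$ holds on the open interior. This may fail at times $t$ when some $\pi_j(t) = \tau_{j'}(t)$, since then adjacent sorted values can coincide. I would handle this by observing that each difference $\pi_j - \tau_{j'}$ is a nonincreasing integer-valued cadlag function, so coincidence times are locally finite, and then carefully tracking identities through the uncrossing procedure (equivalently, adjusting the labeling at these isolated times) so that the resulting $\sigma$ and $\rho$ are essentially disjoint. The length identity is unaffected by these local relabelings, and the remaining verifications of endpoint conditions and the cadlag structure are routine.
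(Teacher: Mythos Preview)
Your overall strategy---a join operation on optimizers, then extract a maximal element by compactness---matches the paper's. The gap is in your join construction. The full sort of all $2k$ values does not in general produce an essentially disjoint $k$-tuple: already when $\pi = \tau$, the sorted multiset at each $t$ is $(\pi_1,\pi_1,\pi_2,\pi_2,\ldots)$, so for $k=2$ your $\sigma_1 = a_3 = \pi_2 = a_4 = \sigma_2$, and $\sigma$ fails essential disjointness everywhere. More generally, nothing prevents $\pi_j$ and $\tau_{j'}$ (paths in \emph{different} $k$-tuples, with no mutual disjointness constraint) from coinciding on an entire interval, and your proposed fix rests on a false premise: the difference $\pi_j - \tau_{j'}$ of two nonincreasing step functions need not be monotone, and its zero set can be an interval rather than a locally finite set of points. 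No relabeling at ``isolated times'' can repair this.

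The paper's join is simpler and avoids all of this: take the \emph{componentwise} $\zeta_i = \pi_i \vee \tau_i$ and $\zeta'_i = \pi_i \wedge \tau_i$. Then each $\zeta_i, \zeta'_i$ is automatically a path from $p_i$ to $q_i$; the identity $\|\zeta\|_f + \|\zeta'\|_f = \|\pi\|_f + \|\tau\|_f$ is immediate since $\max+\min$ equals the sum; and essential disjointness holds because if $\zeta_i(t) = \zeta_j(t)$ at an interior $t$ then (integer-valued cadlag) $\zeta_i = \zeta_j$ on a right-neighborhood, and there $\max(\pi_i,\tau_i) = \max(\pi_j,\tau_j)$ together with $\pi_i \le \pi_j$, $\tau_i \le \tau_j$ forces $\pi_i = \pi_j$ or $\tau_i = \tau_j$ at each point, contradicting essential disjointness of $\pi$ or of $\tau$. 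The paper then concludes via Zorn's lemma, which is equivalent to your finite-intersection-property argument.
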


\begin{proof}
	We first show that for any optimizers $\tau, \pi$ from $\bp$ to $\bq$, there exists optimizers $\ze, \ze'$ from $\bp$ to $\bq$ such that $\ze \ge \pi \ge \ze'$ and $\ze \ge \tau \ge \ze'$. For each $i, t$, set $\ze_i(t) = \max(\pi_i(t), \tau_i(t))$ and $\ze_i'(t)= \min(\pi_i(t), \tau_i(t))$. We first check that $\ze, \ze'$ are disjoint $k$-tuples from $\bp$ to $\bq$. The arguments are symmetric, so we just check $\ze$. 
	
	It is immediate from the definitions that each $\ze_i$ is a path from $p_i$ to $q_i$ and that $\ze_i \le \ze_j$ whenever $i < j$. Now, for $i \ne j$, if $\ze_i(t) = \ze_j(t)$ for some $t$, then the ordering properties for $\pi, \tau$ ensure that either $\pi_i(t) = \pi_j(t)$ or $\tau_i(t) = \tau_j(t)$. Also, if $\ze_i(t) = \ze_j(t)$ for some $t \in (x_i, y_i) \cap (x_j, y_j)$, then since both $\ze_i, \ze_j$ are cadlag, $\ze_i = \ze_j$ on some interval $[t, t + \ep)$ for some $\ep > 0$. Therefore either $\pi_i(t) = \pi_j(t)$ or $\tau_i(t) = \tau_j(t)$ for infinitely many points in this interval, contradicting the essential disjointness of either $\pi_i$ and $\pi_j$, or $\tau_i$ and $\tau_j$. Therefore $\ze_i, \ze_j$ must also be essentially disjoint, and so $\ze$ is a disjoint $k$-tuple from $\bp$ to $\bq$. Now, by the construction of $\ze_i, \ze'_i$ we have $\|\ze_i\|_f + \|\ze'_i\|_f = \|\tau_i\|_f + \|\pi_i\|_f$ for all $i$. Therefore
	$$
	\|\zeta\|_f +\|\zeta'\|_f = \|\tau\|_f + \|\pi\|_f,
	$$
	and so both $\zeta$ and $\zeta'$ must also be optimizers from $\bp$ to $\bq$.
	
	We can complete the proof by appealing to Zorn's lemma. Indeed, the set of optimizers from $\bp$ to $\bq$ is a partially ordered set. Moreover, this set is compact by the continuity of length in path space, and the fact that the set of all disjoint $k$-tuples from $\bp$ to $\bq$ is compact. Therefore by Zorn's lemma, maximal optimizers exist. Finally, if $\tau, \pi$ are two maximal optimizers, then by the argument above there is an optimizer $\ze$ with $\ze \ge \tau, \ze \ge \pi$. By maximality, this implies $\ze = \tau = \pi$ is the unique maximal optimizer: the rightmost optimizer. By a symmetric argument there exists a leftmost optimizer.
\end{proof}

In order to state the monotonicity lemma for multiple paths, we introduce a partial order on endpoint pairs starting on the same line $n$ and ending on the same line $m$. For two endpoint pairs $(\bp, \bq) = (\bx, n; \by, m)$ and $(\bp', \bq') = (\bx', n; \by', m)$ of size $k = k'$, we say that $(\bp, \bq) \le (\bp', \bq')$ if $x_i \le x_i'$ and $y_i \le y_i'$ for all $i$. If the sizes of the endpoint pairs differ or if we do not have an ordering between all endpoints, then we may still be able to compare the endpoint pairs. For two endpoint pairs $(\bp, \bq)$ and $(\bp', \bq')$ of size $k, k'$ that start and end on the same line, and $s \in \Z$, define
$$
(\bp, \bq) \le_s (\bp', \bq')
$$
if $x_{i + s} \le x_i'$ and $y_{i+s} \le y_i'$ for all $i$ such that either $i + s\in \II{1, k}$ or $i \in \II{1, k'}$. Here the coordinates $x_j, y_j$ are defined to be equal to $\infty$ for $j > k$ and $- \infty$ for $j < 1$, and $x_j', y_j'$ are defined similarly in terms of $k'$. 

This definition can be thought in the following way. 
 First pad the endpoint pairs $(\bp, \bq)$ and $(\bp', \bq')$ with points that are arbitrarily far to the right or left so that the indices $i + s$ in $(\bp, \bq)$ and $i$ in $(\bp', \bq')$ are now lined up and the new endpoint pairs have the same size. The ordering $\le_s$ is then just the usual ordering $\le$ on the padded endpoint pairs.

\begin{lemma}
\label{L:mono-tree-multi-path}
Let $(\bp, \bq)$ and $(\bp', \bq')$ be two endpoint pairs of sizes $k, k'$ starting and ending on the same line. Let $\pi$ be the rightmost optimizer from $\bp$ to $\bq$, and $\pi'$ be the rightmost optimizer from $\bp'$ to $\bq'$.
\begin{enumerate}[label=(\roman*), nosep]
	\item Suppose that $k = k'$, and that $(\bp, \bq) \le (\bp', \bq')$. Then $\pi \le \pi'$.
	\item Suppose that $(\bp, \bq) \le_s (\bp', \bq')$ for some $s \in \Z$. Then $\pi_{i} \le \pi'_{i + s}$ for all $i \in \II{1, k} \cap \II{1-s, k'-s}$.
\end{enumerate}
The same statements hold with leftmost optimizers in place of rightmost ones.
\end{lemma}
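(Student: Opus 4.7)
The proof is a classical min/max swap argument. I start with part (i), then reduce (ii) to (i) by padding, and handle the leftmost case by a symmetric argument.

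\textbf{Part (i).} Fix rightmost optimizers $\pi,\pi'$ for $(\bp,\bq)$ and $(\bp',\bq')$ respectively. Extend each path $\pi_j,\pi'_j$ to all of $\R$ by assigning value $n$ strictly before its start and $m$ strictly after its end. Define the envelope $k$-tuples componentwise by
\[
\tilde\pi_j(t)=\min(\pi_j(t),\pi'_j(t)),\qquad \tilde\pi'_j(t)=\max(\pi_j(t),\pi'_j(t)).
\]
The hypothesis $x_j\le x_j'$, $y_j\le y_j'$ ensures that $\tilde\pi_j$ is a path from $p_j$ to $q_j$ and $\tilde\pi'_j$ is a path from $p_j'$ to $q_j'$. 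Two verifications then do all the work: (1) $\tilde\pi$ is a disjoint $k$-tuple from $\bp$ to $\bq$ and $\tilde\pi'$ is a disjoint $k$-tuple from $\bp'$ to $\bq'$; and (2) the length identity $\|\tilde\pi\|_f+\|\tilde\pi'\|_f=\|\pi\|_f+\|\pi'\|_f$.

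For (1), the ordering within $\tilde\pi$ and within $\tilde\pi'$ is immediate from the pointwise inequalities $\pi_i\le\pi_j$ and $\pi'_i\le\pi'_j$. Essential disjointness of $\tilde\pi_i,\tilde\pi_j$ ($i<j$) is a four-case analysis at a hypothetical coincidence point $t\in(x_j,y_i)$, based on which of $\pi_i(t),\pi'_i(t)$ and of $\pi_j(t),\pi'_j(t)$ achieves its respective min: two of the cases force $\pi_i(t)=\pi_j(t)$ or $\pi'_i(t)=\pi'_j(t)$ and so contradict the disjointness of $\pi$ or $\pi'$, while the other two yield a chain of inequalities that collapses to the same contradiction. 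Crucially, the endpoint inequalities $x_j\le x_j'$, $y_i\le y_i'$ are used to place the offending $t$ in the open interval $(x_j',y_i')$ where $\pi'$'s disjointness applies.

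For (2), one computes line by line: the jump times of $\tilde\pi_j$ and $\tilde\pi'_j$ at line $i$ are exactly the min and max of the corresponding jump times of $\pi_j$ and $\pi'_j$, with the boundary conventions $t_{n+1}=x$ and $t_m=y$ producing $\min(x_j,x_j')=x_j$, $\max(x_j,x_j')=x_j'$, etc. Then $\{\min,\max\}=\{\cdot,\cdot\}$ at each jump time gives the desired additive identity on each line, which sums to the full identity.

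From (1), (2) and optimality of $\pi,\pi'$, we get $\|\tilde\pi\|_f=\|\pi\|_f$ and $\|\tilde\pi'\|_f=\|\pi'\|_f$, so $\tilde\pi$ and $\tilde\pi'$ are themselves optimizers. Since $\tilde\pi'=\max(\pi,\pi')\ge\pi'$ and $\pi'$ is the rightmost optimizer from $\bp'$ to $\bq'$, the maximality of $\pi'$ forces $\tilde\pi'=\pi'$, i.e.\ $\pi_j\le\pi'_j$ for all $j$. The leftmost version runs the same construction and uses instead $\tilde\pi=\min(\pi,\pi')\le\pi$ together with the minimality of $\pi$ to conclude $\tilde\pi=\pi$, hence $\pi\le\pi'$.

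\textbf{Part (ii).} Reduce to part (i) by padding. Choose $M$ so large that it dominates all spatial coordinates of $(\bp,\bq)$ and $(\bp',\bq')$, and extend $(\bp,\bq)$ on the left by $|s|$ copies of a point at $(-M,n)/(-M,m)$ (if $s<0$) and on the right by points at $(+M,n)/(+M,m)$ (if needed), and symmetrically for $(\bp',\bq')$, so that the padded pairs have common size $K=\max(k,k'+s,\ldots)$ and real index $i+s$ of $\bp$ is aligned with real index $i$ of $\bp'$. For $M$ large, the rightmost optimizers of the padded pairs split off the original rightmost optimizers on the `real' indices together with non-interacting straight-descent paths on the padded indices; this is a simple consequence of the monotonicity and tree properties in Proposition \ref{P:tree-structure} together with Lemma \ref{L:rightmost-multi-path}. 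Since the padded pairs satisfy the ordinary $\le$ relation, part (i) gives $\tilde\pi_j\le\tilde\pi'_j$ for all padded indices $j$; restricting to the real-real aligned pairs gives $\pi_i\le\pi'_{i+s}$ for $i$ in the stated range.

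\textbf{Main obstacle.} The principal technical difficulty is the essential disjointness verification for the min/max $k$-tuples: the case analysis must simultaneously invoke the disjointness of $\pi$, the disjointness of $\pi'$, and the endpoint ordering, and one must pin down the offending coincidence point inside the correct open interval. The length identity, the deduction of optimality, and the padding reduction are then routine.
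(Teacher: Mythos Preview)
Your proof is correct and follows essentially the same approach as the paper: the min/max swap construction for part (i), the length identity $\|\tilde\pi\|_f+\|\tilde\pi'\|_f=\|\pi\|_f+\|\pi'\|_f$, and the conclusion via maximality of the rightmost optimizer are all exactly as in the paper (which in turn refers back to the proof of Lemma~\ref{L:rightmost-multi-path} for the disjointness check), and the padding reduction for part (ii) is precisely what the paper sketches. Your four-case analysis for essential disjointness is a bit more explicit than the paper's treatment, but the arguments are equivalent.
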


\begin{proof}
We will just prove (i), as (ii) can be reduced to (i) by the padding procedure described above. We use a similar construction to Lemma \ref{L:rightmost-multi-path}. For each $i$, define paths $\ze_i, \ze'_i$ as follows. On $[x_i, y_i] \cap [x_i', y_i']$, set $\ze_i(t) = \min(\pi_i(t), \pi_i'(t))$ and set $\ze'_i(t) = \max(\pi_i(t), \pi_i'(t))$. Extend $\ze_i$ to all of $[x_i, y_i]$ by setting it equal to $\pi_i$ on $[x_i, y_i] \smin [x_i', y_i']$ and extend $\ze'_i$ to all of $[x_i', y_i']$ by setting it equal to $\pi'_i$ on $[x_i', y'_i] \smin [x_i, y_i]$.

 With these definitions, because $x_i \le x_i'$ and $y_i \le y_i'$, $\ze_i$ is a path from $p_i$ to $q_i$ and $\ze'_i$ is a path from $p_i'$ to $q_i'$. Moreover, exactly as in the proof of Lemma \ref{L:rightmost-multi-path}, we can check that $\ze = (\ze_1, \dots, \ze_k)$ is a disjoint $k$-tuple from $\bp'$ to $\bq'$, $\ze'$ is a disjoint $k$-tuple from $\bp$ to $\bq$, and
 $$
 \|\ze\|_f + \|\ze'\|_f = \|\pi\|_f + \|\pi'\|_f.
 $$
 Therefore $\ze, \ze'$ must both be optimizers. Since $\pi' \le \ze'$ and $\pi'$ is a rightmost optimizer, we have $\ze' = \pi'$. Also, $\pi \le \ze'$ by construction, yielding (i).
\end{proof}

We will also need two \textbf{quadrangle inequalities} for multi-point last passage values. These are generalizations of a commonly used quadrangle inequality for single-point last passage values, see for example, Proposition 3.8 in \cite{DOV}.

\begin{lemma}
\label{L:quadrangle}
Let $(\bp, \bq) = (\bx, n; \by, m), (\bp', \bq') = (\bx', n; \by', m)$ be endpoint pairs of size $k$. Define $\bx^\ell, \by^\ell, \bx^r, \by^r\in \R^k_\le$ by setting $x^\ell_i=x_i\wedge x_i'$, $y^\ell_i=y_i\wedge y_i'$, and $x^r_i=x_i\vee x_i'$, $y^r_i=y_i\vee y_i'$, for each $1\le i \le k$, and let $\bp^\ell = (\bx^\ell, n), \bp^r = (\bx^r, n), \bq^\ell = (\by^\ell, m), \bq^r = (\by^r, m)$. Then
$$
f[\bp \to \bq] + f[\bp' \to \bq'] \le f[\bp^\ell \to \bq^\ell] + f[\bp^r \to \bq^r].
$$
In particular, if $(\bp, \bq') \le (\bp', \bq)$, then
$$
f[\bp \to \bq] + f[\bp' \to \bq'] \le f[\bp \to \bq'] + f[\bp' \to \bq].
$$
\end{lemma}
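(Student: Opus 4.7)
The plan is to prove the first (general) inequality constructively, deriving the ``in particular'' statement as a direct specialization. Let $\pi=(\pi_1,\ldots,\pi_k)$ be an optimizer for $(\bp,\bq)$ and $\pi'=(\pi_1',\ldots,\pi_k')$ an optimizer for $(\bp',\bq')$, which exist by compactness. I will construct disjoint $k$-tuples $\mu$ from $\bp^\ell$ to $\bq^\ell$ and $\mu'$ from $\bp^r$ to $\bq^r$ satisfying $\|\mu\|_f+\|\mu'\|_f=\|\pi\|_f+\|\pi'\|_f$; the desired inequality then follows from the supremum definition of $f[\cdot\to\cdot]$.

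The construction is pair-by-pair. For each $i$, define $\mu_i$ on $[x_i^\ell,y_i^\ell]$ and $\mu_i'$ on $[x_i^r,y_i^r]$ as follows: where both $\pi_i$ and $\pi_i'$ are active (i.e., on $[\max(x_i,x_i'),\min(y_i,y_i')]$) set $\mu_i(t)=\min(\pi_i(t),\pi_i'(t))$ and $\mu_i'(t)=\max(\pi_i(t),\pi_i'(t))$; on the ``mixed'' portions of $\mu_i$'s or $\mu_i'$'s domain where only one of $\pi_i,\pi_i'$ is active, extend by that single active path. A per-index check confirms that the domain of $\mu_i$ is indeed $[x_i^\ell,y_i^\ell]$ and of $\mu_i'$ is $[x_i^r,y_i^r]$, that the endpoints are correct since paths enter at value $n$ and exit at $m$, and that $\|\mu_i\|_f+\|\mu_i'\|_f=\|\pi_i\|_f+\|\pi_i'\|_f$, because the pointwise multi-set of values $\{\mu_i(t),\mu_i'(t)\}$ agrees with $\{\pi_i(t),\pi_i'(t)\}$ wherever both paths of a pair are defined, and coincides trivially elsewhere. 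Summing over $i$ yields the total length identity.

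The main obstacle is verifying that $\mu=(\mu_1,\ldots,\mu_k)$ and $\mu'=(\mu_1',\ldots,\mu_k')$ are each disjoint $k$-tuples. For $\mu$: at a time $t$ in the strict interior of both $\mu_i$'s and $\mu_j$'s domains with $i<j$, I check $\mu_i(t)<\mu_j(t)$ by case analysis on which of $\pi_i,\pi_i',\pi_j,\pi_j'$ are active at $t$. Crucially, most of the awkward cases are ruled out by the sortedness of $\bx,\bx',\by,\by'$: for example, if pair $i$ has only $\pi_i$ active (so $t<x_i'$) while pair $j$ has both $\pi_j,\pi_j'$ active (so $t\ge x_j'\ge x_i'$), we reach a contradiction. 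The remaining cases reduce to variants of $\min(\pi_i,\pi_i')(t)<\min(\pi_j,\pi_j')(t)$, $\pi_i(t)<\min(\pi_j,\pi_j')(t)$, or $\pi_i(t)<\pi_j(t)$, each of which follows from the strict essential-disjointness inequalities $\pi_i<\pi_j$ and $\pi_i'<\pi_j'$ in the relevant interiors. A symmetric analysis using the max-based construction and the sortedness of $\by,\by'$ verifies the disjointness of $\mu'$. Finally, the ``in particular'' statement follows immediately: under $(\bp,\bq')\le(\bp',\bq)$ we have $\bx\le\bx'$ and $\by'\le\by$, so $\bx^\ell=\bx,\bx^r=\bx',\by^\ell=\by',\by^r=\by$, and the general inequality specializes to the stated form.
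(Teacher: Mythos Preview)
Your proposal is correct and follows essentially the same approach as the paper. The min/max construction of $\mu,\mu'$ is identical to the paper's $\tau^\ell,\tau^r$, the length identity is obtained the same way, and the ``in particular'' specialization matches exactly. The only stylistic difference is in verifying disjointness: the paper defers to the cadlag argument from the proof of Lemma~\ref{L:rightmost-multi-path} (if $\mu_i(t)=\mu_j(t)$ at one interior point then, since the paths are integer-valued and right-continuous, they agree on an interval, forcing infinitely many coincidences among the original paths), whereas you do an explicit case analysis using sortedness of $\bx,\bx',\by,\by'$. Your case analysis is sound; just note that at isolated points like $t=x_i$ lying strictly inside $\mu_i$'s domain, the inequality $\pi_i(t)<\pi_j(t)$ is not directly supplied by essential disjointness (since $t$ is a domain endpoint for $\pi_i$), but the cadlag argument immediately patches this, since any such coincidence would persist on a right-neighbourhood where your strict inequalities do apply.
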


\begin{proof}
Let $\pi$ be an optimizer from $\bp$ to $\bq$, and let $\pi'$ be an optimizer from $\bp'$ to $\bq'$. We can define disjoint $k$-tuples $\tau^\ell, \tau^r$ as follows. For each $i$, set $\tau^\ell_i = \min(\pi_i, \pi_i')$ on $[x_i^r, y_i^\ell]$ and set $\tau^r_i = \max(\pi_i, \pi_i')$ on $[x_i^r, y_i^\ell]$. On $[x_i^\ell, x_i^r)$, we set $\tau_i^\ell$ to be either $\pi_i$ or $\pi_i'$, depending on whether $x_i^\ell$ equals $x_i$ or $x_i'$. Similarly, on $(y_i^\ell, y_i^r]$, set $\tau^r_i$ to be either $\pi_i$ or $\pi_i'$. As in the proof of Lemma \ref{L:rightmost-multi-path}, one can check that $\tau^\ell, \tau^r$ are disjoint $k$-tuples from $\bp^\ell$ to $\bq^\ell$ and $\bp^r$ to $\bq^r$, respectively. Therefore
\[
f[\bp \to \bq] + f[\bp' \to \bq']  = \|\pi\|_f + \|\pi'\|_f = \|\tau\|_f + \|\tau'\|_f \le f[\bp^\ell \to \bq^\ell] + f[\bp^r \to \bq^r].
\]
The second part of the theorem follows from the fact that if $(\bp, \bq') \le (\bp', \bq)$, then $\bp = \bp^\ell, \bp' = \bp^r, \bq = \bq^r,$ and $\bq' = \bq^\ell$.
\end{proof}

\begin{lemma} \label{L:quadrangle-2}
	Let $(\bp, \bq), (\bp, \bq')$ be endpoint pairs of size $k \ge 2$ that start and end on the same line with $(\bp, \bq) \le (\bp, \bq')$. Fix $1\le\ell < k$, and let $\bp^L, \bq^L,\bq'^L$ be the first $\ell$ coordinates of $\bp, \bq, \bq'$, and $\bp^R, \bq^R, \bq'^R$ be the last $k-\ell$ coordinates of $\bp, \bq, \bq'$.
	Suppose first that $\bq^R = \bq'^R$. Then
	\[
	f[\bp\to\bq] + f[\bp^L \to \bq'^L]\ge f[\bp\to\bq'] + f[\bp^L \to \bq^L].
	\]
	Similarly, suppose that $\bq^L = \bq'^L$. Then
	\[
	f[\bp\to\bq] + f[\bp^R \to \bq'^R]\le f[\bp\to\bq'] + f[\bp^R \to \bq^R],
	\]

\end{lemma}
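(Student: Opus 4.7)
The plan is to deduce both inequalities from the two-endpoint-pair quadrangle inequality of Lemma \ref{L:quadrangle} after padding the smaller endpoint pair up to size $k$ by appending ``trivial'' paths of the form $(z,n)\to(z,m)$. Such a path has the one-point domain $\{z\}$, contributes zero to any $f$-length, and is essentially disjoint from any other path by the very definition of essential disjointness (both conditions are vacuous or automatic). Consequently, if the padding locations lie strictly to the right (respectively, left) of every coordinate of the original pair, the padded endpoint pair is valid, and its last passage value coincides with that of the unpadded pair, since any disjoint tuple is forced to take the trivial paths on their one-point domains.

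For the first inequality, assume $\bq^R=\bq'^R$. Fix $z_{\ell+1}<\cdots<z_k$ larger than every coordinate of $\bp,\bq,\bq'$, and let
\[
A=(\bp,\bq'),\qquad B=(\bp^L,\bq^L) \text{ padded on the right by the trivial pairs at the } z_j,
\]
both of size $k$. Using $y_i\le y_i'$ for $i\le \ell$ (from $(\bp,\bq)\le(\bp,\bq')$), $y_i=y_i'$ for $i>\ell$ (from $\bq^R=\bq'^R$), and $z_j$ exceeding every other coordinate, a direct coordinatewise check shows that the meet of $A$ and $B$ is exactly $(\bp,\bq)$, while the join is $(\bp^L,\bq'^L)$ padded by the same trivial pairs. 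By the observation above, $f[B]=f[\bp^L\to\bq^L]$ and the last passage value across the join equals $f[\bp^L\to\bq'^L]$, so Lemma \ref{L:quadrangle} yields
\[
f[\bp\to\bq']+f[\bp^L\to\bq^L]\le f[\bp\to\bq]+f[\bp^L\to\bq'^L],
\]
which is the desired inequality.

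The second inequality is proved symmetrically: pad $(\bp^R,\bq'^R)$ on the \emph{left} by trivial pairs at locations $w_1<\cdots<w_\ell$ strictly smaller than every coordinate of $\bp,\bq,\bq'$, and apply Lemma \ref{L:quadrangle} to $A=(\bp,\bq)$ and this padded version. Now the meet equals $(\bp^R,\bq^R)$ padded on the left, and the join equals $(\bp,\bq')$, so the inequality comes out with the reversed direction.

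There is no substantive obstacle here, as the argument is deterministic and only invokes Lemma \ref{L:quadrangle}. The main care required is bookkeeping: verifying that right-padding causes the trivial pairs to land in the join (via $\max$) so that the meet isolates the original $\by$-coordinates, while left-padding causes them to land in the meet (via $\min$), and verifying the orderings $x_1\le\cdots\le x_k$ and $y_1\le\cdots\le y_k$ for the padded pair, which all follow once the $z_j$ (or $w_j$) are placed sufficiently far out.
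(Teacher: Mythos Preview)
Your proposal is correct and takes a genuinely different route from the paper. The paper argues directly: it takes an optimizer $\pi$ from $\bp$ to $\bq'$ and an optimizer $\tau$ from $\bp^L$ to $\bq^L$, then builds $\sigma$ by setting $\sigma_i=\min(\pi_i,\tau_i)$ for $i\le\ell$ and $\sigma_i=\pi_i$ for $i>\ell$ (a disjoint $k$-tuple from $\bp$ to $\bq$, using $\bq^R=\bq'^R$), and $\sigma'_i=\max(\pi_i,\tau_i)$ extended by $\pi_i$ on $(y_i,y_i']$ for $i\le\ell$ (a disjoint $\ell$-tuple from $\bp^L$ to $\bq'^L$). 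Since $\|\pi\|_f+\|\tau\|_f=\|\sigma\|_f+\|\sigma'\|_f$, the inequality follows. Your approach instead reduces to Lemma~\ref{L:quadrangle} via the padding device, which is cleaner in that it avoids re-running the $\min/\max$ construction and makes the relationship to the ordinary quadrangle inequality transparent. The price is the bookkeeping you note: checking that degenerate one-point paths fit the paper's framework (they do, since $(z,z)$ has empty open interior so the first disjointness condition is vacuous, and the ordering condition is vacuous once $z_j$ lies strictly outside every other domain). One small overstatement worth tightening: a trivial path is not essentially disjoint from \emph{every} path, only from those whose domains lie entirely to one side of $z$; your placement of the $z_j$ (resp.\ $w_j$) guarantees exactly this, so the argument goes through.
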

\begin{proof}
	We prove the first inequality since the second one follows similarly. Let $\pi$ be an optimizer from $\bp$ to $\bq'$, and let $\tau$ be an optimizer from $\bp^L$ to $\bq^L$. For $i \le \ell$, we can define paths $\sig_i$ by setting $\sig_i = \min(\pi_i, \tau_i)$ on $[x_i, y_i]$. We also set $\sig_i = \pi_i$ for $i \in \{\ell +1, \dots, k\}$. As in the proof of Lemma \ref{L:rightmost-multi-path}, one can check that $\sig$ is a disjoint $k$-tuple from $\bp$ to $(\bq^L, \bq'^R)$. Moreover, $(\bq^L, \bq'^R) = \bq$ since $\bq^R = \bq'^R$.
	Similarly set $\sig_i' = \max(\pi_i, \tau_i)$ on $[x_i, y_i]$ and set $\sig_i = \pi_i$ on $(y_i, y_i']$. Again as in the proof of Lemma \ref{L:rightmost-multi-path}, $\sig'$ is a disjoint $k$-tuple from $\bp^L$ to $\bq'^L$. Therefore
	\[
	f[\bp\to\bq'] + f[\bp^L \to \bq^L] = \|\pi\|_f + \|\tau\|_f = \|\sig\|_f + \|\sig'\|_f \le f[\bp\to\bq] + f[\bp^L \to \bq'^L]. \qedhere
	\]
\end{proof}

We next record three deterministic bounds on multi-point last passage values which will be used to prove tightness. 
The first bound controls the difference between two last passage values.
For this lemma, define the fluctuation of a function $f \in \scrC^I$ on a set $A\sset \R\X I$ by
$$
\om(f, A) = \sup_{(x, i), (y, i) \in A} |f_i(x) - f_i(y)|.
$$
\begin{lemma}
\label{L:naive-bounds-diff}
Let $(\bp, \bq), (\bp, \bq')$ be two endpoint pairs of size $k$ with $\bp=(\bx,n)$ to $\bq=(\by,m)$ and $\bq'=(\by',m)$ differing only on a single coordinate $y_i < y_i'$. Let $\pi, \pi'$ be optimizers from $\bp$ to $\bq$ and $\bp$ to $\bq'$.  Then 
\begin{align*}
f[\bp \to \bq'] - f[\bp \to \bq] &\le  |\pi_i'(y_i) + 1 - m|\om(f, [y_i,y_i'] \X \II{m, \pi_i'(y_i)}),\\
f[\bp \to \bq] - f[\bp \to \bq'] &\le (2(k-i) +1)\om(f, [y_i,y_i'] \X \II{m, m+k-i}).
\end{align*}
\end{lemma}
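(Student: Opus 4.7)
The plan for both inequalities is to modify an optimizer for one endpoint pair into a valid disjoint $k$-tuple for the other, then compare lengths via the jump-time formula $\|\pi\|_f = \sum_r f_r(t_r) - f_r(t_{r+1})$.

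For the first inequality, I take an optimizer $\pi'$ from $\bp$ to $\bq'$ and truncate only its $i$-th path: define $\tilde\pi_i$ by $\tilde\pi_i(t) = \pi_i'(t)$ for $t \in [x_i, y_i)$ and $\tilde\pi_i(y_i) = m$. Since $\tilde\pi_i$ agrees with $\pi_i'$ on the open interval $(x_i, y_i)$, essential disjointness with the other $\pi_j'$ is preserved, so $\tilde\pi := (\pi_1', \ldots, \tilde\pi_i, \ldots, \pi_k')$ is a disjoint $k$-tuple from $\bp$ to $\bq$, giving $f[\bp \to \bq] - f[\bp \to \bq'] \ge \|\tilde\pi_i\|_f - \|\pi_i'\|_f$. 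Writing $\ell = \pi_i'(y_i)$ and unfolding the jump-time formula, the difference $\|\pi_i'\|_f - \|\tilde\pi_i\|_f$ telescopes into $\ell - m + 1$ terms of the form $f_r(s) - f_r(s')$ with $r \in \II{m, \ell}$ and $s, s' \in [y_i, y_i']$, which yields the claimed bound.

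For the second inequality, I start from an optimizer $\pi$ of $\bp \to \bq$ and build $\tilde\pi$ from $\bp$ to $\bq'$ as follows: keep $\tilde\pi_j = \pi_j$ for $j < i$; define $\tilde\pi_i$ as $\pi_i$ extended at line $m$ over $[y_i, y_i']$; and for $j \in \{i+1, \ldots, k\}$ set $t^{(j)} = \inf\{t \ge y_i : \pi_j(t) < m+(j-i)\}$ and let $\tilde\pi_j$ equal $m+(j-i)$ on $[t^{(j)}, y_i')$ and equal $\pi_j$ elsewhere (with $\tilde\pi_j = \pi_j$ if $t^{(j)} \ge y_i'$). One checks that each $\tilde\pi_j$ is cadlag and non-increasing, and that the inductive bound $\tilde\pi_j(t) \ge m + (j-i)$ on $[y_i, y_i')$ gives the required strict separation to ensure that $\tilde\pi$ is a disjoint $k$-tuple from $\bp$ to $\bq'$.

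For the length bookkeeping, $\|\tilde\pi_i\|_f - \|\pi_i\|_f = f_m(y_i') - f_m(y_i) \ge -\om$. For each $j > i$, rearranging the jump-time formula as $\|\pi\|_f = f_m(y) - f_n(x) + \sum_{r=m+1}^n (f_r(t_r) - f_{r-1}(t_r))$ gives
\[
\|\pi_j\|_f - \|\tilde\pi_j\|_f = \sum_{r \in A_j} \bigl[ (f_r(t_r) - f_r(y_i')) - (f_{r-1}(t_r) - f_{r-1}(y_i')) \bigr],
\]
with $A_j = \II{\pi_j(y_i')+1, m+(j-i)}$. A summand vanishes whenever $t_r = y_i'$, which is the case for every $r \in A_j$ with $r \le \pi_j(y_i'^-)$. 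The essential disjointness of $\pi_{i+1}, \ldots, \pi_k$ at times just below $y_i'$ forces $\pi_j(y_i'^-) \ge m + (j-i-1)$, so at most one value of $r \in A_j$ contributes and that contribution is at most $2\om(f, [y_i, y_i'] \X \II{m, m+(k-i)})$. Summing $2\om$ over $j \in \{i+1, \ldots, k\}$ and adding the $\tilde\pi_i$ contribution yields the desired $(2(k-i)+1)\om$ bound. The main obstacle will be this last step: getting the sharp coefficient $2(k-i)+1$ rather than a bound of order $(k-i)^2$ depends on combining the rearranged length formula with the disjointness lower bound on $\pi_j(y_i'^-)$, so that the pushed-up modification of each $\pi_j$ only shifts a single relevant jump time.
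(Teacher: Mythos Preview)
Your proof is correct and follows the same construction as the paper: truncate $\pi_i'$ for the first inequality, and for the second inequality lift each $\pi_j$ with $j>i$ to $\max\{m+(j-i),\pi_j\}$ on the interval $[y_i,y_i')$. The paper presents the second bound by simply asserting that $\|\pi_{i+j}\|_f - \|\pi_{i+j}^*\|_f \le 2\om(f,[y_i,y_i']\times\II{m,m+j})$ for each $j\ge 1$, without spelling out why the coefficient is $2$ rather than something growing with $j$. Your rearranged length formula together with the disjointness observation $\pi_j(y_i'^-)\ge m+(j-i-1)$ is exactly what justifies that assertion: it shows that at most the single jump time $t_{m+(j-i)}$ moves, so only one summand survives. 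The paper uses the analogous disjointness argument at $y_i^-$ (to verify $\pi^*$ is nonincreasing) but leaves the argument at $y_i'^-$ implicit, so in effect you have filled in a step the paper elides. One minor edge case worth noting: if $x_j\ge y_i'$ for some $j>i$ then $\pi_j$ needs no modification and contributes nothing, while if $x_j<y_i'$ then all of $\pi_{i+1},\dots,\pi_j$ are defined on a left neighborhood of $y_i'$ and your disjointness argument applies.
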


\begin{proof}
First observe that we can take the disjoint $k$-tuple $\pi'$ and produce a disjoint $k$-tuple $\tau$ from $\bp$ to $\bq$ by restricting the path $\pi'_i$ to the interval $[x_i, y_i]$ (and possibly redefining the value at the right endpoint $y_i$). The change in length from doing this is
$
\|\pi'_i|_{[y_i, y_i']}\|_f$. This is bounded above by the last passage value $f[(y_i, \pi_i'(y_i)) \to (y_i', m)]$, which is bounded above by
$$
|\pi_i'(y_i) + 1 - m|\om(f, [y_i,y_i'] \X \II{m, \pi_i'(y_i)}).
$$
Since $\|\tau\|_f \le f[\bp \to \bq]$, this yields the first bound in the lemma.

For the other bound, we can take the $k$-tuple $\pi$ and extend the component $\pi_i$ to a path $\pi_i^*$ from $(x_i, n)$ to $(y_i', m)$ by letting $\pi_i = m$ on the interval $[y_i, y_i']$. This may break the essential disjointness with the path $\pi_{i+1}$, so we may need to redefine $\pi_{i+1}$ on the interval $[y_i, y_i']$. We can deal with this by defining a new path $\pi^*_{i+1}$ so that $\pi^*_{i+1} = \max\{m + 1, \pi_{i+1}\}$ on the intersection $[y_i, y_i') \cap [x_{i+1}, y_{i+1})$, and setting $\pi^*_{i+1} = \pi_{i+1}$ elsewhere. Continuing in this way, we can redefine all of the paths $\pi_i, \dots, \pi_k$ to get functions $\pi_{i + j}^*$ that are equal to $\max\{m + j, \pi_{i + j}\}$ on each of the intervals $[a_{i+j}, b_{i+j}) = [y_i, y_i') \cap [x_{i+j}, y_{i+j})$, and are equal to $\pi_{i+j}$ elsewhere.

 We check that this process yields a disjoint $k$-tuple. The functions $\pi^*_{i+j}$ are cadlag and nonincreasing on the interval $[a_{i+j}, b_{i+j})$ where the path was redefined. Since this interval is closed on the left and open on the right, this ensures that $\pi^*_{i+j}$ is cadlag everywhere. Now, since $\pi^*_{i+j} \ge \pi_{i+j}$ on the interval $[a_{i+j}, b_{i+j})$, we have that $\pi^*_{i+j}$ is nonincreasing on $[a_i, y_{i+j}]$. To check that $\pi^*_{i+j}$ is nonincreasing everywhere it just remains to check the endpoint $a_{i+j}$, when $a_{i+j} = y_i$. For this, observe that the essential disjointness of $\pi_i, \pi_{i+1}, \dots, \pi_{i+j}$ implies that 
 $$
 \pi_i(y_i^-) < \pi_{i+1}(y_i^-) < \dots < \pi_{i+j}(y_i^-)
 $$
 which forces $\pi_{i+j}(y_i^-) \ge m+j$. Since $\pi^*_{i+j}(y_i) = \max\{m + j, \pi_{i + j}(y_i)\}$, this implies that $\pi^*_{i+j}$ is nonincreasing at $y_i$. Finally, observe that for any $j < j'$, the new definitions imply $\pi^*_{i+j} \le \pi^*_{i+j'}$ and the two paths are essentially disjoint on the interval $[a_{i+j}, b_{i+j}) \cap [a'_{i+j}, b'_{i+j})$. Hence  $\pi^* = (\pi_1, \dots, \pi_{i-1}, \pi_i^*, \dots, \pi^*_k)$ is a disjoint $k$-tuple from $\bp$ to $\bq'$.
 
  Moreover, for each $j \ge 1$ we have
$$
\|\pi_{i+j}\|_f - \|\pi^*_{i + j}\|_f \le 2 \om(f, [y_i,y_i'] \X \II{m, m + j}) \le 2 \om(f, [y_i,y_i'] \X \II{m, m + k-i}).
$$
For $j = 0$ we have the same bound, except with the $2$ removed since $\pi_i$ is not defined on $[y_i, y_i']$.
Summing over $j \in \II{i, k}$ and using that $\|\pi^*\|_f \le f[\bp \to \bq']$ yields the second inequality.
\end{proof}

The second lemma helps controls the weight of an individual path in a disjoint optimizer.

\begin{lemma}
\label{L:one-path-below}
For an endpoint pair $(p, q)$ of single points, let $(p^k, q^k)$ be an endpoint pair of size $k \ge 2$, where $p^k = (p, \dots, p)$ and $q^k = (q, \dots, q)$. Let $\pi = (\pi_1, \dots, \pi_k)$ be a disjoint optimizer for this endpoint pair. Then for all $i \in \II{1, k},$ we have
$$
\|\pi_i\|_f \ge f[p^k \to q^k] - f[p^{k-1} \to q^{k-1}].
$$
\end{lemma}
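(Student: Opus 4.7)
The plan is to remove the $i$-th path from the optimizer $\pi$ and argue that what remains is a valid disjoint $(k-1)$-tuple between the collapsed endpoint pair $(p^{k-1}, q^{k-1})$. Specifically, set $\tau = (\pi_1, \dots, \pi_{i-1}, \pi_{i+1}, \dots, \pi_k)$. Because all start points coincide at $p$ and all end points coincide at $q$, each remaining $\pi_j$ is still a path from $p$ to $q$. Pairwise essential disjointness and the ordering $\pi_j \le \pi_{j'}$ for $j < j'$ are inherited from $\pi$, so $\tau$ is a disjoint $(k-1)$-tuple from $p^{k-1}$ to $q^{k-1}$.

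By definition of $f[p^{k-1} \to q^{k-1}]$ as a supremum over such tuples,
\[
f[p^{k-1} \to q^{k-1}] \;\ge\; \|\tau\|_f \;=\; \sum_{j \ne i} \|\pi_j\|_f \;=\; \|\pi\|_f - \|\pi_i\|_f \;=\; f[p^k \to q^k] - \|\pi_i\|_f,
\]
where the last equality uses that $\pi$ is an optimizer for $(p^k, q^k)$. Rearranging gives the desired inequality.

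The only substantive point to check is that deleting one coordinate of a disjoint $k$-tuple indeed yields a disjoint $(k-1)$-tuple in the sense of Section \ref{S:lppm}; this holds because all three defining conditions (each $\pi_j$ being a path between the prescribed endpoints, pairwise essential disjointness, and the ordering $\pi_j \le \pi_{j'}$ for $j < j'$) are preserved under taking subtuples, and they are exactly the conditions required since the common endpoint $(p^{k-1}, q^{k-1})$ imposes no new constraints. There is no real obstacle here — the collapsed endpoint structure is what makes the argument painless, since no reindexing or padding is needed.
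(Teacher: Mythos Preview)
Your proof is correct and is essentially identical to the paper's own proof: both remove the $i$-th path, observe that the remaining $k-1$ paths form a disjoint $(k-1)$-tuple from $p^{k-1}$ to $q^{k-1}$, bound its length above by $f[p^{k-1}\to q^{k-1}]$, and rearrange. You even spell out slightly more carefully why the subtuple satisfies the defining conditions of a disjoint tuple.
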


\begin{proof}
For each $i$, the collection $(\pi_j : j \ne i, j \in \II{1, k})$ is a disjoint $(k-1)$-tuple from $p^{k-1}$ to $q^{k-1}$. Therefore
\begin{equation*}
\label{E:pkqk}
f[p^k \to q^k] = \|\pi\|_f = \|\pi_i\|_f + \sum_{j \ne i, j \in \II{1, k}} \|\pi_j\|_f \le \|\pi_i\|_f + f[p^{k-1} \to q^{k-1}].
\end{equation*}
The lemma follows by rearranging the above inequality.
\end{proof}

The next lemma gives naive bounds on the value of $f[\bp \to \bq]$ in terms of single-point last passage values and last passage values with clustered endpoints.

\begin{lemma}
\label{L:f-naive}
Let $(\bp, \bq) = (\bx, n; \by, m)$ be an endpoint pair of size $k \ge 2$. 
Then
$$
\sum_{i=1}^k \lf(f[p_i^k \to q_i^k] - f[p_i^{k-1} \to q_i^{k-1}]\rg) \le f[\bp \to \bq] \le \sum_{i=1}^k f[p_i \to q_i],
$$
where the notation $p^k$ is as in Lemma \ref{L:one-path-below}.
\end{lemma}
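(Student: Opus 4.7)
The upper bound is immediate: for any disjoint $k$-tuple $\pi = (\pi_1, \dots, \pi_k)$ from $\bp$ to $\bq$, each component $\pi_i$ is itself a single path from $p_i$ to $q_i$, so $\|\pi_i\|_f \le f[p_i \to q_i]$, and summing then passing to the supremum yields $f[\bp \to \bq] \le \sum_{i=1}^k f[p_i \to q_i]$.

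For the lower bound, the plan is to exhibit an explicit disjoint $k$-tuple from $\bp$ to $\bq$ whose total length already dominates the left-hand side. For each $i \in \II{1, k}$, let $\tau^{(i)} = (\tau^{(i)}_1, \dots, \tau^{(i)}_k)$ denote the rightmost disjoint optimizer from $p_i^k$ to $q_i^k$, which exists by Lemma \ref{L:rightmost-multi-path}, and define $\sigma_i := \tau^{(i)}_i$. Applying Lemma \ref{L:one-path-below} inside the cluster $\tau^{(i)}$ immediately gives $\|\sigma_i\|_f \ge f[p_i^k \to q_i^k] - f[p_i^{k-1} \to q_i^{k-1}]$, so once we verify that $(\sigma_1, \dots, \sigma_k)$ is a disjoint $k$-tuple from $\bp$ to $\bq$, summing these bounds will finish the proof.

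The ordering $\sigma_i \le \sigma_j$ for $i < j$ is a two-step chain. Because $x_i \le x_j$ and $y_i \le y_j$, the clustered endpoint pairs satisfy $(p_i^k, q_i^k) \le (p_j^k, q_j^k)$ as endpoint pairs of the same size, so the monotonicity of rightmost optimizers (Lemma \ref{L:mono-tree-multi-path}(i)) yields $\tau^{(i)}_i \le \tau^{(j)}_i$. Combining this with the internal ordering $\tau^{(j)}_i \le \tau^{(j)}_j$ inside the cluster $\tau^{(j)}$ gives $\sigma_i = \tau^{(i)}_i \le \tau^{(j)}_i \le \tau^{(j)}_j = \sigma_j$.

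The main obstacle is essential disjointness of $\sigma_i$ and $\sigma_j$ for $i < j$, which I handle by a sandwich argument. Suppose for contradiction that $\sigma_i(t) = \sigma_j(t) = c$ at some $t$ in the interior $(x_j, y_i)$ of their common domain. The same monotonicity chain as above, $\tau^{(i)}_i(t) \le \tau^{(j)}_i(t) \le \tau^{(j)}_j(t)$, then pinches the middle term between two equal values, forcing $\tau^{(j)}_i(t) = \tau^{(j)}_j(t) = c$. But $t \in (x_j, y_i) \subset (x_j, y_j)$ lies in the interior of the common domain of $\tau^{(j)}_i$ and $\tau^{(j)}_j$, and these are two essentially disjoint components of the disjoint $k$-tuple $\tau^{(j)}$, a contradiction. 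Hence $\sigma_i(t) \ne \sigma_j(t)$ everywhere on $(x_j, y_i)$, which together with the ordering completes the verification.
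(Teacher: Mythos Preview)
Your proof is correct and follows the same diagonal construction as the paper: take optimizers $\tau^{(i)}$ from $p_i^k$ to $q_i^k$, extract the diagonal components $\tau^{(i)}_i$, bound each via Lemma \ref{L:one-path-below}, and use monotonicity to check they form a disjoint $k$-tuple. The paper simply asserts that ``by the monotonicity established in Lemma \ref{L:mono-tree-multi-path}, the components $\tau^1_1, \dots, \tau^k_k$ form $k$ disjoint paths'', whereas you spell out the sandwich $\tau^{(i)}_i \le \tau^{(j)}_i \le \tau^{(j)}_j$ and use it both for the ordering and (by pinching) for essential disjointness; you are also more careful to specify \emph{rightmost} optimizers so that Lemma \ref{L:mono-tree-multi-path} applies.
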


\begin{proof}
	The upper bound follows since any disjoint $k$-tuple from $\bp$ to $\bq$ gives rise to $k$ paths from $p_i$ to $q_i$. For the lower bound, we construct a disjoint $k$-tuple from $\bp$ to $\bq$ using a diagonal argument. For each $i \in \II{1, k}$, let $\tau^i$ be a disjoint optimizer from $p_i^k$ to $q_i^k$. By the monotonicity established in Lemma \ref{L:mono-tree-multi-path}, the components $\tau^1_1, \dots, \tau^k_k$ form $k$ disjoint paths from $\bp$ to $\bq$. Finally,
	$\|\tau^i_i\|_f 
	\ge  f[p_i^k \to q_i^k]- f[p_i^{k-1} \to q_i^{k-1}]$ by Lemma \ref{L:one-path-below}. The conclusion follows.
\end{proof}

We finish this subsection by recording a metric composition law, which can deduced from the definition of last passage values without much difficulty. (This is also recorded as \cite[Lemma 4.4]{DOV}.)

\begin{lemma}
	\label{L:split-path}
	Let $(\bp, \bq) = (\bx, n; \by, m)$ be an endpoint pair of size $k$ and let $\ell \in \{m + 1, \dots, n\}$. Then
	$$
	f[\bp \to \bq] = \max_\bz f[\bp \to (\bz, \ell)] +  f[(\bz, \ell - 1) \to \bq],
	$$
	where the maximum is taken over $\bz \in \R^k_\le$ such that both $(\bp; \bz, \ell)$ and $(\bz, \ell - 1; \bq)$ are endpoint pairs.
\end{lemma}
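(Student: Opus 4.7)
The plan is to prove the two inequalities separately by splitting and concatenating disjoint $k$-tuples at line $\ell$. The deterministic nature of the statement means the argument amounts to a careful but routine bookkeeping of jump times and essential disjointness.

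For the upper bound $f[\bp \to \bq] \le \max_\bz f[\bp \to (\bz, \ell)] + f[(\bz, \ell-1) \to \bq]$, I would take a disjoint optimizer $\pi = (\pi_1, \dots, \pi_k)$ from $\bp$ to $\bq$ and define, for each $i$, the splitting point $z_i = \inf\{t \in [x_i, y_i] : \pi_i(t) < \ell\}$, which is the jump time $t_\ell$ for the path $\pi_i$. Since $\pi_i \le \pi_j$ for $i < j$, the path $\pi_i$ reaches values below $\ell$ no later than $\pi_j$, so $z_1 \le z_2 \le \dots \le z_k$, i.e.\ $\bz \in \R^k_\le$. Set $\pi_i^{\text{top}}$ to be the restriction of $\pi_i$ to $[x_i, z_i]$ (redefining its value at $z_i$ to be $\ell$ if necessary) and $\pi_i^{\text{bot}}$ to be the restriction of $\pi_i$ to $[z_i, y_i]$ together with the convention $\pi_i^{\text{bot}}(z_i^-) = \ell - 1$. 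Essential disjointness of $(\pi_i^{\text{top}})$ and $(\pi_i^{\text{bot}})$ inherits from $(\pi_i)$, since it is a condition on open intervals and the paths are merely being restricted. Comparing the jump-time formula for path length, we get $\|\pi_i\|_f = \|\pi_i^{\text{top}}\|_f + \|\pi_i^{\text{bot}}\|_f$: the levels $\ge \ell+1$ contribute identically to the top piece, levels $\le \ell-1$ contribute identically to the bottom piece, and the level-$\ell$ contribution $f_\ell(z_i) - f_\ell(t_{\ell+1})$ is fully captured in the top piece. Summing over $i$ gives $\|\pi\|_f \le f[\bp \to (\bz, \ell)] + f[(\bz, \ell-1) \to \bq]$, and taking the maximum over $\bz$ yields the inequality.

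For the lower bound, fix an admissible $\bz \in \R^k_\le$ and let $\tau^{\text{top}}$, $\tau^{\text{bot}}$ be disjoint optimizers from $\bp$ to $(\bz, \ell)$ and from $(\bz, \ell-1)$ to $\bq$ respectively. Concatenate coordinate-wise: define $\pi_i(t) = \tau_i^{\text{top}}(t)$ for $t \in [x_i, z_i)$ and $\pi_i(t) = \tau_i^{\text{bot}}(t)$ for $t \in [z_i, y_i]$. Each $\pi_i$ is cadlag nonincreasing since the left limit at $z_i$ is $\ell$ (from the top piece) while the value at $z_i$ is $\le \ell - 1$ (from the bottom piece). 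The ordering $\pi_i \le \pi_j$ for $i < j$ follows directly from the ordering within each tuple. The length identity $\|\pi_i\|_f = \|\tau_i^{\text{top}}\|_f + \|\tau_i^{\text{bot}}\|_f$ is again a direct jump-time computation, so summing gives the required lower bound.

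The one step deserving care is essential disjointness of the concatenated $k$-tuple. On any open subinterval contained in $(x_i, z_i) \cap (x_j, z_j)$ or $(z_i, y_i) \cap (z_j, y_j)$, essential disjointness is inherited from $\tau^{\text{top}}$ or $\tau^{\text{bot}}$. The only delicate region is the interval $(z_i, z_j)$ with $i < j$ and $z_i < z_j$: there $\pi_i$ takes values $\le \ell - 1$ (coming from the bottom tuple) while $\pi_j$ takes values $\ge \ell$ (coming from the top tuple), so $\pi_i(t) < \pi_j(t)$ on this interval. When several $z_i$ coincide, the paths meet at the shared point $(z_i, \ell)$ or $(z_i, \ell - 1)$, but this is a single point and does not violate essential disjointness on the open interior. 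Thus $\pi$ is a disjoint $k$-tuple from $\bp$ to $\bq$, completing the proof.
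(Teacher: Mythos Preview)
Your proof is correct and follows precisely the natural splitting/concatenation argument the paper alludes to. The paper does not actually give a proof of this lemma: it states that the result ``can be deduced from the definition of last passage values without much difficulty'' and cites \cite[Lemma~4.4]{DOV}. Your write-up fills in exactly those details, including the careful verification of essential disjointness across the splitting level, which is the only point requiring any care.
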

We note that the $\argmax$ of the right-hand side of the display above is precisely the location in $\R^k_\le$ where an optimizer from $\bp$ to $\bq$ jumps from line $\ell$ to line $\ell-1$.

\subsection{Melons}
\label{S:melons}
Let $f \in \scrC^{\II{1, n}}$. For any point $t \in \R$, the \textbf{melon of $f$ opened up at $t$} is a sequence of functions $W_t f = (W_t f_1, \dots, W_t f_n)$ from $[t, \infty)$ to $\R$ defined as follows. Set $W_t f_1(s) = f[(t, n) \to (s, 1)]$ and for $k \in \II{2, n}$ let
$$
W_t f_k(s) = f[(t, n)^k \to (s, 1)^k] - f[(t, n)^{k-1} \to (s, 1)^{k-1}].
$$
The functions $W_t f_i$ satisfy $W_t f_i(t) = 0$ for all $i$ and are ordered: $W_t f_1 \ge \dots \ge W_t f_n$, see the discussion in \cite[Section 4]{DOV} (before Proposition 4.1). Surprisingly, the melon operation preserves last passage values. This fact was essentially shown by Noumi and Yamada \cite{noumi2002tropical}. A version for single-point last passage values across continuous functions was proven by Biane, Bougerol, and O'Connell \cite{biane2005littelmann}. We quote a multi-point version from \cite{DOV} which applies to our context.
\begin{theorem}[\protect{\cite[Proposition 4.1]{DOV}}]
	\label{T:melon-lpp}
	Let $f \in \scrC^{\II{1, n}}$, and let $(\bp, \bq) = (\bx, n; \by, 1)$ be any endpoint pair. Then for all $t \le x_1$, we have
	$$
	f[\bp \to \bq] = W_tf[\bp \to \bq].
	$$
\end{theorem}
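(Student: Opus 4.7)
Since this theorem is quoted from \cite[Proposition 4.1]{DOV}, I would recover its proof in two main stages. The strategy is to first verify the identity in the ``clustered" case where all starting points coincide at $(t,n)$ and all endpoints coincide at $(s,1)$, and then reduce the general case to the clustered case via a sequence of local Pitman-type operations that preserve multi-point LPP values.

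For the clustered case, the goal is to show
$$W_tf[(t,n)^k \to (s,1)^k] = \sum_{i=1}^k W_tf_i(s) = f[(t,n)^k \to (s,1)^k],$$
where the second equality is just the definition of $W_tf_k$ as a telescoping sum. The lower bound on $W_tf[(t,n)^k \to (s,1)^k]$ is attained by the explicit disjoint $k$-tuple in which path $i$ jumps instantaneously from line $n$ down to line $i$ at time $t$, traverses $W_tf_i$ on $[t,s]$, and jumps from line $i$ up to line $1$ at time $s$. Because $W_tf_i(t)=0$, this path contributes exactly $W_tf_i(s)$ to the length; because path $i$ occupies line $i$ on all of $(t,s)$, the $k$ paths are essentially disjoint. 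The matching upper bound uses the ordering $W_tf_1 \ge \cdots \ge W_tf_n$ together with the common boundary value at $t$ to show by rearrangement that any alternative disjoint $k$-tuple can be ``pushed up" onto the top $k$ lines without decreasing total length.

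For the general case, I would introduce Pitman-type local swap operators $N_i$ that act only on a pair of adjacent lines $(f_i, f_{i+1})$ and leave all other lines unchanged, defined via suprema or infima of $f_i - f_{i+1}$ on the half-line $[t,\infty)$. The melon $W_tf$ is obtained from $f$ by finitely many such swaps applied in a bubble-sort pattern. The key combinatorial identity underlying RSK is that each $N_i$ preserves $f[\bp \to \bq]$ for every endpoint pair with $\bp = (\bx, n)$, $\bq = (\by, 1)$, and $t \le x_1$. Iterating gives $f[\bp \to \bq] = W_tf[\bp \to \bq]$.

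The main obstacle is verifying invariance of multi-point LPP under a single local swap $N_i$. For single-path LPP across two lines this is a direct computation using the inf/sup defining $N_i$. For multi-point LPP, one must show that any optimal disjoint $k$-tuple across $f$ can be matched by a disjoint $k$-tuple of equal length across $N_i f$. The cleanest route uses the quadrangle inequalities (Lemmas \ref{L:quadrangle} and \ref{L:quadrangle-2}) together with the tree structure of rightmost geodesics (Proposition \ref{P:tree-structure}) to decompose the $k$-tuple at the two affected lines and swap the pieces locally, leveraging the assumption that all endpoints lie on the extreme lines $n$ and $1$ so that the internal rearrangement is invisible to the boundary data.
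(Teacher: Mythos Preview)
This theorem is quoted in the paper as \cite[Proposition 4.1]{DOV} and is not proved here; the paper only provides attribution (to Noumi--Yamada and Biane--Bougerol--O'Connell for earlier versions) and cites the multi-point statement from \cite{DOV}. So there is no proof in this paper to compare your proposal against.

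That said, your overall architecture matches the standard proof: the melon is built by iterated two-line Pitman transforms, and one shows each such transform preserves last passage values. Your clustered-case computation is essentially Lemma~\ref{L:high-paths} combined with the definition of $W_tf$, and is correct. However, your final paragraph is where the real content lies, and your proposed tools are not the right ones. Invariance of multi-point LPP under a single two-line Pitman swap $N_i$ is not a consequence of quadrangle inequalities or the tree structure of rightmost geodesics; those are monotonicity and comparison facts that hold for \emph{any} environment and cannot by themselves produce an exact identity between two different environments $f$ and $N_i f$. The actual argument (as in \cite{DOV} and its antecedents) works directly with the two affected lines: one analyzes how a disjoint $k$-tuple crosses lines $i$ and $i+1$, and exhibits an explicit path-rewiring (or equivalently a tropical algebraic identity) showing that the optimal total length is the same before and after the Pitman reflection. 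The assumption $t \le x_1$ is needed precisely so that the Pitman transform, which is anchored at $t$, does not interfere with the starting points. Without this concrete two-line argument your proof has a genuine gap at its most important step.
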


A consequence of Theorem \ref{T:melon-lpp} is that disjointness of optimizers across the melon $W_tf$ is equivalent to disjointness across the original functions $f$. Let $(\bp, \bq)$ and $(\bp', \bq')$ be endpoint pairs such that the concatenation $(\bp \cup \bp', \bq \cup \bq')$ remains an endpoint pair. For disjoint $k$-tuples $\pi, \tau$ from $\bp$ to $\bq$ and $\bp'$ to $\bq'$, we say that $\pi$ and $\tau$ are \textbf{essentially disjoint} if $(\pi, \tau)$ is a disjoint $k$-tuple from $\bp \cup \bp'$ to $\bq \cup \bq'$.

For the next lemma, let $\pi_f^+[\bp, \bq]$ denote the rightmost optimizer from $\bp$ to $\bq$ across a function $f \in \scrC^{\II{1, n}}$, and let $\pi_f^-[\bp, \bq]$ denote the leftmost optimizer.

\begin{lemma}
	\label{L:disjoint-melon}
	Let $f \in \scrC^{\II{1, n}}$, and let $(\bp, \bq) = (\bx, n; \by, 1), (\bp', \bq') = (\bx', n; \by', 1)$ be two endpoint pairs, such that the concatenation $(\bp \cup \bp', \bq \cup \bq')$ remains an endpoint pair. Fix $t \le x_1.$
	
	Then $\pi_f^-[\bp, \bq]$ and $\pi_f^+[\bp', \bq']$ are essentially disjoint if and only if $\pi_{W_tf}^-[\bp', \bq']$ and $\pi_{W_tf}^+[\bu_2, \bv_2]$ are essentially disjoint.
\end{lemma}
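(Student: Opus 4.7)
The plan is to reduce the lemma to an equality of last passage values, which Theorem \ref{T:melon-lpp} preserves under the melon operation. Specifically, I will establish the following auxiliary claim: for any $g \in \scrC^{\II{1,n}}$,
\[
\pi_g^-[\bp, \bq] \text{ and } \pi_g^+[\bp', \bq'] \text{ are essentially disjoint} \iff g[\bp \cup \bp' \to \bq \cup \bq'] = g[\bp \to \bq] + g[\bp' \to \bq'].
\]
The inequality $g[\bp \cup \bp' \to \bq \cup \bq'] \le g[\bp \to \bq] + g[\bp' \to \bq']$ always holds, since any disjoint $(k+k')$-tuple from the combined endpoint pair splits into disjoint sub-tuples for $(\bp, \bq)$ and $(\bp', \bq')$ whose lengths are bounded by the respective single last passage values.

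For the forward direction of the claim, essential disjointness of $\pi_g^-[\bp, \bq]$ and $\pi_g^+[\bp', \bq']$ means their concatenation is a disjoint $(k+k')$-tuple from $(\bp \cup \bp', \bq \cup \bq')$ of length $g[\bp \to \bq] + g[\bp' \to \bq']$, yielding the reverse inequality and hence equality. For the reverse direction, assume equality; take any disjoint optimizer for the combined pair and split it into essentially disjoint sub-tuples $(\pi, \tau)$ for $(\bp, \bq), (\bp', \bq')$, each forced to be an optimizer for its sub-pair. To upgrade to leftmost/rightmost, note that $\pi_g^-[\bp, \bq] \le \pi$ and $\tau \le \pi_g^+[\bp', \bq']$ by definition. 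In the combined disjoint tuple $(\pi, \tau)$, the components of $\pi$ sit below the components of $\tau$ in the ordering imposed by sortedness of $(\bp \cup \bp', \bq \cup \bq')$. Hence if $\pi_g^-[\bp, \bq]_i(s) = \pi_g^+[\bp', \bq']_j(s)$ at some interior point $s$ of a common domain, the sandwich
\[
\pi_i(s) \;\ge\; \pi_g^-[\bp, \bq]_i(s) \;=\; \pi_g^+[\bp', \bq']_j(s) \;\ge\; \tau_j(s)
\]
combined with the combined-tuple ordering $\pi_i(s) \le \tau_j(s)$ forces $\pi_i(s) = \tau_j(s)$, contradicting the essential disjointness of $(\pi, \tau)$. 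Hence $(\pi_g^-[\bp, \bq], \pi_g^+[\bp', \bq'])$ is essentially disjoint.

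Once the auxiliary claim is in hand, the lemma follows directly. Since $t \le x_1$ (and, WLOG assuming $x_1 \le x_1'$, also $t \le x_1'$), Theorem \ref{T:melon-lpp} gives $f[\br \to \bs] = W_tf[\br \to \bs]$ for each of $(\bp, \bq)$, $(\bp', \bq')$, and $(\bp \cup \bp', \bq \cup \bq')$. Therefore the equality characterization holds for $g = f$ if and only if it holds for $g = W_tf$, and the auxiliary claim then yields the desired iff statement. The main obstacle is the reverse direction of the auxiliary claim: carefully verifying that the combined-tuple ordering (arising from sortedness of the concatenated endpoint pair) interacts with the leftmost/rightmost definitions to produce the required contradiction when essential disjointness fails for the extremal pair.
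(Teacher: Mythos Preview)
Your proposal is correct and follows the natural route: reduce essential disjointness of the extremal pair $(\pi_g^-[\bp,\bq],\pi_g^+[\bp',\bq'])$ to the additivity relation $g[\bp\cup\bp'\to\bq\cup\bq']=g[\bp\to\bq]+g[\bp'\to\bq']$, and then invoke Theorem~\ref{T:melon-lpp} to transfer this equality between $f$ and $W_tf$. The paper does not write out its own argument here; it simply points to \cite[Lemma~4.5]{DOV} and notes that the multi-point version is identical up to notation, and that lemma is proved by exactly this additivity characterization. So your approach coincides with the one the paper is citing.
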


Lemma \ref{L:disjoint-melon} is essentially Lemma 4.5 from \cite{DOV}, but for paths with multiple starting and ending points. The proofs are identical up to trivial notational changes.

Optimizers across melons will often be simpler to analyze than optimizers across the original functions. For example, we have the following simple lemma from \cite{DOV}. In this lemma, the function $f$ takes the form of a melon opened up at $0$.

\begin{lemma}[\protect{\cite[Lemma 5.1]{DOV}}]
	\label{L:high-paths}
	Let $f \in \scrC^{\II{1, n}}$ be such that $f_i(0) = 0$ for all $i \in \II{1, n}$ and $f_i \ge f_{i+1}$ for all $i \in \II{1, n-1}$. Fix $j  \le k \le n \in \N$. Let $(\bp, \bq) = (\bx, n; \by, 1)$ be an endpoint pair of size $k$ with $x_i = 0$ for all $i \in \II{1, j}$. Then there exists an optimizer
	$
	\pi
	$
	from $\bp$ to $\bq$
	such that $\pi_i(t) = i$ for all $t \in (0, y_1), i \in \II{1, j}$.
\end{lemma}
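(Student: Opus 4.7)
The central deterministic ingredient is a telescoping bound on single-path lengths. For any path $\pi$ from $(0, n)$ to $(y, m)$, writing $\|\pi\|_f = \sum_{i=m}^n [f_i(t_i) - f_i(t_{i+1})]$ and reorganizing via telescoping gives
\[
\|\pi\|_f = f_m(y) + \sum_{i=m+1}^n [f_i(t_i) - f_{i-1}(t_i)] \le f_m(y),
\]
using $f_n(0) = 0$ and $f_{i-1} \ge f_i$ (so each summand is $\le 0$). The bound is saturated by the direct path that jumps instantaneously from line $n$ to line $m$ at time $0^+$ and stays at $m$ until $y$. More generally, when the path additionally satisfies a lower constraint $\pi \ge j$ on $(0, y)$, a refined version of the telescoping isolates the contribution of lines $\ge j$ and yields $\|\pi\|_f \le f_j(y)$ regardless of the final value $\pi(y)$.

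I would then induct on $j$. The base case $j = 0$ is vacuous. For the inductive step, the inductive hypothesis supplies an optimizer $\tau$ with $\tau_i(t) = i$ on $(0, y_1)$ for every $i < j$. By essential disjointness from $\tau_{j-1}$, one has $\tau_j(t) \ge j$ on $(0, y_1)$. I would construct $\sigma$ by setting $\sigma_i = \tau_i$ for $i \ne j$, and defining $\sigma_j$ by $\sigma_j(0) = n$, $\sigma_j(t) = j$ on $(0, y_1)$, and on $[y_1, y_j]$ taking $\sigma_j$ to be an optimal path starting at $(y_1, j)$ (with freedom to drop instantaneously at $y_1$ to any line $\le j$) and ending at $(y_j, 1)$, subject to essential disjointness from the unchanged paths. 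The existence of this constrained optimizer follows from the compactness argument behind Lemma \ref{L:rightmost-multi-path}.

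The length inequality $\|\sigma_j\|_f \ge \|\tau_j\|_f$ is verified by splitting both paths at $y_1$. On $[0, y_1]$, the refined telescoping bound gives $\|\tau_j|_{[0, y_1]}\|_f \le f_j(y_1) = \|\sigma_j|_{[0, y_1]}\|_f$. On $[y_1, y_j]$, if $m := \tau_j(y_1) \le j$ then $\sigma_j$ drops from $j$ to $m$ at $y_1$ and follows $\tau_j$ thereafter, and the inequality is immediate. If $m > j$, define $\sigma_j$ on $[y_1, y_j]$ by reusing $\tau_j$'s jump times for lines $\ell \le j$ while collapsing its descent through lines $j+1, \dots, m$ to occur instantaneously at $y_1$. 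Writing out both telescoped expressions, the difference $\|\sigma_j\|_f - \|\tau_j\|_f$ reduces to a sum of non-negative terms of the form $f_{\ell-1}(\cdot) - f_\ell(\cdot)$ coming both from the refined bound on $[0, y_1]$ (terms indexed $\ell \in \II{m+1, n}$) and from the collapsed jumps on $[y_1, y_j]$ (terms indexed $\ell \in \II{j+1, m}$), hence is $\ge 0$.

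The main obstacle is verifying that $\sigma$ remains a disjoint $k$-tuple. On $(0, y_1)$ this is immediate: $\sigma_j = j$ slots cleanly between $\tau_{j-1} = j-1$ and the paths $\tau_{j'}$ for $j' > j$, which by the original disjointness of $\tau$ satisfy $\tau_{j'} \ge j+1$ there. On $[y_1, y_j]$, however, the collapsed $\sigma_j$ uses lower lines than $\tau_j$ did and could in principle collide with $\tau_{j-1}$ or other $\tau_i$. The resolution is to take $\sigma_j$ as the leftmost optimizer of the constrained sub-problem so that disjointness is built into its definition; the length-preservation step then needs to be extended to this constrained optimum by an exchange argument combining the telescoping bound above with the monotonicity of multi-point optimizers (Lemma \ref{L:mono-tree-multi-path}) and the quadrangle inequality (Lemma \ref{L:quadrangle}), showing the constrained optimum still dominates $\|\tau_j\|_f$.
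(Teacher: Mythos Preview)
The paper does not supply its own proof of this lemma; it is quoted verbatim from \cite[Lemma 5.1]{DOV}. So there is no in-paper argument to compare against, and I assess your proposal on its own.

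Your telescoping identity and the bounds $\|\pi\|_f \le f_m(y)$ and $\|\pi\|_f \le f_j(y)$ (the latter under the constraint $\pi \ge j$ on $(0,y)$) are correct, as is the length comparison $\|\sigma_j\|_f \ge \|\tau_j\|_f$ for the explicit ``collapsed'' path you describe (which is simply $\sigma_j(t) = \min(\tau_j(t), j)$ on $(0, y_j]$, with $\sigma_j(0^-)=n$). The inductive framework is also fine.

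Where you go astray is in the last paragraph. The collapsed path $\sigma_j = \min(\tau_j, j)$ is \emph{already} essentially disjoint from every $\tau_i$, $i \ne j$, so there is no need to pass to a constrained leftmost optimizer or invoke Lemmas~\ref{L:mono-tree-multi-path} and~\ref{L:quadrangle}. For $i > j$ one has $\sigma_j \le \tau_j < \tau_i$ on the open intersection of domains. For $i < j$ the point you overlook is that $\tau_i$ is \emph{nonincreasing} with $\tau_i \equiv i$ on $(0, y_1)$ by the inductive hypothesis, hence $\tau_i \le i \le j-1$ on all of $(0, y_i]$; since also $\tau_j > \tau_i$ on $(0,y_i)$ by essential disjointness of $\tau$, one gets $\sigma_j(t) = \min(\tau_j(t), j) > \tau_i(t)$ throughout $(0, y_i)$. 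This closes the argument without any further machinery.

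As a further simplification, the induction can be dispensed with entirely. Essential disjointness of any optimizer $\tau$ forces $\tau_1(t) < \tau_2(t) < \dots < \tau_k(t)$ on $(0, y_1)$, hence $\tau_i(t) \ge i$ there for every $i$. Setting $\sigma_i = \min(\tau_i, i)$ for all $i \le j$ simultaneously (and $\sigma_i=\tau_i$ for $i>j$) then yields a disjoint $k$-tuple with $\sigma_i \equiv i$ on $(0, y_1)$ and $\|\sigma_i\|_f \ge \|\tau_i\|_f$ for each $i$ by the same telescoping computation.
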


In particular, Lemma \ref{L:high-paths} gives that the leftmost optimizer from $(t, n)^k$ to any $(\by, 1)$ (with $\by \in \R^k_\le$) in any melon $W_t f$ will only use the top $k$ lines $W_tf_1, \dots W_t f_k$.

\subsection{Brownian melons and the parabolic Airy line ensemble}
\label{S:brownian-melons}

\FloatBarrier

Melons have a remarkable probabilistic structure when the input function consists of $n$ independent two-sided Brownian motions $B^n = (B^n_1, \dots, B^n_n)$. In this case, the \textbf{Brownian $n$-melon} $W^n := W_0 B^n$ is given by $n$ Brownian motions started at $0$, conditioned to never intersect. This was first shown in \cite[Theorem 7]{o2002representation}. This structure allows one to find the scaling limit of $W^n$ at the edge.
See Figure \ref{fig:Brown-melon} for an illustration.
\begin{figure}[t]
    \centering
\begin{tikzpicture}[line cap=round,line join=round,>=triangle 45,x=3cm,y=4cm]

 \node[anchor=south west] at (0,0) {\includegraphics[width=\textwidth]{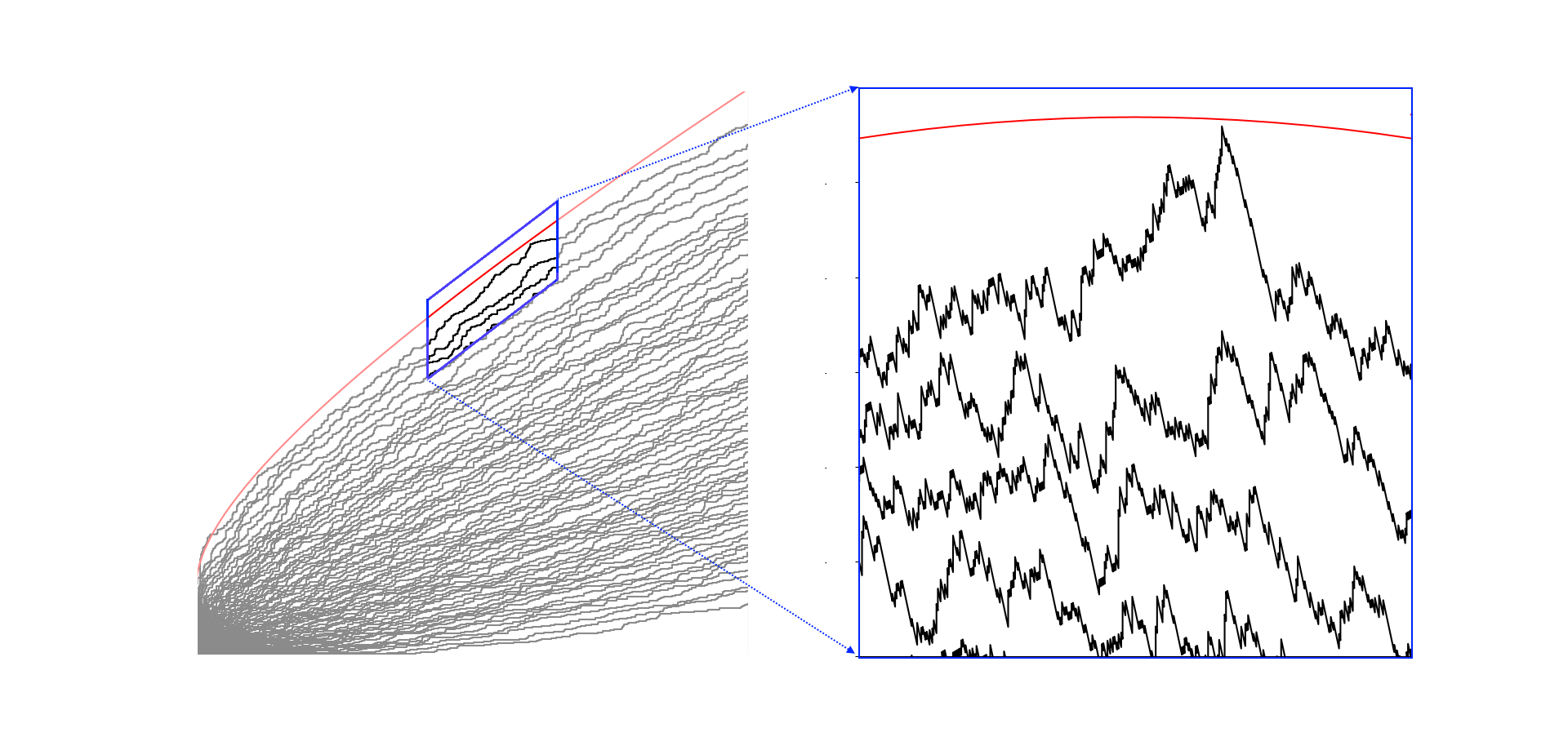}};

\clip(-0.03,-0.03) rectangle (4.95,1.45);

\draw (1.67,1.16) [red] node[anchor=west]{$n^{-1/6}$};
\draw (1.67,1.12) [red] node[anchor=south east]{$n^{-1/3}$};
\draw[{latex}-{latex}] [red] (0.63,0.95) -- (1.31,0.95);
\draw[{latex}-{latex}] [red] (1.33,0.21) -- (1.33,0.83);
\draw (0.96,0.95) [red] node[anchor=south]{$1$};
\draw (1.33,0.6) [red] node[anchor=west]{$2\sqrt{n}$};

\end{tikzpicture}
\caption{A simulation of a Brownian melon (from \cite{DNV}) and a window in which it converges to the parabolic Airy line ensemble.}   \label{fig:Brown-melon}
\end{figure}

First tilt and rescale the melons $W^n = (W^n_1, \dots, W^n_n)$. Define $\scrB^n = (\scrB^n_1, \dots, \scrB^n_n)$ by
\begin{equation}
\label{E:Bnk-def}
\scrB^n_i(y) = n^{1/6} \lf(W_i^n(1 + 2yn^{-1/3}) - 2 \sqrt{n} - 2 y n^{1/6} \rg).
\end{equation}
Then the functions $\scrB^n$ converges in distribution to a continuous limit known as the parabolic Airy line ensemble. 

\begin{theorem}
[\protect{\cite[Theorem 3.1]{CH}}]\label{T:airy-line-ensemble}
The sequence $\scrB^n$ converges in distribution to a continuous limit $\scrB:\R\X\N \to \R$, in the topology of uniform convergence on compact subsets of $\R\X\N$. The limit $\scrB$ is the \textbf{parabolic Airy line ensemble}. 
\end{theorem}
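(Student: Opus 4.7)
The plan is to combine finite-dimensional convergence of the rescaled melons $\scrB^n$ with tightness in the topology of uniform convergence on compact subsets of $\R\X\N$. The key structural input, due to O'Connell and Yor (referenced in the excerpt), is that $W^n$ has the law of $n$ independent Brownian motions started at $0$ conditioned to never intersect on $(0,\infty)$; equivalently, up to time reversal the marginal $(W^n_1(t),\dots,W^n_n(t))$ at a fixed $t$ is distributed as the ordered eigenvalues of a $t$-scaled $n\times n$ GUE matrix, and the joint law across times is Dyson Brownian motion. In particular each fixed-time marginal is a determinantal point process with explicit Hermite correlation kernel, and the multi-time joint law is the extended Hermite kernel.

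First I would establish finite-dimensional convergence. For any finite set of points $(y_1, i_1),\dots,(y_m, i_m) \in \R \X \N$, the joint distribution of $(\scrB^n_{i_j}(y_j))$ is controlled by the extended Hermite kernel evaluated at the edge-scaled locations $(1 + 2y_j n^{-1/3}, i_j)$ with height rescaling $2\sqrt{n} + 2y_j n^{1/6}$. A steepest descent (Plancherel--Rotach) analysis of the Hermite kernel near the edge $2\sqrt{n}$ shows that, after this rescaling, the kernel converges uniformly on compacts to the extended Airy kernel. Together with a gap-probability argument (Fredholm determinant convergence for bounded cylinder events), this gives convergence of the finite-dimensional distributions of $\scrB^n$ to those of the parabolic Airy line ensemble $\scrB$, characterized as the determinantal process with extended Airy kernel after subtracting the parabola.

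Next I would prove tightness. The most convenient route uses the \textbf{Brownian Gibbs property} enjoyed by $\scrB^n$: for any finite window $[a,b]$ and any finite set of indices $\II{j,k}$, the conditional law of $(\scrB^n_j,\dots,\scrB^n_k)$ on $[a,b]$ given all other data is that of $k-j+1$ independent Brownian bridges between the endpoint values, conditioned not to intersect each other and not to cross the curves $\scrB^n_{j-1}$ from above and $\scrB^n_{k+1}$ from below. Combined with one-point tail bounds inherited from the edge asymptotics (Tracy--Widom upper and lower tails for each $\scrB^n_i(y)$, uniform in $n$ and locally uniform in $y$), one can compare each curve to an unconditioned Brownian bridge on a small subwindow via a monotone coupling and an acceptance-probability lower bound. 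This yields a uniform (in $n$) modulus of continuity for each fixed curve on each compact, and hence tightness in $C(\R \X \N)$ with the topology of uniform-on-compact convergence.

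Finite-dimensional convergence plus tightness yields $\scrB^n \cvgd \scrB$ in the stated topology, and continuity of the limit follows from the modulus of continuity bounds passing through the weak limit. The main obstacle I anticipate is the tightness step: while the Brownian Gibbs property is the right tool, actually extracting a uniform modulus of continuity requires uniform-in-$n$ control on one-point tails of $\scrB^n_i$ (so that conditioning on the adjacent curves $\scrB^n_{j-1}, \scrB^n_{k+1}$ costs only a polynomial factor in the acceptance probability) together with an $\scrL$-Brownian comparison lemma showing that the conditioned-nonintersecting bridges inherit the modulus of ordinary Brownian bridges up to a bounded factor. Both ingredients are the substantive content of the cited work \cite{CH}, and with them in hand the convergence $\scrB^n \to \scrB$ follows from the standard Prohorov argument.
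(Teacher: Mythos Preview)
The paper does not prove this statement; it is quoted directly from \cite{CH} (Corwin--Hammond, Theorem 3.1) as an input from the literature, with the only added remark being that Corwin and Hammond work with nonintersecting Brownian bridges on $[0,2]$ rather than nonintersecting Brownian motions, and that these are equivalent in the scaling limit via the standard bridge-to-motion transformation.

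Your sketch is an accurate outline of the Corwin--Hammond argument itself: finite-dimensional convergence via edge asymptotics of the extended Hermite kernel to the extended Airy kernel, followed by tightness via the Brownian Gibbs property combined with uniform one-point tail bounds. So your proposal is not so much a comparison with the paper's proof as a summary of the cited reference. One small point: your statement that the one-time marginal of $W^n$ is GUE eigenvalues is correct, but the identification of the full process with Dyson Brownian motion (as opposed to merely being a Brownian-Gibbs ensemble with the correct one-time marginals) is a more delicate fact; however, the Brownian Gibbs property alone suffices for the tightness argument, so this does not affect your sketch.
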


The qualifier parabolic comes from the fact that the process $\scrA(x) = \scrB(x) + x^2$ is stationary, so $\scrB$ has a parabolic shape. The process $\scrA$ is known as the \textbf{(stationary) Airy line ensemble}. Note that Corwin and Hammond technically worked with nonintersecting Brownian bridges (with diffusion parameter $1$) from time $0$ to time $2$, rather than nonintersecting Brownian motions $\scrB^n$ (with diffusion parameter $1$). The two objects are equivalent in the Airy line ensemble scaling limit by virtue of the standard transformation between Brownian bridge and Brownian motion.

Both Brownian melons and the parabolic Airy line ensemble are strictly ordered and satisfy a useful resampling property called the \textbf{Brownian Gibbs property}. This makes these objects useful in practice. The next theorem gathers results from \cite{CH}, from Definition 2.13 and Theorem 1. We choose not to introduce the Brownian Gibbs property as formally as in that paper, since it only plays a tangential role in this paper.

\begin{theorem}
	\label{T:melon-Airy-facts}
Let $W^n$ denote a Brownian $n$-melon, let $\scrB$ denote the parabolic Airy line ensemble and let $\tilde \scrB = 2^{-1/2} \scrB$. Almost surely,
\begin{equation}
\label{E:Wnii}
\begin{split}
W^n_i(t) &> \; W^n_{i+1}(t) \quad \text{ for all } i \in \II{1, n}, t > 0, \quad \text{ and } \\
\quad \tilde \scrB_i(t) &> \; \tilde\scrB_{i+1}(t) \quad \text{ for all } i \in \N, t \in \R.
\end{split}
\end{equation}
Moreover, for any box $S = \II{\ell, k} \X [a, b]$ with $a > 0$ and $k \le n$, the process $W^n|_S$ given $W^n|_S^c$ is just given by $k - \ell + 1$ Brownian bridges (with diffusion parameter $1$) connecting up the points $W^n_i(a)$ and $W^n_i(b)$, conditioned so that the nonintersection conditions in \eqref{E:Wnii} hold. This property is called the Brownian Gibbs property.

Similarly, for any box $S = \II{\ell, k} \X [a, b]$, the process $\tilde \scrB|_S$ given $\tilde \scrB|_S^c$ is just given by $k - \ell + 1$ Brownian bridges (with diffusion parameter $1$) connecting up the points $\tilde\scrB_i(a)$ and $\tilde\scrB_i(b)$, conditioned so that the nonintersection conditions in \eqref{E:Wnii} hold.

\end{theorem}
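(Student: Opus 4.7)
The theorem aggregates classical facts, so I would split the work into two halves: first establish both the strict ordering \eqref{E:Wnii} and the Brownian Gibbs property for the prelimit melon $W^n$, and then transfer them to $\scrB$ by a weak-convergence argument together with a scale check to get diffusion parameter $1$ after dividing by $\sqrt 2$.

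For $W^n$, I would use the identification (O'Connell--Yor, recalled at the start of Section \ref{S:brownian-melons}) of $W^n=W_0 B^n$ with $n$ independent standard Brownian motions started from $0$ conditioned, in the Doob $h$-transform sense, never to intersect for $t>0$. This yields the strict ordering $W^n_i(t)>W^n_{i+1}(t)$ for all $t>0$ as a definitional property of the object. The Brownian Gibbs property on a box $S=\II{\ell,k}\times[a,b]$ with $a>0$ then follows from the strong Markov property applied at times $a$ and $b$: conditionally on $W^n_i(a)$ and $W^n_i(b)$ and on the external curves $W^n_j|_{[a,b]}$ for $j\notin\II{\ell,k}$, the trajectories $W^n_{\ell},\ldots,W^n_k$ on $[a,b]$ are independent Brownian bridges (diffusion parameter $1$, since the driving motions had parameter $1$) conditioned on the non-intersection events stated in \eqref{E:Wnii} inside the box and against the external curves.

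For $\scrB$, Theorem \ref{T:airy-line-ensemble} gives $\scrB^n\Rightarrow\scrB$. The scaling
\[
\scrB^n_i(y)=n^{1/6}\bigl(W^n_i(1+2yn^{-1/3})-2\sqrt n-2yn^{1/6}\bigr)
\]
converts a time increment $\Delta y$ into a Brownian increment of variance $n^{1/3}\cdot 2\Delta y\,n^{-1/3}=2\Delta y$, so each coordinate of $\scrB$ is locally a Brownian motion with diffusion parameter $\sqrt 2$; hence $\tilde\scrB=2^{-1/2}\scrB$ has diffusion parameter $1$. I would then pass the prelimit Brownian Gibbs property to the limit by writing the Gibbs resampling as a measurable operation on the ensemble restricted to the complement of $S$ and showing it is continuous at the limit law (using that the endpoint values $\tilde\scrB_i(a),\tilde\scrB_i(b)$ and the non-intersection event inside $S$ are continuity sets almost surely). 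This yields exactly the asserted conditional law of $\tilde\scrB|_S$ given $\tilde\scrB|_{S^c}$.

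The main obstacle is the strict ordering of $\tilde\scrB$: weak convergence of $\scrB^n$ only delivers the weak inequality $\tilde\scrB_i\ge\tilde\scrB_{i+1}$, and one must rule out coincidences. I would argue by an a posteriori resampling using the Gibbs property just proved. If $\mathbb{P}(\tilde\scrB_i(t_0)=\tilde\scrB_{i+1}(t_0))>0$ for some fixed $t_0$, resample the pair $(\tilde\scrB_i,\tilde\scrB_{i+1})$ on a small box $\II{i,i+1}\times[t_0-\epsilon,t_0+\epsilon]$; conditionally on the external data, these become two nonintersecting Brownian bridges pinned above/below by the curves $\tilde\scrB_{i-1}$ and $\tilde\scrB_{i+2}$, and such a pair is almost surely strictly ordered on the open interval, a contradiction. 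A countable dense set of times $t_0$ together with continuity of $\tilde\scrB$ extends the strict ordering to all $t\in\R$. This is precisely the route taken in \cite{CH}, and I would simply cite Definition 2.13 and Theorem 1 there rather than reproduce the details.
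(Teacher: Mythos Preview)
Your proposal is correct and aligns with the paper's treatment: the paper does not prove this theorem at all but simply cites \cite{CH} (Definition 2.13 and Theorem 1 therein), exactly as you conclude in your final sentence. Your sketch accurately outlines the argument that \cite{CH} carries out---the $h$-transform description of $W^n$ giving the prelimit Gibbs property, the diffusion-parameter computation under the rescaling \eqref{E:Bnk-def}, passage of the Gibbs property through the weak limit, and the resampling argument to upgrade weak to strict ordering---so there is nothing to add beyond noting that the paper offloads all of this to the citation.
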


We end this subsection by recording a few uniqueness results for Brownian last passage percolation. These results are stated for last passage percolation between multiple points on potentially different lines.

\begin{lemma}
	\label{L:brown-unique}
	Let $(\bp, \bq) = (p_1, \dots, p_k, q_1, \dots, q_k) \in ([x, y] \X \II{m,n})^{2k}$ be an endpoint pair. Let $I \sset \Z$ be an integer interval containing $\II{m,n}$, and let $B= \{B_i : i  \in I\}$ be a sequence of random continuous functions with the following property.
	For $[a, b] \sset (x, y)$ and any $i \in I$, let $\scrF_{[a, b] \X \{i\}}$ be the $\sig$-algebra generated by all increments $B_j(t) - B_j(s)$ with 
	$$
	[t, s] \X \{j\} \sset [x, y] \X \II{m, n} \smin \lf( (a, b) \X \{i\} \rg).
	$$
	Suppose that for any $[a, b] \sset (x, y)$ and $i \in I$, the conditional distribution
	\begin{equation}
	\label{E:pBiBi}
	\p(B_i(b) - B_i(a) \in \cdot \;|\; \scrF_{[a, b] \X \{i\}})
	\end{equation}
	is a continuous distribution almost surely. Then there is almost surely a unique optimizer $\pi$ from $\bp$ to $\bq$.
\end{lemma}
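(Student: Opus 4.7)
I would reduce the statement to showing that the rightmost and leftmost optimizers $\pi^+, \pi^-$ from $\bp$ to $\bq$ produced by Lemma \ref{L:rightmost-multi-path} coincide almost surely. Since every optimizer $\tau$ satisfies $\pi^- \le \tau \le \pi^+$ by definition, their coincidence forces uniqueness. Suppose for contradiction that $\p(\pi^+ \ne \pi^-) > 0$. Because both are disjoint $k$-tuples with finitely many jump times across the finitely many lines in $\II{m, n}$, a countable union argument over rational data localizes the disagreement: there must exist rationals $a < b$ in $(x, y)$, a component index $\ell \in \II{1, k}$, and a line $j \in \II{m+1, n}$ such that, on an event $E_{\ell, j, a, b}$ of positive probability, the jump times $t^\pm$ of $\pi^\pm_\ell$ off line $j$ both lie in $(a, b)$ and satisfy $t^- < t^+$.

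Next, I would use the metric composition law (Lemma \ref{L:split-path}) at the horizontal slabs $\{t = a\}$ and $\{t = b\}$ to decompose $\|\pi^\pm\|_B$ into three pieces corresponding to the time intervals $[x, a]$, $[a, b]$, and $[b, y]$. A further countable enumeration over the possible rational configurations of the $\pi^\pm$ outside $[a, b]$ reduces us to the subevent where $\pi^+$ and $\pi^-$ agree on the complement of $[a, b]$. On this subevent, the equality $\|\pi^+\|_B = \|\pi^-\|_B$ collapses to an identity among increments of the $B_i$'s purely inside $[a, b]$. Localizing $t^-$ and $t^+$ to fixed disjoint rational subintervals $[a_1, b_1], [a_2, b_2] \sset (a, b)$, and then refining $[a_1, b_1]$ to an arbitrarily small rational subinterval $[a_1', b_1']$ containing $t^-$, this identity rewrites as an equation fixing the increment $B_j(b_1') - B_j(a_1')$ as a measurable function of data in $\scrF_{[a_1', b_1'] \times \{j\}}$. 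By the continuous conditional distribution hypothesis \eqref{E:pBiBi}, the conditional probability of this event vanishes, giving the desired contradiction.

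\textbf{Main obstacle.} The delicate step is the last one: the random times $t^\pm$ are not a priori measurable with respect to any of the conditioning $\sigma$-algebras, and the Brownian-like increment $B_j(t^+) - B_j(t^-)$ at which the length identity localizes may itself depend on pieces of $B_j$ internal to $[a_1', b_1']$. I would handle this by iterating the rational enumeration simultaneously over (i) the combinatorial type of the optimizer on $[a, b]$ (which jumps occur on which line in which order), (ii) the rational enclosures of all intermediate jump times of both $\pi^+$ and $\pi^-$, and (iii) the rational enclosures of $t^\pm$ themselves. On each resulting subevent, the coefficients in the length identity become deterministic functions of $\scrF_{[a_1', b_1'] \times \{j\}}$-measurable data, and the hypothesis \eqref{E:pBiBi} can be applied cleanly. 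Summing the resulting zero-probability bounds over this countable family yields $\p(E_{\ell, j, a, b}) = 0$ and completes the proof.
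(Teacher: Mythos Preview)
Your overall strategy — localize a disagreement between two optimizers to a small rational segment on a single line and then invoke \eqref{E:pBiBi} — is the right instinct, and it is also how the paper begins. But the execution has a genuine gap at exactly the point you flag as the main obstacle, and your proposed fix does not close it.

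The issue is this. Restricting to a subevent (via your countable enumeration over combinatorial types and rational enclosures of jump times) does \emph{not} make the quantities $\|\pi^+\|_B$, $\|\pi^-\|_B$, or the jump times $t^\pm$ measurable with respect to $\scrF_{[a_1',b_1']\times\{j\}}$. The leftmost and rightmost optimizers $\pi^\pm$ are global functionals of the entire field $B$; even on the event where $\pi^-$ happens to avoid the segment $[a_1',b_1']\times\{j\}$, the \emph{identity} of $\pi^-$ (and hence its length) still depends on $B_j|_{[a_1',b_1']}$, because changing that increment could change which $k$-tuple is the leftmost optimizer. So your asserted rewriting ``$B_j(b_1')-B_j(a_1')=$ (something $\scrF_{[a_1',b_1']\times\{j\}}$-measurable)'' does not hold: the right-hand side you would extract from $\|\pi^+\|_B=\|\pi^-\|_B$ is not $\scrF_{[a_1',b_1']\times\{j\}}$-measurable. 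Similarly, your reduction ``to the subevent where $\pi^+$ and $\pi^-$ agree on the complement of $[a,b]$'' is unjustified: rational enclosures of jump times do not force exact agreement, and there is no reason the optimizers should differ at only one location.

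The paper's proof sidesteps all of this with one clean idea you are missing. For a rational segment $\gamma=[a,b]\times\{i\}$ and a component index $j$, define two \emph{restricted} last passage values: $B_{\gamma,j}[\bp\to\bq]$, the supremum over disjoint $k$-tuples whose $j$th path contains $\gamma$ in its zigzag graph, and $B_{\gamma^c}[\bp\to\bq]$, the supremum over disjoint $k$-tuples all of whose paths avoid $\gamma$. Now $B_{\gamma^c}$ is manifestly $\scrF_\gamma$-measurable (no competing path uses the increment), and $B_{\gamma,j}=X+B_i(b)-B_i(a)$ with $X$ $\scrF_\gamma$-measurable (every competing path uses the increment exactly once, so subtract it off). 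Hypothesis \eqref{E:pBiBi} then gives $B_{\gamma,j}\ne B_{\gamma^c}$ almost surely for each rational $\gamma$ and each $j$, and two distinct optimizers would force equality for some such pair. The passage from ``specific optimizer'' to ``supremum over a restricted class'' is precisely what makes the measurability transparent; without it your argument does not go through.
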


This lemma is due to Hammond, see \cite[Lemma B.1]{hammond2019patchwork}. 
However, since we have stated it in greater generality than in that paper, we include a brief proof using Hammond's method.

\begin{proof}[Proof of Lemma \ref{L:brown-unique}]
For any $\gamma = [a, b] \X \{i\}$ with $[a, b] \sset (x, y)$, $i \in I$, and $j \in \II{1, k}$, let 
$$
B_{\ga, j}[\bp \to \bq] = \sup \|\pi\|_B,
$$
where the supremum is taken over all disjoint $k$-tuples from $\bp$ to $\bq$ subject to the constraint that $\ga \sset \Ga(\pi_j)$. Define $B_{\ga^c}[\bp \to \bq]$ similarly, but with the supremum taken over all disjoint $k$-tuples from $\bp$ to $\bq$ subject to the constraint that $\ga \cap \Ga(\pi_i) = \emptyset$ for all $i \in \II{1, k}$.
 We claim that almost surely, 
\begin{equation}
\label{E:Bgaj}
B_{\ga, j}[\bp \to \bq] \ne B_{\ga^c}[\bp \to \bq] 
\end{equation}
for all $j$. Indeed, $B_{\ga^c}[\bp \to \bq]$ is $\scrF_\ga$-measurable, and $B_{\ga, j}[\bp \to \bq] = X + B_i(b) - B_i(a)$, where $X$ is an $\scrF_\ga$-measurable random variable. Since $B_i(b) - B_i(a)$ has a continuous distribution, conditionally on $\scrF_\ga$, this yields \eqref{E:Bgaj}. Now, \eqref{E:Bgaj} holds simultaneously almost surely for all $\ga$ with rational endpoints and $j \in \II{1, k}$. On the other hand, if there were two optimizers $\pi, \pi'$ from $\bp$ to $\bq$, then there would exist a $j \in \II{1, k}$ and a $\ga$ with rational endpoints such that $\ga \cap \Ga(\pi_i) = \emptyset$ for all $i \in \II{1, k}$ but $\ga \sset \Ga(\pi'_j)$. Therefore
$$
B_{\ga, j}[\bp \to \bq] = \|\pi'\|_\scrB = \|\pi\|_\scrB= B_{\ga^c}[\bp \to \bq],
$$
contradicting \eqref{E:Bgaj}.
\end{proof}
 
The conditions of the lemma are set up so that they apply to all the objects that we work with.

 \begin{lemma}
	\label{L:specifics}
	The conditions of Lemma \ref{L:brown-unique} are satisfied when $B$ is a collection of independent Brownian motions for any $I$ and $(\bp, \bq)$, when $B = W^n$ is a Brownian melon with $x \ge 0$ and $I \sset \II{1,n}$, and when $B = \scrB$ is the parabolic Airy line ensemble and $I \sset \N$.
	\end{lemma}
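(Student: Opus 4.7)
The plan is to verify, in each of the three cases, the hypothesis of Lemma~\ref{L:brown-unique}: that the conditional distribution $\p(B_i(b)-B_i(a)\in\cdot\mid\scrF_{[a,b]\X\{i\}})$ is almost surely a continuous distribution on $\R$. The independent Brownian motion case is immediate. By the independent increments property of Brownian motion together with the mutual independence of the $B_j$, the increment $B_i(b)-B_i(a)$ is independent of every increment $B_i(t)-B_i(s)$ with $[s,t]\sset[x,a]$ or $[s,t]\sset[b,y]$ and of every increment of $B_j$ for $j\ne i$; hence it is independent of $\scrF_{[a,b]\X\{i\}}$, and its conditional law is $N(0,b-a)$, which is absolutely continuous.

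For the Brownian melon $W^n$ (Case~2) and the parabolic Airy line ensemble $\scrB$ (Case~3), I would use the Brownian Gibbs property of Theorem~\ref{T:melon-Airy-facts}. Fix $\alpha<x$ and $\beta>y$ (with $\alpha>0$ in the melon case, so that the Gibbs hypothesis is satisfied), set $S=\II{i,i}\X(\alpha,\beta)$, and let $\scrG=\sig(B|_{S^c})$. By Brownian Gibbs, under $\scrG$ the process $B_i|_{[\alpha,\beta]}$ is a Brownian bridge from $B_i(\alpha)$ to $B_i(\beta)$ conditioned on non-intersection with $B_{i\pm 1}|_{[\alpha,\beta]}$. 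Enlarge to $\scrH:=\scrG\vee\scrF_{[a,b]\X\{i\}}$; the extra information in $\scrH$ beyond $\scrG$ amounts to the shapes of $B_i$ on $[x,a]$ and $[b,y]$, i.e.\ these paths up to additive constants. By the Markov property of Brownian bridge, conditionally on the interior values $B_i(x),B_i(a),B_i(b),B_i(y)$ the five pieces $B_i|_{[\alpha,x]},B_i|_{[x,a]},B_i|_{[a,b]},B_i|_{[b,y]},B_i|_{[y,\beta]}$ are independent bridges, so the shape constraints are equivalent to the linear relations $B_i(a)-B_i(x)=\phi_1(a)$ and $B_i(y)-B_i(b)=\phi_2(y)$ (for explicit $\scrF$-measurable $\phi_1,\phi_2$) together with a degenerate conditioning of the bridge pieces on $[x,a]$ and $[b,y]$ that does not affect the interior values. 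Hence, under $\scrH$, the joint law of $(B_i(a),B_i(b))$ is obtained from the positive-definite four-dimensional Gaussian distribution of $(B_i(x),B_i(a),B_i(b),B_i(y))$ by conditioning on two nondegenerate linear constraints and restricting to the open non-intersection region; its projection onto $(B_i(a),B_i(b))$ is a rank-two Gaussian, absolutely continuous with respect to two-dimensional Lebesgue measure on the non-intersection region. Consequently $B_i(b)-B_i(a)\mid\scrH$ is absolutely continuous, and averaging over $\scrH\supset\scrF$ preserves absolute continuity.

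The main obstacle is that $\scrF_{[a,b]\X\{i\}}$ is \emph{not} contained in $\scrG$: $\scrF$ controls increments of $B_i$ whose endpoints abut the missing interval $(a,b)\X\{i\}$ and thus lie inside the Gibbs-resampled region $S$. The enlargement to $\scrH$ and the Markov decomposition above are designed precisely to bypass this straddling issue, with the non-degeneracy of the covariance of $(B_i(a),B_i(b))$ on the constraint subspace delivering continuity of the increment. The boundary case $x=0$ in the melon setting is simpler: the initial condition $B_i(0)=0$ already makes $B_i|_{[0,a]}$, and in particular $B_i(a)$, $\scrF$-measurable, so it suffices to apply the same Gibbs and Markov argument on a box containing only $b$ in its interior to obtain continuity of $B_i(b)\mid\scrF$, and hence of $B_i(b)-B_i(a)\mid\scrF$.
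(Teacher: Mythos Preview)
Your approach is essentially correct but takes a more involved route than the paper and contains one imprecision worth noting.

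\textbf{The imprecision.} You assert that under $\scrH$ the joint law of $(B_i(a),B_i(b))$ is ``a rank-two Gaussian\ldots on the non-intersection region''. This is not accurate: the non-intersection event $E$ is a \emph{path} event (it depends on the whole curve $B_i|_{[\alpha,\beta]}$, including the pieces on $[\alpha,x]$, $[a,b]$, $[y,\beta]$), not a region in the $(B_i(a),B_i(b))$-plane, so after conditioning on $E$ the marginal on $(B_i(a),B_i(b))$ is not a restricted Gaussian. The correct way to finish your argument is in two steps: (i) under $\mu(\cdot\mid\text{shapes})$ for the \emph{unconditioned} bridge $\mu$, your Markov decomposition shows that conditioning on the shapes is equivalent (for the four interior values) to conditioning on the two increments $B_i(a)-B_i(x)$ and $B_i(y)-B_i(b)$, and an elementary Gaussian computation gives $\mathrm{Var}(B_i(b)-B_i(a)\mid\text{shapes})>0$; (ii) since $\mu(E\mid\text{shapes})>0$ for $\mu(\cdot\mid E)$-a.e.\ shape, the further conditioning on $E$ is absolutely continuous with respect to $\mu(\cdot\mid\text{shapes})$, and absolute continuity of $B_i(b)-B_i(a)$ is preserved. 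With this repair the argument goes through.

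\textbf{Comparison with the paper.} The paper sidesteps the two-sided shape conditioning by a smarter choice of Gibbs box: instead of $[\alpha,\beta]\supset[x,y]$, it takes the box $\{i\}\times[a,y+1]$, i.e.\ with left endpoint exactly $a$. Then $B_i(a)$ sits on the boundary and is $\scrG$-measurable, and \emph{all} increments of $B_i$ on $[x,a]$ are already in $\scrG$; one only needs to enlarge to $\scrG'=\scrG\vee\sigma(B_i(y)-B_i(r):r\in[b,y])$ to capture the right-side increments, and then $\scrF_{[a,b]\times\{i\}}\subset\scrG'$. This buys two simplifications: the shape conditioning is one-sided rather than two-sided, and the melon boundary case $x=0$ requires no separate treatment (since $a>x\ge0$ always). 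Your separate handling of $x=0$ is in fact essentially this one-sided trick, so you have already discovered the simplification in a special case; applying it uniformly would streamline your proof.
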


\begin{proof}
If $B$ is a collection of independent Brownian motions, then \eqref{E:pBiBi} is a normal distribution almost surely, and hence is continuous. We treat the remaining two cases together by appealing to the Brownian Gibbs property in Theorem \ref{T:melon-Airy-facts} for either $B$ or $2^{-1/2} B$ (where either $B = W^n$ or $2^{1/2} B = \scrB$). By possibly increasing the size of $I$, we may assume $I = \II{1, m}$ for some $m$. Let $[a, b] \sset (x, y)$. 

By the Brownian Gibbs property, conditionally on the $\sig$-algebra $\scrG$ generated by $B_j(t)$ for all $(j, t) \notin \{i\} \X [a, y + 1],$ the process $B_i(t) - B_i(a), t \in [a, y + 1]$ is a Brownian bridge connecting $0$ and $B_i(y+1) - B_i(a)$, conditioned so that the ensemble $B$ remains nonintersecting. In particular, conditionally on the $\sig$-algebra $\scrG'$ generated by $\scrG$ and $B_i(y) - B_i(r), r \in [b, y]$, almost surely the distribution of
$$
B_i(b) - B_i(a)
$$
is absolutely continuous with respect to Lebesgue measure on $\R$. Finally, $\scrF_{[a, b] \X \{i\}} \sset \scrG'$, giving the result.
\end{proof}

Lemmas \ref{L:brown-unique} and \ref{L:specifics} together allow us to speak of a single optimizer or geodesic when considering last passage problems across these Brownian motions, Brownian melons, and the parabolic Airy line ensemble.

\subsection{Melon geodesics and the Airy sheet}
\label{S:Airysheet}
Recall the definition of the prelimiting extended Airy sheets 
\begin{equation}
\label{E:Sndef}
\scrS^n(\bx, \by)= n^{1/6} \lf(B^n[(2n^{-1/3}\bx, n) \to (1 + 2n^{-1/3}\by, 1)] - 2k\sqrt{n} - n^{1/6} \sum_{i=1}^k 2(y_i - x_i) \rg),
\end{equation}
from Theorem \ref{T:extended-sheet}, where $B^n$ is a collection of $n$ independent two-sided standard Brownian motions. For thinking about the prelimiting sheets $\scrS^n$, it will be helpful to use an alternate formula for $\scrS^n$ in terms of the prelimiting Airy line ensembles $\scrB^n$ (defined in \eqref{E:Bnk-def}).
Recall the Brownian $n$-melon $W^n=W_0B^n$.
When $\bx \in \R^k_\le \cap [0, \infty)^k$, by Theorem \ref{T:melon-lpp} we have
\[
\scrS^n(\bx, \by)= n^{1/6} \lf(W^n[(2n^{-1/3}\bx, n) \to (1 + 2n^{-1/3}\by, 1)] - 2k\sqrt{n} - n^{1/6} \sum_{i=1}^k 2(y_i - x_i) \rg).
\]
Note that $\scrB^n$ is just an affine transformation of $W^n$, given by \eqref{E:Bnk-def}.
Using the fact that last passage values commute with affine shifts, we have
\begin{equation}
\label{E:Sn-compact-form}
\scrS^n(\bx, \by) = \scrB^n[(\bx - n^{1/3}/2, n) \to (\by, 1)] - k n^{2/3}.
\end{equation}
One of the main insights of \cite{DOV} was finding a way to take a limit of the right-hand side of \eqref{E:Sn-compact-form} for single points $x, y$ in order to define the Airy sheet $\scrS:\R \to \R$ in terms of the Airy line ensemble. The basic idea there was to carefully analyze the location and coalescence structure of geodesics across $\scrB^n$. The culmination of this analysis showed that if $\pi^n\{x, y\}, \pi^n\{x, z\}$ are the rightmost geodesics across $\scrB^n$ from $(x-n^{1/3}/2, n)$ to $(y, 1)$ and $(z, 1)$, then with high probability $\pi^n\{x, y\}$ and $\pi^n\{x, z\}$ coincide outside of an $O(1)$ region around the points $(y, 1), (z, 1)$. This suggests that difference
$$
\scrS^n(x, y) - \scrS^n(x, z)
$$
should converge to a difference of last passage problems in the Airy line ensemble.  This, along with an estimate on the location of the paths $\pi^n\{x, z\}$, motivates the following definition of the Airy sheet on $[0, \infty) \X \R.$

\begin{definition}\label{D:halfsheet} For a parabolic Airy line ensemble $\scrB$, we define the \textbf{half Airy sheet} of $\scrB$ to be the function $\scrS_\scrB: [0, \infty) \X \R \to \R$ specified by
the formulas 
\begin{itemize}[nosep]
	\item $\scrS_\scrB(0, y) = \scrB_1(y)$ for $y \in \R$.
	\item For $x > 0$ and $y, z \in \R$, we have 
	\begin{equation}
	\label{E:sheet-form-1}
	\scrS_\scrB(x, y) - \scrS_\scrB(x, z) = \lim_{m \to \infty} \scrB[(-\sqrt{m/(2x)}, m) \to (y, 1)] - \scrB[(-\sqrt{m/(2x)}, m) \to (z, 1)].
	\end{equation}
	\item For any $x \in \Q \cap (0, \infty)$ and $y \in \R$, we have
\begin{equation}
\label{E:S-integrate}
\scrS_\scrB(x, y) = \lim_{a \to \infty} \frac{1}{a}\int_{-a}^0 \lf(\scrS_\scrB(x, y) - \scrS_\scrB(x, z) - (x - z)^2 + \xi \rg)dz,
\end{equation}
where $\xi$ is the expectation of the GUE Tracy-Widom distribution.  Note that we could  have integrated on the right-hand side of \eqref{E:S-integrate} over any interval of length $a$ containing $0$.
\end{itemize}
\end{definition}

It turns out that almost surely, all the limits above exist, and the resulting function $\scrS_\scrB$ is continuous. The existence of such an object follows from \cite[Theorem 8.3]{DOV}. The first bullet is part of \cite[Definition 8.1(ii)]{DOV}, the second bullet is \cite[Remark 8.1]{DOV}, and the third bullet is given by the second display in the proof of \cite[Proposition 8.2]{DOV}.

The half-Airy sheet can be extended to all of $\R^2$ by a stationarity relationship, see \cite[Definition 8.1 and Theorem 8.3]{DOV}.

\begin{definition}
\label{D:airysheet}
The \textbf{Airy sheet} is the unique (in law) random continuous function $\scrS:\R^2 \to \R$ satisfying
\begin{itemize}
	\item $\scrS(\cdot, \cdot) \eqd \scrS(t + \cdot, t + \cdot)$ for all $t \in \R$
	\item $\scrS|_{[0, \infty) \X \R}$ is a half Airy sheet.
\end{itemize}
\end{definition} 

Having defined the Airy sheet, we can now state the main convergence result from \cite{DOV}. When stating this result, we also record convergence information for rightmost geodesics across $\scrB^n$ (parts (ii, iii) below) which is the crucial input in defining the Airy sheet.

\begin{theorem}
	\label{T:sheet-structure} For any subsequence $Y \sset \N$, there exists a further subsequence $Y' \sset Y$ and a coupling of $\scrB$, and $\{\scrB^n : n \in Y\}$ such that the following statements all hold almost surely:
	\begin{enumerate}[label=(\roman*)]
		\item The pair $(\scrB^n, \scrS^n|_{[0, \infty) \X \R})$ converges in the uniform-on-compact topology to $(\scrB, \scrS_\scrB)$. Here $\scrB$ is a parabolic Airy line ensemble, and $\scrS_\scrB$ is the half-Airy sheet of $\scrB$.
		\item Let $Z^n_{m}(x, y)$ denote the jump time from line $m+1$ to $m$ for the rightmost geodesic $\pi^n\{x, y\}$ from $(x-n^{1/3}/2, n)$ to $(y, 1)$. For all $x \in \Q \cap (0, \infty), y \in \Q, m \in \N$, the random variables $Z^n_m(x, y)$ converge almost surely to limits $Z_m(x, y)$.  Moreover, 
		$$
		\lim_{m \to \infty} \frac{Z_m(x, y)}{\sqrt{m}} = \frac{-1}{\sqrt{2x}}.
		$$
		\item For every $x \in \Q \cap (0, \infty)$ and $y < z \in \Q$, there are points $X_1 < x < X_2$ with $X_1, X_2 \in \Q \cap (0, \infty)$ and $T < \min(y, z)$ such that for all large enough $n$, we have
		$$
		\Ga(\pi^n\{X_1, y\}|_{[T, y]}) \cap \Ga(\pi^n\{X_2, z\}|_{[T, z]}) \ne \emptyset.
		$$
		Here recall that $\Ga(\pi)$ denotes the zigzag graph of $\pi$.
		\end{enumerate}
\end{theorem}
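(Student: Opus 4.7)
The plan is to treat the three clauses of Theorem \ref{T:sheet-structure} in the order (ii), (iii), (i), since the convergence of the sheet in (i) will be built on top of the geometric information in (ii) and (iii). By the Corwin--Hammond convergence (Theorem \ref{T:airy-line-ensemble}), the sequence $\scrB^n$ converges in distribution to $\scrB$, so by Skorohod's representation theorem, along any subsequence $Y$ I can pass to a further subsequence $Y'$ and a coupling on which $\scrB^n \to \scrB$ almost surely, uniformly on compact sets. All subsequent almost sure statements will be taken in such a coupling, after possibly thinning $Y'$ further.

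For (ii), I would first argue tightness of the jump times $Z^n_m(x,y)$, for fixed $x \in \Q \cap (0,\infty)$, $y \in \Q$, and $m \in \N$, using a one-point fluctuation estimate for rightmost geodesics in prelimit Brownian LPP. The target location $-\sqrt{m/(2x)}$ should come out as the unique minimizer of a variational formula: if $\pi^n\{x,y\}$ enters line $m$ at position $t$, then the rescaled length decomposes into a ``bulk'' contribution whose leading-order behavior is fixed by the parabolic shape of $\scrB$, and this bulk term is minimized at $-\sqrt{m/(2x)}$. Once tightness is established, monotonicity of rightmost geodesics (Proposition \ref{P:tree-structure}, Lemma \ref{L:mono-tree-multi-path}) forces $Z^n_m(x,y)$ to be monotone in the endpoints, so I can diagonalize over countably many $(x,y,m) \in \Q^2 \times \N$ and thin $Y'$ so that all jump times converge almost surely to limits $Z_m(x,y)$. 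The asymptotic $Z_m(x,y)/\sqrt{m}\to -1/\sqrt{2x}$ will then follow by taking $m \to \infty$ inside the concentration bound and using that this quantity is unchanged (up to negligible terms) as $n\to\infty$.

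For (iii), I would use (ii) to find, for $y < z$ fixed, a common ``funnel'' that both $\pi^n\{X_1,y\}$ and $\pi^n\{X_2,z\}$ pass through for large $n$, provided $X_1 < x < X_2$ are both close enough to $x$. Indeed, by (ii) applied to $X_1$ and $X_2$, on line $m$ both paths enter at a position within $o(\sqrt{m})$ of $-\sqrt{m/(2x)}$. Since $X_1 < X_2$, monotonicity gives $\pi^n\{X_1,y\} \le \pi^n\{X_2,z\}$ at the starting line $n$, but $\pi^n\{X_1, y\}(y) = 1$ while $\pi^n\{X_2,z\}$ has not yet descended to line $1$ at time $y < z$, forcing $\pi^n\{X_1,y\}(y^-) \le \pi^n\{X_2,z\}(y^-)$. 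Combining these facts, and again using the tree-structure of rightmost geodesics (Proposition \ref{P:tree-structure}), the two zigzag graphs must touch on an interval $[T, y]$ for some $T < y$.

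Finally, for (i), the formula \eqref{E:Sn-compact-form} gives $\scrS^n(x,y) = \scrB^n[(x - n^{1/3}/2, n) \to (y, 1)] - n^{2/3}$ for $x \geq 0$. Using (iii), for fixed $x \in \Q \cap (0,\infty)$ and $y < z \in \Q$, once $n$ is large enough the rightmost geodesics from $(x - n^{1/3}/2, n)$ to $(y, 1)$ and $(z, 1)$ share a common segment by line $m$, allowing me to write the difference $\scrS^n(x,y)-\scrS^n(x,z)$ as a difference of last passage values across $\scrB^n$ starting from a common point $(Z_m^n(x,y), m)$. Passing to the limit using the almost sure convergence $\scrB^n \to \scrB$ together with (ii) yields the right-hand side of \eqref{E:sheet-form-1}. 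The additive normalization required for pointwise values is pinned down by the Tracy--Widom one-point marginal via \eqref{E:S-integrate}, where convergence of the one-point marginal of $\scrS^n$ to the Tracy--Widom distribution is the Baik--Deift--Johansson theorem applied in the LPP setup. The hardest step is (ii): the one-point transversal fluctuation estimate requires genuinely probabilistic control of geodesic geometry in Brownian LPP, and this is the bottleneck, as (iii) and (i) are largely geometric/analytic consequences of it and of the convergence of the ensembles themselves.
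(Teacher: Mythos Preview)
Your overall architecture matches the paper's (and \cite{DOV}'s): extract a subsequence for almost sure convergence of $\scrB^n$, establish jump-time concentration for (ii), use coalescence for (iii), then localize sheet differences for (i). You also correctly identify (ii) as requiring serious probabilistic input (this is Lemma~\ref{L:zk-unif-bound}, which the paper flags as genuinely hard).

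However, your argument for (iii) has a real gap. You write that monotonicity gives $\pi^n\{X_1,y\} \le \pi^n\{X_2,z\}$, observe the endpoint values, and then invoke Proposition~\ref{P:tree-structure} to conclude the zigzag graphs must touch. But Proposition~\ref{P:tree-structure} only says that \emph{if} the intersection $\Ga(\pi^n\{X_1,y\}) \cap \Ga(\pi^n\{X_2,z\})$ is nonempty, it is the zigzag graph of a path; it does not rule out the empty case. And the ordering $\pi^n\{X_1,y\} \le \pi^n\{X_2,z\}$ holds \emph{everywhere} by monotonicity, so there is no intermediate-value-type crossing to exploit. Two ordered geodesics from distinct starting points can in principle be essentially disjoint, and nothing in (ii) prevents this: the asymptotic $Z_m(X_i, \cdot)/\sqrt{m} \to -1/\sqrt{2X_i}$ places the paths at positions differing by order $\sqrt{m}$ on line $m$ whenever $X_1 \ne X_2$, not $o(\sqrt{m})$ of a common point.

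The missing ingredient is precisely Lemma~\ref{L:close-not-dj} (quoted from \cite[Lemma 7.2]{DOV}): for fixed $x > 0$ and $y_1 < y_2$, the probability that $\pi^n\{x-\ep, y_1\}$ and $\pi^n\{x+\ep, y_2\}$ are essentially disjoint tends to zero as $\ep \to 0$. This is a separate, non-trivial probabilistic statement, and it is what allows one to find random $X_1 < x < X_2$ with the desired intersection property. Your claim that (iii) is a ``largely geometric/analytic consequence'' of (ii) understates this; coalescence of geodesics from nearby but distinct starting points does not follow from jump-time concentration and monotonicity alone.
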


This coupling is constructed in \cite[Section 8]{DOV} (after Lemma 8.4). The construction there shows that condition (ii) above is satisfied. Property (i) of the coupling is shown as \cite[Lemma 8.5]{DOV}, and property (iii) of the coupling is shown in the proof of \cite[Lemma 8.5]{DOV}.
Note that the notion of a point `lying along the path $\pi$' used in that proof means that a point is contained in the zigzag graph of $\pi$. 
We remark that while the rationals $\Q$ are used in the Theorem \ref{T:sheet-structure}, they play no special role. The theorem would still hold with any other countable dense set $D$ in place of $\Q$.

To prove convergence of $\scrS^n$ jointly over all $k$ and $\bx, \by\in\R^k_\le$, we will similarly focus on understanding optimizers across $\scrB^n$. We use Theorem \ref{T:sheet-structure} of \cite{DOV} as a starting point for our analysis. In the remainder of this section, we record a few auxiliary results from \cite{DOV} that will also be needed in our analysis, along with some simple consequences of that paper.
We start with two technical lemmas that are stepping stones along the path to Theorem \ref{T:sheet-structure}. 

For the first lemma, for a random array $\{R_{n, m} :n, m \in \N\}$, we write
\begin{equation}
\label{E:onotation}
R_{n,m} = \oo(r_{m})  \qquad \text{ if for all }\epsilon>0 \qquad  \;\; \sum_{m=1}^\infty \limsup_{n\to\infty} \prob(|R_{n,m}/r_{m}|>\epsilon) < \infty.
\end{equation}
\begin{lemma}[\protect{\cite[Lemma 7.1]{DOV}}]
	\label{L:zk-unif-bound}
	Let $K$ be a compact subset of $(0, \infty) \X \R$. Then we have
	$$
	\sup_{(x,y)\in K} \lf|Z^n_m(x, y) + \sqrt{\frac{m}{2x}}\rg| =\oo(\sqrt{m})
	$$
	and $Z^n_m(x, y)$ is tight as a function of $n$ for each fixed $m \in \N, (x, y) \in (0, \infty) \X \R$.
\end{lemma}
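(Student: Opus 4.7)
The plan combines a monotonicity-based grid reduction with a summable pointwise tail bound on $Z^n_m(x,y)$; tightness then drops out from the subsequential convergence of Theorem \ref{T:sheet-structure}(ii).

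\textbf{Monotonicity reduction.} By Proposition \ref{P:tree-structure}, the rightmost geodesic $\pi^n\{x,y\}$ is monotone in its endpoints, so $Z^n_m(x,y)$ is nondecreasing in each of $x$ and $y$. Enclose $K$ in a rectangle $[\al,\be] \X [\ga,\de]$ with $\al > 0$. For a given $\ep > 0$, choose a finite grid $\{(x_i, y_j)\}$ whose mesh is independent of $m$ and satisfies $\max_i(\sqrt{m/(2x_i)} - \sqrt{m/(2x_{i+1})}) < (\ep/3)\sqrt m$; this is possible because $x \mapsto (2x)^{-1/2}$ is Lipschitz on $[\al,\be]$. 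For $(x,y) \in [x_i,x_{i+1}] \X [y_j, y_{j+1}]$, monotonicity of $Z^n_m(x,y)$ together with the opposite monotonicity of $\sqrt{m/(2x)}$ in $x$ give
\[\lf|Z^n_m(x,y) + \sqrt{m/(2x)}\rg| \le \max\!\lf(\lf|Z^n_m(x_i, y_j) + \sqrt{m/(2x_i)}\rg|, \lf|Z^n_m(x_{i+1}, y_{j+1}) + \sqrt{m/(2x_{i+1})}\rg|\rg) + \tfrac{\ep}{3}\sqrt m,\]
so $\sup_K$ is controlled by a finite maximum plus $(\ep/3)\sqrt m$, reducing the claim on $K$ to a pointwise tail estimate at finitely many grid points.

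\textbf{Pointwise tail bound.} For fixed $(x,y) \in K$ it suffices to show
\[\sum_{m \ge 1}\limsup_{n \to \infty} \p\lf(\lf|Z^n_m(x,y) + \sqrt{m/(2x)}\rg| > \ep\sqrt m\rg) < \infty.\]
By Lemma \ref{L:split-path}, $Z^n_m(x,y)$ is the rightmost argmax of
\[z \mapsto F^n_m(z) := \scrB^n[(x - n^{1/3}/2, n) \to (z, m+1)] + \scrB^n[(z, m) \to (y, 1)].\]
The expected profile of $F^n_m$ is uniquely maximized at $z = -\sqrt{m/(2x)}$ with parabolic curvature of order $\sqrt m$, reflecting the parabolic envelope of last passage values across the top of the Brownian melon. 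The fluctuations of $F^n_m(z)$ around this profile are $O(1)$ at the Airy scale with stretched-exponential tails, obtained from the Brownian Gibbs property of $\scrB^n$ (Theorem \ref{T:melon-Airy-facts}) combined with standard Tracy--Widom-type estimates. Playing curvature against fluctuation tails then shows the argmax lies within $O(1)$ of its predicted location with stretched-exponential-in-$m$ tails, so in particular the event $\{|Z^n_m(x,y) + \sqrt{m/(2x)}| > \ep \sqrt m\}$ has probability summable in $m$.

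\textbf{Tightness and main obstacle.} For $x \in \Q \cap (0,\infty)$ and $y \in \Q$, Theorem \ref{T:sheet-structure}(ii) provides almost sure convergence of $Z^n_m(x,y)$ along a subsequence of every initial subsequence of $\N$, hence $Z^n_m(x,y)$ is tight in $n$ by the standard subsequential criterion; for general $(x,y) \in (0,\infty) \X \R$, sandwich $(x,y)$ between nearby rational pairs and invoke monotonicity, so tightness at $(x,y)$ follows from tightness at the two sandwiching pairs. The main obstacle is the pointwise tail bound: Theorem \ref{T:sheet-structure}(ii) is purely qualitative and supplies no rate, so the summable bound must be produced independently by quantifying how the parabolic curvature $\sqrt m$ of the mean LPP profile dominates its $O(1)$ fluctuations -- this is the technical heart of the argument and rests on combining Brownian Gibbs resampling with KPZ one-point tail estimates.
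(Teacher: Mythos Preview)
This lemma is not proved in the paper: it is quoted verbatim from \cite[Lemma~7.1]{DOV}, and the paper explicitly notes that its proof ``requires the full power of a difficult structural theorem for the Airy line ensemble from \cite{DV}.'' So there is no in-paper argument to compare to; the relevant benchmark is the proof in \cite{DOV}.

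Your monotonicity reduction to a finite grid is correct and is exactly how \cite{DOV} passes from a compact $K$ to finitely many points. The genuine content is the pointwise tail bound, and there your sketch has a gap that you flag yourself but perhaps underestimate. The claim that the fluctuations of $F^n_m(z)$ are ``$O(1)$ at the Airy scale with stretched-exponential tails, obtained from the Brownian Gibbs property \dots\ combined with standard Tracy--Widom-type estimates'' is not something that follows from those ingredients alone. The point $z \approx -\sqrt{m/(2x)}$ lies far into the tail of the rescaled melon, at a location where line $m$ is close to the parabolic envelope; controlling $\scrB^n[(z,m)\to(y,1)]$ and its curvature there requires the uniform structural estimates on $\scrB^n$ proved in \cite{DV} (and in particular is not a consequence of one-point Tracy--Widom bounds plus Gibbs resampling). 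This is precisely the technical input the paper singles out as nontrivial, and your proposal does not supply a substitute.

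Separately, your tightness argument is circular. You invoke Theorem~\ref{T:sheet-structure}(ii) to get convergence of $Z^n_m(x,y)$ along subsequences, but the coupling in Theorem~\ref{T:sheet-structure} is built in \cite[Section~8]{DOV} on top of the Section~7 results there, including Lemma~7.1 itself. Tightness for fixed $m$ must be established directly (in \cite{DOV} it falls out of the same quantitative tail bound that gives the $\oo(\sqrt m)$ estimate).
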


We also require a useful lemma about disjointness of geodesics.

\begin{lemma}[\protect{\cite[Lemma 7.2]{DOV}}]
	\label{L:close-not-dj}
	Fix $x > 0$ and $y_1 < y_2$. Then
	$$
	\lim_{\ep \to 0^+} \limsup_{n \to \infty} \p \big( \pi^n\{x - \ep, y_1\} \;\;\; \mathand \;\;\; \pi^n\{x + \ep, y_2\} \text{ are essentially disjoint} \big) = 0.
	$$
\end{lemma}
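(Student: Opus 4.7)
The strategy is to derive Lemma \ref{L:close-not-dj} from Theorem \ref{T:sheet-structure}(iii) together with the monotonicity of rightmost geodesics (Proposition \ref{P:tree-structure} and Lemma \ref{L:mono-tree-multi-path}). The geometric idea: if two outer rightmost geodesics $\pi^n\{X_1, y_1'\}$ and $\pi^n\{X_2, y_2'\}$ with $X_1 < x < X_2$ and $y_1' < y_1 < y_2 < y_2'$ meet in their zigzag graphs, then by monotonicity any two rightmost geodesics $\pi^n\{x-\ep, y_1\}, \pi^n\{x+\ep, y_2\}$ with $x \pm \ep \in (X_1, X_2)$ are sandwiched between them and are forced to coincide in value at an interior point, violating essential disjointness.

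I would argue by contradiction. Suppose the lemma fails, so there exist $\delta > 0$, $\ep_k \downarrow 0$, and $n_k \to \infty$ with $\p(E_k) \ge \delta$, where $E_k$ is the event that $\pi^{n_k}\{x-\ep_k, y_1\}$ and $\pi^{n_k}\{x+\ep_k, y_2\}$ are essentially disjoint. After reducing (if necessary) to the case $x \in \Q$ via a double application of Theorem \ref{T:sheet-structure}(iii) at rationals $x_L < x < x_R$ (giving a left bracket from the first application and a right bracket from the second), pass to a further subsequence along which Theorem \ref{T:sheet-structure} provides a coupling on which (i)--(iii) hold almost surely.

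In the coupling, fix rationals $y_1' \in (y_1 - 1, y_1)$ and $y_2' \in (y_2, y_2 + 1)$ (this perturbation ensures any eventual intersection point lies strictly in the interior of the middle paths' domains) and apply Theorem \ref{T:sheet-structure}(iii) to $(x, y_1', y_2')$ to obtain almost surely random rationals $X_1 < x < X_2$ and $T < y_1'$ with $\Ga(\pi^{n_k}\{X_1, y_1'\}|_{[T, y_1']}) \cap \Ga(\pi^{n_k}\{X_2, y_2'\}|_{[T, y_2']}) \ne \emptyset$ for all $k$ large. By Proposition \ref{P:tree-structure}, this intersection is itself the zigzag graph of some path $\sigma$; pick a time $t$ in $\sigma$'s domain that is not a jump of either outer path (a cocountable subset, a.s.\ nonempty by the Brownian-melon structure, cf.\ Lemma \ref{L:specifics}), so that both outer paths take the value $\sigma(t)$ at $t$. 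For $k$ large enough that $\ep_k < \min(x - X_1, X_2 - x)$ and $t > x + \ep_k - n_k^{1/3}/2$ (both hold eventually since $X_1 < x < X_2$ almost surely, $\ep_k \to 0$, $T$ is fixed, and $n_k^{1/3} \to \infty$), Lemma \ref{L:mono-tree-multi-path}(i) yields the sandwich
\[
\sigma(t) = \pi^{n_k}\{X_1, y_1'\}(t) \le \pi^{n_k}\{x - \ep_k, y_1\}(t) \le \pi^{n_k}\{x + \ep_k, y_2\}(t) \le \pi^{n_k}\{X_2, y_2'\}(t) = \sigma(t),
\]
forcing equality throughout. Since $t < y_1' < y_1$ and $t > x + \ep_k - n_k^{1/3}/2$, the time $t$ lies strictly in the interior of both middle paths' domains, contradicting essential disjointness. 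Hence $E_k$ fails almost surely in the coupling for all but finitely many $k$, and Fatou's lemma applied to the indicator of $E_k$ gives $\limsup_k \p(E_k) = 0$, contradicting $\p(E_k) \ge \delta$.

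The main technical subtlety is selecting a time $t$ at which both outer paths \emph{agree in value}, rather than merely sharing a point of their zigzag graphs (which could otherwise occur at a jump of one of the paths); this uses the cadlag structure of geodesics together with the continuous distributions of Brownian-melon increments from Lemma \ref{L:specifics} to rule out the null event of a single-point intersection at a jump time. The perturbation $y_1 \to y_1'$ additionally guarantees $t < y_1$, so that $t$ is interior to both middle paths' domains and hence a genuine witness of essential non-disjointness.
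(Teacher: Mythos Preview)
The paper does not give its own proof of this lemma; it is quoted verbatim from \cite[Lemma 7.2]{DOV}. More to the point, the paper states explicitly (in the sentence introducing Lemmas \ref{L:zk-unif-bound} and \ref{L:close-not-dj}) that these two results are ``technical lemmas that are stepping stones along the path to Theorem \ref{T:sheet-structure}.'' Part (iii) of Theorem \ref{T:sheet-structure}, which is the heart of your argument, is established in \cite{DOV} (in the proof of Lemma 8.5 there) \emph{after} and \emph{using} Lemma 7.2. Your derivation is therefore circular: you are deducing the lemma from a coupling whose very construction relies on the lemma.

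Leaving circularity aside, the sandwich idea via monotonicity of rightmost geodesics is correct in spirit, but the write-up blurs an important dependence: the overlap path $\sigma$, and hence the witnessing time $t$, depend on $n_k$. You cannot ``pick a time $t$'' once; you must produce, for each large $n_k$, some $t_{n_k}$ in the (possibly varying) overlap that avoids the finitely many jump times of the two outer geodesics and lies in the interior of both middle domains. This is doable (the overlap is the zigzag graph of a path by Proposition \ref{P:tree-structure}, and jumps are finite), but it should be said per $n_k$. Similarly, the reduction of general $x>0$ to rational $x$ via ``a double application'' of Theorem \ref{T:sheet-structure}(iii) is gestured at rather than carried out. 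None of these are fatal, but the circularity is.
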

Note that these two lemmas are quite non-trivial. In particular, Lemma \ref{L:zk-unif-bound} requires the full power of a difficult structural theorem for the Airy line ensemble from \cite{DV}.

We also record a corollary of Lemma \ref{L:close-not-dj} that follows from symmetries of Brownian LPP.

\begin{corollary}
	\label{C:close-center-DJ}
	Fix $y_1 < y_2$. Then
	$$
	\lim_{\ep \to 0^+} \limsup_{n \to \infty} \p \big( \pi^n\{0, y_1\} \;\;\; \mathand \;\;\; \pi^n\{\ep, y_2\} \text{ are essentially disjoint} \big) = 0.
	$$
\end{corollary}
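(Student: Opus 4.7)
The plan is to reduce Corollary \ref{C:close-center-DJ} to Lemma \ref{L:close-not-dj} via two ingredients: horizontal translation invariance of the underlying Brownian motions $B^n$, and the tree-structure monotonicity of rightmost geodesics from Proposition \ref{P:tree-structure}.

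The first step is the shift identity: for any fixed $x > 0$,
\begin{equation}\label{E:cor-shift}
\p\bigl(\pi^n\{0, y_1\}, \pi^n\{\ep, y_2\} \text{ ess.\ disj.}\bigr) = \p\bigl(\pi^n\{x, y_1+x\}, \pi^n\{x+\ep, y_2+x\} \text{ ess.\ disj.}\bigr).
\end{equation}
To prove \eqref{E:cor-shift}, I would work in $B^n$-coordinates. By Lemmas \ref{L:brown-unique} and \ref{L:specifics}, the optimizer across both $B^n$ and $W^n = W_0 B^n$ is almost surely unique, so rightmost coincides with leftmost, and Lemma \ref{L:disjoint-melon} applied to the two single-point endpoint pairs identifies the $\scrB^n$-essential-disjointness event (equivalently, the $W^n$-event) with the corresponding event for $B^n$-geodesics. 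Translating both starts and ends in the $B^n$-picture by the common constant $c = 2xn^{-1/3}$ preserves the joint distribution of $B^n$-LPP geodesics; under the affine correspondence between $B^n$-positions and $\scrB^n$-labels, the labels $(0,y_1;\ep,y_2)$ map to $(x,y_1+x;x+\ep,y_2+x)$. Translating the shifted event back to $\scrB^n$ via Lemma \ref{L:disjoint-melon} yields \eqref{E:cor-shift}.

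The second step dominates the asymmetric event by a symmetric one. Since $\pi^n\{x-\ep, y_1+x\}$ and $\pi^n\{x, y_1+x\}$ share the same endpoint and $x - \ep < x$, Proposition \ref{P:tree-structure} gives $\pi^n\{x-\ep, y_1+x\} \le \pi^n\{x, y_1+x\}$. Setting $\beta = \pi^n\{x+\ep, y_2+x\}$, both paths lie weakly below $\beta$ and both share the same interior of common domain with $\beta$, namely $(x+\ep-n^{1/3}/2,\, y_1+x)$. On this interior, strict inequality $\pi^n\{x, y_1+x\}(t) < \beta(t)$ forces $\pi^n\{x-\ep, y_1+x\}(t) \le \pi^n\{x, y_1+x\}(t) < \beta(t)$, and therefore
\[
\p\bigl(\pi^n\{x, y_1+x\}, \beta \text{ ess.\ disj.}\bigr) \le \p\bigl(\pi^n\{x-\ep, y_1+x\}, \pi^n\{x+\ep, y_2+x\} \text{ ess.\ disj.}\bigr).
\]
The right-hand side has exactly the form of Lemma \ref{L:close-not-dj} with the fixed parameters $x > 0$ and the fixed endpoints $y_1 + x < y_2 + x$, so its $\limsup_n$ tends to $0$ as $\ep \to 0^+$. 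Combining with \eqref{E:cor-shift} gives the corollary.

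The only mild obstacle is the bookkeeping in the first step, in particular verifying that shift invariance of $B^n$ indeed corresponds to the claimed relabeling in $\scrB^n$; this is handled cleanly once the melon equivalence and almost-sure uniqueness of optimizers are invoked. All the nontrivial probabilistic content is packaged in Lemma \ref{L:close-not-dj}.
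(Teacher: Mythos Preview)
Your proof is correct and follows essentially the same route as the paper's: translate between the melon and Brownian environments via Lemma~\ref{L:disjoint-melon} (with uniqueness from Lemmas~\ref{L:brown-unique}--\ref{L:specifics}), use translation invariance of Brownian increments to shift the start to a positive location, apply the monotonicity of rightmost geodesics from Proposition~\ref{P:tree-structure} to pass from the asymmetric pair $(x,x+\ep)$ to the symmetric pair $(x-\ep,x+\ep)$, and invoke Lemma~\ref{L:close-not-dj}. The only cosmetic difference is the order of operations: the paper first moves Lemma~\ref{L:close-not-dj} to the Brownian side, extends it to all $x\in\R$, replaces $x-\ep$ by $x$, sets $x=0$, and then returns to the melon; you instead shift the target event out to a positive $x$ and compare there.
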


\begin{proof}
We write $\pi^n[x, y]$ for the rightmost geodesic across the original Brownian motions from $(2n^{-1/3}x, n)$ to $(1 + 2n^{-1/3}y, 1)$. 
	By Lemma \ref{L:close-not-dj} and Lemma \ref{L:disjoint-melon}, for any $x > 0$ we have
	$$
	\lim_{\ep \to 0^+} \limsup_{n \to \infty} \p \big( \pi^n[x-\ep, y_1] \;\;\; \mathand \;\;\; \pi^n[x + \ep, y_2] \text{ are essentially disjoint} \big) = 0.
	$$
	for any $x > 0$. Translation invariance of Brownian increments implies that the above statement holds for any $x \in \R$, not just $x > 0$, and monotonicity of geodesics (Proposition \ref{P:tree-structure}) implies that the statement holds with $x -\ep$ replaced by $x$. Setting $x = 0$ and translating back to the melon environment via Lemma \ref{L:disjoint-melon} yields the result.
\end{proof}

Next, we record a few basic facts and symmetries about the Airy sheet.
\begin{lemma}[see \protect{\cite[Lemma 9.1 and Remark 1.1.6]{DOV}}]
\label{L:skew-sym}
The process $(x, y) \mapsto \scrS(x, y) + (x-y)^2$ is translation invariant in both $x$ and $y$. Also, $\scrS(x, y) \eqd \scrS(-y, -x)$. Here the distributional equality is joint in all $x, y \in \R$. 

Moreover, $\scrS(0,0)$ has GUE Tracy-Widom distribution, and hence satisfies the tail bound
$$
\p(|\scrS(0,0)| > m) \le c e ^{-dm^{3/2}}
$$
for universal constants $c, d > 0$ and all $m > 0$.
\end{lemma}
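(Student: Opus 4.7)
The plan is to deduce each symmetry of $\scrS$ from a corresponding symmetry of the prelimit sheets $\scrS^n$ from \eqref{E:Sndef}, then pass to the limit via Theorem \ref{T:sheet-structure}(i) together with the joint stationarity extension in Definition \ref{D:airysheet}.

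For translation invariance, the identity for two-sided Brownian motion $(B^n_i(\cdot + c) - B^n_i(c))_{i=1}^n \eqd (B^n_i(\cdot))_{i=1}^n$, applied with $c = 2n^{-1/3}t$, gives $B^n[(a + 2n^{-1/3}t, n) \to (b + 2n^{-1/3}t, 1)] \eqd B^n[(a, n) \to (b, 1)]$ jointly in $(a, b)$. Substituting $a = 2n^{-1/3}x$ and $b = 1 + 2n^{-1/3}y$ into \eqref{E:Sndef} and noting that the subtracted linear term depends only on $y - x$, we obtain $\scrS^n(x+t, y+t) \eqd \scrS^n(x, y)$ jointly in $(x, y) \in \R^2$. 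Passing to the distributional limit on $[0, \infty) \X \R$ and extending by Definition \ref{D:airysheet}(i) gives $\scrS(x+t, y+t) \eqd \scrS(x, y)$ on $\R^2$; since $(x-y)^2$ is invariant under this joint shift, the same holds for $\scrS(x, y) + (x-y)^2$.

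For skew-symmetry, time reversal combined with index reversal shows that $\tilde B^n_i(s) := -B^n_{n+1-i}(-s)$ has the same joint distribution as $B^n$. A path $\pi$ from $(a, n)$ to $(b, 1)$ in $B^n$ with jump times $a = t_{n+1} \le \cdots \le t_1 = b$ maps bijectively to the path $\tilde \pi$ from $(-b, n)$ to $(-a, 1)$ in $\tilde B^n$ with jump times $\tilde t_i := -t_{n+2-i}$, and a direct computation shows $\|\tilde \pi\|_{\tilde B^n} = \|\pi\|_{B^n}$ (the two sign changes cancel); this bijection respects essential disjointness after reversing the order of a $k$-tuple. Hence $B^n[(-b, n) \to (-a, 1)] \eqd B^n[(a, n) \to (b, 1)]$ jointly in $(a, b)$. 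Combining with a Brownian translation by $+1$ to match the parameterization in \eqref{E:Sndef}, we obtain $\scrS^n(-y, -x) \eqd \scrS^n(x, y)$ jointly in $(x, y) \in \R^2$, and this passes to the limit as above to give $\scrS(x, y) \eqd \scrS(-y, -x)$.

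For the Tracy-Widom statement, the first bullet of Definition \ref{D:halfsheet} together with the relation $\scrA_i(x) = \scrB_i(x) + x^2$ gives $\scrS(0, 0) = \scrB_1(0) = \scrA_1(0)$, the value at $0$ of the top line of the stationary Airy line ensemble. The marginal distribution of $\scrA_1(0)$ is GUE Tracy-Widom by the Baik-Deift-Johansson theorem \cite{baik1999distribution}, and the stated tail bound is a standard property of this distribution. The main obstacle in the argument is extending the prelimit joint identities in $(x, y) \in \R^2$ to the limit, since Theorem \ref{T:sheet-structure}(i) only provides convergence on the half-plane $[0, \infty) \X \R$; combining with the joint translation invariance established above handles this routinely.
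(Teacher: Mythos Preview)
Your argument for the skew-symmetry $\scrS(x,y)\eqd\scrS(-y,-x)$ and for the Tracy--Widom marginal is correct and matches the standard approach (the paper simply cites \cite{DOV} for this lemma rather than reproving it). However, there is a genuine gap in your treatment of translation invariance.

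You establish only the \emph{diagonal} shift invariance $\scrS(x+t,y+t)\eqd\scrS(x,y)$, which indeed follows from translation invariance of Brownian increments. But the assertion is that $(x,y)\mapsto\scrS(x,y)+(x-y)^2$ is stationary in $x$ and $y$ \emph{separately}, i.e.
\[
\scrS(x+a,y+b)+(x+a-y-b)^2\;\eqd\;\scrS(x,y)+(x-y)^2\qquad\text{for all }a,b\in\R,
\]
jointly in $(x,y)$. This stronger statement is what is actually used later (e.g.\ in the proof of Lemma~\ref{L:Zk1-tight}, where one needs $\scrS(x,y)+(x-y)^2$ to be Tracy--Widom for \emph{every} $x,y$, not just those with $x=y$). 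Your diagonal shift does not yield this: adding $(x-y)^2$ is vacuous under a joint shift since $x-y$ is unchanged.

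The missing ingredient is Brownian \emph{scaling}, not just translation. Given the diagonal invariance, it suffices to treat a shift in $y$ alone. With $\alpha_n=1+2cn^{-1/3}$, the identity $B^n(\cdot)\eqd\alpha_n^{-1/2}B^n(\alpha_n\,\cdot)$ gives, after inserting into \eqref{E:Sndef} and tracking the centering,
\[
\scrS^n(x,y)+(x-y)^2\;\eqd\;\alpha_n^{-1/2}\bigl(\scrS^n(\alpha_n x,\alpha_n y+c)+(\alpha_n x-\alpha_n y-c)^2\bigr)+e_n(x,y),
\]
with a deterministic error $e_n\to 0$ uniformly on compacts. Since $\alpha_n\to 1$, continuity of the limit then yields the separate stationarity. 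This is exactly the computation carried out in the paper for the extended sheet in Lemma~\ref{L:basic-sym}; you should either reproduce it here or cite \cite[Lemma~9.1]{DOV} directly.
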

Note that the tail bound in Lemma \ref{L:skew-sym} on the GUE Tracy-Widom distribution goes back to \cite{tracy1994level}.

We end this section by giving a more flexible description of the difference $\scrS_\scrB(x, y) - \scrS_\scrB(x, z)$ defined in \eqref{E:sheet-form-1}.

\begin{lemma}  \label{l:sheet-diff-alter}
Almost surely the following is true.
Take any $x\ge 0$ and $z_1<z_2$.
Let $\pi:(-\infty, z_1]\to\N$ be a nonincreasing cadlag function, such that $\displaystyle \lim_{y \to -\infty} \frac{\pi(y)}{2 y^2} = x$. Then
\begin{equation}
\label{E:S-scrB}
\scrS_\scrB(x,z_1)-\scrS_\scrB(x,z_2) = \lim_{y\to-\infty}\scrB[(y,\pi(y))\to (z_1,1)] - \scrB[(y,\pi(y))\to (z_2,1)]. 
\end{equation}
\end{lemma}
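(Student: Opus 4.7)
The plan is to use a sandwich argument based on two ingredients: monotonicity of the difference in the starting spatial coordinate along a fixed line, and continuity of $\scrS_\scrB$ in its first argument. Write
\[
F(y_0, m) := \scrB[(y_0, m) \to (z_1, 1)] - \scrB[(y_0, m) \to (z_2, 1)].
\]
The key deterministic observation is that $y_0 \mapsto F(y_0, m)$ is nonincreasing on $(-\infty, z_1]$. This follows from the single-point quadrangle inequality (Lemma~\ref{L:quadrangle}) applied to $\bp = (y_0, m), \bp' = (y_0', m)$ with $y_0 \le y_0'$ and to $\bq = (z_2, 1), \bq' = (z_1, 1)$: since $z_1 < z_2$, the min/max coordinates regroup into the cross pair, and the resulting inequality rearranges to $F(y_0, m) \ge F(y_0', m)$.

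For the case $x > 0$, I would sandwich the curve $y \mapsto (y, \pi(y))$ between two reference parabolas. Fix $0 < x' < x < x''$ and set $y^a_m := -\sqrt{m/(2a)}$. The slope assumption $\pi(y)/(2y^2) \to x$ guarantees that for all sufficiently negative $y$,
\[
y^{x'}_{\pi(y)} \;<\; y \;<\; y^{x''}_{\pi(y)}.
\]
Combining this with monotonicity yields $F(y^{x''}_{\pi(y)}, \pi(y)) \le F(y, \pi(y)) \le F(y^{x'}_{\pi(y)}, \pi(y))$. Sending $y \to -\infty$ and applying the defining formula \eqref{E:sheet-form-1} of $\scrS_\scrB$ once with parameter $x'$ and once with $x''$ gives
\[
\scrS_\scrB(x'', z_1) - \scrS_\scrB(x'', z_2) \;\le\; \liminf_{y \to -\infty} F(y, \pi(y)) \;\le\; \limsup_{y \to -\infty} F(y, \pi(y)) \;\le\; \scrS_\scrB(x', z_1) - \scrS_\scrB(x', z_2).
\]
Taking $x' \nearrow x$ and $x'' \searrow x$ and using continuity of $\scrS_\scrB$ in its first argument collapses the sandwich to the desired equality.

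For the boundary case $x = 0$, the reference parabola degenerates, so the upper side of the sandwich must be supplied differently. The replacement is a trivial extension argument: concatenating any geodesic from $(y, \pi(y))$ to $(z_1, 1)$ with the segment of line $1$ from $z_1$ to $z_2$ produces a path to $(z_2, 1)$, giving $\scrB[(y, \pi(y)) \to (z_2, 1)] \ge \scrB[(y, \pi(y)) \to (z_1, 1)] + \scrB_1(z_2) - \scrB_1(z_1)$, hence the uniform bound $F(y, \pi(y)) \le \scrB_1(z_1) - \scrB_1(z_2) = \scrS_\scrB(0, z_1) - \scrS_\scrB(0, z_2)$. The matching lower bound still comes from the one-sided sandwich: for any $x'' > 0$ the hypothesis $\pi(y)/(2y^2) \to 0$ forces $y < y^{x''}_{\pi(y)}$ eventually, so monotonicity yields $F(y, \pi(y)) \ge F(y^{x''}_{\pi(y)}, \pi(y)) \to \scrS_\scrB(x'', z_1) - \scrS_\scrB(x'', z_2)$, and letting $x'' \searrow 0$ with continuity of $\scrS_\scrB$ closes the argument.

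The main (in fact, essentially only) obstacle is the $x = 0$ case, where the reference parabola collapses; the extension-along-line-$1$ bound cleanly resolves it. The whole argument is executed on the almost sure event provided by Definition~\ref{D:halfsheet} (the limits defining $\scrS_\scrB$ exist for every $x > 0$ and all $y, z$, and $\scrS_\scrB$ is continuous), so the almost sure quantifier in the lemma is uniform in $x$, $z_1$, $z_2$, and $\pi$ without any further work.
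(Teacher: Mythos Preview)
Your proposal is correct and takes essentially the same approach as the paper: the quadrangle inequality to get monotonicity of the difference in the starting spatial coordinate, a sandwich between reference parabolas at parameters $x' < x < x''$ together with continuity of $\scrS_\scrB$, and the extension-along-line-$1$ bound for the $x=0$ upper side. Your write-up is organized a bit more explicitly (naming the function $F$ and the sandwich structure), but the content matches the paper's compressed argument line for line.
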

\begin{proof}
By applying Lemma \ref{L:quadrangle} with the endpoint pairs $((y,\pi(y)), (z_2,1))$ and $((-\sqrt{\pi(y)/(2x+\delta)},\pi(y)), (z_1,1))$, and sending $y\to-\infty$, we have that for any $\delta>0$, 
\[
\begin{split}
&\liminf_{y\to-\infty}\scrB[(y,\pi(y))\to (z_1,1)] - \scrB[(y,\pi(y))\to (z_2,1)]\\
\ge\; & \lim_{m\to\infty}\scrB[(-\sqrt{m/(2x+\delta)},m)\to (z_1,1)] - \scrB[(-\sqrt{m/(2x+\delta)},m)\to (z_2,1)]
\\
=\; & \;
\scrS_\scrB(x+\delta/2,z_1)-\scrS_\scrB(x+\delta/2,z_2)
.
\end{split}
\]
Therefore by continuity of $\scrS_\scrB$, the right-hand side of \eqref{E:S-scrB} is bounded below by the left-hand side. For $x > 0$, the opposite inequality holds by symmetric reasoning. For $x = 0$, the opposite inequality holds since 
\[
\scrB[p \to (z_1,1)] - \scrB[p \to (z_2,1)] \le \scrB_1(z_1) - \scrB_1(z_2) = \scrS_\scrB(0, z_1) -\scrS_\scrB(0, z_2). 
\] 
for any point $p \in (-\infty, z_1] \X \N$. Indeed, any path $\pi$ from $p$ to $(z_1, 1)$ can always be extended to a path from $p$ to $(z_2, 1)$ by extending $\pi$ to be equal to $1$ on the interval $[z_1, z_2]$. This picks up the increment $\scrB_1(z_2) - \scrB_1(z_1)$.
\end{proof}

\section{Tightness}
\label{S:tightness}

\subsection{Tightness of prelimiting sheets}

Recall from Theorem \ref{T:extended-sheet} the space $
\fX = \bigcup_{k=1}^\infty \R_{\le}^k \X \R_{\le}^k.
$ Topologically, $\fX$ is a disjoint union of certain subsets of $\R^{2k}$. Let $\scrC(\fX, \R)$ be the space of functions from $\fX$ to $\R$ with the uniform-on-compact topology.
The main goal of this section is to prove the following theorem.

\begin{theorem}
	\label{T:S-tight}
	The functions $\scrS^n$ are tight in $\scrC(\fX, \R)$.
\end{theorem}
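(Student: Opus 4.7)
Since $\fX$ is a countable disjoint union of components $\R^k_\le \X \R^k_\le$, it suffices to prove tightness on each component separately. Fix $k \in \N$ and a compact set $K \subset \R^k_\le \X \R^k_\le$. The goal is to establish (a) pointwise tightness of $\scrS^n(\bx, \by)$ at each $(\bx, \by) \in K$, and (b) a uniform modulus of continuity estimate on $K$; these yield tightness in $\scrC(K, \R)$ via the standard Arzela-Ascoli type criterion.

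For (a), apply Lemma \ref{L:f-naive} and the normalization defining $\scrS^n$ in Theorem \ref{T:extended-sheet} to obtain the deterministic sandwich
\[
\sum_{i=1}^k \bigl[\scrS^n(x_i^k, y_i^k) - \scrS^n(x_i^{k-1}, y_i^{k-1})\bigr] \; \le \; \scrS^n(\bx, \by) \; \le \; \sum_{i=1}^k \scrS^n(x_i, y_i).
\]
Each single-point term $\scrS^n(x_i, y_i)$ is tight by Theorem \ref{T:sheet-structure}(i). For the clustered terms, Lemma \ref{L:high-paths} applied in the Brownian melon (after translating $\bx$ to be nonnegative, which does not affect distributions by translation invariance of $B^n$) gives the explicit identity $\scrS^n(0^j, y^j) = \sum_{i=1}^j \scrB^n_i(y)$ and analogous representations for $\scrS^n(x^j, y^j)$. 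These are tight by Theorem \ref{T:airy-line-ensemble}.

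For (b), split
\[
|\scrS^n(\bx, \by) - \scrS^n(\bx', \by')| \le |\scrS^n(\bx, \by) - \scrS^n(\bx, \by')| + |\scrS^n(\bx, \by') - \scrS^n(\bx', \by')|.
\]
To control the first term, work in the melon formulation \eqref{E:Sn-compact-form}. Lemma \ref{L:high-paths} allows the relevant optimizers to be taken inside the top $k$ lines of $\scrB^n$, so the factor $|\pi'_i(y_i) + 1 - m|$ appearing in Lemma \ref{L:naive-bounds-diff} is bounded by $k$ rather than by $n$. Applying Lemma \ref{L:naive-bounds-diff} coordinate by coordinate, with the aid of monotonicity (Lemma \ref{L:mono-tree-multi-path}), bounds the difference by a constant $C_k$ times a sum of fluctuations of $\scrB^n|_{\II{1, k} \X L}$ over intervals of length at most $|\by - \by'|$, where $L$ is a compact interval containing the $\by$-projection of $K$. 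Since $\scrB^n \cvgd \scrB$ uniformly on compacts (Theorem \ref{T:airy-line-ensemble}), these fluctuations are tight and go to $0$ as $|\by - \by'| \to 0$.

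The second term, capturing continuity in $\bx$, is handled using the distributional symmetry from Brownian time reversal: writing $\bx^{\mathrm{rev}}$ for the order-reversed vector, one has $\scrS^n(\bx, \by) \eqd \scrS^n(-\by^{\mathrm{rev}}, -\bx^{\mathrm{rev}})$ jointly as processes (the normalization is invariant under this map). This converts modulus of continuity in $\bx$ on $K$ into modulus of continuity in $\by$ on the reversed image of $K$, which was already controlled above. The main obstacle is the equicontinuity estimate (b): specifically, the top-$k$-line confinement from Lemma \ref{L:high-paths} is the key deterministic input that allows the full modulus of continuity of $\scrS^n$ to be reduced to that of the finite collection $\scrB^n_1, \dots, \scrB^n_k$.
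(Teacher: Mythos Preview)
There is a genuine gap in part (b). Your claim that ``Lemma \ref{L:high-paths} allows the relevant optimizers to be taken inside the top $k$ lines of $\scrB^n$'' is only valid when the starting vector equals $0^k$. In the melon formulation \eqref{E:Sn-compact-form} the starting vector in $W^n$-coordinates is $2\bx n^{-1/3}$, which is $0^k$ only when $\bx = 0^k$. For a general $\bx \in \R^k_\le$ with $x_1 \ge 0$, Lemma \ref{L:high-paths} gives no confinement, so the factor $|\pi_i'(y_i)+1-m|$ in the first inequality of Lemma \ref{L:naive-bounds-diff} is not bounded independently of $n$. Monotonicity does not rescue this: increasing the starting coordinates $\bx$ pushes the optimizer paths toward \emph{higher}-indexed lines (Lemma \ref{L:mono-tree-multi-path}), which is the wrong direction for an upper bound on $\pi_i'(y_i)$. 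The second inequality in Lemma \ref{L:naive-bounds-diff} does happen to involve only the top $k-i+1$ lines regardless of $\bx$, but you need both directions.

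The paper closes this gap by first reducing to the case $\bx = 0^j$ via the quadrangle-type inequalities of Lemmas \ref{L:quadrangle-2} and \ref{L:quadrangle}: after translating so that the relevant coordinate $x_\ell = 0$, one bounds the increment $\scrR^n(\bx,\by') - \scrR^n(\bx,\by)$ above and below by increments with starting vector $0^\ell$ or $0^{k-\ell+1}$, and only then invokes Lemma \ref{L:high-paths}. This extra step is the missing ingredient. Once you have it, the paper packages the resulting estimate as a quantitative two-point tail bound (Lemma \ref{l:change-spatial-prelim}) and applies Kolmogorov--Chentsov, rather than the softer Arzel\`a--Ascoli argument you outline; either finishing route would work once the reduction to $0^j$ is in place.
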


Note that $\scrS^n$ (from formula \eqref{E:Sndef}) is not defined on all of $\fX$. To formally define $\scrS^n$ as a random element of $\scrC(\fX, \R)$, we arbitrarily extend $\scrS$ to all of $\fX$ in a continuous way so that $\scrS^n \in \scrC(\fX, \R)$. For any compact set $K \sset \fX$, $\scrS^n|_K$ is well-defined by \eqref{E:Sndef} for all large enough $n$, so the arbitrary choice of extension does not affect any convergence or tightness statements.

Theorem \ref{T:S-tight} will follow from the deterministic bounds and inequalities in Section \ref{S:basic-properties}, and explicit tightness bounds for the prelimiting Airy line ensemble, which we quote from \cite{DV}. For this proposition, $W^n$ is a Brownian melon.

\begin{prop}[\protect{\cite[Proposition 4.1]{DV}}]
	\label{P:dyson-tails}
	Fix $k \in \N$ and $c > 0$. There exist constants $c_k, d_k > 0$ such that for every $n \in \N$, $t > 0, \;s \in (0, ctn^{-1/3}]$, and $a > 0$ we have
	$$
	\p\bigg(\lf| W_k^n(t) - W_k^n(t +s) + \frac{s\sqrt{n}}{\sqrt{t}}\rg| > a \sqrt{s} \bigg) \le c_ke^{-d_ka^{3/2}}.
	$$
\end{prop}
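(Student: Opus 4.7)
The statement is a two-point fluctuation estimate for the $k$-th line of Dyson Brownian motion on a short interval at the KPZ edge scale. My first step is Brownian scaling: since $B^n(c\cdot) \eqd \sqrt{c}\, B^n(\cdot)$ and the melon construction commutes with time scaling, $W^n_k(ct) \eqd \sqrt{c}\, W^n_k(t)$ jointly in $(k, t)$. Applying this with $c = 1/t$ reduces the problem to the case $t = 1$, $s \in (0, c n^{-1/3}]$. Passing to Airy coordinates via $\scrB^n_k(y) = n^{1/6}(W^n_k(1 + 2yn^{-1/3}) - 2\sqrt{n} - 2yn^{1/6})$, the substitution $s = 2y n^{-1/3}$ converts the target inequality into
\[
\p\bigl( |\scrB^n_k(0) - \scrB^n_k(y)| > a\sqrt{2y} \bigr) \le c_k\, e^{-d_k a^{3/2}} \quad \text{for all } y \in (0, c/2],
\]
uniformly in $n$. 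In other words, I must show that $\scrB^n_k$ has $\sqrt{2y}$-scale increments with Tracy--Widom strength upper tails.

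The core mechanism is a Brownian Gibbs comparison. On a slight enlargement $[-\delta, y+\delta]$ (say $\delta = 1$), I would condition on the endpoint values $\scrB^n_k(-\delta), \scrB^n_k(y+\delta)$ together with the neighbouring lines $\scrB^n_{k-1}, \scrB^n_{k+1}$ on the whole interval (only $\scrB^n_{k+1}$ when $k = 1$). By the Brownian Gibbs property (Theorem \ref{T:melon-Airy-facts}, applied to $W^n$ and transferred via the affine rescaling defining $\scrB^n$), $\scrB^n_k$ is then an explicitly rescaled Brownian bridge conditioned to lie strictly between the neighbouring lines. For the unconditioned bridge, the increment $\scrB^n_k(0) - \scrB^n_k(y)$ has Gaussian tails $e^{-d a^2}$ at scale $\sqrt{2y}$, and the Radon--Nikodym weight enforcing nonintersection is polynomial in the line gaps via a Karlin--McGregor / LGV determinantal evaluation.

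In parallel I would invoke classical Tracy--Widom one-point tails $\p(|\scrB^n_k(y_0)| > a) \le c_k e^{-d_k a^{3/2}}$, together with analogous control on $\scrB^n_{k \pm 1}(y_0)$, uniformly in $n$ for $y_0$ in a compact set. A union bound then combines the two ingredients: on the atypical event that the conditioning data is extreme -- of probability at most $c_k e^{-d_k a^{3/2}}$ -- I simply discard, while on the complementary good event the bridge estimate gives a Gaussian tail $e^{-d a^2}$, which already dominates the target rate.

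\emph{The main obstacle} is controlling the Radon--Nikodym factor in the regime where the neighbouring lines $\scrB^n_{k\pm 1}$ pinch close to $\scrB^n_k$: there the nonintersection conditioning becomes singular and can produce super-Gaussian contributions to the conditional increment. Handling this requires quantitative edge-gap estimates, e.g.\ stochastic domination of the spacing $\scrB^n_{k-1} - \scrB^n_k$ by a suitable Bessel-type process, exploiting the strict ordering in \eqref{E:Wnii}, so that the singular RN contribution grows at most polynomially in $a$ and is absorbed into the $e^{-d_k a^{3/2}}$ prefactor. Uniformity in $n$ is maintained throughout because all inputs -- Tracy--Widom tails, Brownian Gibbs, and the gap estimates -- are known to hold with constants independent of $n$.
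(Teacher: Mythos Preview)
The paper does not prove this proposition; it is quoted verbatim from \cite[Proposition~4.1]{DV} and used as a black-box input. So there is no ``paper's own proof'' to compare against here.

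As for the soundness of your sketch: the scaling reduction to $t=1$ and the translation into a bound on $|\scrB^n_k(0)-\scrB^n_k(y)|$ are both correct, and the Brownian Gibbs plus one-point Tracy--Widom tail strategy is indeed the standard mechanism behind results of this type in \cite{DV} and \cite{hammond2016brownian}. However, what you have written is a plan rather than a proof, and the step you flag as the ``main obstacle'' --- controlling the conditional law when neighbouring lines pinch --- is exactly the substantive content. Your suggestion to dominate the gap by a Bessel-type process and absorb a polynomial RN factor into $e^{-d_k a^{3/2}}$ is not how this is actually handled: the argument in \cite{DV} proceeds by first establishing one-point tails for $\scrB^n_k$ uniformly in $n$ (their Theorem~3.1, quoted here as Theorem~\ref{T:top-bd}), and then upgrading to two-point bounds via monotone coupling arguments for nonintersecting bridges that bound the acceptance probability of the Gibbs resampling from below. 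The key technical input is a lower bound on the probability that independent bridges avoid the floor and ceiling curves, which requires the one-point tails on those curves but not a direct Bessel comparison. If you want to carry this out, the right reference for the machinery is Sections~3--4 of \cite{DV}.
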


We can translate Proposition \ref{P:dyson-tails} into a modulus of continuity on the prelimiting parabolic Airy line ensembles $\scrB^n$. To do this, we will employ a general lemma for establishing a modulus of continuity, also developed in \cite{DV}. This lemma will also be used later on when establishing a general modulus of continuity for the extended directed landscape.

\begin{lemma}[\protect{\cite[Lemma 3.3]{DV}}]
	Let $T=I_1\X \dots \X I_k$ be a product of bounded real intervals of length $b_1, \dots, b_k$. Let $c, d>0$.
	Let $\scrH$ be a random continuous function from $T$ taking values in a vector space $V$ with norm $|\cdot |$. Assume that for every $i \in \II{1, k}$, that there exist $\al_i \in (0,1), \beta_i, r_i > 0$ such that
	\label{L:levy-est}
	\begin{equation}
	\label{E:tail-bd}
	\p(|\scrH(t+e_i u) - \scrH(t)| \ge a u^{{\alpha_i}}) \le c e^{-d{a^{{\beta_i}}}}
	\end{equation}
	for every coordinate vector $e_i$, every $a>0$, and every $t,t+u e_i\in T$ with $u < r_i$. Set $\beta = \min_i \beta_i, \al = \max_i \al_i$, and $r = \max_i r_i$. Then with probability one we have
	\begin{equation}
	|\scrH(t + s) - \scrH(t)| \le C \lf(\sum_{i=1}^k |s_i|^{\al_i} \log^{1/\beta_i} \lf(\frac{2 r^{\al/\al_i}}{|s_i|} \rg) \rg),
	\end{equation}
	for every $t,t+s\in T$ with $|s_i| \le r_i$ for all $i$ (here $s = (s_1, \dots, s_k)$).
	Here $C$ is random constant satisfying
	$$
	\p(C > a) \le \lf[\prod_{i=1}^k \frac{b_i}{r_i} \rg] c c_0 e^{-c_1 a^{\beta}},
	$$
	where $c_0$ and $c_1$ are constants that depend on $\al_1, \dots, \al_k, \beta_1, \dots, \beta_k, k$ and $d$. Notably, they do not depend on $b_1, \dots, b_k ,c$ or $r_1, \dots, r_k$.
\end{lemma}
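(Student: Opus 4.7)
The plan is to adapt the classical Kolmogorov-Chentsov argument to the multi-parameter, mixed-exponent setting. I would show that with high probability every one-step dyadic increment of $\scrH$ in every coordinate direction $i$ is controlled by $u^{\al_i}$ up to a logarithmic factor of order (dyadic level)$^{1/\be_i}$, and then express a general increment $\scrH(t+s)-\scrH(t)$ as a telescoping sum over coordinate directions, each in turn written as a sum of dyadic one-dimensional increments via the binary expansion of the step size $|s_i|$.

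For the setup I would fix an absolute constant $A>0$ (to be tuned) and for every multi-index $\vec n=(n_1,\dots,n_k)\in\Z_{\ge 0}^k$ introduce the dyadic grid $G_{\vec n}\subset T$ with coordinate-$i$ spacing $b_i 2^{-n_i}$. Define the control event
$$
E^i_{\vec n}(A)=\Bigl\{|\scrH(t+b_i 2^{-n_i}e_i)-\scrH(t)|\le A(1+|\vec n|)^{1/\be_i}(b_i 2^{-n_i})^{\al_i}\text{ for all }t\in G_{\vec n}\Bigr\},
$$
where $|\vec n|=n_1+\cdots+n_k$. Applying hypothesis \eqref{E:tail-bd} with $u=b_i 2^{-n_i}$ and $a=A(1+|\vec n|)^{1/\be_i}$ (valid once $b_i 2^{-n_i}\le r_i$, which holds outside a finite collection of coarse levels that can be absorbed into constants) and then union-bounding over the at most $2^{|\vec n|}$ points of $G_{\vec n}$, one gets $\p(E^i_{\vec n}(A)^c)\le c\cdot 2^{|\vec n|}\exp(-dA^{\be_i}(1+|\vec n|))$. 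Summing over $\vec n$ and $i$ shows that $\Omega^\star:=\bigcap_{\vec n,i}E^i_{\vec n}(A)$ has complementary probability at most $c\, c_0'\exp(-c_1' A^{\be})$ with $\be=\min_i\be_i$, where $c_0',c_1'$ depend only on the $\al_i,\be_i,d,k$, and the ratios $b_i/r_i$ contribute only a constant prefactor.

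On $\Omega^\star$ I would then bound $\scrH(t+s)-\scrH(t)$ by telescoping along the coordinate-wise path $v_0=t$, $v_i=v_{i-1}+s_i e_i$: each step $\scrH(v_i)-\scrH(v_{i-1})$ is a one-dimensional increment in direction $i$ of length $|s_i|$. Setting $m_0=\lceil\log_2(b_i/|s_i|)\rceil$ and writing $|s_i|=\sum_{m\ge m_0}\epsilon_m b_i 2^{-m}$ in binary, each block can be identified with a single dyadic step at level $m$ in some $G_{\vec n}$, after snapping the transverse coordinates of $v_{i-1}$ to that same resolution using continuity of $\scrH$. Applying the bound from $E^i_{\vec n}(A)$ and summing the geometric series $\sum_{m\ge m_0}(b_i 2^{-m})^{\al_i}(1+m)^{1/\be_i}$ yields a contribution of order $A\,|s_i|^{\al_i}\log^{1/\be_i}(b_i/|s_i|)$. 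Summing over $i$ and identifying $C$ as a constant multiple of $A$, the stated tail bound on $C$ is just the probability bound for $(\Omega^\star)^c$ from the previous paragraph.

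The principal technical obstacle is the bookkeeping of how the multi-index level $|\vec n|$ interacts with the one-dimensional chaining in a single direction. When chaining in direction $i$ at binary level $m$, the transverse coordinates must also be approximated on some grid; if one snaps them at comparable resolution then $|\vec n|$ is of order $k m$, which is absorbed into the constant in the exponent $1/\be_i$ and into the multiplicative prefactor of the logarithm. The specific form $2r^{\al/\al_i}$ inside the log (with $\al=\max_i\al_i$) reflects the coarsest scale $r_i$ at which \eqref{E:tail-bd} becomes effective in direction $i$, and appears when one initializes the binary expansion at $m_0\sim\log_2(b_i/|s_i|)$ and compares logarithms across coordinates with different natural scales.
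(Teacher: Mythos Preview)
The paper does not prove this lemma; it is quoted verbatim from \cite[Lemma 3.3]{DV} and used as a black box. So there is no ``paper's own proof'' to compare against.

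That said, your outline is the standard multi-parameter Kolmogorov--Chentsov chaining argument and is the right approach. A few points where the sketch is loose: (i) When you telescope along the coordinate path $v_0\to v_1\to\cdots\to v_k$ and then chain in direction $i$, the intermediate point $v_{i-1}$ will generally not lie on any dyadic grid $G_{\vec n}$. You wave at ``snapping the transverse coordinates using continuity of $\scrH$'', but this needs to be made precise: one first proves the modulus on a dense dyadic set and then extends by continuity, or one approximates $v_{i-1}$ by a grid point at a resolution fine enough that the approximation error is dominated. (ii) The prefactor $\prod_i b_i/r_i$ in the tail of $C$ is not just ``a constant prefactor''; it records that the hypothesis \eqref{E:tail-bd} only controls increments of length $<r_i$, so one must tile $T$ by $\prod_i \lceil b_i/r_i\rceil$ boxes of side $r_i$, apply the chaining inside each box, and union-bound over boxes. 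Your setup with spacing $b_i 2^{-n_i}$ does not directly produce this factor. (iii) Your explanation of the specific argument $2r^{\al/\al_i}/|s_i|$ inside the logarithm is vague; it arises from matching the dyadic level at which the chaining in direction $i$ begins against the scale $r_i$, together with the normalization that makes the logarithm dimensionless across coordinates with different H\"older exponents. None of these are fatal, but each requires a paragraph of honest bookkeeping to turn the outline into a proof.
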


\begin{corollary}  \label{C:dyson-tails-uniform}
Fix $k \in \N$ and $c > 0$. There exist positive constants $c_k$, $d_k$ such that for every $n \in \N$, $t > 0$, $s \in [0, ct n^{-1/3}]$, and $a > 0$ we have
	\begin{equation}
	\label{E:biggmax}
	\p\bigg(\max_{t\le x<y\le t+s}\lf| W_k^n(x) - W_k^n(y) + \frac{(y-x)\sqrt{n}}{\sqrt{t}}\rg| > a\sqrt{s} \bigg) \le c_ke^{-d_ka^{3/2}}.
	\end{equation}
\end{corollary}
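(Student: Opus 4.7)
The plan is to reduce the statement to Lemma \ref{L:levy-est} applied to the drift-corrected process
\[
\tilde\scrH(u) := W^n_k(t+u) + \frac{u\sqrt{n}}{\sqrt{t}}, \qquad u \in [0,s].
\]
The quantity in \eqref{E:biggmax} is precisely $\sup_{0 \le u_1 < u_2 \le s} |\tilde\scrH(u_2) - \tilde\scrH(u_1)|$, so it suffices to obtain a uniform modulus of continuity for $\tilde\scrH$ with Hölder exponent $1/2$ and $3/2$-exponential tails. Specializing Lemma \ref{L:levy-est} to $k=1, \alpha_1 = 1/2, \beta_1 = 3/2, r_1 = b_1 = s$, we get a random constant $C$ with $\p(C > a) \le c c_0 e^{-c_1 a^{3/2}}$ such that $|\tilde\scrH(u_2) - \tilde\scrH(u_1)| \le C |u_2-u_1|^{1/2} \log^{2/3}(2s/|u_2-u_1|)$. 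Since the function $v \mapsto v^{1/2}\log^{2/3}(2s/v)$ is bounded on $(0,s]$ by $M s^{1/2}$ for an absolute constant $M$ (by substituting $v = se^{-r}$), the desired tail \eqref{E:biggmax} follows from $\p(CM > a) \le c c_0 e^{-c_1(a/M)^{3/2}}$ after relabeling constants.

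Thus the real task is to verify the single-increment hypothesis \eqref{E:tail-bd} for $\tilde\scrH$: for $0 \le u_1 < u_1 + v \le s$,
\[
\p\bigl(|\tilde\scrH(u_1+v) - \tilde\scrH(u_1)| \ge a v^{1/2}\bigr) \le c\, e^{-d a^{3/2}}.
\]
Applying Proposition \ref{P:dyson-tails} with $(t,s) \to (t+u_1, v)$ provides the analogous bound with drift $\sqrt{n}/\sqrt{t+u_1}$ in place of $\sqrt{n}/\sqrt{t}$; the compatibility condition $v \le c(t+u_1) n^{-1/3}$ holds because $v \le s \le c t n^{-1/3} \le c(t+u_1) n^{-1/3}$. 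So I need only absorb the discrepancy between the two drift normalizations into the tail.

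This is the main (but mild) technical step. From $\sqrt{t+u_1} - \sqrt{t} \le u_1/(2\sqrt{t})$ one has
\[
v\sqrt{n}\left|\frac{1}{\sqrt{t+u_1}} - \frac{1}{\sqrt{t}}\right| \le \frac{v\sqrt{n}\,u_1}{2 t^{3/2}} \le \frac{v\sqrt{n}\,s}{2 t^{3/2}}.
\]
Using $s \le ct n^{-1/3}$ in one factor and in $\sqrt{v} \le \sqrt{s} \le \sqrt{ct}\, n^{-1/6}$ in the other, this upper bound is at most $c' \sqrt{v}$ for a constant $c'$ depending only on $c$. The triangle inequality then gives
\[
|\tilde\scrH(u_1+v)-\tilde\scrH(u_1)| \le \bigl|W^n_k(t+u_1+v) - W^n_k(t+u_1) + v\sqrt{n}/\sqrt{t+u_1}\bigr| + c'\sqrt{v},
\]
and the tail bound for $\tilde\scrH$ with constants depending on $k$ and $c$ follows by the shift $a \mapsto a - c'$ (absorbed into $c_k, d_k$).

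I do not anticipate any serious obstacle: Proposition \ref{P:dyson-tails} does the probabilistic heavy lifting and Lemma \ref{L:levy-est} does the chaining, while the drift comparison is a one-line calculus estimate that is precisely what the scaling $s \le ct n^{-1/3}$ is designed to accommodate. The only care required is in the bookkeeping of constants to ensure they depend only on $k$ and $c$.
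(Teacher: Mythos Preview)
Your proof is correct and follows the same approach as the paper: apply Lemma~\ref{L:levy-est} with $\alpha_1 = 1/2$, $\beta_1 = 3/2$ to the drift-corrected process, using Proposition~\ref{P:dyson-tails} to verify the increment hypothesis, and then observe that $v^{1/2}\log^{2/3}(2s/v)$ is bounded by a constant times $\sqrt{s}$ on $(0,s]$. The only difference is that you spell out the drift comparison between $\sqrt{n}/\sqrt{t+u_1}$ and $\sqrt{n}/\sqrt{t}$, which the paper absorbs silently into the line ``the assumption \eqref{E:tail-bd} is implied by Proposition~\ref{P:dyson-tails}''; your explicit estimate showing this discrepancy is $O(\sqrt{v})$ is a welcome clarification.
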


\begin{proof}
We apply Lemma \ref{L:levy-est} with $k=1$, $\al_1 = 1/2, \beta_1 = 3/2$ and the function $W_k^n(x) - x\sqrt{n/t}$.  The assumption \eqref{E:tail-bd} is implied by Proposition \ref{P:dyson-tails}. Then we have
	$$
	\lf| W_k^n(x) - W_k^n(y) + \frac{(y-x)\sqrt{n}}{\sqrt{t}}\rg| \le C \sqrt{y-x} \log^{2/3}\left(\frac{2s}{y-x}\right)
	$$
	for all $1\le x<y\le 1+s$, where $C$ is a random constant satisfying the tail bound on the right-hand side of \eqref{E:biggmax} for some constants $c_k, d_k$. The right-hand side above is bounded above by $C \sqrt{s}$ for all $t\le x<y\le t+s$, yielding \eqref{E:biggmax}. 
\end{proof}

We are now in a position to prove a two-point tail bound for $\scrS^n$. We first define the stationary version $\scrR^n: \fX \to \R$ by
$$
\scrR^n(\bx, \by) = \scrS^n(\bx, \by) + \sum_{i=1}^{k}(x_i - y_i)^2,
$$
where $\bx, \by \in \R^k_\le$.
\begin{lemma} \label{l:change-spatial-prelim}
	Take any $k,n\in\N$, $\bu = (\bx, \by), \bu' = (\bx',\by') \in \R^k_\le \X \R^k_\le$ with $\|\bu - \bu'\|_2 < 1$, $\|\bx\|_2, \|\by\|_2, \|\bx'\|_2, \|\by'\|_2 < n^{1/6}$ and $a>0$.
	Then
	$$
	\p(|\scrR^n(\bx',\by') - \scrR^n(\bx,\by)| >
	a\sqrt{\|\bu - \bu'\|_2}) < c e^{-da^{3/2}} ,
	$$
	for some constants $c, d > 0$ depending only on $k$.
\end{lemma}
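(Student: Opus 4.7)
The strategy is to reduce (via translation invariance and telescoping) to a single-coordinate change, and then to combine the multi-point inequalities of Section \ref{S:basic-properties} with the uniform-in-$n$ Dyson-type tail of Corollary \ref{C:dyson-tails-uniform} applied to the Brownian melon representation.

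I first use that $\scrR^n$ is invariant in law under the simultaneous shift $(\bx,\by)\mapsto(\bx+c\bI,\by+c\bI)$ (a consequence of Brownian-increment translation invariance together with the fact that the parabolic correction depends only on $\bx-\by$). This lets me assume $x_1,x'_1\ge 0$, so Theorem \ref{T:melon-lpp} gives the melon representation $\scrS^n(\bx,\by)=\scrB^n[(\bx-n^{1/3}/2,n)\to(\by,1)]-kn^{2/3}$. The Brownian-LPP reflection $B^n[(a,n)\to(b,1)]\eqd B^n[(-b,n)\to(-a,1)]$ reduces to $\by$-changes, and the triangle inequality reduces further to a single-coordinate change $y_i\to y'_i$ with $s:=|y_i-y'_i|<1$, WLOG $y_i<y'_i$.

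For such a change, the construction in the proof of Lemma \ref{L:naive-bounds-diff} gives $B^n[\bq']-B^n[\bq]\le \|\pi'_i|_{[y_i,y'_i]}\|_{W^n}=\sum_{j=1}^{M}[W^n_j(\tau_j)-W^n_j(\tau_{j+1})]$ across the $M=\pi'_i(y_i)$ lines visited by the rightmost melon optimizer, with $\tau_j$ partitioning the corresponding $W^n$-interval. Decomposing each term via Corollary \ref{C:dyson-tails-uniform} into a drift $(\tau_j-\tau_{j+1})\sqrt{n/\tau_{j+1}}$ plus a sub-Gaussian fluctuation of scale $\sqrt{\tau_j-\tau_{j+1}}$, the drift sum $\int_{y_i}^{y'_i}2n^{-1/3}\sqrt{n/(1+2un^{-1/3})}\,du$ cancels, by Taylor expansion, both the linear centring in $\scrS^n$ and the leading parabolic correction in $\scrR^n$, leaving a residual absorbable into $O(\sqrt{s})$ using $|y|\le n^{1/6}$ and $s<1$. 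A matching bound in the opposite direction follows by a symmetric construction---extending an optimizer to $\bq$ rather than truncating one to $\bq'$---together with the quadrangle inequality of Lemma \ref{L:quadrangle-2} for the marginal cases where the changed coordinate is at the boundary of the index set.

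The main obstacle is a uniform-in-$M$ sub-Gaussian tail on the fluctuation sum $\sum_{j=1}^M[W^n_j(\tau_j)-W^n_j(\tau_{j+1})-(\tau_j-\tau_{j+1})\sqrt{n/\tau_{j+1}}]$, where $M$ is random and a priori large. Since the $\tau_j$'s partition the interval of total length $2n^{-1/3}s$, the summed variances are $O(n^{-1/3}s)$ independently of $M$; Corollary \ref{C:dyson-tails-uniform} applied line by line, combined with the Brownian Gibbs property (Theorem \ref{T:melon-Airy-facts}) to control the dependence structure across lines, promotes this variance bound to the desired $ce^{-da^{3/2}}$ tail, with constants depending only on $k$.
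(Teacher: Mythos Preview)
Your reduction to a single-coordinate change via translation invariance, reflection symmetry, and the triangle inequality is the same as the paper's, and the drift-versus-parabola cancellation you sketch is indeed what happens in the endgame. The genuine gap is in your final step.

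You apply Lemma \ref{L:naive-bounds-diff} directly and obtain a sum over the $M=\pi'_i(y_i)$ lines visited by the melon optimizer at $y_i$. You then claim that since the partition has total length $O(n^{-1/3}s)$, Corollary \ref{C:dyson-tails-uniform} applied ``line by line'' yields a tail with constants depending only on $k$. This does not follow: Corollary \ref{C:dyson-tails-uniform} gives constants $c_j,d_j$ that depend on the line index $j$, and $M$ is a random quantity that is \emph{not} bounded in terms of $k$. For general starting points $\bx$, the $i$th path of the melon optimizer from $(\bx-n^{1/3}/2,n)$ to $(\by',1)$ may well sit on line $j$ at position $y_i$ with $j$ large (of order growing with $n$). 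Your ``summed variances are $O(n^{-1/3}s)$ independently of $M$'' observation is a moment heuristic that does not survive the degradation of the tail constants across lines, and the Brownian Gibbs property does nothing to fix this: it describes conditional laws, not uniform tail constants across deep lines.

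The paper's proof closes exactly this gap by first using the quadrangle inequalities (Lemmas \ref{L:quadrangle-2} and \ref{L:quadrangle}) to sandwich the difference $\scrR^n(\bx,\by')-\scrR^n(\bx,\by)$ between two differences whose starting points are all at the single point $x_\ell=0$, i.e.\ at $(-n^{1/3}/2,n)^i$ for some $i\le k$. Then Lemma \ref{L:high-paths} guarantees that an optimizer from such a clustered start stays entirely in the top $i\le k$ lines. Only at that point is Lemma \ref{L:naive-bounds-diff} invoked, yielding a bound involving fluctuations of at most the top $k$ melon lines, to which Corollary \ref{C:dyson-tails-uniform} applies with honest $k$-dependent constants. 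You are missing this reduction; without it the argument does not close.
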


In this proof and throughout the paper, for $\bx \in \R^k_\le$ we write $-\bx$ for the unique element of $\R^k_\le$ given by rearranging the coordinates of $-\bx$. The basic idea of the proof of Lemma \ref{l:change-spatial-prelim} is to reduce bounds on differences in $\scrR$ to bounds on differences of lines in $W^n$ by using deterministic last passage inequalities from Section \ref{S:basic-properties}.

\begin{proof}[Proof of Lemma \ref{l:change-spatial-prelim}]
We first consider fixed $n$.
Recall (from Theorem \ref{T:extended-sheet}) that $\scrS^n$ is defined using multi-point last passage values across a collection of independent two-sided standard Brownian motions $B = \{B_i : i \in \Z\}$.
By using Lemma \ref{L:naive-bounds-diff} repeatedly and changing one coordinate at a time, we have
\[
|\scrS^n(\bx',\by') - \scrS^n(\bx,\by)| < 2k(2k+n) n^{1/6}\sup_{i, a, b} |B_i(a)-B_i(b)| + 2n^{1/3}(\|\bx-\by\|_1+\|\bx'-\by'\|_1),
\]
where the supremum is over all $1\le i \le n$, and $a, b \in \R$, such that both $a, b$ are in either $[2n^{-1/3}(x_i\wedge x_i'), 2n^{-1/3}(x_i\vee x_i')]$ or $[1+2n^{-1/3}(y_i\wedge y_i'), 1+2n^{-1/3}(y_i\vee y_i')]$. 
Therefore by using standard tail bounds on Brownian motion increments, for any fixed $n$ the bound in the lemma holds by taking $c$ large and $d$ small (depending on $n$ and $k$).

For the remainder of the proof it suffices to consider $n$ sufficiently large (depending on $k$).
	By the triangle inequality, and by the symmetry $\scrR^n(\bx, \by) \eqd \scrR^n(-\by, -\bx)$, it suffices to prove the bound when $\bx = \bx'$ and $\by, \by'$ agree at all points except for a single coordinate $y_\ell < y_\ell'$. Moreover, if we let $T_c$ be the map translating all coordinates in a vector by $c$, then $\scrR^n(T_c\bx, T_c\by) \eqd \scrR^n(\bx, \by)$ for all $c$, so we may assume $x_\ell = 0$ if we relax the norm bounds to $\|\bx\|_2, \|\by\|_2, \|\bx'\|_2, \|\by'\|_2 < 2n^{1/6}$.
	With these simplifications, the inequality is equivalent to
	\[
	\p(
	|\scrR^n(\bx, \by') - \scrR^n(\bx, \by)| >  a(y_\ell'-y_\ell)^{1/2}) < c e^{-da^{3/2}} .    
	\]
	Now, by the representation \eqref{E:Sn-compact-form} for $\scrS^n$, we can write 
	\begin{equation}
	\label{E:scrRn}
	\scrR^n(\bx, \by') - \scrR^n(\bx, \by) = \scrA^n[(\bx - n^{1/3}/2, n) \to (\by', 1)] - \scrA^n[(\bx - n^{1/3}/2, n) \to (\by, 1)],
	\end{equation}
	where $\scrA^n_i(x) = \scrB^n_i(x) + x^2$.
	As in Lemma \ref{L:quadrangle-2}, let $\bx^L$ denote the first $\ell$ coordinates of $\bx$ and let $\by^R$ denote the last $k - \ell$ coordinates of $x$. By two applications of that lemma, and \eqref{E:scrRn}, we have
		\[
	\scrR^n(\bx^R, \by'^R) - \scrR^n(\bx^R, \by^R) \le \scrR^n(\bx, \by') - \scrR^n(\bx, \by) \le
	\scrR^n(\bx^L, \by'^L) - \scrR^n(\bx^L, \by^L).
	\]
	Now let $0^i \in \R^i$ denote the vector whose coordinates are all $x_\ell = 0$. We can bound the left- and right-hand sides above using Lemma \ref{L:quadrangle} applied to the points $0^{k-\ell+1} \le \bx^R, \by^R \le \by'^R$ and  $\bx^L \le 0^\ell, \by^L \le \by'^L$ to get that
	\[
	\scrR^n(0^{k-\ell+1}, \by'^R) - \scrR^n(0^{k-\ell+1}, \by^R) \le \scrR^n(\bx, \by') - \scrR^n(\bx, \by) \le
	\scrR^n(0^{\ell}, \by'^L) - \scrR^n(0^{\ell}, \by^L).
	\]
	By these inequalities and \eqref{E:scrRn}, it then suffices to bound
	\begin{align*}
	&\p(\scrA^n[(- n^{1/3}/2, n)^{k-\ell + 1} \to (\by'^R, 1)] - \scrA^n[(- n^{1/3}/2, n)^{k-\ell + 1} \to (\by^R, 1)] < -a(y_\ell'-y_\ell)^{1/2}), \qquad \mathand \\
	&\p(\scrA^n[(- n^{1/3}/2, n)^\ell \to (\by'^L, 1)] - \scrA^n[(- n^{1/3}/2, n)^\ell \to (\by^L, 1)]
	 > a(y_\ell'-y_\ell)^{1/2}).
	\end{align*}
	By Lemma \ref{L:high-paths} applied to $\scrA^n$, for any endpoint pair starting $(- n^{1/3}/2, n)^i$ for some $i \le k$, there is an optimizer that only uses the top $k$ lines. By Lemma \ref{L:naive-bounds-diff}, the above two probabilities are bounded by
	$$
	\p\left(
	2k\max_{1\le i\le k, y_\ell\le x<y\le y_\ell'} |\scrA_i^n(x)-\scrA_i^n(y)| > a(y_\ell'-y_\ell)^{1/2}
	\right).   
	$$
	Rewriting this probability in terms of $W^n$ gives
	\begin{multline}
	\label{E:p-1}
    \p\left(
    2k\max_{1\le i\le k, y_\ell\le x<y\le y_\ell'} |n^{1/6}(W_i^n(1+2n^{-1/3}x)-W_i^n(1+2n^{-1/3}y))+x^2-y^2-2n^{1/3}(x - y)| \right. \\ \left. > a(y_\ell'-y_\ell)^{1/2}
    \right).   
	\end{multline}
	Now, using that $|y_\ell|< 2n^{1/6}$ and $|y_\ell-y_\ell'|<1$, for any $y_\ell\le x<y\le y_\ell'$ and $n$ large enough, by a Taylor expansion we have
	\[
	\lf|\frac{2n^{1/3}(x-y)}{\sqrt{1+2n^{-1/3}y_\ell}} - 2n^{1/3}(x - y) + 2(x-y)y_\ell \rg| \le  3n^{-1/3}(y-x)y_\ell^2 \le 12(y-x).
	\]
Therefore
	\begin{multline}
	\label{E:p-2}
	\left|x^2-y^2-2n^{1/3}(x - y) - n^{1/6}\frac{2n^{-1/3}(y-x)\sqrt{n}}{\sqrt{1+2n^{-1/3}y_\ell}}\right|
	< |x^2-y^2-2(x-y)y_\ell| + 12(y-x) \\
	\le (y-x)(|x+y-2y_\ell| + 12) \le  14(y_\ell'-y_\ell)^{1/2}.
	\end{multline}
	The conclusion then follows by combining \eqref{E:p-1}, \eqref{E:p-2}, Corollary \ref{C:dyson-tails-uniform}, and a union bound.
\end{proof}

\begin{proof}[Proof of Theorem \ref{T:S-tight}]
	It suffices to show that $\scrS^n|_K$ is tight for all compact sets $K \sset \R^k_\le \X \R^k_\le$. We may assume $K$ contains $0$. First, $\scrS^n(0^k, 0^k) = \sum_{i=1}^k \scrB^n_i(0)$, so $\scrS^n(0^k, 0^k)$ is tight by Theorem \ref{T:airy-line-ensemble}. Tightness of $\scrS^n|_K$ then follows from Lemma \ref{l:change-spatial-prelim} and the Kolmorogov-Chentsov criterion, see Corollary 14.9 in \cite{kallenberg2006foundations}.
\end{proof}

\subsection{Tightness of melon optimizers}
\label{S:tightness-melon-optimizers}
Here we prove tightness and asymptotic results about melon optimizers.
For this, we extend the notions of geodesics and jump times from Section \ref{S:Airysheet} to the case where the endpoints are not singletons.
For vectors $\bx, \by \in \R^k_\le$ with $x_1 \ge 0$, we write 
$$
\pi^n\{\bx, \by\} = (\pi^n_1\{\bx, \by\}, \dots, \pi^n_k\{\bx, \by\})
$$
for the rightmost optimizer across $\scrB^n$ from $(\bx-n^{1/3}/2, n)$ to $(\by, 1)$.
We will write $\pi^n[\bx, \by]$ for the rightmost optimizer across the original Brownian motions between the corresponding points $(2n^{-1/3}\bx, n)$ and $(1 + 2n^{-1/3}\by, 1)$. We write
$
Z^n_{i, m}(\bx, \by) 
$
for the jump time from line $m+1$ to $m$ for the path $\pi^n_i\{\bx, \by\}$.

We start with a weak tightness result.

\begin{lemma}
	\label{L:Zk1-tight}
Let $k \in \N$, $x > 0$ and $y \in \R$, and set $x^k = (x, \dots, x), y^k = (y, \dots, y)\in \R^k$. Then for every $i \in \II{1, k}$, the sequence of jump times
$
\{Z^n_{i, 1}(x^k, y^k) : n \in \N\}
$
is tight.
\end{lemma}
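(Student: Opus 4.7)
The proof splits cleanly into the cases $i \ge 2$ and $i = 1$, with the first being deterministic and the second requiring a monotonicity-based sandwich.

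\textbf{Case $i \ge 2$ is deterministic.} Let $\pi^n := \pi^n\{x^k, y^k\}$ denote the rightmost multi-path optimizer from $(x^k - n^{1/3}/2, n)$ to $(y^k, 1)$ across $\scrB^n$. The ordering $\pi^n_1 \le \pi^n_2 \le \dots \le \pi^n_k$ together with essential disjointness on the common interior $(x - n^{1/3}/2, y)$ forces the strict chain $\pi^n_1(t) < \pi^n_2(t) < \dots < \pi^n_k(t)$ at every interior $t$. Since these values are positive integers, $\pi^n_i(t) \ge i$ on $(x - n^{1/3}/2, y)$. For $i \ge 2$ this gives $\pi^n_i(t) \ge 2$ throughout the interior, while $\pi^n_i(y) = 1$ by the right endpoint; consequently $Z^n_{i,1}(x^k, y^k) = \inf\{t : \pi^n_i(t) < 2\} = y$ deterministically, which is trivially tight.

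\textbf{Case $i = 1$, upper bound.} Applying Lemma \ref{L:mono-tree-multi-path}(ii) with the single-point endpoint pair $(x, n; y, 1)$ and the $k$-point endpoint pair $(x^k, n; y^k, 1)$ (shift $s = 0$), I obtain $\pi^n_1\{x^k, y^k\} \le \pi^n\{x, y\}$, so $Z^n_{1,1}(x^k, y^k) \le Z^n_1(x, y)$. The right-hand side is tight by Lemma \ref{L:zk-unif-bound}.

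\textbf{Case $i = 1$, lower bound.} For a parameter $\de > 0$ with $\de < x/(k-1)$, introduce the well-separated endpoint pair
\[
(\bx_\de, \by_\de) := \bigl((x-(k-1)\de, \dots, x-\de, x),\, (y-(k-1)\de, \dots, y-\de, y)\bigr).
\]
Since $\bx_\de \le x^k$ and $\by_\de \le y^k$ coordinatewise, Lemma \ref{L:mono-tree-multi-path}(i) gives $\pi^n_1\{\bx_\de, \by_\de\} \le \pi^n_1\{x^k, y^k\}$, hence $Z^n_{1,1}(\bx_\de, \by_\de) \le Z^n_{1,1}(x^k, y^k)$. Choose $\de$ large enough that, with probability at least $1 - \ep$ uniformly in $n$, the $k$ single-path rightmost geodesics $\pi^n_{(i)} := \pi^n\{x-(i-1)\de, y-(i-1)\de\}$ are pairwise essentially disjoint. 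On this disjointness event, $(\pi^n_{(1)}, \dots, \pi^n_{(k)})$ is a valid disjoint $k$-tuple from $\bx_\de$ to $\by_\de$ whose total length equals the sum of individual single-path maxima; by the upper bound in Lemma \ref{L:f-naive} this is the maximum possible, so by uniqueness of the multi-path optimizer (which follows from Lemma \ref{L:brown-unique} applied to $\scrB^n$ via the Brownian Gibbs property of the underlying melon, cf.\ Lemma \ref{L:specifics}) we obtain $\pi^n_1\{\bx_\de, \by_\de\} = \pi^n_{(1)}$ and $Z^n_{1,1}(\bx_\de, \by_\de) = Z^n_1(x-(k-1)\de, y-(k-1)\de)$. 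The latter is tight by Lemma \ref{L:zk-unif-bound}, and combining with the upper bound gives tightness of $Z^n_{1,1}(x^k, y^k)$.

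\textbf{Main obstacle.} The crux is the uniform-in-$n$ essential disjointness of the single-path geodesics $\pi^n_{(1)}, \dots, \pi^n_{(k)}$ for sufficiently large $\de$. Lemma \ref{L:zk-unif-bound} directly controls the last jump time $Z^n_1$, showing that the line-$1$ portions of the geodesics are separated once $\de$ exceeds the typical fluctuation scale; but the full essential disjointness requires that the intervals on which two distinct geodesics occupy a common line $m$ do not overlap for any $m$. This additional control on higher jump times is provided by the uniform-on-compact form of Lemma \ref{L:zk-unif-bound}, combined with the fact that the asymptotic location $-\sqrt{m/(2x)}$ shifts as $x$ varies, separating the jump-time intervals for the different shifted geodesics when $\de$ is taken large relative to $x/(k-1)$.
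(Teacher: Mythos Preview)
Your treatment of the case $i \ge 2$ is correct and is a pleasant simplification that the paper does not exploit: essential disjointness on the common interior forces $\pi^n_i(t) \ge i$, so for $i \ge 2$ the jump to line $1$ occurs deterministically at $t = y$.

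The case $i = 1$, however, has a genuine gap. Your lower-bound strategy hinges on the claim that the shifted single-path geodesics $\pi^n\{x-j\de,\,y-j\de\}$, $j = 0,\dots,k-1$, are pairwise essentially disjoint with high probability once $\de$ is large. But you require $\de < x/(k-1)$ so that $x-(k-1)\de > 0$, and then in your ``main obstacle'' paragraph you ask for $\de$ large relative to $x/(k-1)$---these two constraints are incompatible. More fundamentally, nothing proved in the paper \emph{before} this lemma supplies the needed disjointness: Lemma~\ref{L:zk-unif-bound} gives asymptotic locations $-\sqrt{m/(2x)}$ and tightness of each $Z^n_m(x,y)$ for fixed $m$, but this does not by itself separate the line-$\ell$ intervals of two geodesics with nearby starting slopes $x$ and $x-\de$ for small $\ell$. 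The results that do establish such disjointness (e.g.\ Lemma~\ref{L:cucumber-sandwich}) are proved \emph{using} the present lemma, so invoking them would be circular. (There is also an index reversal: your tuple $(\pi^n_{(1)},\dots,\pi^n_{(k)})$ has $\pi^n_{(1)}$ ending at $y$, which is the $k$th coordinate of $\by_\de$, not the first.)

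The paper's proof sidesteps all of this with a length argument that works uniformly in $i$: each component path satisfies $\|\pi^n_i\|_{\scrB^n} - n^{2/3} \ge \scrS^n(x^k,y^k) - \scrS^n(x^{k-1},y^{k-1})$, which is tight by Theorem~\ref{T:S-tight}; but if $Z^n_{i,1} < r$ then $\pi^n_i$ follows line $1$ on $[r,y]$, forcing $\|\pi^n_i\|_{\scrB^n} - n^{2/3} \le \scrS^n(x,r) + \scrS^n(0,y) - \scrS^n(0,r)$, and the right side tends to $-\infty$ as $r \to -\infty$ by the parabolic decay of $\scrS$. No comparison with separated-endpoint optimizers is needed.
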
 

\begin{proof}
We write $\pi^n_i:=\pi^n_i\{x^k, y^k\}$ for the $i^{\mathrm{th}}$ path in the disjoint optimizer $\pi^n\{x^k, y^k\}$. By Lemma \ref{L:one-path-below} and \eqref{E:Sn-compact-form}, we have
$$
\|\pi_i^n\|_{\scrB^n} - n^{2/3} \ge \scrS^n(x^k, y^k) - \scrS^n(x^{k-1}, y^{k-1}).
$$
In particular, by Theorem \ref{T:S-tight}, the random variables
$
Y_n:= \|\pi_i^n\|_{\scrB^n} - n^{2/3}
$
are tight. Now suppose that $Z^n_{i, 1}(x^k, y^k) < r$ for some $r \in \R$. Then $\pi_i^n(z) = 1$ for all $z \in [r, y]$, so 
\begin{align*}
\|\pi_i^n\|_{\scrB^n} &= \|\pi_i^n|_{[x - n^{1/3}/2, r]}\|_{\scrB^n} + \scrB^n_1(y) - \scrB^n_1(r) \\
&\le \scrB^n[(x - n^{1/3}/2, n) \to (r, 1)] + \scrB^n_1(y) - \scrB^n_1(r).
\end{align*}
Therefore by \eqref{E:Sn-compact-form} again, we have
\begin{equation}
\label{E:Yn}
Y_n \wedge 0 \le \mathbf{1}(Z^n_{i, 1}(x^k, y^k) < r)[\scrS^n(x, r) + \scrS^n(0, y) - \scrS^n(0, r)]
\end{equation}
The term multiplying the indicator on right-hand side of \eqref{E:Yn} converges to 
$
X(x, y, r) = \scrS(x, r) + \scrS(0, y) - \scrS(0, r).
$
Since $\scrS(x, y) + (x - y)^2$ has GUE Tracy-Widom distribution for all $x, y \in \R$ (Lemma \ref{L:skew-sym}),  by a union bound, for all $x, y, r, m > 0$ we have
$$
\p (X(x, y, r) > m - x^2 - y^2 + 2 x r) \le ce^{-d m^{3/2}},
$$
for some constants $c,d>0$.
In particular, $X(x, y, r) \cvgd -\infty$ as $r \to -\infty$ for fixed $x, y$. Combining this, \eqref{E:Yn}, and the tightness of $Y_n$ gives that 
$$
\lim_{r \to -\infty} \limsup_{n \to \infty} \p(Z^n_{i, 1}(x^k, y^k) < r) = 0.
$$
Since all the random variables $Z^n_{i, 1}(x^k, y^k)$ are bounded above by $y$, this implies that the sequence $Z^n_{i, 1}(x^k, y^k)$ is tight.
\end{proof}

We can use Lemma \ref{L:Zk1-tight} to prove a disjointness lemma for optimizers.

\begin{lemma}
	\label{L:cucumber-sandwich}
	Consider $\pi^n[x^k, y^k]$, the (almost surely unique) optimizer from $(2xn^{-1/3}, n)^k$ to $(1+ 2yn^{-1/3}, 1)^k$ across $n$ independent standard Brownian motions $B^n$. Then for every $x, y \in \R, k \in \N$ and $\ep > 0$, we have that
	$$
	\lim_{r \to \infty} \liminf_{n \to \infty} \p \lf( \pi^n[x^k, y^k]\text{ is essentially disjoint from } \pi^n[x - \ep, y - r] \mathand \pi^n[x + \ep, y + r] \rg) = 1.
	$$
\end{lemma}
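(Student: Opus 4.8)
The plan is to carry the question over to the Brownian melon, where it becomes an identity between last passage values, and then to verify that identity with high probability for large $r$ using the tightness of the watermelon's first jump time (Lemma~\ref{L:Zk1-tight}) together with the monotonicity and quadrangle inequalities of Section~\ref{S:basic-properties}.

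\textbf{Reduction to the melon and to one outer path.} By Lemma~\ref{L:disjoint-melon} and the a.s.\ uniqueness of optimizers across independent Brownian motions and across $\scrB^n$ (Lemmas~\ref{L:brown-unique} and~\ref{L:specifics}), the Brownian optimizers $\pi^n[x^k,y^k]$, $\pi^n[x-\ep,y-r]$, $\pi^n[x+\ep,y+r]$ form a disjoint tuple iff the corresponding melon optimizers $\pi^n\{x^k,y^k\}$, $\pi^n\{x-\ep,y-r\}$, $\pi^n\{x+\ep,y+r\}$ do, and by translation invariance of Brownian increments we may assume $x\ge\ep$ so that all relevant endpoints lie in the domain of $\scrB^n$. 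Moreover the deterministic chain $\pi^n_k\{x^k,y^k\}\le\pi^n\{x,y\}\le\pi^n\{x+\ep,y+r\}$ (from the monotonicity of rightmost optimizers, Lemma~\ref{L:mono-tree-multi-path} and Proposition~\ref{P:tree-structure}) already provides the ordering between the watermelon and the right geodesic, so that once we know the left geodesic lies weakly to the left of the watermelon and is essentially disjoint from it, disjointness of the left geodesic from the right geodesic is automatic (on their common domain both are trapped between the outer watermelon lines). By a union bound it thus suffices to prove
$$
\lim_{r\to\infty}\liminf_{n\to\infty}\p\big(\pi^n\{x-\ep,y-r\}\text{ and }\pi^n\{x^k,y^k\}\text{ form a disjoint }(k+1)\text{-tuple}\big)=1,
$$
together with the mirror statement for $\pi^n\{x+\ep,y+r\}$, which follows by the reflection symmetry of Brownian LPP (acting on $\scrB^n$ by a line-reflection of its arguments) after a translation.

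\textbf{Reformulation as a last passage identity.} Write $\bx'=(x-\ep,x^k)$, $\by'=(y-r,y^k)$. Splitting any disjoint $(k+1)$-tuple from $\bx'$ to $\by'$ into its leftmost path and its remaining $k$ paths gives the deterministic inequality $\scrS^n(\bx',\by')\le\scrS^n(x-\ep,y-r)+\scrS^n(x^k,y^k)$, while decomposing the (a.s.\ unique) $(k+1)$-optimizer and using uniqueness of the single geodesic and of the watermelon optimizer shows that equality holds \emph{precisely} on the event that $\pi^n\{x-\ep,y-r\}$ and $\pi^n\{x^k,y^k\}$ form a disjoint $(k+1)$-tuple. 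So it remains to show $\p\big(\scrS^n(\bx',\by')=\scrS^n(x-\ep,y-r)+\scrS^n(x^k,y^k)\big)\to1$ in the double limit, i.e.\ that for large $r$ there is, with high probability, a geodesic from $(x-\ep-n^{1/3}/2,n)$ to $(y-r,1)$ that stays weakly to the left of $\pi^n_1\{x^k,y^k\}$ and meets the watermelon at only finitely many points.

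\textbf{The crux: no interaction for large $r$.} Here is where Lemma~\ref{L:Zk1-tight} enters. Fix $\delta>0$; tightness gives $C$ with $\p\big(Z^n_{1,1}(x^k,y^k)\ge -C\big)\ge 1-\delta$ for all large $n$, and on this event the entire watermelon stays on lines $\ge 2$ at all times $t<-C$. Choosing $r>y+C$, the left geodesic $\pi^n\{x-\ep,y-r\}$ is defined only on times $t\le y-r<-C$, so it runs along line $1$ only on the interval $[Z^n_1(x-\ep,y-r),\,y-r]$, which is disjoint from the watermelon's line-$1$ excursion $[Z^n_{1,1}(x^k,y^k),\,y]$; thus no conflict can occur on line $1$. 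The remaining possibility is a conflict on lines $\ge 2$ at times $t<-C$, and one rules this out, with probability $1-\delta'$ for $r$ large, by a monotonicity and quadrangle argument: using Lemma~\ref{L:quadrangle} (and Lemma~\ref{L:quadrangle-2}) to control the weight of the restriction of the competing configuration to lines $\ge 2$, using the tightness of $\scrS^n$ from Theorem~\ref{T:S-tight} and the GUE tail bound of Lemma~\ref{L:skew-sym} to control the weights of the individual pieces, and applying the monotonicity of Lemma~\ref{L:mono-tree-multi-path} to conclude that the rightmost such competitor lies weakly left of $\pi^n_1\{x^k,y^k\}$. The principal difficulty is exactly this last step -- upgrading the heuristic ``for $r$ large the far-left geodesic has no reason to meet the watermelon'' into the \emph{exact} identity of last passage values rather than an approximate one; I expect this to need careful bookkeeping with the quadrangle inequalities along the lines of the arguments in \cite{DOV}. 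A robust alternative, should the deterministic route prove awkward, is to pass to a subsequence along which $\scrS^n\to\scrS_\scrB$ and $\scrB^n\to\scrB$ with all single-point melon geodesics converging (Theorems~\ref{T:S-tight}, \ref{T:airy-line-ensemble}, \ref{T:sheet-structure}), and to verify in the limiting objects that the far geodesic avoids the finitely many lines of $\scrB$ that carry the watermelon.
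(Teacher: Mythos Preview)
Your setup through the melon transfer and the last-passage identity reformulation is fine, but the argument collapses at the step you yourself flag as ``the principal difficulty'': ruling out interaction on lines $\ge 2$ at times $t<-C$. What you offer there is not a proof but a list of tools (quadrangle, tightness of $\scrS^n$, Tracy--Widom tails, monotonicity) with no indication of how they combine to force the \emph{exact} equality $\scrS^n(\bx',\by')=\scrS^n(x-\ep,y-r)+\scrS^n(x^k,y^k)$; these inputs control weights up to $o(1)$ errors, and that is not enough for an identity. Your fallback of passing to a subsequential limit is likewise only a suggestion.

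The paper avoids this problem entirely by a translation--scaling move you are missing. Using $B^n(\cdot)\eqd\alpha_n^{-1/2}(B^n(\alpha_n(\cdot-2n^{-1/3}(x-\ep)))-\text{const})$ with a suitable $\alpha_n\to1$, the left-side statement becomes (after absorbing $O(n^{-1/3})$ errors by monotonicity) that $\pi^{n,k}[\ep/2,0]$ is essentially disjoint from $\pi^n[0,-r/2]$; a second translation/scaling turns the right-side statement into $\pi^{n,k}[0,-r/2]$ being disjoint from $\pi^n[\ep/2,0]$. The point of arranging one starting point to be exactly $0$ is that in the melon, Lemma~\ref{L:high-paths} forces the optimizer from $0^k$ to use \emph{only} the top $k$ lines (and from $0$ only the top line). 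Essential disjointness then reduces to a single jump-time inequality: $Z^n_{1,1}((\ep/2)^k,0^k)>-r/2$ for one direction and $Z^n_k(\ep/2,0)>-r/2$ for the other. These follow directly from Lemma~\ref{L:Zk1-tight} and Lemma~\ref{L:zk-unif-bound} respectively. No quadrangle bookkeeping or limit-passage is needed once the endpoints are placed at $0$.
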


\begin{proof}
Throughout the proof we write $\pi^{n,k}[x,y]:=\pi^n[x^k,y^k]$.
First, by a union bound it suffices to show that for all $x, y \in \R, k \in \N$ and $\ep > 0$,
\begin{align}
\label{E:rinfty1}
&\lim_{r \to \infty} \liminf_{n \to \infty} \p \lf( \pi^{n,k}[x, y]\text{ is essentially disjoint from } \pi^n[x - \ep, y - r]\rg) = 1, \qquad \text{ and} \\
\label{E:rinfty2}
&\lim_{r \to \infty} \liminf_{n \to \infty} \p \lf( \pi^{n,k}[x, y]\text{ is essentially disjoint from } \pi^n[x + \ep, y + r] \rg) = 1.
\end{align}
We first simplify \eqref{E:rinfty1} and \eqref{E:rinfty2}.
Translation invariance of Brownian increments and Brownian scaling gives that
\begin{align*}
B^n(t) &\eqd \al_n^{-1/2} \lf(B^n(\al_n(t - 2n^{-1/3}(x-\ep))) - B^n(2\al_n n^{-1/3}(x-\ep)) \rg), \\
\;\; \text{ where } \;\; \al_n &= \frac{1}{1 + 2(-x + \ep + y)n^{-1/3}},
\end{align*}
and so \eqref{E:rinfty1} is equal to 
$$
\p \lf( \pi^{n,k}[\ep + O(n^{-1/3}), 0]\text{ is essentially disjoint from } \pi^n[0, -r + O(n^{-1/3})]\rg).
$$
Here the $O(n^{-1/3})$ terms are small in the sense that for fixed $r, x, y$, there exists $c > 0$ such that $|O(n^{-1/3})| \le c n^{-1/3}$. 
In particular, for large enough $n$, monotonicity of optimizers (Lemma \ref{L:mono-tree-multi-path}) implies that this is bounded below by
\begin{equation}
\label{E:pinkr1}
\p \lf( \pi^{n,k}[\ep/2, 0]\text{ is essentially disjoint from } \pi^n[0, -r/2]\rg).
\end{equation}
By applying translation invariance and Brownian scaling, we can similarly show that \eqref{E:rinfty2} is equal to 
$$
\p\lf( \pi^{n,k}[0, -r + O(n^{-1/3})]\text{ is essentially disjoint from } \pi^n[\ep + O(n^{-1/3}), 0]\rg),
$$
which is again bounded below by
\begin{equation}
\label{E:pinkr2}
\p\lf( \pi^{n,k}[0, -r/2]\text{ is essentially disjoint from } \pi^n[\ep/2, 0]\rg)
\end{equation}
for large enough $n$. Next, by Lemma \ref{L:disjoint-melon} and Lemma \ref{L:brown-unique}, the probabilities \eqref{E:pinkr1} and \eqref{E:pinkr2} are the same as the corresponding probabilities with melon paths $\pi^n\{, \}$ in place of the original Brownian paths $\pi^n[, ]$. 

Now, for any $0 < b, a < c$ and $k, \ell \in \N$, since the melon path $\pi^n\{0^k, a^k\}$ only uses the top $k$ lines by Lemma \ref{L:high-paths}, $\pi^n\{0^k, a^k\}$ is disjoint from $\pi^n\{b^\ell, c^\ell\}$ whenever the jump time
$$
Z^n_{1, k}(b^\ell, c^\ell) > a.
$$
Therefore to prove \eqref{E:pinkr1} and \eqref{E:pinkr2}, we just need to show that
\begin{align}
\label{E:Zinfty1}
&\lim_{r \to \infty} \liminf_{n \to \infty} \p \lf( Z^n_{1,1}((\ep/2)^k, 0^k) > -r/2 \rg) = 1, \qquad \text{ and} \\
\label{E:Zinfty2}
&\lim_{r \to \infty} \liminf_{n \to \infty} \p \lf( Z^n_k(\ep/2, 0) > - r/2 \rg) = 1.
\end{align}
Equation \eqref{E:Zinfty1} follows from the tightness of $Z^n_{1, 1}(x^k, y^k)$ for fixed $x, y$ in Lemma \ref{L:Zk1-tight}.
Equation \eqref{E:Zinfty2} follows from the tightness of $Z^n_k(\ep/2)$ for fixed $k, \ep$ in Lemma \ref{L:zk-unif-bound}.
\end{proof}

Lemma \ref{L:cucumber-sandwich} can be combined with the asymptotics in Lemma \ref{L:zk-unif-bound} to give tightness and asymptotics for jump times on optimizers across the melon. For this next lemma, we set $(0, \infty)^k_\le = (0, \infty)^k \cap \R^k_\le$.

\begin{lemma}
	\label{L:tight-multipoint} For any $k \in \N$ and any compact set $K \sset (0, \infty)_\le^k \X \R_\le^k$, we have that
	\begin{equation}
	\label{E:xyK}
\sup_{(\bx, \by) \in K, i \in \II{1, k}} \lf|Z^n_{i, m}(\bx, \by) -\sqrt{\frac{m}{2x_i}}\rg| =\oo(\sqrt{m}).
	\end{equation}
	Moreover, for any fixed $\bx, \by, m,$ and $i$, the sequence $Z^n_{i, m}(\bx, \by)$ is tight in $n$.
\end{lemma}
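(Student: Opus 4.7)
The plan is to reduce both claims to the single-point version Lemma \ref{L:zk-unif-bound} via a sandwich on the $i$-th component of the multi-optimizer. I would trap the path $\pi^n_i\{\bx, \by\}$ between two single-point rightmost geodesics whose endpoints are perturbed from $(x_i, y_i)$ by a small amount $\ep > 0$ in the spatial coordinate and a large amount $r + r' > 0$ in the second coordinate. The sandwich is built in two stages: first, monotonicity of rightmost optimizers (Lemma \ref{L:mono-tree-multi-path}) compares $\pi^n_i\{\bx, \by\}$ to the $i$-th component of a clustered multi-optimizer; second, Lemma \ref{L:cucumber-sandwich}, transferred from the original Brownian environment to the melon via Lemma \ref{L:disjoint-melon}, passes from the clustered multi-optimizer to a single-point geodesic.

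For the upper bound, set $(\bx^+, \by^+) = ((x_i + \ep)^i, (y_i + r)^i)$ and verify $(\bx, \by) \le_0 (\bx^+, \by^+)$ directly: for $j \le i$ we have $x_j \le x_i \le x_i + \ep$ and $y_j \le y_i \le y_i + r$, while for $j > i$ the padding convention gives $x^+_j = y^+_j = +\infty$. Lemma \ref{L:mono-tree-multi-path}(ii) then yields $\pi^n_i\{\bx, \by\} \le \pi^n_i\{\bx^+, \by^+\}$. Next, apply Lemma \ref{L:cucumber-sandwich} to $(\bx^+, \by^+)$ with an additional parameter $r' > 0$ and transfer to the melon; essential disjointness combined with the forced ordering of any disjoint $(i+1)$-tuple gives $\pi^n_i\{\bx^+, \by^+\} \le \pi^n\{x_i + 2\ep, y_i + r + r'\}$ on an event $S^+$ with $\liminf_n \p(S^+) \to 1$ as $r' \to \infty$. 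A symmetric but slightly more delicate construction produces $\pi^n\{x_i - 2\ep, y_i - r - r'\} \le \pi^n_i\{\bx, \by\}$ on an event $S^-$ with the same property; naive left-clustering fails the $\le_s$ padding condition, so one instead introduces an intermediate size-$k$ endpoint pair with coordinates shifted far to the left at indices $< i$ and equal to $x_i - \ep$ at indices $\ge i$, then sandwiches this by a fully clustered pair to which Lemma \ref{L:cucumber-sandwich} applies. Since jump times respect path order, on $S := S^+ \cap S^-$ we obtain
\[
Z^n_m(x_i - 2\ep, y_i - r - r') \le Z^n_{i, m}(\bx, \by) \le Z^n_m(x_i + 2\ep, y_i + r + r').
\]

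For the tightness of $Z^n_{i, m}(\bx, \by)$ with $\bx, \by, m, i$ fixed, choose $\ep, r, r'$ so that $\p(S) > 1 - \delta$ for arbitrary $\delta$ and all large $n$; on $S$ the random variable is bracketed by sequences tight in $n$ by Lemma \ref{L:zk-unif-bound}, giving tightness. For the uniform $\oo(\sqrt m)$ asymptotic over the compact set $K$ (on which $x_i \ge \al > 0$), Lemma \ref{L:zk-unif-bound} gives uniform $\oo(\sqrt m)$ control of the single-point jump times against their leading terms $-\sqrt{m/(2(x_i \pm 2\ep))}$, and a Taylor expansion yields $|\sqrt{m/(2x_i)} - \sqrt{m/(2(x_i \pm 2\ep))}| = O(\ep \sqrt m / \al^{3/2})$. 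The deterministic error $O(\ep \sqrt m)$ can be made arbitrarily small as $\ep \to 0$.

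The main obstacle is reconciling the $\oo$-summability in $m$ required by \eqref{E:onotation} against the sandwich-failure probability, which Lemma \ref{L:cucumber-sandwich} controls only qualitatively, without a rate in $r'$ or $m$. A fixed-$r'$ sandwich yields an $m$-independent failure probability whose sum over $m$ diverges, so one must either choose $r' = r'(m) \to \infty$ with a quantitative rate extracted from the proofs of Lemmas \ref{L:Zk1-tight} and \ref{L:zk-unif-bound}, or, preferably, arrange a single $m$-independent high-probability event on which the full path sandwich holds for all $m$ at once, thereby transferring the single-point $\oo(\sqrt m)$ summability directly.
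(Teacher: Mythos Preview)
Your approach is the paper's: sandwich $\pi^n_i\{\bx,\by\}$ between single-point melon geodesics via monotonicity plus Lemma~\ref{L:cucumber-sandwich} (transferred through Lemma~\ref{L:disjoint-melon}), then invoke Lemma~\ref{L:zk-unif-bound}. Two remarks.

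\emph{The paper's monotonicity reduction is simpler.} Rather than comparing to differently-sized clustered pairs via $\le_0$ (which, as you note, forces an awkward intermediate construction on the lower side), the paper observes in one line that
\[
(x_i^k,y_i^k)\ \le_{1-i}\ (\bx,\by)\ \le_{k-i}\ (x_i^k,y_i^k),
\]
so Lemma~\ref{L:mono-tree-multi-path}(ii) immediately gives $Z^n_{1,m}(x_i^k,y_i^k)\le Z^n_{i,m}(\bx,\by)\le Z^n_{k,m}(x_i^k,y_i^k)$ for all $m$. Both bounds thus reduce to the extreme paths of a \emph{single} size-$k$ clustered optimizer $\pi^n\{x_i^k,y_i^k\}$, which is exactly the object Lemma~\ref{L:cucumber-sandwich} handles; no separate upper/lower constructions or $\ep$-perturbations are needed at this stage. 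The extension from a single $(\bx,\by)$ to the compact $K$ is then a second application of the same monotonicity.

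\emph{On the summability obstacle.} Your diagnosis is on point, and your preferred option (b) is exactly what the paper does: the event $A_{\ep,r}$ that $\pi^n\{x_i^k,y_i^k\}$ is essentially disjoint from $\pi^n\{x_i\pm\ep,\mp r\}$ is $m$-independent, and on it the jump-time sandwich holds for \emph{all} $m$ simultaneously. The paper's ``putting these together'' is just as terse as your sketch; it does not spell out how the constant $\limsup_n\p(A_{\ep,r}^c)$ is absorbed when summing over $m$. The observation you want is that the left side $\limsup_n\p(B_m)$ does not depend on $r$, so for each $m$ one may optimize over $r$ separately, sending $\alpha(r)\to 0$; what remains is to control $\sum_m$ of the single-point deviation terms uniformly as $r$ varies, and this is where the argument leans on the structure underlying Lemma~\ref{L:zk-unif-bound} rather than its bare statement for a fixed compact set. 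You have correctly located the one genuinely delicate step.
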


\begin{proof}
We first prove this for a single $(\bx, \by)$. In the notation of Lemma \ref{L:mono-tree-multi-path}, for every $i$ we have $(x_i^k, y_i^k) \le_{1-i} (\bx, \by) \le_{k-i} (x_i^k, y_i^k)$. Therefore by that lemma, we have
$$
Z^n_{1, m}(x_i^k, y_i^k) \le Z^n_{i, m}(\bx, \by) \le Z^n_{k, m}(x_i^k, y_i^k),
$$
so it suffices to prove bounds when $\bx, \by$ consist only of repeated points. For this, observe that on the event $A_{\ep, r}$ where the melon optimizer $\pi^n\{x_i^k, y_i^k\}$ is essentially disjoint from $\pi^n\{x_i - \ep, -r\}$ and $\pi^n\{x_i + \ep, r\}$, that 
\begin{equation}
\label{E:Znjk}
Z^n_{j, m}(x_i^k, y_i^k) \in [Z^n_m(x_i -\ep, -r), Z^n_m(x_i + \ep, r)]
\end{equation}
for all $m \in \N, j \in \II{1, k}$. By Lemma \ref{L:disjoint-melon}, essential disjointness of $\pi^n\{x_i^k, y_i^k\}$ from $\pi^n\{x_i - \ep, -r\}$ and $\pi^n\{x_i + \ep, r\}$ is equivalent to essential disjointness of the original Brownian optimizers $\pi^n[x_i^k, y_i^k]$ from $\pi^n[x_i - \ep, -r]$ and $\pi^n[x_i + \ep, r]$.
Therefore by Lemma \ref{L:cucumber-sandwich},
$$
\lim_{r \to \infty} \liminf_{n \to \infty} \p A_{\ep, r} = 1.
$$
Moreover, the asymptotics of the interval on the right-hand side of \eqref{E:Znjk} are given by Lemma \ref{L:zk-unif-bound}. Putting these together proves \eqref{E:xyK} for a single point. The extension to the entire compact set follows again from monotonicity (Lemma \ref{L:mono-tree-multi-path}).

Finally, the tightness claim for fixed $k$ follows from \eqref{E:xyK}, the definition of the notation $\oo$, and the fact that the $Z^n_{i, m}(\bx, \by)$ are nonincreasing in $m$: $Z^n_{i, 1}(\bx, \by) \ge Z^n_{i, 2}(\bx, \by) \ge \ldots$.
\end{proof}

For this next corollary, we extend the definition of path space to include paths with noncompact domains. Let $\scrP$ be the space of all nonincreasing cadlag functions from any closed interval $I \sset \R$ to $\Z$. For a sequence $\pi_n \in \scrP$, we say that $\pi^n \to \pi$ if 
\begin{equation*}
\Ga(\pi_n) \cap [-n, n] \X \II{-n, n} \to \Ga(\pi) \cap [-n, n] \X \{-n, \dots, n\}
\end{equation*}
in the Hausdorff topology for all $n \in \N$. This is a Polish space, since the Hausdorff topology on paths whose zigzag graphs live in $[-n, n] \X \II{-n, n}$ is Polish for all $n \in \N$.

\begin{corollary}
	\label{C:path-tightness}
For any $(\bx, \by) \in (0, \infty)_\le^k \X \R_\le^k$, the paths $\pi^n\{\bx, \by\}$ are tight in distribution in the product of $k$ path spaces. Subsequential limits are $k$-tuples of nonincreasing paths $\pi_i:(-\infty, y_i] \to \N$. Moreover, any distributional subsequential limit $(\pi, \scrB)$ of $(\pi^n\{\bx, \by\}, \scrB^n)$ satisfies the following property:

For any set of times $\bz = (z_1, \dots, z_k)$ and $m \in \N$ such that $(z_i,m) \in \Ga(\pi_i)$ for all $i$, the restricted paths $\{\pi_i|_{[z_i, y_i]} : i \in \II{1, k}\}$ form a disjoint optimizer in $\scrB$ from $(\bz, k)$ to $(\by, 1)$.

\end{corollary}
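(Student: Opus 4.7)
\emph{Tightness and subsequential limits.} The plan is to first establish tightness of the paths, identify the structure of the subsequential limits, and then verify the optimality property by a competing-tuple argument. For tightness, note that on each compact window $[-N, N] \X \II{-N, N}$, the space of closed subsets under the Hausdorff topology is compact, giving automatic tightness of $\pi^n\{\bx, \by\}$ in the product path space. Subsequential limits are nonincreasing cadlag functions $\pi_i: J_i \to \N$ for some subinterval $J_i \sset (-\infty, y_i]$. To ensure $J_i = (-\infty, y_i]$, we invoke Lemma \ref{L:tight-multipoint}: the jump times $Z^n_{i, m}(\bx, \by)$ at each fixed level $m$ are tight in $n$, so any subsequential limit $\pi_i$ has a finite jump time at every level $m$, forcing $\pi_i(y) \to \infty$ as $y \to -\infty$. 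Joint tightness with $\scrB^n$ follows from Theorem \ref{T:airy-line-ensemble}, and by Skorokhod's representation theorem we may work on a probability space where $(\pi^n\{\bx, \by\}, \scrB^n) \to (\pi, \scrB)$ almost surely along a subsequence.

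\emph{Optimality via competing tuples.} Fix $\bz$ and $m$ with $(z_i, m) \in \Ga(\pi_i)$. Essential disjointness of $\{\pi_i|_{[z_i, y_i]}\}$ is inherited from $\pi$ since essential disjointness is a closed condition. For optimality, let $\tau$ be any competing disjoint $k$-tuple from $(\bz, m)$ to $(\by, 1)$ in $\scrB$. We will construct a competing disjoint $k$-tuple $\sigma^n$ in $\scrB^n$ from $(\bx - n^{1/3}/2, n)$ to $(\by, 1)$ as follows: by Hausdorff convergence of zigzag graphs, choose $z_i^n \to z_i$ with $(z_i^n, m) \in \Ga(\pi^n_i)$; set $\sigma^n_i$ equal to $\pi^n_i$ on $[x_i - n^{1/3}/2, z_i^n)$ and equal to a time-shifted copy of $\tau_i$ on $[z_i^n, y_i]$ (padded at level $m$ so the junction is valid). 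By optimality of $\pi^n\{\bx, \by\}$, $\|\sigma^n\|_{\scrB^n} \le \|\pi^n\{\bx, \by\}\|_{\scrB^n}$, and canceling the common initial segments yields
\[
\sum_i \|\sigma^n_i|_{[z_i^n, y_i]}\|_{\scrB^n} \le \sum_i \|\pi^n_i|_{[z_i^n, y_i]}\|_{\scrB^n}.
\]
Passing to the limit using uniform convergence $\scrB^n \to \scrB$ on compact sets and convergence of jump times, the left side tends to $\sum_i \|\tau_i\|_\scrB$ and the right side to $\sum_i \|\pi_i|_{[z_i, y_i]}\|_\scrB$, yielding the desired optimality.

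\emph{Main obstacle.} The chief technical subtlety lies in the patching: since $(z_i^n, m) \in \Ga(\pi^n_i)$ allows $\pi^n_i$ to jump through level $m$ without resting on it, and since $\tau_i$ may rest at level $m$ right after $(z_i, m)$, a naive patch can fail to preserve essential disjointness between $\sigma^n_i$ and $\sigma^n_{i+1}$ on an interval of positive length when both linger at $m$. The fix is to choose $z_i^n$ as an appropriate jump time through level $m$ on $\pi^n_i$, paired with padding $\tau_i$ by a brief flat segment at level $m$ near its left endpoint; the padding contributes only $O(\scrB^n_m(z_i^n) - \scrB^n_m(z_i))$ to the length, which vanishes in the limit by uniform convergence of $\scrB^n$ on compact sets, so the length bookkeeping collapses cleanly to $\sum_i \|\tau_i\|_\scrB \le \sum_i \|\pi_i|_{[z_i, y_i]}\|_\scrB$.
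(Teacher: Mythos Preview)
Your tightness argument is essentially the same as the paper's. The optimality argument, however, takes a genuinely different and more laborious route than the paper.

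\textbf{The paper's approach.} The paper does not lift an arbitrary competitor $\tau$ from $\scrB$ back into $\scrB^n$. Instead it uses two soft facts simultaneously: (a) finite last passage values are continuous in the environment, so $\scrB^n[(\bz,m)\to(\by,1)]\to\scrB[(\bz,m)\to(\by,1)]$ directly from uniform-on-compact convergence; and (b) the restriction of the prelimiting optimizer $\pi^n\{\bx,\by\}$ to $[z_i,y_i]$ is already an optimizer in $\scrB^n$ from $(\bz,m)$ to $(\by,1)$, so $\scrB^n[(\bz,m)\to(\by,1)]=\sum_i\|\pi_i^n|_{[z_i,y_i]}\|_{\scrB^n}$. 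Taking limits of this identity gives optimality of $\pi|_{[z_i,y_i]}$ in one line. Point (b) is the ``restriction of an optimizer is an optimizer'' principle, which is immediate from the metric composition law (Lemma~\ref{L:split-path}) once one observes that any failure of disjointness when splicing at level $m$ can be repaired by a length-preserving swap of the overlapping level-$m$ segments.

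\textbf{Where your argument goes astray.} By choosing to compare against an arbitrary competitor $\tau$ in $\scrB$ and then lift it back to $\scrB^n$, you manufacture the very patching problem you then struggle with. Two specific issues: the phrase ``time-shifted copy of $\tau_i$'' is not meaningful, since translating $\tau_i$ in time decouples it from the environment and destroys the length bookkeeping; and your proposed fix of ``padding $\tau_i$ by a brief flat segment at level $m$'' makes disjointness \emph{worse}, not better, since all $k$ padded segments would sit at the same level $m$ and can overlap. The correct repair is the swap mentioned above, but once you realize that, you see that you never needed to lift $\tau$ at all --- the same swap shows that $\pi^n|_{[z_i^n,y_i]}$ is itself an optimizer in $\scrB^n$, and the paper's two-line argument takes over.
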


\begin{proof}
Tightness is immediate from the tightness of each of the jump time sequences $Z^n_{i, k}(\bx, \by)$ established in Lemma \ref{L:tight-multipoint}, and the definition of the topology on path space. Now, consider a subsequential limit $(\pi, \scrB)$ of $(\pi^n\{\bx, \by\}, \scrB^n)$, a coupling where $(\pi^n\{\bx, \by\}, \scrB^n) \to (\pi, \scrB)$ almost surely, and a set of times $\bz$ as above. Since essential disjointness and path ordering are preversed under taking limits, on the almost sure set where this convergence holds, $\{\pi_i|_{[z_i, y_i]}\}$ is a disjoint $k$-tuple from $(\bz, m)$ to $(\mathbf{y}, 1)$. Moreover, since $\scrB^n \to \scrB$ uniformly on compact sets, we have
\begin{equation}
\label{E:scrBn}
\scrB^n[(\bz, k) \to (\by, 1)] = \sum_{i=1}^m \|\pi_i^n\{\bx, \by\}|_{[z_i, y_i]}\|_{\scrB^n} \to \sum_{i=1}^m  \|\pi_i|_{[z_i, y_i]}\|_{\scrB}
\end{equation}
as $n \to \infty$. Finally, $\scrB^n[(\bz, k) \to (\by, 1)] \to \scrB[(\bz, k) \to (\by, 1)]$ by uniform-on-compact convergence, so \eqref{E:scrBn} implies that $\{\pi_i|_{[z_i, y_i]}\}$ is a disjoint optimizer in $\scrB$ from $(\bz, k)$ to $(\mathbf{y}, 1)$.
\end{proof}

\section{Last passage percolation across the parabolic Airy line ensemble}   \label{S:lpp-across-ALE}

Having established tightness of melon optimizers and prelimiting sheets, our next goal is to construct the limits of these objects. To do this, we introduce a notion of length and last passage percolation for infinite paths in $\scrB$.

\subsection{Parabolic paths, length, and geodesics in $\scrB$}
A \textbf{parabolic path} across $\scrB$ from $x \ge 0$ to $z \in \R$ is a nonincreasing cadlag function $\pi:(-\infty, z] \to \N$ such that 
\begin{equation}
\label{E:asym-dir}
\lim_{y \to -\infty} \frac{\pi(y)}{2 y^2} = x.
\end{equation}
For every $y < z$ define the \textbf{discrepancy of $\pi$ at $y$} by
$$
D_\pi(y) = \|\pi|_{[y, z]}\|_\scrB - \scrB[(y, \pi(y)) \to (z, 1)].
$$
Note that $D_\pi(y) \le 0$ for all $y$.
We then define the length of $\pi$ by
\begin{equation}
\label{E:pi-A}
\|\pi\|_\scrB = \scrS(x, z) + \liminf_{y \to -\infty} D_\pi(y),
\end{equation}
where $\scrS$ is the half-Airy sheet defined from  $\scrB$ as in Definition \ref{D:halfsheet}.
A parabolic path $\pi$ is a \textbf{geodesic} from $x$ to $y$ if the length $\|\pi\|_\scrB$ is finite, and is maximal among all paths in $\scrB$ from $x$ to $z$. A parabolic path $\pi$ is \textbf{locally geodesic} if $\pi|_{[a, b]}$ is a geodesic for every compact interval $[a, b]$.
We first record some basic properties of lengths and geodesics in $\scrB$. The first lemma records useful deterministic facts.

\begin{lemma}
	\label{L:A-paths}
	Let $\scrB$ be a parabolic Airy line ensemble.
	\begin{enumerate}[label=(\roman*)]
		\item For any parabolic path $\pi$, the discrepancy $D_\pi(y)$ is increasing in $y$. In particular, the liminf on the right-hand side of \eqref{E:pi-A} is actually a limit.
		\item A parabolic path $\pi$ from $x$ to $z$ is a geodesic if and only if $\pi$ is locally geodesic, or equivalently $\|\pi\|_\scrB = \scrS(x, z)$.
		\item If $\pi_n$ is a sequence of parabolic paths from $x_n$ to $z_n$ converging to a parabolic path $\pi$ from $x$ to $z$, then $\limsup_{n \to \infty} \|\pi_n\|_\scrB \le \|\pi\|_\scrB$.
	\end{enumerate}
\end{lemma}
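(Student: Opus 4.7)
The three parts are largely independent and I treat them in order.

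For part (i), the key observation combines additivity of path length with super-additivity of single-point last passage values. For $y_1 < y_2 < z$, length decomposes as $\|\pi|_{[y_1, z]}\|_\scrB = \|\pi|_{[y_1, y_2]}\|_\scrB + \|\pi|_{[y_2, z]}\|_\scrB$ by telescoping the line-by-line contributions in the definition of $\|\cdot\|_\scrB$ (any jump of $\pi$ at $y_2$ appears on the left factor and the right factor starts at $\pi(y_2)$, so the decomposition works whether or not $\pi$ is continuous at $y_2$). Inserting the intermediate point $(y_2, \pi(y_2))$ into the last passage value yields
$$
\scrB[(y_1, \pi(y_1)) \to (z, 1)] \ge \scrB[(y_1, \pi(y_1)) \to (y_2, \pi(y_2))] + \scrB[(y_2, \pi(y_2)) \to (z, 1)] \ge \|\pi|_{[y_1, y_2]}\|_\scrB + \scrB[(y_2, \pi(y_2)) \to (z, 1)].
$$
Subtracting gives $D_\pi(y_1) \le D_\pi(y_2)$, so $D_\pi$ is monotone and the liminf in the definition of $\|\pi\|_\scrB$ is actually a limit.

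For part (ii), since $\|\pi|_{[y,z]}\|_\scrB$ is the length of a specific path from $(y, \pi(y))$ to $(z, 1)$, we have $D_\pi \le 0$ and hence $\|\pi\|_\scrB \le \scrS(x,z)$ for every parabolic path. By (i), the monotone nonpositive function $D_\pi$ satisfies $\lim_{y \to -\infty} D_\pi = 0$ iff $D_\pi \equiv 0$, iff $\pi|_{[y, z]}$ is a geodesic in $\scrB$ for every $y$, iff $\pi$ is locally geodesic (since a geodesic restricts to a geodesic on every subinterval). This gives the equivalence (locally geodesic) $\Leftrightarrow$ ($\|\pi\|_\scrB = \scrS(x,z)$). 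Since $\scrS(x,z)$ is an upper bound on all parabolic path lengths, any path attaining this bound is automatically maximal and thus a geodesic. The converse -- every geodesic reaches $\scrS(x,z)$ -- requires a witness parabolic path of length exactly $\scrS(x,z)$, so that $\scrS(x,z)$ really is the supremum; this is the main obstacle. For $x > 0$ the witness is any distributional subsequential limit $\tilde\pi$ of the melon geodesics $\pi^n\{x, z\}$: by Corollary \ref{C:path-tightness} (with $k = 1$ and the point $(a, \tilde\pi(a)) \in \Ga(\tilde\pi)$ for each $a \le z$), $\tilde\pi|_{[a, z]}$ is a geodesic in $\scrB$, so $D_{\tilde\pi}(a) = 0$ for every $a$, and $\tilde\pi$ is locally geodesic. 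For $x = 0$, the constant path $\pi \equiv 1$ is the unique parabolic path from $0$ to $z$, is trivially locally geodesic, and has length $\scrB_1(z) = \scrS(0,z)$ by the first bullet of Definition \ref{D:halfsheet}.

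For part (iii), fix $y < z$ at which $\pi$ is continuous. Path-space convergence $\pi_n \to \pi$ forces $z_n \to z$, eventually $y < z_n$, and eventually $\pi_n(y) = \pi(y)$ by convergence of jump times and the integer-valuedness of the paths. Part (i) applied to $\pi_n$ gives
$$
\|\pi_n\|_\scrB \le \scrS(x_n, z_n) + \|\pi_n|_{[y, z_n]}\|_\scrB - \scrB[(y, \pi_n(y)) \to (z_n, 1)].
$$
Continuity of $\scrS$ on its parameter space, continuity of $\scrB$ (which passes to path length on compact intervals), and continuity of single-point last passage values in endpoints yield $\limsup_{n \to \infty} \|\pi_n\|_\scrB \le \scrS(x, z) + D_\pi(y)$. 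Since continuity points of $\pi$ form a dense set, we may send $y \to -\infty$ along them; part (i) makes $D_\pi(y) \to \|\pi\|_\scrB - \scrS(x, z)$, producing the desired upper semicontinuity. The remaining manipulations throughout are routine once (i) is in hand; the one genuinely nontrivial ingredient is the existence of the witness path in (ii), drawn from the tightness results of the preceding section.
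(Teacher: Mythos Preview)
Your proofs of parts (i) and (iii) match the paper's essentially line for line.

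For part (ii), you take a genuinely different route from the paper. The paper avoids any witness-path argument: it shows directly that a parabolic path $\pi$ with $-\infty < \|\pi\|_\scrB < \scrS(x,z)$ can be strictly improved by picking $y$ with $D_\pi(y) = -a < 0$ and replacing $\pi|_{[y,z]}$ with a finite geodesic from $(y,\pi(y))$ to $(z,1)$ (which exists by compactness of finite path space). The new path agrees with $\pi$ on $(-\infty,y]$, so it is again parabolic from $x$, and its discrepancy at every $y' < y$ is $D_\pi(y') + a$; hence its length exceeds $\|\pi\|_\scrB$ and $\pi$ is not maximal. This is a purely deterministic argument valid on the almost-sure set where $\scrS$ is defined, for every $(x,z)$ at once.

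Your witness approach is correct in spirit but carries some cost. First, Corollary~\ref{C:path-tightness} by itself only gives a locally-optimal limiting path; to know it is a parabolic path from $x$ you also need the jump-time asymptotics of Theorem~\ref{T:sheet-structure}(ii) (or Lemma~\ref{L:tight-multipoint}), which you do not cite. Second, your claim that $\pi\equiv 1$ is the \emph{unique} parabolic path from $0$ is false --- any nonincreasing $\pi$ with $\pi(y)=o(y^2)$ qualifies --- though harmless, since you only need a single witness. Third, and more substantively, the distributional argument via Corollary~\ref{C:path-tightness} produces a witness only for fixed $(x,z)$ almost surely, whereas Lemma~\ref{L:A-paths}(ii) is later applied (e.g.\ in the proof of Lemma~\ref{L:A-geod-exist}) to geodesics with random endpoints, so one needs the statement for all $(x,z)$ simultaneously. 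The paper's improvement trick delivers this for free; your route would require an additional density-and-continuity extension that essentially anticipates Lemma~\ref{L:A-geod-exist}. The paper's argument is the natural one here precisely because Lemma~\ref{L:A-paths} is designed to precede, not depend on, the existence theory.
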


\begin{proof}
	For any parabolic path $\pi:(-\infty, z] \to \N$, for $y_1 < y_2\le z$ we have
	$$
	\|\pi|_{[y_1, z]}\|_\scrB = \|\pi|_{[y_1, y_2]}\|_\scrB + \|\pi|_{[y_2, z]}\|_\scrB.
	$$
	Combining this with the triangle inequality for last passage values in $\scrB$ between the points $(\pi(y_1), y_1), (\pi(y_2), y_2),$ and $(\pi(z), z)$, we get that
	$$
	D_\pi(y_2) - D_\pi(y_1) \ge \scrB[(y_1, \pi(y_1)) \to (y_2, \pi(y_2))] - \|\pi|_{[y_1, y_2]}\|_\scrB \ge 0,
	$$
	so $D_\pi$ is increasing, giving (i).
	
	For part (ii), note that $\pi$ is locally geodesic if and only if $D_\pi(y) = 0$ for all $y$, or equivalently, if $\|\pi\|_\scrB = \scrS(x, z)$. Noting that $D_\pi \le 0$ for any path $\pi$, if $D_\pi = 0$, then $\pi$ must be a geodesic. For the opposite direction, suppose that $\pi$ is a path from $x$ to $y$ with 
	$$
\lim_{y \to -\infty} D_\pi(y) = -c < 0.
	$$ 
	Then $D_\pi(y) = -a \in [-c, 0)$ for some $y < z$. We could modify the path $\pi$ by replacing $\pi|_{[y, z]}$ with a geodesic from $(y, \pi(y))$ to $(z, 1)$. The new path $\pi'$ is a also a parabolic path from $x$ to $z$, and
	$$
	D_{\pi'}(y') = D_\pi(y') + a
	$$
	for all $y' < y$. Therefore $\|\pi'\|_\scrB > \|\pi\|_\scrB$, so $\pi$ cannot be a geodesic.  
	
	For part (iii), observe that if $\pi_n \to \pi$, then the domains converge, and by continuity of $\scrB$, the last passage values on any compact interval $[x, y]$ also converge. In particular, $D_{\pi_n}(y) \to D_\pi(y)$ for all $y$. Combining this with the monotonicity from (i) and the continuity of the Airy sheet $\scrS$ (Definition \ref{D:airysheet}) gives (iii). 
\end{proof}

Existence, uniqueness, and other basic structural results about geodesics across $\scrB$ are guaranteed by limiting results for Brownian melons.

\begin{lemma}
	\label{L:A-geod-exist}
	\begin{enumerate}[label=(\roman*)]
		\item (Uniqueness) For any fixed $(x, y) \in [0, \infty) \X \R$, there exists a unique geodesic $\pi\{x, y\}$ in $\scrB$ from $x$ to $y$ almost surely.
		\item (Existence) Almost surely, for every $(x, y) \in [0, \infty) \X \R$, there exists a geodesic $\pi$ in $\scrB$ from $x$ to $y$. Moreover, almost surely for every $x, y \in [0, \infty) \X \R$, there are geodesics $\pi_L\{x,y\}$ and $\pi_R\{x,y\}$ from $x$ to $y$ satisfying $\pi_L\{x,y\}(t) \le\pi (t) \le\pi_R\{x,y\}(t)$ for any geodesic $\pi$ from $x$ to $y$ and all $t \in (-\infty, y]$. We call $\pi_L\{x,y\}$ and $\pi_R\{x,y\}$ the \textbf{leftmost and rightmost geodesics from $x$ to $y$}.
		\item (Overlap in the trunk) For a fixed $x \ge 0$ and $y, y' \in \R$ and any geodesics $\pi$ and $\pi'$ from $x$ to $y$ and $x$ to $y'$, almost surely we have $\pi(z) = \pi'(z)$ for all sufficiently negative $z$.
		\item (Disjointness Structure) Let $x \ge 0$ and $y \in \R$. For any fixed $r > 0$, we have
		\begin{equation}
		\label{E:e0}
		\lim_{\ep \to 0^+} \p (\Ga(\pi\{(x-\ep) \vee 0, y\}) \cap \Ga(\pi\{x + \ep , y + r\}) = \emptyset) = 0.
		\end{equation}
		Also, for any fixed $0 \le x < x'$ and $y \in \R$, we have
		\begin{equation}
		\label{E:R-to-infty}
		\begin{split}
		\lim_{r \to \infty} &\p ( \pi\{x, y\} \mathand \pi\{x', y + r\} \text{ are essentially disjoint}) = 1, \qquad \mathand \\
			\lim_{r \to \infty} &\p ( \pi\{x, y-r\} \mathand \pi\{x', y\} \text{ are essentially disjoint}) = 1.
		\end{split}
		\end{equation}
		\item (Monotonicity and tree structure) Let $\Om$ be the almost sure set where rightmost geodesics $\pi_R\{x, y\}$ from $x$ to $y$ exist for every $(x, y) \in [0, \infty) \X \R$. On $\Om$, for every $0 \le x_1 \le x_2$ and $y_1 \le y_2$, we have 
		$$
		\pi_R\{x_1, y_1\}(t) \le \pi_R\{x_2, y_2\}(t)
		$$
		for all $t \le y_1$, and the overlap of zigzag graphs 
		$$
		\Ga(\pi_R\{x_1, y_1\}) \cap \Ga(\pi_R\{x_2, y_2\})
		$$
		is either empty, or else is the zigzag graph of a cadlag function $\pi$ from a closed interval to $\R$.
	\end{enumerate} 
\end{lemma}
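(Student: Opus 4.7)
The plan is to deduce each part from its analogue for the Brownian melon $W^n$ (equivalently, for $\scrB^n$), using Corollary \ref{C:path-tightness} to transfer path limits, Lemma \ref{L:zk-unif-bound} for the asymptotic direction, Lemma \ref{L:disjoint-melon} to translate between melon and Brownian disjointness, and the a.s.\ uniqueness of finite melon geodesics from Lemmas \ref{L:brown-unique} and \ref{L:specifics} to drive uniqueness. I will prove the parts in the order (ii), (v), (i), (iii), (iv), which lets the uniqueness argument feed directly into the trunk overlap.

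\textbf{Existence (ii) and tree structure (v).} Fix $(x,y)$ with $x > 0$. Corollary \ref{C:path-tightness} gives tightness of $\pi^n\{x, y\}$, and each subsequential limit $\pi$ has the property that every restriction $\pi|_{[z, y]}$ is a geodesic in the limiting $\scrB$ from $(z, \pi(z))$ to $(y, 1)$. Lemma \ref{L:A-paths}(ii) then promotes $\pi$ to a geodesic on all of $(-\infty, y]$, and Lemma \ref{L:zk-unif-bound} supplies the asymptotic direction \eqref{E:asym-dir}. The boundary case $x = 0$ is handled separately by observing that the geodesic must lie on line $1$ past some (negative) time. To produce leftmost and rightmost geodesics simultaneously over all $(x,y)$, I take a joint subsequential distributional limit of the rightmost and leftmost prelimit geodesics $(\pi^n_R\{x, y\}, \pi^n_L\{x, y\})$ along a countable dense set of endpoints in $(\Q \cap [0, \infty)) \X \Q$, and then extend to arbitrary $(x, y)$ by monotone one-sided limits along rationals. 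Monotonicity from the prelimit (Proposition \ref{P:tree-structure}) passes to path-space limits, and the tree-graph condition in (v) is a closed condition in path space, so both persist in the limit; Lemma \ref{L:A-paths}(iii) ensures the interpolated objects remain geodesics.

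\textbf{Uniqueness (i).} Lemmas \ref{L:brown-unique} and \ref{L:specifics} imply that for each fixed $(x, y)$ the melon geodesic from $(x - n^{1/3}/2, n)$ to $(y, 1)$ is a.s.\ unique, so $\pi^n_L\{x, y\} = \pi^n_R\{x, y\}$ almost surely. This equality passes through the joint distributional limit of part (ii), yielding $\pi_L\{x, y\} = \pi_R\{x, y\}$ a.s. Since any geodesic from $x$ to $y$ is sandwiched between the leftmost and rightmost by the defining property in (ii), uniqueness follows.

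\textbf{Overlap (iii) and disjointness (iv).} For (iii), use (i) to identify $\pi\{x, y\}$ and $\pi\{x, y'\}$ with the a.s.\ unique limits of $\pi^n_R\{x, y\}$ and $\pi^n_R\{x, y'\}$; Proposition \ref{P:tree-structure} forces these two prelimit rightmost geodesics to agree on an initial segment, and this overlap persists in the joint limit since path coincidence on $(-\infty, T]$ is a closed condition for each fixed $T$. For (iv), equation \eqref{E:e0} follows by combining Corollary \ref{C:close-center-DJ} with Lemma \ref{L:disjoint-melon} and the fact that zigzag-graph intersection is preserved under path-space convergence (so a.s.\ non-disjointness passes through the limit inferior); equation \eqref{E:R-to-infty} is obtained from Lemma \ref{L:cucumber-sandwich} in the same manner. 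The chief technical obstacle is arranging the simultaneous leftmost/rightmost construction over continuum-many endpoints in (ii), which requires a careful monotone-interpolation along rationals and a verification (via Lemma \ref{L:A-paths}(iii) and continuity of the Airy sheet from Definition \ref{D:airysheet}) that the interpolated objects remain geodesics with the correct asymptotic direction.
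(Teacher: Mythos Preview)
Your approach follows the paper's general strategy (pass from $\scrB^n$ to $\scrB$ via path limits), but there is a genuine circularity in your treatment of (i) and (ii), and a missing ingredient in (iii) and (iv).

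\textbf{The circularity in (i)--(ii).} Your uniqueness argument runs: $\pi^n_L\{x,y\} = \pi^n_R\{x,y\}$ almost surely in the prelimit, so their joint limit satisfies $\pi_L\{x,y\} = \pi_R\{x,y\}$, and ``since any geodesic from $x$ to $y$ is sandwiched between the leftmost and rightmost by the defining property in (ii), uniqueness follows.'' The trouble is that you never prove the sandwiching property. Your $\pi_L, \pi_R$ are constructed as limits of prelimiting extremal geodesics, but an arbitrary geodesic $\pi'$ in $\scrB$ from $x$ to $y$ need not arise as such a limit, so there is no a~priori reason that $\pi_L \le \pi' \le \pi_R$. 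The paper handles this in the opposite order: it proves (i) \emph{first} via a sandwich argument that traps any geodesic $\pi'$ between finite geodesics $\tau_1, \tau_2$ which are forced to coincide by passing through a common point $(W,R)$. This common point comes from Theorem~\ref{T:sheet-structure}(iii). Only then does the paper prove the rightmost property in (ii), by showing that if $\pi'(s) > \pi_R\{x,y\}(s)$ somewhere, then $\max(\pi', \pi\{x_n,y_n\})$ (suitably extended) gives a second geodesic from $x_n$ to $y_n$, contradicting the already-established uniqueness at rational endpoints.

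\textbf{The missing boundedness in (iii) and (iv).} For (iii), Proposition~\ref{P:tree-structure} indeed forces $\pi^n\{x,y\}$ and $\pi^n\{x,y'\}$ to agree on an initial interval $[x-n^{1/3}/2, z_n)$, but the splitting time $z_n$ might drift to $-\infty$ with $n$; if so, the agreement region disappears in the limit. You need to know that the two paths share a point $(W_n, R_n)$ with $W_n$ bounded below uniformly in $n$. This is exactly the content of Theorem~\ref{T:sheet-structure}(iii), which you do not invoke. The same issue arises in your argument for \eqref{E:e0}: non-disjointness of $\pi^n\{x-\ep, y\}$ and $\pi^n\{x+\ep, y+r\}$ only passes to the limit if the intersection point stays in a compact set; again Theorem~\ref{T:sheet-structure}(iii) provides this. (Also, Corollary~\ref{C:close-center-DJ} is specific to $x=0$; for $x>0$ you would want Lemma~\ref{L:close-not-dj}, but the boundedness issue remains.)
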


While the proof of Lemma \ref{L:A-geod-exist} is rather lengthy, the basic idea is just to use the limiting structure of Brownian melon geodesics in Theorem \ref{T:sheet-structure}. This theorem guarantees that limits of Brownian melon geodesics are geodesics across the parabolic Airy line ensemble $\scrB$. Moreover, the coalescence claims from this theorem pass to coalescence results for geodesics across $\scrB$.

\begin{proof} We will work with a subsequence $Y \sset \N$ and a coupling of $\scrB^n$ and $\scrB$ so that the following conditions hold almost surely:
	\begin{enumerate}
		\item $\scrB^n \to \scrB$.
		\item For all $(x, z) \in (\Q \cap (0, \infty)) \X \Q$, there exists a geodesic $\pi\{x, z\}$ across $\scrB$ from $x$ to $z$ such that $\pi^n\{x, z\} \to \pi\{x, z\}$.
		\item For all $x \in \Q \cap (0, \infty), z < y \in \Q$, there exist $X_1 < x < X_2$ with $X_1, X_2 \in \Q \cap (0, \infty)$ such that for all large enough $n$, there is a point $(W_n, R_n)$ in the zigzag graph of both $\pi^n\{X_1, z\}$ and $\pi^n\{X_2, y\}$. Moreover, $(W_n, R_n) \to (W, R)$ for some $(W, R) \in \R \X \Z$.
		\item For any $w < y \in \Q$ and $n > m \in \N$, there is almost surely a unique geodesic in $\scrB$ from $(w, n)$ to $(y, m)$.
	\end{enumerate}
The existence of a coupling satisfying conditions 1-3 follows from Theorem \ref{T:sheet-structure}. Condition 1 is immediate from Theorem \ref{T:sheet-structure}(i). Corollary \ref{C:path-tightness} and the asymptotics in Theorem \ref{T:sheet-structure}(ii) guarantees convergence of the finite geodesics $\pi^n\{x, z\}$ to a limiting parabolic path $\pi\{x, z\}$ from $x$ to $z$. The second part of Corollary \ref{C:path-tightness} guarantees that each $\pi\{x, z\}$ is locally geodesic, and hence is a geodesic by Lemma \ref{L:A-paths}(ii). This gives condition 2. 

For condition 3, Theorem \ref{T:sheet-structure}(iii) guarantees that there exist $X_1 < x < X_2$ and $T \in \R$ such that for all large enough $n$, the zigzag graphs of $\pi^n\{X_1, z\}$ and $\pi^n\{X_2, y\}$ overlap on the interval $[T, z]$. Since the paths $\pi^n\{X_1, z\}|_{[T, z]}$ and $\pi^n\{X_2, y\}|_{[T, z]}$ both converge, the region of overlap also converges. Therefore we can find $(W_n, R_n) \in \Ga(\pi^n\{X_1, z\}) \cap \Ga(\pi^n\{X_2, y\})$ that converges to some $(W, R)$. Condition 4 follows from Lemma \ref{L:brown-unique}. For all proofs we work on the almost sure set where the four conditions above hold.

	\textbf{Proof of (i) for $x > 0$:} \qquad Without loss of generality we can assume that $x, y \in \Q$; the general case can be dealt with by working on a version of the above coupling where $(\Q \cap (0, \infty)) \X \Q$ is replaced by $((\Q \cap (0, \infty)) \X \Q) \cup \{(x, y)\}$. The existence of such a coupling still holds in this context, see the discussion after Theorem \ref{T:sheet-structure}. Suppose that $\pi'$ is another geodesic from $x$ to $y$, and let $z < y, z \in \Q$. It is enough to show that $\pi' = \pi\{x, y\}$ on the interval $[z, y]$. 
	
	Let $X_1, X_2$ be as in property $3$ of the coupling for the triple $x, z < y$.  The parabolic shape of the paths $\pi\{X_1, z\}, \pi\{x, y\}, \pi',$ and $\pi\{X_2, z\}$ ensures that for large enough $m \in \N$ we can find times $t_1 < t_2 < W$ and $s, s' \in (t_1, t_2)$ such that
$$
\pi\{X_1, z\}(t_1) = \pi\{X_2, y\}(t_2) = \pi\{x, y\}(s) = \pi'(s') = m
$$
Also, let $r_1, r_2$ be rational times with $s, s' \in (r_1, r_2) \sset (t_1, t_2)$. There are unique finite geodesics $\tau_1, \tau_2$ from $(r_1, m)$, $(r_2, m)$ to $(y, 1)$.
Since the paths $\pi\{X_1, z\}, \pi\{x, y\}, \pi',$ and $\pi\{X_2, z\}$ are locally geodesic by Lemma \ref{L:A-paths}(ii), we can apply the monotonicity in Lemma \ref{L:mono-tree-multi-path}(i) to get that 
$$
\pi\{X_1, z\}|_{[t_1, z]} \le \tau_1 \le \pi\{x, y\}|_{[s, y]} \le \tau_2 \le \pi\{X_1, z\}|_{[t_1, y]}.
$$
The outer two inequalities imply that the point $(W, R)$ is contained in the zigzag graphs of both $\tau_1$ and $\tau_2$. Therefore by the tree structure of geodesics (Proposition \ref{P:tree-structure}) and the uniqueness of $\tau_1, \tau_2$, the paths $\tau_1, \tau_2$ coincide on the interval $[W, y]$. The inner two inequalities above then imply that $\tau_1, \tau_2$ also coincide with $\pi\{x, y\}$ on this interval. The same holds for $\pi'$, and hence $\pi\{x, y\} = \pi'$ on $[W, y]$. Since $W \le z$, this gives the desired claim.

	\textbf{Proof of (ii) for $x > 0$:} \qquad
	By Lemma \ref{L:mono-tree-multi-path}, for any fixed $z$, the functions $(x, y) \mapsto \pi^n\{x, y\}(z)$ are nondecreasing in $x$ and $y$. This property passes to the limits $\pi\{x, y\}$.
	Therefore for any $(x, y) \in (0, \infty) \X \R$, and any monotone decreasing sequences $x_n \downarrow x, y_n \downarrow y$ with $(x_n, y_n)$ rational, the paths $\pi\{x_n, y_n\}$ have a limit in path space. This limit is a function $\pi_R\{x, y\}:(-\infty, y] \to \N$. Since $\pi_R\{x, y\} \le \pi\{x_n, y_n\}$ for all $n$, and each $\pi$ is a parabolic path from $x_n$ to $y_n$, we have
	\begin{equation}
	\label{E:y-infty}
\limsup_{z \to - \infty} \frac{\pi_R\{x, y\}(z)}{2z^2} \le x.
	\end{equation}
	Again by monotonicity, for any rational points $x' < x, y' < y$ and any $n$, we have $\pi\{x_n, y_n\} \ge \pi\{x', y'\}$. Therefore $\pi_R\{x, y\} \ge \pi\{x', y'\}$ as well, and so \eqref{E:y-infty} is an equality with the $\limsup$ replaced by a limit, and hence $\pi_R\{x, y\}$ is a parabolic path from $x$ to $y$. The fact that $\pi_R\{x, y\}$ is a geodesic follows from Lemma \ref{L:A-paths}(iii) and continuity of the Airy sheet $\scrS$ (see Definition \ref{D:airysheet}). This proves existence of geodesics for $x > 0$. 
	
	Next, we show that each $\pi_R\{x, y\}$ must be the rightmost geodesic from $x$ to $y$. Suppose that there were another geodesic $\pi'$ with $\pi'(t) > \pi_R\{x, y\}(t)$ for some $t \in (-\infty, y)$. Since both $\pi, \pi_R\{x, y\}$ are cadlag, there must exist $\ep > 0$ such that $\pi'(s) > \pi_R\{x, y\}(s)$ for all $s \in [t, t+\ep]$. Zigzag graph convergence of $\pi\{x_n, y_n\}$ to $\pi_R\{x, y\}$ implies pointwise convergence at all continuity points of $\pi_R\{x, y\}$. In particular, pointwise convergence holds for some $s \in [t, t + \ep]$. Therefore for all large enough $n$, we have 
	\begin{equation}
	\label{E:ne-to}
	\pi\{x_n, y_n\}(s) = \pi_R\{x, y\}(s) < \pi'(s).
	\end{equation}
	Now define a new function $\pi^*$ on $(-\infty, x_n]$ by $\pi^*(t) = \max \{\pi\{x_n, y_n\}(t), \pi'(t)\}$ for $t \le y$ and $\pi^* = \pi\{x_n, y_n\}$ on $(y, y_n]$. The function $\pi^*$ is a parabolic path from $x_n$ to $y_n$. Also, since geodesics are locally geodesic, $\pi^*$ must also be locally geodesic, and hence is a geodesic from $x_n$ to $y_n$. Since $\pi\{x_n, y_n\} \ne \pi^*$ by \eqref{E:ne-to}, this contradicts the uniqueness of $\pi\{x_n, y_n\}$ shown in (i). The existence of leftmost geodesics is similar.
	
	\textbf{Proof of (iii) for $x > 0$:} \qquad Let $\pi, \pi'$ be two geodesics from $x$ to two points $y < y'$. As in the proof of (i), for every $z < \min (y, y')$, we can find a time $W_z \le z$ and a location $R_z$ such that $(W_z, R_z)$ lies on the zigzag graphs of both $\pi$ and $\pi'$. Let $W = \{W_z : z < \min(y, y')\}$. Also, for any rational points $q < q' < \min(y, y')$, condition 4 of the coupling ensures that $\pi|_{[q, q']}$ is the unique geodesic from $(q, \pi(q))$ to $(q', \pi(q'))$, and $\pi'|_{[q, q']}$ is the unique geodesic from $(q, \pi'(q))$ to $(q', \pi'(q'))$. Therefore $\pi$ and $\pi'$ must agree on the half-open interval 
	$$
	[\inf (W \cap [q, q']), \sup (W \cap [q, q'])).
	$$
	Since $W$ is nonempty and unbounded below, and $q, q'$ were arbitrary, this implies that $\pi(z) = \pi'(z)$ for all sufficiently negative $z$.
	
	\textbf{Proof of (iv) for $x > 0$:} \qquad We start with \eqref{E:e0}. Since $x > 0$, we may replace $(x- \ep) \vee 0$ with $x - \ep$. Also, without loss of generality, we may assume that $x, y + r \in \Q$. By the monotonicity of the paths $\pi\{x, z\}$ in $x$ and $z$, to show the statement \eqref{E:e0}, it is enough to find random $X_1 < x < X_2$ with $X_1, X_2 \in \Q$ such that the zigzag graphs of 
	$$
	\pi\{X_1, y\}, \pi\{X_2, y+r\}
	$$
	overlap. This follows from condition 3 of the coupling. 
	
	Equation \eqref{E:R-to-infty} in the finite-$n$ case follows from Lemma \ref{L:cucumber-sandwich} with $k = 1$, and the translation of essential disjointness of optimizers across the original Brownian motions to essential disjointness of optimizers across the melon, Lemma \ref{L:disjoint-melon}. To pass to the limiting paths from the finite-$n$ statement of Lemma \ref{L:cucumber-sandwich}, we use that essential disjointness is a closed property in path space.
	
\textbf{Proofs of (i), (ii), (iii), and (iv) for $x = 0$:} \qquad
The path $\pi_R\{0, y\} := 1$ is locally geodesic, and hence is always a geodesic from $0$ to $y$ by Lemma \ref{L:A-paths}(ii).
We next prove (iv) when $x=0$ when $\pi\{0, y\}$ is replaced by the path $\pi_R\{0, y\}$. We will later show that $\pi_R\{0, y\}$ is almost surely the unique geodesic from $0$ to $y$, proving (i). 

The path $\pi_R\{0, y\}$ is the almost sure limit of the melon optimizers $\pi^n\{0, y\}$, which simply follow the top path by Lemma \ref{L:high-paths}. In particular, \eqref{E:R-to-infty} then follows from the exact same argument as in the $x \ne 0$ case. 

For the first part of (iv), by Corollary \ref{C:close-center-DJ} and the fact that $\pi^n\{0, y\}(z)= 1$ for all $n, y, z$ we have $\pi\{\ep, y+r\} \to \pi_R\{0, y + r\}$ almost surely in path space as $\ep \to 0^+$. Equation \eqref{E:e0} follows since $\pi^R\{0, y\} = 1$. 

To prove (i), (ii) and (iii) for $x = 0$, we just need to show that almost surely for all $y \in \R$, $\pi_R\{0, y\}$ is the only geodesic from $0$ to $y$. By the first part of \eqref{E:R-to-infty} for the paths $\pi_R\{0, y\}$, we can work on an almost sure set where for every $z \in \Q$ we have $\pi\{\ep, z\} \to \pi_R\{0, z\}$ almost surely in path space as $\ep \to 0^+, \ep \in \Q$. 

Now let $y \in \R$, and suppose that $\pi'$ is any geodesic from $0$ to $y$. For any $\ep \in \Q \cap (0, \infty)$ and $z \in \Q \cap (y, \infty)$, monotonicity of geodesics (Proposition \ref{P:tree-structure}) and the uniqueness of $\pi\{\ep, z\}$ implies that $\pi'(t) \le \pi\{\ep, z\}(t)$ for $t \in (-\infty, y]$. Since $\pi\{\ep, z\} \to \pi_R\{0, z\}$ as $\ep \to 0^+$, this implies $\pi'(t) \le \pi_R\{0, z\}(t) = 1$ for $t \in (-\infty, y]$ and hence $\pi'(t) = 1 = \pi_R\{0, y\}(t) $ for $t \in (-\infty, y]$.

\textbf{Proof of (v):} \qquad This follows from the fact that rightmost (and leftmost) geodesics are locally rightmost (and leftmost) geodesics, and the corresponding result in the finite case, Proposition \ref{P:tree-structure}.
\end{proof}

For a parabolic path across $\scrB$, the definition of its length from \eqref{E:pi-A} is not easy to work with, as it involves a $\liminf$ of the discrepancy.
Thus we record the following lemma, which follows from \eqref{E:pi-A} and Lemma \ref{l:sheet-diff-alter}, and says that for two parabolic paths that agree off of a compact set, their difference in length can be computed locally.
\begin{lemma}  \label{l:diff-weight-finite}
The following statement holds almost surely.
Let $\pi_1,\pi_2$ be any two parabolic paths across $\scrB$ from any point $x$ to any points $z_1, z_2$ respectively, such that for some $z_0<z_1\wedge z_2$, we have $\pi_1(y)=\pi_2(y)$ for any $y\le z_0$.
Then
\[
\|\pi_1\|_\scrB -
\|\pi_1|_{[z_0,z_1]}\|_\scrB
=
\|\pi_2\|_\scrB -
\|\pi_2|_{[z_0,z_2]}\|_\scrB.
\]
\end{lemma}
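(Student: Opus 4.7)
The plan is to unpack the definition of $\|\pi\|_\scrB$ on both sides and show that the difference between the two expressions reduces precisely to the identity in Lemma \ref{l:sheet-diff-alter}.

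First I would expand, using the definition in \eqref{E:pi-A} and Lemma \ref{L:A-paths}(i) (which promotes the liminf to a genuine limit),
\[
\|\pi_i\|_\scrB = \scrS_\scrB(x, z_i) + \lim_{y\to-\infty}\Bigl(\|\pi_i|_{[y,z_i]}\|_\scrB - \scrB[(y,\pi_i(y))\to(z_i,1)]\Bigr),\qquad i=1,2.
\]
For any $y\le z_0$, the path length decomposes as $\|\pi_i|_{[y,z_i]}\|_\scrB = \|\pi_i|_{[y,z_0]}\|_\scrB + \|\pi_i|_{[z_0,z_i]}\|_\scrB$, so
\[
\|\pi_i\|_\scrB - \|\pi_i|_{[z_0,z_i]}\|_\scrB = \scrS_\scrB(x, z_i) + \lim_{y\to-\infty}\Bigl(\|\pi_i|_{[y,z_0]}\|_\scrB - \scrB[(y,\pi_i(y))\to(z_i,1)]\Bigr).
\]

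Next I would use the hypothesis $\pi_1\equiv\pi_2$ on $(-\infty,z_0]$. This gives $\|\pi_1|_{[y,z_0]}\|_\scrB = \|\pi_2|_{[y,z_0]}\|_\scrB$ and $\pi_1(y) = \pi_2(y) =: \pi(y)$ for all $y \le z_0$. Subtracting the two displays, the statement reduces to verifying
\[
\scrS_\scrB(x,z_1) - \scrS_\scrB(x,z_2) = \lim_{y\to-\infty}\Bigl(\scrB[(y,\pi(y))\to(z_1,1)] - \scrB[(y,\pi(y))\to(z_2,1)]\Bigr).
\]

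Finally, since $\pi_1$ is a parabolic path from $x$ to $z_1$, we have $\lim_{y\to-\infty}\pi(y)/(2y^2)=x$, and the common restriction $\pi:(-\infty,z_0]\to\N$ satisfies the hypotheses of Lemma \ref{l:sheet-diff-alter} (applied, without loss of generality, with $z_1<z_2$; the other case follows by swapping roles and negating). That lemma yields exactly the displayed identity, completing the proof. There is no substantive obstacle here: the only subtlety is confirming that Lemma \ref{l:sheet-diff-alter} applies with the given path $\pi$ extended only on $(-\infty,z_0]$ rather than up to $z_1$, which is immediate since the statement of that lemma only requires a parabolic asymptotic of $\pi$ and depends on its values near $-\infty$, not near the right endpoints.
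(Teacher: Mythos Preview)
Your proposal is correct and takes essentially the same approach as the paper, which simply states that the lemma follows from \eqref{E:pi-A} and Lemma \ref{l:sheet-diff-alter}; you have filled in exactly those details. One minor point worth noting is the case where the limits are $-\infty$: since the difference of the two monotone sequences converges to the finite quantity $\scrS_\scrB(x,z_2)-\scrS_\scrB(x,z_1)$ by Lemma \ref{l:sheet-diff-alter}, either both limits are finite (and your subtraction is valid) or both are $-\infty$ (and the identity holds trivially as $-\infty=-\infty$).
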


From this lemma we can deduce the following measurability result. Informally, this result 
says that for any parabolic path $\pi$ without any `jump points' inside a compact set $[a, b] \X \llbracket 1,k\rrbracket$, the length of $\pi$ is determined by the values of $\scrB$ outside of $[a, b] \X \llbracket 1,k\rrbracket$ (see Figure \ref{fig:meas}).

\begin{figure}[hbt!]
    \centering
\begin{tikzpicture}[line cap=round,line join=round,>=triangle 45,x=0.8cm,y=0.4cm]
\clip(-0.5,-1) rectangle (10.5,11);

\fill[line width=0.pt,color=green,fill=green,fill opacity=0.2]
(4,4)-- (7,4)-- (7,11)-- (4,11) -- cycle;

\draw [line width=1pt, color=blue] (4, 10) -- (3.8,10) -- (3.8,8) -- (2.5,8) -- (2.5,6) -- (2.2,6) -- (2.2,4) -- (1.4,4) -- (1.4,2) -- (0.5,2) -- (0.5,0) -- (0,0);

\draw [line width=1pt, color=blue] (9, 10) -- (8,10) -- (8,8) -- (7.6,8) -- (7.6,6)  -- (3.7,6) -- (3.7,4) -- (2.4,4) -- (2.4,2) -- (0.5,2) -- (0.5,0) -- (0,0);

\draw (0,25) -- (10,25);

\foreach \i in {1,...,6}
{
\draw [dotted] [thick] (0, 12-2*\i) -- (10, 12-2*\i);
\begin{scriptsize}
\draw (0,12-2*\i) node[anchor=east]{$\i$};
\end{scriptsize}
}

\draw [fill=uuuuuu] (4,10) circle (1pt);

\begin{scriptsize}
\draw (4,10) node[anchor=south]{$z_I$};
\draw (6,10) node[anchor=south]{$I \X \llbracket 1, k\rrbracket$};
\draw (3.6,9) node[anchor=east]{$\pi'$};
\draw (8.2,9) node[anchor=west]{$\pi$};
\end{scriptsize}

\end{tikzpicture}
\caption{
An illustration of Lemma \ref{l:measurable-B-path-weight} and its proof: for a parabolic path $\pi$ that does not have any jump point in the region $I \X \llbracket 1, k \rrbracket$, its length $\|\pi\|_\scrB$ is determined by $\scrB$ outside $I \X \llbracket 1, k \rrbracket$. 
The proof compares $\|\pi\|_\scrB$ with the length of $\pi'$, a parabolic path ending at $z_I$ (the left endpoint of $I$).
The difference is determined by $\scrB$ outside of $I \X \llbracket 1, k \rrbracket$ by Lemma \ref{l:diff-weight-finite}. Using the definition of path length \eqref{E:pi-A}, $\|\pi'\|_\scrB$ is also determined by $\scrB$ outside of $I \X \llbracket 1, k \rrbracket$. 
}
\label{fig:meas}
\end{figure}
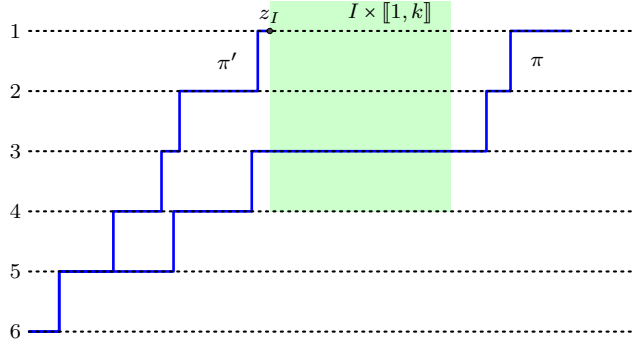

\begin{lemma} \label{l:measurable-B-path-weight}
Take any compact interval $I\subset \R$ and $k\in\N$.
Let $\scrF$ be the $\sigma$-algebra generated by all null sets, all $\scrB_i$ for $i>k$, and $\{\scrB_i(x):x\not\in I\}$ for $1\le i \le k$.
Take any $x\ge 0$, and let $\Sigma_x$ be the set of parabolic paths $\pi$ from $x$ to some $z \in \R$ such that either $\pi(y)>k$ for any $y\in(-\infty,z]\cap I$, or else $I \sset (-\infty, z]$ and $\pi$ is constant on $I$. Let $F:\Sigma_x \to \R$ be the random function recording path length in $\scrB$: $F(\pi) = \|\pi\|_\scrB$. Then $F$ is $\scrF$-measurable.
\end{lemma}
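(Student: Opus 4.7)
My plan is to compare $\pi$ to a reference parabolic path $\pi'$ ending at the left endpoint of $I$, using Lemma \ref{l:diff-weight-finite} to express $\|\pi\|_\scrB$ as a sum of three terms whose $\scrF$-measurability can be verified separately. Write $I=[\alpha,\beta]$. If $z<\alpha$ then the defining formula \eqref{E:pi-A} only accesses $\scrB$ at times below $\alpha$ and measurability is immediate, so assume $z\ge\alpha$. Fix some $z_0<\alpha$ and construct a parabolic path $\pi'$ from $x$ to $\alpha$ that agrees with $\pi$ on $(-\infty,z_0]$ (e.g., follow $\pi$ up to $z_0$, then step down unit-by-unit to reach line $1$ at $\alpha$). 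Applying Lemma \ref{l:diff-weight-finite} to the pair $(\pi,\pi')$ gives
\[
\|\pi\|_\scrB \;=\; \|\pi|_{[z_0,z]}\|_\scrB \;+\; \|\pi'\|_\scrB \;-\; \|\pi'|_{[z_0,\alpha]}\|_\scrB.
\]

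The two finite-length terms are handled by direct bookkeeping. The path $\pi'|_{[z_0,\alpha]}$ has all jump times in $[z_0,\alpha]$, so its length only accesses $\scrB_i(t)$ with $t\le\alpha$, and continuity of $\scrB$ at $\alpha\in\partial I$ gives $\scrF$-measurability. For $\|\pi|_{[z_0,z]}\|_\scrB$, the two hypotheses on $\Sigma_x$ are designed precisely so that jump time contributions from lines $i\le k$ fall outside $I$ (or at its boundary, handled by continuity): under the first hypothesis $\pi$ visits lines $i\le k$ only at times outside $I$; under the second, $\pi\equiv c$ on $I$, so the only line-$c$ contribution spanning $I$ uses jump times $\le\alpha$ and $\ge\beta$, while other lines contribute entirely to one side of $I$.

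The main obstacle is showing $\|\pi'\|_\scrB$ itself is $\scrF$-measurable. Expanding via \eqref{E:pi-A},
\[
\|\pi'\|_\scrB \;=\; \scrS_\scrB(x,\alpha)\;+\;\lim_{y\to-\infty}\bigl[\|\pi'|_{[y,\alpha]}\|_\scrB-\scrB[(y,\pi'(y))\to(\alpha,1)]\bigr].
\]
The path-length and last-passage terms inside the limit involve $\scrB$ only at times $\le\alpha$, hence are $\scrF$-measurable. For $\scrS_\scrB(x,\alpha)$ the case $x=0$ is immediate from $\scrS_\scrB(0,\alpha)=\scrB_1(\alpha)$ and continuity. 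For $x\in\Q\cap(0,\infty)$, I apply the integral identity \eqref{E:S-integrate} and split the averaging window as $[-a,0]=[-a,\alpha\wedge 0]\cup[\alpha\wedge 0,0]$: on the left portion, the integrand $\scrS_\scrB(x,\alpha)-\scrS_\scrB(x,z)-(\alpha-z)^2+\xi$ is $\scrF$-measurable by the second bullet of Definition \ref{D:halfsheet} (both last-passage values defining $\scrS_\scrB(x,\alpha)-\scrS_\scrB(x,z)$ only access $\scrB$ at times $\le\alpha$), while the right portion has bounded integrand on a fixed compact interval and so vanishes after division by $a\to\infty$. For irrational $x>0$, approximate by rationals and use continuity of the half-Airy sheet in $x$. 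Combining these measurability assertions yields the $\scrF$-measurability of $\|\pi\|_\scrB$.
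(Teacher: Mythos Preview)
Your proposal is correct and follows essentially the same strategy as the paper: compare $\pi$ to a reference parabolic path $\pi'$ ending at the left endpoint $z_I=\alpha$ of $I$ via Lemma~\ref{l:diff-weight-finite}, then argue that the finite-length pieces are $\scrF$-measurable by inspection of jump times and that $\scrS_\scrB(x,\alpha)$ is $\scrF$-measurable via the averaging identity \eqref{E:S-integrate}.

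Two small points. First, your dismissal of the case $z<\alpha$ as ``immediate'' is not quite right: the formula \eqref{E:pi-A} contains $\scrS_\scrB(x,z)$, which is not obviously determined by $\scrB$ on $(-\infty,\alpha)$ from its definition; you need the same averaging argument you later give for $\scrS_\scrB(x,\alpha)$ (and it works verbatim). Second, your integrand should read $(x-z)^2$, not $(\alpha-z)^2$; this is harmless since the term is deterministic. As a stylistic difference, the paper shifts the integration window to $[z_I-a,z_I]$ using translation invariance of $\scrS$ so the entire integrand is $\scrF$-measurable, whereas you keep the window $[-a,0]$ and let the non-measurable portion over $[\alpha\wedge 0,0]$ vanish after dividing by $a$; both are fine.
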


Before proceeding with the proof, let us briefly comment on the technical point that $\scrF$ contains all null sets. This is required to ensure that certain almost sure limits are $\scrF$-measurable. Adding null sets into $\scrF$ does not give us very much extra information since the $\sigma$-algebra generated by null sets contains only the sets $A$ with $\p(A) \in \{0, 1\}$. 

\begin{proof}
Let $z_I$ be the left endpoint of $I$. 
We first show that the length of any parabolic path $\pi'$ from $x$ to $z_I$ is $\scrF$-measurable.
From the definition of path length \eqref{E:pi-A}, it suffices to show that $\scrS(x, z_I)$ is $\scrF$-measurable.

By \eqref{E:sheet-form-1}, for every $y<z_I$ we have that $\scrS(x,z_I)-\scrS(x,y)$ is $\scrF$-measurable.
By \eqref{E:S-integrate} and translation invariance of $\scrS$ (Lemma \ref{L:skew-sym}), outside of a null set we have
\[
\scrS(x, z_I) = \lim_{a \to \infty} \frac{1}{a}\int_{z_I-a}^{z_I} \lf(\scrS(x, z_I) - \scrS(x, y) - (x - y)^2 + \xi \rg)dy,
\]
where $\xi$ is the expectation of a GUE Tracy-Widom random variable.
This implies that $\scrS(x, z_I)$ is $\scrF$-measurable, thus the length of any parabolic path $\pi'$ from $x$ to $z_I$ is $\scrF$-measurable.

For a general parabolic path $\pi \in \Sigma_x$ (i.e., $\pi$ starts and $x$ and does not jump in $I \X \llbracket 1,k\rrbracket$), the idea is to compare the length of $\pi$ with the length of some parabolic path ending at $z_I$ using Lemma \ref{l:diff-weight-finite}, and show that $\|\pi\|_\scrB-\|\pi'\|_\scrB$ is $\scrF$-measurable.
Indeed, one can construct a parabolic path $\pi'$ from $x$ to $z_I$ such that $\pi(y) = \pi'(y)$ for all sufficiently negative $y$; and by Lemma \ref{l:diff-weight-finite} applied to the paths $\pi, \pi'$ we have that $\|\pi\|_\scrB-\|\pi'\|_\scrB$ is $\scrF$-measurable.
The conclusion follows.
\end{proof}
This result will be used in the proof of Lemma \ref{L:two-sheet-sum-unique-max} below, where we use the Brownian Gibbs property (Theorem \ref{T:melon-Airy-facts}) on sets of the form $I \X \llbracket 1,k\rrbracket$ to study distributional properties of parabolic path length.

\subsection{Disjoint optimizers in $\scrB$}

Now that we have a notion of length of parabolic paths in $\scrB$, we can define multi-point last passage values. For $(\bx, \by) \in \fX$ with $x_1 \ge 0$, define
\begin{equation}
\label{E:disjoint-length}
\scrB[\bx \to \by] = \sup_\pi \|\pi\|_\scrB := \sup_{\pi_1, \dots \pi_k} \sum_{i=1}^k\|\pi_i\|_\scrB
\end{equation}
where the supremum is over $k$-tuples of ordered, essentially disjoint parabolic paths from $x_i$ to $y_i$. As in the finite case, we call such a collection $\pi$ a \textbf{disjoint $k$-tuple} from $\bx$ to $\by$, we refer to any disjoint $k$-tuple $\pi = (\pi_1, \dots, \pi_k)$ that attains the above supremum as a \textbf{disjoint optimizer}, as long as $\|\pi\|_\scrB$ is finite. We say that a $k$-tuple $\pi$ is a \textbf{local optimizer} if for all $z \le y_1$, the $k$-tuple consisting of the paths $\pi_i|_{[z, y_i]}$ is a disjoint optimizer. Note that the notation \eqref{E:disjoint-length} is similar to the notation for finite last passage values. The two notations are distinguished by the lack of start and end lines in \eqref{E:disjoint-length}.

We first focus on understanding the structure of disjoint optimizers in $\scrB$ from distinct starting points $\bx = (x_1 < x_2 < \dots < x_k)$ with $x_1 > 0$, as such paths are more easily related to geodesics. 

\begin{prop}
	\label{P:unique-exist}
	Take any $(\bx, \by) \in \fX$ such that $\bx = (x_1 < x_2 < \dots < x_k)$ with $x_1 > 0$.
	\begin{enumerate}[label=(\roman*)]
		\item Suppose that $\pi$ is a disjoint optimizer from $\bx$ to $\by$. Then $\pi$ is locally optimal.
		\item Almost surely there is a unique optimizer $\pi = (\pi_1, \dots, \pi_k)$ from $\bx$ to $\by$ in $\scrB$. Moreover, letting $\pi\{x_i, 0\}$ be the geodesic in $\scrB$ from $x_i$ to $0$, then for every $i$, there exists a (random) $Y \in \R$ such that  $\pi\{x_i, 0\}(t) = \pi_i(t)$ for all $t \le Y$.
		\item Almost surely, the only $k$-tuple $\pi$ from $\bx$ to $\by$ in $\scrB$ which is locally optimal is the unique optimizer from $\bx$ to $\by$.
	\end{enumerate}
\end{prop}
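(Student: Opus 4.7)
We organize the proof as (i) $\to$ (iii) $\to$ (ii). For (i), suppose for contradiction that a disjoint optimizer $\pi = (\pi_1,\dots,\pi_k)$ fails to be locally optimal: there exist $z \le y_1$ and a finite disjoint $k$-tuple $\tau = (\tau_1,\dots,\tau_k)$ from $((z,\pi_i(z)))_i$ to $((y_i,\pi_i(y_i)))_i$ in $\scrB$ with $\sum_i \|\tau_i\|_\scrB > \sum_i \|\pi_i|_{[z,y_i]}\|_\scrB$. Splicing $\tau_i$ onto $\pi_i|_{(-\infty,z]}$ at the common boundary value produces a parabolic disjoint $k$-tuple $\pi^*$ from $\bx$ to $\by$ (essential disjointness follows from that of $\pi$ and of $\tau$ together with the cadlag matching at $z$). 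Applying Lemma \ref{l:diff-weight-finite} coordinatewise gives $\sum_i(\|\pi^*_i\|_\scrB - \|\pi_i\|_\scrB) = \sum_i(\|\tau_i\|_\scrB - \|\pi_i|_{[z,y_i]}\|_\scrB) > 0$, contradicting the optimality of $\pi$.

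The heart of the argument is (iii). Fix a locally optimal disjoint $k$-tuple $\pi$ from $\bx$ to $\by$. Because the $x_i$ are strictly increasing and each $\pi_i$ has asymptotic direction $x_i$, the gaps $\pi_{i+1}(z)-\pi_i(z) \sim 2(x_{i+1}-x_i)z^2$ diverge as $z \to -\infty$. Thus for all sufficiently negative $z$ (depending on $\pi$), any parabolic path $\pi'_i$ from $x_i$ agreeing with $\pi_i$ in a neighbourhood of $z$ and staying within, say, half the gap to $\pi_{i\pm 1}$ remains essentially disjoint from the neighbouring $\pi_j$. Combining this flexibility with local optimality, the tail $\pi_i|_{(-\infty,z]}$ must itself be a geodesic in $\scrB$ from $x_i$ to $(z,\pi_i(z))$: any non-geodesic tail could be locally improved without breaking disjointness. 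By the uniqueness and coalescence of geodesics (Lemma \ref{L:A-geod-exist}(i)(iii)), this tail coincides with $\pi\{x_i,0\}$ for all sufficiently negative $t$. This pins down $\pi_i(z)$ for suitable $z$, and local optimality together with the uniqueness of finite disjoint optimizers in $\scrB$ (Lemmas \ref{L:brown-unique} and \ref{L:specifics}) then determines $\pi_i|_{[z,y_i]}$ and hence $\pi$ completely. Global optimality follows by comparing $\pi$ with any competitor $\tau$ from $\bx$ to $\by$: applying the same tail analysis to $\tau$, both share common trunk values on $(-\infty,z]$ for sufficiently negative $z$, and the remaining finite comparison on $[z,y_i]$ is controlled by local optimality of $\pi$ via Lemma \ref{l:diff-weight-finite}.

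For (ii), uniqueness is immediate: an optimizer is locally optimal by (i), hence uniquely determined by (iii). For existence we pass to a subsequential coupling in which $\scrB^n \to \scrB$ almost surely and the Brownian melon optimizers $\pi^n\{\bx,\by\}$ converge in path space, using the tightness from Corollary \ref{C:path-tightness}. The limit $\pi$ is a parabolic disjoint $k$-tuple (essential disjointness and the ordering are closed conditions), and the second part of Corollary \ref{C:path-tightness} shows that each restriction $\pi_i|_{[z,y_i]}$ is a finite disjoint optimizer in $\scrB$, so $\pi$ is locally optimal; by (iii) it is the disjoint optimizer. The trunk-coincidence claim in (ii) is precisely what the tail analysis of (iii) produced. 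The main obstacle in the overall proof is the tail-decoupling step in (iii): formally justifying that any length-strict local perturbation of a path in its tail can be performed without destroying essential disjointness. This will rest on quantitative control of the diverging gaps $\pi_{i+1}(z)-\pi_i(z)$ and careful handling of essential disjointness as a closed condition in path space; once secured, the remainder is a standard combination of finite LPP uniqueness and the coalescence geometry of Lemma \ref{L:A-geod-exist}.
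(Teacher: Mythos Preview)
Your argument for (i) is fine and matches the paper's. The substantive gap is in (iii), at precisely the step you flag as ``the main obstacle'': deducing from local optimality of the $k$-tuple that each component $\pi_i$ is itself a single-path geodesic in the tail. Local optimality asserts that for each $w$ the restriction $(\pi_j|_{[w,y_j]})_j$ is a \emph{disjoint} optimizer --- a coupled optimization --- and this does not decouple into $\pi_i|_{[w,z]}$ being a geodesic. Concretely, to replace a non-geodesic segment $\pi_i|_{[w,z]}$ by the single-path geodesic $\tau_i$ from $(w,\pi_i(w))$ to $(z,\pi_i(z))$ without breaking disjointness, you need $\tau_i(t)<\pi_{i+1}(t)$ on $(w,z)$; but you have no control on $\tau_i$ beyond its endpoints, and the diverging gap bounds $\pi_{i+1}-\pi_i$, not $\pi_{i+1}-\tau_i$. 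There is a second gap: in establishing global optimality you ``apply the same tail analysis to $\tau$'', but $\tau$ is an arbitrary competitor, not locally optimal, so your tail analysis does not apply to it. Without this step, neither (iii) nor your existence argument for (ii) --- which via Corollary~\ref{C:path-tightness} only produces a locally optimal limit --- delivers a global optimizer.

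The paper's route sidesteps both issues by sandwiching rather than decoupling. Using Lemma~\ref{L:A-geod-exist}(iv) it finds auxiliary endpoints $y_i^-\ll y_1\le y_k\ll y_i^+$ so that for each small rational $\delta$ the single-path geodesics $(\pi\{x_i-\delta,y_i^-\})_i$ and $(\pi\{x_i+\delta,y_i^+\})_i$ are each essentially disjoint, hence themselves form locally optimal $k$-tuples. Monotonicity of finite disjoint optimizers (Lemma~\ref{L:mono-tree-multi-path}) then traps any locally optimal $\gamma$ between these, and sending $\delta\to 0$ gives $\pi\{x_i,y_i^-\}\le\gamma_i\le\pi\{x_i,y_i^+\}$. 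Since by Lemma~\ref{L:A-geod-exist}(iii) all geodesics from $x_i$ merge in the tail, the sandwich collapses and forces $\gamma_i(t)=\pi\{x_i,0\}(t)$ for sufficiently negative $t$ --- the conclusion you wanted, obtained without ever claiming that $\gamma_i$ is a geodesic. Existence of the optimizer is handled separately by running the same sandwich on a maximizing sequence (tails replaced by finite optimizers) and extracting a subsequential limit, with global optimality coming from upper semicontinuity of length (Lemma~\ref{L:A-paths}(iii)).
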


The basic idea of the proof is to first look for pairs $(\bx, \by^-)$ and $(\bx, \by^+)$ with $y_i^- < y_i < y_i^+$ where the optimizers from $\bx$ to $\by^\pm$ simply consists of the geodesics from $x_i$ to $y_i^\pm$. Results about optimizers from $\bx$ to $\by$ can then be inferred using monotonicity and coalescence results.

\begin{proof} 
	Without loss of generality, we may assume that all $x_i, y_i$ are rational.
	For (i), note that if any disjoint optimizer $\pi$ were not locally optimal on some interval of lines $\II{1, m}$, then as in the $k=1$ case of Lemma \ref{L:A-paths}(ii), we can increase its length $\|\pi\|_\scrB$ by replacing $\pi$ on those lines with an optimizer $\pi'$.

	For (ii), we will work on the set where for all $x \in \Q \cap (0, \infty)$ and $y \in \Q$, there is a unique geodesic $\pi\{x, y\}$ from $x$ to $y$ in $\scrB$. We also assume that there is a unique optimizer in $\scrB$ from any rational starting location $\bq = (q_1, \dots, q_k) \in (\Q \X \N)^k$ to $(\by, 1)$. We can do this by Lemma \ref{L:specifics}. (In the more general case where $x_i, y_i$ are not all rational, we just consider the set with $\Q$ replaced by $Q \cup \{x_1, \dots, x_k, y_1, \dots, y_k\}$.
	
	First, by using both parts of \eqref{E:R-to-infty} in Lemma \ref{L:A-geod-exist} (iv), we can find rational $\ep > 0$ and rational points $y_i^\pm$ with
	\begin{equation}
	\label{E:yiorder}
	y^-_1 \ll y^-_2 \ll \dots \ll y^-_k < y_1 \le y_k \ll y^+_1 \ll y^+_2 \dots \ll y^+_k
	\end{equation}
	such that for any $i < j \in \II{1, k}$, the geodesics $\pi\{x_i, y^-_i\}$ and $\pi\{x_j - \ep, y^-_j\}$ are essentially disjoint, as are the  geodesics $\pi\{x_i + \ep, y^+_i\}$ and $\pi\{x_j, y^+_j\}$. 
	
	Now, by monotonicity of geodesics, Lemma \ref{L:A-geod-exist}(v), for any rational $\de \in (0, \ep)$ all of the geodesics $\ga_\de^- = \{\pi\{x_i - \de, y_i^-\} : i \in \II{1, k}\}$ are essentially disjoint from each other. Similarly, the geodesics $\ga_\de^+ = \{\pi\{x_i + \de, y_i^+\} : i \in \II{1,k}\}$ are also essentially disjoint. In particular, the disjoint $k$-tuples $\ga_\de^\pm$ are optimizers. They are also locally optimal by Lemma \ref{L:A-paths}(ii).
	
	We use these geodesics to prove the existence of an optimizer from $\bx$ to $\by$. We start with a disjoint $k$-tuple $\pi^1 = (\pi^1_1, \dots, \pi^1_k)$ from $\bx$ to $\by$. The $k$-tuple $\pi^1$ can be obtained from the geodesics $\pi_1, \dots, \pi_k$ from $x_i$ to $y_i$ in the following way. Since these paths have different asymptotic directions $x_i$, there exists $y^* \in \R$ such that $\pi_i(y) \ne \pi_j(y)$ for all $y < y^*$, and $i \ne j$. Modifying these geodesics in any way for $y > y^*$ to ensure essential disjointness and ordering gives a finite length $k$-tuple from $\bx$ to $\by$. Each of the paths in this modification has finite length by Lemma \ref{l:diff-weight-finite}.
	
	Therefore $\scrB[\bx \to \by] > -\infty$. Let $\pi^m = (\pi^m_1, \dots, \pi^m_k)$ be a sequence of $k$-tuples from $\bx$ to $\by$ whose lengths converge to the supremum in \eqref{E:disjoint-length}. The asymptotic growth rate of parabolic paths guarantees that there exists  a sequence $z_m \to -\infty$ such that
	\begin{equation}
	\label{E:pixiii}
	\pi\{x_i - 1/m, y_i^-\}(z_m) < \pi^m_i(z_m) < \pi\{x_i + 1/m, y_i^+\}(z_m)
	\end{equation}
	for all $i \in \II{1, k}$. Next, for each $m$, modify $\pi^m$ so that $\pi^m|_{(-\infty, z_m]}$ is an optimizer. Doing this can only increase the length, so the new path lengths still converge to the supremal value $\scrB[\bx \to \by]$. Moreover, since the $k$-tuples $\ga^\pm_{1/m}$ are locally optimal, \eqref{E:pixiii}, \eqref{E:yiorder}, and monotonicity of optimizers (Lemma \ref{L:mono-tree-multi-path}) implies that for $t \in [z_m, y_i^-]$, we have
	$$
	\pi\{x_i - 1/m, y_i^-\}(t) \le \pi^m_i(t) \le \pi\{x_i + 1/m, y_i^+\}(t).
	$$
	Now, as $m \to \infty$, each of the path collections $\pi\{x_i - 1/m, y_i^-\}$ converges to $\pi\{x_i, y_i^-\}$ in path space. Similarly each of the paths $\pi\{x_i + 1/m, y_i^+\}$ converges to $\pi\{x_i, y_i^+\}$. This, and monotonicity of geodesics implies that the sequence of $k$-tuples $\pi^m$ is precompact in the product of $k$ path spaces, with subsequential limits $\pi$ that satisfy
	\begin{equation}
	\label{E:xiyi-}
	\pi\{x_i, y_i^-\} \le \pi_i \le  \pi\{x_i, y_i^+\}
	\end{equation}
	for all $i$, and are locally optimal. This implies that $\pi$ is also a $k$-tuple from $\bx$ to $\by$. Since essential disjointness and ordering are preserved under limits, $\pi$ is a disjoint $k$-tuple from $\bx$ to $\by$. Also, Lemma \ref{L:A-paths}(iii) implies that $\|\pi\|_\scrB \ge \scrB[\bx \to \by]$ so $\pi$ is a disjoint optimizer.
	
	Next, we establish uniqueness of $\pi$ by establishing (iii). This also completes the proof of (ii). Let $\ga$ be another $k$-tuple from $\bx$ to $\by$ which is locally optimal, and let $\ga^m = \ga$ for all $m$. By a similar argument as above with $\ga^m$ used in place of $\pi^m$, the bounds \eqref{E:xiyi-} also hold with $\ga_i$ in place of $\pi_i$.
	Next, Lemma \ref{L:A-geod-exist}(iii) and \eqref{E:xiyi-} imply that there exists some $y^*$ such that for all $i$ and all $z < y^*$, $\pi_i(z) = \ga_i(z) = \pi\{x_i, 0\}(z)$. Therefore $\ga, \pi$ are both locally optimal paths which are equal at their endpoints. 
	
	Also, we can find rational points $\{(z_i, m_i) : i \in \II{1, k}\} \in (\Q \cap (-\infty, y^*)) \X \N$ such that $\ga_i(z_i) = \pi_i(z_i) = m_i$ for all $i$. Since we are working on an almost sure set where there are unique optimizers between all rational starting locations and $(\by, 1)$, this implies $\ga = \pi$.
\end{proof}

Understanding the structure of optimizers in $\scrB$ from general starting points is more difficult. We will wait until the construction of the extended Airy sheet to do this.

\section{Limits of melon optimizers and the extended Airy sheet}
\label{S:limits-of-optimizers}

We now have the tools to obtain both the scaling limit of $\scrS^n$, and the joint scaling limit of melon optimizers.
We will focus on first understanding the scaling limit of $\scrS^n$ on the set
$$
\hat \fX = \{(\bx, \by) \in \fX : 0 \le x_1 \le \dots \le x_k\}
$$
Also, let
$$
\hat \Q = \{(\bx, \by) \in \hat \fX : x_i \in \Q \cap (0, \infty), y_i \in \Q \;\; \forall i \in \II{1, k}, \text{ and } 0 < x_1 < x_2 < \dots < x_k \}.
$$
Note that $\hat \fX$ is the closure of $\hat \Q$.
By Theorems \ref{T:airy-line-ensemble}, \ref{T:S-tight}, and Corollary \ref{C:path-tightness}, the functions $\scrB^n, \scrS^n|_{\hat \fX}$ and the paths $\{\pi^n\{\bx, \by\} : (\bx, \by) \in \hat \Q\}$ are jointly tight. Here tightness is with respect to the following topologies:
\begin{itemize}[nosep]
    \item For $\scrB^n$: uniform-on-compact convergence of functions from $\R \X \Z$ to $\R$.
    \item For $\scrS^n|_{\hat \fX}$: uniform-on-compact convergence of functions from $\fX$ to $\R$.
    \item For each of the paths $\pi^n\{\bx, \by\}$: the path space topology defined at the beginning of Section \ref{S:lpp}. 
\end{itemize}
Let 
$$
\scrB, \scrS, \{\pi\{\bx, \by\} : (\bx, \by) \in \hat \Q\}
$$
be any joint distributional subsequential limit along some subsequence $Y$. In this section, we will understand the joint structure of these limiting objects. We start with a lemma and a proposition.

\begin{lemma}
	\label{L:S-sum}
	There exists a subsequence $Y' \sset Y$ such that almost surely,
	$$
	(\scrB^n, \scrS^n, \{\pi^n\{\bx, \by\} : (\bx, \by) \in \hat \Q\}) \to (\scrB, \scrS, \{\pi\{\bx, \by\} : (\bx, \by) \in \hat \Q\}),
	$$
	and for every $\bx \in \Q^k$ with $0 < x_1 < \dots < x_k$, there exist rational points $z_1 < z_2 < \dots < z_k$ such that for all large enough $n$, the paths $\pi^n\{x_i, z_i\}$ are essentially disjoint.
\end{lemma}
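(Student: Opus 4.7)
The plan is to combine Skorokhod's representation theorem with a Borel--Cantelli diagonal extraction applied to the disjointness bounds of Lemma \ref{L:cucumber-sandwich}. Since $\hat\Q$ is countable and all objects live in Polish spaces, the tight family $(\scrB^n, \scrS^n|_{\hat\fX}, \{\pi^n\{\bx,\by\}\}_{(\bx,\by)\in\hat\Q})_{n\in Y}$ admits a further subsequence $Y_0 \sset Y$ along which, after relabelling the probability space via Skorokhod, convergence to the joint distributional limit $(\scrB, \scrS, \{\pi\{\bx,\by\}\}_{(\bx,\by)\in\hat\Q})$ holds almost surely. This disposes of the convergence statement; the remaining task is to thin $Y_0$ further so that the disjointness condition also holds.

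Fix $\bx = (x_1 < \dots < x_k) \in \Q^k$ with $x_1 > 0$. Applying Lemma \ref{L:cucumber-sandwich} with $k = 1$ (and transferring essential disjointness between Brownian and melon paths via Lemma \ref{L:disjoint-melon}), for any $i < j$ and any $\eta > 0$ we have
\[
\liminf_{n\to\infty}\p\bigl(\pi^n\{x_i, z_i\} \text{ and } \pi^n\{x_j, z_j\} \text{ essentially disjoint}\bigr) \ge 1 - \eta
\]
provided $z_j - z_i$ is sufficiently large (depending only on $x_j - x_i$ and $\eta$). Choosing consecutive gaps $z_{i+1}-z_i$ large and taking a union bound over the $\binom{k}{2}$ pairs produces a deterministic rational tuple $\bz(\bx,\eta) = (z_1 < \dots < z_k)$ for which the joint probability of pairwise essential disjointness has liminf at least $1-\eta$; the paths $\pi^n\{x_i, z_i\}$ are automatically ordered in $i$ by monotonicity of rightmost optimizers (Proposition \ref{P:tree-structure}), so pairwise essential disjointness suffices.

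Enumerate the (countable) admissible $\bx$ as $\bx^{(1)}, \bx^{(2)}, \ldots$, set $\bz^{(j)} := \bz(\bx^{(j)}, 2^{-j})$, and extract $Y' \sset Y_0$ of indices $n_1 < n_2 < \ldots$ by diagonalization so that for every $j \le m$,
\[
\p\bigl(\{\pi^{n_m}\{x_i^{(j)}, z_i^{(j)}\}\}_{i=1}^{k_j} \text{ not pairwise essentially disjoint}\bigr) < 2^{-m}.
\]
For each fixed $j$ the tail sum is at most $\sum_{m\ge j}2^{-m}<\infty$, so Borel--Cantelli yields that almost surely these paths are pairwise essentially disjoint for all sufficiently large $m$. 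Intersecting the resulting probability $1$ events over $j \in \N$ with the almost sure Skorokhod convergence along $Y' \sset Y_0$ furnishes both conclusions. The main obstacle is the gap between the high-probability bound of Lemma \ref{L:cucumber-sandwich} and the \emph{almost-sure eventual} disjointness demanded by the lemma; the diagonal Borel--Cantelli extraction is precisely what closes this gap, and is the reason $Y'$ must be strictly smaller than $Y_0$.
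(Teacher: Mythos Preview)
Your diagonal extraction contains a genuine gap. You fix $\bz^{(j)} = \bz(\bx^{(j)}, 2^{-j})$, which by construction only guarantees
\[
\limsup_{n\to\infty}\p\bigl(\{\pi^{n}\{x_i^{(j)}, z_i^{(j)}\}\}_i \text{ not pairwise essentially disjoint}\bigr) \le 2^{-j}.
\]
You then claim to extract $n_m$ so that this probability is below $2^{-m}$ for every $j\le m$. But for $j=1$ the bound you have is $2^{-1}$, not $2^{-m}$; there is no reason any $n$ should satisfy $\p(\cdot)<2^{-m}$ once $m\ge 2$. Consequently the tail sum you feed into Borel--Cantelli is fictitious. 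More generally, for any \emph{fixed} rational $\bz$, Lemma~\ref{L:cucumber-sandwich} only gives a $\liminf$ probability bounded away from $1$, so no summability over $m$ is available, and a Borel--Cantelli argument with a single $\bz^{(j)}$ per $\bx^{(j)}$ cannot close the gap between high-probability disjointness and almost-sure eventual disjointness.

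The paper sidesteps this entirely by enlarging the family placed into the Skorokhod coupling: it adjoins the $\{0,1\}$-valued indicators $D^n(\bx,\bz)$ of pairwise essential disjointness for \emph{all} rational $(\bx,\bz)\in\hat\Q$, then passes to a subsequence along which these too converge almost surely. Because an a.s.\ convergent sequence of integers eventually stabilises, $D^n(\bx,\bz)=D(\bx,\bz)$ for all large $n$, and ``eventual disjointness for $\bz$'' becomes the event $\{D(\bx,\bz)=1\}$, whose probability is at least $\liminf_n \E D^n(\bx,\bz)\ge 1-\eta$. Taking a countable family $\bz^{(m)}$ with $\eta=1/m$ and a union gives probability one that \emph{some} rational $\bz$ works. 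This replaces your Borel--Cantelli step with a stabilisation argument, and is precisely what allows the conclusion to hold for a single (possibly random) $\bz$ rather than one varying with $n$.
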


\begin{proof}
For $(\bx, \bz) \in \hat \Q$, define the indicator
$$
D^n(\bx, \bz) = \mathbf{1}\lf(\{\pi^n\{x_i, z_i\}\}_{i=1}^k \text{ are essentially disjoint}\rg).
$$
We can find a subsequence $Y' \sset Y$ such that the random variables
$$
\scrB^n, \scrS^n, \{\pi^n\{\bx, \by\} : (\bx, \by) \in \hat \Q\}, \{D^n(\bx, \bz) :  (\bx, \bz) \in \hat \Q \}
$$
converge jointly in distribution. By Skorokhod's representation theorem, we can couple the environments along $Y'$ so that this convergence takes place almost surely. Finally, by Lemma \ref{L:cucumber-sandwich} and Lemma \ref{L:disjoint-melon}, for every $\bx \in \Q^k$ with $0 < x_1 < \dots < x_k$ and any $\ep > 0$, we can find $\bz$ such that
$$
\liminf_{n \to \infty} \E D_n (\bx, \bz) \ge 1 - \ep.
$$	
Therefore on this coupling, the paths $\pi^n\{x_i, z_i\}$ are essentially disjoint for all large enough $n$ with probability at least $1 - \ep$. Since $\ep> 0$ was arbitrary, this holds almost surely for some rational $\bz$. 
\end{proof}

\begin{prop}
	\label{P:BSjoint}
	With notation as above, almost surely the following statements hold. 
	\begin{enumerate}
		\item For all $(\bx, \by) \in \hat \Q$, we have
		$
		\scrS(\bx, \by) = \scrB[\bx \to \by].
		$
		In particular, by continuity $\scrS|_{\hat\fX}$ is a function of $\scrB$.
		\item For all $(\bx, \by) \in \hat \Q$, the $k$-tuple $\pi\{\bx, \by\}$ is the unique optimizer in $\scrB$ from $\bx$ to $\by$. 
	\end{enumerate}
\end{prop}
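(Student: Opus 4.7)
The plan is to first establish Part 2 (uniqueness and identification of the limiting $k$-tuple $\pi\{\bx,\by\}$) and then deduce Part 1 via a comparison with single-point geodesics and a local splitting at a deep line of $\scrB$. Throughout, I work on the almost sure coupling of Lemma~\ref{L:S-sum}, intersected with the full-probability events on which the conclusions of Proposition~\ref{P:unique-exist}, Lemma~\ref{L:A-geod-exist}, and Theorem~\ref{T:sheet-structure} hold simultaneously for all countably many triples from $\hat\Q$. Fix $(\bx,\by)\in\hat\Q$ with $x_1<\cdots<x_k$.

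For Part 2, Corollary~\ref{C:path-tightness} realizes each $\pi_i\{\bx,\by\}$ as a nonincreasing cadlag function with right endpoint $y_i$; the uniform jump-time asymptotics from Lemma~\ref{L:tight-multipoint} pass to the limit and force the asymptotic direction to be $x_i$, so each is a parabolic path from $x_i$ to $y_i$. Essential disjointness and ordering are closed conditions in the product path topology and persist in the limit. By the second assertion of Corollary~\ref{C:path-tightness}, $\pi\{\bx,\by\}$ is locally optimal in $\scrB$, whereupon Proposition~\ref{P:unique-exist}(iii) pins it down as the unique disjoint optimizer from $\bx$ to $\by$.

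For Part 1, Part 2 gives $\scrB[\bx\to\by]=\sum_{i=1}^k\|\pi_i\{\bx,\by\}\|_\scrB$, so the target is $\scrS(\bx,\by)=\sum_i\|\pi_i\{\bx,\by\}\|_\scrB$. I introduce a reference $\bz\in\hat\Q$ chosen via Lemma~\ref{L:S-sum} so that $z_1<\cdots<z_k\ll y_1$ and, for large $n$, the single-point melon geodesics $\pi^n\{x_i,z_i\}$ are essentially disjoint. Disjointness forces them to comprise the multi-point optimizer, so $\scrS^n(\bx,\bz)=\sum_i\scrS^n(x_i,z_i)\to\sum_i\|\pi\{x_i,z_i\}\|_\scrB$ by the single-point convergence in Theorem~\ref{T:sheet-structure}(i). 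Using Proposition~\ref{P:unique-exist}(ii) on $(\bx,\by)$ together with Lemma~\ref{L:A-geod-exist}(iii) on the single-point geodesics, the limit paths $\pi_i\{\bx,\by\}$ and $\pi\{x_i,z_i\}$ coincide on $(-\infty,Y_i]$ for some $Y_i$, so Lemma~\ref{l:diff-weight-finite} makes the difference $\|\pi_i\{\bx,\by\}\|_\scrB-\|\pi\{x_i,z_i\}\|_\scrB$ a local quantity. The task thus reduces to showing
\[
\scrS(\bx,\by)-\scrS(\bx,\bz)\;=\;\sum_{i=1}^{k}\bigl(\|\pi_i\{\bx,\by\}\|_\scrB - \|\pi\{x_i,z_i\}\|_\scrB\bigr).
\]

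To match the two sides, I fix $m\in\N$ large enough that the line-$m$ crossings of each $\pi_i\{\bx,\by\}$ lie inside $(-\infty,Y_i]$, so they coincide with the line-$m$ crossings of $\pi\{x_i,z_i\}$. Let $\bw^n$ and $\bw'^n$ denote the prelimit line-$m$ crossings across $\scrB^n$ of the rightmost optimizers from $\bx$ to $\by$ and from $\bx$ to $\bz$ respectively; by path convergence these satisfy $\bw^n,\bw'^n\to\bw^\infty$ with a common limit. Applying the metric composition law (Lemma~\ref{L:split-path}) to each LPP value and subtracting splits $\scrS^n(\bx,\by)-\scrS^n(\bx,\bz)$ into a \emph{top} part between line $n$ and line $m+1$, and a \emph{bottom} part between line $m$ and line $1$. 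The argmax property of $\bw^n,\bw'^n$ sandwiches the top part between the two differences $\scrB^n[(\bw'^n,m)\to(\star,1)]-\scrB^n[(\bw^n,m)\to(\star,1)]$ for $\star\in\{\by,\bz\}$, both of which vanish as $n\to\infty$ by $\bw^n,\bw'^n\to\bw^\infty$ together with uniform-on-compacta convergence of $\scrB^n$ to $\scrB$; meanwhile the bottom part converges to $\scrB[(\bw^\infty,m)\to(\by,1)]-\scrB[(\bw^\infty,m)\to(\bz,1)]$. By local optimality of $\pi\{\bx,\by\}$ (Part 2) and disjointness of the single-point geodesics $\pi\{x_i,z_i\}$, splitting both sides at $(\bw^\infty,m)$ identifies this with $\sum_i\bigl(\|\pi_i\{\bx,\by\}\|_\scrB-\|\pi\{x_i,z_i\}\|_\scrB\bigr)$, closing the argument. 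The main obstacle is the top-part cancellation: the prelimit LPPs have different endpoints $\bw^n,\bw'^n$ that are argmaxes of different functionals, so one must invoke the argmax inequality in both directions and combine $|\bw^n-\bw'^n|\to 0$ with compactly uniform convergence of $\scrB^n$ to conclude vanishing in the limit.
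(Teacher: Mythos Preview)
Your argument is correct. Part~2 coincides with the paper's proof. For Part~1, both you and the paper reduce to the identity $\scrS(\bx,\by)-\sum_i\scrS(x_i,z_i)=\scrB[\bx\to\by]-\sum_i\scrB[x_i\to z_i]$ with $\bz$ from Lemma~\ref{L:S-sum}, and both exploit that the limiting paths $\pi_i\{\bx,\by\}$ and $\pi\{x_i,z_i\}$ agree on a half-line $(-\infty,T]$. The execution differs. The paper observes that once the single-point melon geodesics $\pi^n\{x_i,z_i\}$ are essentially disjoint they constitute the (unique, hence rightmost) optimizer $\pi^n\{\bx,\bz\}$; since restrictions of rightmost optimizers are again rightmost, the two rightmost optimizers $\pi^n\{\bx,\by\}$ and $\pi^n\{\bx,\bz\}$ from the common start coincide on $(-\infty,T_n]$ once they agree at some $T_n$ near $T$. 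This gives $\scrS^n(\bx,\by)-\sum_i\scrS^n(x_i,z_i)$ directly as a sum of finite $\scrB^n$-path lengths on $[T_n,\cdot]$, which passes to the limit by uniform-on-compact convergence. Your metric-composition route at a fixed deep line $m$ replaces this prelimit coincidence by the argmax sandwich, using only that the jump-time vectors $\bw^n,\bw'^n$ share the common limit $\bw^\infty$; coincidence is invoked solely in the limit. Your path is a bit longer but sidesteps the (implicit) tree-structure step for rightmost multi-path optimizers; the paper's path is shorter once that step is granted.
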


\begin{proof}
First, Corollary \ref{C:path-tightness} and Lemma \ref{L:tight-multipoint} ensure that each of the $\pi\{\bx, \by\}$ for $(\bx, \by) \in \hat \Q$ is a disjoint $k$-tuple in $\scrB$ from $\bx$ to $\by$ which is locally optimal. Proposition \ref{P:unique-exist}(iii) then implies that $\pi\{\bx, \by\}$ is the unique optimizer in $\scrB$ from $\bx$ to $\by$, yielding statement $2$.

For statement $1$, we first observe that by Theorem \ref{T:sheet-structure}(i), Lemma \ref{L:A-paths}(ii), and the definition \eqref{E:disjoint-length}, we have
$
\scrS(x, y) = \scrB[x \to y]
$
for $x > 0$ and $y \in \R$. Therefore 
to complete the proof it suffices to show that for every $(\bx, \by) \in \hat\Q$ we can find rational points $\bz$ such that 
\begin{equation}
\label{E:difference}
\scrS(\bx, \by) - \sum_{i=1}^k \scrS(x_i, z_i) = \scrB[\bx \to \by] - \sum_{i=1}^k \scrB[x_i \to z_i].
\end{equation}
To prove \eqref{E:difference}, we work with the subsequence $Y'$ and the coupling in Lemma \ref{L:S-sum}.
On this coupling, there exists $\bz$ with $(\bx, \bz) \in \hat \Q$ such that $\pi^n\{x_i, z_i\}$ are essentially disjoint for all large enough $n \in Y'$. Since essential disjointness is a closed condition, the paths $\pi\{x_i, z_i\}$ are also essentially disjoint. Moreover, by Proposition \ref{P:unique-exist}(ii), there exists some $T \in \R$ such that for all $t \le T$, we have
\begin{equation}
\label{E:yiii}
\pi\{x_i, z_i\}(t) =\pi_i\{\bx, \by\}(t).
\end{equation}
In particular, by Lemma \ref{l:diff-weight-finite},
\begin{equation}
\label{E:Bbreakdown}
\scrB[\bx \to \by] - \sum_{i=1}^k \scrB[x_i \to z_i] = \sum_{i=1}^k \|\pi_i\{\bx, \by\}|_{[T, y_i]}\|_\scrB - \|\pi\{x_i, z_i\}|_{[T, z_i]}\|_\scrB,
\end{equation}
Also, \eqref{E:yiii} and the convergence of paths in this coupling implies that there exists $T_n \to T$ such that for all large enough $n \in Y'$, we have
\begin{equation}
\label{E:yiiiy}
\pi^n\{x_i, z_i\}(T_n) =\pi^n_i\{\bx, \by\}(T_n).
\end{equation}
When the $\pi^n\{x_i, z_i\}$ are essentially disjoint, this equality also holds for all $y < T_n$. In particular, this holds for all large enough $n$, and so by \eqref{E:Sn-compact-form},
$$
\scrS^n(\bx, \by) - \sum_{i=1}^k \scrS^n(x_i, z_i) = \sum_{i=1}^k \|\pi^n_i\{\bx, \by\}|_{[Y_n, y_i]}\|_{\scrB^n} - \|\pi^n\{x_i, z_i\}|_{[Y_n, z_i]}\|_{\scrB^n}.
$$
Since the paths $\pi^n\{x_i, z_i\}, \pi^n\{\bx, \by\}$ converge to $\pi\{x_i, z_i\}, \pi\{\bx, \by\}$ and $\scrB^n$ converges uniformly to $\scrB$, the right-hand side above converges to the right-hand side of \eqref{E:Bbreakdown}. The left-hand side above converges to the left-hand side of \eqref{E:difference}, yielding \eqref{E:difference}.
\end{proof}

Proposition \ref{P:BSjoint} uniquely determines $\scrS$ on $\hat \fX$ by continuity. This uniquely determines the distribution of $\scrS$ by translation invariance.

\begin{definition}
	\label{D:extended-sheet}
	Let $\scrC(\fX, \R)$ be the space of continuous functions from $\fX$ to $\R$ with the topology of uniform convergence. A random function $\scrS \in \scrC(\fX, \R)$ is an \textbf{extended Airy sheet} if
	\begin{itemize}
		\item $\scrS$ can be coupled with a parabolic Airy line ensemble $\scrB$ so that 
		$$
		\scrS(\bx, \by) = \scrB[\bx \to \by]
		$$
		for all $(\bx, \by) \in \hat \Q$.
		\item For a vector $\bx \in \R^k$ for some $k$, let $T_c\bx$ denote the shifted vector $(x_1 + c, \dots, x_k + c)$. We can think of $T_c$ as an operator acting on all of $\bigcup_{i=1}^\infty \R^k$. In particular, $T_c$ acts on all of $\fX$ and $T_c(\fX) = \fX$. With this definition, for all $c \in \R$ we have
		$$
		\scrS \eqd \scrS \circ T_c.
		$$
	\end{itemize}
\end{definition}

The above definition clearly yields a unique distribution on $\scrC(\fX, \R)$. Moreover, we have the following theorem. This theorem encompasses Theorem \ref{T:extended-sheet}. 

\begin{theorem}
	\label{T:extended-Airy-sheet}
	The prelimits $\scrS^n$ converge in distribution to an extended Airy sheet $\scrS$.
\end{theorem}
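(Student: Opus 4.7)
By Theorem \ref{T:S-tight}, the sequence $(\scrS^n)$ is tight in $\scrC(\fX,\R)$. Combined with the tightness of $(\scrB^n)$ (Theorem \ref{T:airy-line-ensemble}) and of the prelimiting optimizers $\{\pi^n\{\bx,\by\} : (\bx,\by)\in\hat\Q\}$ (Corollary \ref{C:path-tightness}), the joint family
$$
\bigl(\scrB^n,\ \scrS^n,\ \{\pi^n\{\bx,\by\} : (\bx,\by)\in\hat\Q\}\bigr)
$$
is tight in the appropriate product space. Given any subsequence, extract a further subsequence along which this triple converges jointly in distribution (as was done in Lemma \ref{L:S-sum}) to some limit $(\scrB,\scrS,\{\pi\{\bx,\by\}\})$, with $\scrB$ a parabolic Airy line ensemble. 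My plan is to identify this limit as an extended Airy sheet in the sense of Definition \ref{D:extended-sheet}, and then invoke the already-observed uniqueness of that distribution.

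The first bullet of Definition \ref{D:extended-sheet} is supplied directly by Proposition \ref{P:BSjoint}(1): almost surely $\scrS(\bx,\by)=\scrB[\bx\to\by]$ for every $(\bx,\by)\in\hat\Q$. Since $\hat\Q$ is dense in $\hat\fX$ and $\scrS$ is continuous, this identity extends to all of $\hat\fX$. For the second bullet, I would transport the translation invariance from the prelimit. Fix $c\in\R$ and a compact $K\sset\fX$. For all $n$ large enough (depending on $K$ and $c$), both $K$ and $T_c K$ lie in the region on which \eqref{E:Sndef} directly defines $\scrS^n$; moreover, the quantities $y_i-x_i$ appearing in the tilt are invariant under $T_c$. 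Translation invariance of Brownian increments then gives
$$
B^n\bigl[(2n^{-1/3}(\bx+c),n)\to(1+2n^{-1/3}(\by+c),1)\bigr]\eqd B^n\bigl[(2n^{-1/3}\bx,n)\to(1+2n^{-1/3}\by,1)\bigr]
$$
jointly in $(\bx,\by)\in K$, which upon rearranging yields $\scrS^n\circ T_c|_K \eqd \scrS^n|_{K}$. Passing to the limit along the chosen subsequence and then letting $K\uparrow\fX$ gives $\scrS\circ T_c\eqd \scrS$ as random elements of $\scrC(\fX,\R)$.

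Consequently every subsequential limit $\scrS$ satisfies both bullets of Definition \ref{D:extended-sheet} and hence (by the uniqueness noted just after that definition) has one and the same law. Since every subsequence of $(\scrS^n)$ admits a further subsequence converging to this common law, the full sequence converges: $\scrS^n\cvgd\scrS$ in $\scrC(\fX,\R)$. The only genuinely nontrivial ingredient is matching the limit with a parabolic-Airy last passage value on $\hat\Q$, i.e.\ Proposition \ref{P:BSjoint}, which has already been carried out; the remainder of the argument is the standard tightness plus characterization plus uniqueness scheme.
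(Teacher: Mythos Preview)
Your proof is correct and follows essentially the same approach as the paper: tightness, Proposition~\ref{P:BSjoint} for the first bullet of Definition~\ref{D:extended-sheet}, translation invariance of Brownian increments for the second, and uniqueness of the resulting law. One small remark: the sentence extending the identity $\scrS(\bx,\by)=\scrB[\bx\to\by]$ from $\hat\Q$ to all of $\hat\fX$ is unnecessary (Definition~\ref{D:extended-sheet} only requires it on $\hat\Q$) and would in fact need continuity of $(\bx,\by)\mapsto\scrB[\bx\to\by]$, which is only established later in Proposition~\ref{P:B-continuity}.
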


\begin{proof}
Any subsequential limit $\scrS$ of $\scrS^n$ satisfies the first property of Definition \ref{D:extended-sheet} by Proposition \ref{P:BSjoint}. Moreover, for all $c$, translation invariance of Brownian increments guarantees that $\scrS^n \eqd \scrS^n \circ T_c$, and so $\scrS$ also satisfies the second property of Definition \ref{D:extended-sheet}.
\end{proof}

\subsection{Properties of the extended Airy sheet $\scrS$}

In this subsection we record a few basic properties of the extended Airy sheet $\scrS$, and use these properties to better understand the structure of optimizers in $\scrB$. The culmination of this section will be a proof of (the remaining parts of) Theorem \ref{T:extended-sheet-char}. We start with basic symmetries. For this lemma recall that for $\bx = (x_1, \dots, x_k) \in \R^k_\le$ with a slight abuse of notation we write $-\bx = (-x_k, \dots, -x_1)$.

\begin{lemma}
	\label{L:basic-sym}
The extended Airy sheet $\scrS$ satisfies
$
\scrS(\bx, \by) \eqd \scrS(-\by, -\bx),
$
jointly in all $\bx, \by$.
Moreover, the parabolically shifted sheet
$$
\scrR(\bx, \by) := \scrS(\bx, \by) + \sum_{i=1}^{k}(x_i - y_i)^2
$$
is stationary in the sense that for any $c_1, c_2 \in \R$, we have
$$
\scrR(T_{c_1} \bx, T_{c_2} \by) \eqd \scrR(\bx, \by)
$$
jointly in all $(\bx, \by) \in \fX$. Here the shifts $T_{c_i}$ are as in Definition \ref{D:extended-sheet}.
\end{lemma}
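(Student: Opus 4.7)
The plan is to derive both identities as distributional symmetries of the prelimit sheets $\scrS^n$ and then pass to the limit using Theorem \ref{T:extended-Airy-sheet}. Let $B = \{B_i : i \in \Z\}$ denote the two-sided Brownian motions underlying $\scrS^n$.

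For the reflection $\scrS(\bx, \by) \eqd \scrS(-\by, -\bx)$, the key prelimit fact is
\[
B[(\bx, n) \to (\by, 1)] \eqd B[(-\by, n) \to (-\bx, 1)]
\]
jointly in all endpoint pairs. This follows from the bijection on disjoint $k$-tuples given by reversing time and relabeling the line index $i \mapsto n+1-i$; this map preserves length when $B$ is replaced by $\tilde B_i(t) := -B_{n+1-i}(-t)$, and $\tilde B \eqd B$. A time translation by $1$ restores the Brownian LPP domain to $[0,1]$, and since the normalization $\sum_i 2(y_i - x_i)$ is invariant under $(\bx, \by) \mapsto (-\by, -\bx)$, we obtain $\scrS^n(\bx, \by) \eqd \scrS^n(-\by, -\bx)$ jointly on $\fX$. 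Taking $n \to \infty$ via Theorem \ref{T:extended-Airy-sheet} yields the first claim.

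For the parabolic stationarity I would combine translation invariance of $B$ with Brownian scaling. Set $\alpha_n := 1 + 2n^{-1/3}(c_2 - c_1)$. A time translation by $-2n^{-1/3}c_1$ followed by Brownian scaling by $\alpha_n$ gives
\[
B[(2n^{-1/3}(\bx+c_1), n) \to (1 + 2n^{-1/3}(\by+c_2), 1)] \eqd \alpha_n^{1/2} B[(2n^{-1/3}\bx/\alpha_n, n) \to (1 + 2n^{-1/3}\by/\alpha_n, 1)]
\]
jointly in $(\bx, \by)$. Substituting into the definition of $\scrS^n$ yields
\[
\scrS^n(T_{c_1}\bx, T_{c_2}\by) \eqd \alpha_n^{1/2}\scrS^n(\bx/\alpha_n, \by/\alpha_n) + E_n(\bx, \by),
\]
where $E_n$ collects corrections from the factor $\alpha_n^{1/2}$ and the normalization offsets. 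A Taylor expansion in $n^{-1/3}$ shows that
\[
E_n(\bx, \by) \longrightarrow -k(c_2-c_1)^2 - 2(c_2-c_1)\sum_{i=1}^k (y_i - x_i)
\]
uniformly on compact subsets of $\fX$: the leading $\pm 2kn^{1/3}(c_2-c_1)$ contributions from $2kn^{2/3}(\alpha_n^{1/2}-1)$ and from the normalization $2kn^{1/3}(c_2-c_1)$ cancel, leaving the stated $O(1)$ limit. Using $\alpha_n \to 1$ and $\scrS^n \to \scrS$ in the uniform-on-compact topology (with $\scrS$ continuous), the limit $n \to \infty$ produces
\[
\scrS(T_{c_1}\bx, T_{c_2}\by) \eqd \scrS(\bx, \by) - k(c_2-c_1)^2 - 2(c_2-c_1)\sum_{i=1}^k (y_i - x_i).
\]
Expanding $\sum_{i=1}^k (x_i + c_1 - y_i - c_2)^2 = \sum_{i=1}^k (x_i - y_i)^2 + k(c_2-c_1)^2 + 2(c_2-c_1)\sum_{i=1}^k (y_i - x_i)$, the extra terms cancel precisely against the additive correction in $\scrS$, giving $\scrR(T_{c_1}\bx, T_{c_2}\by) \eqd \scrR(\bx, \by)$.

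The main delicate point is the uniform convergence of $E_n$ to its claimed limit, but this is routine given $\alpha_n = 1 + O(n^{-1/3})$ and boundedness of $\sum_i (y_i - x_i)$ on compact subsets of $\fX$.
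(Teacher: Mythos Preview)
Your proposal is correct and follows essentially the same approach as the paper: both parts are deduced from symmetries of Brownian LPP (time reversal for the first, translation plus Brownian scaling for the second) and passed to the limit via Theorem~\ref{T:extended-Airy-sheet}. The only minor difference is that the paper first invokes the built-in translation invariance $\scrS \eqd \scrS \circ T_c$ from Definition~\ref{D:extended-sheet} to reduce to $c_1 = 0$, which streamlines the scaling computation you carry out directly for general $c_1, c_2$.
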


\begin{proof}
The first distributional equality follows from the distributional equality 
$
B(\cdot) \eqd B(1) - B(1-\cdot)
$
for Brownian motion. By the second part of Definition \ref{D:extended-sheet}, it is enough to prove the second equality when $c_1 = 0$. Let $\scrR^n(\bx, \by) = \scrS^n(\bx, \by) + \sum_{i=1}^{k}(x_i - y_i)^2$ and $\al_n = 1 + 2 c_2 n^{-1/3}$. By Brownian scaling,
$$
\scrR^n(\bx, \by) \eqd \al_n^{-1/2} \scrR^n( \al_n \bx, T_{c_2} \al_n \by) + e_n(\bx, \by)
$$
jointly in $\bx, \by$, where the error term $e_n(\bx, \by)$ term is deterministic and converges to $0$ uniformly on compact sets. Therefore since $\scrR^n(\bx, \by) \cvgd \scrR$ in the uniform-on-compact topology, $\scrR$ is continuous, and $\al_n \to 1$, we also have $\scrR^n(\bx, T_{c_2} \by) \cvgd \scrR$.
\end{proof}

Using Theorem \ref{T:extended-Airy-sheet} to pass Lemma \ref{l:change-spatial-prelim} to the limit, we get the following result for the parabolically shifted sheet $\scrR$.
\begin{lemma} \label{l:change-spatial}
	Take any $k\in\N$, $\bu = (\bx, \by), \bu' = (\bx',\by') \in \R^k_\le$ with $\|\bu - \bu'\|_2 < 1$, and $a>0$.
	Then
	$$
	\p(|\scrR(\bx',\by') - \scrR(\bx,\by)| >
	a\sqrt{\|\bu - \bu'\|_2}) < c e^{-da^{3/2}} ,
	$$
	for some constants $c , d > 0$ depending only on $k$.
\end{lemma}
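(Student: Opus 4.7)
The plan is to pass the prelimit tail bound from Lemma \ref{l:change-spatial-prelim} to the limit using the distributional convergence $\scrS^n \cvgd \scrS$ guaranteed by Theorem \ref{T:extended-Airy-sheet}. Fix $k \in \N$, points $\bu = (\bx, \by), \bu' = (\bx',\by')$ in $\R^k_\le \times \R^k_\le$ with $\|\bu - \bu'\|_2 < 1$, and $a > 0$. The main observation is that while Lemma \ref{l:change-spatial-prelim} carries the side condition $\|\bx\|_2, \|\by\|_2, \|\bx'\|_2, \|\by'\|_2 < n^{1/6}$, this condition is automatically satisfied for all sufficiently large $n$ once $\bx, \by, \bx', \by'$ are fixed. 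Hence, for all large enough $n$,
\[
\p\big(|\scrR^n(\bx',\by') - \scrR^n(\bx,\by)| > a\sqrt{\|\bu - \bu'\|_2}\big) < c e^{-da^{3/2}},
\]
with constants $c,d>0$ depending only on $k$.

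Next, I would use Theorem \ref{T:extended-Airy-sheet} together with the continuous mapping theorem to upgrade convergence in distribution of $\scrS^n$ to convergence in distribution of the real-valued random variable $\scrR^n(\bx',\by') - \scrR^n(\bx,\by)$ (the two-point evaluation map is continuous in the uniform-on-compact topology, and the parabolic correction converting $\scrS^n$ to $\scrR^n$ is deterministic). Thus
\[
\scrR^n(\bx',\by') - \scrR^n(\bx,\by) \cvgd \scrR(\bx',\by') - \scrR(\bx,\by).
\]

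Finally, since the set $\{y \in \R : |y| > a\sqrt{\|\bu - \bu'\|_2}\}$ is open, the portmanteau theorem yields
\[
\p\big(|\scrR(\bx',\by') - \scrR(\bx,\by)| > a\sqrt{\|\bu - \bu'\|_2}\big) \le \liminf_{n\to\infty} \p\big(|\scrR^n(\bx',\by') - \scrR^n(\bx,\by)| > a\sqrt{\|\bu - \bu'\|_2}\big) \le c e^{-da^{3/2}},
\]
which is the desired bound. There is no substantive obstacle here: the constants in the prelimit bound are uniform in $n$ once $n$ is large, and the norm restriction in Lemma \ref{l:change-spatial-prelim} is automatically removed in the limit since $n^{1/6} \to \infty$. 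The only small care needed is in selecting the right direction of the portmanteau inequality (open versus closed sets), which is handled by taking the event $\{|\cdot| > a\sqrt{\|\bu-\bu'\|_2}\}$ rather than its closure.
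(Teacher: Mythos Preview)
Your proposal is correct and follows exactly the approach the paper takes: the paper simply states that the lemma is obtained by ``using Theorem \ref{T:extended-Airy-sheet} to pass Lemma \ref{l:change-spatial-prelim} to the limit,'' and you have supplied the routine details (the norm constraint is eventually satisfied, continuous mapping plus portmanteau) that make this passage precise.
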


We will use this continuity bound to show that $\scrS(\bx, \by) = \scrB[\bx \to \by]$ for all $(\bx, \by) \in \hat \fX$. First, we record an analogue of Lemma \ref{L:quadrangle} from $\scrB$.

\begin{lemma} 
	\label{L:b-quadrangle}
Take any $\bx, \by, \bx', \by'\in \R^k_\le$ such that $x_1, x_1'\ge 0$, and define $\bx^{\ell}, \by^{\ell}, \bx^r, \by^r\in \R^k_\le$ by $x^{\ell}_i=x_i\wedge x_i'$, $y^{\ell}_i=y_i\wedge y_i'$, and $x^r_i=x_i\vee x_i'$, $y^r_i=y_i\vee y_i'$, for each $1\le i \le k$.
Then
\[
\scrB[\bx^{\ell}\to\by^{\ell}] + \scrB[\bx^r\to\by^r] \ge \scrB[\bx\to\by] + \scrB[\bx'\to\by'].
\]
\end{lemma}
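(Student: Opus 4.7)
The plan is to mimic the proof of the finite quadrangle (Lemma \ref{L:quadrangle}) by constructing candidate $k$-tuples for $\scrB[\bx^\ell\to\by^\ell]$ and $\scrB[\bx^r\to\by^r]$ via pointwise min/max of near-optimal $k$-tuples for $\scrB[\bx\to\by]$ and $\scrB[\bx'\to\by']$, and showing total length is exactly preserved. Fix $\ep>0$ and choose disjoint $k$-tuples of parabolic paths $\pi=(\pi_1,\ldots,\pi_k)$ from $\bx$ to $\by$ and $\pi'=(\pi'_1,\ldots,\pi'_k)$ from $\bx'$ to $\by'$ with $\|\pi\|_\scrB\ge\scrB[\bx\to\by]-\ep$ and $\|\pi'\|_\scrB\ge\scrB[\bx'\to\by']-\ep$. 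For each $i$, define $\tau^\ell_i(y)=\min(\pi_i(y),\pi'_i(y))$ on $(-\infty,y^\ell_i]$, and define $\tau^r_i(y)=\max(\pi_i(y),\pi'_i(y))$ on $(-\infty,y^\ell_i]$ with a cadlag extension on $(y^\ell_i,y^r_i]$ by whichever of $\pi_i,\pi'_i$ is defined there. Since $\min(\pi_i(y),\pi'_i(y))/(2y^2)\to x^\ell_i$ and $\max(\pi_i(y),\pi'_i(y))/(2y^2)\to x^r_i$ as $y\to-\infty$, these have the correct asymptotic directions; ordering and essential disjointness of $\tau^\ell,\tau^r$ follow exactly as in the proof of Lemma \ref{L:rightmost-multi-path}. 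Thus $\tau^\ell$ and $\tau^r$ are valid disjoint $k$-tuples from $\bx^\ell$ to $\by^\ell$ and from $\bx^r$ to $\by^r$.

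The crux is showing $\|\pi\|_\scrB+\|\pi'\|_\scrB=\|\tau^\ell\|_\scrB+\|\tau^r\|_\scrB$. At every $y$ one has $\{\pi_i(y),\pi'_i(y)\}=\{\tau^\ell_i(y),\tau^r_i(y)\}$ as multisets, so $\mathbf{1}_{\pi_i(y)=j}+\mathbf{1}_{\pi'_i(y)=j}=\mathbf{1}_{\tau^\ell_i(y)=j}+\mathbf{1}_{\tau^r_i(y)=j}$ for every line $j$. Summing the contributions $\scrB_j(\text{leave})-\scrB_j(\text{enter})$ line by line yields the finite telescoping identity
\[
\|\pi_i|_{[Y,y_i]}\|_\scrB+\|\pi'_i|_{[Y,y'_i]}\|_\scrB=\|\tau^\ell_i|_{[Y,y^\ell_i]}\|_\scrB+\|\tau^r_i|_{[Y,y^r_i]}\|_\scrB
\]
for all $Y<y^\ell_i$ and each $i$, where the extra contribution from $[y^\ell_i,y^r_i]$ matches trivially because $\tau^r_i$ agrees with whichever of $\pi_i,\pi'_i$ extends there.

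To promote this finite identity to the infinite setting, note that by Lemma \ref{l:diff-weight-finite} the tail $T(\pi,Y):=\|\pi\|_\scrB-\|\pi|_{[Y,z]}\|_\scrB$ depends only on $\pi|_{(-\infty,Y]}$, and combining the definition of parabolic path length with Lemma \ref{l:sheet-diff-alter} one obtains the representation
\[
T(\pi,Y)=\scrS(x,Y)+\lim_{Y'\to-\infty}\bigl[\|\pi|_{[Y',Y]}\|_\scrB-\scrB[(Y',\pi(Y'))\to(Y,1)]\bigr].
\]
Applying this representation to each of the four paths, summing, and invoking the same multiset identity at every $Y'<Y$---both to the finite lengths $\|\cdot|_{[Y',Y]}\|_\scrB$ via the telescoping argument, and to the $\scrB$-LPP values $\scrB[(Y',\cdot)\to(Y,1)]$ via $\{\pi_i(Y'),\pi'_i(Y')\}=\{\tau^\ell_i(Y'),\tau^r_i(Y')\}$---the limit portions cancel pairwise across the four paths, leaving
\[
T(\pi_i,Y)+T(\pi'_i,Y)-T(\tau^\ell_i,Y)-T(\tau^r_i,Y)=\scrS(x_i,Y)+\scrS(x'_i,Y)-\scrS(x^\ell_i,Y)-\scrS(x^r_i,Y).
\]
But $\{x_i,x'_i\}=\{x^\ell_i,x^r_i\}$ as multisets, so the right-hand side is identically zero. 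Combined with the finite telescoping identity, this yields $\|\pi_i\|_\scrB+\|\pi'_i\|_\scrB=\|\tau^\ell_i\|_\scrB+\|\tau^r_i\|_\scrB$ for each $i$; summing over $i$ gives the desired length preservation.

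Finally,
\[
\scrB[\bx^\ell\to\by^\ell]+\scrB[\bx^r\to\by^r]\ge\|\tau^\ell\|_\scrB+\|\tau^r\|_\scrB=\|\pi\|_\scrB+\|\pi'\|_\scrB\ge\scrB[\bx\to\by]+\scrB[\bx'\to\by']-2\ep,
\]
and sending $\ep\to 0$ completes the proof. The main obstacle is the infinite-tail step: the finite telescoping identity is a routine combinatorial fact, but converting it into the statement for infinite parabolic paths requires careful use of the representation of $T(\pi,Y)$ coming from Lemma \ref{l:sheet-diff-alter}, together with the elegant observation that the residual ``$\scrS$-quadrangle at level $Y$'' collapses to zero precisely because the four spatial inputs form the same multiset.
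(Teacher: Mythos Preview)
Your proof is correct and follows essentially the same approach as the paper: construct $\tau^\ell,\tau^r$ via coordinatewise min/max, then show total $\scrB$-length is preserved using the finite telescoping identity together with Lemma~\ref{l:sheet-diff-alter} and the multiset equality $\{x_i,x'_i\}=\{x^\ell_i,x^r_i\}$. The paper's version is organized slightly differently---it works directly with the quantity $P(\pi,y)=\|\pi|_{[y,z]}\|_\scrB-\scrB[(y,\pi(y))\to(z,1)]+\scrS(x,z)$, whose limit as $y\to-\infty$ is $\|\pi\|_\scrB$ by definition, rather than introducing your intermediate split into a finite part on $[Y,\cdot]$ plus a tail $T(\pi,Y)$---but the substance is identical, and in particular the same cancellations (finite lengths by multiset matching at each level, $\scrB$-LPP terms by matching starting lines, and $\scrS$-terms by $\{x_i,x'_i\}=\{x^\ell_i,x^r_i\}$) drive both arguments.
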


\begin{proof}
First, the inequality is trivial if either $\scrB[\bx \to \by]$ or $\scrB[\bx' \to \by']$ is $-\infty$, so we may assume both are finite. Let $\pi_n, \pi_n'$ be sequences of disjoint $k$-tuples from $\bx$ to $\by$ and $\bx'$ to $\by'$ whose weights converge to $ \scrB[\bx\to\by],  \scrB[\bx'\to\by']$ as in \eqref{E:disjoint-length}.
As in the proof of Lemma \ref{L:quadrangle}, we define disjoint $k$-tuples $\tau_n^\ell, \tau_n^r$ from $\bx^{\ell}$ to $\by^{\ell}$, $\bx^r$ to $\by^r$,
by (for each $1\le i \le k$) setting $\tau_{n,i}^\ell = \pi_{n,i}\wedge \pi_{n,i}'$, $\tau_{n,i}^r = \pi_{n,i}\vee \pi_{n,i}'$ on $(-\infty, y_i^\ell]$,
and setting $\tau^r_{n,i}$ to be either $\pi_{n,i}$ or $\pi_{n,i}'$ on $(y_i^\ell, y_i^r]$, depending on whether $y^r_i$ equals $y_i$ or $y_i'$.
Then as in the proof of Lemma \ref{L:rightmost-multi-path}, one can check that $\tau_n^\ell, \tau_n^r$ are disjoint $k$-tuples in $\scrB$ from $\bx^{\ell}$ to $\by^{\ell}$ and from $\bx^r$ to $\by^r$, respectively.
To prove the lemma, we just need to show that
\begin{equation}  \label{eq:quadrangle-eq}
\|\tau_n^\ell\|_\scrB+ \|\tau_n^r\|_\scrB = \|\pi_n\|_\scrB+ \|\pi'_n\|_\scrB.    
\end{equation}
Indeed, for any parabolic path $\pi$ from some $x\ge 0$ to $z\in\R$, and $y\le z$, we denote
\[
P(\pi,y) = \|\pi|_{[y,z]}\|_\scrB - \scrB[(y,\pi(y))\to (z,1)] + \scrS(x, z).
\]
Then from the definition of the path length \eqref{E:pi-A}, we just need to verify that
\[
\lim_{y\to-\infty}\sum_{i=1}^k P(\tau_{n,i}^\ell,y) + P(\tau_{n,i}^r,y) - P(\pi_{n,i},y) - P(\pi_{n,i}',y) = 0.
\]
This follows from Lemma \ref{l:sheet-diff-alter} and the fact that for any $y\le y^\ell_1$, we have
\[
\sum_{i=1}^k \|\tau_{n,i}^\ell|_{[y,y^\ell_i]}\|_\scrB + \|\tau_{n,i}^r|_{[y,y^r_i]}\|_\scrB - \|\pi_{n,i}|_{[y,y_i]}\|_\scrB - \|\pi_{n,i}'|_{[y,y_i']}\|_\scrB = 0. \qedhere
\]
\end{proof}

We next study the extended Airy sheet and optimizers in $\scrB$ for all endpoints in $\hat \fX$ (rather than just $\hat \Q$).
\begin{prop}
\label{P:B-continuity}
The function $(\bx, \by) \mapsto \scrB[\bx \to \by]$ is continuous on $\hat \fX$. In particular, 
$$
\scrS(\bx, \by) =\scrB[\bx \to \by]
$$
for all $(\bx, \by) \in \hat \fX$. Moreover, almost surely, for any $(\bx, \by) \in \hat \fX$ there is an optimizer in $\scrB$ from $\bx$ to $\by$.
\end{prop}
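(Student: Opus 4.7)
The plan is to bootstrap from Proposition \ref{P:BSjoint}, which gives $\scrS(\bx,\by) = \scrB[\bx\to\by]$ on the dense subset $\hat\Q \subset \hat\fX$, together with continuity of the extended Airy sheet $\scrS$ on $\fX$ (Theorem \ref{T:extended-Airy-sheet}). I work throughout on a single almost sure event on which Propositions \ref{P:unique-exist} and \ref{P:BSjoint} and all of Lemma \ref{L:A-geod-exist} hold. On this event, for every $(\bx,\by)\in\hat\fX$ I will prove existence of an optimizer and the two inequalities $\scrB[\bx\to\by]\ge\scrS(\bx,\by)$ and $\scrB[\bx\to\by]\le\scrS(\bx,\by)$; the resulting equality then transfers continuity from $\scrS$ to $\scrB[\cdot\to\cdot]$ on $\hat\fX$.

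Existence and the lower bound $\scrB[\bx\to\by]\ge\scrS(\bx,\by)$: Fix $(\bx,\by)\in\hat\fX$ and pick $(\bx^n,\by^n)\in\hat\Q$ with $(\bx^n,\by^n)\to(\bx,\by)$. By Proposition \ref{P:unique-exist}(ii) there is a unique optimizer $\pi^n$ from $\bx^n$ to $\by^n$, and by Proposition \ref{P:BSjoint},
$$
\|\pi^n\|_\scrB \;=\; \scrB[\bx^n\to\by^n] \;=\; \scrS(\bx^n,\by^n) \;\longrightarrow\; \scrS(\bx,\by).
$$
Using the monotonicity in Lemma \ref{L:A-geod-exist}(v), each $\pi^n_i$ can be sandwiched between fixed rightmost geodesics with rational directions strictly below and strictly above $x_i$; this uniformly controls $\Ga(\pi^n_i)$ on every compact window and yields tightness in the product path space. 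Extract a subsequential limit $\pi$. Essential disjointness and ordering are closed conditions in path space, and the sandwiching preserves the asymptotic direction $\pi_i(y)/(2y^2)\to x_i$ as $y\to-\infty$, so $\pi$ is a disjoint $k$-tuple from $\bx$ to $\by$. By upper semi-continuity of length (Lemma \ref{L:A-paths}(iii)),
$$
\|\pi\|_\scrB \;\ge\; \limsup_n \|\pi^n\|_\scrB \;=\; \scrS(\bx,\by),
$$
so $\pi$ is an optimizer and $\scrB[\bx\to\by]\ge\scrS(\bx,\by)$.

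Upper bound $\scrB[\bx\to\by]\le\scrS(\bx,\by)$: It suffices to show $\|\pi\|_\scrB\le\scrS(\bx,\by)$ for every disjoint $k$-tuple $\pi$ from $\bx$ to $\by$ with finite length. Given $\epsilon>0$, pick $(\tilde\bx,\tilde\by)\in\hat\Q$ within $\epsilon$ of $(\bx,\by)$ with $0<\tilde x_1<\cdots<\tilde x_k$. Construct a modified $k$-tuple $\tilde\pi$ from $\tilde\bx$ to $\tilde\by$ by splicing: for each $i$, replace the far-left tail $\pi_i|_{(-\infty,-M]}$ with the corresponding tail of the canonical geodesic $\pi\{\tilde x_i,z_i\}$, matching line indices at the splice time $-M$ by invoking the tree/overlap structure of Lemma \ref{L:A-geod-exist}(iii,v); then modify on a short right-end segment to reach $\tilde y_i$. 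Choose $M$ large so that the paths $\pi_i$ already occupy distinct lines on $(-\infty,-M]$, and so that the geodesics $\pi\{\tilde x_i,z_i\}$ are pairwise essentially disjoint on this tail by \eqref{E:R-to-infty} in Lemma \ref{L:A-geod-exist}(iv); this ensures $\tilde\pi$ is a disjoint $k$-tuple. Using the length formula \eqref{E:pi-A} together with Lemma \ref{l:diff-weight-finite}, the length change per path is
$$
\|\tilde\pi_i\|_\scrB - \|\pi_i\|_\scrB \;=\; \scrS(\tilde x_i,\tilde y_i) - \scrS(x_i,y_i) + o(1)
$$
as $M\to\infty$, which is less than $\epsilon/k$ by continuity of $\scrS$ (cf.\ Definition \ref{D:halfsheet}). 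Hence
$$
\|\pi\|_\scrB - \epsilon \;\le\; \|\tilde\pi\|_\scrB \;\le\; \scrB[\tilde\bx\to\tilde\by] \;=\; \scrS(\tilde\bx,\tilde\by),
$$
and letting $\epsilon\to 0$ along $(\tilde\bx,\tilde\by)\to(\bx,\by)$ gives $\|\pi\|_\scrB\le\scrS(\bx,\by)$, as required.

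The main obstacle is the splicing construction in the upper bound. When several coordinates of $\bx$ coincide (or when $x_1=0$), paths $\pi_i$ sharing the same asymptotic direction must be re-routed onto distinct-direction geodesics $\pi\{\tilde x_i,\cdot\}$ while preserving essential disjointness and controlling the length change. The enabling facts are the disjointness of well-separated-direction geodesics far to the left (Lemma \ref{L:A-geod-exist}(iv) via \eqref{E:e0} and \eqref{E:R-to-infty}), the tree/overlap structure that allows matching line indices at the splice (Lemma \ref{L:A-geod-exist}(iii,v)), and continuity of the half-Airy sheet in its first argument to bound the resulting length defect.
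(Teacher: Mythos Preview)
Your lower bound and existence argument is essentially the paper's (though you should cite monotonicity of the $\hat\Q$-optimizers, inherited from the prelimit via Lemma~\ref{L:mono-tree-multi-path}, rather than Lemma~\ref{L:A-geod-exist}(v), which is only about single geodesics). The upper bound, however, has a genuine gap in the splicing step. You claim that the tail $\pi_i|_{(-\infty,-M]}$ of an arbitrary parabolic path in direction $x_i$ can be replaced by the tail of a geodesic $\pi\{\tilde x_i,z_i\}$ in a \emph{different} direction $\tilde x_i$, ``matching line indices at $-M$'' by Lemma~\ref{L:A-geod-exist}(iii,v). Neither part delivers this: (iii) gives trunk overlap only for geodesics sharing the \emph{same} direction, and (v) is a monotonicity statement. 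When several $x_i$ coincide, the paths $\pi_1,\dots,\pi_j$ all sit on line $\approx 2x_1M^2$ at time $-M$, while geodesics from the distinct $\tilde x_1<\cdots<\tilde x_j$ sit on lines $\approx 2\tilde x_iM^2$, so no single $M$ matches them. Without matching, your length formula fails: Lemma~\ref{l:diff-weight-finite} applies only when two parabolic paths agree on the noncompact tail, whereas here they agree on a compact window and differ on the tail, and any bridging segment between mismatched lines contributes uncontrolled discrepancy.

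The paper avoids splicing altogether. For rational $\bx$ with $0<x_1<\cdots<x_k$ it gets the upper bound by restricting $\pi$ to $(-\infty,y_i']$ with rational $\by'<\by$, which never alters the asymptotic direction. For general $\bx$ it uses the quadrangle inequality in $\scrB$ (Lemma~\ref{L:b-quadrangle}) together with rational approximation $\bx_n\downarrow\bx$ to show that $\by\mapsto \scrS(\bx,\by)-\scrB[\bx\to\by]$ is nondecreasing; then it suffices to produce, for \emph{every} $\bx$ simultaneously, a sequence $\bz_n^-\to(-\infty,\dots,-\infty)$ with $\limsup_n\big(\scrS(\bx,\bz_n^-)-\scrB[\bx\to\bz_n^-]\big)\ge 0$. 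This is achieved by a Borel--Cantelli argument using the quantitative two-point bound on $\scrS$ (Lemma~\ref{l:change-spatial}) and the splitting $\scrS(\bx_n,\bz_n^-)=\sum_i\scrS(x_{n,i},z^-_{n,i})\ge\scrB[\bx_n\to\bz_n^-]$ for well-separated $\bz_n^-$ via \eqref{E:R-to-infty}. The uniformity over all $\bx$ is genuinely probabilistic here, not a deterministic path construction.
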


Proposition \ref{P:B-continuity} is the final piece of Theorem \ref{T:extended-sheet-char}. The proof is lengthy and a bit nuanced, so we give a sketch here.

We know that $\scrS(\bx, \by) =\scrB[\bx \to \by]$ for $(\bx, \by) \in \hat \Q$ and $\scrS$ is continuous. Moreover, it is not difficult to show that $\scrB$ is upper semicontinuous using Lemma \ref{L:A-paths}(iii). Therefore the main goal is to show that $\scrB[\bx \to \by] \le \scrS(\bx, \by)$ for general $\bx, \by$. We first prove this for rational $\bx$ with $0 < x_1 < \dots < x_n$. In this case, we can upper bound $\scrB[\bx \to \by]$ by $\scrB[\bx \to \by'] + \ep$ for a nearby rational $\by'$ by making small modifications to a candidate optimizer from $\bx$ to $\by$. 

For general $\bx$, by the quadrangle inequality (Lemma \ref{L:b-quadrangle}) and an approximation of $\bx$ by a rational sequence, we have that $\scrS(\bx, \by) - \scrB[\bx \to \by]$ is non-decreasing in $\by$.
Then it is enough to find a sequence $\bz_n \to (-\infty, \dots, -\infty)$ with 
\begin{equation}
\label{E:lss}
\limsup_{n \to \infty} \scrS(\bx, \bz_n) - \scrB[\bx\to \bz_n] \ge 0.
\end{equation}
The basic idea here is to choose the $\bz_n$ so that with probability tending to $1$ with $n$, we have
$$
\scrS(\bx, \bz_n) = \sum_{i=1}^k \scrS(x_i, z_{n, i}) \ge \scrB[\bx \to \bz_n].
$$
Note that there is a subtlety here because we need \eqref{E:lss} to hold for all $\bx$ simultaneously. We work around this with an approximation argument, making use of the strong modulus of continuity on $\scrS$ from Lemma \ref{l:change-spatial}. 

\begin{proof}[Proof of Proposition \ref{P:B-continuity}]
	Throughout the proof, we fix $k \in \N$ as the size of the set of points $\bx, \by$ we work with. All points $\bx$ have $x_1 \ge 0$.
By Proposition \ref{P:BSjoint}, $\scrB[\bx \to \by]$ almost surely coincides with the continuous function $\scrS(\bx, \by)$ at all points in $\hat \Q$, and there are unique optimizers $\pi\{\bx, \by\}$ in $\scrB$ for all these points by Proposition \ref{P:unique-exist}. Now consider an arbitrary point $(\bx, \by) \in \hat \fX$. We can approximate $(\bx, \by)$ by a sequence of points $(\bx_n, \by_n) \in \hat \Q$ such that $x_{n,i} > x_i$ and $y_{n,i} > y_i$ for all $i$. Now, the collection of optimizers $\{\pi\{\bx, \by\} : (\bx, \by) \in \hat \Q\}$ is monotone in $\bx$ and $\by$; this is inherited from the prelimiting monotonicity, which follows from Lemma \ref{L:mono-tree-multi-path}.
Therefore as in the proof of Lemma \ref{L:A-geod-exist}(ii), monotonicity of optimizers guarantees that the $k$-tuples $\pi\{\bx_n, \by_n\}$ have a limit, which is itself a disjoint $k$-tuple $\pi_R \{\bx, \by\}$ from $\bx$ to $\by$. 
Lemma \ref{L:A-paths}(iii) implies that
$$
\|\pi_R\{\bx, \by\}\|_\scrB \ge \scrS(\bx, \by) = \lim_{n \to \infty} \scrB[\bx_n \to \by_n].
$$
Therefore $\scrB[\bx \to \by] \ge \scrS(\bx, \by)$ for all $(\bx, \by) \in \hat \fX$. If we can show the opposite inequality, then the path $\pi_R\{\bx, \by\}$ is an optimizer, and $\scrS(\bx, \by) = \scrB[\bx \to \by]$. Since $\scrS$ is continuous, this will complete the proof of the proposition. 

For this, we first prove that for a fixed rational $\bx$ with $0 < x_1 < \dots < x_k$, we have $\scrB[\bx \to \by] = \scrS(\bx, \by)$ for all $\by \in \R^k_\le$. Suppose that $\scrB[\bx \to \by] > \scrS(\bx, \by)$ for some $\by$. Then by continuity of $\scrS$, there is an $\ep > 0$ and a disjoint $k$-tuple $\pi$ in $\scrB$ from $\bx$ to $\by$ with 
\begin{equation}
\label{E:piscrB}
\|\pi\|_\scrB > \scrS(\bx, \by') + \ep
\end{equation}
for all $\by'$ with $|\by - \by'| < \ep$. Now, Lemma \ref{l:diff-weight-finite} and the continuity of $\scrB$ and $\scrS$ ensures that there exists a rational $k$-tuple $\by'$ with $\by' < \by$ (coordinatewise) and $|\by - \by'| < \ep$ such that the path $\pi'$ from $\bx$ to $\by'$ defined by $\pi'_i = \pi_i|_{(-\infty, y_i']}$ satisfies 
$$
\|\pi'\|_\scrB > \|\pi\|_\scrB - \ep.
$$
This is greater than $\scrS(\bx, \by')$ by \eqref{E:piscrB}.
On the other hand, $\scrB[\bx \to \by'] \ge \|\pi'\|_\scrB$ and $\scrS(\bx, \by') = \scrB[\bx \to \by']$, giving a contradiction.

Now consider general $\bx$ and let $\by' \le \by \in \R^k_\le$. Consider a sequence of rational $\bx_n$ with $0 < x_{n,1} < \dots < x_{n, k}$ such that $x_{n, i} \cvgdown x_i$. By Lemma \ref{L:b-quadrangle}, we have
\begin{equation*}
\scrB[\bx \to \by] - \scrB[\bx \to \by'] \le \scrB[\bx_n \to \by] - \scrB[\bx_n \to \by'].
\end{equation*}
Since the $\bx_n$ have distinct positive rational entries, the right-hand side above is equal to the same difference with $\scrS(\cdot, \cdot)$ in place of $\scrB[\cdot \to \cdot]$. Therefore by continuity of $\scrS$, we have
 \begin{equation}
 \label{E:scrBB}
 \scrB[\bx \to \by] - \scrB[\bx \to \by'] \le \scrS(\bx, \by) - \scrS(\bx, \by')
 \end{equation}
 for all $\bx$ and $\by' \le \by$. To complete the proof that $\scrS(\bx, \by) \ge \scrB[\bx \to \by]$ it just suffices to show that we can find a sequence $\bz^-_n$ such that $z_{n, i}^- \to -\infty$ as $n \to \infty$ for all $i \in \II{1, k}$, and
 \begin{equation}
 \label{E:Sxzn}
\limsup_{n \to \infty} \scrS(\bx, \bz^-_n) - \scrB[\bx \to \bz^-_n] \ge 0
 \end{equation}
 for every $\bx$.
Indeed, for any $\bx, \by$, \eqref{E:scrBB} gives that
$$
\scrS(\bx, \by) - \scrB[\bx \to \by]\ge \scrS(\bx, \bz_n^-) - \scrB[\bx \to \bz_n^-]
$$ 
for all large enough $n$, so \eqref{E:Sxzn} gives that  $\scrB[\bx \to \by] \le \scrS(\bx, \by)$, as desired.

Let $\bx_n \in \R^k_\le $ be any sequence of points with distinct positive rational entries, such that any $\bx \in \R^k_\le$ satisfies  $|\bx_n - \bx| < n^{-1/k}$ for infinitely many $n$. The fact that we can find such a sequence is a consequence of the fact that the Lebesgue measure of the ball $B(\bx_n, n^{-1/k})$ is $O(1/n)$ as $n\to\infty$, and hence the sum over $n$ is infinite.
By equation \eqref{E:R-to-infty} in Lemma \ref{L:A-geod-exist}, we can find a sequence of deterministic points $\bz_n^-$ such that 
\begin{equation}
\label{E:S-DJ}
\p\lf(\scrS(\bx_n, \bz^-_n) = \scrB[\bx_n \to \bz^-_n] = \sum_{i=1}^k \scrB[x_{n, i} \to z^-_{n, i}] \rg) \to 1
\end{equation}
as $n \to \infty$, and for every $i \in \II{1, k}$ as $n \to \infty$ we have $z_{n, i}^- \to -\infty$. Moreover, the two-point estimate Lemma \ref{l:change-spatial} and Lemma \ref{L:levy-est} gives that
\begin{equation}
\label{E:BYY-prelim}
\sup_{|\by - \bx_n| \le n^{-1/k}} |\scrR(\bx_n, \bz^-_n) - \scrR(\by, \bz^-_n)| \le C_n n^{-1/{2k}}
\end{equation}
for a sequence of constants $C_n$ satisfying $\p (C_n > a) \le c e^{-d a^{3/2}}$ for constants $c, d$ that do not depend on $n$. This strong tail control on $C_n$ ensures that the right-hand side of \eqref{E:BYY-prelim} converges to $0$ almost surely as $n \to \infty$, and hence so does the left-hand side. Similarly,
\begin{equation}
\label{E:BYY}
\lim_{n \to \infty} \sup_{|\by - \bx_n| \le n^{-1/k}} \sum_{i=1}^k |\scrR(x_{n,i}, z^-_{n, i}) - \scrR(y_i, z^-_{n, i})| = 0 \qquad \as.
\end{equation}
Combining \eqref{E:S-DJ}, the convergence of \eqref{E:BYY-prelim}, and \eqref{E:BYY} with the fact that any point $\bx \in \R^k_\le$ satisfies $|\bx_n - \bx| < n^{-1/k}$ infinitely often implies that for all $\bx \in \R^k_\le$,
$$
\limsup_{n \to \infty} \scrR(\bx, \bz^-_n) - \sum_{i=1}^k \scrR(x_i, z^-_{n, i}) = 0,
$$
and so after removing the parabolic correction, 
$$
\limsup_{n \to \infty} \scrS(\bx, \bz^-_n) - \sum_{i=1}^k \scrS(x_i, z^-_{n, i}) \ge 0.
$$
Now, $\sum_{i=1}^k \scrS(x_i, z^-_{n, i}) \ge \scrB[\bx \to \bz^-_n]$ by the definition of parabolic path weight, yielding \eqref{E:Sxzn}.
\end{proof}

The relationship between $\scrS$ and $\scrB$ is particularly tractable when the start point $\bx = 0^k$. This proposition immediately gives the relationship \eqref{E:line-sheet-relation}.
\begin{prop}
	\label{P:high-paths-B}
Almost surely the following holds. For any $k \in \N$ and $\by \in \R^k_\le$ we have
\begin{equation}
\label{E:0ky}
\scrS(0^k, \by) = \sum_{i=1}^k \scrB_i(y_1) + \scrB[(y_1^k, k) \to (\by, 1)].
\end{equation}
Moreover, there is a disjoint optimizer $\pi$ in $\scrB$ from $0^k$ to $\by$ that only uses the top $k$ lines.
\end{prop}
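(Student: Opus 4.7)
The plan is to transport an exact identity from the prelimit via Lemma \ref{L:high-paths} and then construct the required optimizer by gluing a finite LPP optimizer to constant-$i$ tails, verifying optimality through the local length formula of Lemma \ref{l:diff-weight-finite}. Throughout, I would work on a coupling (from Proposition \ref{P:BSjoint}, possibly along a further subsequence) where $\scrB^n \to \scrB$ and $\scrS^n \to \scrS$ uniformly on compact subsets almost surely.

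For the identity, Lemma \ref{L:high-paths} provides, for each $n$, an optimizer $\pi^n$ in $\scrB^n$ from $(-n^{1/3}/2, n)^k$ to $(\by, 1)$ with $\pi^n_i(t) = i$ on $(-n^{1/3}/2, y_1)$. Splitting its length at $y_1$ would give
\[
\scrB^n[(-n^{1/3}/2, n)^k \to (\by, 1)] = \sum_{i=1}^k \bigl(\scrB^n_i(y_1) - \scrB^n_i(-n^{1/3}/2)\bigr) + \scrB^n[(y_1^k, k) \to (\by, 1)].
\]
A direct computation from \eqref{E:Bnk-def} together with $W^n_i(0) = 0$ yields $\scrB^n_i(-n^{1/3}/2) = -n^{2/3}$, and combining with \eqref{E:Sn-compact-form} simplifies this to the prelimit identity $\scrS^n(0^k, \by) = \sum_i \scrB^n_i(y_1) + \scrB^n[(y_1^k, k) \to (\by, 1)]$. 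Passing $n \to \infty$ along the convergence subsequence, using continuity of the finite LPP in the input environment, would then yield \eqref{E:0ky}.

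For the optimizer, I would take a disjoint optimizer $\tau$ of the finite LPP $(y_1^k, k) \to (\by, 1)$ in $\scrB$ (which exists by Lemma \ref{L:rightmost-multi-path}) and set $\pi_i(t) = i$ for $t < y_1$, $\pi_i|_{[y_1, y_i]} = \tau_i$. The tuple $\pi = (\pi_1, \ldots, \pi_k)$ is then a disjoint $k$-tuple from $0^k$ to $\by$ supported on the top $k$ lines. To verify $\sum_i \|\pi_i\|_\scrB = \scrS(0^k, \by)$, I would compare $\pi_i$ with the reference parabolic path $\pi^*_i$ from $0$ to $y_1$ defined by $\pi^*_i(t) = i$ for $t < y_1$ and $\pi^*_i(y_1) = 1$. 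Since $\pi_i$ and $\pi^*_i$ coincide on $(-\infty, y_1)$, Lemma \ref{l:diff-weight-finite} gives $\|\pi_i\|_\scrB = \|\pi^*_i\|_\scrB + \|\pi_i|_{[y_1, y_i]}\|_\scrB$, reducing the task to $\sum_i \|\pi^*_i\|_\scrB = \sum_i \scrB_i(y_1)$. The upper bound is immediate since $\pi^*$ is a disjoint $k$-tuple from $0^k$ to $y_1^k$, and \eqref{E:0ky} applied at $\by = y_1^k$ (with the trivial $\scrB[(y_1^k, k) \to (y_1^k, 1)] = 0$) gives $\scrB[0^k \to y_1^k] = \sum_i \scrB_i(y_1)$. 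For the matching lower bound I would compute the discrepancy directly: expanding
\[
\scrB[(y, i) \to (y_1, 1)] = \scrB_1(y_1) - \scrB_i(y) + \max_{y \le t_i \le \cdots \le t_2 \le y_1} \sum_{j=2}^i \bigl(\scrA_j(t_j) - \scrA_{j-1}(t_j)\bigr)
\]
and substituting into \eqref{E:pi-A}, the non-positivity of each gap $\scrA_j - \scrA_{j-1}$ (by the strict ordering in Theorem \ref{T:melon-Airy-facts}) forces $D_{\pi^*_i}(y) \ge \scrB_i(y_1) - \scrB_1(y_1)$ for all $y$, which passes to the limit immediately. The main subtlety will be this last discrepancy estimate; the remainder is bookkeeping with the prelimit identity and the gluing construction.
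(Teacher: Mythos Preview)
Your argument is correct, and the derivation of \eqref{E:0ky} is the same as the paper's: pass the Lemma~\ref{L:high-paths} splitting through the limit. The construction of the candidate optimizer $\pi$ by gluing the constant-$i$ tails to a finite optimizer $\tau$ also matches the paper.

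Where you diverge is in verifying that the glued $\pi$ has length $\scrS(0^k,\by)$. The paper computes each $\|\pi_i\|_\scrB$ directly by proving that
\[
\scrB[(z,i)\to(y_i,1)] - \bigl(\scrB_1(y_i)-\scrB_i(z)\bigr) \longrightarrow 0 \quad\text{as } z\to-\infty.
\]
The upper bound $\le 0$ is immediate from the ordering of the lines, but for the matching lower bound the paper needs $\sup_{x\in[n,n+1]}\bigl(\scrB_i(x)-\scrB_1(x)\bigr)$ to approach $0$ along some sequence $n\to-\infty$, and obtains this from stationarity and ergodicity of the Airy line ensemble together with the Brownian Gibbs property. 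Your route avoids ergodicity entirely: you prove only the individual lower bounds $\|\pi_i^*\|_\scrB \ge \scrB_i(y_1)$ from the sign of the gaps, and then close up with a matching upper bound on the \emph{sum},
\[
\sum_{i=1}^k \|\pi_i^*\|_\scrB \;\le\; \scrB[0^k\to y_1^k] \;=\; \scrS(0^k,y_1^k) \;=\; \sum_{i=1}^k \scrB_i(y_1),
\]
using Proposition~\ref{P:B-continuity} and the already-established identity \eqref{E:0ky} at $\by=y_1^k$. This is a genuine simplification: it trades a probabilistic ergodic-limit argument for a second application of Proposition~\ref{P:B-continuity}. It proves slightly less than the paper (you never pin down each $\|\pi_i^*\|_\scrB$ individually, only their sum), but that is all the proposition needs.
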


\begin{proof}
Equation \eqref{E:0ky} is true in the prelimit by Lemma \ref{L:high-paths}, and hence holds in the limit as well. The `Moreover' claim follows by an explicit construction. Let $\pi = (\pi_1, \dots, \pi_k)$ be given by $\pi_i|_{(-\infty, y_1)} = i$ and $\pi_i|_{[y_1, y_i]} = \tau_i$, where $\tau$ is a disjoint optimizer from $(y_1^k, k)$ to $(\by, 1)$. We claim that $\|\pi\|_\scrB$ is equal to the right-hand side of \eqref{E:0ky}. The result will then follow from Proposition \ref{P:B-continuity}.

By \eqref{E:pi-A} and the fact that $\scrS(0, y_i) = \scrB_1(y_i)$ for all $i$, it is enough to show that
\begin{equation}
\label{E:Byik}
\scrB[(z, i) \to (y_i, 1)] - [\scrB_1(y_i) - \scrB_i(z)] \to 0
\end{equation}
as $z \to -\infty$. For any interval $[n, n+1] \sset [z, y_i]$, the left-hand side above is always bounded between $I_n := \sup_{x \in [n, n +1]} \scrB_i(x) - \scrB_1(x)$ and $0$. 
This is because the left-hand side of \eqref{E:Byik} can be written as 
\[
\sup_{z\le z_{i-1}\le \cdots \le z_1 \le y_i} \sum_{j=1}^{i-1} \scrB_{j+1}(z_j) - \scrB_j(z_j).
\]
The support of $I_n$ contains $0$ by the Brownian Gibbs property (Theorem \ref{T:melon-Airy-facts}), and $I_n$ is a stationary, ergodic process by the main result of \cite{corwin2014ergodicity}. Hence $\liminf_{n \to -\infty} I_n = 0$, yielding \eqref{E:Byik}.
\end{proof}

\subsection{Metric composition law}
To construct the full scaling limit of multi-point Brownian LPP from the extended Airy sheet, a key property is the following metric composition law. Recall from the introduction that if $\scrS$ is an extended Airy sheet, then $s\scrS(s^{-2}\bx, s^{-2}\by)$ is an \textbf{extended Airy sheet of scale $s$}.
	\begin{prop}
		\label{P:metric-composition-ex-sheet}
		For $s_1,s_2>0$, take independent extended Airy sheets $\scrS_1, \scrS_2$ of scale $s_1, s_2$, respectively. Then almost surely, for any $\bx,\by \in \R^k_\le$ the maximum 
		\begin{equation}
		\label{E:SMC}
		\scrS(\bx, \by) = \max_{\bz \in \R^k_\le} \scrS_1(\bx,\bz)+\scrS_2(\bz,\by)
		\end{equation}
		exists. Moreover, $\scrS$ is an extended Airy sheet of scale $(s_1^3+s_2^3)^{1/3}$.
	\end{prop}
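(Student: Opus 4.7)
The plan is to first verify that the supremum in \eqref{E:SMC} is almost surely attained for every $(\bx,\by)\in\fX$, then to check translation invariance, and finally to identify the scale via a prelimit composition argument.

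First, I would establish the existence of the maximum using parabolic stationarity and tail control. For $i\in\{1,2\}$, the scaling definition of a scale-$s_i$ extended Airy sheet together with Lemma \ref{L:basic-sym} shows that the parabolically corrected sheet $\scrR_i(\ba,\bb):=\scrS_i(\ba,\bb)+s_i^{-3}\sum_j(a_j-b_j)^2$ is stationary in both arguments. Writing $F(\bz):=\scrS_1(\bx,\bz)+\scrS_2(\bz,\by)$ and isolating the parabolic terms,
\[
F(\bz)=-\sum_{j=1}^{k}\!\left(\tfrac{(x_j-z_j)^2}{s_1^3}+\tfrac{(z_j-y_j)^2}{s_2^3}\right)+\scrR_1(\bx,\bz)+\scrR_2(\bz,\by).
\]
The quadratic form in $\bz$ is coercive, while applying the scaled versions of Lemma \ref{l:change-spatial} with the chaining Lemma \ref{L:levy-est} bounds the fluctuations of $\scrR_i$ by $C|\bz|^{1/2}\log^{2/3}(2+|\bz|)$ for a random finite $C$. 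Hence $F(\bz)\to-\infty$ as $|\bz|\to\infty$ almost surely, and continuity of $\scrS_1,\scrS_2$ implies the supremum is attained on a compact subset of $\R^k_\le$. The quantitative tail bounds can be made uniform for $(\bx,\by)$ in any compact set, so a single almost sure event works for all $(\bx,\by)$, and continuity of $\scrS$ follows.

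Translation invariance of $\scrS$ is immediate: for $c\in\R$, substituting $\bz=T_c\bw$ in \eqref{E:SMC} and using the joint translation invariance of $(\scrS_1,\scrS_2)$ (from individual translation invariance plus independence) gives $\scrS\circ T_c\eqd\scrS$.

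To identify the law of $\scrS$ as a scale-$(s_1^3+s_2^3)^{1/3}$ extended Airy sheet, I would realize the composition as a scaling limit of Brownian LPP and invoke Lemma \ref{L:split-path} in the prelimit. Let $\{B_i\}_{i\in\Z}$ be independent two-sided standard Brownian motions, and set $N=\lfloor(s_1^3+s_2^3)n\rfloor$, $m=\lfloor s_2^3n\rfloor$. Define rescaled LPP functionals $\tilde\scrS^n_{\mathrm{top}}$, $\tilde\scrS^n_{\mathrm{bot}}$, and $\tilde\scrS^n_{\mathrm{tot}}$ corresponding to LPP over lines $m+1,\dots,N$, lines $1,\dots,m$, and lines $1,\dots,N$ respectively, with Brownian scaling chosen so that by the appropriately scaled version of Theorem \ref{T:extended-Airy-sheet}, they converge to extended Airy sheets $\tilde\scrS_1$, $\tilde\scrS_2$, $\tilde\scrS$ of scales $s_1$, $s_2$, $(s_1^3+s_2^3)^{1/3}$ respectively. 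Because the Brownian motions on disjoint line-ranges are independent, $\tilde\scrS^n_{\mathrm{top}}$ and $\tilde\scrS^n_{\mathrm{bot}}$ are independent and so are their limits $\tilde\scrS_1,\tilde\scrS_2$. Applying Lemma \ref{L:split-path} at line $m$ gives the prelimit identity
\[
\tilde\scrS^n_{\mathrm{tot}}(\bx,\by)=\max_{\bz\in\R^k_\le}\bigl(\tilde\scrS^n_{\mathrm{top}}(\bx,\bz)+\tilde\scrS^n_{\mathrm{bot}}(\bz,\by)\bigr)+o(1),
\]
where the $o(1)$ comes from deterministic rescaling shifts. Passing to the limit yields $\tilde\scrS(\bx,\by)=\max_{\bz}\tilde\scrS_1(\bx,\bz)+\tilde\scrS_2(\bz,\by)$, and since $(\tilde\scrS_1,\tilde\scrS_2)\eqd(\scrS_1,\scrS_2)$, this identifies $\scrS$ with a scale-$(s_1^3+s_2^3)^{1/3}$ extended Airy sheet.

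The principal obstacle is justifying the exchange of $\max$ and $n\to\infty$ limit in the last display: one must verify that the prelimit maximizers $\bz_n^\ast$ stay in a common compact set with high probability, uniformly in $n$. This requires a uniform-in-$n$ version of the coercivity argument from the first step, using the prelimit two-point bound Lemma \ref{l:change-spatial-prelim} together with Lemma \ref{L:levy-est} to dominate the fluctuations of the stationarized prelimits by the quadratic penalty outside a compact set.
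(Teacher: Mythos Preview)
Your overall strategy matches the paper's: realize the composition as a prelimiting Brownian LPP identity via Lemma~\ref{L:split-path}, use independence of disjoint line ranges, and pass to the limit using Theorem~\ref{T:extended-Airy-sheet}. The paper normalizes to $s_1^3+s_2^3=1$ and carries out exactly this construction.

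The main difference is in how argmax tightness is handled. You propose a two-stage coercivity argument: first a limiting existence step using parabolic stationarity (Lemma~\ref{L:basic-sym}) and chaining from Lemma~\ref{l:change-spatial}, then a uniform-in-$n$ version via Lemma~\ref{l:change-spatial-prelim} and Lemma~\ref{L:levy-est}. The paper instead invokes the dedicated Lemma~\ref{l:maxi-loc}, which directly bounds the distance of any prelimit maximizer from the linear interpolant $t\by+(1-t)\bx$ with an explicit tail, and then extends from fixed $(\bx,\by)$ to compact sets by monotonicity of optimizers (Lemma~\ref{L:mono-tree-multi-path}). This is considerably cleaner: it avoids building a global growth bound from local two-point estimates and sidesteps the domain restriction $\|\bu-\bu'\|_2<1$ in Lemma~\ref{l:change-spatial-prelim}, which would otherwise force you to tile $\R^k_\le$ and run a separate union bound to control fluctuations at infinity. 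Your route should work, but Lemma~\ref{l:maxi-loc} packages the needed estimate in a single shot. Note also that with the paper's approach the separate limiting existence step becomes unnecessary, since existence and the max-formula are inherited simultaneously from the prelimit.
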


	\begin{proof}
		Without loss of generality, we can assume that $s_1^3 + s_2^3 = 1$. 
		We set up multi-point Brownian LPP converging to an extended Airy sheet $\scrS$ as in Theorem \ref{T:extended-Airy-sheet}. Let $B^n$ be a collection of $n$ independent two-sided standard Brownian motions, and let $\scrS^n$ be the prelimiting extended Airy sheet defined using last passage percolation across $B^n$ (as in Theorem \ref{T:extended-sheet}) so that $\scrS^n \cvgd \scrS$ as $n\to\infty$.
		We now let
	\begin{multline*}
	\scrS^n_1(\bx, \by) = n^{1/6} \\ \times \lf(B[(2n^{-1/3}\bx, n) \to (s_1^3 + 2n^{-1/3}\by, n - \lfloor s_1^3n \rfloor)] - 2ks_1^3\sqrt{n} - n^{1/6} \sum_{i=1}^k 2(y_i - x_i) \rg),
	\end{multline*}
	and
	\begin{multline*}
	\scrS^n_2(\bx, \by) = n^{1/6} \\ \times \lf(B[(s_1^3 + 2n^{-1/3}\bx, n - \lfloor s_1^3n \rfloor - 1) \to (1 + 2n^{-1/3}\by, 1)] - 2ks_2^3\sqrt{n} - n^{1/6} \sum_{i=1}^k 2(y_i - x_i) \rg),
	\end{multline*}
	for any $(\bx, \by) \in \R^k_\le \X \R^k_\le$. Then $\scrS^n_1$ and $\scrS^n_2$ are independent.
	Using Theorem \ref{T:extended-Airy-sheet}, Brownian scaling, and continuity of $\scrS$, we have $\scrS^n_1 \cvgd \scrS_1$ and $\scrS^n_2 \cvgd \scrS_2$ as $n\to\infty$.
	On the other hand, by Lemma \ref{L:split-path} we have
	\[
		\scrS^n(\bx, \by) = \max_{\bz \in \R^k_\le} \scrS^n_1(\bx,\bz)+\scrS^n_2(\bz,\by)
	\]
	for any $\bx,\by \in \R^k_\le$. This passes to the limit as a long as the $\argmax$ for the right-hand side above is tight. 
This tightness for a fixed $\bx, \by$ follows from Lemma \ref{l:maxi-loc} below. Uniform tightness when $(\bx, \by)$ are allowed to range over a compact subset of $\fX$ then follows from monotonicity of optimizers (Lemma \ref{L:mono-tree-multi-path}). 
	\end{proof}
	
\begin{lemma}  \label{l:maxi-loc}
For any $k\in\N$, there exist constants $c,d > 0$ such that the following is true.
Let $n, p, q\in \N$, with $p+q=n$, and denote $t=p/n$.
Take independent standard Brownian motions $B^n=(B^n_1,\ldots,B^n_n)$, and $\bx, \by \in \R^k_\le$ such that $\|\bx\|_2, \|\by\|_2 < n^{1/6}$.
For any $\bz\in \R^k_\le$ define 
\begin{equation}
\label{E:Abzdef}
A(\bz)=B^n[(2n^{-1/3}\bx,n)\to (t+2n^{-1/3}\bz,q+1)]
+
B^n[(t+2n^{-1/3}\bz,q)\to (1+2n^{-1/3}\by,1)].
\end{equation}
We set $A(\bz) = -\infty$ if the right-hand side is not defined.
Then for any $a>0$, with probability at least $1-ce^{-dr^{3/2}}$ the following is true:
for any $\bz^*$ where $A$ achieves its maximum, we must have $\|\bz^*-t\by-(1-t)\bx\|_2<ca^2(t\wedge (1-t))^{1/3}$.
\end{lemma}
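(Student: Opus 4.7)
The plan is to show that when $\bz$ deviates from the deterministic optimum $\bz_0 := t\by+(1-t)\bx$, a quadratic parabolic penalty dominates the $O(\sqrt{\|\bz-\bz_0\|_2})$ random fluctuations of the two underlying sheets; since any maximum $\bz^*$ satisfies $A(\bz^*)\ge A(\bz_0)$, this pins $\bz^*$ near $\bz_0$.

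I would start by setting up, as in the proof of Proposition \ref{P:metric-composition-ex-sheet}, the two independent prelimiting extended Airy sheets $\scrS^n_1(\bx,\bz) := n^{1/6}\bigl(L_1 - 2kt\sqrt{n} - 2n^{1/6}\sum_i(z_i-x_i)\bigr)$ and the analogous $\scrS^n_2(\bz,\by)$, where $L_1,L_2$ are the two LPP values making up $A(\bz)$. Introducing the parabolically shifted versions $\scrR^n_1(\bx,\bz) := \scrS^n_1(\bx,\bz) + t^{-1}\|\bx-\bz\|_2^2$ and $\scrR^n_2(\bz,\by) := \scrS^n_2(\bz,\by) + (1-t)^{-1}\|\bz-\by\|_2^2$, a direct computation---completing the square with $\bw := \bz-\bz_0$---yields the clean decomposition
\[
n^{1/6}\bigl(A(\bz)-A(\bz_0)\bigr) \;=\; \Delta\scrR^n_1(\bz) + \Delta\scrR^n_2(\bz) - \frac{\|\bw\|_2^2}{t(1-t)},
\]
where $\Delta\scrR^n_j(\bz)$ denotes the $\scrR^n_j$ increment from $\bz_0$ to $\bz$. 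The task then reduces to exhibiting a radius $R$ such that, with probability at least $1-ce^{-da^{3/2}}$, the fluctuation terms are beaten by $\|\bw\|_2^2/(t(1-t))$ uniformly over $\|\bw\|_2 \ge R$.

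Next I would use Brownian scaling to identify $\scrR^n_1(\bx,\bz)$ with $t^{1/3}\scrR^p(t^{-2/3}\bx,t^{-2/3}\bz)$, where $p=\lfloor tn\rfloor$, and apply Lemma \ref{l:change-spatial-prelim} to $\scrR^p$ after the harmless relaxation of its $n^{1/6}$ norm constraint permitted by its proof. Since $t^{1/3}\sqrt{t^{-2/3}}=1$, the scaling cleanly produces
$\p\bigl(|\scrR^n_1(\bx,\bz)-\scrR^n_1(\bx,\bz')|>a\sqrt{\|\bz-\bz'\|_2}\bigr) \le c e^{-d a^{3/2}}$
whenever $\|\bz-\bz'\|_2 \le (t\wedge(1-t))^{2/3}$ and $\|\bz\|_2,\|\bz'\|_2$ are bounded by a constant multiple of $n^{1/6}$, and analogously for $\scrR^n_2$. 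I would then upgrade these two-point bounds to a uniform modulus via the chaining Lemma \ref{L:levy-est}, applied on dyadic balls $B_m := B(\bz_0,2^m R)$ with $R := C_0 a^{2/3}(t(1-t))^{2/3}$. On the annulus $B_m\setminus B_{m-1}$ the parabolic penalty satisfies $\|\bw\|_2^2/(t(1-t)) \gtrsim C_0^{3/2}a\cdot 2^{3m/2}$, which dominates the uniform fluctuation bound $\lesssim a\sqrt{2^m R}$ (up to logarithms) once $C_0$ is taken sufficiently large; the prefactors $(2^m R/r)^{2k}$ in Lemma \ref{L:levy-est}'s tail are polynomial in $a$ and $2^m$, hence absorbed by the super-exponential $e^{-c_1 C_0^{9/4}a^{3/2}2^{9m/4}}$. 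Summing failure probabilities over $m \ge 0$ produces a total error $ce^{-da^{3/2}}$.

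To finish, the far-field $\|\bz\|_2 \gg n^{1/6}$ is handled by a separate, cruder argument using Gaussian tails on the Brownian increments together with the deterministic mean of each LPP diverging quadratically in $\bz$; for $\bz \notin \R^k_\le$ we simply have $A(\bz)=-\infty$. Finally, the arithmetic $(t(1-t))^{2/3}\le (t\wedge(1-t))^{1/3}$ (since $t(1-t)\le t\wedge(1-t)\le 1$) and $a^{2/3}\le a^2$ for $a\ge 1$ (the bound being trivial otherwise) converts $R=C_0 a^{2/3}(t(1-t))^{2/3}$ into the stated $c a^2(t\wedge(1-t))^{1/3}$. The hard part will be the chaining step: one must carefully reconcile the $t$-dependent validity window of Lemma \ref{l:change-spatial-prelim} with the dyadic union bound, particularly in the degenerate regime $t\wedge(1-t)\ll n^{-1/3}$ where $p$ or $n-p$ is $O(1)$ and a direct argument (e.g.\ reducing to a single Brownian increment) is preferable.
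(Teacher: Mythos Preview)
Your parabolic decomposition and the scaling identity $\scrR^n_1(\bx,\bz)=t^{1/3}\scrR^p(t^{-2/3}\bx,t^{-2/3}\bz)$ are correct, but the chaining step has a real gap that you flag without resolving. Lemma~\ref{l:change-spatial-prelim} supplies the two-point estimate only for $\|\bu-\bu'\|_2<1$, which after your rescaling becomes $\|\bz-\bz'\|_2<(t\wedge(1-t))^{2/3}$; Lemma~\ref{L:levy-est} then yields a modulus only for steps of at most that size, not a bound on $|\Delta\scrR(\bz)|$ across a full annulus. Since $R/(t\wedge(1-t))^{2/3}=C_0a^{2/3}(t\vee(1-t))^{2/3}\ge C_0a^{2/3}2^{-2/3}$, your choice $R=C_0a^{2/3}(t(1-t))^{2/3}$ already exceeds the validity window for every moderately large $a$, so even the innermost ball $B_0$ is not covered. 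Telescoping the modulus over many small steps degrades the $\sqrt{\|\bw\|_2}$ fluctuation to one linear in $\|\bw\|_2$, after which the dyadic comparison as written no longer closes with the stated $R$. A secondary issue: after rescaling, the norm hypothesis of Lemma~\ref{l:change-spatial-prelim} reads $\|\bx\|_2,\|\bz\|_2<t^{5/6}n^{1/6}$, which the given $\|\bx\|_2<n^{1/6}$ need not satisfy when $t$ is small, and the relaxation ``permitted by its proof'' only buys a fixed constant factor.

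The paper takes a different route that sidesteps chaining entirely. It lower-bounds $\max_{\bz}A(\bz)$ via Lemma~\ref{l:bd-brownain-passtime}, then upper-bounds $A(\bz)$ simultaneously for all $\bz$ by splitting into single points (Lemma~\ref{L:f-naive}) and applying Proposition~\ref{P:cross-prob}, which already gives a one-shot, uniform-in-$x$ curvature bound $W^n_1(x)\le\cN_{b,w}(n,x,a)$ with probability $\ge 1-ce^{-da^{3/2}}$. The two bounds are then compared via the purely deterministic Lemma~\ref{L:calculation}, which shows they cross precisely when some $|z_i-ty_i-(1-t)x_i|>ca^2(t\wedge(1-t))^{1/3}$. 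The uniform melon estimate from \cite{DV} is exactly the missing global input that your local-modulus-plus-chaining scheme would need to manufacture.
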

This lemma is an analogue of \cite[Lemma 9.3]{DOV}.
Its proof is also similar to the proof of  \cite[Lemma 9.3]{DOV}, involving some technical estimates on the Brownian $n$-melon $W^n$. We leave it to Appendix \ref{app:b-m-est}.

\section{The scaling limit of multipoint Brownian LPP}
\label{S:construct-extended-landscape}
\subsection{Tightness of the prelimiting extended landscape}
\label{S:tightness-landscape} 
Recall from the introduction that
\[
\fX_\uparrow = \{(\bx,s;\by,t) \in \bigcup_{k\in \N}(\R^k_\le \X \R)^2: s<t\}.
\]
Let $B=(B_i)_{i\in\Z}$ be an infinite sequence of independent two-sided standard Brownian motions. As in the introduction, let $(\bx, s)_n = (s+2\bx n^{-1/3}, - \floor{sn})$, and define the prelimiting extended landscape
\begin{equation}
\label{E:Lndef-body}
\scrL_n(\bx,s;\by,t) = n^{1/6} \left(
B[(\bx, s)_n\to (\by, t)_n] - 2k(t-s)\sqrt{n} - n^{1/6}\sum_{i=1}^k 2(y_i-x_i)
\right).
\end{equation}
This is a random function on $\fX_\uparrow$. 
In this section we prove that $\scrL_n$ is tight in an appropriate function space.
Given Lemma \ref{l:change-spatial-prelim}, it remains to prove a two-point tail bound on the deviation of $\scrL_n$ in the time direction. This is the analogue of \cite[Lemma 11.2]{DOV}.
Let $\scrK_n$ be the stationary version of $\scrL_n$, defined as
$$
\scrK_n(\bx,s;\by,t) = \scrL_n(\bx,s;\by,t) + \sum_{i=1}^k \frac{(x_i-y_i)^2}{t-s}.
$$
\begin{lemma} \label{l:change-time-prelim}
Take any $k\in\N$, $\bx, \by\in \R^k_\le$, and $t\in n^{-1}\Z$, such that $\|\bx\|_2, \|\by\|_2 < n^{1/100}$, $1/2\le t<1-n^{-1/100}$. Also take $0<a<n^{1/150}$.
Letting $\by'=t\bx+(1-t)\by$, we have
$$
\p(|\scrK_n(\bx,0;\by',t) - \scrK_n(\bx,0;\by,1)| > a(1-t)^{1/3}|\log(1-t)|) < c e^{-da^{9/8}} ,
$$
for some constants $c, d$ depending only on $k$.
\end{lemma}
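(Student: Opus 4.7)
The plan is to compare $\scrL_n(\bx, 0; \by, 1)$ and $\scrL_n(\bx, 0; \by', t)$ via the metric composition law (Lemma \ref{L:split-path}) at the intermediate time $t$, using the intermediate point $\by'$. The choice of $\by'$ in the statement is the natural one: it is precisely the deterministic center around which Lemma \ref{l:maxi-loc} localizes the optimal midpoint, and it is the unique interpolation under which the parabolic corrections in the two $\scrK_n$ quantities arising from the composition law cancel (up to a small deterministic remainder of order $(1-t)^{1/3}$ that is absorbed into the error). The argument has two halves: a lower bound on $\scrK_n(\bx,0;\by,1) - \scrK_n(\bx,0;\by',t)$ from plugging $\bz=\by'$ into the composition law, and an upper bound from using the actual maximizer $\bz^*$.

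For the lower bound, Lemma \ref{L:split-path} with $\bz = \by'$ gives
\[
\scrL_n(\bx, 0; \by, 1) \ge \scrL_n(\bx, 0; \by', t) + \scrL_n(\by', t; \by, 1),
\]
which after accounting for parabolic corrections reduces to showing that $\scrK_n(\by', t; \by, 1)$ is not too negative. By a translation of Brownian increments and Brownian scaling, this quantity has the same law as a comparable stationary increment on a lattice of size $\lfloor n(1-t)\rfloor$ rescaled by $(1-t)^{1/3}$, and one applies Lemma \ref{l:change-spatial-prelim} together with a tail bound for $\scrK_{n'}(0^k,0;0^k,1)$ (a sum of top-$k$ entries of a Brownian $n'$-melon, whose Tracy--Widom-type tail is standard) to get a tail of $ce^{-da^{3/2}}$ at scale $a(1-t)^{1/3}$.

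For the upper bound, let $\bz^*\in\R^k_\le$ attain the maximum in the composition law, so that $\scrL_n(\bx, 0; \by, 1) = \scrL_n(\bx, 0; \bz^*, t) + \scrL_n(\bz^*, t; \by, 1)$. Then, modulo deterministic parabolic terms,
\[
\scrK_n(\bx, 0; \by', t) - \scrK_n(\bx, 0; \by, 1) = [\scrK_n(\bx, 0; \by', t) - \scrK_n(\bx, 0; \bz^*, t)] - \scrK_n(\bz^*, t; \by, 1).
\]
The second term is controlled exactly as in the lower bound. For the first bracket, Lemma \ref{l:maxi-loc} yields $\|\bz^* - \by'\|_2 \le c a_1^2(1-t)^{1/3}$ off an event of probability $ce^{-da_1^{3/2}}$, and on this event Lemma \ref{l:change-spatial-prelim} bounds the bracket by $c a_2 \|\bz^* - \by'\|_2^{1/2} \le c a_1 a_2 (1-t)^{1/6}$ off a further event of probability $ce^{-d a_2^{3/2}}$ (after a union bound over dyadic scales of $\|\bz^*-\by'\|_2$, which produces the $|\log(1-t)|$ factor).

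The main technical obstacle is the bookkeeping that produces the stated tail exponent $a^{9/8}$. Balancing the two tail exponents, one sets $a_1^{3/2} = a_2^{3/2} \asymp a^{9/8}$, so $a_1 \asymp a_2 \asymp a^{3/4}$; then $a_1 a_2 (1-t)^{1/6} \asymp a^{3/2}(1-t)^{1/6}$, which (after the dyadic union bound producing the $|\log(1-t)|$) matches the target scale $a(1-t)^{1/3}|\log(1-t)|$ once we exploit $a \le n^{1/150}$ and $1-t \ge n^{-1/100}$ to keep all quantities within the admissible ranges of the cited lemmas. The identity $9/8 = (3/2)\cdot(3/4)$ reflects precisely this composition of the $3/2$-tail from spatial continuity with the $3/4$-power gained by re-expressing $\|\bz^*-\by'\|_2$ as a tail event for Lemma \ref{l:maxi-loc}.
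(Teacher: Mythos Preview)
Your overall strategy — metric composition at time $t$, one direction by plugging in $\by'$, the other by taking the actual maximizer $\bz^*$ — matches the paper's. The easy direction is fine. The hard direction has a genuine gap in the bookkeeping.

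The problem is in your final paragraph. You localize $\bz^*$ via Lemma~\ref{l:maxi-loc} (giving $\|\bz^*-\by'\|_2 \le c a_1^2(1-t)^{1/3}$ off probability $e^{-da_1^{3/2}}$), then bound the spatial increment by $a_2\sqrt{\|\bz^*-\by'\|_2}\le c a_1 a_2(1-t)^{1/6}$. Setting $a_1\asymp a_2\asymp a^{3/4}$ to get tail $e^{-da^{9/8}}$ gives a deviation bound of order $a^{3/2}(1-t)^{1/6}$, and you then assert this ``matches'' the target $a(1-t)^{1/3}|\log(1-t)|$. It does not: that would require $a^{1/2}\lesssim (1-t)^{1/6}|\log(1-t)|$, which is in no way implied by the hypotheses $a<n^{1/150}$ and $1-t>n^{-1/100}$ (take $a$ moderate and $1-t$ small but of constant order). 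No amount of dyadic summing repairs this, because the scaling in $(1-t)$ is simply off by a power.

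What you have thrown away is the negative parabolic residual. After composing at $\bz^*$, the parabolic corrections leave a term $-\|\bz^*-\by'\|_2^2/(t(1-t))$ on the right, which you dismiss as ``deterministic'' and drop. It is not deterministic (it depends on $\bz^*$), and it is exactly the ingredient that makes the arithmetic close: in the paper's argument this quadratic penalty (obtained there from Proposition~\ref{P:cross-prob}, the uniform curvature bound on $W^n_1$) is played directly against the $\hat a\sqrt{\|\bz-\by'\|_2}$ coming from Lemma~\ref{l:change-spatial-prelim}. Optimizing $\hat a\sqrt{u}-u^2/(1-t)$ over $u\ge 0$ gives $c\hat a^{4/3}(1-t)^{1/3}$, so one takes $\hat a\sim a^{3/4}$ and the tail $e^{-d\hat a^{3/2}}=e^{-da^{9/8}}$ falls out. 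Replacing the quadratic by a localization probability via Lemma~\ref{l:maxi-loc} (whose proof \emph{is} this curvature estimate) discards precisely the information needed. A secondary gap: your control of $\scrK_n(\bz^*,t;\by,1)$ for random $\bz^*$ cannot be done ``exactly as in the lower bound'', which is a one-point estimate at the deterministic $\by'$; the paper handles this uniformly in $\bz$ via Proposition~\ref{P:cross-prob}.
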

We sketch the idea of the proof of this lemma here.
The complete proof is a technical computation and uses similar ideas to the proof of \cite[Lemma 11.2]{DOV}, so we leave it to Appendix \ref{app:b-m-est} (with the proof of Lemma \ref{l:maxi-loc}).
For the upper tail of $\scrK_n(\bx,0;\by',t) - \scrK_n(\bx,0;\by,1)$, via the triangle inequality it suffices to give a lower bound on $\scrK_n(\by',t+n^{-1};\by,1)$. This follows from Lemma \ref{L:f-naive} and tail bounds on points the Brownian melon, see Lemma \ref{l:bd-brownain-passtime}.
For the lower tail, by the metric composition law we need to upper bound 
$$
\sup_{z \in \R^k_\le} (\scrL_n(\bx,0;\bz,t) - \scrL_n(\bx,0;\by',t)) + \scrL_n(\bz,t;\by,1).
$$
The term $\scrL_n(\bz,t;\by,1)$ can be bounded with a curvature estimate on the Brownian melon. When $\|\bz - \by'\|_2$ is large, such a curvature estimate also works to bound $\scrL_n(\bx,0;\bz,t)$, and the aformentioned Lemma \ref{l:bd-brownain-passtime} can be used to bound $\scrL_n(\bx,0;\by',t))$ below.
When $\|\bz-\by'\|_2$ is small we apply the more refined spatial continuity estimate on the prelimiting extended Airy sheets from Lemma \ref{l:change-spatial-prelim} to bound the difference $\scrL_n(\bx,0;\bz,t) - \scrL_n(\bx,0;\by',t)$.
Putting together these bounds gives the desired result.

We now move to tightness. Let $\mathfrak{F}$ be the space of functions from $\fX_\uparrow$ to $\R$ that are either continuous, or of the form \eqref{E:Lndef-body} for some $n$ and some bi-infinite sequence of continuous functions $f$ in place of $B$. This is a Polish space, and so all classical theorems about distributional convergence apply. All of the $\scrL_n$ are random functions on this space.
\begin{prop} \label{p:ln-tight}
The functions $\scrL_n$ are tight in $\mathfrak{F}$, and all subsequential limits are almost surely continuous.
\end{prop}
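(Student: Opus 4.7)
The plan is to prove tightness on each compact subset of $\fX_\uparrow$ by deriving a uniform modulus of continuity for $\scrK_n$ via Lemma \ref{L:levy-est}. Since $\fX_\uparrow = \bigsqcup_{k \in \N} (\R^k_\le)^2 \X \{s < t\}$ and convergence in $\mathfrak{F}$ is uniform-on-compact, it suffices to prove tightness on each compact rectangle
$$
K = \{(\bx, s; \by, t) : \|\bx\|_\infty, \|\by\|_\infty \le M,\; -M \le s,\; s + \delta \le t \le M\} \cap \fX_\uparrow,
$$
for fixed $k, M \in \N$ and $\delta > 0$, and to argue that subsequential limits are almost surely continuous on each such $K$. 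One-point tightness is straightforward: at the base point $\bu_0 = (\boo, 0; \boo, 1)$ we have $\scrL_n(\bu_0) = \scrS^n(\boo, \boo)$, which is tight by Theorem \ref{T:extended-Airy-sheet}, and tightness at any other $(\bx, s; \by, t) \in K$ then follows from this together with Lemma \ref{l:change-spatial-prelim} and the distributional invariance of $\scrL_n$ under Brownian scaling and time translation.

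The core task is to establish a two-point tail bound of the form required by Lemma \ref{L:levy-est} for increments of $\scrK_n$ in each single coordinate. This yields a modulus of continuity with parameters $\alpha_\text{space} = 1/2$, $\beta_\text{space} = 3/2$ in the spatial direction and $\alpha_\text{time} = 1/3$, $\beta_\text{time} = 9/8$ in the time direction (up to logarithmic corrections). The spatial direction is handled directly by Lemma \ref{l:change-spatial-prelim}. The main obstacle is the time direction: Lemma \ref{l:change-time-prelim} controls only increments of the very specific form $|\scrK_n(\bx, 0; t\bx + (1-t)\by, t) - \scrK_n(\bx, 0; \by, 1)|$, whereas Lemma \ref{L:levy-est} requires bounds on one-coordinate increments of $\scrK_n(\bu)$ in either of the two time coordinates and for arbitrary endpoints.

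To bridge this gap I would proceed in three moves. First, use the distributional time-translation invariance of $\scrL_n$, inherited from Brownian increments (after handling the $n^{-1}\Z$-discretization implicit in the definition of $(\bx, s)_n$), to reduce an arbitrary increment of the ending time $t$ to an increment ending at time $1$. Second, use the symmetry $\scrL_n(\bx, s; \by, t) \eqd \scrL_n(-\by, -t; -\bx, -s)$, obtained from time reversal of the underlying Brownian motions and analogous to Lemma \ref{L:basic-sym}, to convert increments in the starting time $s$ into increments in the ending time. Third, absorb the mismatch between the true endpoint $\by$ and the linearly interpolated endpoint $t\bx + (1-t)\by$ using Lemma \ref{l:change-spatial-prelim}. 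The resulting two-point bound, combined with the spatial bound, feeds into Lemma \ref{L:levy-est} to produce a modulus of continuity for $\scrK_n|_K$ with tails $\p(C_K > a) \le c e^{-d a^{9/8}}$ up to logarithmic corrections. This modulus, together with one-point tightness, yields tightness of $\scrL_n|_K$ in $C(K, \R)$ by Arzel\`a-Ascoli, and passes to any subsequential limit, so such limits are almost surely continuous on $K$, and hence on all of $\fX_\uparrow$ after exhausting by a countable sequence of such $K$.
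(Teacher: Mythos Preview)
Your proposal is correct and follows essentially the same approach as the paper: reduce to compact sets, combine Lemmas \ref{l:change-spatial-prelim} and \ref{l:change-time-prelim} with the translation and scaling symmetries of $\scrL_n$ to obtain two-point tail bounds, and conclude via a tightness criterion. The paper handles the time discretization you allude to parenthetically by first linearly interpolating $\scrL_n$ in time to a continuous version $\scrJ_n$, and then invokes the Kolmogorov--Chentsov criterion rather than Lemma \ref{L:levy-est}; these are cosmetic differences.
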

\begin{proof}
Fix a compact set $K \sset (\R^k_\le\X \R)^2$ for some $k\in\N$. It suffices to show tightness of $\scrL_n|_K$. 
First, we replace $\scrL_n$ by a continuous version $\scrJ_n$ on $K$.
For each $(\bx, \by) \in \fX$ and $s$ with $s \in n^{-1} \Z$, define the function $\scrJ_n(\bx, s; \by, \cdot)$ by setting $\scrJ_n(\bx, s; \by, t) = \scrL_n (\bx, s; \by, t)$ whenever $t \in n^{-1} \Z$
and by linear interpolation at times in between. Then for each $(\bx, \by) \in \fX$ and $t\in \R$, we can define  $\scrJ_n(\bx, s; \by, t)$ by linear interpolation between values when $s \in n^{-1} \Z$. This procedure gives a well-defined continuous function on $K$ for large enough $n$. By Theorem \ref{T:airy-line-ensemble}, $\scrJ_n(0^k, 0; 0^k, 1)$ is tight in $n$.
Moreover, by Lemma \ref{l:change-spatial-prelim}, Lemma \ref{l:change-time-prelim}, and translation and scale invariance properties of $\scrL_n$ we get that for all $\bu, \bu' \in K$  and large enough $n$, 
$$
\p(|\scrJ_n(\bu) - \scrJ_n(\bu')| > a \|\bu - \bu'\|_2^{1/3-\epsilon}) \le ce^{-d a^{9/8}}
$$
for any $a,\epsilon> 0$. Here $c, d > 0$ are $K$-dependent constants. Using the Kolmogorov-Chentsov criterion, see \cite[Corollary 14.9]{kallenberg2006foundations}, we get that the sequence $\scrL_n$ is tight.
\end{proof}

\subsection{The explicit construction of $\scrL^*$}

In this subsection, we construct the scaling limit $\scrL^*$ of multipoint Brownian LPP axiomatically and prove Theorems \ref{T:unique-L*} and \ref{T:BLPP-convergence}. We call this object an extended$^*$ directed landscape, or extended$^*$ landscape for brevity. Later we will show that this object coincides with the extended directed landscape as defined in Definition \ref{D:ext-land}.

\begin{definition}
	\label{D:directed-landscape}
	An \textbf{extended$^*$ directed landscape} is a random continuous function $\mathcal{L}^*$ taking values in the space $\scrC(\fX_\uparrow, \R) \sset \mathfrak{F}$ of continuous functions from $\fX_\uparrow$ to $\R$ with the uniform-on-compact topology. It satisfies the following properties.
	\begin{enumerate}[label=\Roman*.]
		\item (Indepedendent extended Airy sheet marginals) 
		For any disjoint time intervals $\{(s_i, t_i) : i \in \{1, \dots k\}\}$, the random functions
		$$
		(\bx, \by) \mapsto \scrL^*(\bx, s_i ; \by, t_i), \quad i \in \II{1, k}
		$$
		are independent extended Airy sheets of scale $(t_i-s_i)^{1/3}$.
		\item (Metric composition law) For any $r<s<t$, almost surely we have that
		$$
		\scrL^*(\bx,r;\by,t)=\max_{\bz \in \mathbb R_\le^k} \scrL^*(\bx,r;\bz,s)+\scrL^*(\bz,s;\by,t),
		$$
		 for any $\bx, \by \in \R_\le^k$.
	\end{enumerate}
\end{definition}

Note that $\scrL^*|_{\Rd}$ is the usual directed landscape, since extended Airy sheets are simply Airy sheets when restricted to $\R^2$.

While $\scrL^*$ can be constructed directly similarly to how the directed landscape was constructed in \cite[Section 10]{DOV}, we will instead show its existence by proving that it is the scaling limit of $\scrL_n$.
The next result encompasses Theorems \ref{T:unique-L*} and \ref{T:BLPP-convergence}.

\begin{theorem}
	\label{T:Lnconverges}
	The extended$^*$ landscape $\scrL^*$  exists and is unique in law. Moreover, $\scrL_n \cvgd \scrL^*$ as random functions in $\mathfrak{F}$.
\end{theorem}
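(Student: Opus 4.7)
The plan is to establish existence of $\scrL^*$ by passing to a subsequential limit of $\scrL_n$, to verify that any such limit satisfies axioms I and II of Definition \ref{D:directed-landscape}, and then to show that these axioms pin down the law uniquely. Tightness of $\{\scrL_n\}$ in $\mathfrak{F}$ with almost surely continuous subsequential limits has already been established in Proposition \ref{p:ln-tight}, so along any subsequence I can pass to a further subsequence along which $\scrL_n$ converges in distribution to some continuous limit $\scrL^* \in \scrC(\fX_\uparrow, \R)$; by Skorokhod representation, I fix a coupling in which $\scrL_n \to \scrL^*$ uniformly on compact subsets of $\fX_\uparrow$ almost surely.

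To verify axiom I, I fix disjoint open intervals $(s_i, t_i)$, $i \in \II{1, \ell}$. Brownian scaling combined with invariance of the driving Brownian motions under integer shifts of the line index identifies each prelimit $\scrL_n(\cdot, s_i; \cdot, t_i)$, up to a vanishing error from the floor rounding, in law with an appropriately rescaled prelimit extended Airy sheet of scale $(t_i - s_i)^{1/3}$. For large $n$, the relevant line index sets $\II{-\lfloor t_i n\rfloor, -\lfloor s_i n\rfloor}$ are pairwise disjoint except possibly for a single shared index when two intervals abut at a common endpoint; in that case the two last passage values depend on disjoint increments of the single shared Brownian motion, so the $\ell$ rescaled prelimits are genuinely independent. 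Theorem \ref{T:extended-Airy-sheet} then yields joint convergence to $\ell$ independent extended Airy sheets of the prescribed scales, and these properties transfer to $\scrL^*$. For axiom II at a fixed triple $r < s < t$, Lemma \ref{L:split-path} applied to the prelimits gives
$$
\scrL_n(\bx, r; \by, t) \;=\; \max_{\bz \in \R^k_\le} \scrL_n(\bx, r; \bz, s_n) + \scrL_n(\bz, s_n; \by, t),
$$
where $s_n \in n^{-1}\Z$ is the nearest lattice time to $s$. Passing to the limit requires tightness of the argmax uniformly on compact sets of endpoints $(\bx, \by)$; pointwise tightness is provided by Lemma \ref{l:maxi-loc}, and the upgrade to uniform-on-compacts tightness follows from monotonicity of the argmax in $(\bx, \by)$ via Lemma \ref{L:mono-tree-multi-path}, exactly as in the proof of Proposition \ref{P:metric-composition-ex-sheet}. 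Combined with the uniform-on-compact convergence $\scrL_n \to \scrL^*$, this yields axiom II almost surely at the fixed triple.

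For uniqueness, any $\scrL^*$ satisfying axioms I and II has the property that for every finite set of times $t_0 < t_1 < \cdots < t_m$, the sheets $\scrL^*(\cdot, t_{i-1}; \cdot, t_i)$, $i \in \II{1, m}$, are independent extended Airy sheets of scales $(t_i - t_{i-1})^{1/3}$, whose joint law is pinned down by Theorem \ref{T:extended-sheet-char}. Iterating axiom II expresses every $\scrL^*(\cdot, t_i; \cdot, t_j)$ with $i < j$ as an explicit measurable functional of these independent sheets, so the joint finite-dimensional distributions of $\scrL^*$ at any finite set of rational times are determined, and continuity of $\scrL^*$ then fixes the full law. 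All subsequential limits therefore share the same distribution, so the full sequence $\scrL_n$ converges to $\scrL^*$. The main technical obstacle I anticipate is the uniform-on-compact tightness of the argmax in the prelimit metric composition needed to propagate axiom II to the limit; fortunately this is already handled by Lemma \ref{l:maxi-loc} together with the monotonicity argument from Proposition \ref{P:metric-composition-ex-sheet}, so no new ingredients are required.
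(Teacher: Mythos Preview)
Your proposal is correct and follows essentially the same approach as the paper's proof: tightness from Proposition~\ref{p:ln-tight}, verification that any subsequential limit satisfies axioms I and II (using independence of the Brownian motions, Theorem~\ref{T:extended-Airy-sheet}, the prelimit metric composition Lemma~\ref{L:split-path}, and argmax tightness from Lemma~\ref{l:maxi-loc}), and uniqueness via the observation that axioms I and II determine all finite-dimensional distributions. Your treatment is more explicit in places (the shared-endpoint case for abutting intervals, the floor rounding, the upgrade to uniform-on-compact argmax tightness via monotonicity), but the structure and ingredients are identical.
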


\begin{proof}
The uniqueness of $\scrL^*$ follows since conditions I and II specify all finite dimensional distributions. Indeed, let $\bu_1, \dots, \bu_k \in \fX_\uparrow$ be any collection of points with time indices $S = \{s_i < t_i  : i \in \II{1, k} \}$. Let $r_1 < \dots< r_\ell$ denote the order statistics of the set $S$, for some $2\le \ell \le 2k$. Then the marginals $\scrL(\cdot, r_i; \cdot, r_{i+1}), i \in \II{1, \ell-1}$ are independent Airy sheets of scale $(r_{i+1} - r_i)^{1/3}$ by I. All the random variables $\scrL(\bu_1), \dots, \scrL(\bu_k)$ are measurable functions of $\scrL(\cdot, r_i; \cdot, r_{i+1}), i \in \II{1, \ell-1}$ by repeated applications of II.

Next, we know $\scrL_n$ is tight in $\scrC(\fX_\uparrow, \R)$ by Proposition \ref{p:ln-tight}. Let $\scrM:\fX \to \Rd$ be any subsequential limit of $\scrL_n$. The function $\scrM$ has independent increments by the independence of the Brownian motions that give rise to $\scrL_n$. These increments must be rescaled extended Airy sheets by Theorem \ref{T:extended-Airy-sheet}, and satisfy metric composition since $\scrL_n$ does, and maximizer locations are tight (Lemma \ref{l:maxi-loc}). Therefore $\scrM$ is an extended$^*$ landscape.
\end{proof}

In the remainder of this section, we gather continuity estimates for $\scrL^*$. 
Let $\scrK$ be the stationary extended landscape, defined as
$$
\scrK(\bx,s;\by,t) = \scrL^*(\bx,s;\by,t) + \sum_{i=1}^k \frac{(x_i-y_i)^2}{t-s}.
$$
By passing Lemma \ref{l:change-time-prelim} to the limit, we have the following two-point bound on $\scrK$ (and hence on $\scrL$) in the time direction. Note that a two-point bound in the spatial direction follows from Lemma \ref{l:change-spatial} and rescaling.

\begin{lemma} \label{l:change-time}
Take any $k\in\N$, $\bx, \by\in \R^k_\le$, and $0<t'<t$ with $2t'\ge t$.
Letting $\by'=(t'/t)\bx+(1-t'/t)\by$, we have
$$
\p(|\scrK(\bx,0;\by',t') - \scrK(\bx,0;\by,t)| > a(t-t')^{1/3}|\log(1-t'/t)|) < c e^{-da^{9/8}} ,
$$
for all $a > 0$. Here $c, d > 0$ are constants depending only on $k$.
\end{lemma}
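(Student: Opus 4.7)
The plan is to pass the prelimit estimate Lemma \ref{l:change-time-prelim} to the continuum via Theorem \ref{T:Lnconverges}. A scaling reduction comes first. Property I of Definition \ref{D:directed-landscape} makes $\scrL^*(\cdot, 0; \cdot, t)$ an extended Airy sheet of scale $t^{1/3}$, which upgrades to the joint distributional identity
\begin{equation*}
\scrK(\bx, 0; \by, t) \eqd t^{1/3} \scrK(t^{-2/3}\bx, 0; t^{-2/3}\by, 1).
\end{equation*}
Under the substitution $\tilde\bx = t^{-2/3}\bx$, $\tilde\by = t^{-2/3}\by$, $\tilde t' = t'/t$, one has $t^{-2/3}\by' = \tilde t' \tilde\bx + (1-\tilde t')\tilde\by$, and both sides of the inequality to be proven scale by the common factor $t^{1/3}$ (the log factor $|\log(1-t'/t)| = |\log(1-\tilde t')|$ being invariant). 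Since the constants $c,d$ depend only on $k$, this reduces matters to the case $t = 1$, $t' \in [1/2, 1)$.

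For that case, fix $\bx, \by \in \R^k_\le$, $t' \in [1/2, 1)$, $a > 0$, and set $t_n = \lceil t'n\rceil/n \in n^{-1}\Z$ and $\by'_n = t_n \bx + (1-t_n)\by$, so that $t_n \to t'$ and $\by'_n \to \by'$. For all sufficiently large $n$ the hypotheses of Lemma \ref{l:change-time-prelim} ($\|\bx\|_2, \|\by\|_2 < n^{1/100}$, $1/2 \le t_n < 1 - n^{-1/100}$, $a < n^{1/150}$) hold, yielding
\begin{equation*}
\p\big(|\scrK_n(\bx, 0; \by'_n, t_n) - \scrK_n(\bx, 0; \by, 1)| > a(1 - t_n)^{1/3}|\log(1 - t_n)|\big) < c e^{-d a^{9/8}}.
\end{equation*}
The continuous modifications $\scrJ_n$ from the proof of Proposition \ref{p:ln-tight} agree with $\scrL_n$ at all times in $n^{-1}\Z$, and by Theorem \ref{T:Lnconverges} together with Skorokhod's representation theorem, we may work on a coupling where $\scrJ_n \to \scrL^*$ almost surely uniformly on compact subsets of $\fX_\uparrow$. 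Since $\scrL^*$ is continuous, this forces the almost sure convergence $X_n := \scrK_n(\bx, 0; \by'_n, t_n) - \scrK_n(\bx, 0; \by, 1) \to \scrK(\bx, 0; \by', t') - \scrK(\bx, 0; \by, 1) =: X$.

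To conclude, set $b := a(1-t')^{1/3}|\log(1-t')|$ and note $a(1-t_n)^{1/3}|\log(1-t_n)| \to b$. For any $\epsilon > 0$, the Portmanteau theorem applied to the open set $\{|x| > b + \epsilon\} \sset \R$ yields
\begin{equation*}
\p(|X| > b + \epsilon) \le \liminf_n \p(|X_n| > b + \epsilon) \le c e^{-d a^{9/8}},
\end{equation*}
with the second inequality following from eventual comparison of $b + \epsilon$ with the prelimit threshold $a(1 - t_n)^{1/3}|\log(1-t_n)|$. Sending $\epsilon \to 0^+$ gives the claimed bound (with a slight enlargement of $c$ to restore the strict inequality). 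The limit passage here is routine; all the real effort sits in Lemma \ref{l:change-time-prelim} itself, whose proof carries out the delicate Brownian-melon tail estimates and is deferred to Appendix \ref{app:b-m-est}.
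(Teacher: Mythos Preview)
Your proof is correct and follows the same approach the paper indicates: the paper simply states that Lemma~\ref{l:change-time} is obtained ``by passing Lemma~\ref{l:change-time-prelim} to the limit,'' and you carry out precisely that limit passage. One small remark: the joint scaling identity you need is really the rescaling symmetry of $\scrL^*$ from Lemma~\ref{L:sym-L}(3), not just Property~I of Definition~\ref{D:directed-landscape} (which only controls single-time marginals); citing that would make the reduction to $t=1$ cleaner.
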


By Lemmas \ref{l:change-spatial} and \ref{l:change-time}, and using Lemma \ref{L:levy-est}, we have that in any compact subset of $\fX_\uparrow$, the function $\scrL$ is $(1/2-\epsilon)$-H\"older in the spatial coordinate and  $(1/3-\epsilon)$-H\"older in the time coordinate, for any $\epsilon >0$. 

We also need uniform upper and lower bounds on $\scrK$ on $\fX_\uparrow$.
We first give a one-point bound. This is obtained from passing the bound Lemma \ref{l:bd-brownain-passtime} on Brownian last passage values to the limit.
\begin{lemma}  \label{l:bound-K}
For any $k\in\N$, $\bx, \by \in \R^k_\le$, and $t>0$, we have
$$
\p(|\scrK(\bx,0;\by,t)| > at^{1/3}) < ce^{-da^{3/2}}
$$
for all $a > 0$. Here $c, d > 0$ are constants depending only on $k$.
\end{lemma}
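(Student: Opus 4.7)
The plan is to pass the prelimit tail bound from Lemma \ref{l:bd-brownain-passtime} to the limit using the distributional convergence established in Theorem \ref{T:Lnconverges}. The parabolic correction $\sum_i (x_i-y_i)^2/t$ is deterministic, so it suffices to transfer a tail bound on $\scrL^*$ from the corresponding bound on $\scrL_n$.

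First I would invoke Theorem \ref{T:Lnconverges} to obtain $\scrL_n \cvgd \scrL^*$ in $\mathfrak{F}$. Since evaluation at a fixed point $(\bx, 0;\by, t) \in \fX_\uparrow$ is a continuous functional on $\mathfrak{F}$ (in the uniform-on-compact topology on the continuous limit), the continuous mapping theorem yields $\scrL_n(\bx, 0; \by, t) \cvgd \scrL^*(\bx, 0; \by, t)$ as real-valued random variables. Adding the deterministic parabolic correction then gives $\scrK_n(\bx, 0; \by, t) \cvgd \scrK(\bx, 0; \by, t)$.

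Next, I would apply the portmanteau theorem to the open set $U_a = \{y \in \R : |y| > at^{1/3}\}$ to conclude
\[
\p(|\scrK(\bx,0;\by,t)| > at^{1/3}) \le \liminf_{n \to \infty} \p(|\scrK_n(\bx,0;\by,t)| > at^{1/3}).
\]
Lemma \ref{l:bd-brownain-passtime} supplies constants $c, d > 0$ depending only on $k$ such that the right-hand side is at most $ce^{-da^{3/2}}$ for all sufficiently large $n$ (large enough that $\lfloor tn \rfloor \geq 1$ so that $\scrK_n(\bx,0;\by,t)$ is well-defined), which completes the argument.

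The only subtlety to watch is that the constants $c, d$ in the prelimit bound must not depend on $n$; this is the content of Lemma \ref{l:bd-brownain-passtime} and is the technical step (proved in the appendix) that does the real work. Apart from this, the proof is a routine application of weak convergence and the portmanteau theorem, with no further obstacles.
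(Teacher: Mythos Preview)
Your proposal is correct and follows essentially the same approach as the paper, which simply states that the lemma ``is obtained from passing the bound Lemma \ref{l:bd-brownain-passtime} on Brownian last passage values to the limit.'' You have filled in the details of this passage (convergence in distribution plus the portmanteau theorem) appropriately; the only minor omission is that converting Lemma \ref{l:bd-brownain-passtime} (stated with centering $\sum_i 2\sqrt{n(1+2n^{-1/3}(y_i-x_i))}$ and for $n$ lines) into a bound on $\scrK_n(\bx,0;\by,t)$ requires a short Taylor expansion and Brownian scaling to handle general $t$, but these are routine and the paper itself glosses over them.
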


Next we use Lemma \ref{l:change-spatial} and Lemma \ref{l:change-time} to upgrade Lemma \ref{l:bound-K} to a uniform bound that will be sufficient for our purposes.

\begin{lemma}  \label{l:uniform-EL-bound}
For any $\eta>0$ and $k\in\N$, there is a random constant $R>1$, such that for any $\bx,\by \in\R^k_\le$ and $s<t$, we have
$$
|\scrK(\bx,s;\by,t)| < RG(\bx,\by,s,t)^{\eta} (t-s)^{1/3}
$$
where
$$
G(\bx,\by,s,t) = \lf(1+\frac{\|\bx\|_1+\|\by\|_1}{(t-s)^{2/3}}\rg)\lf(1+\frac{|s|}{t-s}\rg)(1+|\log(t-s)|).
$$
Also $\p(R>a) < ce^{-da}$ for any $a>0$. Here $c, d > 0$ are constants depending on $k,\eta$.
\end{lemma}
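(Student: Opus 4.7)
The plan is a three-stage argument extending the pointwise bound Lemma \ref{l:bound-K} to a uniform bound on all of $\fX_\uparrow$: first a uniform bound on a unit-scale compact piece, then transport to dyadic boxes via the scaling and time-translation symmetries of $\scrL^*$, and finally a Borel--Cantelli union bound in which the $G^\eta$ factor absorbs the losses from the union.

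For the first stage, fix the compact set $K_0 = \{(\bx, s; \by, t) \in \fX_\uparrow : \bx, \by \in [-1, 1]^k_\le,\ s \in [0,1],\ t-s \in [1, 2]\}$. I will show that $\sup_{K_0} |\scrK|$ is dominated by a random constant $R_0$ with $\p(R_0 > a) \le c e^{-d a^{9/8}}$. This comes from combining the spatial two-point bound Lemma \ref{l:change-spatial} (after rescaling to convert the statement for $\scrR$ into one for $\scrK$) and the temporal two-point bound Lemma \ref{l:change-time} via the general modulus-of-continuity result Lemma \ref{L:levy-est}, anchored at a reference point by Lemma \ref{l:bound-K}.

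For the second and third stages, $\scrL^*$ inherits from the Brownian prelimit $\scrL_n$ (via Theorem \ref{T:Lnconverges}) the joint scaling and time-translation symmetries
\[
\scrL^*(\al^2\bx, c + \al^3 s; \al^2\by, c + \al^3 t) \eqd \al\, \scrL^*(\bx, s; \by, t)
\]
for every $\al > 0$ and $c \in \R$. Applying this with $\al = 2^{n/3}$ and translations indexed by $(m, \mathbf{i}, \mathbf{j}) \in \Z \X \Z^k \X \Z^k$ produces dyadic boxes $K_{n, m, \mathbf{i}, \mathbf{j}}$ covering $\fX_\uparrow$, on each of which $|\scrK|/(t-s)^{1/3}$ is dominated by a random constant $R_{n, m, \mathbf{i}, \mathbf{j}}$ with the same stretched-exponential tail as $R_0$. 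On each box, $G(\bu)$ is bounded below by a constant multiple of $1 + |n| + \|\mathbf{i}\|_1 + \|\mathbf{j}\|_1 + |m|$ (with $|n|$ coming from the $|\log(t-s)|$ factor and $|m|$ from the $|s|/(t-s)$ factor). Since the number of boxes with indices bounded by $N$ grows only polynomially in $N$ at each scale, the sum
\[
\sum_{n, m, \mathbf{i}, \mathbf{j}} \p\bigl(R_{n, m, \mathbf{i}, \mathbf{j}} > a (1 + |n| + \|\mathbf{i}\|_1 + \|\mathbf{j}\|_1 + |m|)^\eta\bigr)
\]
is finite for every $\eta > 0$ and every $a$ sufficiently large, so Borel--Cantelli produces a random $R$ with $\p(R > a) \le c e^{-da}$ satisfying the claimed uniform bound.

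The hardest part will be Step 1. Lemma \ref{l:change-time} compares $\scrK$ at two points on the one-parameter diagonal $\by' = (t'/t)\bx + (1-t'/t)\by$ rather than at two unrelated spacetime points, so it is not directly in the form required by Lemma \ref{L:levy-est}. To obtain a genuine two-point bound I will chain the spatial bound Lemma \ref{l:change-spatial} and the diagonal temporal bound Lemma \ref{l:change-time} via the triangle inequality, moving first in space and then along time. I also need to check that within each dyadic box the ratio $1 - t'/t$ stays bounded away from $0$ and $1$, so that the $|\log(1 - t'/t)|$ factor in Lemma \ref{l:change-time} is $O(1)$ inside each box and can be absorbed into $R_0$ rather than competing with the $(1 + |\log(t-s)|)^\eta$ factor that $G$ contributes in the Step 3 union bound.
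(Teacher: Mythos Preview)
Your plan is workable but Step 2 has a gap. The symmetry you display has only two parameters $(\al, c)$, and even adding the diagonal spatial shift and the skew shift of Lemma \ref{L:sym-L} you get only four real degrees of freedom. For $k \ge 2$ this is not enough to carry a general box $K_{n,m,\mathbf{i},\mathbf{j}}$ back to $K_0$: there is no symmetry of $\scrL^*$ (or of $\scrK$) that moves the individual coordinates of $\bx$ or $\by$ independently of one another. So the claim that each $R_{n,m,\mathbf{i},\mathbf{j}}$ inherits ``the same stretched-exponential tail as $R_0$'' does not follow from symmetry when $\mathbf{i},\mathbf{j}$ range over $\Z^k$.

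The fix is that symmetry is unnecessary here. The one-point bound Lemma \ref{l:bound-K}, the spatial bound Lemma \ref{l:change-spatial} (after rescaling to general $t-s$), and the temporal bound Lemma \ref{l:change-time} all have constants depending only on $k$, uniformly over $\bx,\by,s,t$. Hence your Step 1 argument (Lemma \ref{L:levy-est} plus an anchor point) runs verbatim on any box $K_{n,m,\mathbf{i},\mathbf{j}}$ once rescaling and time-shift have normalized $t-s$ and $s$, and yields the same tail on $R_{n,m,\mathbf{i},\mathbf{j}}$ --- not by distributional identity but because the input estimates are spatially translation-invariant at the level of their constants. With that amendment Steps 2--3 go through as you wrote them.

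The paper organizes the argument differently. Rather than bounding a unit box and tiling, it does one multi-scale chaining on all of $\fX_\uparrow$. At each scale $\ell \in \Z$ it works on a lattice $L_\ell$ with spatial mesh $2^{2\ell}$ and temporal mesh $2^{3\ell}$, applies the same three input lemmas to nearest-neighbor lattice increments weighted by a function $F(\bx,\by,s,t,\ell)^\eta$ analogous to $G^\eta$, and takes a single union over all lattice points at all scales. For a general $(\bx,s;\by,t)$ it then telescopes $\scrK$ along lattice approximants $(\bx^{(\ell)},s_\ell;\by^{(\ell)},t_\ell)$ at scales $\ell_0, \ell_0 - 1, \dots$ with $2^{3\ell_0} \sim t-s$, summing the controlled increments. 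Your route hides the within-box chaining inside a single black-box call to Lemma \ref{L:levy-est}, which is tidier but forces you to confront the spatial-translation issue above; the paper's route does the chaining across scales by hand and never needs to map one box to another.
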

\begin{proof}
Fix $\eta>0, k \in \N$.
Throughout this proof we let $c, d$ be constants depending on $k, \eta$, whose values can vary from line to line.
For each $\ell\in\Z$, let $L_\ell \subset \R^k_\le \X \R$ consist of all $(\bx, s)$, where each coordinate of $\bx$ is in $2^{2\ell}\Z$ and $s \in 2^{3\ell}\Z$.
For any $(\bx, s), (\by, t)\in L_\ell$, denote
\[
F(\bx,\by,s,t,\ell)=(1+2^{-2\ell}(\|\bx\|_1+\|\by\|_1))(1+2^{-3\ell}(|s|+|t|))(1+|\ell|).
\]
Take any $(\bx, s), (\by, t), (\by', t)\in L_\ell$ with $s < t$, such that $\by'$ and $\by$ differ at exactly one coordinate, and by exactly $2^{2\ell}$.
By Lemma \ref{l:change-spatial} we have
\begin{equation}
\label{E:summable1}
\p(|\scrK(\bx,s;\by,t) - \scrK(\bx,s;\by',t)| >
a F(\bx,\by,s,t,\ell)^\eta 2^{\ell}) 
< c e^{-da^{3/2}F(\bx,\by,s,t,\ell)^{3\eta/2}} .
\end{equation}

We then consider $(\bx, s), (\by, t), (\by', t')\in L_\ell$,
such that $s < t'$, $t=t'+2^{3\ell}$, and so that $\bx, \by, \by'$ satisfy the bound $|y_i' - ((t-t')x_i+(t'-s)y_i)/(t-s)| \le 2^{2\ell}$ for $1\le i \le k$.
By Lemmas \ref{l:change-spatial} and \ref{l:change-time} we have
\begin{equation}
\label{E:summable2}
\p(|\scrK(\bx,s;\by,t) - \scrK(\bx,s;\by',t')| >
a F(\bx,\by,s,t,\ell)^\eta 2^\ell
|\log(2^{3\ell}(t-s)^{-1})|
) 
< c e^{-da^{9/8}F(\bx,\by,s,t,\ell)^{9\eta/8}} .
\end{equation}

We next consider any $(\bx, s), (\by, t)\in L_\ell$ with $t-s=2^{3\ell}$.
By Lemma \ref{l:bound-K} we have
\begin{equation}
\label{E:summable-3}
\p(|\scrK(\bx,s;\by,t)|>a F(\bx,\by,s,t,\ell)^\eta 2^{\ell}) < ce^{-da^{3/2}F(\bx,\by,s,t,\ell)^{3\eta/2}}.
\end{equation}
The right-hand sides of \eqref{E:summable1}, \eqref{E:summable2}, and \eqref{E:summable-3} are summable over all allowable $\bx, \by, s, t$ and $\ell$ with sums that decrease at least exponentially in $a$.
In other words, we conclude that there exists a random number $R$, such that $\p(R>a) < ce^{-da}$ and the following is true.
\begin{enumerate}
    \item For any $(\bx, s), (\by, t), (\bx', s), (\by', t)\in L_\ell$ with $s < t$, such that $\bx'$, $\bx$ differ at exactly one coordinate by $2^{2\ell}$ and $\by'$, $\by$ differ at exactly one coordinate by $2^{2\ell}$, we have
    $$
    |\scrK(\bx,s;\by,t) - \scrK(\bx,s;\by',t)| <
R F(\bx,\by,s,t,\ell)^\eta 2^\ell,
    $$
    $$
    |\scrK(\bx',s;\by,t) - \scrK(\bx,s;\by,t)| <
R F(\bx,\by,s,t,\ell)^\eta 2^\ell.
    $$
    The second bound follows by a symmetric analogue of \eqref{E:summable1} where we vary $\bx$ rather than $\by$.
    \item For any $(\bx, s), (\by, t), (\by', t')\in L_\ell$,
such that $s<t'$, $t=t'+2^{3\ell}$, and $\bx, \by, \by'$ satisfy the bound $|y_i' - ((t-t')x_i+(t'-s)y_i)/(t-s)| \le 2^{2\ell}$ for $1\le i \le k$, we have
$$
|\scrK(\bx,s;\by,t) - \scrK(\bx,s;\by',t')| <
R F(\bx,\by,s,t,\ell)^\eta 2^\ell|\log(2^{3\ell}(t-s)^{-1})|.
$$
    \item For any $(\bx, s), (\by, t)\in L_\ell$ with $t-s=2^{3\ell}$, we have
    $$
    |\scrK(\bx,s;\by,t)|<R F(\bx,\by,s,t,\ell)^\eta 2^{\ell}.
    $$
\end{enumerate}
Now consider any $\bx, \by \in \R^k_\le$ and $s<t$ and let $\ell_0 = \lfloor \log_8(t-s) \rfloor -1$. For each $\ell\le \ell_0$, let $s_\ell = 2^{3\ell}\lceil 2^{-3\ell}s \rceil$, $t_\ell = 2^{3\ell}\lfloor 2^{-3\ell}t \rfloor$,
and let $\bx^{(\ell)}, \by^{(\ell)}\in\R^k_\le$ be chosen such that $(\bx^{(\ell)}, s_\ell), (\by^{(\ell)}, t_\ell) \in L_\ell$ and
\begin{equation}  \label{eq:xyil-close}
|x^{(\ell)}_i - ((t-s_\ell)x_i + (s_\ell-s)y_i)/(t-s)|\le 2^{2\ell}, \qquad |y^{(\ell)}_i - ((t-t_\ell)x_i + (t_\ell-s)y_i)/(t-s)|\le 2^{2\ell}    
\end{equation}
for each $1\le i \le k$.
As $\ell\to-\infty$ we have $(\bx^{(\ell)},\by^{(\ell)},s_\ell,t_\ell) \to (\bx, \by, s, t)$.
Therefore by the continuity of $\scrK$ and the triangle inequality we have
\begin{equation}
\label{E:scrK-xxx}
\begin{split}
|\scrK(\bx,s;\by,t)| \le & \; |\scrK(\bx^{(\ell_0)},s_{\ell_0};\by^{(\ell_0)},t_{\ell_0})|\\
&+
\sum_{\ell=\ell_0}^{-\infty}
|\scrK(\bx^{(\ell)},s_{\ell};\by^{(\ell)},t_{\ell}) - \scrK(\bx^{(\ell)},s_{\ell};\by^{(\ell-1)},t_{\ell-1})| \\
&+
|\scrK(\bx^{(\ell)},s_{\ell};\by^{(\ell-1)},t_{\ell-1}) - \scrK(\bx^{(\ell-1)},s_{\ell-1};\by^{(\ell-1)},t_{\ell-1})|.
\end{split}
\end{equation}
Using the above bounds, we have
\begin{equation}
\label{E:scrK-xx}
|\scrK(\bx^{(\ell_0)},s_{\ell_0};\by^{(\ell_0)},t_{\ell_0})| < RF(\bx^{(\ell_0)},\by^{(\ell_0)},s_{\ell_0},t_{\ell_0},\ell_0)^\eta 2^{\ell_0} < cRG(\bx,\by,s,t)^\eta (t-s)^{1/3},
\end{equation}
where the last inequality is by the fact that $2^{-3\ell_0}(t-s)$ and $2^{-3\ell_0}(t_{\ell_0}-s_{\ell_0})$ are upper and lower bounded by constants, and \eqref{eq:xyil-close}.
For each $\ell\le \ell_0$, we can find a sequence
$t_\ell = t_{\ell,1} \ge \cdots \ge t_{\ell,m} = t_{\ell-1}$, and 
$\by^{(\ell)}=\by^{(\ell,1)},\cdots,\by^{(\ell,m)}=\by^{(\ell-1)}\in \R_\le^k$ for some $m\le c$, such that for each $1\le j < m$, one of the following two events happens:
\begin{enumerate}
    \item $t_{\ell,j}=t_{\ell,j+1}+2^{3\ell}$, and $|y_i^{(\ell,j+1)} - ((t_{\ell,j}-t_{\ell,j+1})x_i^{(\ell)}+(t_{\ell,j+1}-s_\ell)y_i^{(\ell,j)})/(t_{\ell,j}-s_\ell)| \le 2^{2\ell}$ for each $1\le i \le k$.
    \item $t_{\ell,j}=t_{\ell,j+1}$ and $\by^{(\ell,j)}$ differ from $\by^{(\ell,j+1)}$ at exactly one coordinate by $2^{2\ell}$.
\end{enumerate}
Thus we have
\[
\begin{split}
|\scrK(\bx^{(\ell)},s_{\ell};\by^{(\ell)},t_{\ell})& - \scrK(\bx^{(\ell)},s_{\ell};\by^{(\ell-1)},t_{\ell-1})| \\
< & \; \sum_{j=1}^{m-1}RF(\bx^{(\ell,j)},\by^{(\ell,j)},s_{\ell},t_{\ell,j},\ell)^\eta 2^{\ell}
|\log(2^{3\ell}(t-s)^{-1})|
\\
< & \; cRG(\bx,\by,s,t)^\eta \lf(\frac{1+|\ell|}{1+|\ell_0|}\rg)^\eta 2^{\ell}
(1+\ell_0-\ell)
.
\end{split}
\]
For $|\scrK(\bx^{(\ell)},s_{\ell};\by^{(\ell-1)},t_{\ell-1}) - \scrK(\bx^{(\ell-1)},s_{\ell-1};\by^{(\ell-1)},t_{\ell-1})|$, arguing similarly we get that the same bound holds. Combining these bounds, summed over all $\ell\le \ell_0$, with the bound \eqref{E:scrK-xx} and the triangle inequality \eqref{E:scrK-xxx}, gives the result.
\end{proof}

As a consequence of Lemma \ref{l:uniform-EL-bound}, we can estimate the location of the maximizer in the metric composition law for the extended landscape.
\begin{lemma} \label{L:transfluc}
For any small enough $\eta>0$ and $k\in\N$, take the random variable $R>1$ and the function $G$ from Lemma \ref{l:uniform-EL-bound}.
For any $\bx,\by,\bz \in \R^k_\le$ and $r<s<t$, if
$\scrL^*(\bx,r;\bz,s)+\scrL^*(\bz,s;\by,t)=\scrL^*(\bx,r;\by,t)$, then
$$
\|\bz-\tbz\|_2
< cRG(\bx,\by,r,t)^{\eta}(t-s)^{1/3}(s-r)^{1/3},
$$
where $\tbz = ((t-s)\bx+(s-r)\by)/(t-r)$ and $c$ is a constant depending only on $\eta, k$.
\end{lemma}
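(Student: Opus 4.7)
The strategy is to turn the optimality equation $\scrL^*(\bx,r;\bz,s)+\scrL^*(\bz,s;\by,t)=\scrL^*(\bx,r;\by,t)$ into a quadratic inequality on $D := \|\bz-\tbz\|_2$ by extracting the parabolic correction, and then to control the right-hand side via the uniform upper bound in Lemma \ref{l:uniform-EL-bound}. The starting point is the componentwise completion of the square
\[
\sum_{i=1}^k \Bigl[\frac{(x_i-z_i)^2}{s-r} + \frac{(z_i-y_i)^2}{t-s} - \frac{(x_i-y_i)^2}{t-r}\Bigr] \;=\; \frac{t-r}{(s-r)(t-s)}\,D^2,
\]
which, after rewriting in terms of the stationary version $\scrK$, turns the hypothesis into
\[
\frac{t-r}{(s-r)(t-s)}\,D^2 \;=\; \scrK(\bx,r;\bz,s) + \scrK(\bz,s;\by,t) - \scrK(\bx,r;\by,t).
\]

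I would then bound each $\scrK$-term using Lemma \ref{l:uniform-EL-bound}. Since $\tbz$ is a convex combination of $\bx$ and $\by$, we have $\|\bz\|_1 \le \|\bx\|_1+\|\by\|_1+\sqrt{k}\,D$; this lets us estimate the two $\bz$-dependent factors $G(\bx,\bz,r,s)$ and $G(\bz,\by,s,t)$ in terms of $G(\bx,\by,r,t)$, $D$, and ratios of the time increments. After absorbing time-ratio factors like $(t-r)/(s-r)$ and $(t-r)/(t-s)$ raised to small $\eta$-powers into $G(\bx,\by,r,t)^{C\eta}$ (legitimate because the statement allows $\eta$ to be shrunk by a universal constant), this yields
\[
D^2 \;\le\; C R\,G(\bx,\by,r,t)^{C\eta}\,\frac{(s-r)(t-s)}{(t-r)^{2/3}}\cdot\Bigl(1 + \frac{D^\eta}{((s-r)\wedge(t-s))^{2\eta/3}}\Bigr).
\]

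To conclude, I would use AM--GM to see $(s-r)(t-s)(t-r)^{-2/3} \le (s-r)^{2/3}(t-s)^{2/3}$. In the regime where the $D^\eta$-correction is dominated by $1$, this immediately gives $D \le C\sqrt{R}\,G^{C\eta/2}(s-r)^{1/3}(t-s)^{1/3}$, which is bounded by the desired expression since $R \ge 1$. In the complementary regime $D^\eta > ((s-r)\wedge(t-s))^{2\eta/3}$, the inequality reduces to $D^{2-\eta} \le C'RG^{C\eta}(s-r)^{2/3}(t-s)^{2/3}\cdot((s-r)\wedge(t-s))^{-2\eta/3}$; substituting the case-hypothesis lower bound on $D$ into one factor of $D^\eta$ and solving yields, for $\eta$ small, again a bound of the same form, with the final $\eta$ rescaled by a universal multiple.

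The main obstacle is the self-referential nature of step two: the $G$-factors $G(\bx,\bz,r,s)$ and $G(\bz,\by,s,t)$ depend on $\bz$, and hence on $D$ itself. The bootstrap closes precisely because this dependence enters only through a sublinear factor $G^{C\eta}$ with $\eta$ small, while the left-hand side is quadratic in $D$. A secondary technicality is that comparing $G(\bx,\bz,r,s)$ to $G(\bx,\by,r,t)$ introduces polynomial factors in $(t-r)/(s-r)$ that are not directly dominated by $G(\bx,\by,r,t)$ itself; these are handled by requiring $\eta$ to be small enough that the time-ratio powers combined with the explicit $(s-r)^{1/3}$ or $(t-s)^{1/3}$ prefactors still yield the claimed scaling, which is the reason the lemma is stated for sufficiently small $\eta$ rather than all $\eta\in(0,1)$.
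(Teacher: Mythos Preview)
Your overall strategy coincides with the paper's: rewrite the optimality equation as the quadratic identity
\[
\frac{(t-r)}{(s-r)(t-s)}\,D^2 \;=\; \scrK(\bx,r;\bz,s)+\scrK(\bz,s;\by,t)-\scrK(\bx,r;\by,t),
\]
bound each $\scrK$-term by Lemma~\ref{l:uniform-EL-bound}, compare $G(\bx,\bz,r,s)$ and $G(\bz,\by,s,t)$ to $G(\bx,\by,r,t)$ at the cost of factors depending on $D$ and on time-ratios, and then close a self-referential inequality for small $\eta$. However, two steps in your execution do not go through as written.

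First, the claim that the time-ratio factors $((t-r)/(s-r))^{A\eta}$ can be ``absorbed into $G(\bx,\by,r,t)^{C\eta}$'' is false: $G(\bx,\by,r,t)$ does not depend on $s$, so no power of it can dominate $(t-r)/(s-r)$. You acknowledge this in your last paragraph, but your displayed inequality already relies on the incorrect absorption. The actual mechanism (and what the paper does) is different: one takes $\eta$ small enough (e.g.\ $\eta<1/30$) so that $((t-r)/(s-r))^{A\eta}(s-r)^{1/3}\le (t-r)^{1/3}$, and similarly for the $(t-s)$ term. In other words, the time-ratio factors get absorbed into the \emph{time-increment} prefactors, not into $G$.

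Second, the AM--GM step $(s-r)(t-s)(t-r)^{-2/3}\le (s-r)^{2/3}(t-s)^{2/3}$ discards exactly the information needed in your second regime. After that simplification, solving $D^{2-\eta}\le C'RG^{C\eta}(s-r)^{2/3}(t-s)^{2/3}((s-r)\wedge(t-s))^{-2\eta/3}$ gives (say with $s-r\le t-s$) a bound $D\lesssim (s-r)^{1/3-c\eta}(t-s)^{1/3+c\eta}$, which \emph{fails} when $(t-s)/(s-r)$ is large. The sentence about ``substituting the case-hypothesis lower bound on $D$ into one factor of $D^\eta$'' does not repair this: you need an upper bound on $D^\eta$ to bound the right-hand side from above, not a lower bound. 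The paper avoids this by first reducing (without loss of generality) to $t-s\ge s-r$, so that $t-s$ is comparable to $t-r$; the basic inequality then reads
\[
D^2(s-r)^{-1}\;\le\; cRG(\bx,\by,r,t)^\eta\bigl(1+D(t-r)^{-2/3}\bigr)^\eta(t-r)^{1/3},
\]
and the bootstrap is closed by checking directly that the function $f(Z)=Z^2(s-r)^{-1}-cRG^\eta(1+Z(t-r)^{-2/3})^\eta(t-r)^{1/3}$ is positive at $Z=cRG^\eta(t-s)^{1/3}(s-r)^{1/3}$, using only $s-r\le t-r$. If you retain the factor $(s-r)(t-s)/(t-r)$ rather than applying AM--GM, your two-regime split can also be made to work along these lines.
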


\begin{proof}
In this proof we let $c$ denote a large constant depending on $k,\eta$, whose value may change from line to line.
By Lemma \ref{l:uniform-EL-bound} we have
\begin{multline}
\label{E:big-one}
0 = \scrL^*(\bx,r;\bz,s) + \scrL^*(\bz,s;\by,t) - \scrL^*(\bx,r;\by,t) < \frac{\|\bx-\by\|_2^2}{t-r}
- \frac{\|\bz-\bx\|_2^2}{s-r} - \frac{\|\bz-\by\|_2^2}{t-s}
\\
+ RG(\bx,\bz,r,s)^{\eta} (s-r)^{1/3} + RG(\bz,\by,s,t)^{\eta} (t-s)^{1/3} +
RG(\bx,\by,r,t)^{\eta} (t-r)^{1/3}
\\
=
- \frac{(t-r)\|\bz-\tbz\|_2^2}{(t-s)(s-r)}
+ R(G(\bx,\bz,r,s)^{\eta} (s-r)^{1/3} + G(\bz,\by,s,t)^{\eta} (t-s)^{1/3} +
G(\bx,\by,r,t)^{\eta} (t-r)^{1/3}).
\end{multline}
Now, $\frac{1+|\log(s-r)|}{1+|\log(t-r)|} \le \frac{t-r}{s-r}$, and $\|\bx\|_1+\|\by\|_1\ge \frac{s-r}{t-r}(\|\bx\|_1 + \|\tbz\|_1)$,
so from the definition of $G$ we have
\[
\begin{split}
\frac{G(\bx,\bz,r,s)}{G(\bx,\by,r,t)}
&= \;\frac{1+\frac{\|\bx\|_1+\|\bz\|_1}{(s-r)^{2/3}}}{1+\frac{\|\bx\|_1+\|\by\|_1}{(t-r)^{2/3}}} \times \frac{1+\frac{|r|}{s-r}}{1+\frac{|r|}{t-r}} \times \frac{1+|\log(s-r)|}{1+|\log(t-r)|}
\\
&\le \;
\frac{1+\frac{\|\bx\|_1+\|\bz\|_1}{(t-r)^{2/3}}}{1+\frac{\|\bx\|_1+\|\tbz\|_1}{(t-r)^{2/3}}}\lf(\frac{t-r}{s-r}\rg)^{5/3} \times \lf(\frac{t-r}{s-r}\rg) \times \lf(\frac{t-r}{s-r}\rg)
\\
&\le \;
\lf(1+\lf|\frac{\|\bz\|_1-\|\tbz\|_1}{(t-r)^{2/3}}\rg|\rg) \lf(\frac{t-r}{s-r}\rg)^{11/3}.
\end{split}
\]
Thus we have
\[
\frac{G(\bx,\bz,r,s)}{G(\bx,\by,r,t)}
< c (1+\|\bz-\tbz\|_2(t-r)^{-2/3})\lf(\frac{t-r}{s-r}\rg)^{10},
\]
and similarly
\[
\frac{G(\bz,\by,s,t)}{G(\bx,\by,r,t)}
< c (1+\|\bz-\tbz\|_2(t-r)^{-2/3})\lf(\frac{t-r}{t-s}\rg)^{10}.
\]
Without loss of generality we assume that $t-s\ge s-r$; thus $(t-r)/2 \le t-s \le t-r$.
We plug these two estimates into the inequality \eqref{E:big-one}.
By taking $\eta < 1/30$ we have
\begin{equation}
\label{E:crG}
\|\bz-\tbz\|_2^2 (s-r)^{-1}
< cRG(\bx,\by,r,t)^{\eta} (1+\|\bz-\tbz\|_2(t-r)^{-2/3})^\eta (t-r)^{1/3}.
\end{equation}
Now consider the function
\[
f:Z\mapsto Z^2(s-r)^{-1} - cRG(\bx,\by,r,t)^{\eta} (1+Z(t-r)^{-2/3})^\eta (t-r)^{1/3}.
\]
We have that $f(0)<0$, and on $\R_+$ this function first decreases then increases.
From \eqref{E:crG}, we have that $f(\|\bz-\tbz\|_2)<0$.
Also, $f(cRG(\bx,\by,r,t)^{\eta}(t-s)^{1/3}(s-r)^{1/3})>0$, since
\begin{align*}
& (cRG(\bx,\by,r,t)^{\eta}(t-s)^{1/3}(s-r)^{1/3})^2(s-r)^{-1}
\\
= & (cRG(\bx,\by,r,t)^{\eta})^2(t-s)^{2/3}(s-r)^{-1/3}
\\
\ge & cRG(\bx,\by,r,t)^{\eta}(t-r)^{1/3} + (cRG(\bx,\by,r,t)^{\eta})^2(t-s)^{1/3}(s-r)^{1/3}(t-r)^{-1/3}
\\ \ge &
cRG(\bx,\by,r,t)^{\eta} (1+cRG(\bx,\by,r,t)^{\eta}(t-s)^{1/3}(s-r)^{1/3}(t-r)^{-2/3})^\eta (t-r)^{1/3},
\end{align*}
where the first inequality is by $cRG(\bx,\by,r,t)^{\eta} \ge c$ and taking $c$ large enough, and the second inequality is by taking $\eta<1$.
These imply the conclusion.
\end{proof}

We can now show that metric composition holds everywhere in $\scrL^*$.
\begin{prop}
	\label{P:mc-everywhere}
	Almost surely, for every $r < s < t$ and $(\bx, \by) \in \fX$ we have
	$$
	\scrL^*(\bx, r; \by, t) = \max_{z \in \R} 	\scrL^*(\bx, r; \bz, s) + 	\scrL^*(\bz, s; \by, t).
	$$
	Also, almost surely we have the triangle inequality
	$$
	\scrL^*(\bx, r; \by, t) \ge \scrL^*(\bx, r; \bz, s) + 	\scrL^*(\bz, s; \by, t)
	$$
	for every $r <s < t, \bx, \bz, \by \in \R^k_\le$.
\end{prop}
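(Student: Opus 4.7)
The plan is to upgrade the fixed-parameter almost sure identity from property II of Definition \ref{D:directed-landscape} to a simultaneous statement by combining countable intersection with the continuity of $\scrL^*$, and to obtain existence of the maximizer via the a priori transversal fluctuation bound (Lemma \ref{L:transfluc}).

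First, I will handle the triangle inequality. Let $\Om_1$ be the event that $\scrL^*$ is continuous on $\fX_\uparrow$, and $\Om_2$ be the event that metric composition in the sense of property II holds simultaneously for every rational tuple $(r,s,t,\bx,\by)$ with $r<s<t$ in $\Q$ and $\bx,\by$ having rational coordinates (in every dimension $k$). Both events have probability one: $\Om_1$ by construction of $\scrL^*$, and $\Om_2$ by countably-many applications of property II. On $\Om_2$, taking any specific rational vector $\bz$ in $\R^k_\le$ on the right-hand side shows that the rational version of the triangle inequality
$$\scrL^*(\bx,r;\by,t)\ge \scrL^*(\bx,r;\bz,s)+\scrL^*(\bz,s;\by,t)$$
holds simultaneously. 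On $\Om_1\cap\Om_2$, any real tuple $(r,s,t,\bx,\bz,\by)$ with $r<s<t$ can be approached by rational tuples, so continuity immediately yields the triangle inequality for every choice of parameters.

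Next, I will upgrade to equality. Fix arbitrary $r<s<t$ and $\bx,\by\in\R^k_\le$, and pick rational sequences $(r_n,s_n,t_n,\bx_n,\by_n)$ with $r_n<s_n<t_n$ converging to the given parameters. On $\Om_2$ each rational composition identity is attained, so there exists $\bz_n\in\R^k_\le$ with
$$\scrL^*(\bx_n,r_n;\bz_n,s_n)+\scrL^*(\bz_n,s_n;\by_n,t_n)=\scrL^*(\bx_n,r_n;\by_n,t_n).$$
Applying Lemma \ref{L:transfluc} to this identity (on the full-probability event where the random constant $R$ and the uniform bound of Lemma \ref{l:uniform-EL-bound} are finite) yields
$$\|\bz_n-\tbz_n\|_2\le cR\,G(\bx_n,\by_n,r_n,t_n)^{\eta}(t_n-s_n)^{1/3}(s_n-r_n)^{1/3},$$
where $\tbz_n=((t_n-s_n)\bx_n+(s_n-r_n)\by_n)/(t_n-r_n)$. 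As $(r_n,s_n,t_n,\bx_n,\by_n)$ converges, both $\tbz_n$ and $G(\bx_n,\by_n,r_n,t_n)$ stay bounded, so the sequence $\bz_n$ is precompact in $\R^k_\le$. Extract a subsequence converging to some $\bz\in\R^k_\le$ (the set $\R^k_\le$ is closed), and use continuity of $\scrL^*$ to pass to the limit in the three last-passage values. This gives
$$\scrL^*(\bx,r;\bz,s)+\scrL^*(\bz,s;\by,t)=\scrL^*(\bx,r;\by,t),$$
so $\bz$ realizes the maximum; combined with the triangle inequality established above, this is exactly metric composition at the given parameters.

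No step is hard in isolation; the only thing one must be careful about is that Lemma \ref{L:transfluc} is an a priori statement assuming an achieved maximizer exists, which is why I use it only on the rational tuples (where property II guarantees existence) before passing to the limit. The extension to arbitrary parameters is then entirely driven by the continuity of $\scrL^*$ and the uniformity of the transversal bound.
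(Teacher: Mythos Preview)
Your proof is correct and follows essentially the same approach as the paper: establish the triangle inequality on a countable dense set and extend by continuity, then for general parameters take a rational approximation, use Lemma \ref{L:transfluc} to show the maximizers $\bz_n$ stay bounded, and pass to a subsequential limit. The only minor difference is that you also approximate $\bx,\by$ by rationals, which is unnecessary since property II already gives composition for all $\bx,\by$ once the times are fixed; this is harmless.
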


\begin{proof}
By condition II in Definition \ref{D:directed-landscape}, we can ensure that almost surely, metric composition holds at all rational times $r < s < t$. The triangle inequality then holds at all rational times. This extends to all times by continuity of $\scrL^*$. 

Now, let $r < s < t$ and $(\bx, \by) \in \fX$. Consider rational sequences $r_n \to r, s_n \to s, t_n \to t$. By the metric composition law at rational times, for every $n$ we can find $\bz_n$ such that
\begin{equation}
\label{E:scrL*-bxby}
\scrL^*(\bx, r_n; \by, t_n) = \scrL^*(\bx, r_n; \bz_n, s_n) + \scrL^*(\bz_n, s_n; \by, t_n).
\end{equation} 
Lemma \ref{L:transfluc} ensures that all the points $\bz_n$ are contained in a common compact set, and hence we can find a subsequential limit $\bz$. Continuity of $\scrL^*$ ensures that Equation \eqref{E:scrL*-bxby} then holds with the $n$'s removed. Combining this with the triangle inequality yields the metric composition law at $r < s < t$ and $(\bx, \by)$.
\end{proof}

We finish this section by recording some symmetries of $\scrL^*$.
\begin{lemma}  \label{L:sym-L}
Take $q>0$, $r,c\in\R$, and let $T_c\bx$ denote the shifted vector $(x_1 + c, \dots, x_k + c)$.
We have the following equalities in distribution for $\scrL^*$ as functions in $\mathfrak{F}$.
\begin{enumerate}
    \item Stationarity: \quad
    $
\scrL^*(\bx,s,\by,t) \eqd \scrL^*(T_c\bx,s+r,T_c\by,t+r).
$
\item
Flip symmetry: \quad
$
\scrL^*(\bx,s,\by,t) \eqd \scrL^*(-\by,-t,-\bx,-s).
$
\item
Rescaling:\quad
$
\scrL^*(\bx,s,\by,t) \eqd q\scrL^*(q^{-2}\bx,q^{-3}s,q^{-2}\by,q^{-3}t).$
\item
Skew symmetry:
\[
\scrL^*(\bx,s,\by,t) + (t-s)^{-1}\|\bx-\by\|_2^2 \eqd \scrL^*(\bx,s,T_c\by,t) + (t-s)^{-1}\|\bx-T_c\by\|_2^2.
\]
\end{enumerate}
\end{lemma}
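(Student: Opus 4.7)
The plan is to handle the four symmetries using the convergence $\scrL_n \cvgd \scrL^*$ from Theorem \ref{T:Lnconverges} together with the axiomatic uniqueness of the extended$^*$ landscape. Items 1--3 will be established by verifying exact or approximate analogues at the prelimit level for $\scrL_n$, where they follow directly from symmetries of the underlying independent Brownian motions, and then passing to the limit. Item 4 is fundamentally different: it does not arise from any symmetry of Brownian LPP for a fixed shift $c$, and will instead be deduced from the uniqueness statement in Theorem \ref{T:Lnconverges} combined with the joint skew symmetry of the extended Airy sheet marginals proved in Lemma \ref{L:basic-sym}.

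For item 1 (stationarity), note that if $r \in n^{-1}\Z$, then shifting all line indices by $rn$ preserves the joint distribution of $\{B_i\}$, and translation invariance of Brownian increments gives the space shift; this yields $\scrL_n(T_c\bx, s+r; T_c\by, t+r) \eqd \scrL_n(\bx, s; \by, t)$ jointly as functions in $\mathfrak{F}$. One extends to arbitrary $r\in\R$ by taking $n\to\infty$ along subsequences and using the continuity of $\scrL^*$. For item 2 (flip), one uses $B_i(\cdot) \eqd B_i(-\cdot)$ for each two-sided standard Brownian motion together with the reversal of line indices, which together reverse the roles of start and endpoints in the Brownian LPP defining $\scrL_n$. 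For item 3 (rescaling), Brownian scaling $B_i(q^3 \cdot) \eqd q^{3/2} B_i(\cdot)$ applied simultaneously to all lines rescales the LPP value by the factor $q$; at the prelimit this identity holds only up to discretization errors of order $O(n^{-1/3})$ arising from the floor function in $(\bx,s)_n$, but these vanish in the limit.

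For item 4 (skew symmetry), the plan is to define the candidate process
\[
\tilde\scrL(\bx,s;\by,t) := \scrL^*(\bx,s;T_c\by,t) + (t-s)^{-1}\bigl(\|\bx-T_c\by\|_2^2 - \|\bx-\by\|_2^2\bigr),
\]
so that the desired equality is exactly $\tilde\scrL \eqd \scrL^*$. By the uniqueness statement in Theorem \ref{T:Lnconverges}, it suffices to verify that $\tilde\scrL$ satisfies the two defining conditions of an extended$^*$ landscape. The marginal condition is straightforward: for fixed $s<t$, the process $\tilde\scrL(\cdot,s;\cdot,t) + (t-s)^{-1}\|\cdot-\cdot\|_2^2$ equals $\scrK(\cdot,s;T_c\cdot,t)$, which by Lemma \ref{L:basic-sym} applied at scale $(t-s)^{1/3}$ is equal in distribution to $\scrK(\cdot,s;\cdot,t)$; and independence across disjoint time intervals is inherited from $\scrL^*$ since the parabolic correction is deterministic.

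The main obstacle is verifying the metric composition law (II) for $\tilde\scrL$. The strategy is to fix $r<s<t$, apply metric composition of $\scrL^*$ at the endpoint $T_c\by$, and then make the bijective change of variable $\bw = T_c \bz$ in the intermediate optimizer to get
\[
\scrL^*(\bx,r;T_c\by,t) = \max_{\bz} \scrL^*(\bx,r;T_c\bz,s) + \scrL^*(T_c\bz,s;T_c\by,t).
\]
The first summand matches the corresponding piece in the metric composition of $\tilde\scrL$, but the second summand differs from $\scrL^*(\bz,s;T_c\by,t)$ appearing in $\tilde\scrL$ by having an extra shift on the start coordinate. To bridge this gap one exploits the joint distributional shift invariance of the stationary sheet on $(s,t)$ given by Lemma \ref{L:basic-sym}, which, combined with the deterministic parabolic corrections in the definition of $\tilde\scrL$ and the independence of the sheets on $(r,s)$ and $(s,t)$, forces the parabolic pieces to telescope correctly. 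The delicate point --- and the step that will require the most care --- is that this matching only holds distributionally and not almost surely pointwise, so the argument must be executed at the level of the joint law of the two independent sheets rather than via a pathwise identity.
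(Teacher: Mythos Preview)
Your treatment of items 1--3 matches the paper: these symmetries are inherited from exact or approximate identities for $\scrL_n$ and pass to the limit via Theorem~\ref{T:Lnconverges}.

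For item 4, however, there is a genuine gap. Your candidate $\tilde\scrL$ does \emph{not} satisfy condition II of Definition~\ref{D:directed-landscape}, and the distributional fix you propose cannot work. Condition II must hold \emph{almost surely} for each fixed $r<s<t$; after substituting $\bw=T_c\bz$ you are left needing $\scrL^*(T_c\bz,s;T_c\by,t)$ to match $\scrL^*(\bz,s;T_c\by,t)$ (up to deterministic corrections) inside a single maximum over $\bz$. That is a pathwise requirement. Lemma~\ref{L:basic-sym} only says the $(s,t)$-sheet has a shift-invariant \emph{law}; invoking it would replace that sheet by a distributionally equivalent copy, but the $\max_{\bz}$ couples the $(r,s)$- and $(s,t)$-sheets, so a distributional swap on one factor does not yield the required almost-sure identity. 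Hence $\tilde\scrL$ is not an extended$^*$ landscape and the uniqueness route, as you set it up, is blocked.

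The paper's argument is shorter and does not go through uniqueness at all: condition I of Definition~\ref{D:directed-landscape} says each fixed-time marginal $\scrL^*(\cdot,s;\cdot,t)$ is an extended Airy sheet of the appropriate scale, and Lemma~\ref{L:basic-sym} then gives item 4 directly as an equality in law jointly in $(\bx,\by)$ for that fixed $(s,t)$, which is all that is used later (e.g.\ in Lemma~\ref{L:two-sheet-sum-unique-max}). If you insist on a version valid jointly over all of $\fX_\uparrow$, the natural statement is the \emph{shear} symmetry $\scrK(T_{cs}\bx,s;T_{ct}\by,t)\eqd\scrK(\bx,s;\by,t)$; for \emph{that} your strategy does succeed, since taking $\hat\scrL(\bx,s;\by,t):=\scrL^*(T_{cs}\bx,s;T_{ct}\by,t)$ plus the matching parabolic correction makes the intermediate points in metric composition line up exactly, so condition II holds pathwise and uniqueness applies.
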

\begin{proof}
The first three symmetries of $\scrL^*$ can be deduced by the convergence from $\scrL_n$ (Theorem \ref{T:Lnconverges}), since finite versions hold for $\scrL_n$. The final symmetry follows from the corresponding symmetry in Lemma \ref{L:basic-sym} and the characterization of $\scrL^*$ in Definition \ref{D:directed-landscape}.
\end{proof}

\section{Paths in the extended landscape}
\label{S:paths-extended-landscape}

Having constructed $\scrL^*$, our next goal is to understand its optimizers. In this section we introduce both paths and optimizers in $\scrL^*$, and prove a selection of basic properties.
\subsection{Path weights}
We call a continuous function $\pi:[s,t]\to \R^k_\le$ for some interval $[s, t]$ a \textbf{multi-path of size $k$}.
For any multi-path $\pi:[s, t] \to \R^k_\le$, define its \textbf{length} in $\scrL^*$ by
$$
\|\pi\|_{\scrL^*} = \inf_{m\in\N} \inf_{s=t_0<t_1<\cdots<t_m=t}
\sum_{i=1}^m \scrL^*(\pi(t_{i-1}), t_{i-1}; \pi(t_i), t_i).
$$
This is the $\scrL^*$-analogue of the formula \eqref{E:length-L}.
For any $\pi:[r,t]\to\R^k_\le$, and a sequence $\pi^{(i)}:[r_i, t_i]\to \R^k_\le$ for $i\in\N$, we say that $\pi^{(i)}\to \pi$ in the \textbf{dyadic pointwise topology}, if $r_i\to r$, $t_i\to t$, and $\pi^{(i)}(s)\to \pi(s)$ for each $s\in \Q_2\cap [r,t]$, where $\Q_2$ is the set of dyadic rational numbers. This is a Polish topology, making it easy to work with probabilistically. Note that the length above can also be defined for discontinuous functions $\pi$. However, almost surely all discontinuous functions will have length $-\infty$ by Lemma \ref{l:uniform-EL-bound}. 

\begin{lemma}  \label{L:upper-semi-conti-path}
For a sequence of multi-paths $\{\pi^{(i)}\}_{i\in\N}$ and a multi-path $\pi$, such that $\pi^{(i)}\to \pi$ in the dyadic pointwise topology, we have $\limsup_{i\to\infty}\|\pi^{(i)}\|_{\scrL^*} \le \|\pi\|_{\scrL^*}$.
\end{lemma}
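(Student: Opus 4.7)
The plan exploits the variational characterization $\|\pi\|_{\scrL^*} = \inf_{\scrP} \Sigma^{\scrP}_\pi$ of length as an infimum over partitions. Fix $\epsilon > 0$; the case $\|\pi\|_{\scrL^*} = +\infty$ is trivial, and the case $\|\pi\|_{\scrL^*} = -\infty$ follows from the same argument with $\|\pi\|_{\scrL^*} + \epsilon$ replaced by an arbitrarily negative constant, so assume $\|\pi\|_{\scrL^*}$ is finite. For each small $\delta > 0$, I first choose a partition $\scrP = \{r = t_0 < t_1 < \cdots < t_m = t\}$ with $t_1 - r < \delta$, $t - t_{m-1} < \delta$, and interior nodes $t_1, \ldots, t_{m-1} \in \Q_2$, such that $\Sigma^{\scrP}_\pi \le \|\pi\|_{\scrL^*} + \epsilon$. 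Such a partition exists because the triangle inequality (Proposition~\ref{P:mc-everywhere}) shows that inserting extra points near $r$ and $t$ only decreases $\Sigma^{\scrP}_\pi$, and by continuity of $\pi$ and $\scrL^*$ the interior nodes of any near-optimal partition can be perturbed to dyadic values without significantly affecting the sum.

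For $i$ large enough that $r_i < t_1$ and $t_{m-1} < t_i$, I apply to $\pi^{(i)}$ the partition $r_i < t_1 < t_2 < \cdots < t_{m-1} < t_i$, giving
\[
\|\pi^{(i)}\|_{\scrL^*} \le T_1^{(i)} + \sum_{j=2}^{m-1} \scrL^*\bigl(\pi^{(i)}(t_{j-1}), t_{j-1};\, \pi^{(i)}(t_j), t_j\bigr) + T_2^{(i)},
\]
with boundary terms $T_1^{(i)} = \scrL^*(\pi^{(i)}(r_i), r_i; \pi^{(i)}(t_1), t_1)$ and $T_2^{(i)}$ defined analogously at the right endpoint. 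Since all interior nodes lie in $\Q_2$, dyadic pointwise convergence and continuity of $\scrL^*$ imply that the middle sum converges as $i \to \infty$ to $\Sigma^{\scrP}_\pi - \scrL^*(\pi(r), r; \pi(t_1), t_1) - \scrL^*(\pi(t_{m-1}), t_{m-1}; \pi(t), t)$.

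The main obstacle is controlling $T_1^{(i)}$ and $T_2^{(i)}$, since $\pi^{(i)}(r_i)$ and $\pi^{(i)}(t_i)$ need not converge. I handle this by taking a supremum over the unknown spatial values, using Lemma~\ref{l:uniform-EL-bound}: for any $\bp \in \R^k_\le$ and small $\eta \in (0, 1/2)$,
\[
\scrL^*(\bp, r_i; \pi^{(i)}(t_1), t_1) \le R\,G(\bp, \pi^{(i)}(t_1), r_i, t_1)^\eta (t_1 - r_i)^{1/3} - \frac{\|\bp - \pi^{(i)}(t_1)\|_2^2}{t_1 - r_i}.
\]
Because $\pi^{(i)}(t_1) \to \pi(t_1)$ is bounded and the quadratic penalty dominates the polynomial growth of $G$ in $\|\bp\|$ when $\eta < 1/2$, the supremum over $\bp$ of the right-hand side is a finite continuous function of $(r_i, t_1, \pi^{(i)}(t_1))$, which converges as $i \to \infty$ to a limit $F_L(r, t_1, \pi(t_1))$. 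Moreover, the optimizing $\bp$ in this supremum lies within $o(1)$ of $\pi(t_1)$ as $t_1 \to r$, so continuity of $\pi$ ensures $F_L(r, t_1, \pi(t_1)) - \scrL^*(\pi(r), r; \pi(t_1), t_1) \to 0$ as $t_1 \to r$. The analogous reasoning bounds $T_2^{(i)}$ by a quantity $F_R$ with the same property.

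Combining,
\[
\limsup_{i \to \infty} \|\pi^{(i)}\|_{\scrL^*} \le \Sigma^{\scrP}_\pi + \bigl[F_L - \scrL^*(\pi(r), r; \pi(t_1), t_1)\bigr] + \bigl[F_R - \scrL^*(\pi(t_{m-1}), t_{m-1}; \pi(t), t)\bigr].
\]
As $\delta \to 0$, both bracketed quantities vanish, leaving $\limsup_{i \to \infty} \|\pi^{(i)}\|_{\scrL^*} \le \|\pi\|_{\scrL^*} + \epsilon$. Sending $\epsilon \to 0$ completes the proof. The main technical point is that, although the values of $\pi^{(i)}$ at its moving endpoints $r_i, t_i$ are not controlled by the dyadic pointwise topology, the parabolic correction built into the upper bound on $\scrL^*$ automatically penalizes them enough to give uniform control of the boundary contributions.
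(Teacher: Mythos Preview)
Your concern about the boundary terms is legitimate: the paper's proof simply asserts that the full partition sum $\sum_{j=1}^m \scrL^*(\pi^{(i)}(s_{i,j-1}), s_{i,j-1}; \pi^{(i)}(s_{i,j}), s_{i,j})$ converges as $i\to\infty$, which for $j=1$ and $j=m$ tacitly uses $\pi^{(i)}(r_i)\to\pi(r)$ and $\pi^{(i)}(t_i)\to\pi(t)$. That convergence is \emph{not} part of the definition of the dyadic pointwise topology, though it does hold in every application of the lemma in the paper. So you have spotted a genuine lacuna in the written argument and are trying to repair it.

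However, your repair has a gap of its own. The step ``continuity of $\pi$ ensures $F_L(r,t_1,\pi(t_1)) - \scrL^*(\pi(r),r;\pi(t_1),t_1)\to 0$ as $t_1\to r$'' does not follow. Write $h=t_1-r$. Your quantity $F_L$ is the supremum over $\bp$ of $R\,G(\bp,\pi(t_1),r,t_1)^\eta h^{1/3}-\|\bp-\pi(t_1)\|_2^2/h$, and as you note this supremum is attained near $\bp=\pi(t_1)$ and satisfies $F_L=O(h^{1/3-o(1)})\to 0$. On the other hand, Lemma~\ref{l:uniform-EL-bound} gives
\[
\scrL^*(\pi(r),r;\pi(t_1),t_1)=\scrK(\pi(r),r;\pi(t_1),t_1)-\frac{\|\pi(r)-\pi(t_1)\|_2^2}{h},
\qquad |\scrK|\le R\,G^\eta h^{1/3},
\]
so
\[
F_L-\scrL^*(\pi(r),r;\pi(t_1),t_1)=\bigl[F_L-\scrK\bigr]+\frac{\|\pi(r)-\pi(t_1)\|_2^2}{h}.
\]
The first bracket tends to $0$, but the second term need not: mere continuity of $\pi$ gives $\|\pi(r)-\pi(t_1)\|_2=o(1)$, not $o(h^{1/2})$. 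For instance if $\|\pi(r)-\pi(t_1)\|_2\sim h^{1/3}$ along the chosen sequence $t_1\to r$, the bracketed error you wish to discard is of order $h^{-1/3}\to\infty$. The observation that the optimizing $\bp$ lies near $\pi(t_1)$, hence near $\pi(r)$, does not help: what matters is the parabolic penalty $\|\pi(r)-\pi(t_1)\|_2^2/h$ incurred by the \emph{actual} endpoint $\pi(r)$, and that is precisely the quantity continuity fails to control. The same objection applies to $F_R$. Thus your final chain of inequalities does not close, and the argument as written does not establish the lemma in the stated generality.
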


\begin{proof}
Suppose that $\pi$ is on $[r,t]$ and each $\pi^{(i)}$ is on $[r_i,t_i]$.
Take any $m\in\N$ and any sequence $r=s_0<s_1<\cdots<s_m=t$, such that $s_j\in\Q_2$ for each $0<j<m$.
For each $i \in \N$ and $0<j<m$ we denote $s_{i,j}=s_j$, and $s_{i,0}=r_i$, $s_{i,m}=t_i$. 
By the definition of $\|\pi^{(i)}\|_{\scrL^*}$, for all $i$ large enough so that $r_i < s_1, s_{m-1}  < t_i$, we have that
$$
\|\pi^{(i)}\|_{\scrL^*} \le \sum_{j=1}^m \scrL^*(\pi^{(i)}(s_{i,j-1}), s_{i,j-1}; \pi^{(i)}(s_{i,j}), s_{i,j}).
$$
As $i\to\infty$ the right-hand side converges to $\sum_{j=1}^m \scrL^*(\pi(s_{j-1}), s_{j-1}; \pi(s_j), s_j)$, by the convergence of $\pi^{(i)}$ to $\pi$ in the dyadic pointwise topology and the continuity of $\scrL^*$.
Therefore \[
\limsup_{i\to\infty}\|\pi^{(i)}\|_{\scrL^*} \le \sum_{j=1}^m \scrL^*(\pi(s_{j-1}), s_{j-1}; \pi(s_j), s_j).\]
By the continuity of $\scrL^*$ and of $\pi$, this inequality holds even when the points $s_i$ are not in $\Q_2$.
The conclusion then follows from the definition of $\|\pi\|_{\scrL^*}$.
\end{proof}

\subsection{Optimizers and transversal fluctuation}
\label{S:optimizers-transversal}
From the definition of $\|\cdot\|_{\scrL^*}$, and the triangle inequality for $\scrL^*$ (Proposition \ref{P:mc-everywhere}), for any $\pi:[s,t]\to \R^k_\le$ we have that 
\begin{equation}
\label{E:ext-L-ineq}
\|\pi\|_{\scrL^*}\le\scrL^*(\pi(s), s; \pi(t), t).
\end{equation}
We call a multi-path $\pi$ an \textbf{optimizer in $\scrL^*$} from $(\pi(s), s)$ to $(\pi(t), t)$, if equality holds in \eqref{E:ext-L-ineq}. If $\pi$ is an optimizer, then
$$
\scrL^*(\pi(s), s; \pi(t), t) = 
\sum_{i=1}^m \scrL^*(\pi(t_{i-1}), t_{i-1}; \pi(t_i), t_i)
$$
for any partition $s=t_0<t_1<\cdots<t_m=t$ of $[s, t]$. In the case where $k=1$, this defines a geodesic in the directed landscape, since $\scrL^*|_{\Rd} = \scrL$.
We next address the existence and uniqueness of optimizers. We start with a fixed pair of endpoints.
\begin{lemma}  \label{L:as-unique-geo-fixedend}
Given $(\bx,r;\by,t)\in \fX_\uparrow$, almost surely there is a unique optimizer in $\scrL^*$ from $(\bx,r)$ to $(\by,t)$.
\end{lemma}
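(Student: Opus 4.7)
The plan is to use the metric composition law to reduce uniqueness of the optimizer to uniqueness of the maximizer in a single-time extended Airy sheet optimization, and then establish the latter via a Brownian Gibbs resampling argument on the underlying parabolic Airy line ensemble.

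Any optimizer $\pi:[r,t]\to\R^k_\le$ from $(\bx,r)$ to $(\by,t)$ satisfies, for every $s\in(r,t)$, that $\pi(s)$ lies in $M(s):=\argmax_{\bz\in\R^k_\le}[\scrL^*(\bx,r;\bz,s)+\scrL^*(\bz,s;\by,t)]$. Indeed, by the definition of $\|\pi\|_{\scrL^*}$ applied to the partition $r<s<t$, together with the triangle inequality from Proposition \ref{P:mc-everywhere},
$$
\scrL^*(\bx,r;\by,t)=\|\pi\|_{\scrL^*}\le\scrL^*(\bx,r;\pi(s),s)+\scrL^*(\pi(s),s;\by,t)\le\scrL^*(\bx,r;\by,t),
$$
so equality holds throughout. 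Since any optimizer is continuous, it therefore suffices to show that $M(s)$ is almost surely a singleton for every $s$ in a countable dense subset of $(r,t)$; intersecting these full-probability events and applying continuity then determines $\pi$ uniquely.

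Fix $s\in\Q\cap(r,t)$. By Definition \ref{D:directed-landscape}(I), the marginals $\scrS_1:=\scrL^*(\cdot,r;\cdot,s)$ and $\scrS_2:=\scrL^*(\cdot,s;\cdot,t)$ are independent extended Airy sheets, and by Lemma \ref{L:transfluc} combined with Lemma \ref{l:uniform-EL-bound}, every element of $M(s)$ lies in a random compact set $K\subset\R^k_\le$ with probability one. Cover $K$ by countably many small disjoint cells $\{U_i\}$, let $F(\bz):=\scrS_1(\bx,\bz)+\scrS_2(\bz,\by)$, and set $M_i:=\sup_{\bz\in U_i\cap K}F(\bz)$. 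The event $\{|M(s)|>1\}$ is contained in $\bigcup_{i\ne j}\{M_i=M_j=\max_K F\}$, so it suffices to show $\p(M_i=M_j)=0$ for each pair $i\ne j$. This is the key technical step and is handled by an analogue of the argument behind Lemma \ref{L:brown-unique}: via the coupling $\scrS_2(\bz,\by)=\scrB[\bz\to\by]$ from Theorem \ref{T:extended-sheet-char} together with Proposition \ref{P:high-paths-B}, Lemma \ref{l:measurable-B-path-weight}, and the monotonicity and tree structure of optimizers in $\scrB$ from Lemma \ref{L:A-geod-exist}(v) and Proposition \ref{P:unique-exist}, one can for sufficiently small disjoint cells $U_i,U_j$ identify a bounded rectangle in $\R\X\N$ whose values of $\scrB$ influence $\scrS_2(\cdot,\by)|_{U_i}$ but not $\scrS_2(\cdot,\by)|_{U_j}$. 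Applying the Brownian Gibbs property (Theorem \ref{T:melon-Airy-facts}) on this rectangle then gives $M_i$ an absolutely continuous conditional distribution given the $\sigma$-algebra making $M_j$ measurable, forcing $\p(M_i=M_j)=0$.

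The main obstacle is the localization step: finding a resamplable block of $\scrB$ that affects $M_i$ but not $M_j$. This is delicate because optimal parabolic $k$-tuples in $\scrB$ ending in different cells can a priori use overlapping regions of $\scrB$. The careful disjointness analysis required is the content of Section \ref{S:optimizers-transversal} of the paper; once it is in place, the reduction above completes the proof.
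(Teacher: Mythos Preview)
Your high-level reduction matches the paper exactly: both arguments show that for each rational $s\in(r,t)$ the function $A_s(\bz)=\scrL^*(\bx,r;\bz,s)+\scrL^*(\bz,s;\by,t)$ has a unique maximizer almost surely, and then use continuity of optimizers to conclude. Your verification that any optimizer $\pi$ satisfies $\pi(s)\in M(s)$ is also correct.

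There are, however, two genuine gaps. First, you do not address \emph{existence}. The lemma asserts that a unique optimizer exists, and existence is not automatic: the paper constructs $\pi$ explicitly by setting $\pi(s)$ equal to the unique argmax of $A_s$ at rational $s$, checks via the triangle inequality that this satisfies the optimizer identity \eqref{E:as-uni-geo} at all rational triples, and then uses Lemma~\ref{L:transfluc} to extend continuously to all of $[r,t]$. Your argument only shows that \emph{if} an optimizer exists, it is unique.

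Second, your resampling scheme for uniqueness of the argmax is not the one the paper uses, and as stated it does not go through. You propose to cover by cells $U_i$ and find a box in $\scrB$ that influences $\scrS_2(\cdot,\by)|_{U_i}$ but not $\scrS_2(\cdot,\by)|_{U_j}$. The difficulty is that optimizers in $\scrB$ ending at $\bz\in U_i$ and $\bz'\in U_j$ are infinite parabolic paths which can share arbitrarily large portions of $\scrB$; there is no reason a localized resampling separates them based on endpoint location alone. The paper (Lemma~\ref{L:two-sheet-sum-unique-max}) instead separates by \emph{jump times}: it first proves (Lemma~\ref{L:jump-points-distinct}) that consecutive jump times of any optimizer are almost surely distinct, so two different argmaxes $\bz^{(1)}\ne\bz^{(2)}$ force the associated optimizers to have at least one jump time on line $1$ (either an endpoint or a jump from line $2$) that differs. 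One then resamples $\scrB_1$ on a rational interval $I$ containing exactly that jump time; Lemma~\ref{l:measurable-B-path-weight} makes the ``no jump in $I$'' optimization $\scrF$-measurable, while the ``one jump in $I$'' optimization picks up a conditionally continuous term $\sup_{x\in I}(\scrB_1(x)+\scrB_1'(x))$. This is what forces the two restricted maxima to differ. Your final paragraph essentially acknowledges that this localization is the hard part and defers to the paper rather than carrying it out.
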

We need the following result on the uniqueness of the maximum in the metric composition law.
\begin{lemma}  \label{L:two-sheet-sum-unique-max}
Given $\bx,\by\in\R^k_\le$ and $r<s<t$, almost surely the function $A_s(\bz) = \scrL^*(\bx,r;\bz,s)+\scrL^*(\bz,s;\by,t)$ has a unique maximum.
\end{lemma}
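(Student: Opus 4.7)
The plan is to reduce to a two-ball separation problem and then carry out a Brownian-Gibbs resampling inside one of the independent parabolic Airy line ensembles underlying $\scrS_1$ and $\scrS_2$.

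First, by the stationarity and rescaling symmetries of $\scrL^*$ in Lemma \ref{L:sym-L}, I may assume $r=0$ and $t=1$. By Property I of Definition \ref{D:directed-landscape}, the two functions
\[
\scrS_1(\cdot,\cdot) := \scrL^*(\cdot,0;\cdot,s), \qquad \scrS_2(\cdot,\cdot) := \scrL^*(\cdot,s;\cdot,1)
\]
are independent rescaled extended Airy sheets. By Theorem \ref{T:extended-sheet-char} (together with the shift and flip symmetries of Lemma \ref{L:basic-sym}), each can be coupled with an independent parabolic Airy line ensemble $\scrB^{(j)}$ so that $\scrS_j$ is a deterministic functional of $\scrB^{(j)}$ via last passage along parabolic paths. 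By Lemma \ref{L:transfluc}, the supremum of $A_s$ over $\R^k_\le$ is attained and the argmax is contained in a random compact set.

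Second, I would reduce uniqueness to a ``two-ball'' statement. If with positive probability $A_s$ had two distinct maximizers, then by the density of the rationals we could find a pair of disjoint closed balls $\bar B_1,\bar B_2\subset\R^k_\le$ with rational centers and rational radii such that, on an event of positive probability, $M_1 = M_2 = \max A_s$, where $M_j := \max_{\bz\in\bar B_j}A_s(\bz)$. Since there are only countably many such pairs, by a union bound it is enough to show that for each such pair,
\[
\p(M_1 = M_2) = 0.
\]

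Third, I would fix the pair $(\bar B_1,\bar B_2)$ and condition on $\scrS_2$; then $M_1, M_2$ become functions of $\scrB^{(1)}$ alone and it suffices to show that the conditional law of $M_1-M_2$ has no atoms almost surely. I would do this by exhibiting a window $R = I\X\II{1,m}\subset\R\X\N$, together with a nonzero perturbation direction in $\scrB^{(1)}|_R$, such that on an event of positive probability (i) every optimizing $\bz\in\bar B_2$ is realized by parabolic paths from $\bx$ to $\bz$ whose length is unaffected by the perturbation (by Lemma \ref{l:measurable-B-path-weight}), and (ii) some optimizing $\bz\in\bar B_1$ is realized by a parabolic path that traverses $R$ on a line where the perturbation acts, so that $M_1$ changes strictly monotonically under it. Feeding such a perturbation into the Brownian Gibbs property of $\scrB^{(1)}$ (Theorem \ref{T:melon-Airy-facts}), or equivalently into the Brownian absolute continuity of the top $m$ lines (Theorem \ref{T:radon-n-deri}), shows that conditional on the other coordinates $M_1$ has a continuous distribution while $M_2$ is deterministic, so $M_1 \ne M_2$ almost surely.

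The main obstacle is step three, namely producing the window $R$ and the separation property (i)--(ii). The natural strategy is to choose $R$ to lie along a very deep line $m$ and far to the left in spatial coordinate, so that every parabolic path from $\bx$ to a point in $\bar B_1\cup\bar B_2$ is forced to traverse $R$ in its common ``trunk.'' To then separate optimizers for $M_1$ from those for $M_2$, I would use the monotonicity, tree structure, and coalescence properties of geodesics in $\scrB^{(1)}$ from Lemma \ref{L:A-geod-exist}, together with the transversal fluctuation bound of Lemma \ref{L:transfluc}, to localize the optimizers of $M_1$ and $M_2$ on disjoint subsets of the lines $\II{1,m}$ inside $R$. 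Perturbing $\scrB^{(1)}$ on just one of these line subsets inside $R$ then isolates the effect on $M_1$ versus $M_2$ and completes the argument.
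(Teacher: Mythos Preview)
Your reduction in Steps 1--2 is fine, and the instinct to use a Brownian--Gibbs resampling to produce a continuous conditional law is correct. The genuine gap is Step 3, and it is not just a matter of filling in details: the geometry you describe does not separate $M_1$ from $M_2$.

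You propose placing the window $R$ ``along a very deep line $m$ and far to the left'' so that all parabolic paths from $\bx$ pass through $R$ in a common trunk. But precisely because the trunk is \emph{common}, any perturbation of $\scrB^{(1)}|_R$ there shifts every path length by the same amount (Lemma \ref{l:diff-weight-finite}), and hence shifts $M_1$ and $M_2$ equally, leaving $M_1-M_2$ unchanged. The place where optimizers to $\bar B_1$ and to $\bar B_2$ genuinely differ is near the top line, not deep in the trunk. Your fallback --- ``localize the optimizers of $M_1$ and $M_2$ on disjoint subsets of the lines $\II{1,m}$ inside $R$'' --- is asserted with no mechanism. Monotonicity and tree structure (Lemma \ref{L:A-geod-exist}) constrain single geodesics, but $M_1,M_2$ are suprema over \emph{all} disjoint $k$-tuples ending in the balls $\bar B_1,\bar B_2$; after you perturb, the suprema may be realized by entirely different $k$-tuples, so you would need to control every such $k$-tuple simultaneously, not just a chosen optimizer. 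Nothing in Lemma \ref{L:A-geod-exist} or Lemma \ref{L:transfluc} gives you disjoint line-subsets on which all optimizers for $\bar B_1$ sit and no optimizer for $\bar B_2$ does.

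The paper's proof avoids this by abandoning the two-ball decomposition altogether. Instead of separating by the spatial location of $\bz$, it separates by the \emph{jump structure on the top line}: for each rational interval $I$ it compares the supremum $M_c$ over $k$-tuples whose line-$1$ endpoints and line-$2$-to-$1$ jumps all avoid $I$, against the suprema $M_0,M_1$ over $k$-tuples with exactly one such point in $I$. Lemma \ref{l:measurable-B-path-weight} makes $M_c$ measurable with respect to $\scrB^{(1)}$ off $I\times\{1\}$, while $M_0$ (resp.\ $M_1$) differs from an $\scrF$-measurable quantity by $\sup_{x\in I}(\scrB_1(x)+\scrB_1'(-x))$, which has a continuous conditional law by the Brownian Gibbs property. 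A separate technical lemma (Lemma \ref{L:jump-points-distinct}) then guarantees that if two distinct maximizers $\bz^{(1)}\ne\bz^{(2)}$ exist, one can always find a rational $I$ and index $i$ for which one optimizer lies in $\scrP_{I,i}^{(0)}$ or $\scrP_{I,i}^{(1)}$ and the other in $\scrP_I^c$, forcing $M_0=M_c$ or $M_1=M_c$, a contradiction. The key point is that this separation is by a \emph{discrete combinatorial feature of the paths} (a jump location), not by where $\bz$ lies, and this is what makes the measurability split clean.
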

We leave the proof of this lemma to the end of this subsection, and continue our discussion of existence and  uniqueness of optimizers.
\begin{proof}[Proof of Lemma \ref{L:as-unique-geo-fixedend}]
By Lemma \ref{L:two-sheet-sum-unique-max}, almost surely for each rational $s\in(r,t)$, the function $A_s(\bz) = \scrL^*(\bx,r;\bz,s)+\scrL^*(\bz,s;\by,t)$ has a unique maximum. Therefore the value of any optimizer from $(\bx,r)$ to $(\by,t)$ is uniquely determined at all rational times, and hence the optimizer itself is uniquely determined by continuity.

For existence, we can construct an optimizer $\pi$ as follows. Let $\pi(r)=\bx, \pi(t)=\by$, and for any rational $s\in(r,t)$ let $\pi(s)\in\R^k_\le$ be the unique maximum of $A_s$.
Then for any triple $s_1<s_2<s_3\in ((r,t) \cap \Q) \cup \{r, t\}$ we claim that
\begin{equation}  \label{E:as-uni-geo}
\scrL^*(\pi(s_1),s_1;\pi(s_2),s_2)+\scrL^*(\pi(s_2),s_2;\pi(s_3),s_3)=\scrL^*(\pi(s_1),s_1;\pi(s_3),s_3).
\end{equation}
Indeed, by the metric composition law there exists $\bz^{(1)}, \bz^{(2)}, \bz^{(3)}\in\R^k_\le$, such that 
\begin{multline*}
\scrL^*(\bx,r;\by,t)=\scrL^*(\bx,r;\bz^{(1)},s_1)
+\scrL^*(\bz^{(1)},s_1;\bz^{(2)},s_2)+\scrL^*(\bz^{(2)},s_2;\bz^{(3)},s_3)\\ +\scrL^*(\bz^{(3)},s_3;\by,t).    
\end{multline*}
The triangle inequality for $\scrL^*$ (Proposition \ref{P:mc-everywhere}) and the uniqueness of maxima for the functions $A_s$ ensures that $\bz^{(i)}=\pi(s_i)$ for $i=1,2,3$ and enforces equation \eqref{E:as-uni-geo}.  
By Lemma \ref{L:transfluc}, $\pi$ is continuous at rational points and at $r, t$. Therefore we can extend $\pi$ to a continuous function on $[r,t]$. 

Finally we check that for any $r=t_0<t_1<\cdots<t_m=t$, we have
$$
\sum_{i=1}^m \scrL^*(\pi(t_{i-1}), t_{i-1}; \pi(t_i), t_i) = \scrL^*(\bx,r;\by,t).
$$
If for all $0<i<m$, $t_i$ is rational, this follows by \eqref{E:as-uni-geo}. This extends to general times $t_i$ by the continuity of $\scrL^*$. We conclude that $\pi$ is an optimizer.
\end{proof}
We can upgrade the existence of optimizers to hold simultaneously for all pairs of endpoints, although the same cannot be achieved for uniqueness.
\begin{lemma}
	\label{L:existence}
Almost surely, for any $(\bx,s;\by,t)\in \fX_\uparrow$, there is an optimizer in the extended landscape $\scrL^*$ from $(\bx,s)$ to $(\by,t)$.
\end{lemma}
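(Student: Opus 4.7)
The plan is to pass to the limit from optimizers at a countable family of endpoint pairs. For each $k \in \N$, let $D_k \sset \fX_\uparrow \cap (\R^k_\le \X \R)^2$ be a countable dense set of rational endpoints, and set $D = \bigcup_{k\in\N} D_k$. By Lemma \ref{L:as-unique-geo-fixedend} and a countable union, on an almost sure event the unique optimizer $\pi_\bu$ exists for every $\bu \in D$. Intersecting with the almost sure sets on which metric composition and the backward triangle inequality hold globally (Proposition \ref{P:mc-everywhere}) and on which the random constant $R$ from Lemma \ref{L:transfluc} is finite for every $k \in \N$ and every $\eta = 1/m$, we obtain an almost sure set $\Omega$. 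I will produce an optimizer for every $\bu \in \fX_\uparrow$ on $\Omega$.

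Fix $\omega \in \Omega$ and $\bu = (\bx,s;\by,t) \in \fX_\uparrow$. Choose a sequence $\bu_n = (\bx_n,s_n;\by_n,t_n) \in D$ with $s_n \uparrow s$, $t_n \downarrow t$, $\bx_n \to \bx$, $\by_n \to \by$, so that $[s,t] \sset [s_n,t_n]$ for all large $n$. Writing $\pi_n := \pi_{\bu_n}$, the global metric composition law and backward triangle inequality force its restriction $\pi_n|_{[s,t]}$ to be an optimizer from $(\pi_n(s),s)$ to $(\pi_n(t),t)$. Applying Lemma \ref{L:transfluc} to the triples $(s_n, s, t_n)$ and $(s_n, t, t_n)$, the transversal-fluctuation bound goes to zero while the linear interpolation converges to $\bx$ and $\by$; hence $\pi_n(s) \to \bx$ and $\pi_n(t) \to \by$, and the same argument applied at $(s_n, r, t_n)$ for general $r \in [s,t]$ shows $\pi_n(r)$ is uniformly bounded on $[s,t]$.

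Next I would establish a uniform Hölder estimate for the family $\{\pi_n|_{[s,t]}\}$. For any triple $r_1 < (r_1+r_3)/2 < r_3$ in $[s,t]$, Lemma \ref{L:transfluc} applied to the sub-optimizer $\pi_n|_{[r_1,r_3]}$ yields
\[
\bigl\|\pi_n(\tfrac{r_1+r_3}{2}) - \tfrac{1}{2}(\pi_n(r_1)+\pi_n(r_3))\bigr\|_2 \le cR\,G(\pi_n(r_1),\pi_n(r_3),r_1,r_3)^{\eta}\,(r_3-r_1)^{2/3}.
\]
Since $\pi_n(r_1),\pi_n(r_3)$ are uniformly bounded and $|r_1|,|r_3|$ lie in a compact set, a direct inspection of $G$ shows $G(\pi_n(r_1),\pi_n(r_3),r_1,r_3) \le C\,(r_3-r_1)^{-5/3}$ up to logarithmic corrections; choosing $\eta$ small enough turns this into a bound of the form $C_\varepsilon\,(r_3-r_1)^{2/3-\varepsilon}$, uniformly in $n$, for any prescribed $\varepsilon>0$. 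A standard dyadic chaining argument then promotes this midpoint estimate to Hölder-$(2/3-\varepsilon)$ continuity with a uniform constant. Arzelà--Ascoli extracts a subsequence along which $\pi_n|_{[s,t]}$ converges uniformly to a continuous function $\pi:[s,t]\to \R^k_\le$ (the target $\R^k_\le$ is closed under limits), with $\pi(s)=\bx$ and $\pi(t)=\by$.

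Finally, I would verify that $\pi$ is an optimizer. Since $\pi_n|_{[s,t]}$ is an optimizer,
\[
\|\pi_n|_{[s,t]}\|_{\scrL^*} = \scrL^*(\pi_n(s),s;\pi_n(t),t) \longrightarrow \scrL^*(\bx,s;\by,t)
\]
by continuity of $\scrL^*$. Uniform convergence implies dyadic pointwise convergence, so Lemma \ref{L:upper-semi-conti-path} gives $\limsup_n \|\pi_n|_{[s,t]}\|_{\scrL^*} \le \|\pi\|_{\scrL^*}$. Combined with the universal inequality $\|\pi\|_{\scrL^*} \le \scrL^*(\pi(s),s;\pi(t),t) = \scrL^*(\bx,s;\by,t)$ from \eqref{E:ext-L-ineq}, this forces equality, so $\pi$ is an optimizer from $(\bx,s)$ to $(\by,t)$. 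The main obstacle is the modulus-of-continuity step: one must carefully track how $G$ degenerates on small intervals with nonzero offset, so that the $(r_3-r_1)^{2/3}$ factor dominates after optimizing $\eta$, producing a genuine uniform Hölder estimate rather than a vacuous one.
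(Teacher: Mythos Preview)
Your proof is correct and follows essentially the same approach as the paper's: approximate by rational endpoints where unique optimizers exist, extract a uniform H\"older-$(2/3)^-$ bound from Lemma \ref{L:transfluc}, pass to a subsequential limit, and invoke Lemma \ref{L:upper-semi-conti-path} together with continuity of $\scrL^*$. Your choice of $s_n<s<t<t_n$ (so that all restricted paths live on the fixed interval $[s,t]$) and your explicit tracking of the $G$-growth in the midpoint-to-H\"older step are helpful elaborations of what the paper's proof asserts in one line.
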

\begin{proof}
By Lemma \ref{L:as-unique-geo-fixedend}, almost surely there is a unique optimizer between any pair of rational endpoints.
For any $\bu = (\bx,s;\by,t)\in \fX_\uparrow$, we can take a sequence of rational points $(\bx^{(i)},s_i;\by^{(i)},t_i)\in \fX_\uparrow$ converging to $\bu$, and let $\pi^{(i)}$ be the unique optimizer from $(\bx^{(i)},s_i)$ to $(\by^{(i)},t_i)$. All these optimizers are H\"older-$(2/3)^-$ with a common H\"older constant $c$ by Lemma \ref{L:transfluc}. Therefore the sequence $\pi^{(i)}$ has a subsequential limit $\pi$ in the dyadic pointwise topology which is itself H\"older-$(2/3)^-$ continuous. By Lemma \ref{L:upper-semi-conti-path} and the continuity of $\scrL^*$, we have that $\pi$ is an optimizer from $(\bx,s)$ to $(\by,t)$.
\end{proof}

We finish this subsection with the proof of Lemma \ref{L:two-sheet-sum-unique-max}.
We first reduce the problem to understanding the sum of two multi-point last passage values across two independent parabolic Airy line ensembles $\scrB, \scrB'$. This is an optimization problem involving parabolic paths across $\scrB, \scrB'$.
The remainder of the proof is essentially a resampling argument for parabolic paths, similar in spirit to the proof of Lemma \ref{L:brown-unique}.

The general idea here is to fix a rational interval $I$, and consider two restricted versions of the optimization problem: one which forces exactly one of the parabolic paths to jump in the interval $I$ either on or off of the line $\scrB_1$, and one which does not allow any jumps on or off of $\scrB_1$ in the interval $I$. Using the Brownian Gibbs property for $\scrB$ (Theorem \ref{T:melon-Airy-facts}), we conclude that almost surely these two restricted optimization problems have different maxima.
Now if the function $A_s$ has two different maxima $\bz, \bz'$, then we show that almost surely we can find a rational interval $I$ where the two optimizations problems discussed above have the same maximum, yielding a contradiction.

\begin{proof}[Proof of Lemma \ref{L:two-sheet-sum-unique-max}]
We now implement the arguments summarized above.

\textbf{Step 1: Reducing to multi-point last passage values across $\scrB$.}
By symmetries of $\scrL^*$ (Lemma \ref{L:sym-L}), we can assume that $r=-1, s=0, t>0$ and $x_1 > 0 > y_k$.
Since $\scrL^*$ has independent extended sheet marginals, using symmetries of extended sheets (Lemma \ref{L:basic-sym}) we have
$$
(\scrL^*(\bx, -1; \bz, 0), \scrL^*(\bz, 0; \by, t)) \eqd (\scrS(\bx, \bz), \scrS_t(-\by, -\bz)),
$$
where $\scrS, \scrS_t$ are independent extended sheets of scale $1$ and $t^{1/3}$.
Therefore by Proposition \ref{P:B-continuity}, we have
\begin{equation}
\label{E:RHSisB}
A_s(\cdot) \eqd \scrB[\bx\to\cdot] + \scrB'[-\by\to -\; \cdot],
\end{equation}
where $\scrB=\{\scrB_i\}_{i\in\N}$ is a parabolic Airy line ensemble, $\scrB'=\{\scrB_i'\}_{i\in\N}$ is independent of $\scrB$, and
$\{x\mapsto t^{-1/3}\scrB_i'(t^{2/3}x)\}_{i\in\N}$ is a parabolic Airy line ensemble. We will show that the right-hand side of \eqref{E:RHSisB} has a unique maximum $\bz$. While the argument is similar in spirit to the one used in Lemma \ref{L:brown-unique}, there are extra complexities coming from the definition of length for parabolic paths. 

\textbf{Step 2: Setup of paths intersecting/avoiding an interval.}
We next define certain collections of parabolic paths that do or not jump in the given interval. We will later show that almost surely, these collections have different maximum weights.

We start by setting up notation for jump times. For a disjoint $k$-tuple of parabolic paths $\pi=(\pi_1,\ldots,\pi_k)$ from $\bx$ to $\bz$, let 
\begin{equation} \label{eq:zjump}
z[\pi]_{i,m} = \sup\{w\le z_i: \pi_i(w)\ge m+1\},
\end{equation}
for any $i\in \llbracket 1, k\rrbracket$ and non-negative integer $m$.
In words, $z[\pi]_{i,m}$ is the jump time from line $m+1$ to line $m$ for $\pi_i$ (when $m\ge 1$), and $z[\pi]_{i,0}=z_i$.

For any interval $I\subset \R$, $1\le i \le k$, and $j\in\{0,1\}$,
let $\scrP_{I,i}^{(j)}$ be the collection of all $k$-tuples of essentially disjoint parabolic paths $\pi$ in $\scrB$ such that $z[\pi]_{i,j} \in I$, and $z[\pi]_{i',m}\not\in I$ for any $(i',m)\neq (i,j)$.
Also let $\scrP_{I}^c$ be the collection of all $k$-tuples of essentially disjoint parabolic paths $\pi$ in $\scrB$ such that $z[\pi]_{i',m}\not\in I$ for any $(i',m)$.

In other words, $\scrP_{I,i}^{(0)}$, $\scrP_{I,i}^{(1)}$ and $\scrP_{I,i}^c$ contain $k$-tuples of essentially disjoint parabolic paths $\pi$, satisfying the following conditions:
\begin{itemize}
    \item $\scrP_{I,i}^{(0)}$: the endpoint of $\pi_i$ is in $I$, and no other endpoint or jump point from line $2$ to line $1$ is in $I$.
    \item $\scrP_{I,i}^{(1)}$: the jump point of $\pi_i$ from line $2$ to line $1$ is in $I$, and no endpoint or other jump point from line $2$ to line $1$ is in $I$.    
    \item $\scrP_{I}^c$: no endpoint or jump point from line $2$ to line $1$ is in $I$.    
\end{itemize}
Now we fix a compact interval $I\subset \R$ and $1\le i \le k$.
Define $M_0, M_1, M_c$ using the same expression
$$
\sup \|\pi\|_\scrB + \|\pi'\|_{\scrB'},
$$
where the supremums are over different sets of pairs of disjoint $k$-tuples $\pi, \pi'$ from $\bx$ to $\bz$ and $-\by$ to $-\bz$ for some $\bz \in \R^k_\le$. 
For $M_0$, we require that $\pi\in \scrP_{I,i}^{(0)}$. For $M_1$, we require $\pi\in \scrP_{I,i}^{(1)}$. For $M_c$, we require $\pi\in \scrP_{I}^c$. We have no additional restriction on $\pi'$.
See Figure \ref{fig:Ms} for an illustration of the paths.

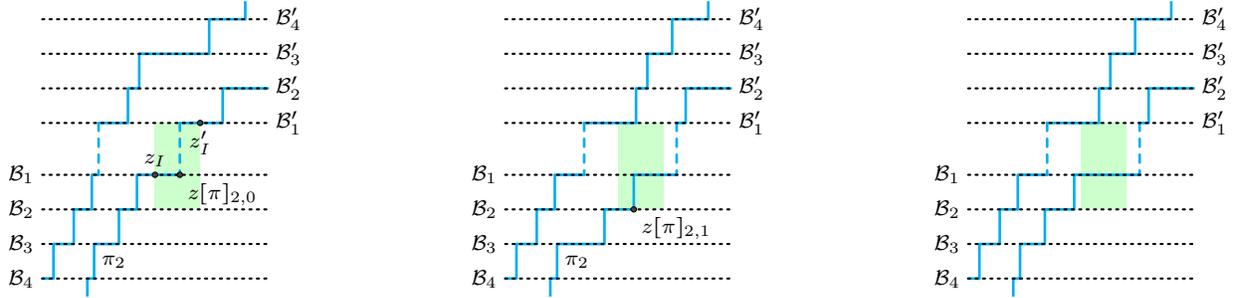
\begin{figure}[hbt!]
    \centering
\begin{tikzpicture}[line cap=round,line join=round,>=triangle 45,x=0.3cm,y=0.23cm]
\clip(-2,-1) rectangle (12,16);

\fill[line width=0.pt,color=green,fill=green,fill opacity=0.2]
(5,4)-- (7,4)-- (7,9)-- (5,9) -- cycle;

\draw [line width=1pt, color=cyan] (9,17) -- (9,15) -- (7.4,15) -- (7.4,13) -- (4.3, 13) -- (4.3, 11) -- (3.8,11) -- (3.8,9) -- (2.5,9);
\draw [line width=1pt, color=cyan] (2.5,6) -- (2.2,6) -- (2.2,4) -- (1.4,4) -- (1.4,2) -- (0.5,2) -- (0.5,0) -- (0,0);
\draw [line width=1pt, color=cyan] [dashed] (2.5,9) -- (2.5,6);

\draw [line width=1pt, color=cyan] (10, 11) -- (8,11) -- (8,9) -- (6.1,9);
\draw [line width=1pt, color=cyan] (6.1,6)  -- (4.2,6) -- (4.2,4) -- (3.4,4) -- (3.4,2) -- (2.3,2) -- (2.3,0) -- (2,0) -- (2,-2);
\draw [line width=1pt, color=cyan] [dashed] (6.1,9) -- (6.1,6);

\draw (0,25) -- (10,25);

\foreach \i in {1,...,4}
{
\draw [dotted] [thick] (0, 8-2*\i) -- (10, 8-2*\i);
\begin{scriptsize}
\draw (0,8-2*\i) node[anchor=east]{$\scrB_{\i}$};
\end{scriptsize}
}

\foreach \i in {1,...,4}
{
\draw [dotted] [thick] (0, 7+2*\i) -- (10, 7+2*\i);
\begin{scriptsize}
\draw (10,7+2*\i) node[anchor=west]{$\scrB_{\i}'$};
\end{scriptsize}
}

\draw [fill=uuuuuu] (5,6) circle (1.0pt);
\draw [fill=uuuuuu] (7,9) circle (1.0pt);
\draw [fill=uuuuuu] (6.1,6) circle (1.0pt);
\begin{scriptsize}
\draw (5,6) node[anchor=south]{$z_I$};
\draw (7,9) node[anchor=north]{$z_I'$};
\draw (6.1,6) node[anchor=north west]{$z[\pi]_{2,0}$};
\draw (2.3,1) node[anchor=west]{$\pi_2$};
\end{scriptsize}

\end{tikzpicture}
\hfill
\begin{tikzpicture}[line cap=round,line join=round,>=triangle 45,x=0.3cm,y=0.23cm]
\clip(-2,-1) rectangle (12,16);

\fill[line width=0.pt,color=green,fill=green,fill opacity=0.2]
(5,4)-- (7,4)-- (7,9)-- (5,9) -- cycle;

\draw [line width=1pt, color=cyan] (9,17) -- (9,15) -- (7.4,15) -- (7.4,13) -- (6.3, 13) -- (6.3, 11) -- (5.8,11) -- (5.8,9) -- (3.5,9);
\draw [line width=1pt, color=cyan] (3.5,6) -- (2.2,6) -- (2.2,4) -- (1.4,4) -- (1.4,2) -- (0.5,2) -- (0.5,0) -- (0,0);
\draw [line width=1pt, color=cyan] [dashed] (3.5,9) -- (3.5,6);

\draw [line width=1pt, color=cyan] (10, 11) -- (8,11) -- (8,9) -- (7.6,9);
\draw [line width=1pt, color=cyan] (7.6,6)  -- (5.7,6) -- (5.7,4) -- (4.4,4) -- (4.4,2) -- (2.3,2) -- (2.3,0) -- (2,0) -- (2,-2);
\draw [line width=1pt, color=cyan] [dashed] (7.6,9) -- (7.6,6);

\draw (0,25) -- (10,25);

\foreach \i in {1,...,4}
{
\draw [dotted] [thick] (0, 8-2*\i) -- (10, 8-2*\i);
\begin{scriptsize}
\draw (0,8-2*\i) node[anchor=east]{$\scrB_{\i}$};
\end{scriptsize}
}

\foreach \i in {1,...,4}
{
\draw [dotted] [thick] (0, 7+2*\i) -- (10, 7+2*\i);
\begin{scriptsize}
\draw (10,7+2*\i) node[anchor=west]{$\scrB_{\i}'$};
\end{scriptsize}
}
\draw [fill=uuuuuu] (5.7,4) circle (1.0pt);
\begin{scriptsize}
\draw (5.7,4) node[anchor=north west]{$z[\pi]_{2,1}$};
\draw (2.3,1) node[anchor=west]{$\pi_2$};
\end{scriptsize}
\end{tikzpicture}
\hfill
\begin{tikzpicture}[line cap=round,line join=round,>=triangle 45,x=0.3cm,y=0.23cm]
\clip(-2,-1) rectangle (12,16);

\fill[line width=0.pt,color=green,fill=green,fill opacity=0.2]
(5,4)-- (7,4)-- (7,9)-- (5,9) -- cycle;

\draw [line width=1pt, color=cyan] (9,17) -- (9,15) -- (7.4,15) -- (7.4,13) -- (6.3, 13) -- (6.3, 11) -- (5.8,11) -- (5.8,9) -- (3.5,9);
\draw [line width=1pt, color=cyan] (3.5,6) -- (2.2,6) -- (2.2,4) -- (1.4,4) -- (1.4,2) -- (0.5,2) -- (0.5,0) -- (0,0);
\draw [line width=1pt, color=cyan] [dashed] (3.5,9) -- (3.5,6);

\draw [line width=1pt, color=cyan] (10, 11) -- (8,11) -- (8,9) -- (7.6,9);
\draw [line width=1pt, color=cyan] (7.6,6)  -- (4.7,6) -- (4.7,4) -- (3.4,4) -- (3.4,2) -- (2.3,2) -- (2.3,0) -- (2,0) -- (2,-2);
\draw [line width=1pt, color=cyan] [dashed] (7.6,9) -- (7.6,6);

\draw (0,25) -- (10,25);

\foreach \i in {1,...,4}
{
\draw [dotted] [thick] (0, 8-2*\i) -- (10, 8-2*\i);
\begin{scriptsize}
\draw (0,8-2*\i) node[anchor=east]{$\scrB_{\i}$};
\end{scriptsize}
}

\foreach \i in {1,...,4}
{
\draw [dotted] [thick] (0, 7+2*\i) -- (10, 7+2*\i);
\begin{scriptsize}
\draw (10,7+2*\i) node[anchor=west]{$\scrB_{\i}'$};
\end{scriptsize}
}

\end{tikzpicture}
\caption{
An illustration of the paths used in defining $M_0$, $M_1$, and $M_c$ (from left to right, respectively). The parabolic Airy line ensemble $\scrB'$ is rotated by $180$ degrees for the picture.
The green regions indicate the interval $I$ and the lines where there are constraints on jumps.
For $M_0$, here we have $\pi\in \scrP_{I,2}^{(0)}$; for $M_1$, we have $\pi \in \scrP_{I,2}^{(1)}$; for $M^c$, we have $\pi \in \scrP_{I}^c$.
}
\label{fig:Ms}
\end{figure}

\textbf{Step 3: Almost surely $M_0 \ne M_c$ and $M_1\ne M_c$.}
The general idea for this step is to show that conditional on $M_c$, the random variable $M_0$ (or $M_1$) has a continuous distribution, using the Brownian Gibbs property for the interval $I$.

We let $\scrF$ be the $\sig$-algebra generated by null sets, $\scrB'$, all $\scrB_m$ for $m\ge 2$, and $\{\scrB_1(x):x\not\in I\}$.
Then $M_c$ is $\scrF$-measurable, since the function recording all lengths of paths $\pi\in \scrP_{I,i}^{c}$ is $\scrF$-measurable by Lemma \ref{l:measurable-B-path-weight}. 

We then deduce that, given $\scrF$, the random variable $M_0$ has continuous distribution.
For this, we investigate how $M_0$ depends on $\scrB_1$ in $I$ (see the left panel of Figure \ref{fig:Ms}).
By Lemma \ref{l:diff-weight-finite} we can write 
\[
M_0 = \sup_{\pi,\pi'} \|\pi\|_\scrB + \|\pi'\|_{\scrB'} + \sup_{x\in I}(\scrB_1(x)+\scrB'_1(x)) - \scrB_1(z_I) - \scrB'_1(-z_I').
\]
Here $z_I, z_I'$ are the left and right endpoints of $I$, i.e., $I=[z_I,z_I']$.
The first supremum above is taken over all $\pi\in \scrP_{I}^c$ and $\pi'\in \scrP_{-I}^c$, where $\pi$ is from $\bx$ to $\bz$ and $\pi'$ is from $-\by$ to $\bz'$, and such that $z_i=z_I, z_{k+1-i}'=-z_I'$, and $z_j=-z_{k+1-j}'$ for any $j\neq i$.
Therefore $M_0-\sup_{x\in I}(\scrB_1(x)+\scrB'_1(x))$ is $\scrF$-measurable by Lemma \ref{l:measurable-B-path-weight}. On the other hand,
the Brownian Gibbs property for $\scrB$ (Theorem \ref{T:melon-Airy-facts}) implies that conditioned on $\scrF$ the law of $\scrB_1$ on $I$ is absolutely continuous to a Brownian bridge (with diffusion parameter $2$). Therefore conditioned on $\scrF$, the random variable $\sup_{x\in I}(\scrB_1(x)+\scrB'_1(x))$ almost surely has a continuous distribution, and hence so does $M_0$. Since $M_c$ is $\scrF$-measurable, $M_0\ne M_c$ almost surely. The argument to show that $M_1\ne M_c$ almost surely is similar.
Moreover, these inequalities hold almost surely simultaneously for all compact rational intervals $I$ and all $i \in \II{1, k}$. Below we assume this probability one event.

\textbf{Step 4: Contradiction with two maximums.}
Now we consider the function $\bz\mapsto \scrB[\bx\to\bz] + \scrB'[-\by\to -\bz]$. Note that by the metric composition law (Proposition \ref{P:metric-composition-ex-sheet}) and symmetry of the extended Airy sheet (Lemma \ref{L:basic-sym}) this function attains its maximum.
Suppose that the maximum is attained at two points $\bz^{(1)}\neq\bz^{(2)}$.
We take any disjoint optimizers $\pi^{(1)}$ and $\pi^{(2)}$ in $\scrB$, from $\bx$ to $\bz^{(1)}$ and $\bz^{(2)}$;
and $\pi'^{(1)}$ and $\pi'^{(2)}$ in $\scrB'$, from $-\by$ to $-\bz^{(1)}$ and $-\bz^{(2)}$. Such optimizers exist by Proposition \ref{P:B-continuity}.
Consider the jump times $z[\pi^{(1)}]_{i,0}=z^{(1)}_i$, $z[\pi^{(1)}]_{i,1}$, and $z[\pi^{(2)}]_{i,0}=z^{(2)}_i$, $z[\pi^{(2)}]_{i,1}$.
By Lemma \ref{L:jump-points-distinct} below (and see Figure \ref{fig:diffz}), almost surely we have
\[
z[\pi^{(1)}]_{1,1}<z[\pi^{(1)}]_{1,0} < z[\pi^{(1)}]_{2,1} < \cdots < z[\pi^{(1)}]_{k,1}<z[\pi^{(1)}]_{k,0},
\]
and
\[
z[\pi^{(2)}]_{1,1}<z[\pi^{(2)}]_{1,0} < z[\pi^{(2)}]_{2,1} < \cdots < z[\pi^{(2)}]_{k,1}<z[\pi^{(2)}]_{k,0}.
\]
Since $\bz^{(1)}\neq\bz^{(2)}$, we can find a number in the first sequence which does not appear in the second sequence. There are two cases:
\begin{enumerate}
    \item We can find $i\in \II{1,k}$, such that $z[\pi^{(1)}]_{i,0}=z^{(1)}_i$ is not in the second sequence.
    Then we can find a rational interval $I$ that contains $z^{(1)}_i$ and does not contain any other number in the two sequences.
    Thus $\pi^{(1)}\in \scrP_{I,i}^{(0)}$ and $\pi^{(2)}\in \scrP_{I,i}^c$.
    Then we have that $M_0 = M_c$ for such $I$ and $i$.
    \item We can find $i\in \II{1,k}$, such that $z[\pi^{(1)}]_{i,1}$ is not in the second sequence.
    Then we can find a rational interval $I$ that contains $z[\pi^{(1)}]_{i,1}$ and does not contain any other number in the two sequences.
    Thus $\pi^{(1)}\in \scrP_{I,i}^{(1)}$ and $\pi^{(2)}\in \scrP_{I,i}^c$.
    Then we have that $M_1 = M_c$ for such $I$ and $i$.
\end{enumerate}
In either case, we get a contradiction with the assumption that $M_0\neq M_c$, $M_1\neq M_c$  for all compact rational intervals $I$ and all $i \in \II{1, k}$.
This means that $A_s$ cannot have two different maximums.
\end{proof}

It remains to prove the following lemma, which says that almost surely, consecutive jumps happen at different locations. 
\begin{lemma}  \label{L:jump-points-distinct}
Almost surely the following statement is true. Fix $t > 0$, and as in the proof of Lemma \ref{L:two-sheet-sum-unique-max}, let $\scrB$ be a parabolic Airy line ensemble, $\scrB'=\{\scrB_i'\}_{i\in\N}$ be independent of $\scrB$ such that
$\{x\mapsto t^{-1/3}\scrB_i'(t^{2/3}x)\}_{i\in\N}$ is a parabolic Airy line ensemble.

Take any $\bx,\by\in\R^k_\le$ such that $x_1>0>y_k$.
Let $\bz^*$ be any maximum of 
$$
\bz\mapsto \scrB[\bx\to\bz] + \scrB'[-\by\to -\bz],
$$
and let $\pi^*, \pi'^*$ be optimizers in $\scrB$ from $\bx$ to $\bz^*$, and in $\scrB'$ from $-\by$ to $-\bz^*$, respectively.
Recall the notation from \eqref{eq:zjump}, and
let $z^*_{i,m}=z[\pi^*]_{i,m}$ and $-z^*_{k+1-i,-m}=z[\pi'^*]_{i,m}$ for any $i\in\II{1,k}$ and non-negative integer $m$ (see Figure \ref{fig:diffz}).
Then for all $m\in\Z$, we have $z^*_{i,m}\neq z^*_{i,m+1}$ for $1\le i\le k$, and $z^*_{i-1,m}\neq z^*_{i,m+1}$ for $2\le i\le k$.
\end{lemma}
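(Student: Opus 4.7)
The plan is to adapt the Brownian Gibbs resampling strategy from the proof of Lemma \ref{L:two-sheet-sum-unique-max} to show that each coincidence event has probability zero.

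I would first reduce via a union bound. If any of the strict inequalities in the lemma fails, then a coincidence $z^*_{i,m}=z^*_{i,m+1}$ or $z^*_{i-1,m}=z^*_{i,m+1}$ holds, and by continuity one may find a rational compact interval $I$ containing the common value in which no other jump points of $\pi^*$ or $\pi'^*$ on the three relevant consecutive lines appear. By countable additivity over rational $I$ and integer indices $(i,m)$ it suffices to bound the probability of each such specific event.

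For the representative case $m\ge 1$ and $z^*_{i,m}=z^*_{i,m+1}\in I$ (the cases $m\le -2$ follow symmetrically with $\scrB'$ replacing $\scrB$, and the shared-endpoint cases $m\in\{-1,0\}$ reduce to the top-line argument already carried out in Lemma \ref{L:two-sheet-sum-unique-max}), let $\scrF$ be the $\sigma$-algebra generated by null sets, $\scrB'$, $\{\scrB_\ell:\ell\ne m+1\}$, and $\{\scrB_{m+1}(x):x\notin I\}$. By Theorem \ref{T:melon-Airy-facts}, the conditional law of $\scrB_{m+1}|_I$ given $\scrF$ is absolutely continuous with respect to a Brownian bridge conditioned on nonintersection with $\scrB_m$ and $\scrB_{m+2}$. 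Following the template of Lemma \ref{L:two-sheet-sum-unique-max}, I would define three restricted suprema $M^c$, $M^{(0)}$, $M^{(1)}$ of $\|\pi\|_\scrB+\|\pi'\|_{\scrB'}$ over disjoint $k$-tuples satisfying respectively: no jump of any path on lines $m,m+1$ lies in $I$; the only such jump in $I$ is $z[\pi]_{i,m+1}$ of $\pi_i$; the only such jump is $z[\pi]_{i,m}$ of $\pi_i$. By Lemma \ref{l:measurable-B-path-weight}, $M^c$ is $\scrF$-measurable. By Lemma \ref{l:diff-weight-finite}, the differences $M^{(0)}-M^c$ and $M^{(1)}-M^c$ can be written respectively as $\sup_{x\in I}(\phi_0(x)-\scrB_{m+1}(x))$ and $\sup_{x\in I}(\phi_1(x)+\scrB_{m+1}(x))$, where $\phi_0,\phi_1$ are $\scrF$-measurable continuous functions. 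Under the conditional Brownian bridge distribution these suprema have continuous conditional distribution given $\scrF$, and so almost surely $M^{(0)}\ne M^c$ and $M^{(1)}\ne M^c$.

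To conclude, on the coincidence event $\pi^*$ lies in the doubly-restricted class $\scrP^{\mathrm{coinc}}$ of configurations with the direct jump at some $u\in I$. Infinitesimal deformations that push one of the two coincident jumps just across either boundary of $I$ yield sequences of configurations in $\scrP^{(0)}$ or $\scrP^{(1)}$ whose lengths may be compared to $\|\pi^*\|$ via Lemma \ref{l:diff-weight-finite}; combining optimality of $\pi^*$ with these comparisons and a parallel deformation pushing both jumps simultaneously into $\scrP^c$ forces at least one identification of the form $M^{(0)}=M^c$ or $M^{(1)}=M^c$, contradicting the almost-sure inequalities of the previous paragraph. The main technical obstacle lies exactly in this identification step: the boundary terms of the form $g(\partial I)-g(u)$ and $h(\partial I)-h(u)$ (with $g=\scrB_{m+2}-\scrB_{m+1}$ and $h=\scrB_{m+1}-\scrB_m$) appearing in the length comparisons are not automatically zero, and orchestrating the deformations so that the restricted suprema are pinned together requires the same delicate jump-matching combinatorics carried out at the end of the proof of Lemma \ref{L:two-sheet-sum-unique-max}. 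The second-type coincidence $z^*_{i-1,m}=z^*_{i,m+1}$ is handled by the same resampling scheme, with the definitions of $\scrP^{(0)}$, $\scrP^{(1)}$, and $\scrP^c$ adjusted to express a simultaneous jump of the adjacent paths $\pi_{i-1}$ and $\pi_i$ across level $m+1$.
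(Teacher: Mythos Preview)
Your approach has a genuine gap at precisely the point you flag, and the paper resolves the lemma by a different mechanism.

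The $M^c, M^{(0)}, M^{(1)}$ framework from Lemma \ref{L:two-sheet-sum-unique-max} was tailored to a situation with \emph{two} distinct maximizers: one optimizer landed in the ``jump in $I$'' class and the other in the ``no jump in $I$'' class, which immediately pinned two restricted suprema together. Here there is only \emph{one} optimizer $\pi^*$, and under the coincidence $z^*_{i,m}=z^*_{i,m+1}=u\in I$ it lies in none of $\scrP^c,\scrP^{(0)},\scrP^{(1)}$ (both relevant jumps sit in $I$). Your proposed deformations push $\pi^*$ into these classes but at a genuine cost to the length, so the resulting configurations are not optimizers and do not realise $M^{(0)}$ or $M^{(1)}$. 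There is no ``jump-matching combinatorics'' at the end of Lemma \ref{L:two-sheet-sum-unique-max} to invoke; that proof ends with the simple observation that two distinct maximizers supply the required pair of configurations. Also, your claim that the cases $m\in\{-1,0\}$ reduce to the top-line argument of Lemma \ref{L:two-sheet-sum-unique-max} is not right: that lemma proves uniqueness of $\bz^*$, not distinctness of a path's endpoint from its first jump.

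The paper's argument is structurally different. Given a coincidence at $\hat z$, it records the entire pattern $\Phi$ of indices whose jump equals $\hat z$ (possibly across several consecutive lines), takes a rational interval $I$ containing $\hat z$ and no other relevant jumps, and observes that optimality of $(\pi^*,\pi'^*)$ forces the function
\[
g_f(\{z_{i,m}\}_{(i,m)\in\Phi})=\sum_{(i,m)\in\Phi}\bigl(f_{m+1}(z_{i,m})-f_m(z_{i,m})\bigr)
\]
on the interlacing polytope (with $f$ the relevant lines of $\scrB,\scrB'$ on $I$) to be maximised at the diagonal point $\{\hat z\}$. After applying the Brownian Gibbs property to replace the lines on $I$ by independent Brownian motions, two consequences are extracted: moving all coordinates in $\Phi$ together shows $\hat z$ is the argmax on $I$ of a Brownian motion $S_1$; moving a single well-chosen coordinate $(i_0,m_0)$ (one with no interlacing neighbour in $\Phi$ on one side) gives a one-sided constraint $S_2(\hat z)\ge S_2(z)$ for $z\ge\hat z$, where $S_2=B_{m_0+1}-B_{m_0}=\alpha S_1+\beta S_3$ with $\beta>0$ and $S_3$ independent of $S_1$. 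Conditionally on $\hat z=\argmax_I S_1$, the process $S_1(\hat z)-S_1(\cdot)$ is a Bessel-3 near $\hat z$ while $S_3$ is independent Brownian, so the ratio $(S_3(z)-S_3(\hat z))/(S_1(\hat z)-S_1(z))$ is a.s.\ unbounded as $z\searrow\hat z$, contradicting $S_2(\hat z)\ge S_2(z)$. This Bessel-versus-Brownian step is the missing idea in your outline.
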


\begin{figure}[hbt!]
    \centering
\begin{tikzpicture}[line cap=round,line join=round,>=triangle 45,x=1cm,y=0.23cm]
\clip(-2,-2) rectangle (12,17);

\draw [line width=1pt, color=cyan] (9,17) -- (9,15) -- (7.4,15) -- (7.4,13) -- (5.3, 13) -- (5.3, 11) -- (4.8,11) -- (4.8,9) -- (3.5,9);
\draw [line width=1pt, color=cyan] (3.5,6) -- (2.2,6) -- (2.2,4) -- (1.9,4) -- (1.9,2) -- (1.5,2) -- (1.5,0) -- (0,0);
\draw [line width=1pt, color=cyan] [dashed] (3.5,9) -- (3.5,6);

\draw [line width=1pt, color=cyan] (10,15) -- (9.5,15) -- (9.5,13) -- (8.1,13) -- (8.1, 11) -- (7,11) -- (7,9) -- (6.1,9);
\draw [line width=1pt, color=cyan] (6.1,6)  -- (4.2,6) -- (4.2,4) -- (3.4,4) -- (3.4,2) -- (2.3,2) -- (2.3,0) -- (2,0) -- (2,-2);
\draw [line width=1pt, color=cyan] [dashed] (6.1,9) -- (6.1,6);

\draw [line width=1pt, color=cyan] (10, 11) -- (9,11) -- (9,9) -- (8.1,9);
\draw [line width=1pt, color=cyan] (8.1,6)  -- (7.2,6) -- (7.2,4) -- (5.8,4) -- (5.8,2) -- (5.3,2) -- (5.3,0) -- (4,0) -- (4,-2);
\draw [line width=1pt, color=cyan] [dashed] (8.1,9) -- (8.1,6);

\draw (0,25) -- (10,25);

\foreach \i in {1,...,4}
{
\draw [dotted] [thick] (0, 8-2*\i) -- (10, 8-2*\i);
\begin{scriptsize}
\draw (0,8-2*\i) node[anchor=east]{$\scrB_{\i}$};
\end{scriptsize}
}

\foreach \i in {1,...,4}
{
\draw [dotted] [thick] (0, 7+2*\i) -- (10, 7+2*\i);
\begin{scriptsize}
\draw (10,7+2*\i) node[anchor=west]{$\scrB_{\i}'$};
\end{scriptsize}
}
\draw [fill=uuuuuu] (8.1,6) circle (1.0pt);
\draw [fill=uuuuuu] (7.2,4) circle (1.0pt);
\draw [fill=uuuuuu] (5.8,2) circle (1.0pt);
\draw [fill=uuuuuu] (5.3,0) circle (1.0pt);

\draw [fill=uuuuuu] (6.1,6) circle (1.0pt);
\draw [fill=uuuuuu] (4.2,4) circle (1.0pt);
\draw [fill=uuuuuu] (3.4,2) circle (1.0pt);
\draw [fill=uuuuuu] (2.3,0) circle (1.0pt);

\draw [fill=uuuuuu] (3.5,6) circle (1.0pt);
\draw [fill=uuuuuu] (2.2,4) circle (1.0pt);
\draw [fill=uuuuuu] (1.9,2) circle (1.0pt);
\draw [fill=uuuuuu] (1.5,0) circle (1.0pt);

\draw [fill=uuuuuu] (4.8,11) circle (1.0pt);
\draw [fill=uuuuuu] (5.3,13) circle (1.0pt);
\draw [fill=uuuuuu] (7.4,15) circle (1.0pt);

\draw [fill=uuuuuu] (7,11) circle (1.0pt);
\draw [fill=uuuuuu] (8.1,13) circle (1.0pt);
\draw [fill=uuuuuu] (9.5,15) circle (1.0pt);

\draw [fill=uuuuuu] (9,11) circle (1.0pt);
\begin{scriptsize}
\draw (8.1,6) node[anchor=north]{$z^*_{3,0}$};
\draw (7.2,4) node[anchor=north]{$z^*_{3,1}$};
\draw (5.8,2) node[anchor=north]{$z^*_{3,2}$};
\draw (5.3,0) node[anchor=north]{$z^*_{3,3}$};

\draw (6.1,6) node[anchor=north]{$z^*_{2,0}$};
\draw (4.2,4) node[anchor=north]{$z^*_{2,1}$};
\draw (3.4,2) node[anchor=north]{$z^*_{2,2}$};
\draw (2.3,0) node[anchor=north]{$z^*_{2,3}$};

\draw (3.5,6) node[anchor=north]{$z^*_{1,0}$};
\draw (2.2,4) node[anchor=north]{$z^*_{1,1}$};
\draw (1.9,2) node[anchor=north]{$z^*_{1,2}$};
\draw (1.5,0) node[anchor=north]{$z^*_{1,3}$};

\draw (4.8,11) node[anchor=south]{$z^*_{1,-1}$};
\draw (5.3,13) node[anchor=south]{$z^*_{1,-2}$};
\draw (7.4,15) node[anchor=south]{$z^*_{1,-3}$};

\draw (7,11) node[anchor=south]{$z^*_{2,-1}$};
\draw (8.1,13) node[anchor=south]{$z^*_{2,-2}$};
\draw (9.5,15) node[anchor=south]{$z^*_{2,-3}$};

\draw (9,11) node[anchor=south]{$z^*_{3,-1}$};
\end{scriptsize}

\end{tikzpicture}
\caption{
An illustration of the jump times in the statement of Lemma \ref{L:jump-points-distinct}.
}
\label{fig:diffz}
\end{figure}
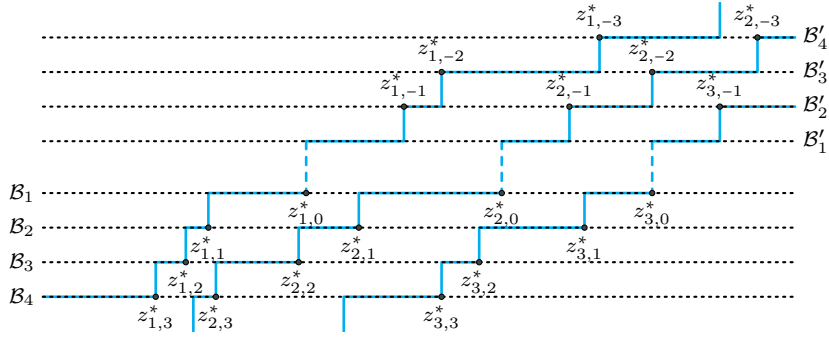

Since $\II{1, k} \times \Z$ is countable, it suffices to show that for fixed $(i, m)$, almost surely $z^*_{i,m} \neq z^*_{i,m+1}$ and $z^*_{i-1,m} \neq z^*_{i,m+1}$ (if $i\ge 2$).
The idea is again to use the Brownian Gibbs property, reducing this to the following statement: on a compact interval, two Brownian motions that are correlated (but not identical or trivially related) almost surely attain their maxima at different points.

\begin{proof}
We note that there are interlacing relations $z^*_{i,m+1}\le z^*_{i,m}$ and $z^*_{i-1,m}\le z^*_{i,m+1}$, from the definition and essential disjointness of the parabolic paths.

Now suppose that there is some $\hat{z}\in\R$, such that $\hat{z} = z^*_{i,m}= z^*_{i,m+1}$ or $\hat{z} = z^*_{i-1,m}= z^*_{i,m+1}$ for some $i,m$.
The plan is to use the Brownian Gibbs property to reduce this problem to studying the $\argmax$ of Brownian motion. 
The interlacing relations impose constraints on the domain of the $\argmax$, so we need to consider all jump times that equal $\hat{z}$.

By looking at all $z^*_{i,m}$ that equal $\hat{z}$, we can find some integers $m_-$ and $m_+$ with $m_+ > m_-$, such that for each $m \in \II{m_-, m_+}$, there is at least one jump time equal to $\hat{z}$ on line $m$, and there is no jump time equal $\hat{z}$ on the lines $m_--1$ and $m_++1$.
We then let $\Phi\subset \llbracket 1,k\rrbracket\X \Z$ be the set of indices for all jump times equal to $\hat{z}$ amongst the lines $\II{m_-, m_+}$.
In other words, we define 
\[
\Phi = \{(i,m): z^*_{i,m}=\hat{z},\;\; m_- \le m \le m_+\},
\]
such that $z^*_{i,m_--1} \neq \hat{z}$, $z^*_{i,m_++1} \neq \hat{z}$ for any $i\in\II{1,k}$.

We note that the number of $\Phi$ of this form is countable. It now suffices to prove the following claim.

\begin{claim}
For any fixed $\Phi \subset \II{1,k}\X\Z$ such that $\Phi \cap (\II{1,k}\X\{m\})$ is nonempty if and only if $m\in \II{m_-,m_+}$ for some $m_-<m_+$, 
almost surely we cannot find a number $\hat{z}$ so that $\Phi = \{(i,m): z^*_{i,m}=\hat{z},\;\; m_--1 \le m \le m_++1\}$.
\end{claim}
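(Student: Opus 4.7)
The plan is to localize the problem near any hypothetical common jump point $\hat z$, apply the Brownian Gibbs property (Theorem \ref{T:melon-Airy-facts}) to $\scrB$ and $\scrB'$ in a neighborhood of $\hat z$, and reduce the claim to the impossibility of two structurally distinct arg-max problems sharing a maximizer.

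First I would localize. Fix a compact rational interval $I \sset \R$ and let $E_I$ denote the event that there exists $\hat z \in I$ with $\Phi = \{(i,m): z^*_{i,m} = \hat z,\; m_- - 1\le m \le m_+ + 1\}$. Since $\R$ is a countable union of such intervals, it suffices to show $\p(E_I) = 0$. By further partitioning $I$ (still countably many pieces), I may also arrange that on $E_I$, the only jump times of $\pi^*$ or $\pi'^*$ on any of the lines in $\II{m_- - 1, m_+ + 1}$ that lie in $I$ are the ones forced to equal $\hat z$ by $\Phi$.

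Next I would use local optimality of $\pi^*$ and $\pi'^*$ (Proposition \ref{P:unique-exist}(i)) to give a variational characterization of the jumps in $\Phi$. For a single path, if the jump from line $m+1$ to line $m$ occurs at $z^*_{i,m}$, then holding the neighboring jump times fixed, $z^*_{i,m}$ is the arg-max of $z \mapsto \scrB_m(z) - \scrB_{m+1}(z)$ on the interval $[z^*_{i,m+1}, z^*_{i-1,m}]$, with the convention $z^*_{0,m} = +\infty$; the analogous statement holds on the $\scrB'$ side. Because $m_- < m_+$, the set $\Phi$ has entries on at least two distinct lines, so on $E_I$ at least two genuinely different arg-max problems of this type must share the same maximizer $\hat z$. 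A short case analysis handles the two basic configurations: two consecutive jumps on a single path (involving lines $m, m+1, m+2$ of the same ensemble), or jumps on adjacent paths at adjacent lines (involving the interlacing constraint $z^*_{i-1,m} \le z^*_{i,m+1}$).

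Finally I would invoke the Brownian Gibbs property on the rectangle $I \X \II{m_- - 1, m_+ + 1}$ in $\scrB$, and on the symmetric rectangle in $\scrB'$ when needed. Let $\scrG$ be the $\sig$-algebra generated by everything outside this rectangle together with all of $\scrB'$. Conditional on $\scrG$, the restrictions $\scrB_m|_I$ for $m\in\II{m_--1,m_++1}$ are distributed as non-intersecting Brownian bridges with prescribed endpoints, and by the same absolute continuity argument used in the proof of Lemma \ref{L:specifics}, any two increments $\scrB_m(b) - \scrB_m(a)$ and $\scrB_{m'}(b') - \scrB_{m'}(a')$ coming from distinct pairs $(m, [a,b])$ and $(m', [a',b'])$ are jointly absolutely continuous with respect to Lebesgue measure. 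This forces the two arg-max problems identified in the previous step to have distinct maximizers almost surely, so $\p(E_I \mid \scrG) = 0$ and hence $\p(E_I) = 0$.

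The main obstacle will be the second step: extracting, in each configuration allowed by $\Phi$, a clean pair of non-degenerate arg-max problems whose joint maximizers under the conditional Brownian Gibbs law are almost surely distinct. The two types of coincidence identified in the claim ($z^*_{i,m} = z^*_{i,m+1}$ versus $z^*_{i-1,m} = z^*_{i,m+1}$), together with the possibility that $\Phi$ straddles both $\scrB$ and $\scrB'$ (when $m_- < 0 < m_+$), require parallel but slightly different arguments in each case.
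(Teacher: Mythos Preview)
Your overall strategy matches the paper's: localize to a rational interval $I$, arrange that no jump times outside $\Phi$ fall in $I$, translate the event into an $\argmax$ statement about the relevant lines of $\scrB$ and $\scrB'$, and then apply the Brownian Gibbs property to reduce to independent Brownian motions. The paper does exactly this.

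The gap is in your final step. You claim that ``joint absolute continuity of increments forces the two arg-max problems to have distinct maximizers almost surely.'' This would be fine if the two $\argmax$ problems were over a fixed interval with $\hat z$ in the interior, but they are not. The interval on which $z^*_{i,m}$ is optimized is $[z^*_{i,m+1},\, z^*_{i,m-1}\wedge z^*_{i+1,m+1}]$ (or similar), and when $(i,m+1)\in\Phi$ the left endpoint of this interval \emph{is} $\hat z$. So the statement ``$z^*_{i,m}=\hat z$'' just says a Brownian-type process on $[\hat z,b]$ is maximized at its left endpoint, which has positive probability. Absolute continuity of increments does not rule this out.

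The paper's fix is to compare two carefully chosen variations rather than two individual coordinate $\argmax$'s. First, move \emph{all} coordinates $\{z_{i,m}\}_{(i,m)\in\Phi}$ together: this respects the interlacing constraints and shows that $S_1:=\sum_{(i,m)\in\Phi}(B_{m+1}-B_m)$ attains a genuine (two-sided) maximum at $\hat z$ on all of $I$. Second, pick a single coordinate $(i_0,m_0)\in\Phi$ with $(i_0,m_0-1),(i_0+1,m_0+1)\notin\Phi$, so that increasing $z_{i_0,m_0}$ alone is unconstrained; this gives only a one-sided inequality $S_2(\hat z)\ge S_2(z)$ for $z\ge\hat z$, where $S_2=B_{m_0+1}-B_{m_0}$. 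Writing $S_2=\alpha S_1+\beta S_3$ with $S_3$ independent of $S_1$, the paper then argues that conditionally on $\hat z=\argmax_I S_1$, the process $S_1(\hat z)-S_1(\cdot)$ scales like a Bessel-$3$ process near $\hat z$, so $\limsup_{z\searrow\hat z}(S_3(z)-S_3(\hat z))/(S_1(\hat z)-S_1(z))=+\infty$ almost surely, contradicting $S_2(\hat z)\ge S_2(z)$. This Bessel-$3$ argument is the missing ingredient; you correctly flag the step as the main obstacle, but the resolution is more delicate than an appeal to absolute continuity.
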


To prove this claim, the general strategy is as follows:
\begin{enumerate}
    \item translate this event to properties of the $\argmax$ of some linear combination of lines of $\scrB$;
    \item replace $\scrB$ by independent Brownian motions (with diffusion parameter $2$);
    \item analyze the probability of certain events involving Brownian motions.
\end{enumerate}
\begin{proof}[Claim Proof]
It suffices to show that, for any such $\Phi$ and any rational interval $I$, almost surely the following event does not happen: we can find $\hat{z}\in I'$, where $I'$ is the middle $1/3$ of $I$, such that
\begin{itemize}
    \item for any $(i,m)\in\Phi$, we have $z^*_{i,m}=\hat{z}$,
    \item for any $(i,m)\in\II{m_--1, m_++1} \X \Z \setminus \Phi$, we have $z^*_{i,m}\not\in I$.
\end{itemize}
By comparing path lengths using Lemma \ref{l:diff-weight-finite},
this implies that $(\scrB_{m^-}, \dots, \scrB_{m^+ + 1})\in A$, where $A$ is the following event defined on sequences of continuous functions $f_{m^-}, \dots, f_{m^+ + 1}:I \to \R$. The function
\[
g_f(\{z_{i,m}\}_{(i,m)\in\Phi}) = \sum_{(i,m)\in\Phi}f_{m+1}(z_{i,m})-f_{m}(z_{i,m})
\]
defined on the domain where $z_{i, m} \in I$,  $z_{i,m+1}\le z_{i,m}$ and $z_{i-1,m}\le z_{i,m+1}$ for all $i$ and $m$ attains its maximum when $\{z_{i,m}\}_{(i,m)\in\Phi}=\{\hat{z}\}_{(i,m)\in\Phi}$ for some $\hat{z}\in I'$. Here when defining $(\scrB_{m^-}, \dots, \scrB_{m^+ + 1})$ we let $\scrB_m=\scrB'_{1-m}$ for $m\le 0$.

We next apply the Brownian Gibbs property to $\scrB$ and $\scrB'$, which enables us to replace $(\scrB_{m^-}, \dots, \scrB_{m^+ + 1})$ by independent Brownian motions (with diffusion parameter $2$).
Namely, it now suffices to prove that almost surely $B = (B_{m^-}, \dots, B_{m^+ + 1})\in A$, where the $B_i$ are independent Brownian motions (with diffusion parameter $2$).

The last step is to prove $\p(B \in A)=0$, by studying the Brownian motions and using inequalities given by the maximum condition.
Consider the functions
\[
S_1=\sum_{(i,m)\in\Phi}B_{m+1}-B_{m}, \quad S_2=B_{m_0+1}-B_{m_0},
\]
where $(i_0,m_0)\in\Phi$ is chosen so that $(i_0,m_0-1), (i_0+1,m_0+1)\not\in\Phi$.
Under the event $A$, for any $z \in I$ we have
\[
g_B\big(\{\hat{z}\}_{(i,m)\in\Phi}\big) \ge g_B\big(\{z\}_{(i,m)\in\Phi}\big),
\]
so $S_1(\hat{z})\ge S_1(z)$. 
We can also just deviate the $(i_0, m_0)$ coordinate, but now only in one direction since we must preserve the interlacing conditions. Namely, for any $z\in I$, $z\ge \hat{z}$, under the event $A$ we have 
\[
g_B\big(\{\hat{z}\}_{(i,m)\in\Phi}\big) \ge g_B\big(\{\hat{z}+ \mathbf{1}((i,m)=(i_0,m_0)) (z-\hat{z})\}_{(i,m)\in\Phi}\big),
\]
so $S_2(\hat{z})\ge S_2(z)$.
It remains to show that, for any $\hat{z} \in I'$,
\begin{equation}  \label{eq:bmt0}
\p(S_2(\hat{z})\ge S_2(z), \forall z \ge \hat{z}, z \in I \mid \hat{z} = \argmax_I S_1) = 0.
\end{equation}
Note that $S_1$ and $S_2$ are Brownian motions, and we can write $S_2=\alpha S_1 + \beta S_3$ for some $\alpha, \beta \in \R$, $\beta >0$, where $S_3$ is a Brownian motion independent of $S_1$.
However, for any $\hat{z} \in I'$, we have
\begin{equation}  \label{eq:bmt1}
\p\big( \limsup_{z\searrow\hat{z}} \frac{S_3(z)-S_3(\hat{z})}{S_1(\hat{z})-S_1(z)} > \alpha\beta^{-1} \mid \hat{z} = \argmax_I S_1 \big) = 1.    
\end{equation}
This is because, conditional on $\hat{z} = \argmax_I S_1$ and as $z\searrow\hat{z}$, a rescaling of $S_1(\hat{z})-S_1(z)$ converges in distribution to a Bessel process of order $3$. By Blumenthal's zero–one law, the left-hand side of \eqref{eq:bmt1} is either zero or one. It cannot be zero since the distribution of the ratio of a Brownian motion over an independent Bessel process of order $3$ is unbounded.

From \eqref{eq:bmt1} we get \eqref{eq:bmt0}, which means that $\p(B \in A)=0$ and the conclusion follows.
\end{proof}

Given the above claim, since the number of possible $\Phi$ is countable, the conclusion follows.
\end{proof}

\subsection{Monotonicity of optimizers}
In this subsection we aim to establish monotonicity for optimizers in the extended landscape.
Some arguments are in parallel to those in Section \ref{S:basic-properties} for last passage across lines.
We first establish that leftmost and rightmost optimizers are well-defined. For this lemma we write $\pi \le \pi'$ for two multi-paths $\pi$ and $\pi'$ if the weak inequality holds pointwise and coordinatewise.
\begin{lemma}  \label{L:exist-left-right-most}
The following statement holds almost surely for $\scrL^*$.
For any $\bx, \by \in \R^k_\le$ and $s<t$,
there are optimizers $\pi^\ell, \pi^r$ from $(\bx,s)$ to $(\by,t)$, such that $\pi^\ell \le \pi \le \pi^r$ for any other optimizer $\pi$ from $(\bx,s)$ to $(\by,t)$. 
We call $\pi^\ell$ and $\pi^r$ the leftmost and rightmost optimizers from $(\bx,s)$ to $(\by,t)$, respectively.
\end{lemma}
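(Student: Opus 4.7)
The strategy follows Lemma~\ref{L:rightmost-multi-path}: establish a lattice structure on the set of optimizers via a quadrangle inequality, then extract extremal elements via a separability argument. The two main tasks are to upgrade the pointwise quadrangle inequality for $\scrL^*$ into a lattice inequality for path lengths $\|\cdot\|_{\scrL^*}$, and to realize $\pi^r$ as a coordinatewise supremum of a countable family of optimizers.

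First I would show that almost surely, for every $s<t$ and $\bx, \bx', \by, \by' \in \R^k_\le$,
\[
\scrL^*(\bx^\ell, s; \by^\ell, t) + \scrL^*(\bx^r, s; \by^r, t) \ge \scrL^*(\bx, s; \by, t) + \scrL^*(\bx', s; \by', t),
\]
where $\bx^\ell, \bx^r, \by^\ell, \by^r$ denote coordinatewise min and max. For each fixed rational $s<t$, the marginal $\scrL^*(\cdot, s; \cdot, t)$ is an extended Airy sheet of scale $(t-s)^{1/3}$ by Condition~I of Definition~\ref{D:directed-landscape}, and extended Airy sheets satisfy the quadrangle inequality by Proposition~\ref{P:B-continuity} combined with Lemma~\ref{L:b-quadrangle}, with the $x_1 \ge 0$ restriction removed using translation invariance from Lemma~\ref{L:basic-sym}. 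Intersecting these events over rational $(s,t)$ and invoking continuity of $\scrL^*$ extends the inequality to all $s<t$ on a single almost sure set.

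Next, for two optimizers $\pi, \tau$ from $(\bx, s)$ to $(\by, t)$, define $\zeta_i(u) = \pi_i(u) \vee \tau_i(u)$ and $\zeta'_i(u) = \pi_i(u) \wedge \tau_i(u)$ coordinatewise. Both are continuous multi-paths into $\R^k_\le$ agreeing with $\pi, \tau$ at the endpoints, since $\pi_i \le \pi_{i+1}$ and $\tau_i \le \tau_{i+1}$ imply $\zeta_i \le \zeta_{i+1}$ and similarly for $\zeta'$. Writing $L(\sigma, \mathbf{t}) = \sum_i \scrL^*(\sigma(t_{i-1}), t_{i-1}; \sigma(t_i), t_i)$ for a partition $\mathbf{t} = (s = t_0 < \cdots < t_m = t)$, the pointwise quadrangle inequality applied on each subinterval gives $L(\zeta, \mathbf{t}) + L(\zeta', \mathbf{t}) \ge L(\pi, \mathbf{t}) + L(\tau, \mathbf{t})$. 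For $\epsilon > 0$, choose partitions $\mathbf{t}_1, \mathbf{t}_2$ with $L(\zeta, \mathbf{t}_1) < \|\zeta\|_{\scrL^*} + \epsilon$ and $L(\zeta', \mathbf{t}_2) < \|\zeta'\|_{\scrL^*} + \epsilon$, and let $\mathbf{t} = \mathbf{t}_1 \cup \mathbf{t}_2$. Refining a partition can only decrease $L(\sigma, \mathbf{t})$ by the triangle inequality (Proposition~\ref{P:mc-everywhere}), so $L(\zeta, \mathbf{t}) \le L(\zeta, \mathbf{t}_1)$ and $L(\zeta', \mathbf{t}) \le L(\zeta', \mathbf{t}_2)$. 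Therefore $\|\zeta\|_{\scrL^*} + \|\zeta'\|_{\scrL^*} + 2\epsilon > 2\scrL^*(\bx, s; \by, t)$, and sending $\epsilon \to 0$ together with the upper bound $\|\zeta\|_{\scrL^*}, \|\zeta'\|_{\scrL^*} \le \scrL^*(\bx, s; \by, t)$ from \eqref{E:ext-L-ineq} forces both $\zeta$ and $\zeta'$ to be optimizers.

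Finally I would construct $\pi^r$. Lemma~\ref{L:transfluc} applied with arbitrary intermediate times yields a common modulus of continuity for the set $\Pi$ of optimizers from $(\bx, s)$ to $(\by, t)$, so $\Pi$ is relatively compact in the uniform topology; it is closed by Lemma~\ref{L:upper-semi-conti-path} and \eqref{E:ext-L-ineq}, hence compact and separable. Fix a countable dense $\{\pi^{(n)}\} \subset \Pi$ and set $\sigma^{(n)} = \pi^{(1)} \vee \cdots \vee \pi^{(n)}$; the lattice property gives $\sigma^{(n)} \in \Pi$ by induction, and $\{\sigma^{(n)}\}$ is pointwise nondecreasing. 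Its pointwise limit $\pi^r$ is continuous (the common modulus is preserved under suprema), and density of $\{\pi^{(n)}\}$ in $\Pi$ shows $\pi^r(u) = \sup_{\pi \in \Pi} \pi(u)$ coordinatewise for each $u \in [s, t]$. Dini's theorem promotes $\sigma^{(n)} \to \pi^r$ to uniform convergence, so Lemma~\ref{L:upper-semi-conti-path} yields $\|\pi^r\|_{\scrL^*} \ge \scrL^*(\bx, s; \by, t)$, and \eqref{E:ext-L-ineq} gives the reverse bound, placing $\pi^r \in \Pi$. The leftmost optimizer $\pi^\ell$ is constructed analogously via infima. The main subtlety is the refinement step in the lattice argument: since $\|\cdot\|_{\scrL^*}$ is an infimum over partitions, the naive summation of the pointwise quadrangle inequality along a fixed partition does not immediately yield a lattice inequality for lengths, and the resolution is to combine separate near-optimal partitions for $\zeta$ and $\zeta'$ into a common refinement that is near-optimal for both.
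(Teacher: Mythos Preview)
Your proof is correct and follows essentially the same two-step approach as the paper: establish that the set of optimizers is a lattice via the quadrangle inequality for $\scrL^*$, then extract extremal elements. The only differences are cosmetic—you construct $\pi^r$ explicitly as a supremum over a countable dense subset of the compact set of optimizers while the paper invokes Zorn's lemma on monotone chains, and your common-refinement argument for the path-length lattice inequality $\|\zeta\|_{\scrL^*}+\|\zeta'\|_{\scrL^*}\ge \|\pi\|_{\scrL^*}+\|\tau\|_{\scrL^*}$ makes explicit a step the paper states in one line.
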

\begin{proof}
Let $\pi^{(1)}$ and $\pi^{(2)}$ be two optimizers from $(\bx,s)$ to $(\by,t)$.
Let $\pi^{(3)}=\pi^{(1)}\wedge \pi^{(2)}$ and $\pi^{(4)}=\pi^{(1)}\vee \pi^{(2)}$, where $\wedge$ and $\vee$ are defined pointwise and coordinatewise. Since all optimizers are continuous by Lemma \ref{L:transfluc}, $\pi^{(3)}$ and $\pi^{(4)}$ are continuous functions from $[t, s]$ to $\R^k_\le$.
By the definition of $\|\cdot\|_{\scrL^*}$, the fact that $\scrL^*$ has extended Airy sheet marginals, and Lemma \ref{L:b-quadrangle},
we have that 
$$
\|\pi^{(3)}\|_{\scrL^*} + \|\pi^{(4)}\|_{\scrL^*} \ge \|\pi^{(1)}\|_{\scrL^*} + \|\pi^{(2)}\|_{\scrL^*}.
$$
Since $\pi^{(1)}, \pi^{(2)}$ are optimizers, this must an equality.
Thus $\pi^{(3)}, \pi^{(4)}$ are also optimizers.

Now consider any monotone sequence of optimizers $\pi^{(1)} \le \pi^{(2)}, \dots$. By Lemma \ref{L:transfluc}, this sequence has a bounded pointwise limit $\pi'$ on dyadic rationals, and $\pi'$ is continuous. This limit is also an optimizer by Lemma \ref{L:upper-semi-conti-path}. Thus by Zorn's lemma, there is an optimizer $\pi^\ell$, such that for any optimizer $\pi$, the condition $\pi\le \pi^\ell$ implies $\pi = \pi^\ell$. Thus for any optimizer $\pi$, since the multi-path $\pi \wedge \pi^\ell$ is an optimizer satisfying $\pi \wedge \pi^\ell\le \pi^\ell$, we must have $\pi \wedge \pi^\ell = \pi^\ell$, implying that $\pi^\ell\le \pi$. Therefore $\pi^\ell$ is the leftmost optimizer.
The existence of the rightmost optimizer follows similarly.
\end{proof}

\begin{lemma}  \label{L:monotonicity}
The following statements hold almost surely.
For any $\bx\le \bx', \by\le\by' \in \R^k_\le$ and $s<t$,
let $\pi$ be the leftmost (resp. rightmost) optimizer from $(\bx,s)$ to $(\by,t)$ and $\pi'$ be the leftmost (resp. rightmost) optimizer from $(\bx',s)$ to $(\by',t)$.
Then $\pi\leq \pi'$.
\end{lemma}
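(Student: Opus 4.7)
The plan is to prove Lemma \ref{L:monotonicity} via a swap argument: show that the coordinatewise pointwise minimum and maximum of two optimizers with comparable endpoints are themselves optimizers, and then read off monotonicity from the defining property of leftmost/rightmost optimizers. This parallels the proof of Lemma \ref{L:mono-tree-multi-path}(i) for finite LPP, with the quadrangle inequality Lemma \ref{L:b-quadrangle} playing the role that Lemma \ref{L:quadrangle} played there.

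The first step is to lift Lemma \ref{L:b-quadrangle} to a quadrangle inequality for $\scrL^*$. Proposition \ref{P:B-continuity} identifies $\scrS(\bx,\by)$ with $\scrB[\bx\to\by]$ whenever $x_1 \ge 0$, so Lemma \ref{L:b-quadrangle} immediately gives
$$
\scrS(\bx^\ell, \by^\ell) + \scrS(\bx^r, \by^r) \ge \scrS(\bx, \by) + \scrS(\bx', \by')
$$
almost surely on the restricted domain $\{x_1, x_1' \ge 0\}$. Shift invariance $\scrS \eqd \scrS \circ T_c$ (Definition \ref{D:extended-sheet}) extends this, for each fixed configuration, to arbitrary $\bx, \bx', \by, \by' \in \R^k_\le$, and then continuity of $\scrS$ together with a countable dense set argument yields a simultaneous almost sure statement. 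Since $\scrL^*(\cdot, s; \cdot, t)$ is an extended Airy sheet of scale $(t-s)^{1/3}$, and independent across disjoint time intervals, applying this at rational $s<t$ and invoking continuity of $\scrL^*$ in time produces the bound
$$
\scrL^*(\bx^\ell, s; \by^\ell, t) + \scrL^*(\bx^r, s; \by^r, t) \ge \scrL^*(\bx, s; \by, t) + \scrL^*(\bx', s; \by', t)
$$
simultaneously for all $s<t$ and endpoint configurations, almost surely.

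The second step fixes $\bx \le \bx'$, $\by \le \by'$, and optimizers $\pi, \pi'$ from $(\bx, s)$ to $(\by, t)$ and from $(\bx', s)$ to $(\by', t)$, respectively. Define $\tilde\pi = \pi \wedge \pi'$ and $\tilde\pi' = \pi \vee \pi'$ coordinatewise and pointwise. Both are continuous multi-paths valued in $\R^k_\le$ (ordering is preserved by coordinatewise min and max), $\tilde\pi$ runs from $(\bx, s)$ to $(\by, t)$, and $\tilde\pi'$ runs from $(\bx', s)$ to $(\by', t)$. The trivial partition gives $\|\tilde\pi\|_{\scrL^*} \le \scrL^*(\bx, s; \by, t) = \|\pi\|_{\scrL^*}$ and similarly $\|\tilde\pi'\|_{\scrL^*} \le \|\pi'\|_{\scrL^*}$. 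For the reverse bound, given $\varepsilon > 0$ I choose a partition $s = t_0 < \cdots < t_m = t$ such that the Riemann-type sums for $\tilde\pi$ and $\tilde\pi'$ approximate their lengths to within $\varepsilon$. Applying the slice-by-slice quadrangle inequality from Step 1 bounds the sum of these two Riemann sums below by the analogous sum with $\tilde\pi, \tilde\pi'$ replaced by $\pi, \pi'$; since $\pi, \pi'$ are optimizers, Proposition \ref{P:mc-everywhere} ensures this latter sum equals $\|\pi\|_{\scrL^*} + \|\pi'\|_{\scrL^*}$. Sending $\varepsilon \to 0$ forces equality in both earlier bounds, so $\tilde\pi$ and $\tilde\pi'$ are themselves optimizers.

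The third step concludes. If $\pi, \pi'$ are the leftmost optimizers for $(\bx, s; \by, t)$ and $(\bx', s; \by', t)$, then $\pi \wedge \pi'$ is an optimizer from $(\bx, s)$ to $(\by, t)$, so by leftmostness $\pi \le \pi \wedge \pi' \le \pi'$. For rightmost optimizers the dual argument gives $\pi \le \pi \vee \pi' \le \pi'$. The main technical care lies in Step 1, ensuring the quadrangle inequality holds almost surely simultaneously in both endpoints and time; this is the only nontrivial point, and it is handled by the combination of Lemma \ref{L:b-quadrangle}, the extended Airy sheet representation Proposition \ref{P:B-continuity}, shift invariance, and the joint continuity of $\scrL^*$.
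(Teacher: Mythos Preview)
Your proposal is correct and follows essentially the same approach as the paper's proof: form the coordinatewise min and max of $\pi$ and $\pi'$, use the quadrangle inequality (Lemma \ref{L:b-quadrangle}, lifted to $\scrL^*$ via its extended Airy sheet marginals) together with the definition of $\|\cdot\|_{\scrL^*}$ to show these are again optimizers, and conclude from the leftmost/rightmost property. The paper compresses your Steps 1 and 2 into the single sentence ``this follows by the definition of $\|\cdot\|_{\scrL^*}$ and Lemma \ref{L:b-quadrangle}'', whereas you spell out the lifting of the quadrangle inequality to all of $\scrL^*$ and the slice-by-slice partition argument explicitly.
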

\begin{proof}
We prove for the case where $\pi, \pi'$ are the leftmost optimizers. The rightmost case follows similarly.
Define $\pi^\ell=\pi\wedge \pi'$ and $\pi^r=\pi\vee \pi'$.
Then $\pi^\ell$ and $\pi^r$ are both continuous multi-paths from $(\bx,s)$ to $(\by,t)$, and from $(\bx',s)$ to $(\by',t)$, respectively.

We claim that $\|\pi^\ell\|_{\scrL^*} + \|\pi^r\|_{\scrL^*} \ge \|\pi\|_{\scrL^*} + \|\pi'\|_{\scrL^*}$.
Indeed, this follows by the definition of $\|\cdot\|_{\scrL^*}$ and Lemma \ref{L:b-quadrangle}.
However, we also have $\|\pi\|_{\scrL^*} \ge \|\pi^\ell\|_{\scrL^*}$ and $\|\pi'\|_{\scrL^*} \ge \|\pi^r\|_{\scrL^*}$, by the definition of optimizers.
Therefore $\|\pi\|_{\scrL^*} = \|\pi^\ell\|_{\scrL^*}$ and $\|\pi'\|_{\scrL^*} = \|\pi^r\|_{\scrL^*}$, and hence $\pi^\ell, \pi^r$ are also optimizers.
As $\pi$ is the leftmost optimizer, we have $\pi\le \pi^\ell$. On the other hand, $\pi^\ell\le \pi, \pi'$ from the definition of $\pi^\ell$.
Thus $\pi=\pi^\ell\le \pi'$.
\end{proof}

\subsection{Sums of disjoint paths}

The goal of this section is to show the following proposition.
\begin{prop}   \label{P:sum-of-disjoint}
Almost surely the following statement is true.
Take any $s < t$ and any multi-paths $\pi:[s,t]\to\R^k_\le$ and $\pi':[s,t]\to\R^{k'}_\le$, such that for any $r\in(s,t)$, we have $\pi_k(r) \le \pi'_1(r)$.
Let $\pi'':[s,t]\to\R_\le^{k+k'}$ be such that $\pi''(r) = (\pi(r), \pi'(r))$ for all $r \in (s, t)$. Then
$$
\|\pi''\|_{\scrL^*} \le \|\pi\|_{\scrL^*} + \|\pi'\|_{\scrL^*}.
$$
Moreover, if $\pi_k(r) < \pi'_1(r)$ for all $r \in (s, t)$, then $\|\pi''\|_{\scrL^*} = \|\pi\|_{\scrL^*} + \|\pi'\|_{\scrL^*}$.
\end{prop}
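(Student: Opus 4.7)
My plan is to prove (1) by establishing pointwise subadditivity of $\scrL^*$ at the sheet level and summing over partitions, and to prove (2) via a fine-partition argument combined with asymptotic decoupling of the extended Airy sheet at widely separated endpoints.

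For (1), the key step is the pointwise inequality
\begin{equation}\label{E:subad-prop}
\scrL^*(\bx\cup\bx',s;\by\cup\by',t) \le \scrL^*(\bx,s;\by,t) + \scrL^*(\bx',s;\by',t),
\end{equation}
holding almost surely for all $s<t$ and $\bx\in\R^k_\le$, $\bx'\in\R^{k'}_\le$, $\by\in\R^k_\le$, $\by'\in\R^{k'}_\le$ with $x_k\le x'_1$ and $y_k\le y'_1$. For each fixed rational pair $s<t$, $\scrL^*(\cdot,s;\cdot,t)$ is an extended Airy sheet, so after applying shift invariance (Lemma \ref{L:basic-sym}) to arrange $x_1\ge 0$, Proposition \ref{P:B-continuity} identifies $\scrS(\bx,\by)=\scrB[\bx\to\by]$ as a supremum of lengths over disjoint parabolic path tuples; any disjoint $(k+k')$-tuple from $\bx\cup\bx'$ to $\by\cup\by'$ splits into its first $k$ and last $k'$ coordinates, each admissible for the corresponding single-group supremum. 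Taking a countable union over rationals and using continuity of $\scrL^*$ propagates \eqref{E:subad-prop} to all $s<t$ almost surely. Then (1) follows: for any partition $P$ of $[s,t]$, applying \eqref{E:subad-prop} to each subinterval and summing gives $\sum_P(\text{for }\pi'') \le \sum_P(\text{for }\pi) + \sum_P(\text{for }\pi')$; taking $P$ to be the common refinement of $\epsilon$-optimal partitions for $\pi$ and $\pi'$ (which only decreases each individual sum by the triangle inequality in Proposition \ref{P:mc-everywhere}) and letting $\epsilon\to 0$ yields the result.

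For (2), I would prove the reverse inequality via fine partitions. Choose a compact sub-interval $[a,b]\subset(s,t)$ on which the gap $g:=\inf_{r\in[a,b]}(\pi'_1(r)-\pi_k(r))>0$, and take a refining sequence of partitions $P_n$ of $[s,t]$ with $|P_n|\to 0$ whose union is dense, so that $\sum_{P_n}(\text{for }\pi)\to \|\pi\|_{\scrL^*}$ and similarly for $\pi'$ and $\pi''$. Writing $\Delta_i := \scrL^*(\pi(t_{i-1}),\cdot;\pi(t_i),\cdot) + \scrL^*(\pi'(t_{i-1}),\cdot;\pi'(t_i),\cdot) - \scrL^*(\pi''(t_{i-1}),\cdot;\pi''(t_i),\cdot) \ge 0$, it suffices to show that $\sum_i \Delta_i \to 0$. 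On each subinterval $[t_{i-1},t_i]\subset[a,b]$ with $\delta_i := t_i - t_{i-1} \ll g^{3/2}$, I would establish a quantitative decoupling estimate
\begin{equation}\label{E:decoup-prop}
\Delta_i \le \eta_i, \qquad \sum_i \eta_i \to 0 \text{ as } |P_n|\to 0,
\end{equation}
while the contributions from boundary subintervals in $[s,a]\cup[b,t]$ are controlled separately by shrinking these pieces using length additivity over subintervals together with upper semi-continuity of length from Lemma \ref{L:upper-semi-conti-path}.

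The main obstacle is the decoupling estimate \eqref{E:decoup-prop}. Rescaling via $\scrS_\delta(\bx,\by)=\delta\scrS(\delta^{-2}\bx,\delta^{-2}\by)$ reduces it to showing that for the extended Airy sheet $\scrS$, as the spatial gap between two groups of endpoints diverges (with endpoints within each group bounded after centering using Lemma \ref{L:basic-sym}), $\scrS(\bx,\by)+\scrS(\bx',\by')-\scrS(\bx\cup\bx',\by\cup\by')\to 0$ uniformly over compact configurations. Intuitively, disjoint optimizer tuples in $\scrB$ with widely separated asymptotic directions $\bx$ and $\bx'$ occupy different asymptotic regions (governed by $y\to-\sqrt{n/(2x)}$) and are mutually disjoint with high probability for all finite $y$. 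Quantifying this uniformly, likely via the coalescence and disjointness estimates in Lemma \ref{L:cucumber-sandwich} and Lemma \ref{L:A-geod-exist}(iv), combined with sheet fluctuation tail bounds (Lemma \ref{l:change-spatial}), is the technical heart of the argument.
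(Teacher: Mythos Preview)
Your argument for the inequality is correct and matches the paper's: both deduce it from the pointwise subadditivity $\scrL^*((\bx,\bx'),s;(\by,\by'),t)\le \scrL^*(\bx,s;\by,t)+\scrL^*(\bx',s;\by',t)$ together with the definition of length as an infimum over partitions. The paper obtains the pointwise inequality by passing the obvious prelimit inequality for $\scrL_n$ through uniform-on-compacts convergence, which immediately yields it almost surely for all arguments; your route via the $\scrB$-representation and Proposition~\ref{P:B-continuity} works on $\hat\fX$ but requires an additional argument (not merely a distributional shift) to cover $x_1<0$ simultaneously for all $\bx$.

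For the equality your outline has the right skeleton---fine partition, decouple on interior subintervals, control the boundary---and this is also the paper's route. The gap is at the decoupling step. Writing $\Delta_i$ for the defect on the $i$th subinterval, you aim for $\Delta_i\le\eta_i$ with $\sum_i\eta_i\to 0$, but $\Delta_i$ is not a quantity that is uniformly small: on a subinterval of length $\de$ it is either exactly $0$ (when the two optimizer groups are disjoint) or of order $\de^{1/3}$, and there are $\sim\de^{-1}$ such terms. What is actually needed is an almost-sure statement that $\Delta_i=0$ for \emph{every} sufficiently short interior subinterval, simultaneously. The paper supplies three ingredients you are missing: (a) Lemma~\ref{l:bound-L-disjoint}, the exponential tail bound $\p(\Delta>0)\le c\,e^{-dh^3/t^2}$ for clustered endpoints $0^k$ and $h^{\ell}$, proved via Proposition~\ref{P:high-paths-B} and Airy point-process tails; (b) Lemma~\ref{l:property-of-L-disjoint}, a deterministic monotonicity (from Lemma~\ref{L:quadrangle-2}) showing that equality for ``inner'' endpoints forces equality for all ``outer'' ones, reducing arbitrary endpoints to the clustered case; and (c) Lemma~\ref{L:algebraic-disjoint}, a Borel--Cantelli over dyadic scales that converts (a) and (b) into a random threshold $P>0$ below which $\Delta=0$ identically. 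The lemmas you cite (Lemma~\ref{L:cucumber-sandwich} and Lemma~\ref{L:A-geod-exist}(iv)) are qualitative and not summable over scales; you would still need something like (a). Your boundary control via Lemma~\ref{L:upper-semi-conti-path} is also not the right tool: that lemma compares lengths of converging multi-paths, whereas what is needed here is the single-increment bound $\limsup_{t_0\downarrow s}\scrL^*(\pi(s),s;\pi(t_0),t_0)\le 0$, for which the paper invokes Lemma~\ref{l:uniform-EL-bound}.
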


The inequality in Proposition \ref{P:sum-of-disjoint} is immediate from the definition of $\| \cdot \|_{\scrL^*}$, and the fact that 
$$
\scrL^*(\bx, s, \by, t) + \scrL^*(\bx', s, \by', t) \ge \scrL^*((\bx, \bx'), s, (\by, \by'), t)
$$
for any $\bx, \by, \bx', \by', s, t$ for which both sides above make sense. This inequality is inherited from the prelimit $\scrL_n$, where it is clear. To prove the claimed equality in Proposition \ref{P:sum-of-disjoint}, we require a few lemmas.
\begin{lemma}  \label{l:bound-L-disjoint}
Let $k,\ell\in\N$ and $h,t>0$.
Then
$$
\p(\scrL^*(0^k,0;0^k,t) + \scrL^*( h^{\ell},0; h^{\ell},t) > \scrL^*((0^k, h^{\ell}),0;(0^k, h^{\ell}),t)) < ce^{-dh^3 t^{-2}},
$$
for $c,d$ depending only on $k, \ell$.
\end{lemma}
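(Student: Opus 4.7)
My plan is to use Proposition \ref{P:sum-of-disjoint} to reduce the inequality to a transversal fluctuation estimate on optimizers for the two simpler endpoint pairs. Write $A = \scrL^*(0^k,0;0^k,t)$, $B = \scrL^*(h^\ell,0;h^\ell,t)$, and $C = \scrL^*((0^k,h^\ell),0;(0^k,h^\ell),t)$. The reverse inequality $C \le A+B$ is automatic: take an optimizer $\pi''$ for $C$ (existence via Lemma \ref{L:existence}) and split its coordinates into a $k$-tuple $\bar\pi$ and an $\ell$-tuple $\bar\pi'$; these satisfy $\bar\pi_k(r) \le \bar\pi'_1(r)$ since $\pi''$ takes values in $\R^{k+\ell}_\le$, so the inequality part of Proposition \ref{P:sum-of-disjoint} gives $C = \|\pi''\|_{\scrL^*} \le \|\bar\pi\|_{\scrL^*} + \|\bar\pi'\|_{\scrL^*} \le A+B$. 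Hence it suffices to bound $\p(C < A+B)$.

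To this end, I would pick optimizers $\pi$ for $A$ and $\pi'$ for $B$ and introduce the separation event $E = \{\pi_k(r) < \pi'_1(r) \text{ for all } r \in (0,t)\}$. On $E$, the concatenation $(\pi,\pi')$ is a valid continuous multi-path in $\R^{k+\ell}_\le$ from $(0^k,h^\ell)$ to $(0^k,h^\ell)$, and the equality part of Proposition \ref{P:sum-of-disjoint} yields $\|(\pi,\pi')\|_{\scrL^*} = A+B$, giving $C \ge A+B$. Thus $\p(C<A+B) \le \p(E^c)$. Taking $\pi$ to be the leftmost optimizer for $A$ and $\pi'$ the rightmost for $B$ (Lemma \ref{L:exist-left-right-most}), and using the stationarity and flip symmetries of $\scrL^*$ in Lemma \ref{L:sym-L} to convert the lower bound on $\pi'_1$ into an upper bound on the top curve of an optimizer for $\scrL^*(0^\ell,0;0^\ell,t)$, a union bound reduces the problem to a one-sided transversal fluctuation estimate
$$
\p\lf(\sup_{r\in[0,t]} \pi_m(r) \ge h/2\rg) \le c\, e^{-dh^3/t^2}
$$
for $m \in \{k,\ell\}$, where $\pi$ is the leftmost optimizer for $\scrL^*(0^m,0;0^m,t)$.

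I would establish this estimate in two steps. \textbf{(i)} A pointwise-in-$r$ bound: apply the metric composition law $A = \max_\bz[\scrL^*(0^m,0;\bz,r)+\scrL^*(\bz,r;0^m,t)]$ and observe that a coordinate of the argmax $\bz^*=\pi(r)$ with $|z^*_j|\ge h/2$ forces a parabolic cost $\|\bz^*\|_2^2(1/r+1/(t-r)) \gtrsim h^2/t$ when passing from $\scrL^*$ to its stationary version $\scrK$. This cost must be absorbed by positive $\scrK$-fluctuations; combined with the matching lower tail $\p(A<-h^2/t) \le ce^{-dh^3/t^2}$ from Lemma \ref{l:bound-K} (applied with threshold $a=h^2/t^{4/3}$), one checks that $\scrK(0^m,0;\bz^*,r)+\scrK(\bz^*,r;0^m,t) \gtrsim h^2/t$ has probability at most $ce^{-dh^3/t^2}$ after a union bound over dyadic shells in $\|\bz^*\|_2$. \textbf{(ii)} A chaining argument over a dyadic grid of times $r$, using the H\"older-$(2/3)^-$ modulus of optimizers from Lemma \ref{L:transfluc}, upgrades the pointwise bound to the supremum.

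The main obstacle is step (i): to get the sharp $e^{-dh^3/t^2}$ tail I need a uniform upper bound on $\scrK(0^m,0;\bz,r)+\scrK(\bz,r;0^m,t)$ over $\bz$ with $\|\bz\|_2 \gtrsim h$, which is sharper than the exponential tail of Lemma \ref{l:uniform-EL-bound}. I expect to close this gap by combining the two-point spatial modulus Lemma \ref{l:change-spatial} (with its Gaussian-$3/2$ tails on $\sqrt{\|\bz-\bz'\|_2}$-differences) with the parabolic curvature: a chaining over dyadic shells $\|\bz\|_2 \sim 2^j h$ produces cost $\sim 4^j h^2/t$, whose probability under Lemma \ref{l:bound-K} decays like $e^{-d\,8^j h^3/t^2}$, and the resulting sum remains bounded by $ce^{-dh^3/t^2}$.
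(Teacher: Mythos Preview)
Your reduction via Proposition \ref{P:sum-of-disjoint} is circular. In the paper's logical order, the equality half of Proposition \ref{P:sum-of-disjoint} is proved \emph{after} Lemma \ref{l:bound-L-disjoint}, through Lemma \ref{L:algebraic-disjoint}, whose proof opens by invoking Lemma \ref{l:bound-L-disjoint}. So you cannot use the statement ``if $\pi_k(r)<\pi'_1(r)$ on $(0,t)$ then $\|(\pi,\pi')\|_{\scrL^*}=\|\pi\|_{\scrL^*}+\|\pi'\|_{\scrL^*}$'' as an input here; it is exactly the hard direction whose proof is downstream of the lemma you are trying to establish. The inequality half (which you also use) is fine, but it is the equality that lets you conclude $C\ge A+B$ on the separation event $E$, and that step is unavailable.

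Even granting that equality, the transversal-fluctuation route does not obviously deliver the exponent $h^3 t^{-2}$. Your step~(i) is correct at a \emph{fixed} $\bz$: the parabolic cost is $\gtrsim h^2/t$ and Lemma \ref{l:bound-K} with threshold $a\asymp h^2 t^{-4/3}$ gives $e^{-d a^{3/2}}=e^{-d h^3/t^2}$. The difficulty is the supremum over $\bz$. Lemma \ref{l:uniform-EL-bound} only yields an exponential tail in $R$ and hence a bound of the form $e^{-d' h^{2-\eta}/t^{4/3-2\eta/3}}$, which is weaker. Chaining with Lemma \ref{l:change-spatial} over a shell $\|\bz\|\sim 2^j h$ of diameter $\sim 2^j h$ produces a fluctuation term of order $C_j(2^j h)^{1/2}$ with $\p(C_j>a)\le c e^{-da^{3/2}}$; asking this term to exceed $\sim 4^j h^2/t$ gives a tail of order $e^{-d (h/t^{2/3})^{9/4}}$, again short of $(h/t^{2/3})^3$. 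A sharp bound of this type (compare Lemma \ref{l:maxi-loc}) is obtained in the paper from prelimiting Brownian-melon curvature estimates, not from landscape-level chaining.

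The paper's proof takes a completely different and much shorter path: rescale to $t=1$, pass to the extended Airy sheet and hence to last passage across a single parabolic Airy line ensemble $\scrB$, and use Proposition \ref{P:high-paths-B}. Since the optimizer from $0^k$ to $0^k$ stays on the top $k$ lines, the strict inequality $\scrB[0^k\to 0^k]+\scrB[h^\ell\to h^\ell]>\scrB[(0^k,h^\ell)\to(0^k,h^\ell)]$ forces the first path of any optimizer from $h^\ell$ to $h^\ell$ to enter the top $k$ lines before $0$, which yields a pointwise inequality among a finite collection of Airy-line values. Stationarity of $\scrA$ and the Tracy--Widom-type tails $\p(|\scrB_i(0)|>a)\le c e^{-da^{3/2}}$ then give the bound directly, with the $h^2$ coming from the parabolic shift $\scrB(x)=\scrA(x)-x^2$.
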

\begin{proof}
	
By rescaling we can assume that $t=1$. Since $\scrL^*$ has extended Airy sheet marginals, by Proposition \ref{P:B-continuity} the probability in question is the same as the probability of the event
$$
A = \{\scrB[0^k \to 0^k] + \scrB[h^{\ell} \to h^{\ell}] > \scrB[(0^k, h^{\ell}) \to (0^k, h^{\ell})]\}.
$$
By Proposition \ref{P:high-paths-B}, the event $A$ implies that every optimizer $\pi$ from $h^{\ell}$ to $h^{\ell}$ in $\scrB$ intersects the first $k$ lines of $\scrB$ in the interval $(-\infty, 0]$. That is, $\pi_1(0) \le k$, and so 
\begin{equation}
\label{E:BHLL}
\scrB[ h^{\ell}\to  h^{\ell}] \le \scrB[h^{\ell-1} \to h^{\ell-1}] + \scrB[h \to 0] + \scrB_1(h) - \scrB_k(0).
\end{equation}
Now by translation invariance of the extended Airy sheet $\scrS$,  $\scrB[ h^{\ell}\to  h^{\ell}] - \scrB[h^{\ell-1} \to h^{\ell-1}] \eqd \scrB_\ell(0)$ and $\scrB[h \to 0] \eqd \scrB_1(-h).$ Finally, since $\scrB(x)+x^2$ is stationary, \eqref{E:BHLL} is equivalent to an inequality of the form
\begin{equation}
\label{E:Xieqref}
X_1 + X_2 + 2h^2 \le X_3 + X_4, 
\end{equation}
where each of the random variables $X_i$ are equal in distribution to $\scrB_i(0)$ for some $i \le k \wedge \ell$. The points $\scrB_i(0)$ are points in the Airy point process, which are known to have well-controlled tails. For example, we can pass Theorem \ref{T:top-bd} to the limit to get that $\p(|X_i| > a) \le c e^{-da^{3/2}}$ for all $1 \le i \le 4$ and constants $c, d$ that depend only on $k, \ell$. Therefore by a union bound, the probability of \eqref{E:Xieqref} is bounded above by $ce^{-dh^3 t^{-2}}$, completing the proof.
\end{proof}

\begin{lemma}  \label{l:property-of-L-disjoint}
For any $s<t$, almost surely the following statement holds.
For any $\bx^{(1)} \le \bx^{(2)} \le \bx^{(3)} \le \bx^{(4)} \in \R^k_\le$, and $\by^{(1)} \le \by^{(2)} \le \by^{(3)} \le \by^{(4)} \in \R^k_\le$, if 
\[
\scrL^*((\bx^{(2)}, \bx^{(3)}), s; (\by^{(2)}, \by^{(3)}), t) = \scrL^*(\bx^{(2)}, s; \by^{(2)}, t) + \scrL^*(\bx^{(3)}, s; \by^{(3)}, t),    
\]
we then have
\[
\scrL^*((\bx^{(1)}, \bx^{(4)}), s; (\by^{(1)}, \by^{(4)}), t) = \scrL^*(\bx^{(1)}, s; \by^{(1)}, t) + \scrL^*(\bx^{(4)}, s; \by^{(4)}, t).
\]
\end{lemma}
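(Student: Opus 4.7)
The inequality
$$\scrL^*((\bx^{(1)},\bx^{(4)}),s;(\by^{(1)},\by^{(4)}),t) \le \scrL^*(\bx^{(1)},s;\by^{(1)},t) + \scrL^*(\bx^{(4)},s;\by^{(4)},t)$$
always holds, inherited via $\scrL_n \to \scrL^*$ (Theorem \ref{T:BLPP-convergence}) from the trivial subadditivity in Lemma \ref{L:f-naive}, so only the reverse requires argument. Since $\scrL^*(\cdot,s;\cdot,t)$ is an extended Airy sheet of scale $(t-s)^{1/3}$ by Definition \ref{D:directed-landscape}, translation invariance from Lemma \ref{L:basic-sym} combined with Proposition \ref{P:B-continuity} lets me couple this sheet with a parabolic Airy line ensemble $\scrB$ so that $\scrL^*(\bu,s;\bv,t) = \scrB[\bu\to\bv]$ for all the points appearing in the statement (after shifting all $\bx^{(i)},\by^{(i)}$ into $[0,\infty)^k_\le$). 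The lemma then reduces to the corresponding implication in $\scrB$.

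From the hypothesis and the existence of optimizers in Proposition \ref{P:B-continuity}, take a disjoint $2k$-optimizer $\pi^{(23)}$ in $\scrB$ from $(\bx^{(2)},\bx^{(3)})$ to $(\by^{(2)},\by^{(3)})$ and split it as $(\sigma,\tau)$ by its first and last $k$ coordinates. The chain
$$\|\pi^{(23)}\|_\scrB\le\|\sigma\|_\scrB+\|\tau\|_\scrB\le\scrB[\bx^{(2)}\to\by^{(2)}]+\scrB[\bx^{(3)}\to\by^{(3)}]$$
together with the hypothesis forces all inequalities to be equalities: $\sigma$ is an optimizer for $(\bx^{(2)},\by^{(2)})$, $\tau$ is an optimizer for $(\bx^{(3)},\by^{(3)})$, and the middle paths $\sigma_k,\tau_1$ are essentially disjoint as consecutive coordinates of the disjoint $2k$-tuple $\pi^{(23)}$.

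Next, pick any optimizers $\pi^{(1)},\pi^{(4)}$ for $(\bx^{(1)},\by^{(1)}),(\bx^{(4)},\by^{(4)})$, and apply the coordinate-wise max/min construction from the proof of Lemma \ref{L:b-quadrangle} to the ordered pairs $(\pi^{(1)},\sigma)$ and $(\pi^{(4)},\tau)$. This produces modified disjoint $k$-tuples $\pi^{(1),*}=\pi^{(1)}\wedge\sigma$ from $\bx^{(1)}$ to $\by^{(1)}$ and $\pi^{(4),*}=\pi^{(4)}\vee\tau$ from $\bx^{(4)}$ to $\by^{(4)}$. The length identity established in that proof, combined with the trivial upper bounds $\|\pi^{(1)}\vee\sigma\|_\scrB\le\scrB[\bx^{(2)}\to\by^{(2)}]$ and its analogue for the other pair, forces $\pi^{(1),*}$ and $\pi^{(4),*}$ to be optimizers as well. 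By construction $\pi^{(1),*}\le\sigma$ and $\tau\le\pi^{(4),*}$ coordinate-wise.

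Finally, $(\pi^{(1),*},\pi^{(4),*})$ is an ordered $2k$-tuple via the chain $\pi^{(1),*}_k\le\sigma_k\le\tau_1\le\pi^{(4),*}_1$. For essential disjointness at an interior point $w<y^{(1)}_k\le y^{(2)}_k$, any equality $\pi^{(1),*}_k(w)=\pi^{(4),*}_1(w)$ collapses the chain and in particular forces $\sigma_k(w)=\tau_1(w)$, contradicting essential disjointness of $\sigma_k,\tau_1$ in the interior $(-\infty,y^{(2)}_k)$ of their common domain. Hence $(\pi^{(1),*},\pi^{(4),*})$ is a genuine disjoint $2k$-tuple from $(\bx^{(1)},\bx^{(4)})$ to $(\by^{(1)},\by^{(4)})$, and so
$$\scrB[(\bx^{(1)},\bx^{(4)})\to(\by^{(1)},\by^{(4)})] \ge \|(\pi^{(1),*},\pi^{(4),*})\|_\scrB = \scrB[\bx^{(1)}\to\by^{(1)}] + \scrB[\bx^{(4)}\to\by^{(4)}],$$
giving the desired equality. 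The hard part is justifying the max/min construction in the parabolic setting: one must check that $\pi^{(1),*}$ retains its asymptotic direction $x^{(1)}_i$ at each coordinate, remains a disjoint $k$-tuple from $\bx^{(1)}$ to $\by^{(1)}$, and satisfies the length identity with $\pi^{(1)}\vee\sigma$ despite the mismatched right endpoints $\by^{(1)}\le\by^{(2)}$. This is a careful but essentially routine adaptation of the construction in the proof of Lemma \ref{L:b-quadrangle}, reduced via Lemma \ref{l:diff-weight-finite} to a finite computation.
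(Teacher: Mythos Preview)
Your argument is correct, but it takes a genuinely different route from the paper's. The paper never passes to the parabolic Airy line ensemble at all: it works directly with the extended Airy sheet $\scrS$ and proves, via two applications of the quadrangle-type inequality Lemma~\ref{L:quadrangle-2} in the prelimit $\scrS^n$ and passage to the limit, the stronger monotonicity
\[
\scrS(\bx^{(2)},\by^{(2)})+\scrS(\bx^{(3)},\by^{(3)})-\scrS((\bx^{(2)},\bx^{(3)}),(\by^{(2)},\by^{(3)}))
\;\ge\;
\scrS(\bx^{(1)},\by^{(1)})+\scrS(\bx^{(4)},\by^{(4)})-\scrS((\bx^{(1)},\bx^{(4)}),(\by^{(1)},\by^{(4)}))
\;\ge\;0,
\]
from which the implication is immediate. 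This is shorter and avoids the parabolic-path machinery entirely; in particular it does not require the shift into $[0,\infty)^k_\le$ (which in your approach forces a countable-intersection argument over translations to get the statement simultaneously for all $\bx^{(i)}$). Your construction, by contrast, is more geometric: from the hypothesis you extract essentially disjoint optimizers $\sigma,\tau$ in $\scrB$ and then explicitly manufacture a witnessing disjoint $2k$-tuple for the outer endpoints via the min/max operation, so you actually \emph{see} why the conclusion holds at the level of paths. The cost is the overhead you already flag---verifying asymptotic directions, essential disjointness at the splice $\pi^{(1),*}_k,\pi^{(4),*}_1$, and the length identity from the proof of Lemma~\ref{L:b-quadrangle} with mismatched right endpoints---all of which go through as you indicate.
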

\begin{proof}
Without loss of generality we assume that $s=0$, $t=1$. With these choices, the lemma is a fact about an extended Airy sheet of scale $1$.
We claim that
\begin{equation}  \label{eq:L-disjoint-pf}
\begin{split}
&\scrS(\bx^{(2)}, \by^{(2)}) + \scrS(\bx^{(3)}, \by^{(3)}) - \scrS((\bx^{(2)}, \bx^{(3)}), (\by^{(2)}, \by^{(3)}))
\\
\ge\; &
\scrS(\bx^{(1)}, \by^{(1)}) + \scrS(\bx^{(4)}, \by^{(4)}) - \scrS((\bx^{(1)}, \bx^{(4)}), (\by^{(1)}, \by^{(4)}))
\\
\ge\; & 0.
\end{split}    
\end{equation}
For this we study the prelimiting sheet $\scrS^n$.
By Lemma \ref{L:quadrangle-2}, for $n$ large enough we have
\[
\scrS^n(\bx^{(2)}, \by^{(2)})+
\scrS^n((\bx^{(2)}, \bx^{(3)}), (\by^{(1)}, \by^{(3)}))
\ge 
\scrS^n(\bx^{(2)}, \by^{(1)})+
\scrS^n((\bx^{(2)}, \bx^{(3)}), (\by^{(2)}, \by^{(3)})),
\]
and
\[
\scrS^n(\bx^{(3)}, \by^{(3)})+
\scrS^n((\bx^{(2)}, \bx^{(3)}), (\by^{(1)}, \by^{(4)}))
\ge 
\scrS^n(\bx^{(3)}, \by^{(4)})+
\scrS^n((\bx^{(2)}, \bx^{(3)}), (\by^{(1)}, \by^{(3)})).
\]
Adding up these two inequalities, and passing to the limit via Theorem \ref{T:extended-Airy-sheet}, we get
\[
\begin{split}
&\scrS(\bx^{(2)}, \by^{(2)}) + \scrS(\bx^{(3)}, \by^{(3)}) - \scrS((\bx^{(2)}, \bx^{(3)}), (\by^{(2)}, \by^{(3)}))
\\
\ge\; &
\scrS(\bx^{(2)}, \by^{(1)}) + \scrS(\bx^{(3)}, \by^{(4)}) - \scrS((\bx^{(2)}, \bx^{(3)}), (\by^{(1)}, \by^{(4)})).
\end{split}
\]
Similarly, we also have
\[
\begin{split}
&\scrS(\bx^{(2)}, \by^{(1)}) + \scrS(\bx^{(3)}, \by^{(4)}) - \scrS((\bx^{(2)}, \bx^{(3)}), (\by^{(1)}, \by^{(4)}))
\\
\ge\; &
\scrS(\bx^{(1)}, \by^{(1)}) + \scrS(\bx^{(4)}, \by^{(4)}) - \scrS((\bx^{(1)}, \bx^{(4)}), (\by^{(1)}, \by^{(4)})).
\end{split}
\]
Thus adding up the above two inequalities we get the first inequality in \eqref{eq:L-disjoint-pf}.
The second inequality in \eqref{eq:L-disjoint-pf} is obvious for $\scrS^n$, so by passing to the limit via Theorem \ref{T:extended-Airy-sheet} it also holds for $\scrS$.
Finally, when the first line in \eqref{eq:L-disjoint-pf} equals zero, so does the second line. The conclusion follows.
\end{proof}

For this next lemma and vectors $\bx, \by \in \R^k$, we write
$$
\min (\bx, \by) = \min\{x_i \wedge y_i: i \in \II{1, k}\}.
$$
We similarly define $\max (\bx, \by)$.
\begin{lemma}  \label{L:algebraic-disjoint}
For each $M, h > 0$ and $k, k' \in \N$, there is a random number $P > 0$ such that the following is true.
For any $\bx, \by \in \R^k_\le, \bx', \by' \in \R^{k'}_\le$ and $s, t \in\R$ with $\|\bx\|_2, \|\by\|_2, |s|, |t| < M$, 
$s<t$, $t-s < P$, and 
$
\min(\bx', \by') - \max(\bx, \by) > h$,
we have 
$$
\scrL^*((\bx, \bx'), s; (\by, \by'), t) = \scrL^*(\bx, s; \by, t) + \scrL^*(\bx', s; \by', t).
$$
\end{lemma}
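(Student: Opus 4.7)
The plan is to combine the quantitative non-additivity estimate of Lemma \ref{l:bound-L-disjoint} with the sandwich propagation of Lemma \ref{l:property-of-L-disjoint} (extended to an unequal $(k, k')$ split), and to obtain the uniform random $P$ through a Borel--Cantelli argument followed by a continuity extension.

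First, I would reduce to clustered endpoints on a finite spatial grid. Given $(\bx, \by, \bx', \by')$ satisfying the hypotheses, set $c_1 = (h/4)\lceil 4\max(\bx, \by)/h\rceil$ and $c_2 = c_1 + h/2$. Since $\|\bx\|_2, \|\by\|_2 < M$, the value $c_1$ lies in the finite set $(h/4)\Z \cap [-M, M + h/4]$, and $c_2 \le \min(\bx', \by')$ by the separation hypothesis. The orderings $\bx, \by \le c_1^k$ and $c_2^{k'} \le \bx', \by'$ put us in the scope of the $(k, k')$-analogue of Lemma \ref{l:property-of-L-disjoint}, whose proof goes through verbatim using Lemma \ref{L:quadrangle-2} with an unequal split. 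Hence clustered additivity at the inner pair $((c_1^k, c_2^{k'}), s; (c_1^k, c_2^{k'}), t)$ propagates to additivity at the outer pair $((\bx, \bx'), s; (\by, \by'), t)$, and it suffices to establish the clustered equality uniformly over the finitely many values of $c_1$ and over all admissible $(s, t)$.

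Second, for fixed $c_1$ in the grid (and $c_2 = c_1 + h/2$), Lemma \ref{l:bound-L-disjoint} together with the space-time stationarity of $\scrL^*$ from Lemma \ref{L:sym-L} yields
\[
\p\bigl(\scrL^*(c_1^k, s; c_1^k, t) + \scrL^*(c_2^{k'}, s; c_2^{k'}, t) > \scrL^*((c_1^k, c_2^{k'}), s; (c_1^k, c_2^{k'}), t)\bigr) < c e^{-d(h/2)^3/(t-s)^2}.
\]
I would then introduce a refined dyadic grid $\{G_n\}$ in $(s, t)$ (mesh $2^{-An}$ for a sufficiently large fixed $A$, with $t - s$ restricted to an overlapping dyadic interval of order $2^{-n}$), so that the union bound at scale $n$ gives a total failure probability $C_{M, h}\, 2^{(2A-1)n} e^{-d' h^3 4^n}$, summable in $n$. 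Borel--Cantelli then produces a random $N$ such that clustered additivity holds at every $(c_1, s, t)$ with $(s, t) \in G_n$ for all $n \ge N$; I would set $P = 2^{-N+1}$.

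Finally, the continuous and nonnegative (by the easy inequality $\scrL^*(\text{combined}) \le \scrL^*(\text{sum})$ inherited from the prelimit by splitting any disjoint $(k+k')$-tuple into two sub-tuples) additivity defect $F(s, t)$ vanishes on the union $\bigcup_{n \ge N} G_n$, and so by continuity vanishes identically on the closure of this union, which combined with the sandwich step gives the lemma. I expect the main obstacle will be arranging the grid scales so that $\bigcup_{n \ge N} G_n$ is in fact dense in $\{(s, t) : |s|, |t| < M,\ 0 < t - s < P\}$: the naive choice of disjoint intervals $[2^{-n-1}, 2^{-n}]$ at each scale does not yield topological density at a generic $(s^*, t^*)$, so one must either carefully enlarge the intervals and shrink the mesh (keeping the Borel--Cantelli sum convergent) until the union is genuinely dense, or supplement the union bound with a sample-path argument in the style of Lemma \ref{L:two-sheet-sum-unique-max} to rule out open subsets on which $F > 0$.
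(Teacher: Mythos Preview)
Your reduction to a fixed finite spatial grid and the unequal-split extension of Lemma~\ref{l:property-of-L-disjoint} are both correct. The gap is exactly where you anticipate it, and it cannot be closed by any choice of time grid. Here is why: if the Borel--Cantelli sum $\sum_j e^{-dh^3/(t_j-s_j)^2}$ over grid points converges, then for every $\delta > 0$ only finitely many grid points can have $t_j - s_j \ge \delta$ (else infinitely many terms are bounded below by the positive constant $e^{-dh^3/\delta^2}$). But then the grid is nowhere dense in $\{(s,t): t-s \ge \delta\}$, so continuity cannot recover the conclusion there. Neither of your suggested fixes escapes this dichotomy between summability and density.

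The paper replaces the continuity extension with a \emph{containment} argument. At each scale $\ell \le 0$ the grid $J_\ell$ carries a single time-width $2^{2\ell-1}$; Borel--Cantelli yields a random $L_0$ below which additivity holds at every grid point. For arbitrary $(s,t)$ with $t-s$ sufficiently small one selects $\ell \le L_0$ and a grid interval $[\tilde s,\tilde t] \supset [s,t]$ with clustered spatial points $\tilde x, \tilde x' = \tilde x + 2^\ell$ lying in the gap of width $h$. Additivity on $[\tilde s,\tilde t]$ implies that the concatenation $(\pi,\pi')$ of the two clustered optimizers is an optimizer for the joint problem, so by metric composition additivity also holds at the restricted endpoints $(\pi(s),\pi'(s)) \to (\pi(t),\pi'(t))$. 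The transversal fluctuation bound (Lemma~\ref{L:transfluc}) then guarantees $\bx \le \pi(s) \le \pi'(s) \le \bx'$ and $\by \le \pi(t) \le \pi'(t) \le \by'$ once $P$ is chosen small enough, and Lemma~\ref{l:property-of-L-disjoint} finishes. Your fixed spatial grid would slot into this argument in place of the paper's $\ell$-dependent one, but the optimizer step via Lemma~\ref{L:transfluc} is the essential missing ingredient.
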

\begin{proof}
For each $\ell \in \Z$, let 
$$
J_\ell = \{(x, x', s, t) : |x|, |s| < M, x \in 2^\ell \Z, x' = x + 2^\ell, s \in 2^{2\ell} \Z, t=s+2^{2\ell-1} \}. 
$$
By Lemma \ref{l:bound-L-disjoint}, 
for any $\ell<0$,
with probability at least 
$1-cM^2 2^{-3\ell}e^{-d2^{-\ell}}$ (for some constants $c, d$ depending on $k, k'$), 
for any 
$(x, x', s, t) \in J_\ell$, we have
$$
\scrL^*(x^k,s;x^k,t) + \scrL^*(x'^{k'},s; x'^{k'},t) = \scrL^*((x^k,x'^{k'}),s;(x^k, x'^{k'}),t).
$$
Then almost surely, there is a random $L_0\in \Z_-$, such that this event happens for all $\ell\le L_0$. Now,  we can choose $P$ small enough such that for any $\bx, \by \in \R^k_\le, \bx', \by' \in \R^{k'}_\le$ and $s,t \in\R$ satisfying the conditions of the lemma, we can find $\ell \le L_0$ and $(\tilde x, \tilde x', \tilde s, \tilde t) \in J_\ell$ such that the following holds:
\begin{itemize}
	\item $\tilde s < s < t < \tilde t$ and $\min (\bx', \by') > \tilde x+h/3$, $\max (\bx, \by) < \tilde x' -h/3$
	\item There exist optimizers $\pi, \pi'$ from $(\tilde x^k, \tilde s)$ to $(\tilde x^k, \tilde t)$ and from $(\tilde x'^{k'}, \tilde s)$ to $(\tilde x'^{k'}, \tilde t)$, such that $\bx\le \pi(s) \le \pi'(s)\le \bx'$ and $\by\le \pi(t) \le \pi'(t)\le \by'$.
\end{itemize}
To ensure the second condition, we have used the transversal fluctuation bound on optimizers from Lemma \ref{L:transfluc}. The fact that $\ell \le L_0$ ensures that 
$$
\scrL^*(\tilde x^k, \tilde s;\tilde x^k, \tilde t) + \scrL^*(\tilde x'^{k'}, \tilde s;\tilde x'^{k'}, \tilde t) = \scrL^*((\tilde x^k, \tilde x'^{k'}),s;(\tilde x^k, \tilde x'^{k'}),t),
$$
which implies that
$$
\scrL^*(\pi(s),s;\pi(t),t) + \scrL^*(\pi'(s),s;\pi'(t),t) = \scrL^*((\pi(s), \pi'(s)),s;(\pi(t), \pi'(t)),t).
$$
Finally, assuming that for any $s<t\in\Q$ the event in Lemma \ref{l:property-of-L-disjoint} holds, we have that $\scrL^*((\bx, \bx'), s; (\by, \by'), t) = \scrL^*(\bx, s; \by, t) + \scrL^*(\bx', s; \by', t)$, if $s,t \in \Q$. By continuity of $\scrL^*$ the conclusion follows.
\end{proof}

\begin{proof}[Proof of the equality in Proposition \ref{P:sum-of-disjoint}]
First assume that $\|\pi''\|_{\scrL^*} > -\infty$. Let $\delta>0$.
Take $s<t_0<t_1<\cdots<t_m<t$, such that
$\sum_{i=1}^m \scrL^*(\pi''(t_{i-1}), t_{i-1}; \pi''(t_i), t_i) < \|\pi''\|_{\scrL^*} + \delta$, and $t_0 - t, s-t_m < \delta$.
We next choose parameters to apply Lemma \ref{L:algebraic-disjoint}. Let 
$$
h = \min_{r\in [t_0,t_m]} \{\min \pi'(r) - \max \pi(t)\}, \qquad M = \max \{ |s|, |t|, \max_{r\in[t,s]} \|\pi''(r)\|_2  \}.
$$
Observe that $h > 0$ by the assumptions of the proposition. Let $P$ be as in Lemma \ref{L:algebraic-disjoint} for this $M, h$.
Then we choose $\overline{m}\ge m$, and $t_0=\overline{t}_0<\overline{t}_1<\cdots<\overline{t}_{\overline{m}}=t_m$
such that $\{t_0, t_1, \cdots, t_m\} \subset \{\overline{t}_0, \overline{t}_1, \cdots, \overline{t}_{\overline{m}}\}$,
and $\overline{t}_i-\overline{t}_{i-1}<P$ for each $1\le i \le \overline{m}$.
Then we have 
\[
\begin{split}
&
\|\pi\|_{\scrL^*}- 
\scrL^*(\pi(s),s; \pi(t_0), t_0)
-\scrL^*(\pi(t_{{m}}), t_{{m}}; \pi(t),t)
\\
&
 + \|\pi'\|_{\scrL^*} 
- 
\scrL^*(\pi'(s),s; \pi'(t_0), t_0)
-\scrL^*(\pi'(t_{{m}}), t_{{m}}; \pi'(t),t)
\\
\le
&\; \sum_{i=1}^{\overline{m}} \scrL^*(\pi(\overline{t}_{i-1}), \overline{t}_{i-1}; \pi(\overline{t}_i), \overline{t}_i) + \scrL^*(\pi'(\overline{t}_{i-1}), \overline{t}_{i-1}; \pi'(\overline{t}_i), \overline{t}_i)\\
=& \; 
\sum_{i=1}^{\overline{m}} \scrL^*(\pi''(\overline{t}_{i-1}), \overline{t}_{i-1}; \pi''(\overline{t}_i), \overline{t}_i)
\\
<& \; \|\pi''\|_{\scrL^*} + \delta.    
\end{split}
\]
Now we send $\delta \to 0$.
By Lemma \ref{l:uniform-EL-bound} we have
$$
\limsup_{\delta\to 0}  \scrL^*(\pi(s),s; \pi(t_0), t_0)
=
\limsup_{\delta\to 0} 
\scrL^*(\pi(t_{{m}}), t_{{m}}; \pi(t),t) \le 0,
$$
$$
\limsup_{\delta\to 0}  \scrL^*(\pi'(s),s; \pi'(t_0), t_0)
=
\limsup_{\delta\to 0} 
\scrL^*(\pi'(t_{{m}}), t_{{m}}; \pi'(t),t) \le 0.
$$
Therefore $\|\pi\|_{\scrL^*} + \|\pi'\|_{\scrL^*} \le \|\pi''\|_{\scrL^*}$, and our conclusion follows. In the case when $\|\pi''\|_{\scrL^*} = -\infty$, we can apply the same argument with an arbitrary $b \in \R$ in place of $\|\pi''\|_{\scrL^*} + \de$ to get the result.
\end{proof}

\section{Disjointness of optimizers}
\label{S:disjoitness-optimizers}
The main goal of this section is to prove the following disjointness result.
\begin{prop}
\label{P:disjointness}
Almost surely, for any $(\bx, r; \by, t)\in \fX_\uparrow$,
there exists an optimizer $\pi$ in $\scrL^*$ from $(\bx,r)$ to $(\by,t)$ such that $\pi_i(s) < \pi_j(s)$ for all $i < j$ and $s \in (r, t)$.
\end{prop}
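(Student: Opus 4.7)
The plan is to follow the three-stage blueprint sketched in Section~\ref{SS:outline}: first establish the proposition for size-two endpoint pairs with coincident coordinates, then for general size-two pairs via a resampling argument, and finally for general $k$ by induction.

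For Stage~1 with $\bx = (x,x)$ and $\by = (y,y)$, I first prove the pointwise version: at any fixed $r \in (s,t)$, the unique midpoint
$\bz^*(r) = \argmax_{\bz}(\scrL^*((x,x),s;\bz,r) + \scrL^*(\bz,r;(y,y),t))$
(unique by Lemma~\ref{L:two-sheet-sum-unique-max}) satisfies $z_1^*(r) < z_2^*(r)$ almost surely. By Definition~\ref{D:directed-landscape}.I, Proposition~\ref{P:B-continuity} and Proposition~\ref{P:high-paths-B}, on any compact window in $\R^2_\le$ this midpoint functional is a deterministic function of the top two lines of two independent parabolic Airy line ensembles $\scrB, \scrB'$ on a compact interval. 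The multi-line Radon--Nikodym estimate (Theorem~\ref{T:radon-n-deri}) asserts that the joint law of these four lines is absolutely continuous with respect to four independent Brownian motions with a positive density. Under the Brownian reference law, a direct argmax computation in the spirit of Lemma~\ref{L:jump-points-distinct} shows that the argmax has strictly ordered coordinates almost surely, and the Radon--Nikodym derivative transfers this to the Airy setting. To upgrade the pointwise statement to strict ordering on all of $(s,t)$, I apply the pointwise result at all rationals and then use Stage~2 to produce, for any $r_0 \in (s,t)$, an optimizer whose restriction to a rational sub-interval $[q_1, q_2] \ni r_0$ (with strictly ordered endpoints by the pointwise conclusion) is disjoint on $(q_1, q_2)$; patching over a countable dense collection of such rational sub-intervals yields an optimizer disjoint throughout $(s,t)$.

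For Stage~2 (general $k=2$), I use a resampling argument. Fix $[s', t'] \subset (s,t)$, and couple $\scrL^*$ so that on $[s', t']$ an optimizer of the general problem from $(\bx,s)$ to $(\by,t)$ coincides with an optimizer of the coincident auxiliary problem from $((0,0), s-1)$ to $((0,0), t+1)$. The ingredients are metric composition (Proposition~\ref{P:mc-everywhere}) combined with the Brownian Gibbs property (Theorem~\ref{T:melon-Airy-facts}) for the parabolic Airy line ensemble that gives rise to a single time-slice of $\scrL^*$ via Proposition~\ref{P:B-continuity}. Conditioning on $\scrL^*$ outside a strip of times and Gibbs-resampling the central ensemble lets us arrange agreement on $[s',t']$. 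Stage~1 then forces the coincident auxiliary optimizer to be disjoint on $[s-1, t+1] \supset [s',t']$, so the general-problem optimizer inherits disjointness on $[s',t']$. Letting $[s',t']$ exhaust $(s,t)$ and extracting a subsequential limit using the H\"older regularity from Lemma~\ref{L:transfluc} produces a disjoint optimizer on $(s,t)$.

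For Stage~3, I induct on $k$. Given a size-$(k+1)$ endpoint pair $(\bx,s;\by,t)$ and assuming the proposition for all smaller sizes, apply the induction hypothesis to the two size-$k$ subproblems obtained by deleting the first, respectively last, coordinate, producing disjoint optimizers $\pi^-$ and $\pi^+$. Monotonicity of leftmost and rightmost optimizers (Lemma~\ref{L:monotonicity}) controls their relative positions. Combining these with the Stage~2 result applied to each adjacent pair of components yields a candidate size-$(k+1)$ multi-path with strictly ordered coordinates, and Proposition~\ref{P:sum-of-disjoint} confirms that its total length equals $\scrL^*(\bx,s;\by,t)$, so it is a disjoint optimizer. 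The main obstacle is the resampling argument in Stage~2: building a coupling that realizes optimizers of two problems with different endpoint structures as agreeing on a common middle window requires carefully combining the landscape-level metric composition with the Brownian Gibbs property of the underlying parabolic Airy line ensembles. A secondary obstacle is the interleaved dependence between Stages~1 and 2 (the full-$(s,t)$ disjointness in Stage~1 uses the conclusion of Stage~2), which must be organized either as a simultaneous induction or by unwinding the logical order so that only the pointwise Stage~1 result feeds into Stage~2.
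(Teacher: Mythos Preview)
Your three-stage outline matches the paper's strategy, but there is a genuine gap in Stage~1 that the circularity you flag as a ``secondary obstacle'' cannot be unwound. The pointwise statement you obtain --- that for each fixed rational $r$ the midpoint has strictly ordered coordinates almost surely --- is exactly Lemma~\ref{l:disjoint-at-fixed-time}, and it is \emph{not} enough to conclude strict ordering on all of $(s,t)$: two continuous paths can be strictly ordered at every rational and still touch at an irrational. Your proposed upgrade is to invoke Stage~2 on a rational sub-interval $[q_1,q_2]$ with strictly ordered endpoints, but your Stage~2 (like the paper's) compares the general problem to a \emph{coincident}-endpoint auxiliary problem on a larger interval, and therefore needs the full-interval conclusion of Stage~1 as an input. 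So Stage~2 cannot be fed only the pointwise Stage~1 result; the dependence is genuinely circular.

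The paper breaks this circle with a \emph{quantitative} estimate (Lemma~\ref{L:disjoint-fixedtime}): $\p(|\pi_1(r)-\pi_2(r)|<\epsilon,\ |\pi_i(r)|<d) < c\,\epsilon^{2-\eta}$ for the coincident-endpoint optimizer. This is obtained by reducing via Theorem~\ref{T:radon-n-deri} to an explicit Brownian argmax problem (Lemma~\ref{L:LPP-2BM}), whose analysis is the technical core of the whole section. The exponent $2-\eta>1$ is what makes the argument work: union-bounding over an $N$-point grid in $[\delta,1-\delta]$ with gap threshold $N^{\eta-2/3}$ gives total probability $O(N^{3\eta-1/3})\to 0$, and the H\"older-$(2/3)^-$ regularity of optimizers (Lemma~\ref{L:transfluc}) prevents the gap from closing between grid points. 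A purely qualitative absolute-continuity argument does not supply this exponent. Separately, your Stage~2 description resamples the ``central ensemble'', whereas the paper's mechanism resamples the \emph{outer} time strips: one takes i.i.d.\ copies of $\scrL^*$ on $[s-1,s]\cup[t,t+1]$, uses Lemma~\ref{L:positive-probability} to realize auxiliary midpoints arbitrarily close to $(\bx,\by)$, and then transfers disjointness via overlap convergence (Lemma~\ref{L:pointwise-to-overlap}).
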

Proposition \ref{P:disjointness} essentially says that disjoint optimizers in Brownian LPP remain disjoint even as we pass to the limit. In other words, it implies that constituent paths in disjoint optimizers in Brownian LPP are separated from each other by an amount that remains visible in the limiting scaling. This is not an obvious fact, and we will prove it in stages. Note also that Proposition \ref{P:disjointness} still leaves open the possibility that at exceptional points $(\bx, r; \by, t)\in \fX_\uparrow$ where there are multiple $\scrL^*$-optimizers, some of these optimizers may have overlapping constituent paths; we do not believe believe that such optimizers exist but we do not attempt to resolve this issue here.

By using Proposition \ref{P:disjointness} and checking the definition of the extended directed landscape from Definition \ref{D:ext-land}, we will show that $\scrL^* = \scrL$. This is the content of Theorem \ref{T:extended-landscape}.
We will also use Proposition \ref{P:disjointness} to prove Theorem \ref{T:disjoint-optimizers-in-L} and Corollaries \ref{C:rsk} and \ref{C:disjointness}.

\subsection{Convergence in the overlap topology}

We start with the following weaker result, which says that all optimizers are disjoint at fixed time.
\begin{lemma}  \label{l:disjoint-at-fixed-time}
For any fixed $s$ the following holds almost surely for $\scrL^*$.
For any $(\bx, \by) \in \fX$, $r<s<t$, and any optimizer $\pi$ from $(\bx, r)$ to $(\by, t)$, we have that $\pi_1(s) < \pi_2 (s) < \dots < \pi_k(s)$.
\end{lemma}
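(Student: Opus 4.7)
The plan is to reduce the statement to Lemma \ref{L:jump-points-distinct} by restricting any putative optimizer to a rational sub-interval containing $s$. Once we pass to rational times $r' < s < t'$ flanking $s$, the two extended Airy sheet marginals $\scrL^*(\cdot, r'; \cdot, s)$ and $\scrL^*(\cdot, s; \cdot, t')$ are independent (Definition \ref{D:directed-landscape}), and Lemma \ref{L:jump-points-distinct} provides a single almost-sure event on which strict ordering of the maximizer holds \emph{uniformly} over all endpoint vectors $\bx', \by' \in \R^k_\le$ and all $k$.

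More precisely, for each rational pair $(r', t')$ with $r' < s < t'$, I would realize (via Proposition \ref{P:B-continuity} combined with the rescaling and stationarity reductions used in the proof of Lemma \ref{L:two-sheet-sum-unique-max}) $\scrL^*(\cdot, r'; \cdot, s)$ and $\scrL^*(\cdot, s; \cdot, t')$ in terms of two independent parabolic Airy line ensembles, and then apply Lemma \ref{L:jump-points-distinct}. This yields an event $\Omega_{r', t'}$ of full probability on which, for every $k \in \N$ and every $\bx', \by' \in \R^k_\le$, each maximizer $\bz^* \in \R^k_\le$ of
\[
A_{r',t',\bx',\by'}(\bz) \;:=\; \scrL^*(\bx', r'; \bz, s) + \scrL^*(\bz, s; \by', t')
\]
satisfies $z^*_1 < z^*_2 < \cdots < z^*_k$. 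The strictness here combines the conclusion $z^*_{i-1,0} \neq z^*_{i,1}$ of Lemma \ref{L:jump-points-distinct} with the interlacing $z^*_{i-1,0} \le z^*_{i,1} \le z^*_{i,0}$. I would then set $\Omega := \bigcap \Omega_{r', t'}$, intersected over all rational pairs $r' < s < t'$, so that $\p(\Omega) = 1$.

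On $\Omega$, given $(\bx, r; \by, t) \in \fX_\uparrow$ with $r < s < t$ and any optimizer $\pi$ from $(\bx, r)$ to $(\by, t)$ (which exists by Lemma \ref{L:existence}), I pick rationals $r' \in (r, s)$, $t' \in (s, t)$. Using the refinement property of the infimum defining $\|\pi\|_{\scrL^*}$ together with the triangle inequality (Proposition \ref{P:mc-everywhere}), a standard argument gives the additive decomposition
\[
\scrL^*(\bx, r; \by, t) \;=\; \|\pi\|_{\scrL^*} \;=\; \|\pi|_{[r,r']}\|_{\scrL^*} + \|\pi|_{[r',t']}\|_{\scrL^*} + \|\pi|_{[t',t]}\|_{\scrL^*},
\]
and comparing each chunk with the corresponding $\scrL^*$-value forces equality, so $\pi|_{[r', t']}$ is an optimizer from $(\pi(r'), r')$ to $(\pi(t'), t')$. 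Splitting that optimizer at the intermediate time $s$ identifies $\pi(s)$ as a maximizer of $A_{r',t',\pi(r'),\pi(t')}$, and the event $\Omega_{r', t'}$ then delivers strict ordering of $\pi(s)$.

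The main difficulty is exactly the ``for all $\bx', \by'$'' uniformity: since $\pi(r')$ and $\pi(t')$ are random and depend on the whole environment, we must exclude a \emph{single} null set that works simultaneously for every possible value of these endpoints. This is precisely the reason for invoking Lemma \ref{L:jump-points-distinct} (whose almost-sure event is uniform in $\bx, \by$) rather than the fixed-endpoint Lemma \ref{L:two-sheet-sum-unique-max}; without this uniformity the countable-intersection argument would fail. The only mild technicality is matching the normalization $x_1 > 0 > y_k$ required by Lemma \ref{L:jump-points-distinct}, handled by exactly the same symmetry reduction via Lemma \ref{L:sym-L} that appears in the proof of Lemma \ref{L:two-sheet-sum-unique-max}.
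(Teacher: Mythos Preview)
Your proposal is correct and follows essentially the same approach as the paper's proof. Both arguments reduce via the restriction property of optimizers to a countable family of time pairs around $s$, invoke Lemma \ref{L:jump-points-distinct} for the uniform (in $\bx,\by$) strict-ordering statement, and handle the constraint $x_1>0>y_k$ through the skew symmetry of Lemma \ref{L:sym-L}; the paper phrases the countable reduction in terms of fixed $\delta$'s and an exhaustion by compact sets of endpoints, whereas you use rational time pairs directly, but the substance is identical.
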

\begin{proof}
As any optimizer restricted to a smaller interval of time is also an optimizer, it suffices to prove the result for fixed $r=s-\delta$ and $t=s+\delta$ with a fixed small $\delta$, and for $\bx, \by \in \R^k_\le$ with each coordinate in a compact interval.
Since $\scrL^*$ has extended Airy sheet marginals, the conclusion follows from Lemma \ref{L:jump-points-distinct} for compact sets $K \sset \fX$ such that every point $(\bx, \by) \in K$ satisfies $x_1 > 0 > y_k$.
For more general compact sets, the conclusion follows by skew symmetry of $\scrL^*$ (Lemma \ref{L:sym-L}).
\end{proof}

For any continuous paths $\pi_n:[r_n,t_n]\to \R^k_\le$, $n\in\N$, and $\pi:[r,t]\to \R^k_\le$, we say that $\pi_n\to \pi$ \textbf{in the overlap topology}, if for all large enough $n$, $O_n=\{s\in [r,t]\cap [r_n,t_n]: \pi_n(s)=\pi(s)\}$ is an interval, and the endpoints of $O_n$ converge to $r$ and $t$. Overlap was first introduced in \cite[Section 3]{DSV}, and is particularly useful for studying geodesics or optimizers in the directed or extended landscape.

We aim to prove an overlap convergence result for optimizers. 
We will require two closely related results for $\scrL$-geodesics from \cite{DSV}.
To state them, for any path $\pi:[r,t]\to\R$, define the \textbf{graph} of $\pi$ by
\[
\fg \pi:= \{(\pi(s),s):s\in[r,t]\}.
\]
This is the usual graph of a function with coordinates reversed.

\begin{lemma}[\protect{\cite[Lemma 3.1]{DSV}}]\label{L:overlap-0}
Almost surely the following is true.
Let $(p_n; q_n) \to (p; q) \in \R^4_\uparrow$, and let $\pi_n$ be any sequence of geodesics from $p_n$ to $q_n$.
Then the sequence $\fg \pi_n$ is precompact in the Hausdorff metric, and any subsequential limit is the graph of a geodesic from $p$ to $q$.
\end{lemma}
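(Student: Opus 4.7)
The plan is to combine the transversal fluctuation bound from Lemma~\ref{L:transfluc} with upper semicontinuity of path length under pointwise convergence. The argument splits into two stages: a compactness step for the graphs $\fg \pi_n$, then a semicontinuity step identifying any Hausdorff subsequential limit as the graph of a geodesic.

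For the compactness stage, I would apply Lemma~\ref{L:transfluc} in the $k=1$ case to each geodesic $\pi_n$. Since $(p_n; q_n) \to (p; q)$, the endpoints eventually lie in a fixed compact subset of $\Rd$, and the lemma yields a (random) constant $R$ controlling the deviation of $\pi_n$ from the straight-line interpolation between $p_n$ and $q_n$, uniformly in $n$. Moreover, every restriction of a geodesic is itself a geodesic, so applying Lemma~\ref{L:transfluc} to arbitrary sub-endpoint pairs along $\pi_n$ delivers a uniform H\"older-$(2/3)^-$ modulus of continuity with a common random constant. Together these facts force $\{\fg \pi_n\}$ to lie in a common compact subset of $\R^2$ and to be equicontinuous, yielding precompactness in the Hausdorff metric.

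For the identification stage, I would show that any Hausdorff subsequential limit $H$ is the graph of a single H\"older continuous function. The uniform H\"older bound forces $H$ to meet each horizontal time-slice in at most one point, and Arzel\`a-Ascoli applied along a further subsequence produces a uniform limit $\pi:[s,t] \to \R$ with $\fg \pi = H$ and $\pi(s) = x, \pi(t) = y$ (writing $p = (x,s), q = (y,t)$). To conclude that $\pi$ is a geodesic, I would combine continuity of $\scrL$, which gives $\|\pi_n\|_\scrL = \scrL(p_n; q_n) \to \scrL(p; q)$, with upper semicontinuity of path length: the proof of Lemma~\ref{L:upper-semi-conti-path} applies verbatim with $\scrL$ in place of $\scrL^*$, since it uses only continuity of the underlying function. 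This yields $\|\pi\|_\scrL \ge \limsup_n \|\pi_n\|_\scrL = \scrL(p;q)$, while the trivial partition in the definition of length gives $\|\pi\|_\scrL \le \scrL(\pi(s), s; \pi(t), t) = \scrL(p;q)$. Hence $\|\pi\|_\scrL = \scrL(p;q)$ and $\pi$ is a geodesic.

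The main obstacle is securing uniform-in-$n$ control on transversal fluctuations with a single random constant. The statement of Lemma~\ref{L:transfluc} has a prefactor $G(\bx, \by, r, t)^\eta$ that depends on the endpoints, but the random constant $R$ itself is global and $G$ is uniformly bounded on the compact region where $(p_n; q_n)$ eventually lives, so this is a bookkeeping issue rather than a genuine difficulty. A small additional subtlety is that the time-domains $[s_n, t_n]$ vary with $n$, which requires a standard extension-by-endpoint or domain-shrinking argument before invoking Arzel\`a-Ascoli on a fixed interval.
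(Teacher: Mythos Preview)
Your proposal is correct. The paper itself does not prove this lemma; it simply cites it as \cite[Lemma 3.1]{DSV}. Your argument instead gives a self-contained proof using the paper's own machinery, specifically Lemma~\ref{L:transfluc} for the uniform H\"older-$(2/3)^-$ control and Lemma~\ref{L:upper-semi-conti-path} for upper semicontinuity of length. Since $\scrL^*|_{\Rd}$ coincides with the directed landscape $\scrL$ (extended Airy sheets restricted to $\R^2$ are Airy sheets), both lemmas apply directly to $\scrL$-geodesics viewed as size-$1$ optimizers, and both are established earlier in the paper's logical flow than the point at which Lemma~\ref{L:overlap-0} is invoked, so there is no circularity. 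The only mild caveat is that Lemma~\ref{L:upper-semi-conti-path} is phrased for the dyadic pointwise topology with varying domains, so your domain-shrinking remark is needed to pass from uniform convergence on $[s_n,t_n]$ to the hypothesis of that lemma; this is routine. What your approach buys is independence from the external reference, at the cost of relying on the $\scrL^*$ framework rather than working purely within $\scrL$ as \cite{DSV} does.
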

\begin{lemma}[\protect{\cite[Lemma 3.3]{DSV}}]
\label{L:overlap-1} 
Almost surely the following is true. Let $(p_n; q_n) \to (p; q) \in \R^4_\uparrow$, and let $\pi_n$ be any sequence of geodesics from $p_n$ to $q_n$.
Suppose that $(p_n; q_n) \in \Q^4$, and $\fg\pi_n\to \fg\pi$ in the Hausdorff metric, for some geodesic $\pi$ from $p$ to $q$.
Then $\pi_n \to \pi$ in the overlap topology.
\end{lemma}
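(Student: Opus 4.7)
The plan is to reduce the overlap convergence to the tree/coalescence structure of $\scrL$-geodesics with rational endpoints. First I would work on the almost-sure event $\Omega$ on which the following hold: (a) for every $p', q' \in \Q^4 \cap \Rd$ there is a unique $\scrL$-geodesic $\gamma_{p',q'}$; (b) any two rational-endpoint geodesics $\gamma_{p',q'}$ and $\gamma_{p'',q''}$, viewed as continuous paths, have the property that if they agree at one interior time, then the set of times at which they agree is a closed interval; and (c) for every pair $(p,q) \in \Rd$ and every geodesic $\pi$ from $p$ to $q$, and every closed interior subinterval $[r',t'] \subset (r,t)$, there exist $p'', q'' \in \Q^4$ with $p'' \prec p$ and $q'' \succ q$ (in time-order) such that $\gamma_{p'',q''}(s) = \pi(s)$ for all $s \in [r',t']$. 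Property (c) is the non-trivial input: it follows from density of rational space-time points together with the confluence/tree structure of geodesics in the directed landscape, and corresponds exactly to the kind of results established in the preceding work on $\scrL$-geodesic geometry cited at the start of this section.

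Assume we are on $\Omega$ and fix rational $r < r' < t' < t$. Choose $p'', q'' \in \Q^4$ as in (c) and set $\gamma := \gamma_{p'',q''}$, so that $\gamma \equiv \pi$ on $[r',t']$. I next want to show that $\pi_n$ coincides with $\gamma$ on a subinterval whose endpoints converge to $r'$ and $t'$. By Hausdorff convergence of $\fg \pi_n$ to $\fg \pi$, and because $\pi$ is continuous and agrees with $\gamma$ on $[r',t']$, we have $\pi_n(s) \to \gamma(s)$ for every $s \in [r',t']$. In particular, for any interior $s_0 \in (r',t')$ we can find, using the transversal-fluctuation modulus for $\scrL$-geodesics, rational times $s_1 < s_0 < s_2$ in $(r',t')$ for which there is some $s_* \in [s_1, s_2]$ with $\pi_n(s_*) = \gamma(s_*)$ for all large $n$. (Concretely, one picks a rational $x_*$ hit by $\gamma$ on an interior sub-interval and argues, using Hausdorff convergence and the fact that the graphs are one-dimensional monotone-like curves in $\R^2$, that $\pi_n$ must cross $\gamma$ there; any meeting point can be used.) Since both $\gamma$ and $\pi_n$ are rational-endpoint geodesics, property (b) then forces the set of coincidence times $\{s : \pi_n(s) = \gamma(s)\}$ to be a closed interval $[r_n^*, t_n^*]$ containing $s_*$.

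Finally, I would show $r_n^* \to r'$ (from above) and $t_n^* \to t'$ (from below), which follows from $\pi_n \to \pi = \gamma$ pointwise on $[r',t']$ and (b): if, say, $\liminf t_n^* < t'$, then along a subsequence the endpoints $t_n^*$ would accumulate strictly inside $[r',t']$, and just past $t_n^*$ the paths $\pi_n$ and $\gamma$ would be disjoint by maximality of the interval, contradicting pointwise convergence $\pi_n \to \gamma$ on all of $[r',t']$. Combining $\pi \equiv \gamma$ on $[r',t']$ with $\pi_n \equiv \gamma$ on $[r_n^*, t_n^*]$ gives $\pi_n \equiv \pi$ on $[r_n^* , t_n^*] \cap [r',t']$, an interval whose endpoints tend to $r', t'$. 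Letting $r' \searrow r$ and $t' \nearrow t$ along rationals and diagonalizing in $n$ produces the required overlap convergence; the fact that $O_n$ is itself an interval (not just contains one) is automatic from property (b), applied to $\pi_n$ and the rational-endpoint geodesic obtained from a rational approximation of $\pi$.

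The main obstacle is property (c), i.e.\ exhibiting a rational-endpoint $\scrL$-geodesic $\gamma$ that truly coincides with a given $\pi$ on a prescribed interior interval, as opposed to merely approximating it. This is the point where one needs genuine input about the tree/coalescence structure of landscape geodesics (density of coalescence points of rational-endpoint geodesics and their ability to absorb arbitrary interior portions of other geodesics); once (c) is granted, the rest of the argument is essentially an application of (b) and pointwise convergence.
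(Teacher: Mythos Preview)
This lemma is quoted from \cite{DSV} and not proved in the present paper, so there is no in-paper argument to compare against. Your outline is in the right spirit: reduce to the coalescence/tree structure of rational-endpoint geodesics, with the substantive input being your property (c), that every interior segment of a geodesic sits inside a rational-endpoint geodesic. You correctly flag (c) as the non-trivial step.

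However, even granting (a)--(c), your deduction has a gap at the ``crossing argument''. To invoke (b) you need $\pi_n$ and $\gamma$ to actually meet at some interior $s_*$, and you justify this via uniform convergence $\pi_n \to \gamma$ on $[r',t']$ plus an informal ``monotone-like graph'' picture. But uniform convergence of two continuous functions does not force them to intersect --- it is consistent with, say, $\pi_n(s) > \gamma(s)$ for every $s \in [r',t']$ and every $n$ --- and landscape geodesics are not monotone in the spatial variable, so the parenthetical does not help. What is needed is a mechanism that forces coincidence rather than mere proximity: for instance, sandwiching $\pi$ on $[r',t']$ between two rational-endpoint geodesics that coalesce there (so that $\pi_n$, once its endpoints are close enough, is trapped between them by geodesic ordering), or routing through an auxiliary rational-endpoint geodesic sharing one endpoint with $\gamma$ and one with $\pi_n$ and applying the tree structure transitively. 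Without some argument of this kind, the passage from convergence to overlap does not go through.
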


From these we can deduce the following result.
\begin{lemma}
\label{L:overlap-2}
Almost surely the following is true.
Let $(p_n; q_n) = (x_n, s; y_n, t) \to (p; q) = (x, s; y, t) \in \R^4_\uparrow$, and suppose that $x_n \ge x, y_n \ge y$ for all $n$. Let $\pi_n$ be the sequence of rightmost geodesics from $p_n$ to $q_n$, and let $\pi$ be the rightmost geodesic from $p$ to $q$.
Then $\pi_n \to \pi$ in the overlap topology.
\end{lemma}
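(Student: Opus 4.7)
The plan is to first establish Hausdorff convergence $\fg \pi_n \to \fg \pi$, and then upgrade this to overlap convergence via a one-sided rational sandwich that exploits the hypothesis $x_n \ge x$, $y_n \ge y$ through monotonicity of rightmost geodesics. Monotonicity is available because $\scrL^*|_{\Rd} = \scrL$, so the $k=1$ case of Lemma \ref{L:monotonicity} gives $\pi_n \ge \pi$ pointwise on $[s,t]$. Lemma \ref{L:overlap-0} makes $\{\fg \pi_n\}$ precompact with every subsequential limit the graph of a geodesic from $p$ to $q$. Passing $\pi_n \ge \pi$ to any subsequential limit $\pi'$ gives a geodesic from $p$ to $q$ lying coordinatewise above $\pi$; but $\pi$ is the rightmost geodesic from $p$ to $q$, forcing $\pi'=\pi$. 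Hence $\fg\pi_n \to \fg\pi$ in the Hausdorff metric, equivalently $\pi_n \to \pi$ uniformly on $[s,t]$.

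To upgrade to overlap convergence, I would pick a rational sequence $(p^+_m;q^+_m)\in\Q^4$ with $p^+_m \ge p$, $q^+_m \ge q$, and $(p^+_m;q^+_m)\to(p;q)$, and let $\sigma^+_m$ be the rightmost geodesic from $p^+_m$ to $q^+_m$. The Hausdorff argument above, applied to $\{\sigma^+_m\}$ in place of $\{\pi_n\}$, yields $\fg\sigma^+_m \to \fg\pi$ in Hausdorff, and since $(p^+_m;q^+_m)\in\Q^4$, Lemma \ref{L:overlap-1} upgrades this to $\sigma^+_m \to \pi$ in the overlap topology. For each $m$ and every $n$ large enough that $p_n \le p^+_m$ and $q_n \le q^+_m$, monotonicity sandwiches $\pi \le \pi_n \le \sigma^+_m$. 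On the overlap interval of $\sigma^+_m$ and $\pi$, whose endpoints approach $s$ and $t$ as $m\to\infty$, this sandwich forces $\pi_n = \pi$.

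Finally I would verify that $O_n = \{r\in[s,t]:\pi_n(r)=\pi(r)\}$ is itself an interval. For any $r_1<r_2\in O_n$, the restrictions $\pi_n|_{[r_1,r_2]}$ and $\pi|_{[r_1,r_2]}$ are both geodesics between the common endpoints $(\pi(r_1),r_1)$ and $(\pi(r_2),r_2)$, and each is in fact rightmost amongst such geodesics: any strictly rightward competitor could be spliced back into $\pi_n$ or $\pi$ on $[r_1,r_2]$ to contradict its being the rightmost geodesic from $(p_n,q_n)$ or $(p,q)$, respectively. This forces $\pi_n=\pi$ on $[r_1,r_2]$, so $O_n$ is convex, hence an interval. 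Combined with the previous paragraph, $O_n$ contains intervals $[a_m,b_m]$ with $a_m\to s$ and $b_m\to t$, which is precisely overlap convergence. I expect the convexity step to be the main subtlety, since we have no almost-sure uniqueness of geodesics between random interior points to appeal to; the rightmost-under-restriction splicing argument is the clean way around this.
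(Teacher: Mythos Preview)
Your proof is correct. The Hausdorff-convergence half matches the paper's argument exactly: precompactness from Lemma~\ref{L:overlap-0}, the ordering $\pi_n \ge \pi$ from the rightmost hypothesis via Lemma~\ref{L:monotonicity}, and identification of every subsequential limit with $\pi$ by its being the rightmost geodesic. For the upgrade to overlap convergence, the paper simply writes ``Lemma~\ref{L:overlap-1} then completes the proof''; but that lemma, as stated here, requires $(p_n;q_n)\in\Q^4$, which is not assumed in Lemma~\ref{L:overlap-2}. Your rational one-sided sandwich through $\sigma_m^+$, together with the rightmost-under-restriction splicing argument to show that $O_n$ is an interval, is a careful and self-contained way to close this gap. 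The paper's shortcut presumably relies on the observation that the proof of Lemma~\ref{L:overlap-1} in \cite{DSV} goes through verbatim when the $\pi_n$ are rightmost geodesics rather than the unique geodesics from rational endpoints, since the key property used there---that restrictions to subintervals are again the distinguished geodesic between their endpoints---holds in both cases; your argument avoids reopening that proof. One small point: when you pick $(p^+_m;q^+_m)$ you should take strict inequalities $x^+_m>x$, $y^+_m>y$ so that $x_n \le x^+_m$ and $y_n \le y^+_m$ are guaranteed for large $n$.
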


The existence of rightmost and leftmost geodesics follows from \cite[Lemma 13.2]{DOV}; alternately, it follows from Lemma \ref{L:exist-left-right-most}. 

\begin{proof}[Proof of Lemma \ref{L:overlap-2}]
First, by Lemma \ref{L:overlap-0} the sequence $\mathfrak{g} \pi_n$ is precompact in the Hausdorff metric and any subsequential limit is the graph of a geodesic from $p$ to $q$. Consider such a subsequential limit $\mathfrak{g} \pi'$. Since the $\pi_n$ are rightmost geodesics, by Lemma \ref{L:monotonicity} we have $\pi_n \ge \pi$ for all $n$, and hence $\pi' \ge \pi$. Since $\pi$ is a rightmost geodesic, this implies $\pi' = \pi$, and therefore $\mathfrak{g} \pi_n \to \mathfrak{g} \pi$. Lemma \ref{L:overlap-1} then completes the proof.
\end{proof}
We can now upgrade the above lemma to optimizers in $\scrL^*$.
\begin{lemma} \label{L:pointwise-to-overlap}
Almost surely the following statement is true.
Take any $r<t$, $\bx, \by \in \R^k_\le$, and two sequences $\bx^{(i)}, \by^{(i)} \in \R^k_\le$, for $i\in\N$.
Suppose that $x^{(i)}_j > x_j$, $y^{(i)}_j > y_i$ for $j \in \II{1, k}$, and that $\bx^{(i)}\to \bx$, $\by^{(i)}\to \by$ as $i\to\infty$.
Let $\pi^{(i)}$ be the rightmost optimizer from $(\bx^{(i)}, r)$ to $(\by^{(i)}, t)$, and $\pi$ be the rightmost optimizer from $(\bx, r)$ to $(\by, t)$.
Then $\pi^{(i)} \to \pi$ in the overlap topology.
\end{lemma}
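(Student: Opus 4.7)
The plan is to establish uniform convergence $\pi^{(i)} \to \pi$ first, then locally decouple the $\scrL^*$-optimizer into rightmost $\scrL$-geodesics on short subintervals, and finally transfer the single-geodesic overlap statement of Lemma \ref{L:overlap-2} to each coordinate and glue. For the uniform convergence, the hypothesis $\bx^{(i)} \ge \bx, \by^{(i)} \ge \by$ combined with monotonicity of rightmost optimizers (Lemma \ref{L:monotonicity}) gives $\pi^{(i)} \ge \pi$ pointwise and coordinatewise. The uniform H\"older control on optimizers from Lemma \ref{L:transfluc} then makes the sequence $\pi^{(i)}$ precompact in the uniform topology on $[r,t]$, and any subsequential limit $\tilde\pi \ge \pi$ is itself an $\scrL^*$-optimizer by Lemma \ref{L:upper-semi-conti-path}, hence equals $\pi$ by the rightmost property. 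So $\pi^{(i)} \to \pi$ uniformly.

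For the decoupling step, I would work on the almost sure event on which Lemma \ref{l:disjoint-at-fixed-time} holds at every rational time in $(r,t)$, so that the coordinates of $\pi$ are strictly ordered at every rational time. Given a compact $[r',t']\subset(r,t)$, I would cover it by finitely many overlapping closed intervals $I_\alpha = [s_\alpha^-, s_\alpha^+]$ with rational endpoints, chosen short enough that: (i) by continuity around the rational times, the coordinate gaps of $\pi$ on $I_\alpha$ are bounded below by some $h_\alpha > 0$; (ii) $|I_\alpha|$ is less than the random constant $P$ from Lemma \ref{L:algebraic-disjoint} applied with gap parameter $h_\alpha/2$; and (iii) the transversal fluctuations on $I_\alpha$ of any single $\scrL$-geodesic with endpoints in a fixed compact set are less than $h_\alpha/4$ (via Lemma \ref{L:transfluc}). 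By the uniform convergence, for large $i$ the coordinates of $\pi^{(i)}$ on $I_\alpha$ also separate by at least $h_\alpha/2$. Iterating Lemma \ref{L:algebraic-disjoint} across adjacent coordinate pairs gives the identity
\[
\scrL^*(\pi(s_\alpha^-),s_\alpha^-;\pi(s_\alpha^+),s_\alpha^+) = \sum_{j=1}^k \scrL(\pi_j(s_\alpha^-),s_\alpha^-;\pi_j(s_\alpha^+),s_\alpha^+),
\]
and similarly for $\pi^{(i)}$; via Proposition \ref{P:sum-of-disjoint}, this forces each $\pi_j|_{I_\alpha}$ and each $\pi^{(i)}_j|_{I_\alpha}$ to be an $\scrL$-geodesic between its endpoints. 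Moreover, $\pi|_{I_\alpha}$ inherits the rightmost $\scrL^*$-optimizer property between its endpoints (by a paste-back argument), and since $h_\alpha$ exceeds the transversal fluctuations of any candidate $\scrL$-geodesic with the same endpoints, replacing a single coordinate by a coordinate-wise larger $\scrL$-geodesic preserves the required disjointness; this forces each $\pi_j|_{I_\alpha}$ to actually be the rightmost $\scrL$-geodesic between its endpoints, and similarly for $\pi^{(i)}_j|_{I_\alpha}$.

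On each $I_\alpha$ the rightmost $\scrL$-geodesics $\pi^{(i)}_j|_{I_\alpha}$ then have endpoints converging from above to those of $\pi_j|_{I_\alpha}$ (by monotonicity and uniform convergence), so Lemma \ref{L:overlap-2} applies and gives an overlap interval $O_{i,j,\alpha}\subset I_\alpha$ whose endpoints converge to $s_\alpha^\pm$. Intersecting over $j$ gives an overlap interval $O_{i,\alpha}$ for the full multi-path on $I_\alpha$, again with endpoints converging to $s_\alpha^\pm$. Arranging the cover so that consecutive $I_\alpha$ share an open interval of positive length, the adjacent $O_{i,\alpha}$'s overlap pairwise for large $i$, so their union $\bigcup_\alpha O_{i,\alpha}\subset[r',t']$ is an interval on which $\pi^{(i)} = \pi$; letting $r'\searrow r, t'\nearrow t$ along a countable exhaustion of $(r,t)$, a gluing argument (again invoking the rightmost property and monotonicity to rule out disconnected overlap regions) yields the single interval $O_i\subset[r,t]$ required by the overlap topology with endpoints converging to $r$ and $t$. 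The main obstacle is the decoupling step: identifying each coordinate of the restricted rightmost $\scrL^*$-optimizer as the rightmost $\scrL$-geodesic between its endpoints requires carefully matching the qualitative strict disjointness from Lemma \ref{l:disjoint-at-fixed-time} with the quantitative short-time bounds of Lemmas \ref{L:algebraic-disjoint} and \ref{L:transfluc}; once this identification is made, the overlap convergence is a clean transfer of Lemma \ref{L:overlap-2} coordinate by coordinate.
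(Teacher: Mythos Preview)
Your proposal is correct and follows essentially the same strategy as the paper: establish convergence of $\pi^{(i)}$ to $\pi$, locally identify each coordinate of the restricted rightmost optimizer as a rightmost $\scrL$-geodesic via disjointness at a fixed time plus Proposition~\ref{P:sum-of-disjoint}, and then invoke Lemma~\ref{L:overlap-2} coordinatewise. The paper's gluing is simpler than your cover-and-union: once you know $\pi^{(i)}(s_1)=\pi(s_1)$ and $\pi^{(i)}(s_2)=\pi(s_2)$ at two dyadic times $s_1<s_2$, both $\pi^{(i)}|_{[s_1,s_2]}$ and $\pi|_{[s_1,s_2]}$ are rightmost optimizers between the same endpoints (by paste-back), hence equal on $[s_1,s_2]$; letting $s_1\searrow r,\ s_2\nearrow t$ finishes, and this same observation shows the overlap set is automatically an interval, so no separate connectedness argument is needed.
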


\begin{proof}
First, the graphs of all the optimizer paths $\pi_j^{(i)}$ are H\"older-$(2/3)^-$ with a common H\"older constant by Lemma \ref{L:transfluc}. Therefore along any subsequence we can take a further subsequence so that $\pi^{(i)}$ converges to a continuous limit in the dyadic pointwise topology.
This limit must be an optimizer from $(\bx, r)$ to $(\by, t)$, by Lemma \ref{L:upper-semi-conti-path} and the continuity of $\scrL^*$. Thus the limit must be $\pi$ since $\pi\le \pi^{(i)}$ for each $i$, by Lemma \ref{L:monotonicity}.
We conclude that $\pi^{(i)} \to \pi$ in the  dyadic pointwise topology.

We take any $s\in\Q_2$ with $r<s<t$ (recall that $\Q_2$ is the set of dyadic rational numbers).
By Lemma \ref{l:disjoint-at-fixed-time} we can assume that $\pi_1(s), \cdots, \pi_k(s)$ are pairwise distinct.
Then in a small neighborhood of $s$, the paths $\pi_1,\cdots, \pi_k$ are mutually disjoint.
By Lemma \ref{L:transfluc}, we can find a (random) $\delta>0$, such that $\delta \in \Q_2$, and for any $0<\delta_1^-,\cdots, \delta_k^-, \delta_1^+,\cdots, \delta_k^+ < \delta$, and any geodesics from $(\pi_j(s-\delta)+\delta_j^-, s-\delta)$ to $(\pi_j(s+\delta)+\delta_j^+, s+\delta)$, $1\le j \le k$, these geodesics are disjoint. By the dyadic convergence established above and Lemma \ref{L:monotonicity}, for all large enough $i$ we have
\begin{equation}
\label{E:pijj}
\pi_j(s \pm \de) \le \pi^{(i)}_j(s \pm \de) < \pi_j(s \pm \de) + \de
\end{equation}
for all $j \in \II{1, k}$. From now on, we work with $i$ such that \eqref{E:pijj} holds.
For each $1\le j \le k$, let $\tau^{(i)}_j$ be the rightmost geodesic from $(\pi^{(i)}_j(s-\delta), s-\delta)$ to $(\pi^{(i)}_j(s+\delta), s+\delta)$. We claim that $\pi^{(i)}_j = \tau^{(i)}_j$ on the interval $I_\de = [s-\delta, s+\delta]$ as long as $i$ is sufficiently large. Indeed, letting $\tau^{(i)} = (\tau_1^{(i)}, \dots, \tau_k^{(i)})$, we have
\begin{equation*}
\|\tau^{(i)}\|_{\scrL^*} = \sum_{j=1}^k \|\tau^{(i)}_j\|_{\scrL^*} \ge \sum_{j=1}^k \|\pi_j^{(i)}|_{I_\de}\|_{\scrL^*} \ge \|\pi^{(i)}|_{I_\de} \|_{\scrL^*}.
\end{equation*}
Here the equality follows from Proposition \ref{P:sum-of-disjoint} and \eqref{E:pijj}, the first inequality uses that each $\tau^{(i)}_j$ is a geodesic, and the second inequality uses
Proposition \ref{P:sum-of-disjoint} again. Since $\pi^{(i)}$ is an optimizer, all inequalities above must be equalities, so $\tau^{(i)}$ must also be an optimizer, and all the paths $\pi_j^{(i)}|_{I_\de}$ must be geodesics. Since $\pi^{(i)}$ is a rightmost optimizer, we have $\pi^{(i)}|_{I_\de} \ge \tau^{(i)}$. Since each of the $\pi_j^{(i)}|_{I_\de}$ are geodesics and each of the $\tau_j^{(i)}$ are rightmost geodesics, this implies that $\tau_j^{(i)} = \pi_j^{(i)}|_{I_\de}$ for all $i$ large enough.

The same argument shows that each $\pi_j|_{I_\de}$ is also a rightmost geodesic. Therefore by Lemma \ref{L:overlap-2}, and the fact that for any $\de > 0$, \eqref{E:pijj} holds for all large enough $i$ shows that for $i$ large enough we have $\pi_j^{(i)}(s) = \pi_j(s)$ for all $j \in \II{1, k}$.

Next we take $s_1, s_2\in\Q_2$ with $r<s_1<s_2<t$.
For $i$ large enough we have $\pi^{(i)}(s_1) = \pi(s_1)$ and $\pi^{(i)}(s_2) = \pi(s_2)$. Thus $\pi^{(i)}(s) = \pi(s)$ for any $s_1<s<s_2$ since on $[s_1, s_2]$ both $\pi^{(i)}$ and $\pi$ are the rightmost optimizer from $(\pi(s_1), s_1)$ to $(\pi(s_2), s_2)$.
By sending $s_1\to r$ and $s_2\to t$ we get the conclusion.
\end{proof}
\begin{remark}
A similar statement to Lemma \ref{L:pointwise-to-overlap} holds for convergence to leftmost optimizers.
\end{remark}

\subsection{Two paths}

To prove Proposition \ref{P:disjointness}, we start with the two-path case with fixed endpoints.
\begin{lemma}
\label{L:disjoint-2}
Fix $\bx, \by \in \R^2_\le$ and $s < t$. Then almost surely, the unique optimizer in $\scrL^*$ from $(\bx, s)$ to $(\by, t)$ consists of two paths that are disjoint, except possibly at the endpoints.
\end{lemma}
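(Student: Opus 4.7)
The plan follows the two-step outline in Section \ref{SS:outline}: first prove the lemma in the equal-coordinate case $\bx = (x, x), \by = (y, y)$, then extend to general $(\bx, \by)$ via a resampling argument. The existence and uniqueness of the fixed-endpoint optimizer are already in hand by Lemma \ref{L:as-unique-geo-fixedend}, so only disjointness on the open interval $(s, t)$ remains to be shown.

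In the equal-coordinate case, fix a rational $r \in (s, t)$. By the metric composition law (Proposition \ref{P:mc-everywhere}), the independent extended sheet marginals of $\scrL^*$, and Lemma \ref{L:two-sheet-sum-unique-max}, the midpoint $\pi(r)$ is almost surely the unique $\bz \in \R^2_\le$ maximizing $F_r(\bz) := \scrL^*(\bx, s; \bz, r) + \scrL^*(\bz, r; \by, t)$, whose two summands are independent rescaled extended Airy sheets. By Propositions \ref{P:B-continuity} and \ref{P:high-paths-B}, together with the flip symmetry of Lemma \ref{L:basic-sym}, on any compact set $F_r$ can be written as an explicit functional of the top two lines of two independent parabolic Airy line ensembles $\scrB, \scrB'$ on a compact interval. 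The Brownian Gibbs property (Theorem \ref{T:melon-Airy-facts}) combined with the Radon--Nikodym bound of Theorem \ref{T:radon-n-deri} then reduces the question to the analogous maximization in which $\scrB, \scrB'$ are replaced by four independent Brownian motions, where a direct computation shows that the maximizer lies almost surely off the diagonal. This yields $\pi_1(r) < \pi_2(r)$ a.s.\ at each fixed rational $r \in (s, t)$; a continuity argument based on Lemma \ref{L:transfluc}, applied by zooming in to any would-be collision point and reducing to a smaller equal-endpoint subproblem, upgrades this to disjointness on all of $(s, t)$.

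For general $(\bx, \by)$, let $\tau$ denote the unique optimizer from $((0, 0), s - 1)$ to $((0, 0), t + 1)$, which by the equal-coordinate case is disjoint on $(s - 1, t + 1)$. A resampling argument will show that for any rational interval $[s', t'] \sset (s, t)$, almost surely the unique optimizer $\pi$ from $(\bx, s)$ to $(\by, t)$ satisfies $\pi(s') = \tau(s')$ and $\pi(t') = \tau(t')$; by the uniqueness of fixed-endpoint optimizers, this forces $\pi|_{[s', t']} = \tau|_{[s', t']}$, and hence $\pi$ is disjoint on $[s', t']$. Taking $[s'_n, t'_n] \uparrow (s, t)$ along rationals then yields disjointness on $(s, t)$. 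The resampling will use the decomposition of $\scrL^*$ into independent time-slab sheets on $[s - 1, s]$, $[s, t]$, and $[t, t + 1]$, and will exploit the Brownian Gibbs property of the parabolic Airy line ensembles underlying the two outer slabs to transfer the disjointness of $\tau$ on $[s', t']$ to $\pi$ despite the mismatched outer boundary conditions.

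I expect the main obstacle to be the resampling step: matching the maximizers of two composed optimization problems with different outer endpoints, when the joint law of the random matching locations $(\tau(s'), \tau(t'))$ is only implicitly defined, requires a delicate conditional absolute continuity estimate on the outer slabs. By contrast, the equal-coordinate case is comparatively clean because the Radon--Nikodym comparison with Brownian motion gives direct access to continuity-of-distribution statements for the argmax.
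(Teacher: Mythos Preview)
Your two-step outline matches the paper's, but both steps have genuine gaps.

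\textbf{Step 1.} Showing that for each fixed rational $r$ the maximizer of $F_r$ almost surely lies off the diagonal is not enough to conclude disjointness on all of $(s,t)$: it gives $\pi_1(r)<\pi_2(r)$ simultaneously for all rational $r$, but this is compatible with $\pi_1$ and $\pi_2$ touching at an irrational time. Your ``zooming in to a smaller equal-endpoint subproblem'' does not close this: the would-be collision time $r_0$ and the common value $z=\pi_1(r_0)=\pi_2(r_0)$ are random, so you cannot apply a fixed-endpoint/fixed-time statement there. What the paper actually proves (Lemma \ref{L:disjoint-fixedtime}, via the Brownian computation of Lemma \ref{L:LPP-2BM}) is the quantitative bound $\p(|\pi_1(r)-\pi_2(r)|<\epsilon,\ |\pi_i(r)|<d)<c\,\epsilon^{2-\eta}$. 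Union-bounding over a grid of $N$ points and using H\"older-$(2/3)^-$ continuity from Lemma \ref{L:transfluc} then gives that the probability of any collision on $[\delta,1-\delta]$ is $O(N\cdot N^{(2-\eta)(-2/3+o(1))})\to 0$. The exponent exceeding $3/2$ is exactly what makes the grid argument work, and your qualitative off-diagonal statement does not supply it.

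\textbf{Step 2.} The assertion that $\pi(s')=\tau(s')$ and $\pi(t')=\tau(t')$ is simply false: the optimizer $\tau$ from $((0,0),s-1)$ to $((0,0),t+1)$ has no reason to pass through $\bx$ at time $s$ or $\by$ at time $t$, so $\pi$ and $\tau$ will typically differ everywhere on $[s,t]$. The paper does something quite different. It takes i.i.d.\ copies $\scrL^{*,i}$ of $\scrL^*$ and builds hybrid landscapes $\hat\scrL^{*,i}$ that agree with $\scrL^*$ on the middle slab $[s,t]$ but use the fresh copies on the outer slabs. Lemma \ref{L:positive-probability} shows that, conditionally on the middle slab, the law of $(\tau^{(i)}(s),\tau^{(i)}(t))$ has full support on $\R^2_\le\times\R^2_\le$; hence along a subsequence these midpoints converge to $(\bx,\by)$ from the right. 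One then invokes overlap convergence of rightmost optimizers (Lemma \ref{L:pointwise-to-overlap}) to transfer the disjointness of each $\tau^{(i)}|_{[s,t]}$ (guaranteed by Step 1) to $\pi$. So the mechanism is not that any single resampled $\tau$ coincides with $\pi$, but that a sequence of them converges to $\pi$ in overlap.
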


The proof of Lemma \ref{L:disjoint-2} consists of two steps. First, we will handle the common endpoint case when $x_1 = x_2, y_1 = y_2$. This can be dealt with by proving a quantitative estimate on the location of optimizers from $\bx$ to $\by$ at a fixed time, Lemma \ref{L:disjoint-fixedtime} below, and then appealing to H\"older continuity of optimizers. We will then compare the case of general $\bx = (x_1 < x_2), \by = (y_1 < y_2)$ to the common endpoint case by applying a resampling argument to show that optimizers from $(\bx, s)$ to $(\by, t)$ can be approximated in overlap by optimizers from $((0,0), s-1)$ to $((0,0), t + 1)$ in a sequence of extended landscapes defined with different, independent noise on $[s-1, s]$ and $[t, t + 1]$.

\begin{lemma}  \label{L:disjoint-fixedtime}
Let $\pi=(\pi_1,\pi_2):[0,1]\to \R^2_\le$ be the almost surely unique optimizer from $((0,0),0)$ to $((0,0),1)$ in $\scrL^*$. 
Take any $\delta, \eta, d>0$.
There exists $c > 0$ depending on $\delta, d, \eta$, such that for any $t\in[\delta, 1-\delta]$, and $\epsilon>0$, we have
$$
\p(|\pi_1(t)-\pi_2(t)| < \epsilon, |\pi_1(t)|, |\pi_2(t)|<d ) < c\epsilon^{2-\eta}.
$$
\end{lemma}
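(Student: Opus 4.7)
The plan is to represent $\pi(t)$ as the unique argmax of an explicit functional of two independent parabolic Airy line ensembles and then use the Brownian Gibbs property together with an argmax density estimate to bound the probability of proximity to the diagonal.

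By the independent extended Airy sheet marginals of $\scrL^*$ (Definition \ref{D:directed-landscape}), the metric composition law (Proposition \ref{P:mc-everywhere}), and Lemma \ref{L:two-sheet-sum-unique-max}, $\pi(t)$ is almost surely the unique maximizer over $\R^2_\le$ of $A_t(z_1,z_2):=\scrS_1(0^2,(z_1,z_2))+\scrS_2((z_1,z_2),0^2)$, where $\scrS_1,\scrS_2$ are independent extended Airy sheets of scales $t^{1/3}$ and $(1-t)^{1/3}$. Applying Proposition \ref{P:high-paths-B} with $k=2$ and computing the two-path last passage value $\scrB[((z_1,z_1),2)\to((z_1,z_2),1)]$ explicitly as a supremum over the jump time of the second path from line $2$ to line $1$ yields the identity
\[
\scrS(0^2,(z_1,z_2)) \;=\; \scrB_1(z_1)+\scrB_1(z_2)+\sup_{w\in[z_1,z_2]}(\scrB_2-\scrB_1)(w).
\]
Combining this with the flip symmetry of Lemma \ref{L:basic-sym}, we realize $A_t$ in terms of two independent parabolic Airy line ensembles $\scrB,\scrB'$ (suitably rescaled and with $\scrB'$ reflected) as
\[
A_t(z_1,z_2) \;=\; H(z_1)+H(z_2)+\sup_{w\in[z_1,z_2]}G(w)+\sup_{w\in[z_1,z_2]}G'(w),
\]
where $H$ is a linear combination of $\scrB_1$ and the reflected $\scrB'_1$, and $G=\scrB_2-\scrB_1$, $G'=\scrB'_2-\scrB'_1$.

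Next, fix a large compact interval $K\supset[-d,d]$ and condition on the $\sig$-algebra $\scrF$ generated by all $\scrB_i,\scrB'_i$ outside $K$ together with $\scrB_i,\scrB'_i$ for $i\ge 2$ on $K$. By Theorem \ref{T:melon-Airy-facts}, conditional on $\scrF$, the pair $(\scrB_1,\scrB'_1)|_K$ is a pair of independent Brownian bridges (of diffusion parameter $\sqrt{2}$) conditioned to stay above $\scrB_2,\scrB'_2$ respectively. By Theorem \ref{T:radon-n-deri}, this conditional law is absolutely continuous with respect to the free Brownian bridge law, with Radon--Nikodym derivative in every $L^p$, $p<\infty$, uniformly in $\scrF$. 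Hence, by H\"older's inequality applied with a sufficiently large exponent, it suffices to prove the bound with a slightly better exponent $2-\eta/2$ under the simpler law in which $(\scrB_1,\scrB'_1)|_K$ is replaced by free Brownian bridges independent of $\scrF$.

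Under this simpler law, the two supremum terms depend only on $\scrF$ and reduce to a fixed deterministic function $\Psi(z_1,z_2)$ which is nondecreasing as $z_2-z_1$ grows, while $H$ becomes (up to a bounded $\scrF$-measurable drift) a Brownian motion on $K$. Parameterize by $y=(z_1+z_2)/2$, $\de=z_2-z_1\ge 0$; using the decomposition $H(y-\de/2)+H(y+\de/2)=2H(y)+\xi_\de(y)$ with $\xi_\de(y)$ a centered Gaussian of variance $O(\de)$ independent of $H(y)$, a Cameron--Martin--Girsanov calculation bounds the joint density of the argmax of the resulting process near $\de=0$ by $C_\eta\,\de^{1-\eta/4}$, uniformly in $y\in[-d,d]$ and in $\scrF$. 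Integrating in $\de\in[0,\epsilon]$ and in $y\in[-d,d]$ and combining with the H\"older estimate from the previous paragraph gives the required $c\epsilon^{2-\eta}$ bound. The main technical obstacle is this density estimate: it must combine the Brownian fluctuation scale $\sqrt{\de}$ coming from $H$ with the strict increase of $\Psi$ off the diagonal, producing the quadratic vanishing at the diagonal characteristic of $\beta=2$ level repulsion in the KPZ class.
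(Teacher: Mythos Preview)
Your setup through the formula
\[
A_t(z_1,z_2)=H(z_1)+H(z_2)+\sup_{w\in[z_1,z_2]}G(w)+\sup_{w\in[z_1,z_2]}G'(w)
\]
matches the paper, but the conditioning step breaks. You condition on the $\sigma$-algebra $\scrF$ generated by everything except $\scrB_1,\scrB'_1$ on $K$, and then assert that ``the two supremum terms depend only on $\scrF$''. They do not: $G=\scrB_2-\scrB_1$ and $G'=\scrB'_2-\scrB'_1$ both involve the lines $\scrB_1,\scrB'_1$ that you are resampling, so $\sup_{[z_1,z_2]}G$ is genuinely random given $\scrF$. Consequently the reduction to ``$H$ is Brownian and $\Psi$ is a fixed function'' fails, and the remainder of the argument (the $y,\delta$ parameterization and the Cameron--Martin density claim) is operating on a model that is not the one you have.

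The paper avoids this by applying Theorem~\ref{T:radon-n-deri} with $k=2$, which compares the \emph{pair} $(\scrB_1,\scrB_2)$ (and similarly $(\scrB'_1,\scrB'_2)$) on a compact interval to two independent Brownian motions. This replaces all four functions $\scrB_1,\scrB_2,\scrB'_1,\scrB'_2$ by independent Brownian motions simultaneously, so both the $H$-part and the supremum parts become Brownian. The resulting four-Brownian-motion optimization problem is then analyzed by a concrete computation (Lemma~\ref{L:LPP-2BM}): one localizes the argmax to an $\epsilon$-interval, splits into two independent one-sided events of probability $\epsilon^{3/2-o(1)}$ each, and evaluates these via the reflection principle (using that $\max_{[0,1]}B\eqd|B(1)|$). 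Your final paragraph gestures at a density estimate that would have to do comparable work, but even with the conditioning repaired, the decomposition $H(y-\delta/2)+H(y+\delta/2)=2H(y)+\xi_\delta(y)$ with $\xi_\delta(y)$ independent of $H(y)$ is not correct for Brownian motion, and the argument for the claimed $\delta^{1-\eta/4}$ density is not supplied.
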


The basic strategy for the proof of Lemma \ref{L:disjoint-fixedtime} is to use Proposition \ref{P:B-continuity} to relate the joint distribution of $\pi_1(t), \pi_2(t)$ to a certain optimization problem across the first two lines of two parabolic Airy line ensembles. This optimization problem is amenable to analysis since the parabolic Airy line ensemble withstands a strong comparison to independent Brownian motions, and is further reduced to the same optimization problem for several independent Brownian motions (see Lemma \ref{L:LPP-2BM} below).

It is worth mentioning that exponents for disjointness of \textit{geodesics} in the directed landscape have previously been analyzed by Hammond \cite{hammond2017exponents} by similarly appealing to a particular statistic in the parabolic Airy line ensemble. Hammond showed that the probability that $k$ geodesics from time $0$ to time $1$ that all start and end within $\ep$ of each other are mutually disjoint is bounded above by $\ep^{(k^2-1)/2 + o(1)}$. This exponent is expected to be sharp. One thing that is new in the analysis in Lemma \ref{L:disjoint-fixedtime} is the use of the precise relationship between the Airy sheet and the parabolic Airy line ensemble, which was not yet known when \cite{hammond2017exponents} was written. 

The optimization problem (Lemma \ref{L:LPP-2BM}) that arises in Lemma \ref{L:disjoint-fixedtime} is not straightforward to analyze, even heuristically. Indeed, we do not expect that the exponent $2$ in Lemma \ref{L:disjoint-fixedtime} is sharp and do not have a prediction for the true exponent. To set up the proof, we need the following result giving a strong comparison between the parabolic Airy line ensemble $\scrB$ on a compact set and a sequence of independent Brownian motions.

\begin{theorem} \label{T:radon-n-deri}
	For $d > 0$, let $\scrC_d$ be the space of continuous functions on $[-d, d]$ which vanish at $-d$. Let $\mu_d$ denote the law of a standard Brownian motion on $[-d, d]$, and for $k \in \N$ let $\mu_d^{\bigotimes k}$ denote the law of $k$-tuples of functions in $\scrC^k_d$ given by the product of $k$ copies of $\mu_d$. For any measurable set $A\subset \scrC^k_d, k \in \N$ and $d \ge 1$ we have
	$$
	\p(\hat \scrB^k \in A) \le \mu_d^{\bigotimes k}(A) \exp \lf(b k d^6 + d e^{bk} \lf(\log [\mu_d^{\bigotimes k}(A)]^{-1} \rg)^{5/6} \rg),
	$$
	where $b>0$ is a universal constant, $\hat \scrB^k = (\hat\scrB^k_1, \dots, \hat \scrB^k_k)$, and each $\hat \scrB^k_i$ is given by
	$$
	\hat \scrB^k_i(x) = 2^{-1/2} \lf(\scrB^k_i(x) - \scrB^k_i(-d)\rg).
	$$
\end{theorem}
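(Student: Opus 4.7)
The result is a $k$-line version of the Radon–Nikodym comparison of \cite{CHH20} (which treats $k=1$), following the same three-step template: a Brownian Gibbs reduction, an explicit Gaussian comparison between bridges and motions, and a lower bound on a non-intersection probability. First, work with $\tilde\scrB=2^{-1/2}\scrB$ and let $\scrF_0$ be the $\sigma$-algebra generated by $\tilde\scrB_i(\pm d)$ for $1\le i\le k$ together with $\tilde\scrB_{k+1}|_{[-d,d]}$. By the Brownian Gibbs property (Theorem \ref{T:melon-Airy-facts}), conditional on $\scrF_0$ the restriction $(\tilde\scrB_1,\ldots,\tilde\scrB_k)|_{[-d,d]}$ is a collection of $k$ Brownian bridges (diffusion $1$) from $\tilde\scrB_i(-d)$ to $\tilde\scrB_i(d)$, reweighted by the indicator of the event $\scrE$ that they remain ordered among themselves and above $\tilde\scrB_{k+1}|_{[-d,d]}$. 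After recentering by $\tilde\scrB_i(-d)$, the conditional law of $\hat\scrB^k$ given $\scrF_0$ is that of $k$ Brownian bridges from $0$ to $u_i := \tilde\scrB_i(d)-\tilde\scrB_i(-d)$ conditioned on $\scrE$.

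For the bridge-to-motion comparison, for any measurable $A \subset \scrC^k_d$,
\[
\p(\hat\scrB^k\in A\mid\scrF_0) \;\le\; \frac{\bigotimes_{i=1}^k \nu^{u_i}_d(A)}{\bigotimes_{i=1}^k \nu^{u_i}_d(\scrE)},
\]
where $\nu^u_d$ is the Brownian bridge from $0$ to $u$ on $[-d,d]$. The numerator is compared with $\mu_d^{\bigotimes k}(A)$ via the disintegration $\mu_d = \int \nu^u_d\, g(u)\,du$, where $g$ is the Gaussian density of $\mu_d$-endpoint values. Combining this identity with a Cauchy--Schwarz bound on the cross term $\sum_i u_i f_i(d)/(2d)$ in the bridge-motion density ratio, and a Gaussian tail bound on $\sum_i f_i(d)^2$ under $\mu_d^{\bigotimes k}(\,\cdot\mid A)$, yields
\[
\bigotimes_i \nu^{u_i}_d(A) \;\le\; \mu_d^{\bigotimes k}(A)\cdot\exp\!\lf(\tfrac{C}{d}\sum_i u_i^2 \;+\; C d \lf(\log \mu_d^{\bigotimes k}(A)^{-1}\rg)^{5/6}\rg),
\]
with the $5/6$ exponent arising from optimizing the Gaussian tail against the quadratic growth of the bridge-motion density.

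The denominator $\bigotimes_i \nu^{u_i}_d(\scrE)$ is lower-bounded using the Karlin--McGregor formula for non-intersecting Brownian bridges, together with quantitative control on the lower curve $\tilde\scrB_{k+1}$, which has parabolic profile $-x^2$ plus fluctuations of order $d^{1/2}$ on $[-d,d]$ (estimated by passing Proposition \ref{P:dyson-tails} to the limit). Integrating the resulting $\scrF_0$-measurable factor against the joint law of the boundary data --- for which standard Tracy--Widom-type tail bounds on the Airy point process values $\tilde\scrB_i(\pm d)\approx -d^2/\sqrt 2$ are available --- converts $\tfrac{C}{d}\sum u_i^2$ and $-\log\bigotimes \nu^{u_i}_d(\scrE)$ into the deterministic prefactor $\exp(bkd^6)$. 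The principal obstacle is precisely this $k$-line non-intersection lower bound: extending the single-bridge estimate of \cite{CHH20} to $k$ ordered bridges with a random lower barrier while tracking the $k$- and $d$-dependence of all constants tightly enough to recover the factor $e^{bk}$ in the statement. Although conceptually routine via Karlin--McGregor, this requires reassembling several intermediate lemmas in \cite{CHH20} with care rather than invoking the $k=1$ result as a black box.
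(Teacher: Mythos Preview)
Your outline differs substantially from the paper's proof and contains a real gap at the step you yourself flag as the ``principal obstacle''.

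The paper does \emph{not} apply the Brownian Gibbs property directly on the box $\II{1,k}\times[-d,d]$ and then try to lower bound the full non-intersection probability. Instead it works with the \emph{jump ensemble} machinery of \cite{hammond2016brownian,CHH20}: a more elaborate conditioning $\sigma$-algebra $\scrF_k$ (involving random side intervals $[-2T,\mathfrak l]$, $[\mathfrak r,2T]$ with $T\sim(\log\ep^{-1})^{1/3}$, bridge versions on those intervals, and a pole set), together with an intermediate ensemble $J$ conditioned on a strictly weaker event than full non-intersection. The point of this construction is precisely that the acceptance probability $\p_{\scrF_k}(\Pass(J)=1)$ admits the uniform lower bound $\exp(-d^2e^{bk}(\log\ep^{-1})^{2/3})$ on a favourable event of probability $\ge 1-\ep$ (assertions (I)--(II) in the appendix). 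The paper then proves the $k$-line bound by \emph{induction on $k$}: conditionally on $\scrF_k$ one writes $\p_{\scrF_k}(\hat\scrB^k\in A)=\E_{\scrF_k}\p_{\scrF_{k-1}}(\hat\scrB^{k-1}\in A_{\hat\scrB_k})$, applies the inductive hypothesis to the fibre, and then controls the $\scrF_k$-expectation over $\hat\scrB_k$ using the single-line jump-ensemble bound (assertion (IV)). No Karlin--McGregor computation appears.

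The gap in your approach is the integration step. Your conditional bound has the form $\p(\hat\scrB^k\in A\mid\scrF_0)\le N(\scrF_0)/D(\scrF_0)$ where $D(\scrF_0)=\bigotimes_i\nu^{u_i}_d(\scrE)$ is the probability that $k$ bridges avoid one another \emph{and} stay above the random curve $\tilde\scrB_{k+1}$ on all of $[-d,d]$. This denominator can be arbitrarily small (e.g.\ when the boundary gaps $\tilde\scrB_i(\pm d)-\tilde\scrB_{i+1}(\pm d)$ are small, or when $\tilde\scrB_{k+1}$ nearly touches $\tilde\scrB_k$), and Karlin--McGregor gives no control on the barrier part. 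You cannot simply take expectations of $N/D$ and bound by $\E N/\E D$; you would need a uniform-in-$\scrF_0$ bound on a favourable set, which is exactly what the jump ensemble provides and what your outline lacks. Your closing remark that one must ``reassemble several intermediate lemmas in \cite{CHH20}'' is correct, but those intermediate lemmas \emph{are} the jump-ensemble construction that your three-step template has bypassed; invoking them would replace, not supplement, your argument.
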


The main result in \cite{CHH20} (Theorem 3.11 therein) shows that each of the marginals $\hat \scrB_i^k$ satisfy the above Radon-Nikodym derivative bound with $\mu_d$ in place of $\mu_d^{\bigotimes k}$. While Theorem \ref{T:radon-n-deri} is stronger than \cite[Theorem 3.11]{CHH20}, it can nonetheless be proven by combining the same key technical ingredients developed in Sections 4 and 5 of \cite{CHH20}. We do this in Appendix \ref{app:radon-n-deri}.

\begin{proof}[Proof of Lemma \ref{L:disjoint-fixedtime}]
We define
$$\tbz=\argmax_{-2d\le z_1\le z_2\leq 2d} \scrL^*((0,0), 0; \bz, t) + \scrL^*(\bz, t; (0,0), 1).$$
Since $\scrL^*$ has extended Airy sheet marginals, by Proposition \ref{P:B-continuity} and the symmetry $\scrS(\bx, \by) = \scrS(-\by, -\bx)$ (Lemma \ref{L:basic-sym}), we could alternatively define $\tbz$ as
\begin{equation}
\label{E:tbz-argmax}
\tbz=\argmax_{-2d\le z_1\le z_2\leq 2d} \scrB[(0,0) \to (z_1,z_2)] + \scrB'[(0,0)\to (-z_2,-z_1)],
\end{equation}
where $t^{1/3} \scrB(t^{-2/3} \; \cdot)$ and $(1-t)^{1/3} \scrB'((1-t)^{-2/3} \; \cdot)$ are independent parabolic Airy line ensembles. Uniqueness of the $\argmax$ follows the same arguments as in the proof of Lemma \ref{L:two-sheet-sum-unique-max}.
It suffices to prove
$$
\p(|\tz_1-\tz_2| < \epsilon, |\tz_1|, |\tz_2|< d) < c\epsilon^{2-\eta},
$$
since if $|\pi_1(t)|, |\pi_2(t)|<d$, we must have that $\pi_1(t)=\tz_1$ and $\pi_2(t)=\tz_2$. By Proposition \ref{P:high-paths-B},
\begin{align*}
\scrB[(0,0) \to (z_1,z_2)]
&=
\max_{z_1\le w\le z_2} \scrB_1(z_1) + \scrB_1(z_2) - \scrB_1(w) + \scrB_2(w), \\
\scrB'[(0,0) \to (-z_2,-z_1)]
&=
\max_{-z_2\le w\le -z_1} \scrB'_1(-z_2) + \scrB'_1(-z_1) - \scrB'_1(w) + \scrB'_2(w).
\end{align*}
Therefore by Theorem \ref{T:radon-n-deri} applied to the interval $[-2\delta^{-2/3}d, 2\delta^{-2/3}d]$ and Brownian scaling and time-reversal symmetry of Brownian motion, it suffices to study the same problem when $\scrB_1(\cdot), \scrB_2(\cdot)$, $\scrB_1'(- \; \cdot), \scrB_2'(- \; \cdot)$ are replaced by independent Brownian motions. This is done in Lemma \ref{L:LPP-2BM}, implying the desired result.
\end{proof}

\begin{lemma} \label{L:LPP-2BM}
Take four independent two-sided Brownian motions $B_1, B_2, B_1', B_2':\R\to \R$, with diffusion parameter $2$.
Let $(\tz_1, \tz_2, \tw, \tw')$ be
$$
\argmax_{-2\le z_1\le w, w' \le z_2\le 2}
(B_1(z_1)+B_1(z_2)-B_1(w)+B_2(w))+(B_1'(z_1)+B_1'(z_2)-B_1'(w')+B_2'(w')).
$$
Note that a priori we do not assume the $\argmax$ is unique, and just take an arbitrary one.
Then given any small $\eta>0$, for any small enough $\epsilon>0$ we have $\p(|\tz_1|,|\tz_2|<1, |\tz_1-\tz_2|<\epsilon) < \epsilon^{2-\eta}$.
\end{lemma}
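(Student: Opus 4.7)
The target bound is essentially a joint density estimate for the two-dimensional argmax $(\tz_1, \tz_2)$ near the diagonal $\{z_1 = z_2\}$, requiring that the density vanish almost linearly there. My strategy is a discretization combined with Brownian scaling. Cover the region $\{z_1, z_2 \in [-1, 1], \, |z_1 - z_2| < \epsilon\}$ by $O(\epsilon^{-1})$ overlapping boxes of the form $J_k \times J_k$, where each $J_k \subset \R$ is an interval of length $3\epsilon$ centered at $k\epsilon$ for $k \in \Z$; every point in the target region lies in at least one such box. By a union bound it then suffices to establish, for each $k$,
\[
\p(\tz_1, \tz_2 \in J_k) \le C\epsilon^{3-\eta},
\]
since summing over the $O(\epsilon^{-1})$ choices of $k$ yields $\epsilon^{2-\eta}$.

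For a fixed $J_k$, apply translation invariance of Brownian increments and Brownian scaling to reduce to a scale-one problem on a large domain $[-L, L]$ with $L = 2/\epsilon$. In rescaled coordinates $u = z/\epsilon$, the four Brownians become independent standard two-sided Brownian motions $Y_1, Y_2, Y_1', Y_2'$ on $[-L, L]$, the box $J_k\times J_k$ becomes the fixed unit box $[0, 3]^2$, and the relevant functional is
\[
\tilde F(u_1, u_2) = Y_1(u_1) + Y_1(u_2) + \max_{u \in [u_1, u_2]}(Y_2 - Y_1)(u) + Y_1'(u_1) + Y_1'(u_2) + \max_{u' \in [u_1, u_2]}(Y_2' - Y_1')(u').
\]
The per-box target becomes $\p\bigl(\argmax_{-L \le u_1 \le u_2 \le L} \tilde F \in [0, 3]^2\bigr) \le CL^{-3+\eta}$.

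To estimate this, condition on the $\sigma$-algebra $\mathcal{G}$ generated by the four Brownian motions outside $(0, 3)$ together with their values at $0$ and $3$; inside $(0, 3)$ the four processes become independent Brownian bridges. Restricted to $[0, 3]^2$, $\tilde F$ decomposes as a $\mathcal{G}$-measurable additive constant plus a functional $R$ of the bridges, while the maximum $M^{\mathrm{out}}$ of $\tilde F$ on the complement is nearly $\mathcal{G}$-measurable up to cross-terms (LPP paths whose optimal jump point lies in $(0,3)$ while their endpoints do not), which are controlled by $\sqrt{\log(1/\epsilon)}$-scale oscillation bounds on Brownian bridges. The $L^{-3+\eta}$ bound then decomposes as (a) a factor $L^{-1}$ from localizing the argmax of the diagonal Brownian motion $H(z) = Y_1(z) + Y_2(z) + Y_1'(z) + Y_2'(z)$, whose arcsine-type density places $O(L^{-1})$ mass on the unit interval $[0, 3]$; and (b) an $L^{-2+\eta}$ factor capturing the additional decay coming from the fact that the conditional joint density of $\argmax \tilde F$ on $[0, 3]^2$ vanishes almost linearly at the diagonal $\{u_1 = u_2\}$.

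The main obstacle is establishing the almost-linear diagonal vanishing in point (b) above. Heuristically it arises from the competition between the parabolic Bessel-$3$ behavior of $H$ near its argmax (which would concentrate $\argmax \tilde F$ on the diagonal) and the $(u_2 - u_1)^{1/2}$ "spreading gain" $\tilde F(u_1, u_2) - \tilde F(u, u) \ge 0$ coming from the LPP maxima (which pushes $\argmax \tilde F$ off the diagonal); balancing these two effects in the first-order optimality conditions should yield density vanishing like $(u_2 - u_1)^{1-\eta}$. Making this rigorous seems to require a direct Gaussian / Cameron--Martin computation for the 2D argmax of the bridge functional $R$, combined with Bessel-$3$ local analysis of Brownian motion near its maximum. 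This step is the technically delicate heart of the proof, and the $\eta$ slack in the exponent is expected to come from the logarithmic losses incurred when turning oscillation bounds on Brownian bridges into pointwise control of $R$.
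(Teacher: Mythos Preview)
Your Step 1 (discretize $[-1,1]$ into $O(\epsilon^{-1})$ intervals and reduce to a per-interval bound $\p(\tz_1,\tz_2\in I)\le\epsilon^{3-\eta}$) matches the paper exactly. After that, however, your plan and the paper diverge, and your proposed route has a genuine gap.

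Your decomposition $L^{-3+\eta}=L^{-1}\cdot L^{-2+\eta}$ does not add up. Linear vanishing of the conditional density of the argmax at the diagonal $\{u_1=u_2\}$ inside a \emph{unit} box $[0,3]^2$ contributes only an $O(1)$ factor to the mass of that box, not $L^{-2}$; you would need the argmax to typically have $|\tilde u_2-\tilde u_1|$ on scale $L$, which you have not argued. The heuristic balance you describe (Bessel-3 cost $\sim\sqrt{\delta}$ versus LPP spreading gain $\sim\sqrt{\delta}$) does not single out a preferred scale, so there is no evident mechanism producing the extra $L^{-2}$. You acknowledge this step is ``the technically delicate heart'' and leave it as a Cameron--Martin computation to be done; as stated, it is not a proof.

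The paper's route is both different and complete. It factors $\epsilon^{3-\eta}$ as $\epsilon^{3/2-\eta/3}\cdot\epsilon^{3/2-\eta/3}$ via \emph{left/right independence}. Write $z_I$ for the center of $I$ and $F_I=F(z_I,z_I,z_I,z_I)$. If $\tz_1,\tz_2\in I$ and the max over $I$ does not exceed $F_I+\epsilon^{1/2-\eta/10}$ (the complementary event has negligible probability by Brownian oscillation tails), then necessarily
\[
\mathcal{E}_1:\quad \max_{z_I\le w\le z_2\le 2}F(z_I,z_2,w,w)<F_I+\epsilon^{1/2-\eta/10},
\]
and the analogous event $\mathcal{E}_2$ to the left. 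Two simplifications make this tractable: first, setting $w=w'$ collapses the dependence to the two processes $\tilde B_1=\tfrac12(B_1+B_1')$ and $\tilde B_2=\tfrac12(B_2+B_2')$; second, $\mathcal{E}_1$ and $\mathcal{E}_2$ depend on increments of $\tilde B_1,\tilde B_2$ on disjoint half-lines relative to $z_I$, hence are independent. Each becomes
\[
\p\Bigl(\max_{0\le w\le z\le 1}\tilde B_1(z)-\tilde B_1(w)+\tilde B_2(w)<h\Bigr),\qquad h\sim\epsilon^{1/2},
\]
and using L\'evy's identity $\max_{[0,s]}\tilde B_1-\tilde B_1(s)\stackrel{d}{=}|B|(s)$ followed by an explicit reflection-principle integral, this probability is shown to be $O(h^3)=O(\epsilon^{3/2})$. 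What you are missing is precisely this $w=w'$ simplification and the left/right factorization; they replace the two-dimensional argmax analysis you propose by a one-dimensional small-ball estimate that can be computed exactly.
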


While now we only work with Brownian motions, analyzing the $\argmax$ formula is still involved. Our general strategy is to simplify the problem by restricting the choice of parameters (for example, we will usually take $w=w'$). We expect that these simplifications do not capture the full picture and so we do not expect the exponent of $2-\eta$ to be sharp.

\begin{proof}[Proof of Lemma \ref{L:LPP-2BM}]
Throughout the proof we assume that $\eta > 0$ is small and that $\ep > 0$ is sufficiently small given $\eta$.
	
\textbf{Step 1: Conditioning on the location of the $\argmax$.} We first split $[-1, 1]$ into $\lceil\epsilon^{-1}\rceil$ intervals, each of length at most $2\epsilon$.
We just need to show that, for each interval $I$, we have $\p(\tz_1,\tz_2\in I) < \epsilon^{3-\eta }$.

Let $F(z_1,z_2,w,w')$ denote the function inside the $\argmax$.
Denote the center of $I$ by $z_I$, and $F_I=F(z_I,z_I,z_I,z_I)$.
If $\tz_1,\tz_2\in I$, then one of $\scrE$ and $\scrE_1\cap \scrE_2$ happens, where
\begin{align*}
\scrE:&\quad \max_{z_1\le w, w' \le z_2, z_1,z_2\in I} F(z_1,z_2,w,w') \ge F_I + \epsilon^{1/2-\eta/10},\\
\scrE_1:&\quad \max_{z_I\le w \le z_2 \le 2} F(z_I,z_2,w,w) < F_I + \epsilon^{1/2-\eta/10},\\
\scrE_2:&\quad \max_{-2\le z_1\le w \le z_I} F(z_1,z_I,w,w) < F_I + \epsilon^{1/2-\eta/10}.
\end{align*}
We have taken $w'=w$ in defining $\scrE_1$ and $\scrE_2$ to simplify arguments below.
From the tail of Brownian motions in an interval of length $2\epsilon$ we have $\p(\scrE)<e^{-\epsilon^{-\eta/6}}$.
Also note that $\scrE_1$ and $\scrE_2$ are independent, since they depend only on $B_1+B_1'-B_1(z_I)-B_1'(z_I)$ and $B_2+B_2'-B_2(z_I)-B_2'(z_I)$, to the right and left of $z_I$, respectively.
Thus $\p(\scrE_1\cap\scrE_2) = \p(\scrE_1)\p(\scrE_2)$.
It remains to show that $\p(\scrE_1)<\epsilon^{3/2-\eta/3}$, since similarly we will also have $\p(\scrE_2)<\epsilon^{3/2-\eta/3}$.

\textbf{Step 2: Reducing to two Brownian motions.} To bound $\p(\scrE_1)$, we rewrite $F(z_I,z_2,w,w)$ using two independent Brownian motions.
Consider two processes on $[0,3]$, defined as $\tilde B_1(z)= 2^{-1}(B_1(z_I+z)+B_1'(z_I+z)-B_1(z_I)-B_1'(z_I))$ and $\tilde B_2(z) = 2^{-1}(B_2(z_I+z)+B_2'(z_I+z)-B_2(z_I)-B_2'(z_I))$, respectively.
These are two independent standard Brownian motions.
Letting $h=2^{-1}\epsilon^{1/2-\eta/10}$, we have
\begin{equation}
\label{E:scrE1}
\p(\scrE_1) 
\le \p\left(\max_{0\le w \le z \le 1} \tB_1(z) - \tB_1(w) + \tB_2(w) < h \right).
\end{equation}
Now let $\hat B(w) = \max \{ \tB_1(z)-\tB_1(1-w): 1-w\le z\le 1 \}$. By \cite[Theorem 2.34]{morters2010brownian} and the independence of $\tilde B_1$ and $\tilde B_2$, we have $(\hat B, \tilde B_2) \eqd (|B|, \tilde B_2)$ on $[0, 1]$, where $B$ is another Brownian motion, also independent of $\tilde B_2$. Therefore the right-hand side of \eqref{E:scrE1} equals
\begin{equation}
\label{E:scrE11}
\p \lf(\max_{w\in[0,1]}|B(1-w)|+\tB_2(w)<h \rg).
\end{equation}
Our goal is to show that \eqref{E:scrE11} is $O(h^3)$.

\textbf{Step 3: Computation using the reflection principle.} 
Up to a constant factor, \eqref{E:scrE11} further equals
\begin{equation}
\label{E:double-1}
\iint \p \lf(\max_{w\in[0,1]}|B(1-w)|+\tB_2(w)<h\;\Big \mid \; B(1)=a, \tB_2(1)=b\rg) e^{-(a^2+b^2)/2} dadb.
\end{equation}
Conditioned on $B(1)=a, \tB_2(1)=b$, the processes $B(1-w)-a(1-w)$ and $\tB_2(w)-bw$ are independent Brownian bridges.
Thus we can write the probability in \eqref{E:double-1} as
$$
\p\lf(\max_{w\in[0,1]} \max \{G_1(w)+G_2(w) + a(1-w)+bw, -G_1(w)+G_2(w) - a(1-w)+bw\} < h\rg), 
$$
where $G_1, G_2:[0,1]\to \R$ are two independent Brownian bridges.
Using that $H_1 := 2^{-1/2}(G_1+G_2)$ and $H_2 := 2^{-1/2}(G_1-G_2)$ are independent Brownian bridges, this probability can be further written as
\begin{equation*}
\p\lf(\max_{w\in[0,1]} \sqrt{2} H_1(w) + a(1-w)+bw < h \rg) \p \lf(\max_{w\in[0,1]} \sqrt{2} H_2(w) - a(1-w)+bw < h \rg).
\end{equation*}
These two probabilities can be computed using the reflection principle (see e.g. \cite[Theorem 2.19]{morters2010brownian}).
The first one equals (for $B$ being a standard Brownian motion on $[0,1]$)
\[
\begin{split}
\p\Big(\max_{w\in[0,1]} B(w) < 2^{-1/2}(h-a)
&\mid B(1)=2^{-1/2}(b-a)
\Big)\\
=\;&
\frac{\p\lf(B(1)=2^{-1/2}(b-a)\rg)-\p\lf(B(1)=2^{-1/2}(2h-a-b)\rg)}{\p\lf(B(1)=2^{-1/2}(b-a)\rg)}\\
=\;&
e^{(a-b)^2/4}(e^{-(a-b)^2/4} - e^{-(2h-a-b)^2/4}),    
\end{split}
\]
and similarly for the second one.
Therefore we can write \eqref{E:double-1} as
\[
\begin{split}
&\iint_{|a|,b\le h} e^{-(a^2+b^2)/2} e^{(a-b)^2/4}(e^{-(a-b)^2/4} - e^{-(2h-a-b)^2/4})
e^{(-a-b)^2/4}(e^{-(-a-b)^2/4} - e^{-(2h+a-b)^2/4})
dadb\\
=\;
&\iint_{|a|,b\le h} e^{-(a^2+b^2)/2} (1 - e^{-(h-a)(h-b)})
(1 - e^{-(h+a)(h-b)})
dadb\\
<\;
&\iint_{|a|,b\le h} e^{-(a^2+b^2)/2} (h-a)(h+a)(h-b)^2
dadb\\
<\; &2h^3 \int_{b\le h}e^{-b^2/2}(h-b)^2 db.
\end{split}
\]
We note that the integral in the last line is uniformly bounded for $h<1$.
Thus we conclude that $\p(\scrE_1)<\epsilon^{3/2-\eta/3}$, and our conclusion follows.
\end{proof}

Before moving to the proof of Lemma \ref{L:disjoint-2}, we need one more result.

\begin{lemma}
	\label{L:positive-probability}
	Let $s < t$, and let $\scrF$ denote the $\sig$-algebra generated by $\scrL^*$ restricted to time increments $[r, r'] \sset [s, t]$. Let $\pi$ denote the almost surely unique optimizer from $((0,0),s-1)$ to $((0,0),t+1)$. Then the conditional law of $(\pi(s), \pi(t))$ given $\scrF$ almost surely has full support $\R^2_\le \X \R^2_\le$.
\end{lemma}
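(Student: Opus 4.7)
The approach is to combine the metric composition law with the independent-increments structure of $\scrL^*$ to reduce the full-support claim to a positive-probability statement for the argmax of a sum of two independent extended Airy sheets and a fixed function. Define $g(\bz) := \scrL^*((0,0), s-1; \bz, s)$, $M(\bz, \bz') := \scrL^*(\bz, s; \bz', t)$, and $h(\bz') := \scrL^*(\bz', t; (0,0), t+1)$. Iterating the metric composition law of Proposition \ref{P:mc-everywhere} gives
\[
\scrL^*((0,0), s-1; (0,0), t+1) = \max_{(\bz, \bz') \in \R^2_\le \X \R^2_\le} \bigl[ g(\bz) + M(\bz, \bz') + h(\bz') \bigr],
\]
with $(\pi(s), \pi(t))$ the (almost surely unique) maximizer. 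By Definition \ref{D:directed-landscape}(I), the triple $(g, M, h)$ is mutually independent; in particular $(g, h)$ is independent of $\scrF$, and each of $g$ and $h$ has the distribution of a fixed-endpoint marginal of an extended Airy sheet of scale $1$. Consequently, the conditional law of $(\pi(s), \pi(t))$ given $\scrF$ is, for a.e.\ realization $M_\omega$ of $M$, the law of $\argmax[g + M_\omega + h]$ under the product law of $(g, h)$. By countability of a basis, it then suffices to show: for any continuous $M$ satisfying the growth bound of Lemma \ref{l:uniform-EL-bound}, any rational $(\bz_0, \bz'_0)$ in the interior of $\R^2_\le \X \R^2_\le$, and any rational $\epsilon > 0$, we have $\p\bigl(\argmax[g + M + h] \in B_\epsilon(\bz_0, \bz'_0)\bigr) > 0$.

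The plan for the positive-probability statement proceeds in two steps. First, by the quadratic decay of $g, h$ in $\|\bz\|_2, \|\bz'\|_2$ and of $M$ in $\|\bz - \bz'\|_2$, combined with the uniform bounds of Lemma \ref{l:uniform-EL-bound}, the argmax lies in some (deterministic, given $M$) compact set $K \X K' \supset B_\epsilon(\bz_0, \bz'_0)$ with probability at least $1/2$. On $K \X K'$ the fixed function $M$ has oscillation bounded by some constant $C = C(M, K, K')$. It therefore suffices to produce events $E_g, E_h$, depending only on $g$ and $h$ respectively and each of positive probability, such that on $E_g$ one has $g(\bz_0) - g(\bz) > 2C$ for all $\bz \in K \setminus B_{\epsilon/2}(\bz_0)$, and analogously for $E_h$ and $\bz' \in K' \setminus B_{\epsilon/2}(\bz'_0)$. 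To construct $E_g$, I use Proposition \ref{P:high-paths-B}, specialized to $k = 2$, which yields
\[
g(z_1, z_2) = \scrB_1(z_1) + \scrB_1(z_2) + \max_{w \in [z_1, z_2]} [\scrB_2(w) - \scrB_1(w)]
\]
as an explicit continuous functional of the top two lines of the parabolic Airy line ensemble $\scrB$ on any compact interval $I$ containing the coordinate range of $K$. Theorem \ref{T:radon-n-deri} then implies that the law of $(\scrB_1, \scrB_2)|_I$ assigns positive probability to every open subset of the space of continuous ordered pairs $(f_1, f_2): I \to \R^2$ with $f_1 > f_2$. One takes $E_g$ to be a small sup-norm neighborhood of a suitably chosen profile; the construction for $E_h$ is analogous (using the symmetry $\scrS(\bx, \by) \eqd \scrS(-\by, -\bx)$ of Lemma \ref{L:basic-sym} to reduce $h$ to the same form as $g$), and independence of $g, h$ gives $\p(E_g \cap E_h) > 0$.

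The main obstacle is designing the target profile for $(\scrB_1, \scrB_2)|_I$ so that the induced $g|_K$ has a sharp peak at $\bz_0$ of height exceeding the margin $2C$. The nonlinearity of the formula for $g$ in $\scrB_1, \scrB_2$ is the source of difficulty, but the expression is tractable enough: choosing $\scrB_1$ with a tall symmetric bump centered at the midpoint of $z_{0,1}$ and $z_{0,2}$ and $\scrB_2$ flat and substantially lower than $\scrB_1$ will produce a $g$ with a peak near $\bz_0$ of tunable height. Once such a profile is fixed, Theorem \ref{T:radon-n-deri} together with continuity of the functional $(\scrB_1, \scrB_2)|_I \mapsto g|_K$ yields that a small sup-norm neighborhood of the profile has positive probability, giving $E_g$ and completing the argument.
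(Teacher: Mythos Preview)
Your approach is essentially the paper's: decompose via metric composition into $g + M + h$ with $(g,h)$ independent of $\scrF$, express $g$ and $h$ through Proposition \ref{P:high-paths-B} as explicit functionals of the top two Airy lines, and then exploit the flexibility of those two lines to steer the argmax. The paper packages this as a Brownian Gibbs resampling of $(\scrB_1,\scrB_2)$ and $(\scrB'_1,\scrB'_2)$ on a compact interval and concludes via $F \eqd F'$; your version spells out a target profile instead, but the content is the same.

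There are, however, three concrete problems. First, Theorem \ref{T:radon-n-deri} points the wrong way: it bounds $\p(\hat\scrB^k \in A)$ \emph{above} by a correction of $\mu_d^{\bigotimes k}(A)$, so it shows Brownian nullsets are Airy nullsets, not the reverse. To get that open sets have positive Airy probability you need absolute continuity in the other direction, which comes from the Brownian Gibbs property (Theorem \ref{T:melon-Airy-facts}); this is exactly what the paper invokes. Second, your target profile is wrong: with a single bump in $\scrB_1$ at the midpoint $m$ and $\scrB_2$ flat, the formula $g(z_1,z_2) = \scrB_1(z_1) + \scrB_1(z_2) + \max_{w\in[z_1,z_2]}[\scrB_2(w)-\scrB_1(w)]$ attains its maximum on the diagonal near $(m,m)$, not at $\bz_0$ (check $g(m,m)\approx H$ while $g(z_{0,1},z_{0,2})\approx 0$). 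The correct profile puts narrow bumps in $\scrB_1$ at \emph{both} $z_{0,1}$ and $z_{0,2}$, which yields $g(\bz_0)\approx 2H$ and at most $H$ elsewhere. Third, your events $E_g, E_h$ as stated do not force the argmax into $B_\epsilon(\bz_0,\bz'_0)$: if $\bz\notin B_{\epsilon/2}(\bz_0)$ but $\bz'\in B_{\epsilon/2}(\bz'_0)$, you have $g(\bz_0)-g(\bz)>2C$ and $M(\bz_0,\bz'_0)-M(\bz,\bz')\ge -C$, but no lower bound on $h(\bz'_0)-h(\bz')$. You must also build into $E_g$ and $E_h$ control on the oscillation of $g$ and $h$ on the small balls $B_{\epsilon/2}(\bz_0)$ and $B_{\epsilon/2}(\bz'_0)$, which is straightforward once the profiles are designed correctly.
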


\begin{proof}
Let $(\bx_*, \by_*) = \argmax_{(\tbx,\tby)\in \R^2_\le} F(\tilde \bx, \tilde \by),$ where
$$
F(\tilde \bx, \tilde \by) =  \scrL^*((0,0),s-1;\tbx,s)+\scrL^*(\tbx,s;\tby,t)+\scrL^*(\tby,t;(0,0),t+1).
$$
Then $(\pi(s), \pi(t)) = (\bx^*, \by^*)$ and by Lemma \ref{L:as-unique-geo-fixedend}, the argmax is almost surely unique. Now, the outer two functions are independent of $\scrF$.
Moreover, by Proposition \ref{P:high-paths-B}, we have
\begin{equation}
\label{E:L**}
\scrL^*((0,0),s-1;\tbx,s) = \max_{\tilde x_1 \le w \le \tilde x_2} \scrB_1(\tilde x_1) + \scrB_1(\tilde x_2) - \scrB_1(w) + \scrB_2(w),
\end{equation}
where $\scrB$ is a parabolic Airy line ensemble.
A similar decomposition exists for $\scrL^*(\tby,t;(0,0),t+1)$ in terms of an independent parabolic Airy line ensemble $\scrB'$. Now let $(\bx, \by) \in \R^2_\le \X \R^2_\le$. Conditionally on $\scrF$, we can apply the Brownian Gibbs property to resample the first two lines of $\scrB, \scrB'$ on an interval $[-m, m]$ containing $x_1, x_2, y_1, y_2$. Let $F'$ denote the analogue of the original function $F$ after resampling. By \eqref{E:L**}, for any $M, \de > 0$, with positive probability we have
\begin{align*}
F'(\bx, \by) - F(\bx, \by) > M, \qquad |F(\bu) - F'(\bu)| \le \de \text{ for all } \bu \text{ such that } \|\bu - (\bx, \by)\|_2 > \de. 
\end{align*}
Since $F$ achieves its argmax, this implies that $F'$ can achieve its argmax arbitrarily close to $(\bx, \by)$. Since $F \eqd F'$, this gives the result.
\end{proof}

\begin{proof}[Proof of Lemma \ref{L:disjoint-2}]
Let $\pi=(\pi_1,\pi_2)$ be the optimizer from $(\bx,s)$ to $(\by,t)$. By Lemma \ref{L:as-unique-geo-fixedend} we assume that it is the unique one.

\textbf{Step 1.}
We first prove the case where $x_1=x_2, y_1=y_2$.
By the symmetries of $\scrL^*$ (Lemma \ref{L:sym-L}) we may assume $x_1=x_2=y_1=y_2=0$, and $s=0$, $t=1$.

Fix some small $\delta$ with $0<\delta<1$.
Take a large $N\in\N$, and let $t_i=\delta+(1-2\delta)i/N$ for $i=0,\ldots,N$.
By Lemma \ref{L:disjoint-fixedtime},
for any fixed $d$ and $\eta$ there is some constant $c>0$ such that
$$
\p(\exists i, |\pi_1(t_i)-\pi_2(t_i)| < N^{\eta-2/3}, |\pi_1(t_i)|, |\pi_2(t_i)|<d ) < cN^{3\eta-1/3}.
$$
By Lemma \ref{L:transfluc}, each $\pi_i$ is H\"older $2/3^-$. Therefore taking $N \to \infty$ we have
$$
\p(\exists t'\in[\delta, 1-\delta], \pi_1(t')=\pi_2(t'), |\pi_1(t')|< d ) =0.
$$
Since $d$ and $\de$ are arbitrary, we have $\pi_1(t')\neq \pi_2(t')$, $\forall t'\in(0, 1)$.

\textbf{Step 2.} Now we prove the general case by a resampling argument.

Let $\pi'$ be the optimizer from $((0,0),s-1)$ to $((0,0),t+1)$, which is assumed to be unique by Lemma \ref{L:as-unique-geo-fixedend}.
Setting
\begin{equation}
\label{E:bxby}
(\bx_*, \by_*) = \argmax_{(\tbx,\tby)\in \R^2_\le} \scrL^*((0,0),s-1;\tbx,s)+\scrL^*(\tbx,s;\tby,t)+\scrL^*(\tby,t;(0,0),t+1),
\end{equation}
then $\pi'$ is the concatenation of the optimizers from $((0,0),t-1)$ to $(\bx_*,t)$, from $(\bx_*,t)$ to $(\by_*,s)$, and from $(\by_*,s)$ to $((0,0),s+1)$.
Each of these three optimizers must be unique, otherwise $\pi'$ is not unique.

Now we take a series of independent samples of $\scrL^*$, 
denoted as $\scrL^{*,i}$ for $i\in\N$. Using these samples, we can define landscapes $\hat \scrL^{*, i}$ by setting $\hat \scrL^{*, i}(\cdot, r; \cdot, r')$ equal to $\scrL^{*, i}$ when $[r, r'] \sset (s, t)^c$ and equal to $\scrL^*$ when $[r, r'] \sset [s, t]$. Defining $\hat \scrL^{*, i}$ at all other time increments via metric composition yields an extended landscape.

We denote by $\pi^{(i)}$ the optimizer from $((0,0),s-1)$ to $((0,0),t+1)$ in $\hat \scrL^{*, i}$, and define $(\bx_*^{(i)}, \by_*^{(i)})$ as in \eqref{E:bxby} with $\hat \scrL^{*, i}$ in place of $\scrL^*$, so that arguing as before, $\pi^{(i)}$ is a concatenation of the unique optimizer from $((0,0),s-1)$ to $(\bx_*^{(i)},s)$ in $\scrL^{*,i}$, the unique optimizer from $(\bx_*^{(i)},s)$ to $(\by_*^{(i)},t)$ in $\scrL^*$, and the unique optimizer from $(\by_*^{(i)},t)$ to $((0,0),t+1)$ in $\scrL^{*,i}$. In addition, from the first step, we have that each $\pi^{(i)}$ consists of disjoint paths, except for the endpoints.

Conditioned on $\scrL^*$, for any fixed $\bx,\by \in \R^2_\le$ and any $\epsilon > 0$, by Lemma \ref{L:positive-probability} there is a positive probability that $x_{*, j}^{(i)}>x_j$ and $y_{*, j}^{(i)}>y_j$ for all $j \in \II{1, k}$, and that $\|\bx_*^{(i)}-\bx\|_2<\epsilon$ and $\|\by_*^{(i)}-\by\|_2<\epsilon$. 
Thus almost surely, we can find a sequence $i_1<i_2<\cdots$, such that $\bx_*^{(i_\ell)} \to \bx$ and $\by_*^{(i_\ell)} \to \by$ as $\ell \to \infty$, and $x_{*, j}^{(i_\ell)}>x_j$ and $y_{*, j}^{(i_\ell)}>y_j$ for all $\ell$.
Then by Lemma \ref{L:pointwise-to-overlap}, 
the optimizer from $(\bx_*^{(i_k)},s)$ to 
$(\by_*^{(i_k)},t)$ converges to the rightmost optimizer from $(\bx,s)$ to $(\by,t)$, in the overlap topology.
Since for each $i$, the optimizer from $(\bx_*^{(i)},s)$ to 
$(\by_*^{(i)},t)$ consists of disjoint paths, the optimizer from $(\bx,s)$ to $(\by,t)$ must also consists of disjoint paths, except possibly at the endpoints.
\end{proof}
We upgrade Lemma \ref{L:disjoint-2} to all endpoints simultaneously.

\begin{lemma} \label{L:disjoint-2-allpair}
Almost surely the following statement is true.
For any $\bx, \by \in \R^2_\le$ and $s < t$,
there exists an optimizer in the extended landscape from $(\bx,s)$ to $(\by,t)$, that consists of two paths that are disjoint, except possibly at the endpoints.
\end{lemma}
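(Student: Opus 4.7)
The plan is to push disjointness from the fixed-endpoint result of Lemma \ref{L:disjoint-2} up to all endpoints simultaneously, by comparing a general rightmost optimizer with rightmost optimizers between rational endpoints via the overlap-topology convergence of Lemma \ref{L:pointwise-to-overlap}.

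\textbf{Step 1 (countable dense set of disjoint optimizers).}
For every quadruple $(\bx, \by, s, t) \in \Q^2_\le \X \Q^2_\le \X \Q \X \Q$ with $s < t$, Lemma \ref{L:as-unique-geo-fixedend} guarantees an almost surely unique optimizer from $(\bx, s)$ to $(\by, t)$ and Lemma \ref{L:disjoint-2} ensures that this optimizer is disjoint on $(s, t)$. Intersecting these countably many full-measure events with the event from Lemma \ref{L:exist-left-right-most}, one obtains an event $\Om_0$ of probability one on which rightmost and leftmost optimizers exist between every pair of endpoints in $\fX_\uparrow$, and on which the unique optimizer between any rational pair of endpoints is disjoint on the corresponding open time interval.

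\textbf{Step 2 (rightmostness passes to restrictions).}
Work on $\Om_0$, fix any $\bx, \by \in \R^2_\le$ and $s < t$, and let $\pi$ be the rightmost optimizer from $(\bx, s)$ to $(\by, t)$. For any $s < s' < t' < t$, I claim $\pi|_{[s', t']}$ is the rightmost optimizer from $(\pi(s'), s')$ to $(\pi(t'), t')$. If not, some optimizer $\sigma$ between these endpoints would satisfy $\sigma \ge \pi|_{[s', t']}$ with strict inequality at some point. Then the concatenation $\pi'$ of $\pi|_{[s, s']}$, $\sigma$, and $\pi|_{[t', t]}$ would be a continuous multi-path satisfying $\pi' \ge \pi$ with strict inequality somewhere. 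Evaluating $\|\pi'\|_{\scrL^*}$ along any partition refining $\{s, s', t', t\}$ and using that the length functional is constant along all partitions of an optimizer (a consequence of the backwards triangle inequality for $\scrL^*$ and the equality $\|\tau\|_{\scrL^*} = \scrL^*(\tau(s), s; \tau(t), t)$ defining an optimizer $\tau$), one finds $\|\pi'\|_{\scrL^*} = \scrL^*(\bx, s; \by, t)$, making $\pi'$ itself an optimizer and contradicting the rightmostness of $\pi$.

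\textbf{Step 3 (overlap approximation).}
Fix rational $s < s' < t' < t$ and take rational sequences $\bx^{(m)}, \by^{(m)} \in \Q^2_\le$ with $\bx^{(m)} \searrow \pi(s')$ and $\by^{(m)} \searrow \pi(t')$, strictly component-wise. By Step 2 and Lemma \ref{L:pointwise-to-overlap}, the rightmost optimizers $\pi^{(m)}$ from $(\bx^{(m)}, s')$ to $(\by^{(m)}, t')$ converge to $\pi|_{[s', t']}$ in the overlap topology. On $\Om_0$ each $\pi^{(m)}$ equals the unique optimizer between its rational endpoints, and is therefore disjoint on $(s', t')$ by Step 1. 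Overlap convergence provides intervals $O_m \sset [s', t']$ on which $\pi^{(m)} = \pi|_{[s', t']}$ and whose endpoints converge to $s'$ and $t'$, so $\pi$ is disjoint on $O_m \cap (s', t')$ and these sets exhaust $(s', t')$. Letting $s' \searrow s$ and $t' \nearrow t$ along the rationals yields disjointness of $\pi$ on all of $(s, t)$.

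The main obstacle is \textbf{Step 2}, the self-consistency of rightmostness under restriction: rightmostness is a global condition, yet I need it to survive passage to an arbitrary rational subinterval. The resolution rests on the fact that cutting and re-splicing optimizers at intermediate times produces another optimizer, which itself follows from the additivity of the length functional along partitions of an optimizer combined with the backwards triangle inequality for $\scrL^*$.
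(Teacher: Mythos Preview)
Your proof is correct and follows essentially the same route as the paper: transfer disjointness from rational endpoints to arbitrary ones via the overlap convergence of Lemma \ref{L:pointwise-to-overlap}. The one genuine difference is in how you handle non-rational time endpoints. The paper approximates $(s,t)$ from the \emph{outside} by rational times $s^{(i)} < s < t < t^{(i)}$, takes the unique rational optimizer on $[s^{(i)}, t^{(i)}]$, and invokes the transversal fluctuation bound Lemma \ref{L:transfluc} to ensure that this optimizer at times $s,t$ lies strictly to the right of $\bx,\by$ and converges to them; it then restricts to $[s,t]$ (uniqueness passes to the restriction by splicing) and applies Lemma \ref{L:pointwise-to-overlap}. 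You instead approximate from the \emph{inside} by rational $s' , t'$, use your Step 2 splicing argument to show the rightmost optimizer restricts to a rightmost optimizer on $[s',t']$, and approximate the (random) spatial endpoints $\pi(s'),\pi(t')$ by rational sequences from above. Both variants rest on the same splicing principle; your version trades the use of Lemma \ref{L:transfluc} for the explicit rightmostness-under-restriction claim, which is a clean and self-contained alternative.
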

\begin{proof}
By Lemma \ref{L:as-unique-geo-fixedend} and Lemma \ref{L:disjoint-2}, almost surely for all $\bx, \by \in \R^2_\le\cap \Q^2$ and $s < t \in \Q$ the above statement is true, and there is a unique optimizer from $(\bx,s)$ to $(\by,t)$.
For any general $\bx, \by\in \R^2_\le$ and $s < t$, we take a sequence $\bx^{(i)}, \by^{(i)} \in \R^2_\le$ and $s^{(i)} < t^{(i)}$ consisting of rational numbers, and satisfying the following conditions:
\begin{itemize}
    \item $s^{(i)} < s < t < t^{(i)}$, and $s^{(i)} \to s$, $t^{(i)} \to t$ as $i\to \infty$;
    \item for each $1\le j\le k$ and $i\in \N$, 
    $x^{(i)}_j - x_j > (s-s^{(i)})^{1/5}$, $y^{(i)}_j - y_j > (t^{(i)}-t)^{1/5}$;
    \item $\bx^{(i)} \to \bx$, $\by^{(i)} \to \by$ as $i\to \infty$.
\end{itemize}
Let $\pi^{(i)}$ be the unique optimizer from $(\bx^{(i)}, s^{(i)})$ to $(\by^{(i)}, t^{(i)})$.
By Lemma \ref{L:transfluc}, for $i$ large enough we have $\pi^{(i)}(s) > \bx$ and $\pi^{(i)}(t) > \by$, while $\pi^{(i)}(s) \to \bx$ and $\pi^{(i)}(t) \to \by$ as $i\to\infty$.
Then by Lemma \ref{L:pointwise-to-overlap}, as $i\to\infty$
the (unique) optimizer from $(\pi^{(i)}(s), s)$ to $(\pi^{(i)}(t), t)$ converges to the rightmost optimizer from $(\bx, s)$ to $(\by, t)$ in the overlap topology.
This means that the rightmost optimizer from $(\bx,s)$ to $(\by,t)$ consists of two paths that are disjoint, except possibly at the endpoints.
\end{proof}

\subsection{Multiple paths}

We now extend from the two-path case to the general $k$-path case. The basic idea is that if two adjacent paths in $k$-path optimizer overlap at a point $(z, r)$, then a similar situation must occur with two paths in a two-path optimizer whose endpoints are chosen close to $(z, r)$.
\begin{proof}[Proof of Proposition \ref{P:disjointness}]
We show that for each $k\ge 2$, almost surely, for any $\bx, \by \in \R^k_\le$ and $r < t$,
there exists an optimizer from $(\bx,r)$ to $(\by,t)$ consisting of paths that are disjoint, except possibly at the endpoints.
We prove this by induction on $k$.
The $k=2$ case is Lemma \ref{L:disjoint-2-allpair}.
Now suppose that $k>2$ and that the statement is true for $k-1$.

We first prove the fixed endpoint version, i.e., for any fixed $\bx, \by \in \R^k_\le$ and $r < t$, almost surely the unique optimizer $\pi$ from $(\bx, r)$ to $(\by, t)$ is disjoint, except possibly at the endpoints.

Take $\bx', \by' \in \R^{k-1}_\le$ consisting of the first $k-1$ coordinates of $\bx, \by$, respectively.
We assume that the optimizer from $(\bx', r)$ to $(\by', t)$ is unique, and denote it as $\pi'$.
We then have that almost surely, these optimizers interlace; i.e. 
for each $1\le i \le k-1$ and $r\le s\le t$ we have
$$\pi_i(s)\le \pi'_i(s) \le \pi_{i+1}(s).$$
This follows from Lemma \ref{L:monotonicity}.
Indeed, by Lemma \ref{L:transfluc} one can find large enough $x'', y'' \in \R$, such that the geodesic from $(x'',r)$ to $(y'',t)$ (denoted as $\pi''$) is disjoint from $\pi'$.
Then $(\pi',\pi'')$ is a optimizer from $((\bx',x''),r)$ to $((\by',y''),t)$, 
by Proposition \ref{P:sum-of-disjoint};
and by Lemma \ref{L:monotonicity} applied to $\pi$ and $(\pi',\pi'')$ the first inequality is obtained. The second inequality follows similarly by taking $x'',y''$ sufficiently negative.

By the inductive hypothesis, $\pi'$ consists of paths that are disjoint, except possibly at the endpoints.
This implies that $\pi_i$ and $\pi_{i+2}$ are disjoint except possibly at the endpoints, for each $1\le i \le k-2$.

Now suppose that $\pi_i(s)=\pi_{i+1}(s)$ for some $r<s<t$ and $1\le i \le k-1$.
Then $\pi_{i-1}(s)$ (if $i>1$) and $\pi_{i+2}(s)$ (if $i+1<k$) are different from $\pi_i(s)=\pi_{i+1}(s)$.
Then there exists some $\epsilon>0$, such that 
$$
\max_{s'\in[s-\epsilon, s+\epsilon]} \pi_{i-1}(s') + \epsilon < \min_{s'\in[s-\epsilon, s+\epsilon]} \pi_{i}(s'),$$
$$\min_{s'\in[s-\epsilon, s+\epsilon]} \pi_{i+2}(s') - \epsilon > \max_{s'\in[s-\epsilon, s+\epsilon]} \pi_{i+1}(s').
$$
Now for small enough $\de>0$, let $\bx^\de = (\pi_i(s - \de), \pi_{i+1}(s - \de))$ and let $\by^\de = (\pi_i(s + \de), \pi_{i+1}(s + \de))$. By Lemma \ref{L:disjoint-2-allpair}, we can find an optimizer $\pi^\de$ from $(\bx^\de, s - \de)$ to $(\by^\de, s + \de)$ with $\pi^\de_1(s) < \pi^\de_2(s)$; in particular, $\pi^\de \ne (\pi_i, \pi_{i+1})$.
By Lemma \ref{L:transfluc}, for small enough $\de > 0$, the optimizer $\pi^\de$ is disjoint from $\pi_{i-1}$ (if $i>1$) and $\pi_{i+2}$ (if $i+1<k$). Therefore letting $\tau^\de$ denote $\pi$ with $\pi^\de$ in place of $(\pi_i, \pi_{i+1})$ on the interval $[s- \de, s+ \de]$, Proposition \ref{P:sum-of-disjoint} ensures that $\|\tau^\de\|_{\scrL^*}  \ge \|\pi\|_{\scrL^*}$. 
Thus this new path is also an optimizer from $(\bx,r)$ to $(\by,t)$, contradicting the uniqueness assumption.

To upgrade this to hold for all endpoints simultaneously, we use the arguments in the proof of Lemma \ref{L:disjoint-2-allpair}, essentially verbatim.
\end{proof}

\begin{proof}[Proof of Theorems \ref{T:extended-landscape} and \ref{T:disjoint-optimizers-in-L} and Corollaries \ref{C:rsk} and \ref{C:disjointness}]
First, we can couple $\scrL^*, \scrL$ so that $\scrL^*|_{\Rd} = \scrL|_{\Rd}$. 
By Proposition \ref{P:sum-of-disjoint}, for any multi-path $\pi:[s, t] \to \R^k_\le$ with $\pi_i(r) < \pi_{i+1}(r)$ for all $r \in (s, t)$, we have
\begin{equation}
\label{E:pi-same}
\|\pi\|_{\scrL^*} = \sum_{i=1}^k \|\pi_i\|_{\scrL^*} = \sum_{i=1}^k \|\pi_i\|_\scrL.
\end{equation}
Moreover, almost surely for all $(\bx, s; \by, t) \in \fX_\uparrow$, Proposition \ref{P:disjointness} guarantees that
$
\scrL^*(\bx, s; \by, t) = \sup_\pi \|\pi\|_{\scrL^*},
$
where the supremum is over all multi-paths $\pi$ from $(\bx, s)$ to $(\by, t)$ that are disjoint away from the endpoints. Comparing this with Definition \ref{D:ext-land} gives that $\scrL^* = \scrL$, proving Theorem \ref{T:extended-landscape}. Theorem \ref{T:disjoint-optimizers-in-L} then follows from \eqref{E:pi-same}, Proposition \ref{P:disjointness}, and Lemma \ref{L:as-unique-geo-fixedend}. Corollary \ref{C:rsk} follows from \eqref{E:line-sheet-relation} and Theorem \ref{T:extended-landscape}. For Corollary \ref{C:disjointness}, \eqref{E:Lxsys} follows from the existence of disjoint geodesics by definition. The opposite direction uses Theorem \ref{T:disjoint-optimizers-in-L}. 
\end{proof}

\section{Convergence of optimizers}
\label{S:convergence-optimizers}

In this section we prove Theorem \ref{T:limit-theorem}, which shows that disjoint optimizers in Brownian LPP converge to disjoint optimizers in $\scrL$. The convergence for geodesics was shown in \cite{DOV}. The argument in \cite{DOV} is purely deterministic, relying only the metric composition law for $\scrL$ and a few basic regularity properties. We will adopt a similar strategy here. 
In this section, we will work in a coupling where the following conditions hold on some set $\Om$ of probability $1$.
\begin{enumerate}[label=(\roman*), nosep]
	\item $\cL_n \to \cL$ uniformly on compact subsets of $\fX_\uparrow$.
	\item For every bounded set $K = [-b, b]^4 \cap \R^4_\uparrow$, there exists some finite $C_b$ such that for all $\ep \in (0, 1)$ we have
	$$
	\limsup_{n \to \infty} \sup_{(x, s; y; t) \in K} \scrL_n(x, s; y; t) + \frac{(x -y)^2}{t-s + \ep} \le C_b.
	$$
	\item For any $\eta > 0$, there is a constant $R > 0$ such that
$$
\lf|\scrL(x, s; y, t) + \frac{(x - y)^2}{t-s} \rg|\le R (t-s)^{1/3} G(x, s; y, t)^\eta.
$$
Here the function $G$ is as in Lemma \ref{l:uniform-EL-bound}.
\end{enumerate}
The fact that such a coupling exists follows from Theorem \ref{T:BLPP-convergence} for the first statement, \cite[Lemma 13.3]{DOV} for the second statement, and Lemma \ref{l:uniform-EL-bound} for the third statement (or alternately, \cite[Corollary 10.7]{DOV}). We let $B^n = (B^n_i : i \in \Z)$ denote the collection of standard Brownian motions that give rise to $\scrL_n$ in this coupling. We work on $\Om$ for all statements and proofs in this section.

Most of this section is focused on proving Hausdorff convergence of rescaled zigzag graphs; we translate to the language of Theorem \ref{T:limit-theorem} at the end.
 For a path $\pi:[a, b] \to \Z$, recall from Section \ref{S:lpp} that its zigzag graph is
$$
\Ga(\pi) = \{(c, y) \in \R \X \Z : c \in [a, b], \pi(r) \le y \le \pi(r^-) \}.
$$
Note that we write $\pi(r^-)$ for the left-hand limit at $r$, and that $\pi(r^-)$ is always defined, see Section \ref{S:lpp}.
Also let $A_n$ be the linear transformation of $\R^2$ given by the matrix
\begin{equation}
A_n = \lf[ \begin{array}{cc}
n^{1/3}/2 & n^{-2/3}/2 \\
0 & -n^{-1}
\end{array}\rg].
\end{equation}
For any path $\pi$, its transformed zigzag graph $A_n \Ga(\pi)$ is contained in $\R \X n^{-1} \Z$. Moreover, the restriction $A_n|_{\R \X \Z}: \R \X \Z \to \R \X n^{-1} \Z$ is the inverse of the map $(x, s) \mapsto (x, s)_n$ used in the construction of $\scrL_n$ in Theorem \ref{T:BLPP-convergence}. Therefore for any path $\pi$ from $(a, m)$ to $(b, \ell)$ and any $n \in \N$, after tracing through the definitions we get that
\begin{equation}
\label{E:pi-length}
\begin{split}
 \|\pi\|_{\scrL_n} &= \inf \sum_{i=1}^k \scrL_n(p_{i-1}; p_i), \qquad \qquad \text{ where } \\  \|\pi\|_{\scrL_n} :&= \|\pi\|_{B^n} + 2\sqrt{n} (b-a) + n^{1/6}(A_n(b, \ell)_1 - A_n(a, m)_1).
\end{split}
\end{equation}
Here the infimum is over all finite sequences $p_0, \dots, p_k \sset A_n \Ga(\pi)$ such that
$$
a = (A_n^{-1} p_0)_1 < (A_n^{-1} p_1)_1 < \dots < (A_n^{-1} p_k)_1  = b.
$$
Here and in \eqref{E:pi-length}, $(A^{-1}_n p)_1$ denotes the first coordinate of $A_n^{-1} p$. We begin with a tightness statement for zigzag graphs.

\begin{lemma}
	\label{L:tight}
Let $\pi_n$ be a sequence of paths from $(a_n, m_n)$ to $(b_n, \ell_n)$ such that 
\begin{equation}
\label{E:An-1}
A_n(a_n, m_n) \to (x, r) \qquad \mathand \qquad A_n(b_n, \ell_n) \to (y, t)
\end{equation} 
as $n \to \infty$. Suppose also that
\begin{equation}
\label{E:liminfpin}
\liminf_{n \to \infty} \|\pi_n\|_{\scrL_n} > -\infty
\end{equation}
almost surely. Then on $\Om$, the sequence $A_n\Ga(\pi_n)$ is precompact in the Hausdorff metric. Moreover, any subsequential limit of $A_n\Ga(\pi_n)$ is equal to $\mathfrak{g} \pi = \{(\pi(s), s) : s \in [r, t]\}$ for some continuous function $\pi:[r, t] \to \R$ with $\pi(r) = x$ and $\pi(t) = y$.  
\end{lemma}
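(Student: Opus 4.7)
The plan is to derive both conclusions from two deterministic tools: the decomposition \eqref{E:pi-length} and the bound (ii). Together they give that for any finite sequence of checkpoints $p_0, \ldots, p_k$ in $A_n\Gamma(\pi_n)$ with strictly increasing first coordinates (of $A_n^{-1}p_i$), we have $\|\pi_n\|_{\scrL_n} \le \sum_{i=1}^k \scrL_n(p_{i-1}; p_i)$, and each summand is at most $C - (p_{i,1} - p_{i-1,1})^2/(p_{i,2} - p_{i-1,2} + \epsilon)$ whenever the checkpoints lie in a common bounded region (with $C = C_b$ depending only on the region, and growing at most polynomially in $b$ by (iii)). Any hypothesized pathology in the geometry of $A_n\Gamma(\pi_n)$ can then be ruled out by inserting checkpoints that witness it and applying these bounds, since the resulting upper bound on $\|\pi_n\|_{\scrL_n}$ will tend to $-\infty$, contradicting \eqref{E:liminfpin}.

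First I would show $A_n\Gamma(\pi_n) \subset K_0$ for some fixed compact $K_0 \subset \R^2$. The time coordinates lie in $[-m_n/n, -\ell_n/n]$, which converges to $[r, t]$ by \eqref{E:An-1}. Suppose some $q_n \in A_n\Gamma(\pi_n)$ had $|q_{n,1}| \to \infty$. Using $q_n$ as an intermediate checkpoint (or, if $q_n$ sits on a vertical segment at $c = a_n$ or $c = b_n$, a nearby point with strictly interior $c$-value), one bounds $\|\pi_n\|_{\scrL_n}$ above by the sum of two $\scrL_n$ values, each with spatial separation comparable to $|q_{n,1}|$ and time separation bounded. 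The parabolic penalty $-|q_{n,1}|^2/O(1)$ dominates $C_b$ for $b$ comparable to $|q_{n,1}|$, forcing the upper bound to $-\infty$ and contradicting \eqref{E:liminfpin}. Hausdorff precompactness now follows from Blaschke's selection theorem inside $K_0$.

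Next I would show that any subsequential limit $K$ equals $\fg\pi$ for a continuous $\pi:[r, t] \to \R$. Connectedness of each $A_n\Gamma(\pi_n)$ passes to $K$, and $(x, r), (y, t) \in K$ since $A_n(a_n, m_n) \to (x, r)$ and $A_n(b_n, \ell_n) \to (y, t)$. Connectedness forces the time projection of $K$ to equal $[r, t]$, so it suffices to show each vertical slice $K \cap (\R \times \{s\})$ is a single point. Suppose instead that $(z_1, s), (z_2, s) \in K$ with $z_1 < z_2$ and $s \in (r, t)$. For every $\delta > 0$ and all sufficiently large $n_k$ along the subsequence, Hausdorff convergence produces $p_{n_k, i} \in A_{n_k}\Gamma(\pi_{n_k})$ with $|p_{n_k, i} - (z_i, s)| < \delta$. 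Writing $A_n^{-1}p_{n, i} = (c_{n, i}, y_{n, i})$, the identity $p_{n,i,1} = n^{1/3}c_{n, i}/2 + n^{-2/3}y_{n, i}/2$ together with the boundedness of $y_{n, i}/n$ forces $c_{n_k, 1} < c_{n_k, 2}$ for $n_k$ large. Decomposing $\|\pi_{n_k}\|_{\scrL_{n_k}}$ along $A_{n_k}(a_{n_k}, m_{n_k}), p_{n_k, 1}, p_{n_k, 2}, A_{n_k}(b_{n_k}, \ell_{n_k})$ and applying (ii) with parameter $\delta$, the two outer summands are bounded by a constant depending only on $K_0$ and the limiting endpoints, while the middle summand is at most $C - (z_2 - z_1 - 2\delta)^2/(3\delta)$. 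As $\delta \to 0$, the total upper bound tends to $-\infty$, contradicting \eqref{E:liminfpin}. The endpoint cases $s \in \{r, t\}$ are handled similarly. Once all slices are singletons, the graph representation together with the closedness and compactness of $K$ immediately yields continuity of the resulting $\pi$.

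The main obstacle is the vertical-slice step, since one has to verify that the approximating checkpoints $p_{n_k, 1}, p_{n_k, 2}$ can be ordered compatibly with the strict monotonicity required by \eqref{E:pi-length}. This is the one place where the precise form of $A_n$, and in particular the dominance $n^{1/3} \gg n^{-2/3}$ between the two nonzero entries, enters the argument; the rest is a routine application of the parabolic-penalty bound from (ii).
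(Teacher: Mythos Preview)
Your overall approach is the same as the paper's, and nearly all of it is sound. The one genuine error is precisely at the step you flag as the main obstacle. You claim that the dominance $n^{1/3} \gg n^{-2/3}$ in $A_n$ forces $c_{n_k,1} < c_{n_k,2}$, but inverting $A_n$ gives $(A_n^{-1}(u,v))_1 = 2n^{-1/3}u + v$, so
\[
c_{n,2} - c_{n,1} \;=\; 2n^{-1/3}(u_{n,2} - u_{n,1}) \;+\; (v_{n,2} - v_{n,1}).
\]
The \emph{time} coordinate $v$ carries weight $1$ while the space coordinate $u$ carries only weight $2n^{-1/3}$; since $v_{n,1}, v_{n,2}$ are both within $\delta$ of $s$ with no further control, their difference can have either sign and swamps the $O(n^{-1/3})$ spatial contribution. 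Your ordering claim is therefore false, and ``boundedness of $y_{n,i}/n$'' does nothing to rescue it (that is exactly the statement that the $v$-coordinates are bounded, which is what causes the problem).

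The paper's fix is just pigeonhole: along the subsequence, one of the two strict orderings $(A_n^{-1}p_n)_1 \lessgtr (A_n^{-1}p_n')_1$ holds infinitely often, and since the roles of the two limit points $(z,s)$ and $(z',s)$ are symmetric one may assume it is $<$. The middle term $\scrL_n(p_n;p_n')$ then has spatial separation tending to $|z-z'|>0$ and time separation tending to $0$, so condition (ii) (taking the limsup in $n$ and then letting $\epsilon \to 0$) drives it to $-\infty$, while the outer two terms stay bounded above. With this single substitution your vertical-slice argument goes through unchanged.

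A minor secondary point: in the precompactness step you justify control of $C_b$ by invoking (iii), but (iii) bounds $\scrL$, not $\scrL_n$ or $C_b$. The paper instead fixes $b$, passes to a thickening of $A_n\Gamma(\pi_n)$ to guarantee connectedness, and shows that any subsequential limit touching $\{\pm b\}\times[r,t]$ at $(z,s)$ yields a contradiction: via (ii) when $s\in\{r,t\}$, and via (i) followed by (iii) when $s\in(r,t)$. One then takes $b$ sufficiently large.
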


\begin{proof}
First, let 
$$
\Ga'_n(\pi_n) = \{x \sset \R^2 :  d(x, A_n\Ga(\pi)) \le n^{-2/3}\}.
$$
Here $d(x, A)$ denotes the Euclidean distance between a point and a set. The definitions of $A_n$ and $\Ga_n(\pi)$ ensure that the sets $\Ga'_n(\pi_n)$ are all connected. Moreover, the Hausdorff distance $d_H(\Ga'_n(\pi_n), \Ga_n(\pi_n))$ is at most $n^{-2/3}$, so it suffices to prove all statements in the lemma for $\Ga'_n(\pi_n)$. Next, fix an interval $[-b, b] \sset \R$. The definition of the scaling matrix $A_n$ and the limiting statements \eqref{E:An-1} guarantee that $\Ga'_n(\pi_n) \cap ([-b, b] \X \R)$ is precompact in the Hausdorff topology, with subsequential limits contained in $[-b, b] \X [r, t]$. 

Take $b$ large enough so that $x, y \in (-b, b)$. Connectedness of the sets $\Ga'_n(\pi_n)$ implies either there is a subsequential limit of $\Ga'_n(\pi_n) \cap ([-b, b] \X \R)$ that intersects the boundary $\{-b, b\} \X\R$, or else the sequence $\Ga_n'(\pi_n)$ is precompact, and all subsequential limits are contained in $(-b, b) \X \R$.

Suppose that some subsequential limit of $\Ga'_n(\pi_n) \cap [-b, b] \X \R$ intersects the boundary $\{-b, b\} \X\R$ at a point $p \in \R^2$. Then there exists a sequence of points $p_n \in A_n\Ga(\pi_n)$ that converge to $p$. By the triangle inequality for $\scrL_n$ and \eqref{E:pi-length} we have
$$
\|\pi\|_{\scrL_n} \le \scrL_n(A_n(a_n, m_n), p_n) + \scrL_n(p_n, A_n(b_n, \ell_n) ).
$$
If $p_n \to (z, s)$ for some $(z, s) \in \{-b, b\} \X \{r, t\}$, then the right-hand side above converges to $-\infty$ by condition (ii) above, contradicting \eqref{E:liminfpin}. 
If $p_n \to (z, s)$ for some $s \in (r, t)$ and $z = \pm b$, then uniform-on-compact convergence of $\scrL_n$ to $\scrL$ guarantees that the right-hand side above converges to 
$$
\scrL(x, r; z, s) + \scrL(z, s; y, t).
$$
For $b$ large enough, condition (iii) above guarantees that this quantity can become arbitrarily large and negative, contradicting \eqref{E:liminfpin}. Therefore the sequence $\Ga_n'(\pi_n)$ is precompact, and all subsequential limits are contained in $(-B, B) \X [r, t]$ for some random $B > 0$. Since all subsequential limits of $\Ga_n'(\pi_n)$ are connected and contain the points $(x, r)$ and $(y, t)$, to show that any subsequential limit $\Ga$ is of the form $\{(\pi(s), s) : s \in [r, t]\}$ for some continuous function $\pi:[r, t] \to \R$ with $\pi(r) = x$ and $\pi(t) = y$, we just need to show that $\Ga$ intersects each horizontal line at most once.

Suppose that this is not the case, and that $p=(z, s),p'=(z', s) \in \Ga$ for some $z \ne z'$. Then there are sequences $p_n \in A_n \Ga(\pi_n)$ and $p_n' \in A_n\Ga(\pi_n)$ converging to $p, p'$, respectively. 
Without loss of generality, we may assume that $(A_n^{-1} p_n)_1 < (A_n^{-1} p_n')_1$ infinitely often, so that by \eqref{E:pi-length} we have
$$
\|\pi_n\|_{\scrL_n} \le \scrL_n(A_n(a_n, m_n); p_n) + \scrL_n(p_n; p_n')+ \scrL_n(p_n'; A_n(b_n, \ell_n))
$$
for infinitely many $n$. Condition (ii) guarantees that almost surely, the middle term on the right-hand side above converges to $-\infty$, whereas the first and third terms are bounded above. Again, this contradicts \eqref{E:liminfpin}.
\end{proof}

\begin{theorem}
\label{T:path-cvge}
Fix $\bu = (\bx, s; \by, t) \in \fX_\uparrow$, and let $C_\bu$ be the almost sure set where there is a unique disjoint optimizer $\pi$ in $\scrL$ from $(\bx, s)$ to $(\by, t)$. Let $\pi^{(n)}$ be any sequence of $\scrL_n$-optimizers from $(\ba_n, m_n)$ to $(\bb_n, \ell_n)$ where 
$A_n (a_{n, i}, m_n) \to (x_i, s)$ and $A_n (b_{n, i}, \ell_n) \to (y_i, t)$.

Then on $\Om \cap  C_\bu$, $A_n \Ga(\pi^{(n)}_i) \to \mathfrak{g} \pi_i = \{(\pi_i(r), r) : r \in [s, t]\}$ in the Hausdorff metric for all $i$. Moreover, letting $h_{n, i}:[s, t] \to [a_{n, i}, b_{n, i}]$ be the linear function satisfying $h_{n,i}(s) = a_{n, i},  h_{n,i}(t) = b_{n, i}$, on $\Om \cap C_\bu$ we have the uniform convergence
$$
\tilde \pi^{(n)}_i := \frac{\pi^{(n)}_i \circ h_{n, i}+ n h_{n, i}}{2 n^{2/3}} \to \pi_i,
$$
as functions from $[s, t]$ to $\R$.
\end{theorem}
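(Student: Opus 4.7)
The plan follows the framework of \cite[Section 13]{DOV}: first establish Hausdorff precompactness of the rescaled zigzag graphs $A_n\Gamma(\pi^{(n)}_i)$ via Lemma \ref{L:tight}; next identify any subsequential limit as the (unique, on $C_\bu$) disjoint optimizer of $\scrL$ from $(\bx, s)$ to $(\by, t)$; and finally upgrade Hausdorff convergence of graphs to uniform convergence of the rescaled paths $\tilde\pi^{(n)}_i$.

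To apply Lemma \ref{L:tight} to each component $\pi^{(n)}_i$, I must verify $\liminf_{n\to\infty} \|\pi^{(n)}_i\|_{\scrL_n} > -\infty$. Since $\pi^{(n)}$ is a disjoint optimizer,
\[
\sum_{j=1}^k \|\pi^{(n)}_j\|_{\scrL_n} = \scrL_n(\bu_n) \to \scrL(\bu)
\]
by condition (i), while each $\|\pi^{(n)}_j\|_{\scrL_n}$ is bounded above by the single-point value $\scrL_n(a_{n,j}, m_n; b_{n,j}, \ell_n)$, which converges to $\scrL(x_j, s; y_j, t)$. Rearranging yields the required lower bound on each $\|\pi^{(n)}_i\|_{\scrL_n}$, so Lemma \ref{L:tight} produces Hausdorff precompactness, with any subsequential limit of $A_n\Gamma(\pi^{(n)}_i)$ of the form $\mathfrak{g}\pi_i$ for a continuous function $\pi_i:[s,t]\to \R$ with $\pi_i(s)=x_i$, $\pi_i(t)=y_i$.

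Fix a subsequence along which all $A_n\Gamma(\pi^{(n)}_i)$ converge in Hausdorff. The weak ordering $\pi^{(n)}_i \le \pi^{(n)}_{i+1}$ passes to the limit, so $\pi = (\pi_1, \ldots, \pi_k)$ is a continuous monotone multi-path from $(\bx, s)$ to $(\by, t)$. To check optimality, fix an arbitrary partition $s = t_0 < \cdots < t_m = t$. By optimality of $\pi^{(n)}$ and the metric composition law in the prelimit,
\[
\scrL_n(\bu_n) = \sum_{j=1}^m \scrL_n(\pi^{(n)}(t_{j-1}), t_{j-1}; \pi^{(n)}(t_j), t_j).
\]
Hausdorff convergence of graphs of continuous functions gives pointwise convergence $\pi^{(n)}(t_j) \to \pi(t_j)$, and combined with uniform-on-compact convergence $\scrL_n \to \scrL^* = \scrL$ (Theorem \ref{T:extended-landscape}) the right-hand side tends to $\sum_j \scrL(\pi(t_{j-1}), t_{j-1}; \pi(t_j), t_j)$. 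Hence this sum equals $\scrL(\bu)$ for every partition, so $\|\pi\|_{\scrL} = \scrL(\bu)$ and $\pi$ is a disjoint optimizer in $\scrL$. On $C_\bu$ the optimizer is unique, which forces every subsequential limit to coincide with $\pi$, so the full sequence $A_n\Gamma(\pi^{(n)}_i) \to \mathfrak{g}\pi_i$ converges in Hausdorff.

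By direct computation $\tilde\pi^{(n)}_i(r)$ is the first coordinate of $A_n(h_{n,i}(r), \pi^{(n)}_i(h_{n,i}(r)))$, whose second coordinate $-\pi^{(n)}_i(h_{n,i}(r))/n$ tends to $r$ uniformly on $[s,t]$; Hausdorff convergence of $A_n\Gamma(\pi^{(n)}_i)$ to the graph of the continuous function $\pi_i$ then upgrades to uniform convergence $\tilde\pi^{(n)}_i \to \pi_i$ by a standard compactness argument. The main obstacle I expect is the identification step: matching the prelimit sum-of-weights notion of optimality across the limit requires both Proposition \ref{P:sum-of-disjoint} and the equality $\scrL=\scrL^*$ (Theorem \ref{T:extended-landscape}), without which individual rescaled path weights would not correspond to the length functional $\|\cdot\|_\scrL$ defining an $\scrL$-optimizer.
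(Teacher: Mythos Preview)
Your overall strategy coincides with the paper's, and the tightness and ``Moreover'' portions are essentially correct. The issue is in the identification step.

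You write
\[
\scrL_n(\bu_n) \;=\; \sum_{j=1}^m \scrL_n\big(\pi^{(n)}(t_{j-1}), t_{j-1};\, \pi^{(n)}(t_j), t_j\big)
\]
and then pass to the limit. This display is not well-defined and does not follow from the prelimit: the optimizer $\pi^{(n)}=(\pi^{(n)}_1,\dots,\pi^{(n)}_k)$ is a $k$-tuple of cadlag functions $\pi^{(n)}_i:[a_{n,i},b_{n,i}]\to\Z$ recording \emph{line indices}, so ``$\pi^{(n)}(t_j)$'' has no meaning as an element of $\R^k_\le$. More substantively, the only prelimit composition law available is Lemma~\ref{L:split-path}, which splits at an integer line $\ell$ (and at adjacent lines $\ell,\ell-1$ on the two pieces), not at a fixed rescaled time $t_j$; the induced identity in $\scrL_n$-coordinates is therefore only available at times in $n^{-1}\Z$ and carries an off-by-one in the line index that must be controlled. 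Your sentence ``Hausdorff convergence of graphs of continuous functions gives pointwise convergence $\pi^{(n)}(t_j)\to\pi(t_j)$'' compounds this: the sets $A_n\Gamma(\pi^{(n)}_i)$ are \emph{not} graphs of functions of the time variable (they contain vertical segments from the zigzag), so Hausdorff convergence does not immediately yield a pointwise statement at a prescribed $t_j$.

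The paper closes this gap by working directly with the set $P_n$ of metric-composition splitting points $(\bz,j)$ indexed by \emph{all} integer lines $j\in\{\ell_n+1,\dots,m_n\}$, so that the rescaled set $A_nP_n$ is automatically $O(n^{-1})$-dense in the time coordinate. Hausdorff convergence of $A_nP_n$ to $\mathfrak g\gamma$ (inherited from the convergence of $A_n\Gamma(\pi^{(n)}_i)$) then lets one pass the identity \eqref{E:Mcprior} to the limit at every $r\in(s,t)$, yielding $\scrL(\bx,s;\by,t)=\scrL(\bx,s;\gamma(r),r)+\scrL(\gamma(r),r;\by,t)$ and hence optimality of $\gamma$. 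To repair your argument you would need, for each $t_j$, to choose lines $\ell^{(n)}_j$ with $-\ell^{(n)}_j/n\to t_j$, read off $\bz^{(n)}_j$ from Lemma~\ref{L:split-path} at those lines, and track the rescaled points $A_n(\bz^{(n)}_j,\ell^{(n)}_j)$ through the limit---which is precisely what the $P_n$ construction does in one stroke.
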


The `Moreover' in Theorem \ref{T:path-cvge} is Theorem \ref{T:limit-theorem}.
\begin{proof}[Proof of Theorem \ref{T:path-cvge}]
	In the proof, we work on the set $C_\bu \cap \Om$.
	Let $k$ be such that $\bx, \by \in \R^k_\le$. Since the $\pi^{(n)}$ are optimizers, and $\scrL_n \to \scrL$ uniformly on compact sets, we have
\begin{equation}
\label{E:ppinn}
\sum_{i=1}^k \|\pi_i^{(n)}\|_{\scrL_n} = \scrL_n(A_n (\ba_n, m_n), A_n (\bb_n, \ell_n)) \to  \scrL(\bx, s; \by, t).
\end{equation}
Also, for each $i$, we have
\begin{equation}
\label{E:pini}
\|\pi^{(n)}_i\|_{\scrL_n} \le \scrL_n(A_n (a_{n, i}, m_n), A_n (a_{n, i}, \ell_n)).
\end{equation}
The right-hand side above converges to $\scrL(x_i, s; y_i, t)$, so for all $i$, by \eqref{E:ppinn} and \eqref{E:pini}, we have
$$
\liminf_{n \to \infty} \|\pi^{(n)}_i\|_{\scrL_n} \ge  \scrL(\bx, s; \by, t) - \sum_{1 \le j \le k, j \ne i} \scrL(x_j, s; y_j, t) > -\infty.
$$
Hence by Lemma \ref{L:tight}, each of the sequences $\{A_n \Ga(\pi^{(n)}_i) : n \in \N\}$ is precompact, with subsequential limits that are of the form $(\mathfrak{g} \ga_1, \dots, \mathfrak{g} \ga_k)$ for some continuous multi-path $\ga$ from $(\bx, s)$ to $(\by, t)$. Now, let $P_n \sset \R^k_\le \X \Z$ be the set of all points $(\bz, j)$ such that 
\begin{equation}
\label{E:Mcprior}
B^n[(\ba_n, m_n) \to (\bb_n, \ell_n)] = B^n[(\bb_n, m_n) \to (\bz, j)] + B^n[(\bz, j-1) \to (\bb_n, \ell_n)],
\end{equation}
and such that $(z_i, j), (z_i, j-1) \in \Ga(\pi^{(n)}_i)$ for all $i$.
Metric composition (Lemma \ref{L:split-path}) and the fact that $\pi^{(n)}$ is an optimizer guarantees that for every $j \in \{\ell_n + 1, \dots, m_n\}$, there exists $(\bz, j) \in P_n$. In particular, this implies that along a subsequence where $A_n \Ga(\pi^{(n)}_i) \to \mathfrak{g} \ga_i$ for all $i$, we have
$$
A_n P_n \to \mathfrak{g} \ga = \{(\ga(r), r) : r \in [s, t]\} 
$$
and so \eqref{E:Mcprior} passes to the limit to give that
$$
\scrL(\bx, s; \by, t) = \scrL(\bx, s; \ga(r), r) +  \scrL( \ga(r), r; \by, t)
$$
for all $r \in (s, t)$.
This can only occur if $\ga$ is the unique optimizer in $\scrL$ from $(\bx, s)$ to $(\by, t)$, yielding the first part of the theorem.

For the `Moreover', it is enough to show that $\mathfrak{g} \tilde \pi^{(n)}_i \to \mathfrak{g} \pi_i$ for all $i$ in the Hausdorff metric, since Hausdorff convergence of graphs implies uniform convergence of functions when the limit is continuous. For this, by the first part of the theorem we just need to show that the Hausdorff distance $d_H(\mathfrak{g} \tilde \pi_i^{(n)}, A_n \Ga(\pi_i^{(n)}))$ converges to $0$ with $n$.

Since $A_n (a_{n, i}, m_n) \to (x_i, s)$ and $A_n (b_{n, i}, \ell_n) \to (y_i, t)$ we have $a_{n,i}\to s$ and $b_{n,i}\to t$.
Then the function $h_{n, i}$ converges to the identity, so $d_H(\mathfrak{g} \tilde \pi_i^{(n)}, \mathfrak{g} \hat \pi^{(n)}_i) \to 0$, where
$$
\hat \pi^{(n)}_i(x) = \frac{\pi^n_i(x) + nx}{2n^{2/3}}.
$$
Moreover, letting $\Lambda \pi^{(n)}_i = \{(c, \pi^{(n)}_i(c)) : c \in [a, b]\}$ denote the graph of $\pi^{(n)}_i$, the first part of the theorem guarantees that $d_H(A_n \Lambda \pi_i^{(n)}, A_n\Ga \pi_i^{(n)}) \to 0$. Therefore it suffices to show that $d_H(A_n \Lambda \pi_i^{(n)}, \mathfrak{g} \hat \pi_i^{(n)}) \to 0$ with $n$. This boils down to a matrix computation. We have $\mathfrak{g} \hat \pi_i^{(n)} = D_n \mathfrak{g} \pi_i^{(n)}$ and $ \mathfrak{g} \pi_i^{(n)} = R \Lambda \pi^{(n)}_i$, where
\begin{equation}
D_n = \lf[ \begin{array}{cc}
n^{-2/3}/2 & n^{1/3}/2 \\
0 & 1
\end{array}\rg], \qquad R = \lf[ \begin{array}{cc}
0 & 1 \\
1 & 0
\end{array}\rg].
\end{equation}
Therefore $D_n R A^{-1}_n (A_n \Lambda \pi^{(n)}_i) = \mathfrak{g} \hat \pi_i^{(n)}$. A quick computation shows that $D_n R A_n^{-1} \to I$, yielding the result.
\end{proof}

\bibliographystyle{alpha}
\bibliography{bibliography}

\appendix
\section{Brownian melon estimates}
\label{app:b-m-est}
In this appendix we prove Lemma \ref{l:maxi-loc} and \ref{l:change-time-prelim}, using some Brownian melon estimates from the literature.
We start by quoting these results.
\begin{theorem}[\protect{\cite[Theorem 3.1]{DV}}]
\label{T:top-bd}
There exist positive constants $c_k, d_k, k \in \N$ such that the following holds. For all $m \in (0, 5n^{2/3})$ and $n \ge 1$ we have
\[
\prob(W^n_k(1) - 2 \sqrt{n} \ge m n^{-1/6} ) \le c_1 e^{-d_1 m^{3/2}}, \]
\[
\prob(W^n_k(1) - 2 \sqrt{n} \le - m n^{-1/6} ) \le c_k e^{-d_k m^3}.
\]
Also, for all $m \ge 5n^{2/3}$ and $n \ge 1$ we have
\[
\p(|W^n_k(1) - 2 \sqrt{n}| \ge m n^{-1/6}) \le c_1 e^{-d_1 n^{-1/3} m^2}.
\]
\end{theorem}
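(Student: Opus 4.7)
The starting point is the classical identification, going back to Baryshnikov and O'Connell--Yor, that the vector $(W^n_1(1), \ldots, W^n_n(1))$ has the same law as the ordered eigenvalues $\lambda_1 \ge \cdots \ge \lambda_n$ of an $n \times n$ GUE matrix (properly normalized). This follows from the RSK-like combinatorics of Brownian LPP and the Dyson description of the melon in Theorem \ref{T:melon-Airy-facts}. Under this identification the theorem reduces to three tail estimates on $\lambda_k$ near the soft edge $2\sqrt{n}$, which I would attack in three different regimes using three distinct techniques.

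For the upper tail with $m \in (0, 5n^{2/3})$, since $\lambda_k \le \lambda_1$ it suffices to bound the top eigenvalue, and the $e^{-d m^{3/2}}$ rate is classical. The cleanest route is the moment method: bound $\E\, \mathrm{tr}(M/\sqrt{n})^{2p}$ (with $M$ the GUE matrix) by the sum over paired Wick diagrams, optimize at $p \sim n^{1/3} m^{1/2}$, and apply Markov. This is the strategy of Aubrun and of Ledoux--Rider and gives the correct $3/2$-exponent uniformly in $n$; the resulting bound transfers to all $\lambda_k$ with the same constants.

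For the lower tail with $m \in (0, 5n^{2/3})$, the event $\{W^n_k(1) \le 2\sqrt{n} - mn^{-1/6}\}$ is exactly the event that the Hermite determinantal process $\{\lambda_1, \ldots, \lambda_n\}$ has at most $k-1$ points above $2\sqrt{n} - mn^{-1/6}$. Writing this as a Fredholm expansion of the Hermite kernel restricted to the relevant half-line, and applying sharp edge-kernel bounds, yields a cubic exponential tail. The $m^3$ rate reflects that in the Airy scaling the mean number of points in $[-m, \infty)$ is of order $m^{3/2}$, and suppressing all but $k-1$ of them is a squared large deviation event for this counting function.

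For the far tail $m \ge 5n^{2/3}$ we exit the edge regime and Gaussian concentration takes over. I would apply Ledoux's Herbst-type concentration inequality for Lipschitz functions of GUE to the eigenvalue $\lambda_k$, together with the a priori bound $|\E\lambda_k - 2\sqrt{n}| = O(n^{-1/6})$, to obtain $\p(|\lambda_k - 2\sqrt{n}| \ge mn^{-1/6}) \le c\exp(-d n^{-1/3} m^2)$. The main obstacle across the three regimes will be the edge lower tail: obtaining constants that depend only on $k$ (not on $n$) while leaving room for $k-1$ admissible excursions above the threshold requires careful bookkeeping inside the Fredholm expansion, and is where I expect most of the work to lie.
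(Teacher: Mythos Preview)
The paper does not supply a proof of this statement: it is quoted directly from \cite[Theorem 3.1]{DV} and used as a black-box input for the Brownian melon estimates in Appendix~\ref{app:b-m-est}. So there is no ``paper's own proof'' to compare against.

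Your outline is a sound and standard route to such bounds, and is in the spirit of how these GUE edge tails are obtained in the literature. The identification of $(W^n_1(1),\dots,W^n_n(1))$ with the ordered GUE eigenvalues is exactly the right starting point, and your three-regime split matches the shape of the statement. Two small comments. First, your heuristic for the cubic lower tail via a ``squared large deviation for the counting function'' is not quite the usual mechanism: the $m^3$ exponent is most cleanly read off from the Airy-kernel asymptotics (equivalently, the Tracy--Widom lower tail), or in a large-deviation picture from the cost of pushing the semicircle edge inward by $m n^{-1/6}$. The Fredholm route you sketch will still land on the right exponent, but the bookkeeping is simpler if you bound the gap probability directly via edge-kernel decay rather than through counting statistics. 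Second, for the Gaussian far tail, $\lambda_k$ is $1$-Lipschitz in the matrix entries (Hoffman--Wielandt), so concentration applies cleanly once you have the mean estimate you mention; no issue there.
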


For any $n\in \N$, $x,a,b,w>0$, denote 
\[
\cN_{b,w}(n,x,a)=
2 \sqrt{nx} + \sqrt{x} n^{-1/6}(a + b\log^{2/3}(n^{1/3} |\log(x/w)| + 1)).
\]
Note that for any $\alpha>0$, we have
\begin{equation} \label{eq:rescale-cN}
\cN_{b,\alpha w}(n,\alpha x,a) = \sqrt{\alpha}\cN_{b,w}(n,x,a).
\end{equation}
\begin{prop}[\protect{\cite[Proposition 4.3]{DV}}]
\label{P:cross-prob}
There exist positive constants $b, c$ and $d$ such that for all $w,a > 0$ and $n \ge 1$, the probability that
\[
W^n_1(x) \le \cN_{b,w}(n,x,a), \;\; \forall x \in (0, \infty)
\]
is greater than or equal to $1- c e^{-d a^{3/2}}$.
\end{prop}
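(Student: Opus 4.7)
The plan is to establish the uniform bound via a dyadic decomposition of $(0,\infty)$ around the base point $w$, together with Brownian scaling of the melon and a union bound on a sufficiently fine mesh within each dyadic piece. The overall architecture is: reduce to a single reference scale using $(W^n_i(\alpha x))_{i,x}\eqd (\sqrt\alpha W^n_i(x))_{i,x}$; on that reference scale, control $W^n_1$ at a grid of $O(n^{1/3})$ points using Theorem \ref{T:top-bd}; control the oscillation between grid points using Corollary \ref{C:dyson-tails-uniform}; finally sum the resulting failure probabilities over dyadic scales, with $b$ chosen large enough to absorb every logarithmic loss.

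First, I would reduce to $w=1$. Using the identity \eqref{eq:rescale-cN}, namely $\cN_{b,\alpha w}(n,\alpha x,a)=\sqrt\alpha\,\cN_{b,w}(n,x,a)$, and the scaling $W^n_1(\alpha x)\eqd\sqrt\alpha W^n_1(x)$ jointly in $x$, the event of failure at scale $w$ is identical in law to the corresponding event at scale $1$. So I may assume $w=1$. Next, partition $(0,\infty)$ into dyadic pieces $I_k=[2^k,2^{k+1}]$ for $k\in\Z$. Applying Brownian scaling inside each $I_k$ (writing $x=2^k u$ with $u\in[1,2]$) converts the failure event on $I_k$ into the event
\[
\scrE_k\;=\;\bigl\{\exists\,u\in[1,2]:\;W^n_1(u)>2\sqrt{nu}+\sqrt{u}\,n^{-1/6}(a+b\,L_k(u))\bigr\},
\]
where $L_k(u)=\log^{2/3}(n^{1/3}|k\log 2+\log u|+1)$. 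Since $u\in[1,2]$, the infimum $L_k^{\min}$ of $L_k(u)$ over $u\in[1,2]$ satisfies $L_k^{\min}\gtrsim \log^{2/3}(n^{1/3}|k|+1)$ for $|k|\ge 2$, and $L_k^{\min}=0$ for $k\in\{-1,0\}$. It therefore suffices to bound $\scrE_k$ with $L_k(u)$ replaced by $L_k^{\min}$.

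To estimate $\p(\scrE_k)$, I would discretize $[1,2]$ into $N=\lceil n^{1/3}\rceil$ subintervals $J_j$ of length $\le n^{-1/3}$. At the left endpoint $u_j$ of each $J_j$, Brownian scaling gives $W^n_1(u_j)-2\sqrt{nu_j}\eqd\sqrt{u_j}(W^n_1(1)-2\sqrt n)$, so Theorem \ref{T:top-bd} (in the regime $m\le 5n^{2/3}$) yields
\[
\p\!\bigl(W^n_1(u_j)-2\sqrt{nu_j}\ge m\sqrt{u_j}\,n^{-1/6}\bigr)\le c_1 e^{-d_1 m^{3/2}}.
\]
Within each $J_j$, Corollary \ref{C:dyson-tails-uniform} applied with $t=u_j$, $s=n^{-1/3}$ controls the oscillation: with probability at least $1-c\,e^{-d m^{3/2}}$,
\[
\max_{u\in J_j}\bigl|W^n_1(u)-W^n_1(u_j)+(u-u_j)\sqrt{n/u_j}\bigr|\le m\,n^{-1/6}.
\]
Combining these at level $m=a+b L_k^{\min}$ and taking a union bound over the $N$ subintervals gives
\[
\p(\scrE_k)\le c\, n^{1/3}\,e^{-d(a+bL_k^{\min})^{3/2}}.
\]
The key arithmetic is that $(a+bL_k^{\min})^{3/2}\ge a^{3/2}+c'b^{3/2}(L_k^{\min})^{3/2}$, and $(L_k^{\min})^{3/2}\gtrsim \log(n^{1/3}|k|+1)$, so for $b$ sufficiently large
\[
n^{1/3}e^{-d(a+bL_k^{\min})^{3/2}}\le e^{-(d/2)a^{3/2}}(n^{1/3}|k|+1)^{-2}.
\]
Summing over $k\in\Z$ yields a bound of the form $c\,e^{-da^{3/2}}$ as required, after adjusting constants.

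The main obstacle is bookkeeping rather than conceptual: one must choose $b$ large enough so that a single universal constant simultaneously absorbs (i) the $n^{1/3}$ factor from the mesh on each dyadic block (which is the reason a $\log^{2/3}$ correction is indispensable even at $k=0$), and (ii) the $|k|$-dependent growth from summing over dyadic blocks. A secondary technicality is the regime in Theorem \ref{T:top-bd}: if $a+bL_k^{\min}$ exceeds $5n^{2/3}$, the Gaussian tail $ce^{-dn^{-1/3}m^2}$ must be used, but this only occurs for $|k|$ super-polynomial in $n$, where the resulting bound is still comfortably summable. With these choices made, the dyadic sum produces the desired $1-ce^{-da^{3/2}}$ lower bound on the probability that $W^n_1(x)\le\cN_{b,w}(n,x,a)$ holds simultaneously for all $x\in(0,\infty)$.
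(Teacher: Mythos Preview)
The paper does not prove this proposition; it is quoted from \cite[Proposition 4.3]{DV} and used as a black box in Appendix~\ref{app:b-m-est}. So there is no ``paper's own proof'' to compare against, and I evaluate your argument on its own merits.

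Your architecture (Brownian scaling to reduce to $w=1$, dyadic decomposition of $(0,\infty)$, a mesh of width $n^{-1/3}$ on each block, one-point control via Theorem~\ref{T:top-bd}, and oscillation control via Corollary~\ref{C:dyson-tails-uniform}) is the natural one. However, there is a genuine gap on the central blocks $k\in\{-1,0\}$. You replace $L_k(u)$ by $L_k^{\min}$, and since $L_k^{\min}=0$ for those $k$, your union bound over $N\sim n^{1/3}$ mesh points yields $\p(\scrE_k)\le c\,n^{1/3}e^{-da^{3/2}}$. This is useless for small $a$, and in particular your displayed inequality
\[
n^{1/3}e^{-d(a+bL_k^{\min})^{3/2}}\le e^{-(d/2)a^{3/2}}(n^{1/3}|k|+1)^{-2}
\]
is simply false at $k=0$ (the left side is $n^{1/3}e^{-da^{3/2}}$, the right side is $e^{-(d/2)a^{3/2}}$). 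You even remark parenthetically that the $\log^{2/3}$ correction is ``indispensable even at $k=0$'', yet your proof has already discarded it by passing to the infimum.

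The repair is to keep the pointwise value of $L_k$ on the central blocks. At a mesh point $u_j$ whose logarithmic distance from the zero of $u\mapsto|k\log 2+\log u|$ is $\sim j n^{-1/3}$, one has $L_k(u_j)\sim\log^{2/3}(j+1)$. Using this in the one-point tail before union bounding gives
\[
\sum_{j}e^{-d(a+b\log^{2/3}(j+1))^{3/2}}\le c\,e^{-d'a^{3/2}}\sum_{j}(j+1)^{-d'b^{3/2}},
\]
which is finite once $b$ is large. This is precisely what the term $\log^{2/3}(n^{1/3}|\log(x/w)|+1)$ in $\cN_{b,w}$ is engineered for; you must exploit it mesh-point by mesh-point near $x=w$ rather than minimising it out.
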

The following estimate is also necessary. This estimate is simply a deterministic inequality and does not involve any probabilistic objects.
\begin{lemma}[\protect{\cite[Lemma 9.4]{DOV}}]
	\label{L:calculation}
	Let $b > 0$ be a fixed constant. Then there exists a constant $c$ such that for all $n \in \N, t \in \{1/n, 2/n, \dots, (n-1)/n\}, a>1$ and
	$$
	z \in [0, t - c(t\wedge(1- t))^{1/3}a^2n^{-1/3}] \cup [t + c(t\wedge(1- t))^{1/3}a^2n^{-1/3}, 1],
	$$
	we have that
	\[
	\cN_{b,t}(nt,z,a) + \cN_{b,1-t}(n(1-t),1-z,a)\le 2\sqrt{n} - an^{-1/6}.
	\]
\end{lemma}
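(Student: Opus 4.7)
Plan: The claim is a deterministic inequality, so no probability is involved. I would first use the symmetry $(t,z)\mapsto(1-t,1-z)$, which preserves both the hypothesis on $|z-t|$ and the conclusion (simply swapping the two $\cN$ terms), to reduce to the case $t\le 1/2$ so that $t\wedge(1-t)=t$. The argument then isolates the ``main part'' $2\sqrt{ntz}+2\sqrt{n(1-t)(1-z)}$ from the ``error part'' $E(z)$ consisting of the two terms involving $a$ and $\log^{2/3}$.

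The main part is handled by the algebraic identity
\[
\bigl(\sqrt{tz}+\sqrt{(1-t)(1-z)}\bigr)^2 \;=\; 1 - \frac{(z-t)^2}{\bigl(\sqrt{t(1-z)}+\sqrt{(1-t)z}\bigr)^2},
\]
obtained by direct expansion. Since the denominator $h(z)^2$ is maximized at $z=t$ with value $4t(1-t)$, combining with $\sqrt{1-x}\le 1-x/2$ yields the concavity deficit
\[
2\sqrt{ntz}+2\sqrt{n(1-t)(1-z)} \;\le\; 2\sqrt{n} - \sqrt{n}\min\!\lf(\frac{(z-t)^2}{4t(1-t)},\,\tfrac{1}{2}\rg).
\]
Writing $\delta = z-t$, the problem reduces to showing that this deficit dominates $an^{-1/6}+E(z)$.

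I would then split on the size of $|\delta|$. In the \emph{Taylor regime} $\delta^2\le t(1-t)$, the deficit at the threshold $|\delta|=ca^2 t^{1/3}n^{-1/3}$ is at least $c^2 a^4 n^{-1/6}/(4t^{1/3})$. At this threshold the argument of the first $\log^{2/3}$ satisfies $(nt)^{1/3}|\log(z/t)|\le 2ca^2/t^{1/3}$, and after the elementary calculus bound that $t^{1/3}(\log(1+K/t^{1/3}))^{2/3}$ is uniformly controlled in $t\in(0,1]$ by a universal multiple of $1+(\log K)^{2/3}$, one finds $E(z)\le C n^{-1/6}(a + b(1+(\log a)^{2/3}))$. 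Since $c^2 a^4$ dominates $a+(\log a)^{2/3}$ for $a\ge 1$ with $c$ large in terms of $b$, the bound holds at the threshold, and improves monotonically for larger $|\delta|$ in this regime (the deficit grows quadratically while the error grows only logarithmically). In the \emph{far regime} $\delta^2 > t(1-t)$, the deficit is at least $\sqrt{n}/4$; the crude bound $E(z)\le C(a + b(\log n)^{2/3})$ (using $(nt)^{-1/6},(n(1-t))^{-1/6}\le 1$ since $t,1-t\ge 1/n$) is dominated because the constraint $|\delta|\le 1$ together with the threshold hypothesis forces $n$ to be sufficiently large in terms of $a$ for $\sqrt{n}/4$ to absorb both the $an^{-1/6}$ slack and the logarithmic error, provided $c$ is large in terms of $b$.

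The main technical obstacle is the uniform-in-$t$ calculus bound on $t^{1/3}(\log(1+K/t^{1/3}))^{2/3}$ in the Taylor regime: this is what prevents the logarithmic error from blowing up as $t\to 0$ and, combined with the polynomial $t^{-1/3}$ growth of the deficit, allows a single constant $c$ depending only on $b$ to work uniformly in $n, t, a$. The careful book-keeping that balances the $c^2 a^4 t^{-1/3}$ deficit against the $a$- and $b$-dependent error terms, and the verification that the far regime only arises for $n$ large enough to make $\sqrt{n}$ dominate, is where most of the technical care is required.
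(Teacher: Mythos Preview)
The paper does not prove this lemma; it is cited as \cite[Lemma 9.4]{DOV} with only the remark that it ``is simply a deterministic inequality and does not involve any probabilistic objects.'' So there is nothing in the present paper to compare against, and your overall strategy---isolate the concave main term, control its deficit from $2\sqrt n$ via the identity $A^2=1-(z-t)^2/h(z)^2$ with $h(z)=\sqrt{t(1-z)}+\sqrt{(1-t)z}$, and balance this against the logarithmic error $E(z)$---is the natural route to an independent proof.

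There is, however, a genuine error in your concavity bound. You assert that $h(z)^2$ is maximized at $z=t$ with value $4t(1-t)$, but by Cauchy--Schwarz $h(z)\le 1$ with equality at $z=1-t$, so for $t<\tfrac12$ the maximum of $h(z)^2$ is $1$, strictly larger than $4t(1-t)$. Concretely, at $t=\tfrac14,\ z=\tfrac34$ your displayed deficit inequality reads $\sqrt 3\le \tfrac53$, which is false. This also breaks the far-regime claim that the deficit is at least $\sqrt n/4$: for small $t$ and $z$ of order $\sqrt t$ one computes a deficit only of order $\sqrt{nt}$, not $\sqrt n$. The issue matters because the $t^{-1/3}$ gain that you correctly identify as essential was coming precisely from the false $4t(1-t)$ denominator.

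The repair is to bound $h(z)^2$ \emph{locally} rather than globally. For $|z-t|\le t/2$ one checks directly that $h(z)^2\le C t$, which restores the deficit $\gtrsim \sqrt n\,\delta^2/t$ near the threshold and lets your Taylor-regime balancing go through. The complementary range $|z-t|>t/2$ then needs its own (still elementary) treatment: there the deficit is only guaranteed to be of order $\sqrt n\,t$, and one must use the nonemptiness of the hypothesis interval (which forces $n\gtrsim a^6/t^2$ on the left piece) to absorb $an^{-1/6}+E(z)$. With these corrections the outline can be completed.
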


We first use Theorem \ref{T:top-bd} to deduce an estimate on last passage values across Brownian motions. To clean up the notation in this lemma, its proof, and in the subsequent proof of Lemma \ref{l:maxi-loc}, for a vector $\bx$, we let
$$
\hat \bx = (2 n^{-1/3} \bx, n) \qquad \mathand \qquad \tilde \bx = (1 + 2 n^{-1/3} \bx, 1).
$$
The dependence on $n$ in the notation is implicit.
\begin{lemma}  \label{l:bd-brownain-passtime}
Take independent standard Brownian motions $B^n=(B^n_1,\ldots,B^n_n)$, and $\bx, \by \in \R^k_\le$ such that $\|\bx\|_2, \|\by\|_2 < n^{1/6}$.
For any $a>0$ we have
\[
\p\left(\lf| B^n[\hat \bx \to \tilde \by]-\sum_{i=1}^k 
2\sqrt{n(1+2n^{-1/3}(y_i-x_i))}
\rg| > an^{-1/6}\right) < ce^{-da^{3/2}},  
\]
where $c,d$ are constants depending only on $k$.
\end{lemma}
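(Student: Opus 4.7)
My plan is to derive Lemma \ref{l:bd-brownain-passtime} from the deterministic sandwich in Lemma \ref{L:f-naive} combined with the single-line tail estimates for the Brownian melon in Theorem \ref{T:top-bd}, thereby reducing the multi-point problem to facts already available for individual melon lines. Write $T_i = 1 + 2n^{-1/3}(y_i - x_i)$; the hypothesis $\|\bx\|_2,\|\by\|_2 < n^{1/6}$ forces $T_i \in [1-4n^{-1/6}, 1+4n^{-1/6}]$, so $\sqrt{T_i}$ stays uniformly bounded away from $0$ and $\infty$ for all large $n$ (small $n$ where the bounds degenerate are finite in number and absorbed into the constants).

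The first step is to apply Lemma \ref{L:f-naive} to $(\hat\bx,\tilde\by)$, yielding the deterministic sandwich
\[
\sum_{i=1}^k \lf(B^n[\hat{x_i}^k \to \tilde{y_i}^k] - B^n[\hat{x_i}^{k-1} \to \tilde{y_i}^{k-1}]\rg) \le B^n[\hat \bx \to \tilde \by] \le \sum_{i=1}^k B^n[\hat x_i \to \tilde y_i].
\]
Each of the $O(k)$ quantities appearing here involves only clustered endpoints, which is precisely what the Brownian melon computes. The second step is to identify each such quantity distributionally: translation invariance of Brownian increments together with the definition of $W^n = W_0 B^n$ gives $B^n[\hat{x_i}^m \to \tilde{y_i}^m] \eqd \sum_{j=1}^m W^n_j(T_i)$, and Brownian scaling descends through the (deterministic) melon construction to give $(W^n_1(T), \dots, W^n_n(T)) \eqd \sqrt{T}(W^n_1(1), \dots, W^n_n(1))$. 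Combining these, each telescoping difference satisfies
\[
B^n[\hat{x_i}^m \to \tilde{y_i}^m] - B^n[\hat{x_i}^{m-1} \to \tilde{y_i}^{m-1}] - 2\sqrt{n T_i} \;\eqd\; \sqrt{T_i}\bigl(W^n_m(1) - 2\sqrt{n}\bigr)
\]
(with the empty-tuple value set to $0$), and Theorem \ref{T:top-bd} supplies $\p(|W^n_m(1) - 2\sqrt{n}| > an^{-1/6}) \le c_m e^{-d_m a^{3/2}}$ for each $m \in \II{1,k}$.

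The final step is a two-sided union bound: the upper tail of $B^n[\hat\bx\to\tilde\by] - \sum_i 2\sqrt{n T_i}$ is controlled via the right half of the sandwich (applying the $m=1$ estimate to each of the $k$ summands), and the lower tail via the left half (applying the $m=k$ estimate to each telescoping difference). I do not anticipate any genuine obstacle; the only point requiring a moment's care is verifying that the centering $2\sqrt{n T_i}$ in the statement of the lemma matches exactly the scaled centering $\sqrt{T_i}\cdot 2\sqrt{n}$ produced by Brownian scaling, which is precisely why the shift takes this particular form.
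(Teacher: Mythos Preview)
Your proposal is correct and follows essentially the same approach as the paper: apply the sandwich from Lemma \ref{L:f-naive}, identify each clustered last-passage value with a melon line evaluated at $T_i = 1+2n^{-1/3}(y_i-x_i)$ via translation invariance and the melon definition, rescale using Brownian scaling of $W^n$, and conclude with Theorem \ref{T:top-bd}. The paper's write-up is terser (citing Theorem \ref{T:melon-lpp} rather than spelling out the translation step) but the logic is identical.
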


\begin{proof}
By Lemma \ref{L:f-naive} we have
\[
\sum_{i=1}^k(B^n[\hat x_i^k \to \tilde y_i^k] - B^n[\hat x_i^{k-1}\to \tilde y_i^{k-1}])\le B^n[\hat \bx \to \tilde \by] \le \sum_{i=1}^k B^n[\hat x_i \to \tilde y_i].
\]
If $W^n$ is the $n$ dimensional Brownian melon, then by Theorem \ref{T:melon-lpp},  
\begin{align*}
B^n[\hat x_i \to \tilde y_i] &\eqd W_1^n(1+2n^{-1/3}(y_i-x_i)) \quad \mathand \quad \\
B^n[\hat x_i^k \to \tilde y_i^k] - B^n[\hat x_i^{k-1}\to \tilde y_i^{k-1}] &\eqd W_k^n(1+2n^{-1/3}(y_i-x_i)).
\end{align*}
Then the conclusion follows from Theorem \ref{T:top-bd}, using that $\|\bx\|_2, \|\by\|_2 < n^{1/6}$, and scale invariance of the Brownian melon: $\sqrt{\al} W^n(\cdot) \eqd W^n(\al \; \cdot)$ for any $\al > 0$.
\end{proof}

\begin{proof}[Proof of Lemma \ref{l:maxi-loc}]
Throughout this proof we let $c, d$ denote constants depending on $k$, whose values may change from line to line. We also assume that $n$ is large enough, since otherwise the conclusion follows by taking $c$ large and $d$ small.

By Lemma \ref{l:bd-brownain-passtime}, we have
\begin{equation} \label{eq:maxi-loc-pf}
\p\left(\max_{\bz\in\R^k_\le} A(\bz)<\sum_{i=1}^k 
2\sqrt{n(1+2n^{-1/3}(y_i-x_i))}
- an^{-1/6}\right) < ce^{-da^{3/2}}.
\end{equation}
For each $\bz\in\R^k_\le$ for which $A(\bz)$ is not equal to $-\infty$ (i.e. when it is defined by \eqref{E:Abzdef}), by Lemma \ref{L:f-naive} we also have
\begin{equation}
\label{E:Abzz}
A(\bz) \le 
\sum_{i=1}^k\big(B^n[\hat x_i \to (t+2n^{-1/3}z_i, q+1)] + B^n[(t+2n^{-1/3}z_i, q)\to \tilde y_i]\big).
\end{equation}
By Proposition \ref{P:cross-prob}, with probability at least $1-ce^{-da^{3/2}}$, the $i$th summand on the right-hand side of \eqref{E:Abzz} is bounded above by 
\[
\cN_{b,t(1+2n^{-1/3}(y_i-x_i))}(p,t+2n^{-1/3}(z_i-x_i),a)
+
\cN_{b,(1-t)(1+2n^{-1/3}(y_i-x_i))}(q,1-t+2n^{-1/3}(y_i-z_i),a),
\]
where $b$ is a universal constant.
By \eqref{eq:rescale-cN} this equals
\begin{multline*}
\sqrt{1+2n^{-1/3}(y_i-x_i)}
\\
\X\lf(\cN_{b,t}\lf(nt,\frac{t+2n^{-1/3}(z_i-x_i)}{1+2n^{-1/3}(y_i-x_i)},a\rg)
+
\cN_{b,(1-t)}\lf(n(1-t),\frac{1-t+2n^{-1/3}(y_i-z_i)}{1+2n^{-1/3}(y_i-x_i)},a\rg)\rg).
\end{multline*}
Recall that we require $\|\bx\|_2,\|\by\|_2<n^{1/6}$.
By Lemma \ref{L:calculation}, for any $a>1$ the above can be bounded by $\sqrt{1+2n^{-1/3}(y_i-x_i)}(2\sqrt{n}-an^{-1/6})$,
when $|z_i-ty_i-(1-t)x_i|>ca^2(t\wedge(1-t))^{1/3}$.
Thus we conclude that, for any $a>0$, with probability at least $1-ce^{-da^{3/2}}$ we have
\[
A(\bz) < \sum_{i=1}^k
2\sqrt{n(1+n^{-1/3}(y_i-x_i))} - an^{-1/6}
\]
for any $\bz$ with $\|\bz-t\by-(1-t)\bx\|_2>ca^2(t\wedge(1-t))^{1/3}$.
This with \eqref{eq:maxi-loc-pf} finishes the proof.
\end{proof}

Now we complete proving Lemma \ref{l:change-time-prelim}, following the outline in Section \ref{S:tightness-landscape}. 
\begin{proof}[Proof of Lemma \ref{l:change-time-prelim}]
In this proof we let $c, d$ denote large and small constants depending on $k$, whose values may change from line to line. 

We first upper bound $\scrK_n(\bx,0;\by',t) - \scrK_n(\bx,0;\by,1)$.
By the triangle inequality we have
$$
\scrL_n(\bx,0;\by',t) - \scrL_n(\bx,0;\by,1)
\le -\scrL_n(\by',t+n^{-1};\by,1).
$$
Thus we have
\[
\begin{split}
&\p(\scrK_n(\bx,0;\by',t) - \scrK_n(\bx,0;\by,1)> a(1-t)^{1/3})\\
\le &\;
\p(\scrK_n(\by',t+n^{-1};\by,1) < -a(1-t)^{1/3}) \le ce^{-da^{3/2}}.
\end{split}
\]
Here the last inequality follows by applying Lemma \ref{l:bd-brownain-passtime} to $(1-t)n-1$ Brownian motions, and elementary calculations.
We next lower bound $\scrK_n(\bx,0;\by',t) - \scrK_n(\bx,0;\by,1)$. For any $\bz\in \R^k_\le$ we denote $A(\bz)=(\scrL_n(\bx,0;\bz,t) - \scrL_n(\bx,0;\by',t)) + \scrL_n(\bz,t;\by,1)$.
It remains to bound the probability of this event
$$
\sup_{\bz\in\R^k_\le}A(\bz) >  -\|\by-\bx\|_2^2(1-t) + a(1-t)^{1/3}|\log(1-t)|.
$$

To bound $A(\bz)$, we collect some estimates on $\scrL_n(\bz,t;\by,1)$ and $\scrL_n(\bx,0;\bz,t)-\scrL_n(\bx,0;\by',t)$.
For this, take any $1<\hat{a}<n^{1/100}$.

\noindent\textbf{Estimate 1.}
By Lemma \ref{L:f-naive} we have
$
\scrL_n(\bz,t;\by,1) \le \sum_{i=1}^k
\scrL_n(z_i,t;y_i,1)
$.
By Proposition \ref{P:cross-prob} and using the notation there, for some constant $b>0$, with probability $>1-ce^{-d\hat{a}^{3/2}}$ we have
\begin{multline}  \label{eq:timed-pre-pf1}
\scrL_n(\bz,t;\by,1) < \sum_{i=1}^k \Big[
n^{1/6}\cN_{b,1-t+2(y_i-y_i')n^{-1/3}}((1-t)n,1-t+2(y_i-z_i)n^{-1/3},\hat{a})\\-2(1-t)n^{2/3}-2n^{1/3}(y_i-z_i)\Big],
\end{multline}
for any $\bz\in\R^k_\le$ such that $1-t+2(y_i-z_i)n^{-1/3} > 0$ for each $i$.
We now give a more explicit bound for the $i^{\mathrm{th}}$ summand in the right-hand side of \eqref{eq:timed-pre-pf1}, when $|z_i-y_i'|<cn^{1/20}$.
Note that $\|\bx\|_2,\|\by\|_2<n^{1/100}$, $1-t>n^{-1/100}$, so in this case we would have $(y_i-z_i)n^{-1/3}, (y_i-y_i')n^{-1/3}<d(1-t)$.
Further, recall that
\[
\begin{split}
&\cN_{b,1-t+2(y_i-y_i')n^{-1/3}}((1-t)n,1-t+2(y_i-z_i)n^{-1/3},\hat{a})
\\
=& \; 2\sqrt{(1-t)n(1-t+2(y_i-z_i)n^{-1/3})}
+ \sqrt{1-t+2(y_i-z_i)n^{-1/3}}
\\
&\X
((1-t)n)^{-1/6}   \left(\hat{a}+b\log^{2/3}\left(((1-t)n)^{1/3}\left|\log\left(\frac{1-t+2(y_i-z_i)n^{-1/3}}{1-t+2(y_i-y_i')n^{-1/3}}\right)\right| + 1\right)\right).
\end{split}    
\]
By Taylor expansion of $y_i-z_i$, we can bound the first term in the right-hand side by
\[
2(1-t)\sqrt{n}+2n^{1/6}(y_i-z_i)
-n^{-1/6}\frac{(y_i-z_i)^2}{1-t} 
+ cn^{-1/2}\frac{(y_i-z_i)^3}{(1-t)^2}.
\]
For the second term, we use that $\sqrt{1-t+2(y_i-z_i)n^{-1/3}} < c\sqrt{1-t}$, and that $\left|\log\left(\frac{1-t+2(y_i-z_i)n^{-1/3}}{1-t+2(y_i-y_i')n^{-1/3}}\right)\right|<cn^{-1/3}\frac{|y_i'-z_i|}{1-t}$, to bound it by
\[
cn^{-1/6}\hat{a}(1-t)^{1/3}+cn^{-1/6}\frac{|y_i'-z_i|}{(1-t)^{1/3}}.
\]
Thus when $|z_i-y_i'|<cn^{1/20}$ we can bound the $i^{\mathrm{th}}$ summand in the right-hand side of \eqref{eq:timed-pre-pf1} by
\begin{equation}  \label{eq:timed-pre-pf11}
-\frac{(y_i-z_i)^2}{1-t}
+
c\hat{a}(1-t)^{1/3}+c\frac{|y_i'-z_i|}{(1-t)^{1/3}}.
\end{equation}

\noindent\textbf{Estimate 2.} 
For $\scrL_n(\bx,0;\bz,t)-\scrL_n(\bx,0;\by',t)$ we give two different bounds.
The first of these bounds $\scrL_n(\bx,0;\bz,t)$ and $\scrL_n(\bx,0;\by',t)$ separately.

By Lemma \ref{L:f-naive} we have
$\scrL_n(\bx,0;\bz,t) \le \sum_{i=1}^k
\scrL_n(x_i,0;z_i,t)$.
By Proposition \ref{P:cross-prob} (for $\scrL_n(\bx,0;\bz,t)$) and Lemma \ref{l:bd-brownain-passtime} (for $\scrL_n(\bx,0;\by',t)$), with probability $>1-ce^{-d\hat{a}^{3/2}}$ we have
\begin{multline}   \label{eq:timed-pre-pf2}
\scrL_n(\bx,0;\bz,t)-\scrL_n(\bx,0;\by',t)
< \sum_{i=1}^k
\Big[ n^{1/6}\cN_{b,t+2(y_i'-x_i)n^{-1/3}}(tn,t+2(z_i-x_i)n^{-1/3},\hat{a})\\
 - 2n^{2/3}\sqrt{t(t+2(y_i'-x_i)n^{-1/3})}
 +\hat{a}\sqrt{t+2(y_i'-x_i)n^{-1/3}}t^{-1/6}
-2n^{1/3}(z_i-y_i') \Big],
\end{multline}
for any $\bz\in\R^k_\le$ such that $t+2(z_i-x_i)n^{-1/3} > 0$ for each $i$.
Similar to Estimate 1, when $|z_i-y_i'|<cn^{1/20}$, we can bound the $i^{\mathrm{th}}$ summand in the right-hand side of \eqref{eq:timed-pre-pf2} by
\begin{equation}  \label{eq:timed-pre-pf21}
-\frac{(z_i-x_i)^2}{t} +
c\hat{a}{t}^{1/3}+c\frac{|y_i'-z_i|}{{t}^{1/3}} +\frac{({y_i}'-x_i)^2}{t}.
\end{equation}

\noindent\textbf{Estimate 3.} 
The second bound for $\scrL_n(\bx,0;\bz,t)-\scrL_n(\bx,0;\by',t)$ is from the continuity of the prelimiting extended Airy sheet (Lemma \ref{l:change-spatial-prelim}). It is more refined when $\|\bz-\by'\|_2$ is small.

By Lemma \ref{l:change-spatial-prelim} and using Lemma \ref{L:levy-est}, we also have that with probability $>1-ce^{-d\hat{a}^{3/2}}$,
\begin{multline}   \label{eq:timed-pre-pf3}
\scrL_n(\bx,0;\bz,t) - \scrL_n(\bx,0;\by',t) < 
\sum_{i=1}^k \Big[ \hat{a}\log^{2/3}(2|z_i-y_i'|^{-1})\sqrt{|z_i-y_i'|} \Big] \\
-  \frac{\|\bz-\bx\|_2^2-\|\by'-\bx\|_2^2}{t}, 
\end{multline}
for any $\bz\in\R^k_\le$ with $\|\by'-\bz\|_2<1$.

Below we shall bound $A(\bz)$ assuming that the above three estimates \eqref{eq:timed-pre-pf1}, \eqref{eq:timed-pre-pf2}, \eqref{eq:timed-pre-pf3} hold.

\noindent\textbf{Upper bound $A(\bz)$ for $\|\by'-\bz\|_2<1$.}
In this case, by \eqref{eq:timed-pre-pf11} and \eqref{eq:timed-pre-pf3}
we have
\[
\begin{split}
A(\bz) <& \;\sum_{i=1}^k \Big[ -\frac{(y_i-z_i)^2}{1-t} +c\hat{a}(1-t)^{1/3}+c\frac{|y_i'-z_i|}{(1-t)^{1/3}}
\\
&+
\hat{a}\log^{2/3}(2|z_i-y_i'|^{-1})\sqrt{|z_i-y_i'|}
- \frac{(z_i-x_i)^2-(y_i'-x_i)^2}{t} \Big] \\
=& \;
-(1-t)\|\by-\bx\|_2^2 + \sum_{i=1}^k \Big[ -\frac{(y_i'-z_i)^2}{t(1-t)}
+\hat{a}\log^{2/3}(2|z_i-y_i'|^{-1})\sqrt{|z_i-y_i'|} \\ &+
c\hat{a}(1-t)^{1/3}+c\frac{|y_i'-z_i|}{(1-t)^{1/3}} \Big]
\\
< & \; -(1-t)\|\by-\bx\|_2^2 + c\hat{a}^{4/3}(1-t)^{1/3}|\log(1-t)|,
\end{split}
\]
where the last inequality uses that
\[
\frac{(y_i'-z_i)^2}{2t(1-t)} + c^2 (1-t)^{1/3} > c\frac{|y_i'-z_i|}{(1-t)^{1/3}},\]
\[
\frac{(y_i'-z_i)^2}{2t(1-t)} + c\hat{a}^{4/3}(1-t)^{1/3}|\log(1-t)| > \hat{a}\log^{2/3}(2|z_i-y_i'|^{-1})\sqrt{|z_i-y_i'|}.
\]

\noindent\textbf{Upper bound $A(\bz)$ for $\|\by'-\bz\|_2\ge 1$.}
In this case we use \eqref{eq:timed-pre-pf1} and \eqref{eq:timed-pre-pf2}.
Letting $A_i$ be the sum of the $i^{\mathrm{th}}$ term in the right-hand side of \eqref{eq:timed-pre-pf1} and \eqref{eq:timed-pre-pf2}, we have $A(\bz)\le \sum_{i=1}^k A_i$.
By \eqref{eq:rescale-cN} and Lemma \ref{L:calculation}, for each $1\le i \le k$ such that $|z_i-y_i'|>c\lf(\frac{\hat{a}}{1-t}\rg)^2(1-t)^{1/3}$ we have
\[
\begin{split}
&\cN_{b,1-t+2(y_i-y_i')n^{-1/3}}((1-t)n,1-t+2(y_i-z_i)n^{-1/3},\hat{a})\\
&+
\cN_{b,t+2(y_i'-x_i)n^{-1/3}}(tn,t+2(z_i-x_i)n^{-1/3},\hat{a}) \\
\le &\;  \sqrt{1+2(y_i-x_i)n^{-1/3}}
\Bigg( \cN_{b,1-t}\lf((1-t)n,\frac{1-t+2(y_i-z_i)n^{-1/3}}{1+2(y_i-x_i)n^{-1/3}},\frac{\hat{a}}{1-t}\rg)
\\ &
+
\cN_{b,t}\lf(tn,\frac{t+2(z_i-x_i)n^{-1/3}}{1+2(y_i-x_i)n^{-1/3}},\frac{\hat{a}}{1-t}\rg)\Bigg)
\\
\le &\; \sqrt{1+2(y_i-x_i)n^{-1/3}} \lf(2\sqrt{n}-\lf(\frac{\hat{a}}{1-t}\rg)n^{-1/6}\rg).
\end{split}
\]
So using that $\|\bx\|,\|\by\|_2<n^{1/100}$, $1-t>n^{-1/100}$, we have
\[
\begin{split}
A_i\le & \; 2n^{2/3}\sqrt{1+2(y_i-x_i)n^{-1/3}} - \frac{\hat{a}}{1-t} \sqrt{1+2(y_i-x_i)n^{-1/3}}
-2n^{1/3}(y_i-y_i')
\\
& -2(1-t)n^{2/3}
 - 2n^{2/3}\sqrt{t(t+2(y_i'-x_i)n^{-1/3})}
+\hat{a}\sqrt{t+2(y_i'-x_i)n^{-1/3}} t^{-1/6}
\\
\le & \;
 2(1-t)n^{2/3}\sqrt{1+2(y_i-x_i)n^{-1/3}} - 2(1-t)n^{1/3}(y_i-x_i) -2(1-t)n^{2/3} - \frac{d\hat{a}}{1-t}
\\
<& \; -(1-t)(y_i-x_i)^2 + c(1-t)^{1/3} - \frac{d\hat{a}}{1-t}
\end{split}
\]
When $|z_i-y_i'|\le c\lf(\frac{\hat{a}}{1-t}\rg)^2(1-t)^{1/3} < cn^{1/20}$, using \eqref{eq:timed-pre-pf11} and \eqref{eq:timed-pre-pf21} we have
\[
\begin{split}
A_i\le & \;
-\frac{(y_i-z_i)^2}{1-t}
+
c\hat{a}(1-t)^{1/3}+c\frac{|y_i'-z_i|}{(1-t)^{1/3}}
-\frac{(z_i-x_i)^2}{t} \\
&+
c\hat{a}{t}^{1/3}+c\frac{|y_i'-z_i|}{{t}^{1/3}} +\frac{({y_i}'-x_i)^2}{t}
\\
<& \;
-\frac{(y_i'-z_i)^2}{t(1-t)} - (1-t)(y_i-x_i)^2 + c\hat{a} + \frac{c |y_i'-z_i|}{(1-t)^{1/3}}
\\
<&  \; -\frac{(y_i'-z_i)^2}{2(1-t)} - (1-t)(y_i-x_i)^2 + c\hat{a} + c(1-t)^{1/3}.
\end{split}
\]
Thus by the above two inequalities, and using that $\|\by'-\bz\|_2\ge 1$, we get
\[
A(\bz) < c\hat{a} + c(1-t)^{1/3} - \frac{d}{1-t} -(1-t)\|\by-\bx\|_2^2 < c\hat{a}^{4/3}(1-t)^{1/3}-(1-t)\|\by-\bx\|_2^2.
\]
Finally, from these bounds on $A(\bz)$ in each case, we conclude that
\[
\begin{split}
& \p(\scrK_n(\bx,0;\by,1) - \scrK_n(\bx,0;\by',t)> a(1-t)^{1/3}|\log(1-t)|)
\\
<& \; \p(\sup_{\bz\in\R^k_\le}A(\bz) >  -\|\by-\bx\|_2^2(1-t) + a(1-t)^{1/3}|\log(1-t)|)
< ce^{-da^{9/8}}.
\end{split}
\]
The conclusion follows.
\end{proof}

\section{Proof of Theorem \ref{T:radon-n-deri}}  \label{app:radon-n-deri}

In this appendix, we extend the main result of Calvert, Hegde, and Hammond \cite{CHH20} to prove Theorem \ref{T:radon-n-deri}.
For brevity, we don't give full context for the paper \cite{CHH20} here and refer the interested reader to that paper. The paper \cite{hammond2016brownian} may also be a useful reference, as the work \cite{CHH20} builds on results from that paper. We strive to use the same notation as \cite{CHH20} so the interested reader can refer back easily. The main exception to this is that we use the notation $\tilde \scrB_i = 2^{-1/2} \scrB_i$ for lines in the (rescaled) parabolic Airy line ensemble.  In \cite{CHH20}, the authors use the notation $\scrL(i, \cdot)$ for these lines, which conflicts with our notation for the directed landscape. The factor of $2^{-1/2}$ is introduced in \cite{CHH20} so that comparison statements can be made with Brownian motions with diffusion parameter $1$, rather than $2$.

Throughout this section, we let $b > 0$ be a large constant and $b' > 0$ be a small constant, whose values may change from line to line but do not depend on any parameters. Other constants will retain the definitions used in \cite{CHH20}.

First, fix an interval $[-d, d]$ with $d \ge 1$ and a collection of line indices $\II{1, k}$.  For universal positive constants $c, C$, as in \cite{CHH20} we define
\begin{align*}
c_k &= ((3-2^{3/2})^{3/2} 2^{-1} 5^{-3/2})^{k-1} (2^{-5/2} c \wedge 1/8), \\
C_k &= \max \lf \{ 10 \cdot 20^{k-1}5^{k/2} \lf(\frac{10}{3 - 2^{3/2}}\rg)^{k(k-1)/2} C, e^{c/2} \rg\},  \\
D_k &= \max \lf\{ k^{1/3} c_k^{-1/3} (2^{-9/2} - 2^{-5})^{-1/3}, 36 (k^2 - 1), 2 \rg\}.
\end{align*}
The precise values of these constants are not important for our purposes here, but we will record the bounds
\begin{align}
\label{E:Dk-bound}
b' \le D_k \le e^{b k},\quad b' \le C_k \le e^{b k^2}.
\end{align}
Next, let $\ep > 0$ satisfy the $(k, d)$-dependent upper bound
$$
\ep < e^{-1} \wedge (17)^{-1/k} C_k^{-1/k} D_k^{-1} \wedge \exp(-(24)^6 d^6/ D_k^3).
$$
This simplifies to 
\begin{equation}
\label{E:ebbb}
\ep < e^{- b d^6 - b k}.
\end{equation}
Finally, we set $T = D_k (\log \ep^{-1})^{1/3}$. 

Now, for a function $f:[a, b] \to \R$, define its \textbf{bridge version} $f^{[a, b]} = f - L$, where $L$ is the linear function satisfying $L(a) = f(a)$ and $L(b) = f(b)$.
Next, with all parameters $d, k, \ep, T$ fixed as above, let $\scrF_{k}$ be the $\sig$-algebra generated by
\begin{itemize}[nosep]
	\item all the lower curves $\tilde \scrB_i:\R \to \R, i \ge k + 1$,
	\item the top $k$ curves $\tilde \scrB_i$ restricted to the set $\{x \in \R: |x| \ge 2T\}$,
	\item certain $\sig(\tilde \scrB_{k+1})$-measurable random variables $\mathfrak{l} \le \mathfrak{r} \in [-T, T]$, and
	\item the $2k$ bridges $\tilde \scrB_i^{[-2T, \mathfrak{l}]}, i = 1, \dots, k$ and $\tilde \scrB_i^{[\mathfrak{r}, 2T]}, i = 1, \dots, k$.
\end{itemize}
Here we use the notation $\sig(X)$ for the $\sig$-algebra generated by $X$.
We let $\p_{\scrF_k}(\cdot) = \p(\cdot \;|\; {\scrF_k})$ be the conditional law given ${\scrF_k}$. The precise nature of the random variables $\mathfrak{l}$ and $\mathfrak{r}$ is not important for us here, only their potential ranges and that they are functions of the $(k+1)^{\mathrm{st}}$ curve $\tilde \scrB_{k+1}$. For precise definitions, see the beginning of Section 4.1.5 in \cite{CHH20}.

With parameters $d, k, \ep, T$ fixed as above, in \cite{CHH20} and previously in \cite{hammond2016brownian}, the authors define a collection of random functions $J = \{J_i:[-2T, 2T] \to \R, i \in \II{1, k}\}$ known as the \textbf{jump ensemble}. First, for any sequence of functions $X=\{X_i:[-2T, 2T] \to \R, i \in \II{1, k}\}$, we can define a \textbf{resampled ensemble} $\tilde \scrB^{\operatorname{re}, X} = \{\tilde \scrB^{\operatorname{re}, X}_i: \R \to \R, i \in \N\}$. For this definition we let $L(x, a; b, y)$ denote the affine function with $L(x) = a, L(y) = b$. 
\begin{equation}
\label{E:resampled}
\tilde \scrB_i^{\operatorname{re}, X}(x) = \begin{cases}
\tilde \scrB_i(x), \quad & (i, x) \notin \II{1, k} \X [-2T, 2T]\\
\tilde \scrB_i^{[-2T, \mathfrak{l}]}(x) + L(-2T, \tilde \scrB_i(-2T); \mathfrak{l}, X_i(\mathfrak{l}))(x) , \qquad &x \in [-2T, \mathfrak{l}], i \le k\\
\tilde \scrB_i^{[\mathfrak{r}, 2T]}(x) + L(\mathfrak{r}, X_i(\mathfrak{r}); 2T, \tilde \scrB_i(2T))(x), \qquad & x \in [\mathfrak{r}, 2T], i \le k\\
X_i(x), & x\in [\mathfrak{l}, \mathfrak{r}], i \le k.
\end{cases}
\end{equation}
Note that in \cite{CHH20}, the same object is only defined for the top $k$ lines.
Next, in \cite{CHH20}, the authors define an ${\scrF_k}$-measurable finite set $P \sset [\mathfrak{l}, \mathfrak{r}]$ called a \textbf{pole set}, see the discussion in \cite[Section 4.1.5]{CHH20}. The precise nature of this set is not important for us. Let $B = \{B_i:[-2T, 2T] \to \R, i \in \II{1, k}\}$ be a collection of  Brownian bridges with $B_i(\pm 2T) = 0$ that are independent of $\tilde \scrB$ and each other. Finally, we define the ensemble $J$ in the following way.
\begin{itemize}
	\item First let $J' = \{J'_i:[-2T, 2T] \to \R, i \in \II{1, k}\}$ be given by connecting up the points $\tilde \scrB_i(\pm 2T)$ with the Brownian bridges $B_i$. That is, for all $i \in \II{1, k}$, 
	$$
	J'_i = B_i + L(-2T, \tilde \scrB_i(-2T); 2T, \tilde \scrB_i(-2T)).
	$$
	\item Next, let $J$ be given by the ensemble $J'$, conditionally on the events
	\begin{align*}
	\tilde \scrB_1^{\operatorname{re}, J'}(x) > \tilde \scrB_2^{\operatorname{re}, J'}(x) > \dots > \tilde \scrB_{k+1}^{\operatorname{re}, J'}(x) \quad &\mathfor x = [-2T,\mathfrak{l}]\cup[\mathfrak{r},2T], \quad \mathand \\
	\tilde \scrB_i^{\operatorname{re}, J'}(x) \ge \tilde \scrB_{k+1}^{\operatorname{re}, J'}(x), \qquad &\mathforall i \in \II{1, k}, x \in P.
	\end{align*}
\end{itemize}
This is the same as the definition given at the beginning of Section 4.1.6 in \cite{CHH20}. 
Next, let $\Pass(J)$  be the indicator of event where
$$
\tilde \scrB_i^{\operatorname{re}, J}(x) > \tilde \scrB_{i+1}^{\operatorname{re}, J}(x) \qquad \text{ for all } x \in [-2T, 2T], i \in \II{1, k}.
$$ 
The relevance of the jump ensemble $J$ lies in the following lemma.. 

\begin{lemma}
	\label{L:-2T2T}
	We have
	$$
	\p_{\scrF_k}(\tilde \scrB_i^{\operatorname{re}, J}|_{\II{1, k} \X [-2T, 2T]} \in \cdot \;| \; \Pass(J) = 1) = \p_{\scrF_k}(\tilde \scrB|_{\II{1, k} \X [-2T, 2T]} \in \cdot).
	$$ 
	Here the restriction to $\II{1, k} \X [-2T, 2T]$ is a restriction to the top $k$ lines and the interval $[-2T, 2T]$.
\end{lemma}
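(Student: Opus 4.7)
The plan is to reduce both conditional laws in the identity to the same description in terms of $k$ independent Brownian bridges, via a careful application of the Brownian Gibbs property. Let $\cN$ denote the non-intersection event $\tilde\scrB_i^{\operatorname{re},X} > \tilde\scrB_{i+1}^{\operatorname{re},X}$ on $[-2T,2T]$ for every $i \in \II{1,k}$, viewed as a condition on the candidate ensemble $X$; by definition $\Pass(J) = 1$ is precisely $\cN$ applied to $X = J$. My first step will be to apply Theorem \ref{T:melon-Airy-facts} to the rectangle $\II{1,k} \X [-2T,2T]$: conditional on the complement of this rectangle, the top $k$ lines of $\tilde\scrB$ on $[-2T,2T]$ are $k$ independent Brownian bridges from $\tilde\scrB_i(-2T)$ to $\tilde\scrB_i(2T)$, conditioned on $\cN$ (with the $(k+1)^{\mathrm{st}}$ line fixed at $\tilde\scrB_{k+1}$, which is $\scrF_k$-measurable).

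Next, I will decompose each such Brownian bridge $B_i$ via its values at $\mathfrak{l}$ and $\mathfrak{r}$: conditionally on $(B_i(\mathfrak{l}), B_i(\mathfrak{r}))$, the three restrictions $B_i|_{[-2T,\mathfrak{l}]}$, $B_i|_{[\mathfrak{l},\mathfrak{r}]}$, $B_i|_{[\mathfrak{r},2T]}$ are independent Brownian bridges with the prescribed endpoints, and the bridge shapes $B_i^{[-2T,\mathfrak{l}]}$ and $B_i^{[\mathfrak{r},2T]}$ are independent of both the endpoint values and of $B_i|_{[\mathfrak{l},\mathfrak{r}]}$. Consequently, further conditioning on the $\scrF_k$-measurable bridge shapes $\tilde\scrB_i^{[-2T,\mathfrak{l}]}$ and $\tilde\scrB_i^{[\mathfrak{r},2T]}$ leaves the joint law of the triples $(\tilde\scrB_i(\mathfrak{l}), \tilde\scrB_i(\mathfrak{r}), \tilde\scrB_i|_{[\mathfrak{l},\mathfrak{r}]})_{i=1}^k$ unchanged: given $\scrF_k$, these triples are distributed as those of $k$ independent unconstrained Brownian bridges from $\tilde\scrB_i(-2T)$ to $\tilde\scrB_i(2T)$, conditioned on $\cN$. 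Since the $\scrF_k$-measurable bridge shapes together with such a triple deterministically reconstruct $\tilde\scrB_i|_{[-2T,2T]}$ via exactly the formula \eqref{E:resampled} applied with $X_i = \tilde\scrB_i|_{[\mathfrak{l},\mathfrak{r}]}$, this yields a concrete formula for the right-hand side of the claimed identity.

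For the left-hand side I will analyze the jump ensemble. By its construction $J$ is $J'$ conditioned on the intermediate events $\cN_1 \cap \cN_2$ (non-intersection on $[-2T,\mathfrak{l}] \cup [\mathfrak{r},2T]$ and $\tilde\scrB_i^{\operatorname{re},J'} \ge \tilde\scrB_{k+1}$ on the pole set $P$), and a direct inspection using \eqref{E:resampled} shows $\cN \subset \cN_1 \cap \cN_2$. Hence conditioning $J$ further on $\Pass(J) = 1$, i.e.\ on $\cN$, collapses the nested constraints to $\cN$, so given $\scrF_k$ and $\Pass(J) = 1$ the ensemble $J$ is itself distributed as $k$ independent Brownian bridges from $\tilde\scrB_i(-2T)$ to $\tilde\scrB_i(2T)$ conditioned on $\cN$. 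Applying \eqref{E:resampled} with $X = J$ produces $\tilde\scrB^{\operatorname{re},J}|_{\II{1,k}\X[-2T,2T]}$ from $(J_i(\mathfrak{l}), J_i(\mathfrak{r}), J_i|_{[\mathfrak{l},\mathfrak{r}]})_{i=1}^k$ and the same $\scrF_k$-measurable bridge shapes by the identical deterministic rule used in the previous paragraph, so the two conditional laws agree. The only mildly technical point will be justifying the decomposition of the Brownian bridge measure along its values at $\mathfrak{l}$ and $\mathfrak{r}$; this is handled by writing $B_i$ as a piecewise linear function through its values at $-2T,\mathfrak{l},\mathfrak{r},2T$ plus three independent bridges from $0$ to $0$ on the three subintervals, and using the explicit Gaussian joint density of $(B_i(\mathfrak{l}), B_i(\mathfrak{r}))$.
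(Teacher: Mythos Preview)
Your argument is correct. The paper itself does not prove this lemma; it simply cites Lemma~4.5 of \cite{CHH20}, remarking only that the deterministic parameters $\ell,r,A$ there may be replaced by the $\scrF_k$-measurable quantities $\mathfrak{l},\mathfrak{r},P$ without affecting a statement about $\scrF_k$-conditional laws. What you have written out is essentially the proof of that cited lemma: apply the Brownian Gibbs property on $\II{1,k}\times[-2T,2T]$, decompose each bridge into its side bridge-shapes and the triple $(B_i(\mathfrak l),B_i(\mathfrak r),B_i|_{[\mathfrak l,\mathfrak r]})$, note that the bridge-shapes are independent of the triple so that conditioning on $\scrF_k$ leaves the triple with the law of an unconstrained bridge's triple conditioned on the non-intersection event, and finally use $\cN\subset\cN_1\cap\cN_2$ to collapse the jump-ensemble conditioning down to the same description. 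So your route and the paper's cited route coincide in substance; you have simply supplied the details rather than quoting them.
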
 

This is a special case of Lemma 4.5 in \cite{CHH20}. To compare with that lemma, we take $X' = J$, and replace the deterministic values $\ell$ and $r$ and the set $A$ with random ${\scrF_k}$-measurable values $\mathfrak{l}$ and $\mathfrak{r}$ and the ${\scrF_k}$-measurable set $P$. As noted in \cite{CHH20} in the discussion immediately following Equation (17) (at the end of Section 4.1.6), this replacement with ${\scrF_k}$-measurable random variables follows does not affect the lemma since the claim is about ${\scrF_k}$-conditional distributions. 

The usefulness of Lemma \ref{L:-2T2T} in practice comes from the following four facts.
\begin{enumerate}[label=(\Roman*)]
	\item There exists an ${\scrF_k}$-measurable event $\Fav_{k, \ep}$ such that
	$$
	\p_{\scrF_k}(\Pass(J) = 1) \ge \exp \lf(-3973 k^{7/2} (d_{ip})^2 D_k^2 (\log \ep^{-1})^{2/3}\rg) \mathbf{1}(\Fav_{k, \ep}).
	$$
	This is Proposition 4.2 in \cite{hammond2016brownian}, quoted as Proposition 4.9 in \cite{CHH20}. For use in \cite{hammond2016brownian}, the quantity $d_{ip}$ above is a parameter related to the pole set $P$, but in \cite{CHH20} and for our purposes, we take $d_{ip} = 5d$ (see Equation (18) in \cite[Section 4.1.8]{CHH20} and surrounding discussions). Moving forward, we work with the simplified version of the above bound given by
	$$
	\p_{\scrF_k}(\Pass(J) = 1) \ge \exp \lf(-d^2 e^{b k} (\log \ep^{-1})^{2/3}\rg)  \mathbf{1}(\Fav_{k, \ep}).
	$$
	\item The event $\Fav_{k, \ep}$ satisfies
	$$
	\p(\Fav_{k, \ep}^c) \le \ep.
	$$
	This bound uses Lemma 4.1 in \cite{hammond2016brownian}, cited as Lemma 4.10 in \cite{CHH20}. 
	\item On $\Fav_{k, \ep}$, we have $[-d, d] \sset [\mathfrak{l}, \mathfrak{r}]$. This follows from the paragraph before Lemma 4.10 in \cite{CHH20}, which states that $[-T/2, T/2] \sset [\mathfrak{l}, \mathfrak{r}]$, and the discussions in \cite{CHH20} after Lemma 4.10, which shows that $[-d, d] \sset [-T/2, T/2]$. 
	\item Let $\scrC_d$ denote the space of continuous functions from $[-d, d]$ to $\R$ that vanish at $-d$, equipped with the Borel $\sig$-algebra in the topology of uniform convergence. Let $\mu_{0, *}^{[-d, d]}$ denote the law of a standard Brownian motion on $[-d, d]$ started from the initial condition $B(-d) = 0$. This is a measure on $\scrC_d$.
	Then there exists an absolute positive constant $G$ such that if $\mu_{0, *}^{[-d, d]}(A) = \ep$, then
	$$
	\p_{\scrF_k} \lf(J_k(\cdot) - J_k(-d) \in A \rg) \mathbf{1}(\Fav_{k, \ep}) \le \ep G d^{1/2} D_k^4 (\log \ep^{-1})^{4/3} \exp \lf(792 d D_k^{5/2} (\log \ep^{-1})^{5/6}\rg).
	$$
	This is Theorem 4.11 in \cite{CHH20}. The most important term to keep in mind here is the $(\log \ep^{-1})^{5/6}$ in the exponent. Moving forward, we will work with the simplified version of the bound given by
	\begin{equation}
	\label{E:FJk}
	\p_{\scrF_k} \lf(J_k(\cdot) - J_k(-d) \in A \rg) \mathbf{1}(\Fav_{k, \ep}) \le \ep \exp \lf(d e^{bk} (\log \ep^{-1} )^{5/6} \rg).
	\end{equation}
	Observe also that the inequality \eqref{E:FJk} for all $A$ with $\mu_{0, *}^{[-d, d]}(A) = \ep$ implies that
	\begin{equation}
	\label{E:FJk'}
	\p_{\scrF_k} \lf(J_k(\cdot) - J_k(-d) \in B \rg) \mathbf{1}(\Fav_{k, \ep}) \le \mu_{0, *}^{[-d, d]}(B) \exp \lf(d e^{bk} (\log \ep^{-1} )^{5/6} \rg)
	\end{equation}
	for all $B$ with $\mu_{0, *}^{[-d, d]}(B) \ge \ep$.
\end{enumerate}

In \cite{CHH20}, the authors use these three bounds with Lemma \ref{L:-2T2T} to find explicit Radon-Nikodym derivative estimates for individual parabolic Airy lines versus Brownian motion. With only slightly more work, we can upgrade these estimates to give bounds for multiple parabolic Airy lines versus several independent Brownian motions. We start with a lemma that translates the bounds on the jump ensemble to a conditional bound on parabolic Airy lines. For this lemma, we will also need to define $\Fav_{k, \ep}$ when $\ep$ does not satisfy the bound in \eqref{E:ebbb}. In this case, we set $\Fav_{k, \ep}$ to be the whole space.

\begin{lemma}
	\label{L:Bkbound}
	With $\mu_{0, *}^{[-d, d]}$ as above, for every $d \ge 1, k \in \N,$ and $\ep \in (0, 1]$, for every Borel measurable set $A$ in $\scrC_d$ with $\mu_{0, *}^{[-d, d]}(A) \ge \ep$, we have
	\begin{equation}
	\label{E:conditionalB}
	\p_{\scrF_{k}} \lf(\tilde \scrB_k(\cdot) - \tilde \scrB_k(-d) \in A \rg) \mathbf{1}(\Fav_{k,\ep}) < \mu_{0, *}^{[-d, d]} (A) \exp \lf(b d^6 + d e^{bk} (\log \ep^{-1})^{5/6} \rg).
	\end{equation}
	Moreover, if we let $f_{\scrF_k}$ denote the (random) Radon-Nikodym derivative of the random measure $\p_{\scrF_{k}} \lf(\tilde \scrB_k(\cdot) - \tilde \scrB_k(-d) \in \cdot \rg)$ with respect to $\mu_{0, *}^{[-d, d]}$, then almost surely,
	\begin{equation}
	\label{E:scrFF}
	\E_{\scrF_k} f_{\scrF_k} \mathbf{1}\lf(f_{\scrF_k} \ge \exp \lf(b d^6 + d e^{bk} (\log \ep^{-1})^{5/6} \rg)\rg) \mathbf{1}(\Fav_{k,\ep}) \le \ep \exp \lf(b d^6 + d e^{bk} (\log \ep^{-1})^{5/6} \rg).
	\end{equation}
\end{lemma}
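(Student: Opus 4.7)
The plan is to combine Lemma~\ref{L:-2T2T} with properties (I)--(IV), using the jump ensemble $J$ as a tractable surrogate for the top $k$ parabolic Airy lines on $[-d,d]$. First I would split on whether $\ep$ satisfies the smallness condition \eqref{E:ebbb}. If not, then $\log \ep^{-1} \le bd^6 + bk$, and an elementary computation with $(\log\ep^{-1})^{1/6} \le (bd^6 + bk)^{1/6}$ shows that $\ep \exp(bd^6 + de^{bk}(\log\ep^{-1})^{5/6}) \ge 1$, making both inequalities trivial from the fact that probabilities are bounded by $1$.

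Now assume $\ep < e^{-bd^6 - bk}$, and set $M = \exp(bd^6 + de^{bk}(\log\ep^{-1})^{5/6})$. On $\Fav_{k,\ep}$, property (III) gives $[-d,d]\subset[\mathfrak l, \mathfrak r]$, so by the definition \eqref{E:resampled} of the resampled ensemble we have $\tilde \scrB_k^{\operatorname{re},J}|_{[-d,d]} = J_k|_{[-d,d]}$. Combining Lemma~\ref{L:-2T2T} with the elementary inequality $\p(\,\cdot\mid \Pass(J)=1) \le \p(\,\cdot\,)/\p(\Pass(J)=1)$, for any Borel $A \subset \scrC_d$ with $\mu_{0,*}^{[-d,d]}(A) \ge \ep$ one obtains
\[
\p_{\scrF_k}\bigl(\tilde \scrB_k - \tilde \scrB_k(-d) \in A\bigr)\,\mathbf{1}(\Fav_{k,\ep}) \;\le\; \frac{\p_{\scrF_k}\bigl(J_k - J_k(-d) \in A\bigr)}{\p_{\scrF_k}(\Pass(J)=1)}\,\mathbf{1}(\Fav_{k,\ep}).
\]
I would then plug in property (I) to bound the denominator below by $\exp(-d^2 e^{bk}(\log\ep^{-1})^{2/3})$, and property (IV)/\eqref{E:FJk'} to bound the numerator by $\mu_{0,*}^{[-d,d]}(A) \exp(de^{bk}(\log\ep^{-1})^{5/6})$. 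This gives the desired bound with an extra cross term $d^2 e^{bk}(\log\ep^{-1})^{2/3}$ in the exponent.

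The main obstacle is absorbing this cross term. Here I would exploit \eqref{E:ebbb}, which gives $(\log\ep^{-1})^{1/6} \ge (bd^6)^{1/6} \ge d$ once $b$ is enlarged (this is the step that requires the smallness hypothesis on $\ep$). It then follows that
\[
d^2(\log\ep^{-1})^{2/3} \;\le\; d\cdot(\log\ep^{-1})^{1/6}\cdot(\log\ep^{-1})^{2/3} \;=\; d(\log\ep^{-1})^{5/6},
\]
so enlarging $b$ inside $e^{bk}$ by a further constant factor absorbs the cross term into $de^{bk}(\log\ep^{-1})^{5/6}$, yielding the first inequality of the lemma.

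For the moreover assertion, let $A = \{f_{\scrF_k} > M\}$, which is $\scrF_k$-measurable. Note $\E_{\scrF_k}[f_{\scrF_k}\mathbf{1}(f_{\scrF_k} > M)] = \p_{\scrF_k}(\tilde \scrB_k - \tilde \scrB_k(-d) \in A)$. If $\mu_{0,*}^{[-d,d]}(A) \ge \ep$, the first part gives $\int_A f_{\scrF_k}\,d\mu_{0,*}^{[-d,d]} \le \mu_{0,*}^{[-d,d]}(A)\,M$, contradicting the strict lower bound $\int_A f_{\scrF_k}\,d\mu_{0,*}^{[-d,d]} > M\,\mu_{0,*}^{[-d,d]}(A)$ forced by $f_{\scrF_k} > M$ on $A$. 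Hence $\mu_{0,*}^{[-d,d]}(A) < \ep$ on $\Fav_{k,\ep}$. Since $\mu_{0,*}^{[-d,d]}$ is non-atomic, I can embed $A \subset A'$ with $\mu_{0,*}^{[-d,d]}(A') = \ep$ and apply the first part to $A'$ to conclude $\E_{\scrF_k}[f_{\scrF_k}\mathbf{1}(f_{\scrF_k} > M)]\mathbf{1}(\Fav_{k,\ep}) \le \ep M$. Replacing $>$ with $\ge$ follows from the same dichotomy applied to $\{f_{\scrF_k} \ge M\}$: the same contradiction argument shows $\mu_{0,*}^{[-d,d]}(\{f_{\scrF_k} > M\}) = 0$ whenever $\mu_{0,*}^{[-d,d]}(\{f_{\scrF_k} \ge M\}) \ge \ep$, and in that degenerate case the integral reduces to $M\,\mu_{0,*}^{[-d,d]}(\{f_{\scrF_k} = M\})$, which is bounded by $\ep M$ using the same embedding trick.
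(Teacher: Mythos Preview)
Your proposal is correct and follows essentially the same approach as the paper: split on \eqref{E:ebbb}, pass to the jump ensemble via Lemma~\ref{L:-2T2T} and property (III), bound numerator and denominator by (IV) and (I), absorb the $d^2 e^{bk}(\log\ep^{-1})^{2/3}$ cross term using the smallness of $\ep$, and for the moreover part argue by contradiction that $\mu_{0,*}^{[-d,d]}(\{f_{\scrF_k}\ge M\})<\ep$ and then embed into a set of measure exactly $\ep$.

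Two small remarks. First, the paper includes a brief paragraph verifying that the Radon--Nikodym derivative $f_{\scrF_k}$ actually exists, via the Brownian Gibbs property on a larger box; you tacitly assume this. Second, your final paragraph handling the passage from $\{f_{\scrF_k}>M\}$ to $\{f_{\scrF_k}\ge M\}$ is more involved than necessary: since \eqref{E:conditionalB} is a \emph{strict} inequality, applying it directly with $A=\{f_{\scrF_k}\ge M\}$ (assuming $\mu_{0,*}^{[-d,d]}(A)\ge\ep$) yields $\int_A f_{\scrF_k}\,d\mu_{0,*}^{[-d,d]} < M\,\mu_{0,*}^{[-d,d]}(A)$, which already contradicts the trivial lower bound $\ge M\,\mu_{0,*}^{[-d,d]}(A)$ on $\Fav_{k,\ep}$. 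So no separate degenerate-case analysis is needed; this is how the paper proceeds.
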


\begin{proof}
	First, the bound \eqref{E:conditionalB} holds trivially whenever $\ep$ does not satisfy \eqref{E:ebbb} as long as $b$ is taken large enough. Therefore we may assume that \eqref{E:ebbb} holds.
	
	In this case, we can let $J$ be the jump ensemble defined with parameters $d, k$, and $\ep$.
	Then by Lemma \ref{L:-2T2T}, we can write 
	\begin{align*}
	\p_{\scrF_k} \lf(\tilde \scrB_k(\cdot) - \tilde \scrB_k(-d) \in A \rg) = \p_{\scrF_k} \lf( \tilde \scrB^{\operatorname{re}, J}_k(\cdot) - \tilde \scrB^{\operatorname{re}, J}_k(-d) \in A \; | \; \Pass(J) = 1 \rg).
	\end{align*}
	Now, by assertion (III) above, $[-d, d]\sset [\mathfrak{l}, \mathfrak{r}]$ on $\Fav_{k,\ep}$. Therefore by the definition \eqref{E:resampled} of the resampled ensemble $\tilde \scrB^{\operatorname{re}, J}_k$, on  $\Fav_{k,\ep}$ we have
	\begin{align*}
	\p_{\scrF_k} \lf( \tilde \scrB^{\operatorname{re}, J}_k(\cdot) - \tilde \scrB^{\operatorname{re}, J}_k(-d) \in A \; | \; \Pass(J) = 1 \rg) &= \p_{\scrF_k} \lf( J_k(\cdot) - J_k(-d) \in A \; | \; \Pass(J) = 1 \rg) \\
	&\le \frac{\p_{\scrF_k} \lf( J_k(\cdot) - J_k(-d) \in A \rg)}{\p_{\scrF_k} \lf( \Pass(J) = 1 \rg)}.
	\end{align*}
	By assertion (I) and \eqref{E:FJk'} above, on $\Fav_{k, \ep}$ the right-hand side above is bounded by
	\begin{equation*}
	\mu_{0, *}^{[-d, d]}(A) \exp \lf(d e^{bk} (\log \ep^{-1})^{5/6} + d^2 e^{b k} (\ep^{-1})^{2/3} \rg).
	\end{equation*}
	The bound \eqref{E:ebbb} on $\ep$ implies that this is bounded above by the right-hand side of \eqref{E:conditionalB}.
	It remains to show \eqref{E:scrFF}. 
	
	We first claim that $\p_{\scrF_{k}} \lf(\tilde \scrB_k(\cdot) - \tilde \scrB_k(-d) \in \cdot \rg)$ is absolutely continuous with respect to $\mu_{0, *}^{[-d, d]}$ and so the Radon-Nikodym derivative $f_{\scrF_k}$ is well-defined. To see this, observe that $\scrF_k$ is contained in the $\sig$-algebra $\scrG_k$ generated by $\tilde \scrB|_{S^c}$, where $S = \II{1, k} \X [-T, T]$, so it suffices to prove the same absolute continuity for $\p_{\scrG_{k}} \lf(\tilde \scrB_k(\cdot) - \tilde \scrB_k(-d) \in \cdot \rg)$. Conditional on $\scrG_k$, the Brownian Gibbs property for $\tilde \scrB$ (Theorem \ref{T:melon-Airy-facts}) ensures that the process $\tilde \scrB_k|_{[-T, T]}$ is absolutely continuous with respect to a Brownian bridge between the $\scrG_k$-measurable endpoints $\tilde \scrB_k(\pm T)$ at times $\pm T$. Since Brownian bridge increments are absolutely continuous with respect to Brownian motion increments away from the endpoints of the bridge interval, $\p_{\scrG_{k}} \lf(\tilde \scrB_k(\cdot) - \tilde \scrB_k(-d) \in \cdot \rg)$ is absolutely continuous with respect to $\mu_{0, *}^{[-d, d]}$.
	
	Next, let
	$$
	A = \lf\{f_{\scrF_k} \ge \exp \lf(b d^6 + d e^{bk} (\log \ep^{-1})^{5/6} \rg) \rg\},
	$$
	so that the left-hand side of \eqref{E:scrFF} is equal to 
	$$
	\p_{\scrF_k}(\tilde \scrB_k(\cdot) - \tilde \scrB_k(-d) \in A) \mathbf{1}(\Fav_{k, \ep}).
	$$
	By the definition of $A$, this is bounded below by 
	$$
	\mu_{0, *}^{[-d, d]} (A) \exp \lf(b d^6 + d e^{bk} (\log \ep^{-1})^{5/6} \rg) \mathbf{1}(\Fav_{k, \ep}).
	$$
	By \eqref{E:conditionalB}, this implies that $\mu_{0, *}^{[-d, d]} (A) \le \ep$. Therefore we can find a set $S$ such that $\mu_{0, *}^{[-d, d]} (A \cup S) = \ep$. Then by \eqref{E:conditionalB}, we have
	\begin{align*}
	\p_{\scrF_k}(\tilde \scrB_k(\cdot) - \tilde \scrB_k(-d) \in A) \mathbf{1}(\Fav_{k, \ep}) &\le \p_{\scrF_k}(\tilde \scrB_k(\cdot) - \tilde \scrB_k(-d) \in A \cup S) \mathbf{1}(\Fav_{k, \ep}) \\
	&\le \ep \exp \lf(b d^6 + d e^{bk} (\log \ep^{-1})^{5/6} \rg),
	\end{align*}
	giving \eqref{E:scrFF}.
\end{proof}
The next theorem is a restatement of Theorem \ref{T:radon-n-deri}.
\begin{theorem}
	\label{T:RN-joint}
	Let $\mu_d^{\bigotimes k}$ denote the law of $k$-tuples of functions in $\scrC^k_d$ given by the product of $k$ copies of $\mu_{0, *}^{[-d, d]}$. Define $\hat \scrB^k = (\hat \scrB_1, \dots \hat \scrB_k)$ by letting $\hat \scrB_i = \tilde \scrB_i(\cdot) - \tilde \scrB_i(-d)$, restricted to the interval $[-d, d]$. Then for any set $A, k \in \N$ and $d \ge 1$, we have
	$$
	\p(\hat \scrB^k \in A) \le \mu_d^{\bigotimes k}(A) \exp \lf(b k d^6 + d e^{bk} (\log [\mu_d^{\bigotimes k}(A)]^{-1})^{5/6} \rg),
	$$
	where $b>0$ is a universal constant.
\end{theorem}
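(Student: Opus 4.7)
The plan is to establish a joint Radon--Nikodym derivative bound on the top $k$ lines by mirroring the proof of Lemma~\ref{L:Bkbound}, but treating the jump ensemble $J = (J_1,\ldots,J_k)$ as a multi-line object. Without loss we may assume $\mu_d^{\bigotimes k}(A) < e^{-bkd^6 - bk}$ (otherwise the claimed bound exceeds $1$ and is trivial), so that setting $\ep := \mu_d^{\bigotimes k}(A)$ yields an $\ep$ satisfying \eqref{E:ebbb}. The task reduces to proving a joint analog of \eqref{E:conditionalB}:
\[
\p_{\scrF_k}(\hat\scrB^k \in A)\mathbf{1}(\Fav_{k,\ep}) < \mu_d^{\bigotimes k}(A)\exp\!\lf(bkd^6 + de^{bk}(\log\ep^{-1})^{5/6}\rg),
\]
after which taking expectations and adding $\p(\Fav_{k,\ep}^c) \le \ep$ from (II) completes the theorem once one chooses $\ep = \mu_d^{\bigotimes k}(A)$, at the cost of a harmless adjustment of the universal constant $b$.

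To produce the displayed joint conditional bound, I would use Lemma~\ref{L:-2T2T} together with assertion~(III) to identify $\hat\scrB^k$ with $(J_1(\cdot)-J_1(-d),\ldots,J_k(\cdot)-J_k(-d))|_{[-d,d]}$ conditional on $\Pass(J)=1$, so that the target probability is bounded by the unconditional joint law of $J$ restricted to $[-d,d]$ divided by $\p_{\scrF_k}(\Pass(J)=1)$. The reciprocal of this pass probability from assertion~(I) contributes a factor $\exp(d^2 e^{bk}(\log\ep^{-1})^{2/3})$, which is absorbable into the $(\log\ep^{-1})^{5/6}$ term above because $\log\ep^{-1} > d^6$ under \eqref{E:ebbb}. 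The crucial observation for controlling the joint law of $J$ itself is that the candidate ensemble $J' = (J'_1,\ldots,J'_k)$ consists of $k$ \emph{independent} Brownian bridges on $[-2T,2T]$ between $\scrF_k$-measurable endpoints, and $J$ arises from $J'$ by conditioning on a single $\scrF_k$-measurable positive-probability event. Hence the Radon--Nikodym derivative of $J$ with respect to $J'$ is the reciprocal of this conditioning probability, which by the same bound (I) is again controlled on $\Fav_{k,\ep}$. The remaining factor---the Brownian bridge to Brownian motion density ratio---is precisely what is computed in the single-line proof of Theorem~4.11 of \cite{CHH20}, and it tensorizes over the $k$ independent coordinates of $J'$ to yield $\exp(bd^6)$ per line and $\exp(bkd^6)$ in total.

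The main obstacle is verifying the tensorization claim in the previous paragraph: one must trace through the proof of Theorem~4.11 of \cite{CHH20} and confirm that the bridge-to-motion comparison, after translating the bridges so that their value at $-d$ becomes $0$, splits cleanly into a product over the independent coordinates of $J'$, and that the pole-set and ordering conditioning event defining $J$ from $J'$ can be bounded jointly (rather than line-by-line) by the same lower bound used in the single-line case. Both points are in fact already embedded in \cite{CHH20}: the pole constraints are inequalities involving each $J_i$ separately against the single lower curve $\tilde\scrB_{k+1}^{\operatorname{re},J'}$, the $J'_i$ are independent by construction, and assertion~(I) is itself stated as a joint lower bound on all $k$ top lines. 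This reduces the proof essentially to bookkeeping within the CHH20 framework, with the dimension-dependence confined to the factor $k$ in $bkd^6$ coming from the per-coordinate bridge-to-motion correction and to the $e^{bk}$ factor already present in the single-line bound.
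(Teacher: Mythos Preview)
Your approach is genuinely different from the paper's, and it contains a real gap. The paper does \emph{not} attempt to control the joint law of the jump ensemble $(J_1,\dots,J_k)$ directly. Instead it proves the conditional bound by \emph{induction on $k$}, using only the single-line result (assertion~(IV), packaged as Lemma~\ref{L:Bkbound}) as a black box. The key structural input is the nesting $\scrF_k\subset\scrF_{k-1}$: one writes $\p_{\scrF_k}(\hat\scrB^k\in A)=\E_{\scrF_k}\big[\p_{\scrF_{k-1}}(\hat\scrB^{k-1}\in A_{\hat\scrB_k})\big]$, applies the inductive hypothesis (at level $k-1$, with parameter $\ep^2$) inside the expectation, and then integrates out $\hat\scrB_k$ using the single-line Radon--Nikodym bound $f_{\scrF_k}$ together with the truncation estimate \eqref{E:scrFF}. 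An auxiliary set $\Favl_{k,\ep}\subset\Fav_{k,\ep}$ is introduced along the way to absorb the inductive error $\mathbf{1}(\Favl_{k-1,\ep^2}^c)$ via Markov's inequality.

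The specific gap in your argument is the sentence ``the Radon--Nikodym derivative of $J$ with respect to $J'$ is the reciprocal of this conditioning probability, which by the same bound (I) is again controlled on $\Fav_{k,\ep}$.'' Assertion~(I) bounds $\p_{\scrF_k}(\Pass(J)=1)$ from below; it says nothing about the \emph{different} conditioning probability $\p_{\scrF_k}(J'\in E)$ that defines $J$ from $J'$. In fact $\p_{\scrF_k}(\Pass(J')=1)=\p_{\scrF_k}(\Pass(J)=1)\cdot\p_{\scrF_k}(J'\in E)$, so the bound you need is strictly stronger than (I) and is not among the quoted facts. Relatedly, your tensorization claim is too optimistic: the event $E$ includes the \emph{ordering} constraint $\tilde\scrB_1^{\operatorname{re},J'}>\cdots>\tilde\scrB_{k+1}^{\operatorname{re},J'}$ on the wings $[-2T,\mathfrak{l}]\cup[\mathfrak{r},2T]$, which couples the $k$ lines through their values $J'_i(\mathfrak{l}),J'_i(\mathfrak{r})$. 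The lines of $J$ are therefore not conditionally independent given $\scrF_k$, and the single-line computation in \cite{CHH20} does not simply factor over $i$.

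It may well be possible to push your direct approach through by re-entering the proof of Theorem~4.11 in \cite{CHH20} and tracking the joint density of the entry and exit data $(J_i(\mathfrak{l}),J_i(\mathfrak{r}))_{i=1}^k$, after which the restrictions to $[\mathfrak{l},\mathfrak{r}]$ are conditionally independent bridges subject only to separate pole constraints. But this is substantially more than bookkeeping, and the paper's inductive route is designed precisely to avoid it: one never needs a joint jump-ensemble estimate, only the single-line bound applied once per inductive step.
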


\begin{proof}
	Fix $d \ge 1$. We will first show that for every $k \in \N$, and $\ep \in (0, 1]$, that there exists an $\scrF_{k}$-measurable set $\Favl_{k, \ep}$ with $\p(\Favl^c_{k, \ep}) \le k\ep$ such that for every $\scrC^k_d$-measurable set $A$ with $\mu_d^{\bigotimes k}(A) \ge \ep$, we have
	\begin{equation}
	\label{E:extra-conditional}
	\p_{\scrF_{k}} \lf(\hat \scrB^k \in A \rg)\mathbf{1}(\Favl_{k, \ep}) \le \mu_d^{\bigotimes k}(A) \lf(4 \exp \lf(b d^6 + d e^{b k} (\log \ep^{-1})^{5/6} \rg) \rg)^k.
	\end{equation}
	The set $\Favl_{k, \ep}$ is not from \cite{CHH20}, and is contained in $\Fav_{k, \ep}$ but does not have an explicit construction. Think of the `!' as indicating that it is an extra favourable version of the set $\Fav_{k, \ep}$.
	To prove \eqref{E:extra-conditional} we use induction on $k$. For the proof of \eqref{E:extra-conditional}, we fix the constant $b$, since increasing $b$ during the inductive step would be problematic.
	The $k = 1$ case for all $\ep \in (0, 1]$ is given in Lemma \ref{L:Bkbound} with the set $\Favl_{k, \ep} = \Fav_{k, \ep}$. Now suppose that the claim holds for $k-1$ and all $\ep \in (0, 1]$.
	Let $A \sset \scrC_d^k$ be a Borel measurable set with $\mu_d^{\bigotimes k}(A) = \ep$. For every $x \in \scrC_d$, define the fibre
	$$
	A_{x} = \{y \in \scrC_d^{k-1} : (y, x) \in A \}.
	$$
	Then we can write
	\begin{equation}
	\label{E:conditioning}
	\p_{\scrF_{k}}(\hat \scrB^k \in A) = \p_{\scrF_{k}}(\hat \scrB^{k-1} \in A_{\hat \scrB_k})  = \E_{\scrF_{k}} \lf(\p_{\scrF_{k-1}}(\hat \scrB^{k-1} \in A_{\hat \scrB_k})\rg).
	\end{equation}
	where the last equality uses that $\scrF_{k} \sset \scrF_{k-1}$.
	We use the inductive hypothesis to estimate $\p_{\scrF_{k-1}}(\hat \scrB^{k-1} \in A_{\hat \scrB_k})$. First, let $S$ be any set with $\mu_d^{\bigotimes k - 1}(S) = \ep^2$. 
	Then we can write
	\begin{align}
	\nonumber
	\p_{\scrF_{k-1}}(\hat \scrB^{k-1} \in A_{\hat \scrB_k}) \le\; &\p_{\scrF_{k-1}}(\hat \scrB^{k-1} \in A_{\hat \scrB_k} \cup S)\\
	\nonumber
	\le\; &\p_{\scrF_{k-1}}(\hat \scrB^{k-1} \in A_{\hat \scrB_k} \cup S)\mathbf{1}(\Favl_{k-1,\ep^2}) + \mathbf{1}(\Favl^c_{k-1,\ep^2}) \\
	\label{E:PFk-1}
	\le\; & \mu_d^{\bigotimes k - 1}(A_{\hat \scrB_k} \cup S)\lf(4 \exp \lf(b d^6 + d e^{b (k-1)} (\log \ep^{-1})^{5/6} \rg) \rg)^{k-1}  + \mathbf{1}(\Favl^c_{k-1,\ep^2}).
	\end{align}
	Here the final inequality uses the inductive hypothesis, and the fact that $\mu_d^{\bigotimes k - 1}(A_{\hat \scrB_k} \cup S)$ is always greater than $\ep^2$.
	Next, we want to apply $\E_{\scrF_{k}}$ to the right-hand side of \eqref{E:PFk-1}. We start with the term $\mu_d^{\bigotimes k - 1}(A_{\hat \scrB_k} \cup S)$. As in Lemma \ref{L:Bkbound}, let $f_{\scrF_k}$ denote the Radon-Nikodym derivative of $\p_{\scrF_{k}} \lf(\hat \scrB_k \in \cdot \rg)$ with respect to $\mu_{0, *}^{[-d, d]}$. Letting $W$ be an independent Brownian motion drawn from the distribution $\mu_{0, *}^{[-d, d]}$, we can write
	\begin{align}
	\nonumber
	\E_{\scrF_{k}} \mu_d^{\bigotimes k - 1}(A_{\hat \scrB_k} \cup S) = & \; \E_{\scrF_{k}} f_{\scrF_k}(W) \mu_d^{\bigotimes k - 1}(A_{W} \cup S) \\
	\label{E:bigtobd}
	\le &  \;\E_{\scrF_k} f_{\scrF_k}(W) \mathbf{1}\lf(f_{\scrF_k}(W) \ge \exp \lf(b d^6 + d e^{bk} (\log \ep^{-1})^{5/6} \rg)\rg)  \\
	\nonumber
	&+\exp \lf(b d^6 + d e^{bk} (\log \ep^{-1})^{5/6} \rg) \E_{\scrF_{k}} \mu_d^{\bigotimes k - 1}(A_{W} \cup S).
	\end{align}
	Now, by the definition of the sets $A_x$ and a union bound, we have
	$$
	\E_{\scrF_{k}} \mu_d^{\bigotimes k - 1}(A_{W} \cup S) \le \E_{\scrF_{k}}[ \mu_d^{\bigotimes k - 1}(A_{W}) + \mu_d^{\bigotimes k - 1}(S)] = \mu_d^{\bigotimes k}(A) + \ep^2 \le 2\ep.
	$$ 
	Also, on the event $\Fav_{k, \ep}$ in Lemma \ref{L:Bkbound}, we can bound the first term on the right-hand side of \eqref{E:bigtobd} above using \eqref{E:scrFF}.  Therefore
	\begin{equation}
	\label{E:3ep-bd}
	\E_{\scrF_{k}} \mu_d^{\bigotimes k - 1}(A_{\hat \scrB_k} \cup S) \mathbf{1}(\Fav_{k, \ep}) 
	\le
	3 \ep \exp\lf(b d^6 + d e^{bk} (\log \ep^{-1})^{5/6} \rg).
	\end{equation}
	We now bound the second term on the right-hand side of \eqref{E:PFk-1}. We have
	$$
	\E \E_{\scrF_{k}} (\mathbf{1}(\Favl^c_{k-1,\ep^2})) = \p (\Favl^c_{k-1,\ep^2}) \le (k-1)\ep^2,
	$$
	by the inductive hypothesis,
	so by Markov's inequality, we have
	\begin{equation}
	\label{E:ep^2-up}
	\E_{\scrF_{k}} (\mathbf{1}(\Favl^c_{k-1,\ep^2})) \le \ep
	\end{equation}
	on a set $B$ of probability $1-(k-1)\ep$. We now set $\Favl_{k, \ep} = B \cap \Fav_{k,\ep}$. Assertion (II) and a union bound shows that $\p(\Favl^c_{k,\ep}) \le k \ep$. Finally, gathering the inequalities \eqref{E:PFk-1}, \eqref{E:3ep-bd}, and \eqref{E:ep^2-up}, we have
	\begin{equation*}
	\begin{split}
	&\E_{\scrF_{k}} \p_{\scrF_{k-1}}(\hat \scrB^{k-1} \in A_{\hat \scrB_k}) \mathbf{1}(\Favl_{k, \ep}) \\
	\le &  \;\lf(4 \exp \lf(b d^6 + d e^{b (k-1)} (\log \ep^{-1})^{5/6} \rg) \rg)^{k-1}3 \ep \exp\lf(b d^6 + d e^{bk} (\log \ep^{-1})^{5/6} \rg)  + \ep.
	\end{split}
	\end{equation*} 
	This is bounded above by \eqref{E:extra-conditional} when $\mu_d^{\bigotimes k}(A) = \ep$. As in \eqref{E:FJk'}, the extension of \eqref{E:extra-conditional} to all $A$ with $\mu_d^{\bigotimes k}(A) \ge \ep$ is immediate.
	
	The theorem then follows by averaging over $\scrF_k$. More precisely, let $A$ be any set, and define $\ep = \mu_d^{\bigotimes k}(A)$. Then
	\begin{align*}
	\p(\hat \scrB \in A) &\le \E \p_{\scrF_k}(\hat \scrB \in A) \mathbf{1}(\Favl_{k, \ep}) + \p(\Favl_{k, \ep}^c) \\
	&\le \ep \lf(4 \exp \lf(b d^6 + d e^{b k} (\log \ep^{-1})^{5/6} \rg) \rg)^k + k \ep,
	\end{align*}
	This gives the desired bound after increasing $b$.
\end{proof}

\end{document}